\documentclass[11pt]{amsart}
\usepackage{amssymb,amsmath,latexsym,amsthm}
\usepackage[varg]{pxfonts}
\usepackage{hyperref}

\usepackage[utf8]{inputenc}

\usepackage[T1]{fontenc}

\usepackage{placeins}
\usepackage{bbm}
\usepackage{subcaption}
\usepackage{pgfplots}
\usepackage{xcolor}
\usepackage{enumitem}
\pgfplotsset{compat=1.15}
\usepackage{mathrsfs}
\usepackage{bold-extra}
\usetikzlibrary{arrows}
\usepackage{mathtools}
\usepackage{multirow}
\usepackage{cleveref}
\usepackage{bold-extra}
\usepackage{diagbox}
\usepackage{tikz}
\usepackage{float}
\usetikzlibrary{positioning}
\usetikzlibrary{calc}

\newcommand{\m}[1]{\mathbf{\uppercase{#1}}}
\newcommand{\n}{\scriptscriptstyle{\mathsf{N}}}
\newcommand{\co}{\scriptscriptstyle{\mathsf{C}}}

\newtheorem{thm}{Theorem}
\newtheorem{cor}[thm]{Corollary}
\newtheorem{lem}[thm]{Lemma}
\newtheorem{prp}[thm]{Proposition}

\theoremstyle{definition}
\newtheorem{dfn}[thm]{Definition}
\newtheorem{exmp}{Example}[section]
\theoremstyle{remark}
\newtheorem*{rem}{Remark}
\theoremstyle{plain}
\newtheorem{claim}[thm]{Claim}

\def\T{\mathbf{T}}

\begin{document}

\title{On 2-generated minimal Taylor algebras of size 4}
\author[affil1]{Zarathustra Brady}
\email{notzeb@gmail.com}
\author[affil2]{Petar Đapi\'c}
\email{petarn@dmi.uns.ac.rs}
\author[affil2]{Vuka\v sin Đinovi\' c}
\email{vukasindjinovic04@gmail.com}
\author[affil2]{Petar Markovi\'c}
\email{pera@dmi.uns.ac.rs}
\author[affil3]{Aleksandar Proki\'{c}}
\email{aprokic@uns.ac.rs}
\author[affil2]{Veljko Tolji\' c}
\email{veljko.toljic@dmi.uns.ac.rs}
\author[affil2]{Vlado Uljarevi\'c}
\email{vlado.uljarevic@dmi.uns.ac.rs}
\address[affil1]{No academic affiliation}
\address[affil2]{Department of Mathematics and Informatics\\ 
University of Novi Sad\\ Serbia}
\address[affil3]{Faculty of Technical Sciences\\ 
University of Novi Sad\\ Serbia}
\newcommand{\AuthorNames}{Z. Brady et al.}
\newcommand{\FilMSC}{Primary 08B05; Secondary 08A05, 08A45}
\newcommand{\FilKeywords}{minimal Taylor algebras}
\newcommand{\FilCommunicated}{Miroslav \'Ciri\'c}
\newcommand{\FilSupport}{P. Đapi\'c, P. Markovi\'c and V. Uljarevi\'c were supported by the Ministry of Education, Science and Technological Development of the Republic of Serbia (Grants No. 451-03-137/2025-03/200125 and 451-03-136/2025-03/200125). Aleksandar Proki\'{c} was supported by the Ministry of Education, Science and Technological Development of the Republic of Serbia (Grant No. 451-03-137/2025-03/200156.)}

\begin{abstract}
We prove that any 2-generated minimal Taylor algebra on a domain of size 4 is not simple. In addition, we find all such algebras up to isomorphism and term-equivalence.
\end{abstract}

\maketitle

\section{Introduction}
The study of Constraint Satisfaction Problem (CSP) and its complexity led to new directions in which Universal Algebra developed. In particular, the regularity of structures in classical approach to Universal Algebra came from having many term operations. The approach which seems most natural to CSP, however, is to squeeze regularity out of having the least amount of term operations among those algebras which have at least one term operation of some fixed type. This approach was formalized in \cite{dreamteam}, where the authors defined minimal Taylor algebras, algebras which have a Taylor term, but no proper subclone of their term clone contains a Taylor operation. The increased understanding of minimal Taylor algebras directly translates to better understanding of polynomial-time algorithms for solving CSP and simpler proofs of the main result in the complexity of CSP, the Dichotomy Theorem, a famously complicated result.

The colored edge theory invented by A. Bulatov in his proof of the Dichotomy Theorem defines edges on an algebra according to the types of 2-generated subalgebras. Problem 4.2.2 from \cite{Zebnotes} asks whether there exists a 2-generated simple minimal Taylor algebra with at least three elements which is not affine. If the answer is negative, then Bulatov's theory would be greatly simplified, since all large 2-generated algebras would be either affine, or have a nontrivial congruence. The nontrivial congruence would reduce the proofs of various theorems to the factor algebras and congruence classes, all of which would be smaller algebras for which the theorem would inductively hold. Indeed, many theorems in Bulatov's theory have as the hardest case the simple 2-generated algebras. If one could prove that all of those are just the three 2-element minimal Taylor algebras, plus the simple affine ones, it would shorten and trivialize several proofs in Bulatov's theory. 

After the preliminaries in Section 2, in Section 3 we consider the 4-element case of the above problem. It was proved that there are no 3-element 2-generated simple non-affine minimal Taylor algebras in \cite{jankovec}, since all 24 3-element minimal Taylor algebras were found in that thesis. In Section 3 we extend this result to 4-element 2-generated minimal Taylor algebras. Sadly, some parts of our proof don't extend to all minimal Taylor algebras so we are unable thus far to solve the abovementioned problem beyond the 4-element case.

In Section 4 we applied the result of Section 3 to find all 4-element 2-generated minimal Taylor algebras using a case analysis based on a maximal congruence such an algebra might have. Indeed, the factor modulo such a congruence must be a 2-element algebra or the simple affine algebra $\mathbb{Z}_3^{\mathrm{aff}}$, and we thus break down our search into four cases. In the end we find that there are 18 4-element 2-generated minimal Taylor algebras, up to term equivalence and isomorphism, and describe them all. 

Our case-analysis first assumes that there exists a homomorphism onto a 2-element semilattice, and find seven such 2-generated minimal Taylor algebras (Subsection 4.1). Next we assume that there exists a homomorphism onto the two-element majority algebra, but no homomorphism onto the 2-element semilattice, and find three such minimal Taylor algebras (Subsection 4.2). Next we assume that there is a homomorphism onto the two-element affine algebra, but neither of the previous two cases holds, and find three such minimal Taylor algebras (Subsection 4.3). Finally, we find the all six 2-generated minimal Taylor algebras which map onto the 3-element affine algebra, and find that one of them also maps onto the 2-element semilattice, while the remaining five round out our classification (Subsection 4.4).

We did not finish the classification of 4-element minimal Taylor algebras. The number of 4-generated (conservative) 4-element minimal Taylor algebras is not too hard to find, using a computer and combinatorics. Indeed, using Corollary 4.4.16 of \cite{Zebnotes} and computer, it was found that there are exactly 520 4-element conservative minimal Taylor algebras, up to isomorphism and term equivalence (for 5-element conservative minimal Taylor algebras this number grows to 2686891). However, the 3-generated 4-element case seems to be pretty wild, so classifying the 4-element minimal Taylor algebras remains open.

\section{Preliminaries}
This paper is in the area of Universal Algebra, so it is assumed that the reader is familiar with the terminology and basic results of that area. The readers who need help with basic Universal Algebra are advised to look up the missing parts in \cite{burris-sank}, or \cite{alvin1}, \cite{alvin2} and \cite{alvin3}. The investigation of minimal Taylor algebras was motivated by computational complexity of CSP, so we will refer a lot to the manuscript \cite{Zebnotes} by the first author of this paper, where almost all knowledge on CSP is collected, and several new facts are proven, and we will select the variants of definitions and results we use like defined in \cite{Zebnotes}. When the result was first proven elsewhere, we will also cite the original source as the alternative.

All algebras in the present paper are assumed to be \emph{idempotent}, which means each of them has all its singletons as subuniverses. 

A set of functions $\mathcal{C}$ on some nonempty set $A$ is called a \emph{clone} if it contains all \emph{projections} $\pi_i^k\colon A^k\rightarrow A$ which satisfy $\pi_i^k(x_1,\dots,x_k)=x_i$, and is closed under composition, the operation which maps $k$-ary function $f$ and functions $g_1,\dots,g_k$ of arity $l$ to the function $$(f\circ (g_1,\dots,g_k))\colon (x_1,\dots,x_l)\mapsto f(g_1(x_1,\dots,x_l),\dots, g_k(x_1,\dots,x_l)).$$
Given an algebra $\m a$, by a {\emph{clone of $\m a$}}, denoted $\mathrm{Clo}(\m a)$, we call the set of all term operations of $\m a$, that is the set of operations generated by the set of basic operations of $\m a$. We denote the collection of all operations of arity $n$ from $\mathrm{Clo}(\m a)$ by $\mathrm{Clo}_{n}(\m a)$. Two algebras, say $\m a$ and $\m b$, with the same domain, are \emph{term-equivalent} if $\mathrm{Clo}(\m a)=\mathrm{Clo}(\m b)$.

An algebra $\m a$ is a \emph{Taylor algebra} if it has an idempotent term operation $f$ which satisfies a system of identities of the form $$f(\dots,\ast,x,\ast,\dots )\approx f(\dots,\ast,y,\ast,\dots ),$$ for each coordinate, where $\ast$'s are substituted somehow with $x$'s and $y$'s. Such an operation $f$ is called a \emph{Taylor operation}.
A clone $\mathcal{C}$ on a finite domain is called \emph{minimal Taylor clone} if it contains a Taylor operation and every proper subclone of $\mathcal{C}$ is not Taylor. A finite algebra $\m a$ is a \emph{minimal Taylor algebra} if $\mathrm{Clo}(\m a)$ is a minimal Taylor clone. Equivalently, the algebra is a minimal Taylor algebra if it is Taylor, and it has no proper reduct that is also Taylor. The next proposition clarifies the existence of a minimal Taylor clone.

\begin{prp}[Proposition 5.2 of \cite{dreamteam}, Proposition 4.2.2 of \cite{Zebnotes}]
Every Taylor clone on a finite domain contains a minimal Taylor clone.
\end{prp}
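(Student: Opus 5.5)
The plan is to reduce the statement to a finiteness argument: Taylor clones on a fixed finite set $A$ form a family in which descending chains stabilize, so a minimal element exists below any given one. The key device is to replace a Taylor clone by the subclone generated by a single Taylor operation of bounded arity, whose existence follows from a standard characterization. Concretely, I would invoke the known fact (Taylor's theorem together with the cyclic term theorem of Barto--Kozik, or equivalently the Siggers term result) that an idempotent clone on a finite set is Taylor if and only if it contains a cyclic operation of arity $p$ for every prime $p>|A|$. I would instead use the simpler consequence that it is Taylor if and only if it contains a $4$-ary Siggers operation $s$ satisfying $s(x,y,z,x)\approx s(y,x,y,z)$ (or the $6$-ary variant). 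The essential point is that being Taylor is then witnessed by a single operation of fixed arity $n$ depending only on $|A|$ (here $n=4$ or $6$), and a clone containing such an operation is Taylor.

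Given a Taylor clone $\mathcal{C}$ on $A$, I would consider the set $\mathcal{S}$ of all subclones $\mathcal{D}\subseteq\mathcal{C}$ that contain some Siggers operation. This set is nonempty, since $\mathcal{C}\in\mathcal{S}$. Each $\mathcal{D}\in\mathcal{S}$ contains a subclone $\langle s\rangle$ generated by one Siggers operation $s\in\mathcal{D}$. Since $s$ is an $n$-ary operation on the finite set $A$, there are only finitely many candidates, at most $|A|^{|A|^n}$. Hence the family $\{\langle s\rangle : s\in\mathcal{C}\text{ Siggers}\}$ is finite and nonempty, and I would pick one such clone $\mathcal{M}=\langle s_0\rangle$ that is minimal under inclusion within this finite family.

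Next I would verify that $\mathcal{M}$ is a minimal Taylor clone. It is Taylor because $s_0$ is a Taylor operation: idempotence holds within the idempotent setting, and the Siggers identity yields, for each coordinate, an identity of the required form. Now let $\mathcal{D}\subsetneq\mathcal{M}$ be Taylor. By the characterization, $\mathcal{D}$ contains a Siggers operation $s_1$. Then $\langle s_1\rangle\subseteq\mathcal{D}\subsetneq\mathcal{M}$, so $\langle s_1\rangle$ is a member of the finite family strictly below $\mathcal{M}$, contradicting the choice of $\mathcal{M}$.

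The main obstacle is the characterization that every Taylor clone on a finite set contains a Siggers operation, since everything else is pure finiteness. I would cite it rather than reprove it, as it is deep and relies on loop lemmas or absorption theory. If citing Siggers is not allowed, I would try to show directly that a Taylor clone on a finite set contains a Taylor operation of arity bounded in terms of $|A|$. Failing that, I would argue via compactness that descending chains of clones containing a fixed-arity witness terminate, which again reduces to the same bounded-arity fact.
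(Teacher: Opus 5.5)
Your argument is correct and is essentially the standard proof behind the cited result (Proposition~5.2 of \cite{dreamteam}, Proposition~4.2.2 of \cite{Zebnotes}): every Taylor clone on $A$ contains a Taylor operation of fixed arity (a Siggers operation, or a cyclic one of prime arity $p>|A|$), there are only finitely many such operations, and a clone generated by one of them that is inclusion-minimal among these is a minimal Taylor clone. Two small remarks, neither affecting the argument you actually run: your opening claim that descending chains of Taylor clones stabilize is false (Post's lattice already has infinite strictly descending chains of idempotent clones all containing a semilattice operation), and if $\mathcal{C}$ is not assumed idempotent you should restrict to \emph{idempotent} Siggers operations, since a constant operation also satisfies the Siggers identity.
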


It is not hard to show that the class of minimal Taylor algebras is closed under the basic constructions.

\begin{prp}[Proposition 5.4 of \cite{dreamteam}, Theorem 4.2.4 of \cite{Zebnotes}]\label{HSPfin}
Any subalgebra, finite power, or quotient of a minimal Taylor algebra is a minimal Taylor algebra.
\end{prp}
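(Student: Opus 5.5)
Throughout, let $\m a$ be a finite minimal Taylor algebra with clone $\mathcal C=\mathrm{Clo}(\m a)$. The plan is to handle each construction separately. For every derived algebra $\m b$ (a subalgebra, a finite power, or a quotient of $\m a$) we note that $\m b$ is Taylor, because Taylor identities are preserved by all three constructions. Then, assuming $\m b$ had a proper Taylor subclone, we would produce a proper Taylor subclone of $\mathcal C$, contradicting minimality. The unifying tool is a restriction map $\rho\colon \mathcal C\to \mathrm{Clo}(\m b)$ sending a term operation $t^{\m a}$ to $t^{\m b}$. This map is well defined, since term operations of $\m a$ determine those of $\m b$ for $\m b\in\mathsf{HSP}_{fin}(\m a)$. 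It is also a surjective clone homomorphism, because it preserves projections and composition.

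The key step is the following. Let $\mathcal D\subsetneq \mathrm{Clo}(\m b)$ be a Taylor clone, and let $\mathcal E=\rho^{-1}(\mathcal D)$. Since $\rho$ is a clone homomorphism, $\mathcal E$ is a subclone of $\mathcal C$. It is proper because $\rho$ is surjective and $\mathcal D$ is proper. The remaining task is to show that $\mathcal E$ contains a Taylor operation on $A$. Pick a Taylor operation $g\in\mathcal D$, and let $f\in \mathcal E$ be any preimage of $g$. Then $f$ satisfies the Taylor identities on $\m b$, but not necessarily on $A$, so this is the main obstacle.

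To overcome it, I would use the finite-domain characterization of Taylor clones: a clone on a finite set is Taylor iff it contains a cyclic operation (Barto–Kozik), or equivalently iff it has no clone homomorphism to projections. With the homomorphism criterion, the argument closes cleanly. Suppose $\mathcal E$ were not Taylor. Then there is a clone homomorphism $\xi\colon\mathcal E\to\mathcal P$ onto the projection clone. The kernel of $\rho|_{\mathcal E}$ consists of pairs of operations with the same image in $\mathcal D$. I would show that $\xi$ factors through $\rho|_{\mathcal E}$, which yields a homomorphism $\mathcal D\to\mathcal P$ and contradicts that $\mathcal D$ is Taylor. For the factorization, one needs that two operations in $\mathcal E$ with equal $\rho$-images get the same projection. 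I expect to derive this from a Taylor term of $\mathcal C$ by a standard "absorbing the difference" trick. This is where the real work lies.

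If that route fails, I would argue more concretely via cyclic terms. Take $f\in\mathcal E$ whose image is a cyclic term of $\mathcal D$ of prime arity $p>|A|$. Then iterate, replacing $f$ by compositions $f(f(\dots),\dots)$ over all cyclic shifts, which stay in $\mathcal E$ because $\mathcal D$ is a clone. By finiteness, some iterate is cyclic on all of $A$; this is the idempotent-iteration argument used in the proof of Proposition 5.2 of \cite{dreamteam}. That operation lies in $\mathcal E$, so $\mathcal E$ is a proper Taylor subclone of $\mathcal C$, the desired contradiction.
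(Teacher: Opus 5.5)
Your reduction is sound: $\rho$ is a surjective clone homomorphism, $\mathcal E=\rho^{-1}(\mathcal D)$ is a proper subclone of $\mathcal C$, and everything hinges on showing that $\mathcal E$ contains a Taylor operation on $A$. Neither of your two routes establishes this.

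Route 1 is only a placeholder. The factorization of $\xi$ through $\rho|_{\mathcal E}$ carries the entire content of the statement, and you give no argument for it.

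Route 2 rests on a false step. Composing $f$ with itself along cyclic shifts never leaves $\mathrm{Clo}(f)$. Knowing that $\rho(f)$ is cyclic puts no constraint on $f$ outside $\m b$, so finiteness cannot force an iterate to be cyclic on $A$. For instance, on $\{0,1,2\}$ let $f$ be the $p$-ary majority on $\{0,1\}^p$ and the first projection on every tuple containing $2$. Then $\{0,1\}$ is closed under $f$ and $f|_{\{0,1\}}$ is cyclic. Yet $f$ acts as a projection on $\{0,2\}$, so no member of $\mathrm{Clo}(f)$ is cyclic on $A$. Your iteration never uses that $\m a$ itself is Taylor, and that hypothesis is indispensable.

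The missing idea is to put a cyclic term of $\m a$ on the outside.
\begin{enumerate}
\item Take a prime $p>\max(|A|,|B|)$; note $|B|$ may exceed $|A|$ for powers. By \Cref{cyclic}, the idempotent Taylor clone $\mathcal D$ contains a $p$-ary cyclic operation $c$, and $\m a$ has a $p$-ary cyclic term $s$.
\item Pick $f\in\mathcal C$ with $\rho(f)=c$ and set
\[
g(x_1,\dots,x_p)=s\bigl(f(x_1,\dots,x_p),f(x_2,\dots,x_p,x_1),\dots,f(x_p,x_1,\dots,x_{p-1})\bigr).
\]
\item $g$ is cyclic on $A$ because $s$ is.
\item On $\m b$, all $p$ inner arguments coincide with $c(x_1,\dots,x_p)$ because $c$ is cyclic. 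Idempotency of $s$ then gives $\rho(g)=c\in\mathcal D$.
\item Hence $g\in\mathcal E$, so $\mathcal E$ is a proper Taylor subclone of $\mathcal C$, contradicting minimality. Equivalently, $\mathrm{Clo}(g)=\mathcal C$ by minimality, so $\mathrm{Clo}(\m b)=\rho(\mathrm{Clo}(g))=\mathrm{Clo}(c)\subseteq\mathcal D$.
\end{enumerate}
This single composition is the standard argument for the result the paper cites from \cite{dreamteam} and \cite{Zebnotes}; the same device underlies \Cref{subuniverse}. It closes the proof and makes Route 1 unnecessary.
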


Given an nonempty set $A$ and a ternary function $f$ on it, we say that $f$ is a \emph{majority operation} on $A$ if it satisfies $f(a,a,b)=f(a,b,a)=f(b,a,a)=a$ for any $a,b\in A$. In the case of a two-element set $A=\{a,b\}$, there is only one majority operation and we denote it by $\mathrm{maj}_{a,b}$, or just $\mathrm{maj}$, while the algebra $(\{a,b\};\mathrm{maj})$ is the \emph{two-element majority algebra}. A \emph{Mal'cev operation} is a ternary operation $p$ satisfying $p(b,a,a)=p(a,a,b)=b$ for any $a,b\in A$. An interesting example of such an operation is the operation $x-y+z$ on $A$, which is sometimes denoted by $\mathrm{aff}$, where $+$ and $-$ are operations from an abelian group on $A$. The algebra $(A;x-y+z)$ is called the \emph{affine Mal'cev algebra} of the abelian group $(A;+)$. If that abelian group is $\mathbb{Z}_n$ for some $n\in\mathbb{N}$, the affine Mal'cev algebra of it is denoted $\mathbb{Z}_n^{\mathrm{aff}}$. The operation $p$ of the two-element affine Mal'cev algebra $\mathbb{Z}_2^{\mathrm{aff}}$ satisfies the so-called \emph{minority condition}, that is $p(b,a,a)=p(a,b,a)=p(a,a,b)=b$ for any $a,b$.  A \emph{meet-semilattice operation}, or just a \emph{semilattice operation}, is a binary operation $\wedge$ which is idempotent, commutative, and associative. An algebra $(A;\wedge)$, where $\wedge$ is semilattice operation, is called a \emph{semilattice}. An ordered set $(A,\leqslant)$ that has a greatest lower bound for every pair of elements can be observed as a semilattice and vice versa; we define $a\wedge b$ to be the infimum of elements $a,b\in A$. Conversely, given a semilattice $(A;\wedge)$, we get the order $\leqslant$ by setting $a\leqslant b$ if and only if $a\wedge b=a$. In this case, we say that $a$ \emph{absorbs} $b$.  Although the lattice of clones on a two-element domain, usually referred to as Post's lattice, is countable (see \cite{post}), there are only three minimal Taylor algebras on a two-element domain, up to isomorphism and term-equivalence. These are the 2-element semilattice, the majority algebra, and the affine Mal'cev algebra.

Algebras with a semilattice or a majority operation are examples of bounded width algebras, while any affine Mal'cev algebra of a nontrivial abelian group is not. An algebra $\m a$ is \emph{abelian} if there exists a congruence $\theta$ of $\m a \times \m a$ such that the diagonal relation $\Delta _{A}=\{(a,a)\colon a\in A\}$ is a $\theta$-class. In general, the notion of abelian is tightly connected to bounded width. Namely, the bounded width theorem from \cite{bkbw} characterizes the bounded width algebras as the algebras such that no quotient of a subalgebra of a power of it is a nontrivial abelian algebra. In Taylor algebras, according to  \cite{hobby-mckenzie}, abelian algebras are precisely those which are \emph{affine modules}, the algebras whose term operations are exactly idempotent term operations of a module over a unit ring. Furthermore, we have the following simplification for minimal Taylor algebras.

\begin{prp}\label{mintayloraff}[see Theorem 3.12.8, Proposition 1.9.6 and Proposition 4.2.14 of \cite{Zebnotes}]
If $\m a$ is an abelian minimal Taylor algebra, then $\m a$ is term-equivalent to the affine Mal'cev algebra of some abelian group $(A;+)$.
\end{prp}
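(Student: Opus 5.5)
The plan is to chain three facts: Taylor abelian algebras are affine modules, every affine module contains the affine Mal'cev operation in its clone, and minimality then collapses the clone to that operation alone.

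First, since $\m a$ is Taylor and abelian, the Hobby--McKenzie result quoted above says $\m a$ is an affine module. Concretely, there is a ring $R$ with unit and a unital $R$-module structure $(A;+,\,r\cdot)$ such that $\mathrm{Clo}(\m a)$ consists exactly of the operations
\[
\sum_i r_i x_i \quad\text{with}\quad \sum_i r_i=1 .
\]
In particular $x-y+z$ is a term operation of $\m a$, because its coefficients $1,-1,1$ sum to $1$.

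Next, consider the subclone $\mathcal{D}$ generated by $p(x,y,z)=x-y+z$. The operation $p$ is Taylor: it satisfies $p(x,x,y)\approx p(y,x,x)$, and these identities handle the first and third coordinates. The second coordinate is handled by $p(y,x,x)\approx y\approx p(x,x,y)$, where the middle variable is $x$ on the left and $x$ on the right. A cleaner way to see Taylorness is that $p$ is Mal'cev, and a Mal'cev operation is Taylor by the identities $p(x,x,y)\approx p(y,y,y)$-type reformulations. Equivalently, one can just note that a clone containing a Mal'cev operation is Taylor. So $\mathcal{D}$ is a Taylor subclone of the minimal Taylor clone $\mathrm{Clo}(\m a)$, and minimality forces $\mathrm{Clo}(\m a)=\mathcal{D}$.

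It remains to check that $\mathcal{D}$ equals the clone of the affine Mal'cev algebra $(A;x-y+z)$ of the abelian group $(A;+)$. This holds by definition: $\mathcal{D}$ is generated by $x-y+z$ computed in the group $(A;+)$. Hence $\m a$ is term-equivalent to $(A;x-y+z)$.

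The main obstacle is the first step, invoking the structure theorem precisely. One needs that the module description applies to the whole algebra, with $x-y+z$ genuinely a term operation, rather than only to some quotient or local structure. This is where the cited results from \cite{Zebnotes} (Theorem 3.12.8 and Proposition 1.9.6) would be used. Idempotency of $\m a$ guarantees that the affine combinations have coefficients summing to $1$, as required above.
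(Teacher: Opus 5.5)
Your proposal is correct and follows the same chain of facts the paper cites: abelian plus Taylor gives an affine module whose clone contains $x-y+z$, and minimality then collapses $\mathrm{Clo}(\m a)$ onto the subclone generated by $x-y+z$, which is Taylor because it is Mal'cev. One small slip: the identity $p(x,x,y)\approx p(y,x,x)$ has $x$ in the middle position on both sides, so it does not handle the second coordinate; what actually shows $p$ is Taylor is the pair $p(x,x,y)\approx p(y,y,y)$ and $p(y,x,x)\approx p(y,y,y)$, which your fallback ``Mal'cev implies Taylor'' correctly supplies.
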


\subsection{Minimal Taylor algebras on domain of size 3}\label{s3elem}
Here we summarize results concerning three-element minimal Taylor algebras from \cite{Zebnotes} since we shall recall these algebras on numerous occasions later. Clones of those algebras are later studied in \cite{jankovec}, from which we adopt the notation. We present them the same as in \cite{jankovec}. Before that, to do this efficiently, we introduce a few notions. 
    

\begin{dfn}
Let $f$ be a $k$-ary operation on the set $A$. We say that $f$ is
\begin{itemize}
    \item \emph{symmetric} if the result does not depend on the order of arguments;
    \item \emph{cyclic} if $f(a_1,a_2,\dots,a_k)=f(a_2,\dots,a_k,a_1)$ for any $a_1,\dots,a_k\in A$, i.e., the result does not depend on the cyclic permutations of arguments;
    \item \emph{conservative} if $f(a_1,\dots,a_k)\in\{a_1,\dots,a_k\}$ for any $a_1,\dots,a_k\in A$.
\end{itemize}
An algebra $\m a$ is said to be \emph{conservative} if all the basic operations of $\m a$ are conservative. Clearly, an algebra $\m a$ is conservative if and only if every subset of it is a subalgebra.
\end{dfn}
We are ready to give the list of minimal Taylor algebras on the domain $\{0,1,2\}$.
There are 24 such algebras up to isomorphism and term-equivalence, among which 5 are not conservative. A single basic operation generates a clone for each, and a generating operation is binary or ternary.

\subsubsection*{Nonconservative algebras}
The first three of the nonconservative algebras (\Cref{table:1}) have a single basic operation $g$, which is symmetric. The fourth algebra ${\m T}_4^{\n}$ is the semilattice algebra with binary operation $\wedge$ determined by the ordering $0 \leqslant 1$, $0 \leqslant 2$. The algebra ${\m T}_5^{\n}$ is actually the affine Mal'cev algebra $\mathbb{Z}^{\mathrm{aff}}_3$. We note that $\mathrm{min}^n_{a\leqslant b}$ in the table denotes the $n$-ary operation on $\{a,b\}$ which returns the minimum of the arguments with respect to the ordering $a\leqslant b$.
\begin{table}[h!]
\centering
 \begin{tabular}{|c||c|c|c|c|c|}
\hline
    Algebra & $g{\restriction_{\{0,1\}}}$ & $g{\restriction_{\{0,2\}}}$ & $g(1,1,2)$ & $g(1,2,2)$ & $g(0,1,2)$ \\
\hline\hline
     $\T^{\n}_1$  & maj& $\text{min}^3_{0\leqslant 2}$ & 1 & 0 & 0\\
\hline
    $\T^{\n}_2$  & aff & $\text{min}^3_{0\leqslant 2}$ & 0 & 1 & 1\\
\hline
    $\T^{\n}_3$  & maj & aff & 2 & 0 & 2\\
\hline
    $\T^{\n}_4$  & $\text{min}^3_{0\leqslant 1}$ & $\text{min}^3_{0\leqslant 2}$ & 0 & 0 & 0\\
\hline
    $\T^{\n}_5$  & \multicolumn{5}{|c|}{$\mathbb{Z}^{\mathrm{aff}}_3$}\\
\hline
\end{tabular}  
\caption{Nonconservative algebras.}
\label{table:1}
\end{table}

\subsubsection*{Conservative algebras with a commutative binary operation }
    As listed in the table below, there are precisely two algebras of this kind. The first one, $\T^{\scriptscriptstyle{\mathsf{S}}}_1$, is sometimes referred to as \emph{rock-paper-scissors algebra}.
\begin{table}[!ht]
\centering
 \begin{tabular}{|c||c|c|c|}
\hline
    Algebra & $t{\restriction_{\{0,1\}}}$ & $t{\restriction_{\{1,2\}}}$ & $t{\restriction_{\{0,2\}}}$ \\
\hline\hline
     $\T^{\scriptscriptstyle{\mathsf{S}}}_1$  & $\text{min}^2_{0\leqslant 1}$ & $\text{min}^2_{1\leqslant 2}$ & $\text{min}^2_{2\leqslant 0}$ \\
\hline
    $\T^{\scriptscriptstyle{\mathsf{S}}}_2$  & $\text{min}^2_{0\leqslant 1}$ & $\text{min}^2_{1\leqslant 2}$ & $\text{min}^2_{0\leqslant 2}$ \\
 \hline
\end{tabular}  
\caption{Conservative algebras with commutative binary term.}
\label{table:2}
\end{table}
\FloatBarrier

\subsubsection*{Conservative algebras without commutative binary or cyclic ternary operation}
There are two such algebras: a \emph{dual discriminator algebra} $\m t_1^{\scriptscriptstyle{\mathsf{P}}}$ and a simple nonabelian Mal'cev algebra $\m t_2^{\scriptscriptstyle{\mathsf{P}}}$ (\Cref{table:3}). 

\begin{table}[hbt!]
\centering
\begin{tabular}{|c||c|c|c|c|}
\hline
    Algebra & $f{\restriction_{\{0,1\}}}$ & $f{\restriction_{\{1,2\}}}$ & $f{\restriction_{\{0,2\}}}$ & $f(x,y,z)\:\text{ if }\: \{x,y,z\}=\{0,1,2\}$ \\
\hline\hline
     $\T^{\scriptscriptstyle{\mathsf{P}}}_1$  & maj & maj & maj & \multirow{2}{*}{$x$}\\

\cline{1-4}
    $\T^{\scriptscriptstyle{\mathsf{P}}}_2$  & aff & aff & aff & \\
 \hline
\end{tabular}  
\caption{Conservative algebras without symmetric binary or cyclic ternary term.}
\label{table:3}
\end{table}

\subsubsection*{Conservative algebras with a ternary cyclic operation}
The following list includes a basic cyclic operation for each of the 15 conservative algebras with such an operation. 
\begin{table}[!ht]
\centering
 \begin{tabular}{|c||c|c|c|c|c|}
\hline
    Algebra & $g{\restriction_{\{0,1\}}}$ & $g{\restriction_{\{1,2\}}}$ & $g{\restriction_{\{0,2\}}}$ & $g(0,1,2)$ & $g(0,2,1)$ \\
\hline\hline
     $\T^{\co}_1$  & $\text{min}^3_{0\leqslant 1}$ & $\text{min}^3_{1\leqslant 2}$ & maj & 0 & 0\\
\hline
    $\T^{\co}_2$  & $\text{min}^3_{0\leqslant 1}$ & maj & $\text{min}^3_{0\leqslant 2}$ & 0 & 0\\
\hline
    $\T^{\co}_3$  & maj & $\text{min}^3_{1\leqslant 2}$ & $\text{min}^3_{0\leqslant 2}$ & 0 & 1\\
\hline
    $\T^{\co}_4$  & $\text{min}^3_{0\leqslant 1}$ & maj & maj & 0 & 0\\
\hline
    $\T^{\co}_5$  & $\text{min}^3_{0\leqslant 1}$ & aff & $\text{min}^3_{0\leqslant 2}$ & 0 & 0\\
\hline
    $\T^{\co}_6$  & $\text{min}^3_{0\leqslant 1}$ & $\text{min}^3_{1\leqslant 2}$ & aff & 0 & 0\\
\hline
    $\T^{\co}_7$  & aff & $\text{min}^3_{1\leqslant 2}$ & $\text{min}^3_{0\leqslant 2}$ & 0 & 1\\
\hline
    $\T^{\co}_8$  & $\text{min}^3_{0\leqslant 1}$ & aff & aff & 2 & 2\\
\hline
    $\T^{\co}_9$  & $\text{min}^3_{0\leqslant 1}$ & aff & maj & 0 & 0\\
\hline
    $\T^{\co}_{10}$  & $\text{min}^3_{0\leqslant 1}$ & maj & aff & 2 & 2\\
\hline
    $\T^{\co}_{11}$  & maj & aff & maj & 1 & 2\\
\hline
    $\T^{\co}_{12}$  & aff & maj & aff & 0 & 0\\
\hline
    $\T^{\co}_{13}$  & aff & aff & aff & 0 & 0\\
\hline
    $\T^{\co}_{14}$  & maj & maj & maj & 0 & 0\\
\hline
    $\T^{\co}_{15}$  & maj &  maj & maj & 1 & 2\\
\hline
\end{tabular}  
\caption{Conservative algebras with cyclic ternary term.}
\label{table:4}
\end{table}
\FloatBarrier



\subsection{On edges and absorption within minimal Taylor algebras}

In this part of the paper, we highlight some important notions and concepts from \cite{dreamteam}, where three approaches to the CSP were unified through minimal Taylor algebras. 

\begin{dfn}[Definition 4.3.1 of \cite{Zebnotes}]
Let $\m A$ be an algebra and $a,b\in A$.
\begin{itemize}
\item ({\em weak semilattice edge}) $(a, b)$ is a weak semilattice edge if there is a proper congruence $\theta$ on $\mathrm{Sg}\{a,b\}$ and a binary term $t$ such that $t(a/\theta, b/\theta) = t(b/\theta, a/\theta) = b/\theta$.
\item ({\em weak majority edge}) $\{a,b\}$ is a weak majority edge if there is a proper congruence $\theta$ on $\mathrm{Sg}\{a,b\}$ and a term $m\in \mathrm{Clo}_3({\m A})$ which acts as the majority operation on $\{a/\theta, b/\theta \}$.
\item ({\em weak affine edge}) $\{a,b\}$ is a weak affine edge if there is a proper congruence $\theta$ on $\mathrm{Sg}\{a,b\}$ and a term operation $p\in \mathrm{Clo}_3({\m A})$ such that $(\mathrm{Sg}\{a,b\}/\theta;p)$ is an affine Mal'cev algebra with respect to some abelian group $(\mathrm{Sg}\{a,b\}/\theta;+)$.
\end{itemize}
The congruence $\theta$ in the definition of the weak edges is called the {\em witness} of the edge. We say that
\begin{itemize}
\item ({\em semilattice edge}) $(a, b)$ is a semilattice edge if there exists a binary term $t$ such that $t(a, b) = t(b, a) = b$.
\item ({\em majority and affine edges}) We drop the label ``weak" from the majority or affine edge $\{a,b\}$ if the witness is a maximal congruence of $\mathrm{Sg}\{a,b\}$ and, moreover, if for every $a',b' \in A$ such that $(a,a'), (b,b')\in \theta$, we have $\mathrm{Sg}\{a',b'\}=\mathrm{Sg}\{a,b\}$. By a {\em strong} affine edge we will call an affine edge whose witnessing congruence is the identity.
\end{itemize}
\end{dfn}
Note that semilattice edges are directed, while majority and affine are not. One can notice that in Definition 3.1 of \cite{dreamteam}, the more general notion of so-called \emph{abelian edge} is defined instead of the affine edge. However, since we work with minimal Taylor algebras in this paper, where these two notions coincide (see Proposition~\ref{mintayloraff}), we use the term ``affine". 

We recall the following 

\begin{prp}[Proposition 5.3 of \cite{dreamteam}, Theorem 4.2.4 of \cite{Zebnotes}]\label{subuniverse}
Let $\m A$ be a minimal Taylor algebra and $B\subseteq A$ be closed under an operation $f\in \mathrm{Clo}({\m A})$ such that $B$ together with the restriction of $f$ to $B$ forms a Taylor algebra. Then $B$ is a subuniverse of ${\m A}$.
\end{prp}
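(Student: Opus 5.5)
The plan is to show that the subclone
$$\mathcal{C}_B=\{t\in \mathrm{Clo}(\m A)\colon t \text{ preserves } B\}$$
is itself Taylor. Minimality of $\mathrm{Clo}(\m A)$ then forces $\mathcal{C}_B=\mathrm{Clo}(\m A)$. In particular every basic operation of $\m A$ preserves $B$, which is exactly the claim that $B$ is a subuniverse. That $\mathcal{C}_B$ is a clone is routine: it contains all projections, and a composition of $B$-preserving operations preserves $B$. So the only real work is to produce a Taylor operation on $A$ that preserves $B$.

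For this I would use the cyclic term theorem of Barto and Kozik: a finite idempotent Taylor algebra has a cyclic term operation of every prime arity $p$ larger than the size of its domain. I would fix a prime $p>|A|$ and apply the theorem twice.

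\begin{itemize}
\item Applied to $\m A$, which is idempotent and Taylor, it gives a cyclic $t\in\mathrm{Clo}_p(\m A)$.
\item Applied to $(B;f{\restriction_B})$, it gives a cyclic $p$-ary term $g_0$ in the clone generated by $f{\restriction_B}$. This algebra is Taylor by hypothesis and idempotent because $\m A$ is, and $p>|A|\geq |B|$.
\end{itemize}

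Write $g_0$ as a term in the single symbol $f$, and let $g\in\mathrm{Clo}_p(\m A)$ be the same term interpreted with $f$ on all of $A$. Since $B$ is closed under $f$, we have $g\in\mathcal{C}_B$ and $g{\restriction_B}=g_0$ is cyclic. Note that $g$ need not be cyclic on all of $A$.

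The key step, and the only nonobvious one, is to combine $t$ and $g$ into a single operation that is cyclic on $A$ and still preserves $B$. I would take
$$h(x_1,\dots,x_p)=t\bigl(g(x_1,x_2,\dots,x_p),\,g(x_2,\dots,x_p,x_1),\,\dots,\,g(x_p,x_1,\dots,x_{p-1})\bigr).$$
This $h$ has the two required properties.

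\begin{itemize}
\item \emph{$h$ is cyclic on $A$.} Cyclically shifting the arguments of $h$ cyclically shifts the arguments of $t$, and $t$ is cyclic. In particular $h$ is a Taylor operation.
\item \emph{$h$ preserves $B$.} If $x_1,\dots,x_p\in B$, then $g$ is cyclic on $B$, so all $p$ inner values equal a single element $b=g(x_1,\dots,x_p)\in B$. Idempotency of $t$ then gives $h(x_1,\dots,x_p)=t(b,\dots,b)=b\in B$.
\end{itemize}

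Hence $h\in\mathcal{C}_B$ and $\mathcal{C}_B$ is Taylor, which completes the argument by the first paragraph.

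The main obstacle I anticipate is precisely this construction. The naive composition $g(t(\dots),\dots)$ fails because $t$ need not preserve $B$. It also matters which operation is cyclic where: $t$ must be cyclic globally on $A$ to give cyclicity of $h$, while $g$ only needs to be cyclic on $B$ to collapse the inner arguments. If one prefers not to invoke the cyclic term theorem, the same argument should go through with any fixed height-1 condition characterising Taylor clones on finite sets, such as an Olšák term. In that version, $t$ would be chosen satisfying the condition on $A$ and $g$ satisfying it on $B$, and $h$ would be formed by the analogous star composition.
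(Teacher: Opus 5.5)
Your proof is correct and is essentially the standard argument behind this result. The paper gives no proof of its own and refers to Proposition 5.3 of \cite{dreamteam}: compose a $p$-ary cyclic term $t$ of $\m A$ with a term $g$ that is cyclic on $B$, observe that $t\bigl(g(x_1,\dots,x_p),g(x_2,\dots,x_p,x_1),\dots,g(x_p,x_1,\dots,x_{p-1})\bigr)$ is cyclic on $A$ and preserves $B$ because the inner values collapse on $B$, and conclude by minimality. The one thing to drop is your closing aside about an Ol\v{s}\'ak term: the collapse step needs $g{\restriction_B}$ to satisfy an identity in $p$ independent variables, which a two-variable height-1 condition like Ol\v{s}\'ak's does not provide, so that variant does not go through as described.
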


Applying Proposition~\ref{subuniverse}, together with Proposition~\ref{HSPfin}, to weak edges and edges, we obtain

\begin{prp}\label{edgesaresubalgs}
Let $\m a$ be a minimal Taylor algebra and $a,b\in A$. If $(a,b)$ is a weak semilattice edge, or $\{a,b\}$ is a weak majority edge, with the witnessing congruence $\theta$, then $a/\theta\cup b/\theta$ is a subuniverse of $\m a$. In particular, if $(a,b)$ is a semilattice edge, then $\{a,b\}$ is a subuniverse of $\m a$.
\end{prp}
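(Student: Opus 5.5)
The plan is to apply Proposition~\ref{subuniverse} to the set $B=a/\theta\cup b/\theta$, which is a subset of $C=\mathrm{Sg}\{a,b\}$. We first use Proposition~\ref{HSPfin}: $\mathrm{Sg}\{a,b\}$ is a subalgebra of $\m a$, hence a minimal Taylor algebra. Its quotient $\mathrm{Sg}\{a,b\}/\theta$ is therefore also minimal Taylor. This lets us work inside $\m C=\mathrm{Sg}\{a,b\}$, where $\theta$ is a congruence. It suffices to show that $B$ is a subuniverse of $\m C$, since subuniverses of $\m C$ are subuniverses of $\m a$.

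\textbf{Semilattice case.} Let $t$ be the binary term with $t(a/\theta,b/\theta)=t(b/\theta,a/\theta)=b/\theta$. Take any $x,y\in B$. Since $\theta$ is a congruence, $t(x,y)/\theta = t(x/\theta,y/\theta)$. Because $t$ is idempotent, this class is $a/\theta$ or $b/\theta$ in all four cases, so $B$ is closed under $t$. Next, consider the restriction of $t$ to $B$. The map $B\to\{a/\theta,b/\theta\}$ is a homomorphism for $t$ onto a 2-element algebra whose operation is the semilattice operation. That alone does not make $(B;t)$ Taylor, which is the main obstacle here. To get around it, replace $t$ by a term that is Taylor on $B$.

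\textbf{Key step.} We pass to the quotient. In $\m C/\theta$, which is minimal Taylor, the set $\{a/\theta,b/\theta\}$ is closed under $t$, and $t$ restricted to it is the 2-element semilattice operation, which is Taylor. By Proposition~\ref{subuniverse} applied to $\m C/\theta$, the pair $\{a/\theta,b/\theta\}$ is a subuniverse of $\m C/\theta$. Its full preimage under the quotient map is exactly $B$. Preimages of subuniverses under homomorphisms are subuniverses, so $B$ is a subuniverse of $\m C$, hence of $\m a$. This sidesteps the need for a Taylor term on $B$ itself.

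\textbf{Majority case.} The same argument applies with the ternary term $m$, since the majority operation on $\{a/\theta,b/\theta\}$ is Taylor (with identities $m(x,x,y)\approx m(y,x,x)$ and so on). Again Proposition~\ref{subuniverse} in $\m C/\theta$ gives that $\{a/\theta,b/\theta\}$ is a subuniverse, and we take its preimage.

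\textbf{Strong semilattice edge.} For a semilattice edge (no congruence), set $\theta$ to be the identity congruence on $\mathrm{Sg}\{a,b\}$, which is proper unless $a=b$; the case $a=b$ is trivial. Then $B=\{a,b\}$, and the statement follows directly from Proposition~\ref{subuniverse}, since $t$ acts as the semilattice operation on $\{a,b\}$. The only point to check carefully is that the quotient image of $\{a,b\}$ has exactly two elements, which holds because $\theta$ is proper on $\mathrm{Sg}\{a,b\}$. This forces $a\not\theta\, b$, since otherwise $\theta$ would be the full congruence on the algebra generated by $a$ and $b$.
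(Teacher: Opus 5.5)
Your proof is correct and is the argument the paper has in mind. The paper only states that the result follows from Proposition~\ref{subuniverse} together with Proposition~\ref{HSPfin}. You carry this out: $\mathrm{Sg}\{a,b\}/\theta$ is minimal Taylor, so $\{a/\theta,b/\theta\}$ (on which the edge term is Taylor) is a subuniverse of it, and pulling back along the quotient map gives $a/\theta\cup b/\theta$. Note that $a/\theta\neq b/\theta$ relies on the paper's standing idempotence assumption.
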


The following result is expressed in terminology of directed graphs (uses weak connectedness), so we can replace each undirected weak edge (majority or affine) with two directed edges, one in each direction. Another way to parse it is to replace weak connectedness with usual connectedness in the graph in which the orientation of weak semilattice edges is forgotten.

\begin{thm}[Theorem 5 of \cite{BulatovGraph1}, Theorem 4.3.8 of \cite{Zebnotes}]\label{connectthm}
A finite idempotent algebra $\m a$ has a Taylor term iff the graph of weak edges of every algebra $\m b\leq \m a$ is weakly connected.
\end{thm}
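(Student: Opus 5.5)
The plan is to prove the two implications separately. In both directions the main tool is a minimal counterexample among subalgebras, together with the standard structure theory of finite idempotent Taylor algebras.

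\emph{($\Leftarrow$).} Suppose $\m a$ is idempotent and not Taylor. I would use the Bulatov--Jeavons / Taylor characterization: there exist $\m b\leq \m a$ and a congruence $\theta$ of $\m b$ such that $\m b/\theta$ is a two-element algebra all of whose term operations are projections. Among all such pairs I choose one with $|B|$ minimal. Write $B=B_0\cup B_1$ for the two $\theta$-classes; the goal is to show that no weak edge joins $B_0$ to $B_1$, so the weak-edge graph of $\m b$ is disconnected.
\begin{itemize}
\item Take $a\in B_0$, $b\in B_1$. Then $\mathrm{Sg}\{a,b\}$ still maps onto the two-element projection algebra, so minimality gives $\mathrm{Sg}\{a,b\}=B$.
\item Let $\eta$ be a proper congruence of $B$ witnessing a weak edge on $\{a,b\}$. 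If $\eta\subseteq\theta$, then $\m b/\theta$ is a quotient of $\m b/\eta$. A binary term acting as a semilattice on $\{a/\eta,b/\eta\}$, or a ternary term acting as majority or affine on the generated quotient, would then act non-trivially on $\{B_0,B_1\}$. This contradicts the fact that all term operations of $\m b/\theta$ are projections.
\item If $\eta\not\subseteq\theta$, I would pass to $\eta\cap\theta$, or argue on an $\eta$-class that meets both $B_0$ and $B_1$. Such a class is a proper subalgebra by idempotency, and it again maps onto a two-element projection algebra, which contradicts the minimality of $|B|$.
\end{itemize}
The last case is where care is needed. In the semilattice case I would also use that $a/\eta\cup b/\eta$ is closed under the witnessing term.

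\emph{($\Rightarrow$).} Assume $\m a$ is Taylor and $\m b\leq\m a$; then $\m b$ is Taylor as well. I argue by induction on $|B|$ that the weak-edge graph of $\m b$ is connected.
\begin{itemize}
\item For $a,b\in B$ with $\mathrm{Sg}\{a,b\}\neq B$, the inductive hypothesis applied to $\mathrm{Sg}\{a,b\}$ already connects $a$ and $b$.
\item It therefore suffices to treat the case $B=\mathrm{Sg}\{a,b\}$ and to produce, for such a pair, a weak edge from $a$'s side to $b$'s side.
\item Take a maximal congruence $\theta$ of $\m b$. Each $\theta$-class is a proper subalgebra by idempotency, hence connected by induction.
\item So it is enough to find a single weak edge $\{c,d\}$ with $c/\theta\neq d/\theta$, and then to connect the classes through the simple quotient $\m b/\theta$. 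Here I would pull edges back along $\theta$, taking witnesses of the form $\zeta\supseteq\theta$ restricted to $\mathrm{Sg}\{c,d\}$.
\end{itemize}
This reduces everything to the following claim: \emph{a simple idempotent Taylor algebra $\m s$ with at least two elements, generated by two elements, has a pair $\{c,d\}$ forming a weak semilattice, majority or affine edge with a proper witness.}

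The main obstacle is this claim about simple, two-generated Taylor algebras. I would attack it with absorption theory (Barto--Kozik):
\begin{itemize}
\item If $\m s$ has a proper absorbing subuniverse, I would aim for a binary absorption (semilattice-type edge) or a ternary one, which yields a majority-type behaviour on a suitable two-class quotient of a two-generated subalgebra.
\item If $\m s$ has no proper absorbing subuniverse, I would use the Absorption Theorem / cyclic-term arguments to show that $\m s$, or a two-generated subalgebra of it, is abelian. By the Hobby--McKenzie result quoted before Proposition~\ref{mintayloraff}, it is then an affine module, so the identity congruence witnesses an affine edge.
\end{itemize}
The delicate point is ensuring that the edge produced lies on a two-generated subalgebra and that its witness is proper. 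This is essentially Bulatov's original argument, and I expect it to be where most of the work goes.
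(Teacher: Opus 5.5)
The paper does not prove this theorem; it quotes it from Bulatov and from Brady's notes. So your proposal has to stand on its own.

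\textbf{The direction ($\Leftarrow$) is sound.} With $|B|$ minimal, every proper subalgebra of $\m b$ lies inside a $\theta$-class. Hence every proper congruence $\eta$ of $\m b$ satisfies $\eta\subseteq\theta$, and your second case is void. Any semilattice, majority or Mal'cev behaviour modulo $\eta$ would then survive in the projection algebra $\m b/\theta$, which is a contradiction.

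\textbf{The direction ($\Rightarrow$) has a genuine gap in the reduction.} Finding one weak edge $\{c,d\}$ with $c/\theta\neq d/\theta$ suffices only when $|B/\theta|=2$. In general you need the whole weak-edge graph of $\m b/\theta$ to be connected. Your pull-back of edges along $\theta$ is fine, but induction can supply this connectivity only under two conditions. First, the induction must range over all finite idempotent Taylor algebras, since a quotient of $\m b$ need not be a subalgebra of $\m a$. Second, you need $\theta\neq 0_B$; if $\m b$ is simple, nothing has been reduced. What really has to be shown is the following. Suppose $\m s$ is simple, every proper subalgebra of $\m s$ has a connected graph, and the graph of $\m s$ splits into components $K_1,\dots,K_r$ with $r\geq 2$, so every pair taken from different components generates $S$. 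Then some weak edge joins two \emph{different} components. By simplicity such an edge has witness $0_S$. So it is one of: a semilattice edge on the pair itself, a ternary term acting as a majority on that two-element set itself, or a term $p$ with $(S;p)$ affine. Your claim only asserts that \emph{some} weak edge exists. That edge may lie inside a single component, so the claim does not yield connectivity.

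\textbf{The route you sketch for the simple case is also not available in this generality.}
\begin{itemize}
\item In an arbitrary Taylor algebra, an absorbing subuniverse $C$ may have any arity. Its existence does not by itself produce a semilattice or majority edge from a prescribed $x\notin C$ into $C$. The results in the paper that do this, Proposition~\ref{smedge} and Theorem~\ref{singletoncenter}, are stated only for minimal Taylor algebras.
\item There is no general result saying that a simple Taylor algebra without proper absorbing subuniverses is abelian. Zhuk's four-case theorem also leaves a polynomially complete alternative, which your plan never treats.
\item Knowing merely that some two-generated subalgebra is abelian again gives you one edge, not a crossing one.
\end{itemize}
This simple-algebra step is the real content of the cited theorem; as far as I recall, Bulatov handles it with tame congruence theory. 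It is missing from your proposal.
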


In \Cref{3elemgraf}, graphs of the first three nonconservative examples among minimal Taylor algebras on a three-element domain are given. The straight lines in the illustration represent edges which are not weak, while those that are weak are drawn bent. We adopt this convention throughout the paper. Additionally, lines are labeled $\text{s}$, $\text{m}$ or $\text{a}$, depending on the type. In a few cases, we would like to distinguish whether an affine edge comes from $\mathbb{Z}_3^{\mathrm{aff}}$, so those edges will be labeled $\text{a3}$. 
\begin{figure*}[h]
	\begin{subfigure}{0.33\textwidth}
        \centering
		\begin{tikzpicture}
			\tikzstyle{every node}=[draw,circle,fill=white,minimum size=3pt,inner sep=0pt]
			
			\node (0) at (0,{1*sqrt(3)}) [label=above:\strut{$0$}] {};
			\node (1) at (-1,0) [label=below:\strut{$1$}] {};
			\node (2) at (1,0) [label=below:\strut{$2$}] {};
			
			\path[->,line width=1.2pt,>=stealth, shorten <=0.7mm, shorten >=0.7mm]
			(2) edge node[draw=none,midway,above right,inner sep=0.8pt] {\scriptsize{s}} (0);
			\path[-,line width=1.2pt,dashed, shorten <=0.7mm, shorten >=0.7mm]
			(0) edge node[draw=none,midway,above left, inner sep=0.8pt] {\scriptsize{m}} (1);
			\path[-,line width=0.5pt,dashed, shorten <=0.7mm, shorten >=0.7mm]
			(2) edge[bend right=10] node[draw=none,midway,above] {\scriptsize{m}} (1);
			\path[->,line width=0.5pt,>=stealth, shorten <=0.7mm, shorten >=0.7mm]
			(2) edge[bend left=10] node[draw=none,midway,below,inner sep=0.8pt] {\scriptsize{s}} (1);

		\end{tikzpicture}
		\caption*{$\m t_1^{\n}$}
	\end{subfigure}%
	\hfill
	\begin{subfigure}{0.33\textwidth}
        \centering
		\begin{tikzpicture}
			\tikzstyle{every node}=[draw,circle,fill=white,minimum size=3pt,inner sep=0pt]
			
			\node (0) at (0,{1*sqrt(3)}) [label=above:\strut{$0$}] {};
			\node (1) at (-1,0) [label=below:\strut{$1$}] {};
			\node (2) at (1,0) [label=below:\strut{$2$}] {};
			
			\path[->,line width=1.2pt,>=stealth, shorten <=0.7mm, shorten >=0.7mm]
			(2) edge node[draw=none,midway,above right,inner sep=0.8pt] {\scriptsize{s}} (0);
			\path[-,line width=1.2pt,densely dotted, shorten <=0.7mm, shorten >=0.7mm]
			(0) edge node[draw=none,midway,above left, inner sep=0.8pt] {\scriptsize{a}} (1);
			\path[-,line width=0.5pt,densely dotted, shorten <=0.7mm, shorten >=0.7mm]
			(2) edge[bend right=10] node[draw=none,midway,above] {\scriptsize{a}} (1);
			\path[->,line width=0.5pt,>=stealth, shorten <=0.7mm, shorten >=0.7mm]
			(2) edge[bend left=10] node[draw=none,midway,below,inner sep=0.8pt] {\scriptsize{s}} (1);
			
		\end{tikzpicture}
		\caption*{$\m t_2^{\n}$}
	\end{subfigure}%
	\hfill
	\begin{subfigure}{0.33\textwidth}
        \centering
		\begin{tikzpicture}
			\tikzstyle{every node}=[draw,circle,fill=white,minimum size=3pt,inner sep=0pt]
			
			\node (0) at (0,{1*sqrt(3)}) [label=above:\strut{$0$}] {};
			\node (1) at (-1,0) [label=below:\strut{$1$}] {};
			\node (2) at (1,0) [label=below:\strut{$2$}] {};
			
			\path[-,line width=1.2pt,>=stealth,densely dotted, shorten <=0.7mm, shorten >=0.7mm]
			(2) edge node[draw=none,midway,above right,inner sep=0.8pt] {\scriptsize{a}} (0);
			\path[-,line width=1.2pt,dashed, shorten <=0.7mm, shorten >=0.7mm]
			(0) edge node[draw=none,midway,above left, inner sep=0.8pt] {\scriptsize{m}} (1);
			\path[-,line width=0.5pt,densely dotted, shorten <=0.7mm, shorten >=0.7mm]
			(2) edge[bend left=10] node[draw=none,midway,below] {\scriptsize{a}} (1);
			
		\end{tikzpicture}
		\caption*{$\m t_3^{\n}$}
	\end{subfigure}
    \caption{Directed graph of $\m t_i^{\n}$ for $i=1,2,3$.}
    \label{3elemgraf}
\end{figure*}
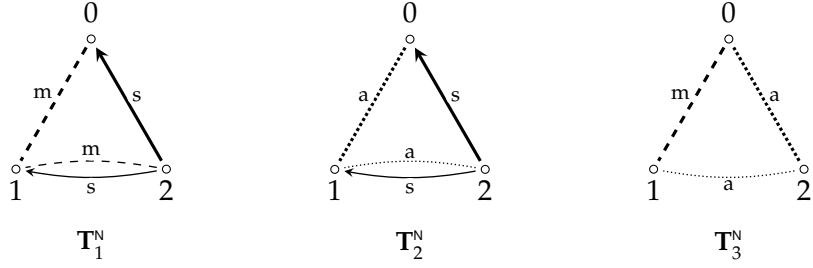	

Next we recall the fundamental notion of the absorption theory.
\begin{dfn}[Definition 3.3 of \cite{dreamteam}]\label{absdef}
Let ${\m A}$ be an algebra and $B\subseteq A$. We call $B$ an $n$-{\em absorbing set} of ${\m A}$ if there is a term operation $t\in \mathrm{Clo}_n({\m A})$ such that $t({\bf a})\in B$ whenever ${\bf a}\in A^{n}$ and $|\{i : a_i\in B\}|\geqslant n-1$. If, additionally, $B$ is a subuniverse of $\m a$, we write $B\trianglelefteq_n {\m A}$.
\end{dfn}
In the previous definition, we also say that $B$ $n$-{\em absorbs} $\m a$ {\em by} $t$ or that $t$ {\em witnesses the absorption} $B\trianglelefteq_n {\m A}$. A set which is 2-absorbing (resp. 3-absorbing) is also said to be a \emph{binary} (resp. \emph{ternary}) \emph{absorbing set}.  

As insinuated in Definition~\ref{absdef}, an absorbing set does not have to be closed under all term operations. However, in minimal Taylor algebras, absorbing sets are automatically subuniverses due to the following result, which fails in general Taylor algebras.

\begin{thm}[Theorem 5.5 of \cite{dreamteam}]\label{abssetsubuniv}
Let $\m a$ be a minimal Taylor algebra and $B$ an absorbing set of $\m a$. Then $B$ is a subuniverse of $\m a$.
\end{thm}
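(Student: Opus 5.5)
The plan is to reduce the statement to Proposition~\ref{subuniverse}, or more directly to the minimality of $\mathrm{Clo}(\m a)$. Let $t\in\mathrm{Clo}_n(\m a)$ witness that $B$ is $n$-absorbing, and consider
$$\mathcal{C}_B=\{f\in \mathrm{Clo}(\m a)\colon f(B,\dots,B)\subseteq B\}.$$
This set contains all projections and is closed under composition, so it is a subclone of $\mathrm{Clo}(\m a)$. If $\mathcal{C}_B$ contains a Taylor operation of $\m a$, then minimality of $\mathrm{Clo}(\m a)$ forces $\mathcal{C}_B=\mathrm{Clo}(\m a)$. That means every basic operation preserves $B$, so $B$ is a subuniverse. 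Equivalently, it suffices to find one $f\in\mathrm{Clo}(\m a)$ that preserves $B$ and is Taylor on $B$; Proposition~\ref{subuniverse} then applies. The whole task is therefore to manufacture a Taylor term that preserves $B$.

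The only tool for landing inside $B$ is absorption: $t(a_1,\dots,a_n)\in B$ whenever at most one $a_i$ lies outside $B$. Let $s$ be a Taylor term of $\m a$, and take it cyclic, of prime arity $p>|A|$; finite Taylor algebras have such terms, and cyclicity makes the identities easy to track. Then $f(x_1,\dots,x_p)=t(s(x_1,\dots,x_p),x_1,\dots,x_1)$ preserves $B$, because for $\mathbf{x}\in B^p$ at most the first argument of $t$ leaves $B$. However, $f$ singles out $x_1$ and so destroys the cyclic identity. To symmetrize, I would nest: replace each of the $n-1$ ``filler'' positions by a term that already preserves $B$ and is itself built symmetrically in the variables. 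Concretely, I would define terms $f_0,f_1,\dots$ inductively by taking $f_0$ to be a projection and
$$f_{k+1}(\mathbf{x})=t\bigl(s(\mathbf{x}),f_k(\mathbf{x}^{\sigma_2}),\dots,f_k(\mathbf{x}^{\sigma_n})\bigr),$$
with cyclic shifts $\sigma_i$ chosen so that the total dependence on the variables becomes cyclically balanced. Each $f_k$ preserves $B$ by induction and the absorption property. The aim is that some $f_k$, or a suitable idempotent iterate of the map $s\mapsto f$ (there are only finitely many $p$-ary operations on $A$), satisfies a nontrivial height-1 identity such as a weak near-unanimity or Siggers-type identity on all of $A$. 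At the least, it should satisfy such an identity on $B$, which is enough for Proposition~\ref{subuniverse}.

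The main obstacle is exactly this last step: proving that the resulting $B$-preserving term is Taylor. Absorption only controls values on tuples with at most one coordinate outside $B$, while the Taylor identities must hold with $x,y$ arbitrary. My first attempt would be to use only identities restricted to $B$, since that is all Proposition~\ref{subuniverse} needs. On $B^p$ every inner call $s(\mathbf{x})$ is an arbitrary element of $A$, but the fillers lie in $B$. So the restriction of $f_{k+1}$ to $B$ equals a $B$-valued function of $(s(\mathbf{x}),f_k(\ldots))$. I would then pass to an idempotent power, via $f\mapsto f\circ(f,\dots,f)$-style iteration and finiteness, to obtain a cyclic-on-$B$ operation. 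If that fails, the fallback is the loop-lemma characterization: show that a non-Taylor $\mathcal{C}_B$ would give a minion homomorphism to projections, and contradict it by composing with $t$.
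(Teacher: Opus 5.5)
Your reduction is correct. The $B$-preserving term operations form a subclone $\mathcal{C}_B$, and once it contains a Taylor operation (or one that is Taylor on $B$, via Proposition~\ref{subuniverse}), minimality forces $B$ to be a subuniverse. The gap is the step you flag yourself: nothing in the proposal shows that any $f_k$, or any iterate of it, is Taylor.

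The recursion cannot deliver this, because it uses only absorption and $s$. Take $A=\{0,1,2\}$ and $B=\{0,1\}$. Let $s(\mathbf{x})=x_1$ if $\mathbf{x}$ is constant and $s(\mathbf{x})=2$ otherwise. Let $t(x,y)=x$ for $x\in B$ and $t(2,y)=y$. Then $B$ is $2$-absorbing via $t$. For every non-constant $\mathbf{x}\in B^p$ your recursion gives $f_{k+1}(\mathbf{x})=f_k(\mathbf{x}^{\sigma_2})$. So every $f_k$ acts on $B$ as a projection, and so does every composite or idempotent iterate of such operations. A $B$-preserving operation that is a majority on $B$ does exist there, built from the equality tests $t(s(x,\dots,x,y),z)$. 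So it is the method that fails, not the goal.

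The missing idea is your fallback, and it has to be carried out. The paper only quotes Theorem~5.5 of \cite{dreamteam}; as far as I recall, the short argument there is this one, and it rests on Taylor's theorem rather than on the loop lemma.

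Suppose $\mathcal{C}_B$ contains no Taylor operation. Being idempotent, it then has a clone homomorphism $\xi$ to the clone of projections. After permuting the arguments of $t$ we may assume $\xi(t)=\pi_1$. For $f\in\mathrm{Clo}_k(\m a)$ put
\[
S_f(x_1,\dots,x_k,y)=t(f(x_1,\dots,x_k),y,\dots,y),
\]
which lies in $\mathcal{C}_B$ by absorption.

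Suppose $\xi(S_f)$ were the projection onto $y$. Identifying $x_1,\dots,x_k$ and using idempotency of $f$ would show that $\xi$ sends $t(x,y,\dots,y)$ to the projection onto $y$. But $\xi(t)=\pi_1$ sends it to the projection onto $x$. So $\xi(S_f)$ is the projection onto some $x_j$; put $\Xi(f)=\pi_j$.

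Now $S_{f^\alpha}$ is the minor of $S_f$ obtained by renaming the $x_i$ via $\alpha$ and keeping $y$. Hence $\Xi$ is a minion homomorphism from $\mathrm{Clo}(\m a)$ to the projections. This is impossible, because it would send a cyclic term of $\m a$ to a cyclic projection.

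Hence $\mathcal{C}_B$ contains a Taylor operation of $\m a$ itself. Minimality then gives $\mathcal{C}_B=\mathrm{Clo}(\m a)$ directly, with no need for Proposition~\ref{subuniverse}.
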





Several fruitful properties of binary and ternary absorbing sets in minimal Taylor algebras are collected in the following statements. Besides being effective, some of those are rather unexpected, such as the one when the union of two subuniverses is a subuniverse.  
\begin{prp}[Propositions 5.9 and 5.12 of \cite{dreamteam}]\label{composition}
Let $\m a$ be a minimal Taylor algebra and $B,C\subseteq A$. The following hold:
\begin{enumerate}
\item[$\mathrm{(1)}$] If $B\trianglelefteq_2\m a$ and $C\leq \m a$, then $B\cup C\leq \m a$.
\item[$\mathrm{(2)}$] If $C\trianglelefteq_2 B\trianglelefteq_2 \m a$, then $C\trianglelefteq_2 \m a$.
\item[$\mathrm{(3)}$] If $B,C\trianglelefteq_3 \m a$, then $B\cup C\leq \m a$ and $B\cap C\trianglelefteq_3 \m a$.
\item[$\mathrm{(4)}$] If $C\trianglelefteq_3 B\trianglelefteq_3 \m a$, then $C\trianglelefteq_3 \m a$.
\end{enumerate}
\end{prp}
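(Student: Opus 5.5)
The plan is to prove all four items with one device, the \emph{subclone trick}. For a relation $R$ on $A$, let $\mathcal D_R$ be the set of term operations of $\m a$ that preserve $R$. This is a subclone of $\mathrm{Clo}(\m a)$. If $\mathcal D_R$ contains a Taylor operation, minimality forces $\mathcal D_R=\mathrm{Clo}(\m a)$, so $R$ is preserved by all term operations. Thus each item reduces to building, from a Taylor term of $\m a$ and the given absorbing terms, one Taylor operation that preserves the relevant relation. For the Taylor term I would use a cyclic term $c$ of prime arity $p>|A|$, which every finite idempotent Taylor algebra has. Any cyclic idempotent operation is Taylor, so it suffices to produce a \emph{cyclic} operation in $\mathcal D_R$.

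\textbf{Item (1).} Let $t$ witness $B\trianglelefteq_2\m a$ and take $R=B\cup C$. For $i=1,\dots,p$, with indices mod $p$, set
\[
u_i=t\bigl(\cdots t(t(x_i,x_{i+1}),x_{i+2})\cdots,x_{i+p-1}\bigr),\qquad c'(x_1,\dots,x_p)=c(u_1,\dots,u_p).
\]
A cyclic shift of the variables permutes the $u_i$ cyclically, so $c'$ is cyclic and idempotent. Every $x_j$ occurs in each $u_i$, either as the first argument of the innermost $t$ or as a second argument of some $t$. Binary absorption keeps membership in $B$ from that point outward, so $u_i\in B$ as soon as some $x_j\in B$. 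In that case $c'\in B$, since $B$ is a subuniverse. If instead all $x_j\in C$, then $c'\in C$ because $C\le\m a$. Hence $c'$ preserves $B\cup C$, and the subclone trick gives $B\cup C\le\m a$.

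\textbf{Item (3).} The same construction works with the ternary witness. To get $B\cup C\le \m a$, I would nest the two witnesses $t_B,t_C$ inside cyclically arranged $u_i$. For a tuple from $B\cup C$, either at most one entry lies outside $B$ or at most one lies outside $C$ (in the nontrivial case), so each $u_i$ can be pushed into $B$ or into $C$ uniformly in $i$.

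For $B\cap C\trianglelefteq_3\m a$, I would use the composed term
\[
s(x,y,z)=t_C\bigl(t_B(x,y,z),\,t_B(y,z,x),\,t_B(z,x,y)\bigr),
\]
possibly with the roles of $t_B$ and $t_C$ symmetrized, and check by cases which coordinate lies outside $B\cap C$. If this direct check fails, I would fall back on Theorem~\ref{abssetsubuniv} together with the subclone trick applied to the relation expressing ``$n$-absorption into $B\cap C$''.

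\textbf{Items (2) and (4).} Here the witness for $C\trianglelefteq B$ is only guaranteed on $B$. I would consider the relation
\[
R=(C\times B)\cup(B\times C)\subseteq B^2
\]
(and its ternary analogue), observe that the absorbing term $t_1$ is the restriction of a term of $\m a$, and try to combine $t_1$ with the witness $t_2$ of $B\trianglelefteq\m a$. The natural first candidate is a nested composition such as $t_1\bigl(t_2(x,y),t_2(y,x)\bigr)$, iterated so that an argument from $C$ survives into one slot of $t_1$ while the other slot lands in $B$.

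The main obstacle is exactly (2) and (4). Naive compositions land in $B$ but not in $C$ when the $C$-entry sits in the wrong position. I expect to need the minimal Taylor structure essentially: by the subclone trick, one should be able to replace $t_1$ by a term that absorbs $C$ into $B$ from \emph{both} sides, in the spirit of a commutative-like binary term. That substitution would make the nested composition work.
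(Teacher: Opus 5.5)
Your item (1) is correct. A single $B$-entry propagates outward through the nested $t$'s, so $c'$ is a cyclic term preserving $B\cup C$, and minimality finishes the argument; this is the same device the paper uses in Lemmas~\ref{lemma1} and~\ref{triangle}. Items (2), (3) and (4), however, are not proved, and the sketch for (3) rests on false claims.

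\textbf{Item (3).} The mechanism of (1) breaks down for a ternary witness: $t_B(b,a,a')$ needs two entries in $B$, so a single $B$-entry no longer propagates. A ternary witness for $B$ plus a mere subuniverse $C$ cannot give $B\cup C\le\m a$ anyway. In $\m m\times\mathbb{Z}_2^{\mathrm{aff}}\cong\T_{4,9}$, take $g$ acting as majority on the first coordinate and as $x+y+z$ on the second. Then $B=\{0\}\times\{0,1\}$ is $3$-absorbing and $C=\{(1,0)\}$ is a subuniverse, but $g((1,0),(1,0),(0,1))=(1,1)\notin B\cup C$. So the argument must use $C\trianglelefteq_3\m a$ essentially. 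Your pigeonhole step ("at most one entry outside $B$ or at most one outside $C$") holds only for triples, not for $p$-tuples; take entries alternating between $B\setminus C$ and $C\setminus B$. Nothing therefore lets you push all $u_i$ into $B$ or all into $C$.

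Your $s$ for $B\cap C$ also fails. For $x,y\in B\cap C$ and arbitrary $z$, the three inner values $t_B(\cdot)$ lie in $B$ but need not lie in $C$, so $t_C$ gives no control. In arbitrary algebras, composing witnesses only gives $B\cap C\trianglelefteq_9\m a$, via $t_C(t_B(x_1,x_2,x_3),t_B(x_4,x_5,x_6),t_B(x_7,x_8,x_9))$. Lowering the arity to $3$ is precisely the minimal-Taylor content of the statement. Your fallback does not supply it: $3$-absorption asserts that some term exists, not that a fixed relation is preserved by all terms. The subclone trick therefore has nothing to act on unless you first prove such a relational characterization.

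\textbf{Items (2) and (4).} You give no argument, and the obstacle is misdiagnosed. $t_1$ already absorbs from both sides inside $B$, and even a commutative $t_1$ would not rescue $t_1(t_2(x,y),t_2(y,x))$. For $x\in C$, neither argument need lie in $C$: $t_2$ moves a $C$-entry into $B$ but not into $C$, and $t_1$ says nothing once an argument leaves $B$.

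For (2), the missing ingredient is strong absorption, Proposition~\ref{bradybinaryprop}(1), which you would have to establish independently of (2). With it, if $C\neq B$ then $t_1$ is not a projection on $B$, so it depends on both variables, and
\[
r(x,y)=t_1\bigl(t_1(x,t_2(x,y)),\,t_1(t_2(x,y),y)\bigr)
\]
witnesses $C\trianglelefteq_2\m a$. Indeed, for $x\in C$ the left inner value lies in $C$ and the right one lies in $B$ by strong absorption, so the result lies in $C$. The case $y\in C$ is symmetric.

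For (4), the same composition problem arises: $t_C(t_B(x,y,z),t_B(y,z,x),t_B(z,x,y))$ with $x,y\in C$ only puts the inputs of $t_C$ into $B$. Your proposal supplies no substitute for this, so a genuinely different minimal-Taylor argument is still needed there.
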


\begin{prp}[Propostion 4.2.10 of \cite{Zebnotes}]\label{bradybinaryprop}
Let $\m a$ be a minimal Taylor algebra and $B\trianglelefteq_2\m a$. Then the following hold:
\begin{enumerate}
    \item[$\mathrm{(1)}$] $\m B$ is strongly absorbing subalgebra of $\m a$, that is, any term $f\in\mathrm{Clo}_n({\m A})$ which depends on its first input satisfies $f(b,a_1,\dots,a_{n-1})\in B$ for any $b\in B$ and $a_1,\dots,a_{n-1}\in A$.

    \item[$\mathrm{(2)}$] There is a congruence $\theta_B$ on $\m a$ such that $B$ is a congruence class of $\theta_B$, and all other congruence classes of $\theta_B$ are singletons;
    \item[$\mathrm{(3)}$] For every $a\notin B$, $\m b\cup \{a\}$ is a subalgebra of $\m a$, and $(\m b\cup \{a\})/\theta_B $ is a two-element semilattice with absorbing element $\m b/\theta_B$;
\end{enumerate}
\end{prp}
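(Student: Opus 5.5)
The plan is to prove (1) by exhibiting a subclone of $\mathrm{Clo}(\m a)$ that contains a Taylor operation and then applying minimality. Parts (2) and (3) should then follow quickly from (1).

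For (1), let $t\in\mathrm{Clo}_2(\m a)$ witness $B\trianglelefteq_2\m a$, and assume $B\neq A$; otherwise there is nothing to prove. Define $\mathcal C$ to be the set of all $f\in\mathrm{Clo}(\m a)$ such that, for every coordinate $i$ on which $f$ depends, $f(a_1,\dots,a_n)\in B$ whenever $a_i\in B$.
\begin{itemize}
\item Projections lie in $\mathcal C$.
\item $\mathcal C$ is closed under composition. Suppose $h=f\circ(g_1,\dots,g_k)$ depends on $x_j$. Then there is some $l$ such that $f$ depends on its $l$-th input and $g_l$ depends on $x_j$. If $x_j\in B$, then $g_l(\dots)\in B$, and hence $h(\dots)\in B$.
\end{itemize}
So $\mathcal C$ is a subclone. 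By minimality it suffices to find a Taylor operation in $\mathcal C$.

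Put $u(x_1,\dots,x_p)=t(t(\cdots t(x_1,x_2)\cdots),x_p)$. By binary absorption, $u(\mathbf a)\in B$ as soon as some $a_i\in B$. Next I will use the known fact that a finite Taylor algebra has a cyclic term $c$ of every prime arity $p>|A|$ (Barto--Kozik; this is not stated earlier in the paper, so I am importing it). Set
\[ w(x_1,\dots,x_p)=c\bigl(u(x_1,\dots,x_p),u(x_2,\dots,x_p,x_1),\dots,u(x_p,x_1,\dots,x_{p-1})\bigr). \]
This $w$ is cyclic, hence Taylor. If any $x_i\in B$, then every inner $u$-value lies in $B$; since $B$ is a subuniverse, $w(\mathbf x)\in B$. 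Thus $w\in\mathcal C$, so $\mathcal C=\mathrm{Clo}(\m a)$. The claim in (1) then follows, and by permuting variables it holds for every coordinate on which $f$ depends, not only the first.

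For (2), let $\theta_B$ identify all elements of $B$ and leave every other element alone. I check compatibility with each $f\in\mathrm{Clo}_n(\m a)$. Take $\mathbf a\,\theta_B\,\mathbf a'$.
\begin{itemize}
\item Suppose $f$ depends on some coordinate $i$ with $a_i\neq a'_i$. Then $a_i,a'_i\in B$, and (1) puts both $f(\mathbf a)$ and $f(\mathbf a')$ in $B$.
\item Otherwise $f$ ignores every coordinate where $\mathbf a$ and $\mathbf a'$ differ, so $f(\mathbf a)=f(\mathbf a')$.
\end{itemize}
In either case $f(\mathbf a)\,\theta_B\,f(\mathbf a')$.

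For (3), apply Proposition~\ref{composition}(1) with $C=\{a\}$, which is a subuniverse by idempotence; this shows $B\cup\{a\}$ is a subuniverse. The quotient $(B\cup\{a\})/\theta_B$ has two elements. On it, $t$ is idempotent and satisfies $t(B,a)=t(a,B)=B$ by absorption. Hence $t$ is the semilattice operation with absorbing element $B/\theta_B$.

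The main obstacle is producing a Taylor operation inside $\mathcal C$. A naive composite of a Taylor term $s$ with $t$ destroys the Taylor identities. I handle this by symmetrizing with a cyclic term. If importing the cyclic term theorem is undesirable, the fallback is to work with any Taylor term $s$ in place of $c$ and replace $u$ by a fully symmetric combination (e.g.\ $u$ applied to all permutations) so that the identities of $s$ survive.
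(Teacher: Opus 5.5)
Your proof is correct. The paper gives no proof of its own; it quotes Proposition 4.2.10 of Brady's notes. Your argument is the standard one for this fact. The operations that send a $B$-entry in any coordinate they depend on into $B$ form a subclone. A cyclic term applied to the cyclic shifts of the iterated absorbing term $t$ is a Taylor operation in that subclone, so minimality makes it all of $\mathrm{Clo}(\m a)$. Parts (2) and (3) then follow as you describe.

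Two minor remarks.

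First, the cyclic-term theorem you import is stated in the paper as Theorem~\ref{cyclic} (in Section~3), so nothing external is needed. Your fallback via a ``fully symmetric combination'' of $u$ can therefore be dropped; as sketched it is not clearly workable anyway, since $u$ is not symmetric.

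Second, in (3) you can get $B\cup\{a\}\leq\m a$ directly from (1) and idempotence instead of from Proposition~\ref{composition}(1). If $f$ depends on some coordinate carrying an element of $B$, then $f(\mathbf a)\in B$; otherwise $f(\mathbf a)=f(a,\dots,a)=a$. The same observation shows that every term $f$ acts on $(B\cup\{a\})/\theta_B$ as $\bigwedge_{i\in D(f)}x_i$, where $D(f)$ is the set of coordinates $f$ depends on. So the quotient is term-equivalent to the two-element semilattice, which is slightly stronger than $t$ merely acting as a meet on it. Alternatively, the quotient is minimal Taylor by Proposition~\ref{HSPfin}, and the semilattice clone generated by $t$ is a Taylor subclone of it.
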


The next two results connect absorption with edges.

\begin{prp}[Proposition 9.7 of \cite{dreamteam}]\label{smedge}
Let $\m A$ be a minimal Taylor algebra:
\begin{itemize}
\item[$\mathrm{(1)}$] If $B\trianglelefteq_2 {\m A}$ and $a\in A\setminus B$ there exists $b\in B$ such that $(a,b)$ is a semilattice edge.
\item[$\mathrm{(2)}$] If $B\trianglelefteq_3 {\m A}$ and $a\in A\setminus B$ there exists $b\in B$ such that:
\begin{itemize}
\item[$\mathrm{a)}$] $(a,b)$ is a semilattice edge, or
\item[$\mathrm{b)}$] $\{a,b\}$ is a weak majority edge and $\mathrm{Sg}\{a,b\}\cap B$ is one of the two equivalence classes of the congruence witnessing it.
\end{itemize}
\end{itemize}
\end{prp}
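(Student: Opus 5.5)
The plan is to reduce both parts to a statement about two-generated subalgebras $\mathrm{Sg}\{a,b\}$ with $b\in B$. In such a subalgebra the trace $\mathrm{Sg}\{a,b\}\cap B$ still absorbs with the same arity, since the absorbing term restricts. We then choose $b$ so as to minimize this subalgebra and exploit the minimal Taylor structure. Concretely, for fixed $a\notin B$ I choose $b\in B$ such that $C=\mathrm{Sg}\{a,b\}$ is minimal under inclusion among all $\mathrm{Sg}\{a,b'\}$ with $b'\in B$. By Proposition~\ref{HSPfin}, $\m C$ is minimal Taylor. By Theorem~\ref{abssetsubuniv}, $D=C\cap B$ is a subuniverse of $\m C$ that absorbs $\m C$. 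Minimality gives $C=\mathrm{Sg}\{a,d\}$ for every $d\in D$.

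For part (1) I work with the binary relation
$$S=\{(f(a,b),f(b,a)) : f\in \mathrm{Clo}_2(\m A)\}=\mathrm{Sg}_{\m A^2}\{(a,b),(b,a)\}.$$
A semilattice edge $(a,b)$ is exactly the statement $(b,b)\in S$. By Proposition~\ref{bradybinaryprop}(3), $C\subseteq B\cup\{a\}$ and $C/\theta_B$ is a two-element semilattice with absorbing class $B$. Hence any term $t$ witnessing $B\trianglelefteq_2\m A$ gives $(t(a,b),t(b,a))\in S\cap D^2$, so $S\cap D^2\neq\emptyset$. The relation $S$ is invariant under swapping coordinates, and $S\cap D^2$ binary absorbs $S$ coordinatewise. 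Using the minimality of $C$, I would show that for every $(c,d)\in S\cap D^2$ the relation $S$ also contains $\mathrm{Sg}\{(a,c),(c,a)\}$ up to substitution. The point is that $S$ is closed under composing binary terms, so we may replace $b$ by any $d\in D$ without leaving $S$. This makes $S\cap D^2$ a symmetric relation that is invariant under all such substitutions.

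To reach the diagonal, I would pick a pair $(c,d)\in S\cap D^2$ for which $\mathrm{Sg}\{c,d\}$ is minimal, and iterate the absorbing term: $(t(c,d),t(d,c))$, then $t$ applied with $a$, and so on. The aim is to show that the graph $S\cap D^2$ on $D$ has a loop. This is the step I expect to be the main obstacle. A bare symmetric subuniverse need not contain the diagonal, as the swap relation on $\mathbb{Z}_2^{\mathrm{aff}}$ shows. So the argument must use strong absorption, Proposition~\ref{bradybinaryprop}(1): every term depending on its first variable keeps $D$ when fed a $D$-element there. The plan is to prove that $D\times D\cap S$ absorbs binarily into its own diagonal, which would rule out an affine obstruction. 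If this direct route fails, I would instead induct on $|A|$. We may pass to $\m C$; if $C\subsetneq A$ we are done by induction. Otherwise $A=\mathrm{Sg}\{a,b\}$, and we can use a maximal congruence of $\m A$ together with Proposition~\ref{composition}(2) to push the problem into a smaller class.

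For part (2), the same minimal choice of $C$ is made for $B\trianglelefteq_3\m A$. Take a maximal congruence $\theta$ of $\m C$ separating $a$ from $D$; one exists because $D$ is a proper absorbing subuniverse. The quotient $\m C/\theta$ is a simple minimal Taylor algebra, generated by two elements, in which $D/\theta$ is a proper nonempty ternary absorbing subuniverse. I would argue that $\m C/\theta$ cannot be affine, since affine Mal'cev algebras have no proper nontrivial absorbing subuniverses. I would also argue, using the absorption and the minimality of $C$ (via Proposition~\ref{composition}(3),(4)), that $\m C/\theta$ has only two elements. By Post's classification it is then a semilattice or a majority algebra. In the semilattice case, the argument of part (1) applied to the binary absorption $D\trianglelefteq_2\m C$ gives a semilattice edge. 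In the majority case, $\{a,b\}$ is a weak majority edge witnessed by $\theta$, and $C\cap B=D$ is one of its two classes, as required in b).
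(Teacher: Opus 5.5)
Part (1) has a genuine gap: the step that actually produces the edge is never carried out. You flag the loop in $S\cap D^2$ as the main obstacle and leave it at a hoped-for ``absorption into the diagonal'' or an unspecified induction. Even a loop would not finish the job. $(d,d)\in S=\mathrm{Sg}\{(a,b),(b,a)\}$ only gives $f(a,b)=f(b,a)=d$, whereas $(a,d)$ being a semilattice edge means $(d,d)\in\mathrm{Sg}\{(a,d),(d,a)\}$. Your bridge (``$S$ contains $\mathrm{Sg}\{(a,c),(c,a)\}$ up to substitution'') does not follow from closure under binary terms. Combining $(a,b),(b,a)$ with $(c,c')\in S$ yields pairs $(h(a,c),h(b,c'))$, not $(h(a,c),h(c,a))$. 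Moreover, by strong absorption $(a,c)\in S$ forces $c=b$. In fact your minimal $C$ is necessarily $\{a,b\}$, so the whole content is $|D|=1$, which the plan never addresses.

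A direct argument needs neither minimality nor the Loop Lemma. Iterate a witness $t$ of $B\trianglelefteq_2\m A$ in its second variable to get a binary term $f$ with $f(x,f(x,y))=f(x,y)$. Then $F=\{d\in B: f(a,d)=d\}=f(a,B)$ is nonempty. The term $r(x,y)=f(x,f(y,x))$ satisfies $r(d,a)=f(d,d)=d$ for $d\in F$, and $r(a,F)\subseteq F$. Iterate $r$ in its second variable and take an idempotent power of $d\mapsto r(a,d)$ on $F$. This gives a binary term $s$ and $d\in F$ with $s(a,d)=s(d,a)=d$.

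Part (2) rests on the unsupported claim that $\m C/\theta$ has only two elements. Proposition~\ref{composition}(3),(4) only combine absorbing subuniverses and give no bound on the size of a simple quotient; in the simple case this claim is essentially the proposition itself. (Also, what makes every proper congruence of $\m C$ separate $a$ from $D$ is the minimality of $C$, not absorption.) Worse, your case split is driven by an arbitrary maximal $\theta$, and the semilattice branch can give a false conclusion.

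Take $\m A=\m T_1^{\n}$ and $a=2$. Take $B=\{1\}$, which is $3$-absorbing via $g$ since $g(1,1,x)=1$ for all $x$. Then $C=\mathrm{Sg}\{2,1\}=A$ and $D=\{1\}$. Moreover, $\theta=\{\{0,1\},\{2\}\}$ is a maximal congruence separating $2$ from $1$ whose quotient is the two-element semilattice. However, $D$ is not $2$-absorbing: otherwise $\{1,2\}$ would be a subuniverse by Proposition~\ref{bradybinaryprop}(3), but $g(1,2,2)=0$. Part (1) applied to the class $\{0,1\}$ produces the edge $(2,0)$ with $0\notin B$, and no semilattice edge joins $2$ to $B$ at all. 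The statement holds here only through b), with witness $\{\{0,2\},\{1\}\}$. So you need a $\theta$ for which $C\cap B$ is an entire class. The proposal neither constructs one nor shows that the resulting quotient has two elements.
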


\begin{thm}[Theorem 5.21 of \cite{dreamteam}]\label{singletoncenter}
If $\m a$ is a minimal Taylor algebra and $a\in A$ satisfies that there is no outgoing semilattice edge, nor any weak affine edge, connecting $a$ and any other element, then $\{a\}\trianglelefteq_3 {\m A}$
\end{thm}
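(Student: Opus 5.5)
The plan is to build the witnessing term in two stages: first for each single element $b$, then for all of $A$ at once, with induction on $|A|$ at the base. What has to be produced is a ternary term $t\in\mathrm{Clo}_3(\m a)$ with
$$t(a,a,x)=t(a,x,a)=t(x,a,a)=a\quad\text{for all }x\in A.$$
Call $t$ \emph{good for $b$} if these three equalities hold at $x=b$.

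\textbf{Globalizing step.} Suppose that for every $b\in A$ and every ternary term $s$ there is a term good for the triple $s(a,a,b),s(a,b,a),s(b,a,a)$, in the natural sense. Enumerate $A=\{b_1,\dots,b_n\}$ and proceed iteratively. Given $t_k$ good for $b_1,\dots,b_k$, set
$$t_{k+1}(x,y,z)=u\bigl(t_k(x,y,z),\,t_k(y,z,x),\,t_k(z,x,y)\bigr),$$
where $u$ is chosen to be good with respect to the three values $t_k$ produces at $b_{k+1}$. By idempotency of $u$, goodness for $b_1,\dots,b_k$ is preserved. This reduces everything to a local statement about tuples living in the subalgebra $\mathrm{Sg}\{a,c\}$, where $c$ is a value of a term at $(a,a,b)$. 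Equivalently, I would show that $(a,a,a)$ lies in the subuniverse of $\m a^3$ generated by $(a,a,c),(a,c,a),(c,a,a)$ for every $c$. The hypothesis passes to each such $c$ because the relevant triples stay in $\mathrm{Sg}\{a,b\}$.

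\textbf{Local step: case analysis.} Fix $b\neq a$ and put $B=\mathrm{Sg}\{a,b\}$, which is again minimal Taylor by Proposition~\ref{HSPfin}. By Theorem~\ref{connectthm}, $a$ is joined to something inside $B$ by a weak edge. By hypothesis there is no weak affine edge at $a$ and no outgoing semilattice edge from $a$.
\begin{itemize}
\item If $B\neq A$, I would apply induction on $|A|$ to $B$; the hypotheses restrict to $B$. This gives $\{a\}\trianglelefteq_3 B$, which is exactly the local goal.
\item If $B=A$, I would take a maximal congruence $\theta$ of $\m a$. The simple quotient $\m a/\theta$ is not affine, since that would give a weak affine edge at $a$.
\begin{itemize}
\item If $|\m a/\theta|=2$, the quotient is a semilattice with $a/\theta$ absorbing or a majority algebra; in either case $\{a/\theta\}$ ternary-absorbs the quotient.
\item If the quotient has more than two elements, I apply induction to it.
\end{itemize}
Either way I get a term $s$ sending the three triples into $a/\theta$. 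I then restrict to the class $a/\theta$, a proper subalgebra containing $a$, apply induction there, and compose as in the globalizing step.
\end{itemize}
Proposition~\ref{composition}(4) is the model for this composition: a ternary absorbing set of a ternary absorbing set is ternary absorbing.

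\textbf{Main obstacle.} The hard point is checking that the hypotheses survive passage to $a/\theta$ and to $\m a/\theta$. The class $a/\theta$ is fine, since it is a subalgebra and edges inside it are edges of $\m a$. For $\m a/\theta$ there is a real risk: an outgoing semilattice edge $(a/\theta,d/\theta)$ in the quotient need not lift to a semilattice edge from $a$ in $\m a$. To handle this I would first use Proposition~\ref{edgesaresubalgs}: if $(a/\theta,d/\theta)$ is a semilattice edge of the quotient, then $(a,d)$ is a weak semilattice edge of $\m a$, so $a/\theta\cup d/\theta$ is a subuniverse. Working inside that subuniverse and applying the binary term to $a$ and a suitable element of $d/\theta$, I would try to produce a genuine outgoing semilattice edge from $a$, contradicting the hypothesis. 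If that fails, I would fall back on Theorem~\ref{connectthm} inside $a/\theta\cup d/\theta$ to force the forbidden edge.
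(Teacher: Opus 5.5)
The paper does not prove this statement; it is quoted from \cite{dreamteam}. Your argument therefore has to stand on its own, and as written it has two genuine gaps.

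\emph{The globalizing step rests on a false equivalence.} In $t_{k+1}(x,y,z)=u(t_k(x,y,z),t_k(y,z,x),t_k(z,x,y))$ you need $u(c_1,c_2,c_3)=u(c_2,c_3,c_1)=u(c_3,c_1,c_2)=a$. Here $(c_1,c_2,c_3)=(t_k(a,a,b),t_k(a,b,a),t_k(b,a,a))$ can be any element of $R_b=\mathrm{Sg}\{(a,a,b),(a,b,a),(b,a,a)\}$.

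That requirement is not the same as $(a,a,a)\in\mathrm{Sg}\{(a,a,c),(a,c,a),(c,a,a)\}$ for single elements $c$. In general none of the $c_i$ equals $a$, and even a full witness of $\{a\}\trianglelefteq_3\mathrm{Sg}\{a,b\}$ only controls inputs with two coordinates equal to $a$. For example, if $t_k(a,a,b)=t_k(a,b,a)=t_k(b,a,a)=e$, the three shifted tuples generate only $(e,e,e)$. Then $u$ exists only if you already know $e=a$. That is information of the kind Proposition~\ref{ternaryabscyclic} gives, and that proposition presupposes the ternary absorption you are trying to prove.

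So what your induction delivers, namely $\{a\}\trianglelefteq_3\mathrm{Sg}\{a,b\}$ for proper $\mathrm{Sg}\{a,b\}$, does not feed your iteration. You would need a genuine local-to-global principle for ternary absorption, and none is supplied.

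\emph{The induction is vacuous in the essential case.} Suppose $\mathbf A=\mathrm{Sg}\{a,b\}$ is simple with $|A|\geq 3$. Its only maximal congruence is $0_A$, so $\mathbf A/\theta\cong\mathbf A$ and $a/\theta=\{a\}$. ``Applying induction to the quotient'' then means applying the theorem to $\mathbf A$ itself, and the term $s$ sending the three triples into $a/\theta$ is exactly the original goal.

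Such algebras are non-abelian under your hypotheses, and they cannot be dismissed: whether they exist is Problem 4.2.2 of \cite{Zebnotes}, which motivates this paper. So the heart of the theorem is left unproved.

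The non-simple case, by contrast, closes without any globalization. Take a nontrivial proper congruence $\theta$.
\begin{itemize}
\item Weak affine edges at $a/\theta$ pull back to weak affine edges at $a$.
\item Suppose $(a/\theta,d/\theta)$ is an outgoing semilattice edge. Then $\{a/\theta,d/\theta\}$ is a subuniverse by Proposition~\ref{edgesaresubalgs}, so $\mathrm{Sg}\{a,d\}\subseteq a/\theta\cup d/\theta$. Hence $d/\theta\cap\mathrm{Sg}\{a,d\}$ binary-absorbs $\mathrm{Sg}\{a,d\}$, and Proposition~\ref{smedge}(1) gives a genuine semilattice edge out of $a$.
\item Induction now gives $\{a/\theta\}\trianglelefteq_3\mathbf A/\theta$, hence $a/\theta\trianglelefteq_3\mathbf A$.
\item Induction on the proper subalgebra $a/\theta$ gives $\{a\}\trianglelefteq_3 a/\theta$, and Proposition~\ref{composition}(4) finishes.
\end{itemize}

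So all the difficulty sits in simple $\mathbf A$, whether or not it is generated by $a$ and one more element. There the proposal offers either no argument or only the flawed globalization.
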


As stated in \cite{dreamteam}, three types of operations arising from edges can be unified throughout a single operation, but when restricted to minimal Taylor clones, the common witnessing operation witnesses all the binary and ternary absorption, as well.
\begin{thm}[Theorem 5.23 of \cite{dreamteam}]\label{compositionactslike}
Every minimal Taylor algebra ${\m A}$ has a ternary term operation $m$ such that if $(a,b)$ is a weak edge witnessed by $\theta$ in ${\m E}=\mathrm{Sg}\{a,b\}$, then:
\begin{itemize}
\item if $(a,b)$ is a weak semilattice edge, then $m(x,y,z)=x\wedge y\wedge z$ on ${\m E}/\theta$ (where $b/\theta$ is the absorbing element);
\item if $\{a,b\}$ is a weak majority edge, then $m$ is the majority operation on ${\m E}/\theta$ (which has two elements);
\item if $\{a,b\}$ is a weak affine edge, then $m(x,y,z)=x-y+z$ on ${\m E}/\theta$;
\item $m$ witnesses all $3$-absorptions $B\trianglelefteq_3 {\m A}$;
\item each of the binary operations $m(x,x,y)$, $m(x,y,x)$ and $m(y,x,x)$ witnesses all $2$-absorptions $B\trianglelefteq_2 {\m A}.$
\end{itemize}
\end{thm}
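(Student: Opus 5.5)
The plan is to build $m$ in two stages. First I pin down, locally, what a single ternary term is allowed to do on each edge quotient. Then I glue the local requirements into one term by a composition-and-iteration argument over the finitely many weak edges and absorbing subuniverses of $\m A$.

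\textbf{Local stage.} Fix a weak edge $(a,b)$ with witness $\theta$ on $\m E=\mathrm{Sg}\{a,b\}$. By Proposition~\ref{HSPfin}, $\m E/\theta$ is a minimal Taylor algebra. By the definition of the edge together with Proposition~\ref{mintayloraff}, it is term-equivalent to one of three algebras: the 2-element semilattice, the 2-element majority algebra, or an affine Mal'cev algebra.
\begin{itemize}
\item \emph{Semilattice or majority quotient.} Every ternary term of $\m A$ induces on $\m E/\theta$ either a projection or a meet of some of its variables, respectively a projection or $\mathrm{maj}$. The required behaviour is the unique ``fully non-projective'' option.
\item \emph{Affine quotient.} A ternary term induces $\alpha x+\beta y+\gamma z$ with $\alpha+\beta+\gamma=1$. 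The required value is $(1,-1,1)$, which is forced as soon as the term is Mal'cev on the quotient.
\end{itemize}

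I will use minimality in the following form. Consider the set of terms $f$ of $\m A$ whose action on the quotient $\m E/\theta$ is \emph{either} a projection \emph{or} the prescribed operation (for absorption: either a projection or absorbing via $B$). The key step is to show that this set contains the prescribed operation nontrivially. Its closure under composition is a subclone containing a Taylor term, because a Taylor term cannot act as a projection on a nontrivial quotient. By minimality this subclone is all of $\mathrm{Clo}(\m A)$. In particular, some term acts exactly as required on this one edge, and likewise for a single $B\trianglelefteq_3\m A$ (using Theorem~\ref{abssetsubuniv} to know $B$ is a subuniverse).

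\textbf{Gluing stage.} I will maintain a ternary term $m$ that is already correct on a set $S$ of edges and absorptions, together with a term $m'$ correct on one further item $e$. I will form a composite such as
\[
m''(x,y,z)=m\bigl(m'(x,y,z),\,m'(y,z,x),\,m'(z,x,y)\bigr)
\]
or the analogous composite with $m$ and $m'$ swapped. I will check case by case that on items where $m'$ acts as a projection, the correct behaviour of $m$ survives, and vice versa. On an affine quotient the linear coefficients compose multiplicatively, so a suitable power/iterate (using finiteness of $A$, raising to an idempotent iterate) restores $(1,-1,1)$. For semilattice and majority quotients, a composite of non-projections is again the unique non-projective operation.

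The binary requirement is that $m(x,x,y)$, $m(x,y,x)$ and $m(y,x,x)$ witness every $B\trianglelefteq_2\m A$. I plan to handle it through Proposition~\ref{bradybinaryprop}: on $(B\cup\{a\})/\theta_B$ these binary terms only need to be non-projections on a 2-element semilattice. This is implied by the semilattice-edge behaviour from Proposition~\ref{smedge}(1). The ternary absorptions are handled similarly via Proposition~\ref{smedge}(2) together with Proposition~\ref{composition}(3),(4).

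\textbf{Main obstacle.} I expect the main obstacle to be the gluing stage when an item where $m'$ is a projection interacts with the ``wrong'' coordinate. For example, $m'$ may act as $\pi_1$ on one quotient while $m$ needs all three coordinates distinct there. The cyclic-shift composite above is chosen precisely to symmetrize this. If it fails for the affine edges, I would first try replacing $m$ by the idempotent iterate of $x\mapsto m(x,y,z)$ before composing.
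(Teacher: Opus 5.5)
\textbf{There is a genuine gap: the gluing stage fails.} All of the theorem's content sits in that stage, so the proposal does not prove the statement.

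\emph{Affine quotients are not preserved.} The composite $m(m'(x,y,z),m'(y,z,x),m'(z,x,y))$ can destroy the behaviour of $m$ on an affine quotient even when $m'$ acts there as a projection. If $m'$ acts as $\pi_2$, the composite acts as $m(y,z,x)=x+y-z$. In general, if $m$ acts as $x-y+z$ and $m'$ as $\alpha x+\beta y+\gamma z$, the composite acts as $(\alpha+\beta-\gamma)x+(\beta+\gamma-\alpha)y+(\gamma+\alpha-\beta)z$. This equals $x-y+z$ only when $2\alpha=2$ and $2\beta=2\gamma=0$. Exchanging the roles of $m$ and $m'$ has the same defect. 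Cyclic symmetrization is the wrong tool because $x-y+z$ is not invariant under cyclic shifts. Your fallback is worse: where $m$ acts as $x-y+z$, the $k$-th iterate of $x\mapsto m(x,y,z)$ acts as $x-ky+kz$, so the idempotent iterate is the first projection.

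\emph{Absorptions are not preserved either.} There is no counterpart of ``acting as a projection'' for $B\trianglelefteq_3\mathbf A$. If only the outer term witnesses $B$, the inner values $m'(b,b,a)$, $m'(b,a,b)$, $m'(a,b,b)$ may all lie outside $B$. Proposition~\ref{smedge}(2) does not rescue the fourth bullet. It controls $m$ on one pair $\{a,b\}$ for each $a\notin B$, whereas witnessing $B\trianglelefteq_3\mathbf A$ requires $m(b_1,b_2,a)\in B$ for all $b_1,b_2\in B$.

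\emph{The local stage proves nothing, though it is harmless.} The compositional closure of any set containing a Taylor term is all of $\mathrm{Clo}(\mathbf A)$. Also, the set of terms acting as ``projection or prescribed'' is not closed under composition. Local existence is immediate from the definitions of edges and absorption anyway. Your reduction of the fifth bullet to the first via Proposition~\ref{bradybinaryprop} is fine.

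\textbf{The missing idea is a single source of nontriviality for all items at once.} This paper imports the theorem from \cite{dreamteam}. The standard argument reads every property off one cyclic term, which is consistent with the Remark that $m(x,x,y)=m(y,x,x)$. Take a cyclic term $c$ of prime arity $p$, large compared with $N=|A|!$ (Theorem~\ref{cyclic}). Set $m(x,y,z)=c(x,\dots,x,y,\dots,y,z,\dots,z)$ with blocks of lengths $k,l,k$, where $2k+l=p$, $k,l<p/2$, $k\equiv p$ and $l\equiv -p\pmod N$. By Proposition~\ref{HSPfin} each edge quotient is minimal Taylor. So its clone is exactly that of the $2$-element semilattice, the majority algebra, or an affine Mal'cev algebra.
\begin{itemize}
\item On a semilattice quotient, the only idempotent cyclic operation is the full $p$-ary meet, so $m=x\wedge y\wedge z$.
\item On a majority quotient, $c$ is monotone, self-dual and cyclic. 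Suppose a set of fewer than $p/2$ cyclically consecutive coordinates were decisive. A disjoint cyclic shift of it lies in its complement, so by monotonicity the complement is decisive too, contradicting self-duality. Hence no such set is decisive, which forces $m=\mathrm{maj}$.
\item On an affine quotient of exponent at most $|A|<p$, $c$ acts as $p^{-1}(x_1+\dots+x_p)$. So $m$ acts as $p^{-1}(kx+ly+kz)=x-y+z$.
\item Any two of the three blocks form more than $p/2$ cyclically consecutive coordinates. So Proposition~\ref{ternaryabscyclic} gives the fourth bullet.
\item The fifth bullet then follows from the first, as you indicated.
\end{itemize}
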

\begin{rem}
Reading the proof of the previous theorem in \cite{dreamteam}, one can see that $m(x,x,y)=m(y,x,x)$ can be derived for all $x,y$.
\end{rem}
Any ternary operation that witnesses edges in the minimal Taylor algebra $\m a$ generates the whole clone $\mathrm{Clo}(\m a)$ as shown in the next theorem.
\begin{thm}[Theorem 5.24 of \cite{dreamteam}]\label{gen}
If ${\m A}$ is a minimal Taylor algebra, then $\mathrm{Clo}({\m A})$ is generated by any operation $m\in \mathrm{Clo}({\m A})$ satisfying the first three items in \Cref{compositionactslike}.
\end{thm}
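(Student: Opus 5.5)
The plan is to let $\mathcal{D}$ be the clone generated by $m$. Since $m\in\mathrm{Clo}(\m A)$, we have $\mathcal{D}\subseteq \mathrm{Clo}(\m A)$. Because $\mathrm{Clo}(\m A)$ is a minimal Taylor clone, it suffices to show that $\mathcal{D}$ is Taylor. Then $\mathcal{D}$ cannot be a proper subclone, so $\mathcal{D}=\mathrm{Clo}(\m A)$. Taylorness of the finite idempotent algebra $(A;m)$ will be checked with Theorem~\ref{connectthm}: for every subuniverse $B$ of $(A;m)$, the graph of weak edges of $(B;m)$ should be weakly connected.

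The first step transfers edges. Let $(a,b)$ be a weak edge of $\m A$ witnessed by $\theta$ on $\m E=\mathrm{Sg}_{\m A}\{a,b\}$, and let $a,b\in B$. Put $E'=\mathrm{Sg}_{m}\{a,b\}\subseteq E\cap B$. By the first three items of Theorem~\ref{compositionactslike}, $m$ acts on $\m E/\theta$ as $x\wedge y\wedge z$, as the majority operation, or as $x-y+z$. Hence $\theta\cap (E')^2$ is an $m$-congruence of $E'$ that still separates $a$ and $b$, and $E'/\theta$ is a subalgebra of $(\m E/\theta;m)$ containing both classes. Therefore $(a,b)$ is again a weak edge of $(B;m)$ of the same type. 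The binary term needed for a weak semilattice edge is $m(x,x,y)$. For an affine edge, $E'/\theta$ is a subalgebra of an affine Mal'cev algebra under $x-y+z$, hence itself affine, i.e.\ a coset of a subgroup. In short, every weak $\m A$-edge joining two points of $B$ is a weak $(B;m)$-edge.

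The second step, which I expect to be the main obstacle, is that a subuniverse $B$ of $(A;m)$ need not be an $\m A$-subuniverse. So the $\m A$-paths given by Theorem~\ref{connectthm} inside $\mathrm{Sg}_{\m A}\{a,b\}$ may leave $B$. To handle this I would argue by contradiction with a minimal counterexample. Choose $B\le (A;m)$ of least size whose edge graph is not weakly connected. Choose $a,b$ in different components with $B=\mathrm{Sg}_m\{a,b\}$; minimality forces this. Then do induction on $|\mathrm{Sg}_{\m A}\{a,b\}|$. Take a maximal congruence $\theta$ of $\m E=\mathrm{Sg}_{\m A}\{a,b\}$. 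If $a/\theta\neq b/\theta$, then $\m E/\theta$ is a 2-generated simple minimal Taylor algebra by Proposition~\ref{HSPfin}. I would try to show that $\{a,b\}$ is itself a weak edge witnessed by $\theta$. For a two-element or affine quotient this is immediate. Otherwise the image of $B$ in $\m E/\theta$ should be all of $E/\theta$, and I would use Proposition~\ref{smedge} and Theorem~\ref{singletoncenter} on the quotient to produce an edge between classes met by $B$. If instead $a\,\theta\, b$, replace $\m E$ by the class $a/\theta$, which is a smaller $\m A$-subalgebra containing $a,b$, and apply induction.

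If the direct graph argument stalls, I would fall back on the equivalent criterion that a finite idempotent algebra is Taylor iff no algebra in $\mathrm{HS}$ of it is a two-element algebra of projections. Under a hypothetical projection quotient of $(B;m)$, an $m$-edge from the first step between the two classes is impossible, because $m$ would have to act as a semilattice, majority, or affine operation there. So it remains only to locate one $\m A$-weak edge with endpoints in $B$ lying in different blocks. This is the same core difficulty as in the second step, now phrased as finding a single crossing edge rather than a full path.
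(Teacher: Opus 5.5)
Your first step is sound. For any $m$-closed set $B$, a weak edge of $\m A$ with both ends in $B$ is a weak edge of $(B;m)$ of the same type: by \Cref{edgesaresubalgs} the semilattice and majority quotients have two elements, and an $m$-subalgebra of $(G;x-y+z)$ is a coset. The reduction to a minimal disconnected $B=\mathrm{Sg}_m\{a,b\}$ is also fine. The paper gives no proof of this statement; it quotes it from \cite{dreamteam}. However, your second step, which carries all the content, is not actually carried out, and the sketch does not work.

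Take the critical case, for example when $\m E=\mathrm{Sg}_{\m A}\{a,b\}$ is itself simple, non-abelian and of size at least $3$. Then $\{a,b\}$ is not a weak edge at all (as noted at the start of the proof of \Cref{main-factor}), so you need a path through other points of $B$. Your claim that $B$ maps onto $E/\theta$ says $\mathrm{Sg}_m\{a/\theta,b/\theta\}=E/\theta$, which is the theorem itself for the algebra $\m E/\theta$. Your induction on $|\mathrm{Sg}_{\m A}\{a,b\}|$ inside the fixed $\m A$ never reaches quotient algebras. For $\theta=0_E$ the claim reads $\mathrm{Sg}_m\{a,b\}=\mathrm{Sg}_{\m A}\{a,b\}$, which is exactly what is in doubt. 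Even granting it, \Cref{smedge} applied to a proper $C\trianglelefteq_3\m E$ containing $a$ (from \Cref{center}) gives some $c\in C$ with an edge to $b$, but nothing puts $c$ in $B$. The absorption is witnessed by terms of $\m A$ that need not lie in $\langle m\rangle$, and items 1--3 say nothing about $m$ away from edges. So neither \Cref{smedge} nor \Cref{singletoncenter} gives any control over the $m$-closed set $B$.

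It helps to see what remains after a correct reduction. Induct on $|A|$ over all minimal Taylor algebras; items 1--3 pass to subalgebras and to quotients, since weak edges of a quotient lift. Take a minimal $m$-subuniverse $B$ with $(B;m)$ not Taylor. Then:
\begin{itemize}
\item $B=P\cup Q$, where $P,Q$ are disjoint $\m A$-subuniverses (by \Cref{subuniverse}) on whose partition $m$ acts as a projection;
\item each pair $x\in P$, $y\in Q$ generates $\m A$;
\item $\m A$ is simple and non-abelian, with $|A|\ge 3$.
\end{itemize}
In this configuration every weak edge of $\m A$ lies in a proper subalgebra, so items 1--3 constrain $m$ only there, and no weak edge joins $P$ to $Q$. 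A further idea is needed to show this configuration is contradictory, and your proposal supplies none. Nor can the case be dismissed: 2-generated simple non-affine minimal Taylor algebras with at least three elements are not known not to exist. This is the open problem quoted in the introduction, and such algebras are excluded only for sizes $3$ and $4$.

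Your fallback hits the same wall. Its first half also needs an argument: an $m$-edge $\{x,y\}$ crossing a projection quotient $\psi$ is not immediately absurd, since $m$ is well behaved on $\mathrm{Sg}_m\{x,y\}/\theta$, which is a different quotient. You must use minimality of $B$ (every cross pair generates $B$) and exhibit a cross pair lying inside a single $\theta$-class.
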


\begin{prp}[Proposition 9.14 of \cite{dreamteam}]\label{ternaryabscyclic}
Let $\m a$ be a minimal Taylor algebra and $\m b\trianglelefteq_3\m a$. It $t$ is an $n$-ary cyclic term, $m\geqslant n/2$ and $a_1,\dots,a_n\in A$ such that $a_1,\dots,a_m\in B$, then $t(a_1,\dots,a_n)\in B$. In particular, if $t$ is a ternary cyclic term and two of its entries are in $B$, then the result is in $B$.
\end{prp}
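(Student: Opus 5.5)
The plan is a subclone argument, as is typical for minimal Taylor algebras. I will define a subclone $\mathcal{D}\subseteq\mathrm{Clo}(\m a)$ by an absorption-type property relative to $B$. I will then show that $\mathcal{D}$ contains a Taylor operation, so $\mathcal{D}=\mathrm{Clo}(\m a)$ by minimality. Finally I will read off the statement for the cyclic term $t$.

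\textbf{Step 1: the subclone.} Fix $B\trianglelefteq_3\m a$, and let $w$ be a witnessing ternary term; by \Cref{compositionactslike} we may take $w=m$. For a $k$-ary $f\in\mathrm{Clo}(\m a)$, a set $I\subseteq\{1,\dots,k\}$ is \emph{$B$-forcing for $f$} if $f(\mathbf a)\in B$ whenever $a_i\in B$ for all $i\in I$. Let $\mathcal{D}$ consist of those $f$ for which the $B$-forcing sets form a family closed under supersets with the following two properties:
\begin{itemize}
\item for every $I$, at least one of $I$ and its complement is $B$-forcing;
\item $\{1,\dots,k\}$ itself is $B$-forcing.
\end{itemize}
The second property holds for every $f$ because $B$ is a subuniverse by \Cref{abssetsubuniv}. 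Projections lie in $\mathcal{D}$, and $w$ lies in $\mathcal{D}$: every two-element set of coordinates is $B$-forcing for $w$, and for any $I\subseteq\{1,2,3\}$ either $I$ or its complement has at least two elements. Checking closure of $\mathcal{D}$ under composition is routine. Given $f\in\mathcal{D}$ and $g_1,\dots,g_k$, and a set $J$ of variables of the composite, let $I_J$ be the set of indices $i$ for which $J$ is $B$-forcing for $g_i$. Then for each $i$, either $i\in I_J$ or $i\in I_{J^c}$, so either $I_J$ or $I_{J^c}$ is $B$-forcing for $f$. Hence $J$ or $J^c$ is $B$-forcing for the composite. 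Since $w\in\mathcal{D}$ is a Taylor operation (it is a near-unanimity-like edge term; if that fails, I would use the Taylor term generated by $m$ via \Cref{gen}), minimality gives $\mathcal{D}=\mathrm{Clo}(\m a)$. In particular $t\in\mathcal{D}$.

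\textbf{Step 2: the cyclic term.} Let $I=\{1,\dots,m\}$. By cyclicity, the $B$-forcing family of $t$ is invariant under cyclic shifts. Since $m\geqslant n/2$, the complement of $I$ is contained in a cyclic shift $I'$ of $I$. Suppose $I$ is not $B$-forcing. Then neither is $I'$, as it is a shift of $I$. Because the family is up-closed and $I^c\subseteq I'$, the set $I^c$ is not $B$-forcing either. This contradicts the dichotomy property of $\mathcal{D}$. Therefore $I$ is $B$-forcing, which is exactly the claim. The ternary case is $n=3$, $m=2$.

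\textbf{Main obstacle.} The delicate point is Step 1. I must make sure that $\mathcal{D}$ is genuinely closed under composition and contains a Taylor operation, so that minimality applies. If the complement-dichotomy formulation breaks under composition, I would try a weighted version instead: require some probability measure on the coordinates such that every set of measure $\geqslant 1/2$ is $B$-forcing, and compose measures multiplicatively. For the cyclic term, averaging over shifts makes the measure uniform, so any $m\geqslant n/2$ consecutive coordinates would force $B$.
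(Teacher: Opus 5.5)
Your proof is correct. The paper itself gives no proof of this proposition; it is imported from \cite{dreamteam}, so there is no internal argument to compare against.

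It may help to see what Step~1 actually establishes. Your family $\mathcal D$ is exactly the set of term operations preserving the binary relation $R_B=(B\times A)\cup(A\times B)$. Indeed, given pairs $(x_i,y_i)\in R_B$, let $I=\{i: x_i\in B\}$. Then $y_i\in B$ for $i\notin I$, so the dichotomy for $I$ gives $f(\mathbf x)\in B$ or $f(\mathbf y)\in B$. So Step~1 shows $R_B\leq\m a^2$.

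Step~2 can then be phrased in one line. Apply $t$ to the $n$ pairs $(a_i,a_{i+m})$, indices mod $n$. Each lies in $R_B$ because $2m\geqslant n$. Cyclicity gives $t(a_1,\dots,a_n)=t(a_{m+1},\dots,a_n,a_1,\dots,a_m)$, so the result is a diagonal pair in $R_B$, i.e.\ $t(\mathbf a)\in B$. This is the same content as your shift-and-up-closure argument.

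One correction in Step~1: you have not shown, and do not need, that $m$ itself is a Taylor operation; the phrase ``near-unanimity-like edge term'' proves nothing. The right justification is your fallback. $\mathcal D$ is a clone containing $m$, by the fourth item of \Cref{compositionactslike}. By \Cref{gen}, $m$ generates $\mathrm{Clo}(\m a)$, so $\mathcal D=\mathrm{Clo}(\m a)$ immediately. Minimality enters only through those two theorems, not through a separate ``$\mathcal D$ contains a Taylor operation'' step.

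The weighted-measure backup is unnecessary. The complement dichotomy is genuinely closed under composition, as your $I_J\cup I_{J^c}=[k]$ computation shows.
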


\begin{thm}[Theorem 6.5 of \cite{dreamteam}]\label{Malcevcriterionedges}
Let $\m a$ be a minimal Taylor algebra. $\m a$ has no semilattice or weak majority edges iff $\m a$ has a Mal'cev term.
\end{thm}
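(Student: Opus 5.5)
The plan is to prove the two implications separately. The backward direction is a two-element argument; the forward direction goes through absorption.

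\emph{Mal'cev term $\Rightarrow$ no bad edges.} Suppose $\m a$ has a Mal'cev term $p$. If $(a,b)$ is a semilattice edge, then $\{a,b\}$ is a subuniverse by Proposition~\ref{edgesaresubalgs}. By Proposition~\ref{HSPfin} it is a two-element minimal Taylor algebra, so it is the semilattice, the majority algebra, or $\mathbb{Z}_2^{\mathrm{aff}}$. The latter two have only projections as idempotent binary operations, so the edge term $t$ forces the semilattice. Its clone preserves the order $b\leqslant a$, while the restriction of $p$ does not, since $p(a,a,b)=b$ and $p(a,b,b)=a$ with $(a,a,b)\geqslant(a,b,b)$ coordinatewise. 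This is a contradiction.

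If $\{a,b\}$ is a weak majority edge with witness $\theta$, then $C=a/\theta\cup b/\theta$ is a subuniverse by Proposition~\ref{edgesaresubalgs}. The quotient $C/\theta$ is a two-element minimal Taylor algebra with a majority term, hence the majority algebra. Its clone is order-preserving and therefore cannot contain the image of $p$, again a contradiction.

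\emph{No bad edges $\Rightarrow$ Mal'cev term.} First I would show that there are no weak semilattice edges either. Suppose $(a,b)$ were one with witness $\theta$. Then $C=a/\theta\cup b/\theta\le\m a$ and $C/\theta$ is a two-element semilattice with absorbing element $b/\theta$, so $b/\theta\trianglelefteq_2 C$. Proposition~\ref{smedge}(1), applied in the minimal Taylor algebra $C$, gives a semilattice edge from $a$ into $b/\theta$, which is a contradiction. So every weak edge of every subalgebra is affine.

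Next, I would transfer the hypothesis to finite powers. A semilattice edge in $\m a^n$ projects to a semilattice edge in a coordinate where the two tuples differ. For weak majority edges in $\m a^n$ I would use Proposition~\ref{smedge}(2) together with Theorem~\ref{singletoncenter} to convert them into ternary absorptions, and then push these to a coordinate. Combining Proposition~\ref{smedge} with the previous step, no subalgebra of any $\m a^n$ then has a proper $2$- or $3$-absorbing subuniverse.

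Finally, I would use the standard criterion that $\m a$ has a Mal'cev term iff, in $\m a^{n}$ with $n=|A|^2$, the tuples $x,y$ listing all pairs of $A$ satisfy $(y,x)\in R=\mathrm{Sg}\{(x,x),(x,y),(y,y)\}$. I would take a minimal counterexample, namely a subalgebra $R\le \m b^2$ with $\m b\le\m a^n$ that is linked but not rectangular. I would then show that the set of elements of a projection of $R$ that are "far" from a fixed point is absorbing, using the term $m$ of Theorem~\ref{compositionactslike}. Since $m$ acts as $x-y+z$ on every weak edge, it should behave affinely along paths in the (weakly connected, by Theorem~\ref{connectthm}) edge graph.

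I expect this last step to be the main obstacle: turning ``$m$ is affine on every edge'' into a global rectangularity or absorption statement. My first attempt would be to iterate $m(x,y,y)$ to an idempotent unary-in-$x$ term that is the identity on all edges, and then argue via Theorem~\ref{gen} that the resulting operation is Mal'cev.
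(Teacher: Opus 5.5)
\textbf{The gap is the forward direction (no semilattice or weak majority edges $\Rightarrow$ Mal'cev term), which is not proved.} Your argument for ``Mal'cev term $\Rightarrow$ no semilattice or weak majority edges'' is correct. So is the reduction showing that, under the hypothesis, every weak edge is affine. In the forward direction, however, the last step is stated only as an expectation, and two concrete steps would fail.

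First, the transfer to powers. A weak majority edge $\{u,v\}$ in some $\m b\leq\m a^n$ already gives a proper ternary absorbing subuniverse $u/\theta$ of $u/\theta\cup v/\theta$. The problem is pushing it to a coordinate: the projection of $u/\theta$ to each coordinate may be the whole projection, so nothing proper is obtained in $\m a$. Moreover, even if you excluded $2$- and $3$-absorption in all subalgebras of powers, an absorption/linkedness argument needs absorption of every arity excluded.

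Second, the closing idea. The conditions of Theorem~\ref{compositionactslike} determine $m$ on a weak affine edge only modulo its witness, and not at all on pairs that are not edges. Theorem~\ref{gen} yields generation, not identities. Concretely, $\m t_{4,17}$ from Subsection~\ref{s:Z3} has only affine weak edges, and its operation $p$ satisfies the edge conditions. Yet $p(a,b,b)=p(d,b,b)=d$, so iterating $x\mapsto p(x,b,b)$ sends $a$ to $d$ on the edge $\{a,b\}$ rather than acting as the identity there.

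\textbf{What is missing is a global tool turning lack of absorption into a Mal'cev term.} With the right criterion you never need powers. For an idempotent algebra it suffices to have, for all $a,b,c,d\in A$, a term $t$ with $t(a,b,b)=a$ and $t(c,c,d)=d$. Such local terms glue into a Mal'cev term by a routine induction on the set of imposed equations, using compositions like $s(x,t(x,y,y),t(x,y,z))$ and $s(t(x,y,z),t(y,y,z),z)$.

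The existence of such a $t$ means $(a,d)\in R=\mathrm{Sg}_{\m a^2}\{(a,c),(b,c),(b,d)\}$. Here $R\leq_{sd}\mathrm{Sg}\{a,b\}\times\mathrm{Sg}\{c,d\}$ is linked, because each link congruence class is a subuniverse containing both generators. By the Barto--Kozik Absorption Theorem, $R$ is the full product unless $\mathrm{Sg}\{a,b\}$ or $\mathrm{Sg}\{c,d\}$ has a proper absorbing subuniverse.

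These are subalgebras of $\m a$, so their semilattice and weak majority edges are edges of $\m a$. What remains is the fact that a minimal Taylor algebra with a proper absorbing subuniverse has a proper $2$- or $3$-absorbing one. This comes from Zhuk's theory: proper absorption yields a proper binary absorbing subuniverse or a proper center, and in minimal Taylor algebras centers are exactly the ternary absorbing subuniverses. Proposition~\ref{smedge} then produces a forbidden edge.
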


\section{Existence of a nontrivial congruence}

\thispagestyle{empty}
In this section, we prove that any minimal Taylor algebra generated by two elements on a four-element domain is not simple. Our groundwork relies on the following result.

\begin{thm}[Theorem 9.16 of \cite{dreamteam}]\label{center} 
If ${\m A}$ is a minimal Taylor algebra that is generated by two distinct elements $a,b \in A$, then either $\m A$ has a nontrivial abelian quotient, or at least one of $a,b$ is contained in a proper ternary absorbing subuniverse of $\m A$.
\end{thm}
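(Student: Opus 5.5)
The plan is to reduce first to the case where $\m A$ is simple, and then work inside the simple quotient using edges and Theorem~\ref{singletoncenter}.

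\emph{Reduction to a simple quotient.} Assume $\m A$ has no nontrivial abelian quotient, and fix a maximal congruence $\theta$ of $\m A$. By Proposition~\ref{HSPfin}, $\m A/\theta$ is a simple minimal Taylor algebra, generated by $a/\theta$ and $b/\theta$. By our assumption it is not abelian, and since $a/\theta\ne b/\theta$ it has at least two elements.

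The key observation for pulling back is this. Suppose $C\trianglelefteq_3\m A/\theta$ is proper and witnessed by a term $t$. Then the union $\bigcup C$ of the corresponding $\theta$-classes is ternary absorbing in $\m A$, witnessed by the same term $t$, because $t$ acts on $\theta$-classes. By Theorem~\ref{abssetsubuniv} this union is a subuniverse, and it is proper. So it suffices to find a proper ternary absorbing subuniverse of $\m A/\theta$ containing $a/\theta$ or $b/\theta$. Replacing $\m A$ by $\m A/\theta$, we may therefore assume $\m A$ is simple and nonabelian.

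\emph{The two-element case.} If $|A|=2$, then $\m A$ is either a semilattice or the majority algebra.
\begin{itemize}
\item In the semilattice case, the absorbing element is a binary absorbing singleton. It is also $3$-absorbing via $t(x,t(y,z))$, so one of $\{a\}$, $\{b\}$ is a proper ternary absorbing subuniverse.
\item In the majority case, $\{a\}\trianglelefteq_3\m A$ via the majority operation.
\end{itemize}
More generally, whenever $\{a,b\}$ is itself a weak edge, the witness is a proper congruence of $\mathrm{Sg}\{a,b\}=A$. Simplicity forces the witness to be $0_A$, so $\m A$ is the two-element semilattice or majority algebra (the affine option being excluded). We are then in this case.

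\emph{The general simple case.} Now suppose $|A|\geqslant 3$ and $\{a,b\}$ is not a weak edge. Apply Theorem~\ref{singletoncenter} to $a$ and to $b$. If either has no outgoing semilattice edge and no incident weak affine edge, then that singleton is a proper ternary absorbing subuniverse, and we are done.

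So suppose both $a$ and $b$ have such edges. A semilattice edge $(a,c)$ gives the two-element subuniverse $\{a,c\}$ by Proposition~\ref{edgesaresubalgs}. I would then try to grow absorbing sets from these edges. Let $B$ be the set of elements reachable from $a$ by outgoing semilattice edges, closed under the operation $m$ of Theorem~\ref{compositionactslike}. The aim is to show that a suitable such set is $3$-absorbing via $m$, using the facts that $m$ acts as a semilattice, majority or affine operation on every edge. Proposition~\ref{composition} would then let us combine absorbing sets: intersections of ternary absorbing subuniverses are ternary absorbing, and absorption is transitive. The goal is to isolate a minimal one containing $a$ or $b$ and check that it is proper.

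\emph{Main obstacle.} The hard step is the weak affine edges at $a$ and $b$. They only give abelian quotients of proper subalgebras $\mathrm{Sg}\{a,c\}$, not of $\m A$. To exclude them, I would try a contradiction argument. If neither generator lies in a proper $3$-absorbing subuniverse, I would consider the relation $\mathrm{Sg}_{\m A^2}\{(a,b),(b,a)\}$ together with linked-relation arguments in the style of the absorption theorem. The aim is to show that this relation is full or the graph of an automorphism. Combining this with connectivity (Theorem~\ref{connectthm}) and Theorem~\ref{Malcevcriterionedges}, I would try to derive that $\m A$ has a Mal'cev term and is simple. By Proposition~\ref{mintayloraff} that would make $\m A$ abelian, which contradicts our assumption.
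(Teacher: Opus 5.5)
The paper does not prove this statement; it quotes it from \cite{dreamteam} and only applies it (e.g.\ in the proof of Theorem~\ref{main-factor}). So your argument has to stand on its own, and as it stands there is a genuine gap.

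\textbf{What is correct.} Your reductions are sound. The preimage of a proper $3$-absorbing subuniverse of $\m A/\theta$ is a proper $3$-absorbing subuniverse of $\m A$: the same term witnesses it, and it is a subuniverse simply because it is a preimage. The two-element case and the case where $\{a,b\}$ is a weak edge are handled correctly. Theorem~\ref{singletoncenter} settles the case where $a$ or $b$ has neither an outgoing semilattice edge nor an incident weak affine edge.

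\textbf{The missing case.} What remains is $\m A$ simple, nonabelian, $|A|\geqslant 3$, with both generators carrying such edges. This is the whole content of the theorem, and for it you only list things you would try.
\begin{enumerate}
\item Nothing shows that the set reachable from $a$ by outgoing semilattice edges (closed under $m$) is $3$-absorbing, or proper. Proposition~\ref{composition} only combines sets already known to absorb, so it cannot produce absorption from edges.
\item The dichotomy ``$R_{ab}$ is full or the graph of an automorphism'' is not available. Simplicity gives only ``linked or the graph of an automorphism'', which is exactly the split in the paper's proof of Theorem~\ref{main-factor}. Upgrading ``linked'' to ``full'' needs congruence permutability, or the absence of \emph{all} proper absorbing subuniverses. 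Your hypothesis gives neither: it only says that $a$ and $b$ lie in no proper $3$-absorbing subuniverse.
\item To get a Mal'cev term from Theorem~\ref{Malcevcriterionedges}, you must exclude semilattice and weak majority edges between \emph{all} pairs of elements. Your hypothesis constrains only $a$ and $b$.
\end{enumerate}

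\textbf{The final step is false as stated.} Proposition~\ref{mintayloraff} says that an abelian minimal Taylor algebra is affine. It does not say that a simple minimal Taylor algebra with a Mal'cev term is abelian. The algebra $\T^{\scriptscriptstyle{\mathsf{P}}}_2$ of Table~\ref{table:3} is a simple nonabelian minimal Taylor algebra with a Mal'cev term. By Proposition~\ref{smedge} and Theorem~\ref{Malcevcriterionedges} it has no proper $3$-absorbing subuniverse. Being simple and nonabelian, it has no nontrivial abelian quotient.

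So the statement fails without two-generation, which must therefore be used essentially, precisely at the step you leave open. Even granting a Mal'cev term, $R_{ab}$ cannot contain $(a,a)$, since that would make $(b,a)$ a semilattice edge. By congruence permutability and simplicity, $R_{ab}$ is then the graph of an automorphism swapping $a$ and $b$. Getting from there to abelianness requires a genuine further argument, which your proposal does not supply.
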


\subsection{Auxiliary results: the general case}

\begin{dfn}\label{linkdef}
Let ${\m A}_1$ and ${\m A}_2$ be algebras and $R \leq_{sd}{\m A}_1 \times {\m A}_2$. For any $i\in \{1,2\}$, the $i$-th {\em link tolerance} of $R$, denoted by $tol_i R$ is defined by
\[
\begin{gathered}
tol_1 R:=\{(a_1,a_1')\in A_1^2:(\exists a_2\in A_2)\
(a_1,a_2)\in R\text{ and }(a_1',a_2)\in R\},\\
tol_2 R:=\{(a_2,a_2')\in A_2^2:(\exists a_1\in A_1)\
(a_1,a_2)\in R\text{ and }(a_1,a_2')\in R\}.\\
\end{gathered}
\]
The transitive closure of $tol_iR$ is the $i$-th {\em link congruence} of $R$, denoted by $lk_iR$. We say $R$ is {\em linked} if its link congruences are full.
\end{dfn}

The next theorem is currently an unpublished result proved by Zhuk, and it is a stronger version of Theorem 4.2 of \cite{BartoKozik}, and establishes the existence of cyclic terms in finite Taylor algebras.

\begin{thm}[Theorem 4.1.8 of \cite{Zebnotes}]\label{cyclic}
Suppose ${\m A}$ is a finite idempotent Taylor algebra and that $p$ is prime. Then one of the following is true:
\begin{itemize}
\item[$\mathrm{(1)}$] either ${\m A}$ has a cyclic term of arity $p$, or
\item[$\mathrm{(2)}$] there exist $B\in \mathsf{HS}({\m A})$ and some automorphism $\sigma\in \mathrm{Aut}({\m B})$ such that $\sigma ^p =1$ and $\sigma$ has no fixed points.
\end{itemize}
In particular, if $p>|A|$, then ${\m A}$ has a cyclic term of arity $p$.
\end{thm}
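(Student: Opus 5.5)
The statement is the cited Theorem~\ref{cyclic} of \cite{Zebnotes}, so I would follow the standard ``fixed point of an automorphism in a free algebra'' strategy. Let $S=A^p$ and let $\sigma\colon S\to S$ be the cyclic shift of coordinates. Let $\m R\leq \m A^{S}$ be the subalgebra generated by the $p$ coordinate maps $\pi_i\colon s\mapsto s_i$; its elements are exactly the $p$-ary term operations of $\m A$. Permuting the coordinates of $A^S$ by $\sigma$ gives an automorphism $\hat\sigma$ of $\m R$ with $\hat\sigma^p=1$, since it sends $\pi_i$ to $\pi_{i+1}$. A term $t$ is cyclic iff it is a fixed point of $\hat\sigma$. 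So (1) reduces to the following lemma.

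\emph{Lemma.} Suppose no $\m B\in\mathsf{HS}(\m A)$ carries a fixed-point-free automorphism of order $p$. Then every $\m C\in \mathsf{HSP}_{fin}(\m A)$ with an automorphism $\tau$, $\tau^p=1$, has a $\tau$-fixed point.

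I would prove the lemma by induction on $|C|$. If $\m C$ has a proper nontrivial $\tau$-invariant congruence $\theta$, then $\tau$ induces an automorphism of $\m C/\theta$. By induction it fixes some class $D$, which is a $\tau$-invariant subuniverse, and induction applies again to $\m D$. Likewise, if $\m C$ has a proper $\tau$-invariant subuniverse, we are done by induction. So we may assume $\m C$ has no proper $\tau$-invariant subuniverses and no $\tau$-invariant congruences. Here I would use Zhuk's four-case theorem for Taylor algebras: $\m C$ has a proper binary absorbing subuniverse, a proper center (ternary absorption), a nontrivial affine quotient, or a polynomially complete quotient. In the two absorbing cases, take a minimal absorbing subuniverse (say $B$). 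Its images under $\tau$ are also absorbing. By the intersection and union properties for absorption (as in Proposition~\ref{composition}), either two of these images intersect in a smaller nonempty absorbing set, or the images are pairwise disjoint; in the latter case the subuniverse they generate, or $\bigcap_k\tau^k B$, gives a proper $\tau$-invariant subuniverse. In the affine case, the intersection of the $\tau$-images of the maximal congruence with affine quotient is a $\tau$-invariant congruence, and in the resulting affine module the average $\frac1p\sum\tau^k(x)$ gives a fixed point when $p\nmid |C|$. When $p$ divides $|C|$, one descends to a $\tau$-invariant minimal affine quotient.

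The main obstacle is the final transfer: a minimal bad pair $(\m C,\tau)$ lives in $\mathsf{HSP}$, while the hypothesis only concerns $\mathsf{HS}(\m A)$. To handle this, I would take $\m C\leq \m A^n/\theta$ with $n$ minimal and use Zhuk's result that in the affine and polynomially complete cases, minimal subdirect representations of a simple algebra factor through a single coordinate. This should produce a $\tau$-equivariant isomorphism of $\m C$ (or of a $\tau$-invariant section of it) with an algebra in $\mathsf{HS}(\m A)$, giving a fixed-point-free automorphism of order $p$ there and contradicting the hypothesis. I expect this equivariance bookkeeping to be the hardest step; I would first try projecting onto the coordinate orbit under the induced permutation action.

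Finally, the ``in particular'' clause is immediate. A fixed-point-free $\sigma$ with $\sigma^p=1$ and $p$ prime has all orbits of size exactly $p$, so $p$ divides $|B|\leq|A|$. Hence (2) is impossible when $p>|A|$, and (1) holds.
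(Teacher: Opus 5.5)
The paper only cites this theorem, so your argument has to stand on its own. It has a genuine gap exactly where the hypothesis on $\mathsf{HS}(\m A)$ must be used.

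The reduction to a $\hat\sigma$-fixed point of the free algebra, the inductive reduction to a $\tau$-simple $\m C$, and the ``in particular'' clause are fine. But your Lemma is equivalent to the theorem: a $p$-ary cyclic term $c$ gives the fixed point $c(x,\tau x,\dots,\tau^{p-1}x)$ in every $\m C\in\mathsf{HSP}(\m A)$. So everything rests on the terminal case, and there the argument fails.

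In the absorbing case, suppose the $\tau$-orbit of a minimal absorbing $B$ has size $p$. Then $\bigcap_k\tau^kB=\emptyset$, while $\mathrm{Sg}(\bigcup_k\tau^kB)$ is $\tau$-invariant and hence equals $C$ in your reduced situation. Nothing proper is produced. The two-element majority algebra with $p=2$ and $\tau$ the swap shows this concretely: $\{0\}$ and $\{1\}$ are centers exchanged by $\tau$, there is no proper $\tau$-invariant subuniverse or congruence, and $\tau$ has no fixed point. Similarly, $\mathbb{Z}_p^{\mathrm{aff}}$ with $\tau(x)=x+1$ is a $\tau$-simple affine instance with $p\mid |C|$, where there is nothing left to descend to. (The union property you borrow from Proposition~\ref{composition} also holds only for minimal Taylor algebras, and $\m C$ is merely Taylor.)

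So every terminal case leaves you with a $\tau$-simple $\m C\in\mathsf{HSP}(\m A)$ carrying a fixed-point-free $\tau$. Producing a witness in $\mathsf{HS}(\m A)$ from it is essentially the whole theorem, not bookkeeping. An automorphism of $\m C\in\mathsf{HS}(\m A^n)$ need not come from any permutation of the $n$ coordinates, so there is no coordinate orbit to project onto.

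The missing idea is to never leave subpowers of sections of $\m A$ equipped with the coordinate shift. First use the standard composition trick. Let $t$ be cyclic on a shift-closed $X\subseteq A^p$, let $\bar a\notin X$, and put $\bar a'=(t(\bar a),t(\sigma\bar a),\dots,t(\sigma^{p-1}\bar a))$. A constant tuple $s(\bar a',\sigma\bar a',\dots,\sigma^{p-1}\bar a')$ in the shift-invariant relation generated by the shifts of $\bar a'$ makes $s(t(\bar x),t(\sigma\bar x),\dots,t(\sigma^{p-1}\bar x))$ cyclic on $X$ and on the orbit of $\bar a$. Hence it suffices that every nonempty shift-invariant $R\le\m A^p$ contains a constant tuple.

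This can be proved by induction on the coordinate algebra, which stays inside $\mathsf{HS}(\m A)$. Subuniverses and congruences of the coordinate algebra are applied coordinatewise, so an absorbing $B$ yields the automatically shift-invariant $B^p$. For binary absorption with absorbing term $u$, iterating $r\mapsto u(r,\sigma r)$ shows $R\cap B^p\neq\emptyset$ once $R$ is subdirect. In the terminal simple case one, using Zhuk's theory, either finds a constant tuple or obtains a fixed-point-free automorphism $\pi$ with $\pi^p=1$ of a member of $\mathsf{HS}(\m A)$. That $\pi$ is the bijection linking consecutive coordinates of $R$, or a translation in the affine case, and this is exactly alternative (2).
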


When a minimal Taylor algebra ${\m A}=\mathrm{Sg}\{a,b\}$ is simple, it is useful to consider the algebra $R_{ab}=\mathrm{Sg}\{(a,b),(b,a)\}\leq_{sd} {\m A}^2$. Then $R_{ab}$ is either the graph of automorphism $\varphi$ of ${\m A}$ such that $\varphi (a)=b$ and $\varphi (b)=a$, or the link congruence $lk_1 R_{ab}$ is not the identity, which, since $\m a$ is simple, means that $R_{ab}$ is linked. When we consider a linked subdirect power of a Taylor algebra, we may use the following result.

\begin{thm}[\emph{Loop Lemma}; Theorem 3.5 of \cite{BartoKozik}]\label{looplemma}
Let $\m a$ be a finite Taylor algebra and $R\leq_{sd}\m a^2$ is linked. Then $R$ contains a loop, that is $(a,a)\in R$ for some $a\in A$.
\end{thm}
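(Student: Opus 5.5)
We follow Barto and Kozik's absorption-theoretic strategy. We argue by induction on $|A|$, choosing a counterexample $(\m a,R)$ with $|A|$ minimal and, for this $A$, with $|R|$ minimal. The engine is the \emph{Absorption Theorem} of \cite{BartoKozik}. It says: if $\m a,\m b$ are finite algebras in a common Taylor variety and $R\leq_{sd}\m a\times\m b$ is linked, then either $R=A\times B$, or one of $\m a,\m b$ has a proper absorbing subuniverse. I would cite it rather than reprove it, since the Loop Lemma is classically derived from it. Applied with $\m b=\m a$, the first alternative gives $R=A^2$, and then every pair $(a,a)$ is a loop. So we may assume that $\m a$ has a proper absorbing subuniverse $C$, with absorption witnessed by an $n$-ary term $t$. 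Replacing $C$ by a minimal absorbing subuniverse, we may also assume $C$ is as small as possible.

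Next we transfer the problem to $C$. For $X\subseteq A$ write $X+R=\{y:(x,y)\in R \text{ for some } x\in X\}$ and $X-R=\{x:(x,y)\in R \text{ for some } y\in X\}$. Because $R$ is linked, alternating the operations $+R$ and $-R$ starting from $C$ eventually yields $A$. The point of the construction is that every set obtained this way from $C$ again absorbs $A$ via the same $t$. This uses subdirectness: to see that $t$ maps a tuple with all but one coordinate in $C+R$ back into $C+R$, pick $R$-preimages of its coordinates, all but one of them in $C$. So these sets are absorbing subuniverses. There are two cases.
\begin{itemize}
\item[(i)] $C\cap(C+R)\neq\emptyset$. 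Put $C'=C\cap(C-R)$ and restrict to $R'=R\cap(C'\times C')$, trimmed further if necessary so that it is subdirect in $\m c'^2$. Absorption keeps $C'$ and $R'$ nonempty and closed under the term operations.
\item[(ii)] $C\cap(C+R)=\emptyset$. Replace $R$ by a suitable odd relational power $R\circ R^{-1}\circ R\circ\cdots$ that connects $C$ to itself. Then we are in case (i) for this new relation, but a loop of the power only yields a closed walk of odd length in $R$. So here one must argue directly, using minimality of $|R|$, that the first case can be arranged.
\end{itemize}
In case (i), $\m c'$ is a smaller Taylor algebra, being a subalgebra of $\m a$. Once $R'$ is shown to be a linked subdirect subalgebra of $\m c'^{\,2}$, the induction hypothesis gives a loop in $R'\subseteq R$.

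The main obstacle is showing that the restricted relation $R'$ is still linked. Linkedness of $R$ guarantees connections inside $A$, but the connecting paths may leave $C$. I would first try the standard trick: use the $n$-ary absorbing term $t$ to ``pull'' an $R$-path with endpoints in $C'$ coordinatewise into $C'$. Apply $t$ to tuples of the form $(x,c,\dots,c)$, where $c$ runs along a fixed path inside $C'$. Absorption then keeps all intermediate vertices in $C'$, while idempotence fixes the endpoints. If the link congruence of $R'$ still turns out proper, we would factor $\m c'$ by it. The quotient is smaller, Taylor, and carries the linked image of $R'$, so induction gives a loop up to the congruence. We would then lift this loop back using minimality of $|R|$ together with absorption of the preimage class.
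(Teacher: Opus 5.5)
The paper does not prove this statement; it quotes it from Barto--Kozik. So your sketch has to stand on its own, and it has a genuine gap in case (ii). There you note, correctly, that odd relational powers are useless, because a loop of the power only gives an odd closed walk in $R$. You then merely assert that minimality of $|R|$ lets one arrange case (i), and no argument is given.

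This case is not vacuous, even for a minimal absorbing $C$. Take the two-element majority algebra on $\{0,1\}$ and $R=\{(0,1),(1,0),(1,1)\}$, which is a linked subdirect subuniverse of the square. Then $C=\{0\}$ is a minimal absorbing subuniverse and $C\cap(C+R)=\emptyset$; the loop lives in the other minimal absorbing subuniverse $\{1\}$. What is missing is a mechanism that produces an absorbing subuniverse $C$ with $R\cap C^2\neq\emptyset$, for instance by showing that \emph{some} minimal absorbing subuniverse falls under case (i). Nothing in the proposal does this. Given the repair of case (i) below, this is the part of the argument that carries the real difficulty.

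Your case (i) is also wrong as written, though it is repairable. Running $t$ on $(x_i,c_i,\dots,c_i)$ along an $R$-path $(x_i)$ from $u$ to $v$ gives an $R'$-path from $t(u,c_0,\dots,c_0)$ to $t(v,c_m,\dots,c_m)$. Idempotence returns $u$ and $v$ only if $(c_i)$ already joins $u$ to $v$ inside $C'$, which is circular.

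The fallback fails as well. Modulo its link congruences, $R'$ is the graph of a bijection between $C'/lk_1R'$ and $C'/lk_2R'$, so factoring destroys linkedness rather than preserving it. Moreover, a loop modulo a congruence does not by itself give a loop of $R'$.

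The fix is to exploit minimality of $C$, in three steps.
\begin{enumerate}
\item Subdirectness. $C+R$ and $C-R$ absorb $A$ via $t$, so their intersections with $C$ do too. In case (i) both intersections are nonempty, so minimality gives $C\subseteq C+R$ and $C\subseteq C-R$. Hence $R'=R\cap C^2$ is already subdirect in $C^2$, and no trimming is needed.
\item Linkedness. Let $E$ be the $lk_1R'$-class of some $c\in C$, and take $u\in A$ and $e_2,\dots,e_n\in E$. Choose an $R$-link path from $c$ to $u$ and $R'$-link paths from $c$ to each $e_k$, padded to a common length by back-and-forth steps, and apply $t$ coordinatewise. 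Absorption keeps every vertex in $C$, so this is an $R'$-link path from $t(c,\dots,c)=c$ to $t(u,e_2,\dots,e_n)$; the same works with $u$ in any other position. Hence $E$ absorbs $A$, so $E=C$ by minimality, and likewise for $lk_2R'$. Thus $R'$ is linked.
\item Conclusion. By transitivity of absorption, $C$ has no proper absorbing subuniverse. The Absorption Theorem then gives $R'=C^2$, which contains a loop.
\end{enumerate}
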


In the rest of this subsection, we introduce three lemmas that can be used whenever $\mathrm{Sg}\{a,b\}$ has more than two elements.

\begin{dfn}
We say that a binary term $t$ {\em has a dominant first coordinate }if for any $3$-absorbing subuniverse $C$ and all $c\in C$ and $d\in A$, it is true that $t(c,d)\in C$. A dominant second coordinate of a binary term is defined analogously.
\end{dfn}

\begin{lem}\label{dominant}
Every binary term $t$ of minimal Taylor algebra has a dominant first or second coordinate.
\end{lem}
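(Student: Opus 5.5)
The plan is to argue by contradiction and move to the square $\m A^2$. There we will combine the two failures of dominance into a single union of ternary absorbing subuniverses, which $t$ fails to preserve; this contradicts Proposition~\ref{composition}(3).

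Suppose $t$ has neither a dominant first nor a dominant second coordinate. Then there are ternary absorbing subuniverses $C_1,C_2\trianglelefteq_3\m A$ and elements $c_1\in C_1$, $c_2\in C_2$, $d_1,d_2\in A$ with
$$t(c_1,d_1)\notin C_1\quad\text{and}\quad t(d_2,c_2)\notin C_2.$$
The subuniverses $C_1$ and $C_2$ may differ and need not intersect, so we cannot work inside $\m A$ directly. Instead we pass to $\m A^2$, which is again a minimal Taylor algebra by Proposition~\ref{HSPfin}.

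In $\m A^2$ we consider $D_1=C_1\times A$ and $D_2=A\times C_2$. Both are subuniverses of $\m A^2$. Both are also ternary absorbing: take the term $m$ of Theorem~\ref{compositionactslike}, which witnesses every $3$-absorption in $\m A$, and apply it coordinatewise. If two of the three arguments lie in $C_1\times A$, then two of the first coordinates lie in $C_1$, so the first coordinate of the output lies in $C_1$. The same check works for $D_2$. Hence $D_1,D_2\trianglelefteq_3\m A^2$. Proposition~\ref{composition}(3), applied in the minimal Taylor algebra $\m A^2$, then gives that $D=D_1\cup D_2$ is a subuniverse of $\m A^2$.

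Now $(c_1,d_2)\in D_1\subseteq D$ and $(d_1,c_2)\in D_2\subseteq D$. However,
$$t\bigl((c_1,d_2),(d_1,c_2)\bigr)=\bigl(t(c_1,d_1),t(d_2,c_2)\bigr).$$
Its first coordinate is not in $C_1$ and its second is not in $C_2$, so it lies in neither $D_1$ nor $D_2$. This contradicts the closure of $D$ under the term $t$.

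The only point that needs care is that the $3$-absorption of $C_1\times A$ and $A\times C_2$ in $\m A^2$ holds via the coordinatewise action of a single term. This is why we use the uniform witness $m$ from Theorem~\ref{compositionactslike}, rather than separate witnesses for $C_1$ and $C_2$. Once that is settled, the rest follows directly from Proposition~\ref{composition}(3).
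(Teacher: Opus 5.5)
Your proof is correct, and it takes a genuinely different route from the paper.

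The paper argues syntactically. By Theorem~\ref{gen}, the operation $m$ of Theorem~\ref{compositionactslike} generates $\mathrm{Clo}(\m A)$, so the paper inducts on the complexity of $t$ as an $m$-term. In $t=m(t_1,t_2,t_3)$, two of the $t_i$ share a dominant coordinate by pigeonhole. Since $m$ witnesses every ternary absorption, that coordinate is dominant for $t$ as well.

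You argue semantically instead. You pack both possible failures into the single binary relation $(C_1\times A)\cup(A\times C_2)$. Propositions~\ref{HSPfin} and~\ref{composition}(3) make this a subuniverse of $\m A^2$, and $t$ visibly fails to preserve it.

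Your route dispenses with Theorem~\ref{gen} and with term induction. It also shows the lemma is just the binary-relation form of the union property of ternary absorbing subuniverses. The paper's route needs only the uniform witness $m$ and its generating property, not the union property, and it gives a recursive recipe for which coordinate is dominant.

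One correction to your final paragraph: the uniform witness is not actually needed. $C_1\times A\trianglelefteq_3\m A^2$ holds via any witness of $C_1\trianglelefteq_3\m A$ applied coordinatewise, and likewise for $A\times C_2$ via any witness of $C_2$. Proposition~\ref{composition}(3) does not require the two absorptions to share a witness.
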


\begin{proof}
Each such term $t$ belongs to $\mathrm{Clo}(A;m)$, where $m$ is the term operation from \Cref{compositionactslike}. The proof is by induction on the complexity of the term $t$. If $t(x,y)$ is a projection or is equal to some term $m(z_1,z_2,z_3)$ where $\{z_1,z_2,z_3\}=\{x,y\}$, then we are done. If a term $t$ has higher complexity, then $t(x,y)=m(t_1(x,y)$, $t_2(x,y), t_3(x,y))$, where $t_1, t_2, t_3$ are binary terms. Based on the induction hypothesis, each of the terms $t_1,t_2,t_3$ has a dominant coordinate. Without loss of generality, we can assume that $t_1$ and $t_2$ have a dominant first coordinate, and then it is obvious that the term $t$ must have a dominant first coordinate.
\end{proof}

When considering a binary term $t(x,y)$, we can assume that the first coordinate is dominant because if it is the second, we can take the term $t'(x,y):= t(y,x)$ instead.

By \Cref{center}, every simple nonabelian minimal Taylor algebra ${\m A}=\mathrm{Sg}\{a,b\}$ has a proper $3$-absorbing subuniverse which contains one of the generators $a$ and $b$. In the next lemma, we assume without loss of generality that element $a$ is in some proper $3$-absorbing subuniverse. 

\begin{lem}\label{lemma1}
Let ${\m A}$ be a simple nonabelian minimal Taylor algebra of size at least $3$ generated by two distinct elements $a,b\in A$ and $a\in C_a\trianglelefteq_3 {\m A}$.
\begin{enumerate}
\item[a)] If $R_{ab}$ is the graph of automorphism $\varphi$ which swaps $a$ and $b$, and $g$ is a cyclic term of algebra ${\m A}$ of arity $n,$ then there exists a cyclic term $g'$ of the same arity such that 
$$g'(\{a,b\}^{n}\setminus\{(a,a,\dotso, a), (b,b.\dotso, b)\})\subseteq C_a\cap C_b,$$
where $C_b$ is the image of $C_a$ under $\varphi$, thus $b\in C_b\trianglelefteq_3 {\m A}$.
\item[b)] If $R_{ab}$ is linked and $g$ is a cyclic term of algebra ${\m A}$ of arity $n$, then there exists a cyclic term $g'$ of the same arity such that  
$$g'(\{a,b\}^{n}\setminus\{(b,b,\dotso, b)\})\subseteq C_a.$$
\end{enumerate}
\end{lem}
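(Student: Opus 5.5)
The plan is to obtain $g'$ by composing $g$ with a suitable $n$-ary term $f$ and symmetrizing along cyclic shifts. Write $\sigma$ for the cyclic shift of coordinates. For any $f\in\mathrm{Clo}_n(\m a)$ the term
$$g'(x_1,\dots,x_n):=g\bigl(f(\mathbf x),f(\sigma\mathbf x),\dots,f(\sigma^{n-1}\mathbf x)\bigr)$$
is again cyclic of arity $n$. Shifting $\mathbf x$ only permutes the arguments of $g$ cyclically. So it suffices to find $f$ that sends the relevant tuples of $\{a,b\}^n$ into the target set $D$, where $D=C_a$ in b) and $D=C_a\cap C_b$ in a). The relevant set of tuples is invariant under $\sigma$. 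Hence all $n$ arguments of $g$ then lie in $D$, and $g'(\mathbf u)\in D$ because $D$ is a subuniverse. In a), $C_a\cap C_b\trianglelefteq_3\m a$ by \Cref{composition}(3), and $C_b=\varphi(C_a)\trianglelefteq_3\m a$ because $\varphi$ is an automorphism. If $f$ fails to reach $D$ on every argument, a fallback is to place at least $n/2$ of the arguments in a cyclic arc inside $D$ and invoke \Cref{ternaryabscyclic}.

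\emph{Step 1: a binary seed.} In b), $R_{ab}$ is linked, so the Loop Lemma (\Cref{looplemma}) gives $(c,c)\in R_{ab}$. Then there is a binary term $t$ with $t(a,b)=t(b,a)=c$. By \Cref{dominant} we may assume that $t$ has a dominant first coordinate, so $c=t(a,b)\in C_a$. In a), take any binary term $t$ with dominant first coordinate. Then $t(a,b)\in C_a$, and applying $\varphi$ gives $t(b,a)=\varphi(t(a,b))\in C_b$. We also have $t(b,a)\in C_b$ and $t(a,b)\in C_a$ directly. To land in both sets, set $s(x,y):=m(t(x,y),t(y,x),t(x,y))$, with $m$ from \Cref{compositionactslike}; alternatively compose $t$ with its reverse through a dominant coordinate. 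We then check that $s(a,b),s(b,a)\in C_a\cap C_b$ using $C_a,C_b\trianglelefteq_3$, which is witnessed by $m$.

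\emph{Step 2: extend to $n$ variables.} This is by induction on $n$, or on the number of $a$'s in a tuple. I would define $f$ as a balanced composition of the seed, for instance $f(\mathbf x)=m\bigl(s(x_1,h(\mathbf x')),\,s(h(\mathbf x'),x_1),\,\dots\bigr)$ with $h$ given by induction. The design goal is that on every relevant tuple at least two of the three arguments of each $m$ fall into $D$, so ternary absorption keeps the value in $D$. Dominance of the first coordinate is what controls expressions such as $t(d,e)$ with $d\in D$.

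The main obstacle is Step 2. A naive nested term $t(x_1,t(x_2,\dots))$ fails: it would require $t(b,d)\in D$ for $d\in D$, and dominance does not provide this. I would first handle this by replacing each nesting level with $m$ applied to both orders $t(u,v)$ and $t(v,u)$, together with a third argument that is already known to lie in $D$. If that becomes unmanageable, I would drop the requirement that $f$ lands in $D$ on all tuples. Instead I would only require that for each relevant $\mathbf u$ a cyclic arc of length at least $n/2$ of the values $f(\sigma^i\mathbf u)$ lies in $D$, and then conclude with \Cref{ternaryabscyclic}.
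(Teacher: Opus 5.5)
Your framework, $g'(\mathbf x)=g\bigl(f(\mathbf x),f(\sigma\mathbf x),\dots,f(\sigma^{n-1}\mathbf x)\bigr)$ with $f$ sending every non-excluded tuple of $\{a,b\}^n$ into a 3-absorbing subuniverse $D$, is the same as the paper's. However, the proposal has two genuine gaps.

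\textbf{Step 2 is not done, and a simple choice of $f$ works.} The obstacle you found comes only from nesting in the wrong direction. Take the \emph{left} fold
$$f(\mathbf x)=t(\cdots t(t(x_1,x_2),x_3)\cdots,x_n),$$
where $t$ has a dominant first coordinate. On a non-constant tuple from $\{a,b\}^n$, idempotency collapses the initial run of equal letters. The first change of letter then produces $t(a,b)$ or $t(b,a)$, both in $D$. From then on the accumulated value sits in the dominant first coordinate. By the definition of dominance applied to the 3-absorbing subuniverse $D$, $t(d,x)\in D$ for all $d\in D$ and all $x$, so the value never leaves $D$. Every cyclic shift of a non-constant tuple is non-constant, so all $n$ arguments of $g$ lie in $D$. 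In b) the all-$a$ tuple is trivial. No balancing with $m$ and no appeal to Proposition~\ref{ternaryabscyclic} is needed.

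\textbf{Your seed in a) is wrong as stated.} In $s(a,b)=m(t(a,b),t(b,a),t(a,b))$ only one argument is known to lie in $C_b$. So you get $s(a,b)\in C_a$ but not $s(a,b)\in C_b$. Composing $t$ with its reverse through the dominant coordinate likewise gives membership in only one of the two sets.

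The underlying omission is that you never show $C_a\cap C_b\neq\emptyset$. This is exactly where simplicity and $|A|\ge 3$ are used. If the intersection were empty, $C_a\cup C_b$ would be a subuniverse by Proposition~\ref{composition}(3). It contains $a$ and $b$, so it equals $A$. Since $m$ witnesses both absorptions, $m$ is compatible with the partition $\{C_a,C_b\}$. This gives a nontrivial congruence with the two-element majority quotient, contradicting simplicity.

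Once you have some $e\in C_a\cap C_b$, pick a binary term $t$ with $t(a,b)=e$. The map $\varphi^2$ fixes the generators $a,b$, so $\varphi$ is an involution exchanging $C_a$ and $C_b$. Hence $t(b,a)=\varphi(e)\in C_a\cap C_b$. Swap the arguments of $t$ if necessary to make the first coordinate dominant. Your Step 1 for b), the Loop Lemma followed by dominance to get $c=t(a,b)=t(b,a)\in C_a$, is fine.
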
 

\begin{proof}

First we shall prove part $a)$. Notice that $C_a\cap C_b\not= \emptyset$ holds, otherwise $\m a$ has $2$-majority quotient, which is impossible as it is simple and has more than two elements. Of course, $C_a\cap C_b$ is a 3-absorbing subuniverse. As $\varphi$ is an automorphism such that $\varphi (a)=b$ and $\varphi (b)=a$, then there exists a binary term $t$ such that $t(a,b), t(b,a)\in C_a\cap C_b$. Assume that the first coordinate is dominant in $t$. We define a new cyclic term $g'$ of the same arity as $g$ as follows
\begin{align*}
g'(x_1,\dotso, x_n) := g(&t(\dotso t(t(x_1,x_2),x_3),\dotso x_n), t(\dotso t(t(x_2,x_3),x_4),\dotso x_1),\dotso ,\\
& t(\dotso t(t(x_n,x_1),x_2),\dotso x_{n-1})).
\end{align*}
If $x_1,\dots,x_n\in \{a,b\}$ and not all $x_i$'s are equal, then $t(a,b)$ or $t(b,a)$ appears in (a simplification of) the expression $t(\dotso t(t(x_j,x_{j+1}),x_{j+2}),\dotso x_{j-1})$ for all $1\leqslant j\leqslant n$. Since the first coordinate is dominant and $t(a,b), t(b,a)\in C_a\cap C_b$, we have $$t(\dotso t(t(x_j,x_{j+1}),x_{j+2}),\dotso x_{j-1})\in C_a\cap C_b$$ for all $1\leqslant j\leqslant n$, and therefore $g'(x_1,\dotso ,x_n)\in C_a\cap C_b$.

In part $b)$, \Cref{looplemma} is useful. Take $R_{a,b}$ as $R$ of \Cref{looplemma}, so we get $c=t(a,b)=t(b,a)$, for some binary term $t$. The same composition of $g$ and $t$ as in part $a)$ gives us a cyclic term $g'$ which satisfies the lemma's statement.
\end{proof}

The next lemma will be used multiple times in the proof of the main theorem. The proof of the lemma relies on the following consequence of cyclic operations.

\begin{lem}\label{3elem}
Let ${\m A}$ be a minimal Taylor algebra of size at least $3$ generated by two distinct elements $a,b\in A$. Let $C\trianglelefteq_3 {\m A}$ and let $a,c\in C$ such that $c=t(a,b)=t(b,a)$ for some binary term $t$. If $\{a,c\}$ and $\{b,c\}$ are subuniverses, then $\mathrm{Sg}\{a,b\}=\{a,b,c\}$.
\end{lem}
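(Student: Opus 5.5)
The plan is to show that the three-element set $\{a,b,c\}$ is closed under some cyclic (hence Taylor) term operation of $\m a$. Then Proposition~\ref{subuniverse} makes $\{a,b,c\}$ a subuniverse, and since it contains both generators, $\mathrm{Sg}\{a,b\}=\{a,b,c\}$.

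First I would fix the binary term $t$ with $c=t(a,b)=t(b,a)$. By Lemma~\ref{dominant} one of its coordinates is dominant. Since the hypothesis on $t$ is symmetric, replacing $t(x,y)$ by $t(y,x)$ if needed lets us assume the first coordinate is dominant. Then $t(u,v)\in C$ whenever $u\in C$.

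Next I would record how $t$ acts on $\{a,b,c\}$:
\begin{itemize}
\item on pairs from $\{a,c\}$ it stays in $\{a,c\}$, and on pairs from $\{b,c\}$ it stays in $\{b,c\}$, because both sets are subuniverses;
\item it sends $(a,b)$ and $(b,a)$ to $c$.
\end{itemize}
So $\{a,b,c\}$ is closed under $t$, but $t$ alone is not Taylor there, so a cyclic term is needed. By Theorem~\ref{cyclic}, pick a cyclic term $g$ of prime arity $p>|A|$. As in the proof of Lemma~\ref{lemma1}, define the cyclic term
$$g'(x_1,\dots,x_p)=g(s_1,\dots,s_p),\qquad s_j=t(\dots t(t(x_j,x_{j+1}),x_{j+2}),\dots,x_{j-1}),$$
with indices taken cyclically. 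This $g'$ is cyclic and idempotent. The goal is then to show that for $x_i\in\{a,b,c\}$ all the chain values $s_j$ lie either entirely in $\{a,c\}$ or entirely in $\{b,c\}$. Once that holds, $g$ applied to them stays in the corresponding subuniverse, so $g'(\{a,b,c\}^p)\subseteq\{a,b,c\}$.

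For this I would split into two cases.
\begin{enumerate}
\item If no $x_i$ equals $a$, every chain stays in the subuniverse $\{b,c\}$.
\item If some $x_i=a$, every chain eventually meets $a$, by cyclicity. Just before that point its value lies in $\{a,b,c\}$, and applying $t(\,\cdot\,,a)$ gives a value in $\{a,c\}\subseteq C$: if the value is $b$ we get $c$, and otherwise we stay inside $\{a,c\}$. From then on dominance keeps the chain inside $C$. Closure of $\{a,c\}$ and $\{b,c\}$ under $t$, together with $t(a,b)=c$, keeps it inside $\{a,b,c\}$. Hence every value lies in $C\cap\{a,b,c\}$.
\end{enumerate}

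When $b\notin C$ we have $C\cap\{a,b,c\}=\{a,c\}$, so all $s_j\in\{a,c\}$ and we are done. The main obstacle is the degenerate situation $b\in C$. Then $C=A$, because $C$ is a subuniverse containing both generators, and the chain argument no longer confines the values to $\{a,c\}$. For that case I would try redoing the construction with a different absorbing set chosen via Theorem~\ref{center}, or directly tracking the chain values. The key observation there is that a chain can only reach $b$ from $b$ or $c$, and cannot leave $\{b,c\}$ except through $a$. From this I would aim to show that whenever both $a$ and $b$ occur, all chain values avoid $b$.
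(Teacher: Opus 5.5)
Your proof is correct for $b\notin C$ (see the second paragraph). It is a close variant of the paper's argument rather than a genuinely new route.

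Both proofs use the same ingredients:
\begin{itemize}
\item make the first coordinate of $t$ dominant via Lemma~\ref{dominant};
\item feed left-nested $t$-chains into a Taylor operation, so that any chain which meets $a$ is trapped in $C\cap\{a,b,c\}=\{a,c\}$ (the crucial value being $t(c,b)=c$);
\item finish with Proposition~\ref{subuniverse}.
\end{itemize}
The paper uses the ternary edge term $m$ of Theorem~\ref{compositionactslike} with chains of length three, $m'(x,y,z)=m(t(t(x,y),z),t(t(y,z),x),t(t(z,x),y))$. Since $m$ is not globally cyclic, the paper checks by hand that $m'$ is cyclic on $\{a,b,c\}$ and preserves it. That check uses how $m$ acts on the two-element edges $\{a,c\}$, $\{b,c\}$ and a list of explicit values on the mixed triples. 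Your genuinely cyclic $p$-ary $g$ from Theorem~\ref{cyclic} makes cyclicity automatic. It also replaces that list by the uniform argument that a chain meeting $a$ lands in $\{a,c\}$ and stays there. This is exactly how the paper itself later proves Lemma~\ref{triangle}. In exchange, the paper's version stays within ternary terms and avoids the cyclic-term theorem.

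You do not need to treat the case $b\in C$ that you call the main obstacle. The lemma is meant for proper $C$, and it is always applied that way (e.g.\ to $C_a$ with $b\notin C_a$). The paper's proof also tacitly assumes $b\notin C$, for instance to get $t(c,b)=c$ in $m'(b,b,a)=m(c,c,c)$.

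If you want the literal statement with $C=A$ anyway, your idea of appealing to Theorem~\ref{center} works. Pure chain-tracking does not: without a proper $C$ nothing forces $t(c,b)=c$, so chain values $a$ and $b$ can occur together. Theorem~\ref{center} leaves two alternatives, and both are handled.
\begin{itemize}
\item Suppose a proper $C'\trianglelefteq_3\m A$ contains $a$ (or $b$). By Lemma~\ref{dominant} it must contain $c=t(a,b)=t(b,a)$. So your argument runs with $C'$, swapping $a$ and $b$ if needed.
\item A nontrivial abelian quotient cannot exist. Such a quotient is affine by Propositions~\ref{HSPfin} and~\ref{mintayloraff}, so $t$ acts there as $kx+(1-k)y$, giving $(2k-1)(\bar a-\bar b)=0$. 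Since $\{\bar b,\bar c\}$ is a subuniverse, $2(\bar c-\bar b)=0$, i.e.\ $2k(\bar a-\bar b)=0$. Together these force $\bar a=\bar b$, so the quotient is trivial.
\end{itemize}
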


\begin{proof}
First, notice that $c\not= a$, otherwise $\mathrm{Sg}\{a,b\}$ is a two-element semilattice, which contradicts the assumption on the size of $\m a$. The ternary term operation $m$ from \Cref{compositionactslike} generates the whole clone $\mathrm{Clo}({\m A})$, so we may assume that $m$ is a basic operation of $\m a$. Since the two-element subuniverses must be edges of one of the three types (and in the affine case that edge would be $\mathbb{Z}_2^{\mathrm{aff}}$), Theorem~\ref{compositionactslike} implies that on the subuniverses $\{a,c\}$ and $\{b,c\}$ the operation $m$ acts like a cyclic ternary operation. It follows that $(\{b,c\}; m)$ is a two-element semilattice or two-element majority algebra; it cannot be an affine algebra because $m(b,c,c)\in C$ as $m$ witnesses all 3-absorptions. Assume that the first coordinate is dominant in $t$. We define a new ternary term $m'$ by
$$m'(x,y,z) = m(t(t(x,y),z), t(t(y,z),x), t(t(z,x),y)),$$
 which shall be cyclic on $\{a,b,c\}$.
It is easy to verify that $m'$ is cyclic on subuniverses $\{a,c\}$ and $\{b,c\}$. Further,
\begin{align*}
m'(a,a,b)&=m(t(a,b), t(c,a), t(c,a))=m(c, t(c,a), t(c,a))\in \{a,c\},\\
m'(b,b,a)&=m(t(b,a), t(c,b), t(c,b))=m(c,c,c)=c,
\end{align*}
so it is also cyclic on $\{a,b\}$ as $m$ is cyclic on $\{a,c\}$.
Note that $t(t(b,c),a)$ is equal to $a$ or $c$, so each of the following elements:
\begin{align*}
    m'(a,b,c)&=m(t(c,c), t(t(b,c),a), t(t(c,a),b))=m(c, t(t(b,c),a), c),\\
    m'(b,c,a)&=m(t(t(b,c),a),t(t(c,a),b), t(c,c))=m(t(t(b,c),a), c, c),\\
    m'(c,a,b)&=m(t(t(c,a),b), t(c,c), t(t(b,c),a))=m(c,c,t(t(b,c),a)),\\
    m'(a,c,b)&=m(t(t(a,c),b), t(t(c,b),a), t(c,c))=m(c, t(c,a), c),\\
    m'(c,b,a)&=m(t(t(c,b),a), t(c,c), t(t(a,c),b))=m(t(c,a),c,c),\\
    m'(b,a,c)&=m(t(c,c), t(t(a,c),b), t(t(c,b),a))=m(c,c, t(c,a)),
\end{align*}
is in the set $\{a,c\}$. Therefore, the set $\{a,b,c\}$ is closed under the cyclic term $m'$, thus, by Proposition~\ref{subuniverse}, $\{a,b,c\}$ is a subuniverse of ${\m A}$ which implies $\mathrm{Sg}\{a,b\}=\{a,b,c\}.$
\end{proof} 

\begin{lem}\label{Aljahomework}
Algebra ${\m T}_i^{\n}$, $i\in \{1,2,3,4\}$, has a unique ternary cyclic term.
\end{lem}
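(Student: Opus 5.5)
Since each of ${\m T}_1^{\n},\dots,{\m T}_4^{\n}$ lives on $\{0,1,2\}$, a ternary cyclic term $c$ is determined by its values on two-element subsets and on the tuples $(0,1,2)$, $(0,2,1)$. By cyclicity, on a two-element subset $\{x,y\}$ it suffices to know $c(x,x,y)$ and $c(x,y,y)$. The plan is to prove existence and uniqueness separately.

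\emph{Existence.} Each of these algebras has a cyclic ternary term. For ${\m T}_1^{\n},{\m T}_2^{\n},{\m T}_3^{\n}$ the basic operation $g$ is symmetric, hence cyclic. For ${\m T}_4^{\n}$ we take $x\wedge y\wedge z$.

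\emph{Uniqueness.} Let $c$ be any ternary cyclic term. We pin down its values by combining three facts: the description of the two-element subalgebras, the $3$-absorptions, and the edge structure.

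First we handle the two-element subalgebras. The two-element subuniverses are exactly those listed in Table~\ref{table:1} (e.g.\ $\{0,1\}$ and $\{0,2\}$, plus $\{1,2\}$ where the table says so). On a two-element minimal Taylor algebra the ternary cyclic terms are forced: on the semilattice only $\min$ is cyclic; on the majority algebra only $\mathrm{maj}$; on $\mathbb{Z}_2^{\mathrm{aff}}$ only $x+y+z$. The point is that a cyclic idempotent operation on $\{0,1\}$ is determined by $c(0,0,1)$ and $c(0,1,1)$, and the clone of each two-element minimal Taylor algebra contains exactly one such choice. So $c$ agrees with $g$ on each two-element subuniverse.

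Next we handle the tuples not lying in a two-element subuniverse. These are $(1,1,2)$ and $(1,2,2)$ when $\{1,2\}$ is not a subuniverse, together with $(0,1,2)$ and $(0,2,1)$. Here we use the quotient and absorption structure from Figure~\ref{3elemgraf}. In ${\m T}_1^{\n}$ and ${\m T}_2^{\n}$ the partition $\{0,2\}\mid\{1\}$ is a congruence, with quotient $\mathrm{maj}$ or $\mathrm{aff}$ respectively. Moreover, $\{0\}$ is a binary or ternary absorbing subuniverse, and $\{0,1\}$ is absorbing in suitable cases.
\begin{itemize}
\item The congruence forces $c(1,1,2)$ to lie in the correct class, namely $\{1\}$ for $\mathrm{maj}$ and $\{0,2\}$ for $\mathrm{aff}$.
\item Proposition~\ref{ternaryabscyclic}, applied to the $3$-absorbing sets, forces membership in the absorbing singleton or pair.
\end{itemize}
Intersecting these constraints should yield a single value in each case. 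For example, in ${\m T}_1^{\n}$: $c(1,2,2)$ must lie in the class $\{0,2\}$; and since $\{0,1\}$ is $3$-absorbing (as $\{0,1\}$ is closed and $2\mapsto 0$), applying Proposition~\ref{ternaryabscyclic} to the $\{0,1\}$ entries rules out $2$. The same pattern handles $(0,1,2)$ and $(0,2,1)$. For ${\m T}_3^{\n}$ we use the congruence $\{0,2\}\mid\{1\}$ with majority quotient instead. For ${\m T}_4^{\n}$ we use $\{0\}\trianglelefteq_2{\m T}_4^{\n}$: by Proposition~\ref{bradybinaryprop}(1), any term that depends on a coordinate holding $0$ outputs $0$. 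This forces $c(0,1,2)=c(0,2,1)=0$ once we know that $c$ depends on all coordinates, which cyclicity guarantees. It also forces $c(1,1,2)=c(1,2,2)=0$ via the congruence collapsing $\{1,2\}$.

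\emph{Main obstacle.} The hardest step is the case analysis for the mixed tuples in ${\m T}_2^{\n}$ and ${\m T}_3^{\n}$, where the congruence and absorption constraints alone may leave two candidate values. If that happens, the first thing to try is to compose the candidate $c$ with $g$. For instance, form $c(g(x,y,z),\dots)$, or use the identity $m(x,x,y)=m(y,x,x)$ from the Remark. Any competing value would then produce a term that is not in the clone, for example a non-affine operation on $\{1,2\}$ or a binary operation violating Lemma~\ref{dominant}. Failing that, the clone is small enough that I would determine $\mathrm{Clo}_3$ by closure under $g$ and check directly that only one of its members is cyclic.
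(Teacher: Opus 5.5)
Your overall strategy, pinning down every value of an arbitrary cyclic term $c$ by intersecting constraints from quotients and absorbing subuniverses, is the same as the paper's. However, several structural facts you feed into it are false, so the argument does not go through as written.

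For ${\m T}_3^{\n}$ the partition $\{0,2\}\mid\{1\}$ is not a congruence. We have $g(1,1,0)=1$ (majority on $\{0,1\}$) but $g(1,1,2)=2$, so identifying $0$ with $2$ collapses everything. Using it would even force $c(1,1,2)=1$, contradicting the symmetric basic operation itself. The correct data are the congruence $\{0,1\}\mid\{2\}$ with quotient $\mathbb{Z}_2^{\mathrm{aff}}$ and the absorption $\{0,2\}\trianglelefteq_3{\m T}_3^{\n}$:
\begin{itemize}
\item the quotient puts $c(1,1,2)$, $c(0,1,2)$ and $c(0,2,1)$ in the class $\{2\}$;
\item $c(1,2,2)$ lies in $\{0,1\}$ by the quotient and in $\{0,2\}$ by Proposition~\ref{ternaryabscyclic} (two entries in $\{0,2\}$), hence equals $0$.
\end{itemize}
Likewise, ${\m T}_4^{\n}$ has no congruence collapsing $\{1,2\}$, since $1\wedge 1=1$ but $1\wedge 2=0$. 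You need either both semilattice congruences $\{0,1\}\mid\{2\}$ and $\{0,2\}\mid\{1\}$, or the fact the paper uses: every term of a semilattice is a meet of a nonempty set of variables, so the only cyclic ternary one is $x\wedge y\wedge z$.

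For ${\m T}_1^{\n}$ your worked example fails. Proposition~\ref{ternaryabscyclic} for a ternary cyclic term needs \emph{two} entries in the absorbing set, and $(1,2,2)$ has only one entry in $\{0,1\}$. What rules out $c(1,2,2)=2$ is the congruence $\{0,1\}\mid\{2\}$, whose quotient is the two-element semilattice with absorbing class $\{0,1\}$. Equivalently, use $\{0,1\}\trianglelefteq_2{\m T}_1^{\n}$ together with strong absorption, Proposition~\ref{bradybinaryprop}(1). This semilattice congruence and the majority/affine congruence $\{0,2\}\mid\{1\}$ are exactly the pair $\alpha,\beta$ the paper uses for ${\m T}_1^{\n}$ and ${\m T}_2^{\n}$.

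Separately, $\{0\}$ is not absorbing in ${\m T}_2^{\n}$, because $\{0,1\}$ is a $\mathbb{Z}_2^{\mathrm{aff}}$-subalgebra; you do not actually need it there. With the correct congruences and absorptions every value is forced. The fallback in your ``main obstacle'' paragraph, which as stated is not an argument, then becomes unnecessary.
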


\begin{proof} Suppose that algebra ${\m T}_i^{\n}$, $i\in \{1,2,3,4\}$, has universe $\{0,1,2\}$ and subuniverses $\{0,1\}$ and $\{0,2\}$ of the same types as corresponding subalgebras in the Table~\ref{table:1}. Let $q$ be a ternary cyclic term of algebra ${\m T}_1^{\n}$. Obviously, on subuniverses $\{0,1\}$ and $\{0,2\}$, the term $q$ acts like the term $t$ from the Table~\ref{table:1}.

Algebra ${\m T}_1^{\n}$ has two congruences:
\begin{itemize}
\item $\alpha=\{\{0,1\},\{2\}\}$, such that ${\m T}_1^{\n}/\alpha$ is term equivalent to two-element semilattice with absorbing element $\{0,1\}$.
\item $\beta=\{\{0,2\},\{1\}\}$, such that ${\m T}_1^{\n}/\beta$ is term equivalent to two-element majority algebra.
\end{itemize}
These congruences ensure that $q(1,1,2)=1$, $q(1,2,2)=0$ and $q(0,1,2)=q(0,2,1)=0$ hold. Hence, the term $q$ acts in the same way as the term $t$ from Table~\ref{table:1}.

Proof in the case algebra ${\m T}_2^{\n}$ is analogous, but then the quotient ${\m T}_2^{\n} /\beta$ is term equivalent to a two-element affine algebra.

Algebra ${\m T}_3^{\n}$ has congruence $\alpha=\{\{0,1\},\{2\}\}$, such that ${\m T}_3^{\n}/\alpha$ is term equivalent to two-element affine algebra. Also, subalgebra $\{0,2\}$ ternary absorbs ${\m T}_3^{\n}$. These facts ensure that $q(1,1,2)=2, q(1,2,2)=0,$ and $q(0,1,2)=q(0,2,1)=2$, thus, the term $q$ acts in the same way as the term $t$ from Table~\ref{table:1}.

Lastly, the algebra ${\m T}_4^{\n}$ is term equivalent to a two-generated three-element semilattice. Therefore, it necessarily has a unique ternary cyclic term.
\end{proof}

\subsection{The main theorem}
\begin{thm}\label{main-factor}
Let ${\m A}$ be a nonabelian minimal Taylor algebra on a domain of size four generated by two distinct elements $a,b\in A$. Then $\m a$ is not simple.
\end{thm}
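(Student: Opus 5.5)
We argue by contradiction: suppose $\m a=\mathrm{Sg}\{a,b\}$ has $|A|=4$, is nonabelian and simple. Every quotient of $\m a$ is then either $\m a$ itself or trivial, so $\m a$ has no nontrivial abelian quotient. \Cref{center} therefore yields a proper $C_a\trianglelefteq_3\m a$ containing one of the generators, say $a$. We split according to the dichotomy described before \Cref{looplemma}: either $R_{ab}$ is the graph of an automorphism $\varphi$ swapping $a$ and $b$, or $R_{ab}$ is linked. In the second case the loop lemma gives a binary term $t$ with $c=t(a,b)=t(b,a)$. We may assume $t$ has a dominant first coordinate (\Cref{dominant}), so $c\in C_a$. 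In both cases \Cref{lemma1} supplies a ternary cyclic term $g'$, which exists by \Cref{cyclic} since $3$ is prime. It sends all non-constant tuples from $\{a,b\}^3$ into $C_a\cap C_b$ in case a), or into $C_a$ in case b).

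We next constrain $C_a$. Since $\m a$ is simple and $|A|=4$, $C_a$ cannot be a binary absorbing set of size $\ge 2$, because \Cref{bradybinaryprop}(2) would give a nontrivial congruence. By \Cref{composition}(3), unions of 3-absorbing subuniverses are subuniverses. We would also use \Cref{smedge}(2): every $x\notin C_a$ has a semilattice edge into $C_a$ or a weak majority edge whose witness splits $\mathrm{Sg}\{x,y\}$ along $C_a$. We then run through $|C_a|\in\{1,2,3\}$. The guiding principle is that $\{a,b\}\cup(\text{stuff in }C_a)$ is closed under the cyclic term $g'$ (suitably modified as in \Cref{3elem}). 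By \Cref{subuniverse} such a set is a subuniverse, hence all of $A$, so $A$ must be covered by $a$, $b$ and at most two elements of $C_a$.

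In the linked case, once $c\in C_a$ we look at $\mathrm{Sg}\{a,c\}$ and $\mathrm{Sg}\{b,c\}$. If both are $2$-element, \Cref{3elem} gives $|A|=3$, a contradiction. So one of them, say $\mathrm{Sg}\{a,c\}\subseteq C_a$, has size $3$, with $b$ the fourth element. Such a $3$-element $2$-generated subalgebra is one of the algebras of Section~\ref{s3elem}, namely one of $\T^{\n}_1,\dots,\T^{\n}_5$, or a conservative one, which is excluded by being 2-generated except in trivial cases. In each such case we will use the explicit structure together with uniqueness of the cyclic ternary term (\Cref{Aljahomework}) to build a congruence on $A$. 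The candidate is the congruence of the $3$-element subalgebra extended by $\{b\}$, or $C_a$ as a class with $b$ a singleton. We verify it is preserved by $g'$, which generates the clone by \Cref{gen} once it witnesses edges. The sub-case $\mathrm{Sg}\{b,c\}$ of size 3 is symmetric, using that $c\in C_a$ absorbs. In the automorphism case, $C_a\cap C_b\neq\emptyset$; its elements together with $a,b$ close under $g'$, and the $\varphi$-symmetry reduces the possibilities to $|C_a\cap C_b|\le 2$. In each possibility we again exhibit either a 3-element generated subuniverse or a congruence such as the one with classes $C_a\cap C_b$ and singletons.

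The main obstacle is the case where a $3$-element subalgebra of type $\T^{\n}_i$ sits inside $A$ and the cyclic term is only controlled on $\{a,b\}$-tuples. Showing that the candidate equivalence is compatible requires checking values of $g'$ on mixed tuples involving the fourth element. I would first try to make this manageable by applying \Cref{ternaryabscyclic} (two entries in $C_a$ force the value into $C_a$) and \Cref{compositionactslike} on every edge. This should pin down $g'$ on enough of $A^3$ that the congruence check becomes a finite verification.
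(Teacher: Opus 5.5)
Your setup matches the paper's: \Cref{center}, the automorphism/linked dichotomy for $R_{ab}$, the loop lemma, \Cref{lemma1}, \Cref{3elem} and the $3$-element classification. But the proposal stops where the actual proof begins, and the step you plan to use there does not work as stated. The cyclic term from \Cref{lemma1} is controlled only on $\{a,b\}$-tuples, and only up to membership in $C_a$ (or in $C_a\cap C_b$, which turns out to be $\{c,d\}$). Absorption, \Cref{ternaryabscyclic} and \Cref{compositionactslike} do not decide whether, say, $g'(a,a,b)$ is $c$ or $d$, nor the values on tuples such as $(a,b,c)$. When $\m C_a\cong\T^{\co}_{10}$, even the restriction of ternary cyclic terms to $C_a$ is not unique. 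So ``check that the candidate equivalence is preserved by $g'$'' is not a finite verification you can carry out.

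The missing idea is to \emph{replace} the cyclic term by self-compositions such as $g(g(x,x,y),g(y,y,z),g(z,z,x))$ or $g(t(x,y),t(y,z),t(z,x))$. These are iterated, with each initial possibility treated separately, until every relevant value is forced. Only then can one read off a contradiction. It is often not a congruence of the shape you predict: the paper gets the affine quotient $\{\{a,d\},\{b,c\}\}$ in Subsubcases 1b1 and 2b1, the majority quotient $\{\{a,c\},\{b,d\}\}$ in 1b3, or a cyclic term under which $\{a,b\}$ is closed.

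Many branches end not with a congruence but with an edge between $a$ and $b$, which you never record as forbidden. Simplicity excludes every weak edge between the generators: a non-affine witness is a nontrivial congruence, and an affine witness is the diagonal, making $\m A$ abelian. The paper produces such edges by closure computations in $R_{ab}$ with $m$ that give $(a,a)\in R_{ab}$, or by showing that $\{a,c,d\}$ or $\{b,c,d\}$ is $2$-absorbing. The same prohibition, through \Cref{connectthm}, \Cref{singletoncenter} and \Cref{smedge}, is what forces an affine or outgoing semilattice edge at $a$ (or $b$) and organizes the whole case split.

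Several specific steps are also wrong.

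\begin{itemize}
\item \Cref{cyclic} does not give a ternary cyclic term ``since $3$ is prime''. It needs $p>|A|=4$, or no fixed-point-free automorphism of order $3$ in $\mathsf{HS}(\m A)$. When $\mathrm{Sg}\{a,c\}\cong\T^{\n}_5=\mathbb{Z}_3^{\mathrm{aff}}$ (a case on your own list), $\m A$ has no ternary cyclic term at all, because $3\lambda=1$ has no solution in $\mathbb{Z}_3$. The paper handles that case purely inside $R_{ab}$.
\item The case ``$\{a,c\}$ a subuniverse, $\mathrm{Sg}\{b,c\}$ larger'' is not symmetric to the other, because $b\notin C_a$. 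It contains the paper's Subcase 2a ($C_a=\{a,c\}$), which needs separate arguments.
\item You omit the possibility that $b$ together with $c$ or $d$ generates all of $A$.
\item In the automorphism case you give no analysis. There $\m C_a=\{a,c,d\}$ may be conservative ($\T^{\co}_4$ or $\T^{\co}_{10}$), which is covered neither by your list $\T^{\n}_1,\dots,\T^{\n}_5$ nor by the congruence with classes $C_a\cap C_b$ and singletons.
\item Your ``guiding principle'' that $\{a,b\}$ together with elements of $C_a$ is closed under $g'$ is either vacuous (when these exhaust $A$) or unjustified.
\item $g'$ generates $\mathrm{Clo}(\m A)$ simply because it is a Taylor term of a minimal Taylor algebra, not via \Cref{gen}.
\end{itemize}
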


\begin{proof} For the sake of contradiction, assume that $\m a$ is simple. Any non-affine weak edge has only two classes, and if there were an affine weak edge $\{a,b\}$, the simplicity of $\m a=\mathrm{Sg}\{a,b\}$ would imply that the witnessing congruence is the diagonal and $\m a$ is Abelian. Hence, there is no type of edge between $a$ and $b$. Let $A=\{a,b,c,d\}$ and let $m$ denote the ternary term operation from \Cref{compositionactslike}, which generates the clone $\mathrm{Clo}(\m a)$. We may assume that $m$ is the basic operation of $\m a$. Recall the relation $R_{ab}$, which is either a graph of an automorphism that swaps $a$ and $b$ or it is linked. Based on that, we naturally distinguish two cases.

\begin{enumerate}[wide, labelwidth=!,itemindent=!,labelindent=0pt]
\item[{\sc{\textbf{Case 1.}}}]  $R_{ab}$ is a graph of an automorphism $\varphi$ of algebra ${\m A}$ such that $\varphi (a)=b$ and $\varphi (b)=a$.

First, let us recall 3-absorbing subuniverses $C_a$ and $C_b$ from Lemma~\ref{lemma1}, thus $a\in C_a$ and $b\in C_b$. By $C$ we denote the intersection $ C_a\cap C_b$, which is also a non-empty 3-absorbing subuniverse according to the proof of the same lemma. Since any element of $A=\mathrm{Sg}\{a,b\}$ is equal to $t(a,b)$ for some binary term $t$, and thus must be in $C_a$ or $C_b$, depending on which coordinate of $t$ is dominant. Thus $A=C_a\cup C_b$. Since $C$ is non-empty and we are in the automorphism case, then $c,d\in C$, that is  $C=\{c,d\}$. 
\begin{enumerate}[wide, labelindent=0pt]
    \item[{\sc{\textbf{Subcase 1a.}}}] $\varphi (c)=c$ and $\varphi (d)=d$. 
    
    Hence, $(c,c),(d,d)\in R_{ab}$, so there exist binary terms $t_1$ and $t_2$ such that $t_1(a,b)=t_1(b,a)=c$ and $t_2(a,b)=t_2(b,a)=d$.

By examining three-element minimal Taylor algebras listed in the subsection \ref{s3elem}, we see that at least one of the sets $\{a,c\}$ and $\{a,d\}$ is a subuniverse. Let $\{a,c\}$ be a subuniverse. Hence $\{b,c\}=\varphi(\{a,c\})$ is also a subuniverse. Since $t_1(a,b)=t_1(b,a)=c$, we have $A=\mathrm{Sg}\{a,b\}=\{a,b,c\}$ by Lemma~\ref{3elem}, a contradiction.

    \item[{\sc{\textbf{Subcase 1b.}}}] $\varphi (c)=d$ and $\varphi (d)=c$.
    

It is already assumed that there is no weak edge between $a$ and $b$, so in view of Theorem~\ref{connectthm}, some weak edge must connect $a$ with $c$ or $d$. We claim that one such edge must be either affine or an outgoing semilattice edge, otherwise by Theorem~\ref{singletoncenter}, $\{a\}\trianglelefteq_3 \m a$. Then $\{b\}\trianglelefteq_3 \m a$ follows because of the automorphism, which together with the fourth item of Theorem~\ref{compositionactslike} means that the ternary term $m$ from that theorem acts on $\{a,b\}$ as the majority. This implies $A=\mathrm{Sg}\{a,b\}=\{a,b\}$, a contradiction. 

Consider the subalgebra $\mathbf{C}_a$ on domain $C_a=\{a,c,d\}$.

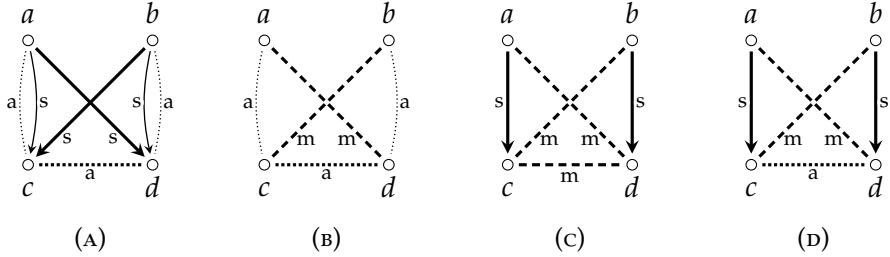
\begin{figure}[h]
	\begin{subfigure}{0.24\textwidth}
		\centering
		\begin{tikzpicture}
			\tikzstyle{every node}=[draw,circle,fill=white,minimum size=4pt,inner sep=0pt]
			
			\node (a) [label=above:\strut{$a$}] {};
			\node (b) [right=1.5cm of a,label=above:\strut{$b$}] {};
			\node (c) [below=1.5cm of a,label=below:\strut{$c$}] {};
			\node (d) [right=1.5cm of c,label=below:\strut{$d$}] {};
			
			
			\path[-,line width=0.5pt,densely dotted, shorten <=0.7mm, shorten >=0.7mm]
			(b) edge [bend left=12] node[draw=none,midway,right] {\scriptsize{a}} (d)
			(a) edge [bend right=12] node[draw=none,midway,left] {\scriptsize{a}} (c);
			\path[-,line width=1.2pt,densely dotted, shorten <=0.7mm, shorten >=0.7mm]
			(c) edge node[draw=none,midway,below] {\scriptsize{a}} (d);
			\path[->,>=stealth,line width=1.2pt, shorten <=0.7mm, shorten >=0.7mm]
			(a) edge [line width=0.5pt] [bend left=12] node[draw=none,midway,right] {\scriptsize{s}} (c)
			(b) edge node[draw=none,near end,below right] {\scriptsize{s}} (c)
			(b) edge [line width=0.5pt] [bend right=12] node[draw=none,midway,left] {\scriptsize{s}} (d)
			(a) edge node[draw=none,near end,below left] {\scriptsize{s}} (d);
			
		\end{tikzpicture}
		\caption{}
		\label{subcase1b1prva}
	\end{subfigure}%
	\hfill
	\begin{subfigure}{0.24\textwidth}
		\centering
		\begin{tikzpicture}
			\tikzstyle{every node}=[draw,circle,fill=white,minimum size=4pt,inner sep=0pt]
			
			\node (a) [label=above:\strut{$a$}] {};
			\node (b) [right=1.5cm of a,label=above:\strut{$b$}] {};
			\node (c) [below=1.5cm of a,label=below:\strut{$c$}] {};
			\node (d) [right=1.5cm of c,label=below:\strut{$d$}] {};
			
			
			\path[-,line width=0.5pt,densely dotted, shorten <=0.7mm, shorten >=0.7mm]
			(b) edge [bend left=12] node[draw=none,midway,right] {\scriptsize{a}} (d)
			(a) edge [bend right=12] node[draw=none,midway,left] {\scriptsize{a}} (c);
			\path[-,line width=1.2pt,densely dotted, shorten <=0.7mm, shorten >=0.7mm]
			(c) edge node[draw=none,midway,below] {\scriptsize{a}} (d);
			\path[-,line width=1.2pt,densely dashed, shorten <=0.7mm, shorten >=0.7mm]
			(b) edge node[draw=none,near end,below right] {\scriptsize{m}} (c)
			(a) edge node[draw=none,near end,below left] {\scriptsize{m}} (d);
		\end{tikzpicture}
		\caption{}
            \label{subcase1b1druga}
	\end{subfigure}
	\hfill
	\begin{subfigure}{0.24\textwidth}
		\centering
		\begin{tikzpicture}
			\tikzstyle{every node}=[draw,circle,fill=white,minimum size=4pt,inner sep=0pt]
			
			\node (a) [label=above:\strut{$a$}] {};
			\node (b) [right=1.5cm of a,label=above:\strut{$b$}] {};
			\node (c) [below=1.5cm of a,label=below:\strut{$c$}] {};
			\node (d) [right=1.5cm of c,label=below:\strut{$d$}] {};
			
			\path[->,line width=1.2pt,>=stealth, shorten <=0.7mm, shorten >=0.7mm]
                (a) edge node[draw=none,midway,left] {\scriptsize{s}} (c)
                (b) edge node[draw=none,midway,right] {\scriptsize{s}} (d);
			
			\path[-,line width=1.2pt,densely dashed, shorten <=0.7mm, shorten >=0.7mm]
			(c) edge node[draw=none,midway,below] {\scriptsize{m}} (d);
			\path[-,line width=1.2pt,densely dashed, shorten <=0.7mm, shorten >=0.7mm]
			(b) edge node[draw=none,near end,below right] {\scriptsize{m}} (c)
			(a) edge node[draw=none,near end,below left] {\scriptsize{m}} (d);
		\end{tikzpicture}
		\caption{}
            \label{subcase1b3prva}
	\end{subfigure}
	\hfill
	\begin{subfigure}{0.24\textwidth}
		\centering
		\begin{tikzpicture}
			\tikzstyle{every node}=[draw,circle,fill=white,minimum size=4pt,inner sep=0pt]
			
			\node (a) [label=above:\strut{$a$}] {};
			\node (b) [right=1.5cm of a,label=above:\strut{$b$}] {};
			\node (c) [below=1.5cm of a,label=below:\strut{$c$}] {};
			\node (d) [right=1.5cm of c,label=below:\strut{$d$}] {};
			
			\path[->,line width=1.2pt,>=stealth, shorten <=0.7mm, shorten >=0.7mm]
                (a) edge node[draw=none,midway,left] {\scriptsize{s}} (c)
                (b) edge node[draw=none,midway,right] {\scriptsize{s}} (d);
			
			\path[-,line width=1.2pt,densely dotted, shorten <=0.7mm, shorten >=0.7mm]
			(c) edge node[draw=none,midway,below] {\scriptsize{a}} (d);
			\path[-,line width=1.2pt,densely dashed, shorten <=0.7mm, shorten >=0.7mm]
			(b) edge node[draw=none,near end,below right] {\scriptsize{m}} (c)
			(a) edge node[draw=none,near end,below left] {\scriptsize{m}} (d);
		\end{tikzpicture}
		\caption{}
            \label{subcase1b3druga}
	\end{subfigure}
    \caption{\sc{Subcase 1b}}
\end{figure}

\begin{enumerate}[wide, labelwidth=!,labelindent=0pt,itemindent=!]
    \item[{\sc Subsubcase 1b1}.]
    There is a weak affine edge going from element $a$. 
     
Without loss of generality, we can assume that $\{a,c\}$ is a weak affine edge. The set $C=\{c,d\}$ $3$-absorbs the algebra ${\m A}$, hence $m(a,c,c)\in \{c,d\}$. If $\{a,c\}$ were a two-element subalgebra, it would contradict the assumption that this set is an affine edge. Therefore, $\mathbf{C}_a$ is not conservative and $C_a=\mathrm{Sg}\{a,c\}=\{a,c,d\}$. Since $\mathbf{T}_5^{\n}$ has no 3-absorbing subsets, only two possibilities are left: ${\m C}_a\cong {\m T}_2^{\n}$ or ${\m C}_a\cong {\m T}_3^{\n}$. 

If ${\m C}_a\cong {\m T}_2^{\n}$ (see \Cref{subcase1b1prva}), then $C$ is a $2$-absorbing set of ${\m C}_a$. Since the automorphism $\varphi$ maps ${\m C}_a$ to ${\m C}_b$, it follows that $C\trianglelefteq_2{\m C}_b$, thus $C\trianglelefteq_2{\m A}$. From the first part of Lemma~\ref{lemma1} it is easy to derive that $\{\{a\},\{b\},\{c,d\}\}$ is a factor of ${\m A}$ (term-equivalent to semilattice ${\m T}_4^{\n}$), a contradiction. 

Consider the case when ${\m C}_a\cong {\m T}_3^{\n}$ (where $a\mapsto 1$, $c\mapsto 2$, $d\mapsto 0$), and also ${\m C}_b\cong {\m T}_3^{\n}$ (see \Cref{subcase1b1druga}). The algebra ${\m A}$ has a ternary cyclic term $g$ by \Cref{cyclic}. Restricted to ${\m C}_a$ and ${\m C}_b$, the term $g$ acts as the corresponding operation from the \Cref{table:1} because ${\m T}_3^{\n}$ has an affine factor $\{\{0,1\},\{2\}\}$ and the ternary absorbing set $\{0,2\}$. Therefore, to completely determine $g$, we only need to find the values $g(a,a,b)$, $g(a,b,c)$, and $g(a,b,d)$, which are all in $C$.
Let $g'$ be a ternary cyclic term defined by $$g'(x,y,z) = g(g(x,x,y),g(y,y,z),g(z,z,x)).$$

\noindent Now, we have $g'(a,a,b)=c$ while $g'$ acts as $g$ on $C_a$ and $C_b$. Again, we define a new ternary cyclic term $g''$ by 
$$g''(x,y,z) = g'(g'(x,x,y),g'(y,y,z),g'(z,z,x)).$$
\noindent Now, we get
$$g''(a,b,c)=g'(g'(a,a,b),g'(b,b,c),g'(c,c,a))=g'(c,b,d)=d,$$
$$g''(a,b,d)=g'(g'(a,a,b),g'(b,b,d),g'(d,d,a))=g'(c,d,d)=c,$$
while $g''$ has the same values as $g'$ in all other cases. Hence, the term $g''$ is fully determined and must exist in algebra ${\m A}$ in this case. Furthermore, $g''$ generates $\mathrm{Clo}({\m A})$ since it is a cyclic term, and produces an affine quotient $\{\{a,d\},\{b,c\}\}$, which is a contradiction.

    \item[{\sc Subsubcase 1b2}.] $(a,c)$ and $(a,d)$ are weak semilattice edges.
    
    We have $\{c,d\}\trianglelefteq_2 \m c_a$ in all those cases. Using the automorphism's properties, we obtain $\{c,d\}\trianglelefteq_2\m c_b$, so $\{c,d\}\trianglelefteq_2\m a$. Then there is a congruence $\theta$ of $\m a$ with classes $\{a\}$, $\{b\}$ and $\{c,d\}$, and the quotient $\m a/\theta$ term-equivalent to $\m t_4^{\n}$, which is a contradiction.

{\sc Subsubcases 1b1} and {\sc 1b2} cover all cases where $\m c_a$ is nonconservative, according to the list of all 3-element minimal Taylor algebras and other properties we have uncovered about $\m c_a$, so we are left with

    \item[{\sc Subsubcase 1b3}.] $\{a,c\}$ and $\{a,d\}$ are subuniverses.

    As we have seen, it is not possible for both to be majority algebras, while neither is affine, as that was dealt with in {\sc Subsubcase 1b1}. It is impossible that both of them are two-element semilattices, as $C\trianglelefteq C_a$ implies that both would be going out of $a$, and we dealt with that case in {\sc Subsubcase 1b2}. For that reason, one of the subuniverses must be a two-element semilattice, and the other a two-element majority algebra. Therefore, let $\{a,c\}$ be a two-element semilattice with absorbing element $c$ and let $\{a,d\}$ be a two-element majority algebra. It follows that ${\m C}_a$ is a conservative minimal Taylor algebra, and the subalgebra on universe $C=\{c,d\}$ is either the two-element majority algebra or the two-element affine algebra (since the automorphism $\varphi$ flips $c$ and $d$, it can't be the semilattice).

Let ${\m C}$ be a two-element majority algebra as in \Cref{subcase1b3prva}. From \Cref{table:4} we see that ${\m C}_a\cong {\m T}_4^{\co}$ holds. Now, we can assume that algebra ${\m A}$ has a ternary cyclic term $g$ by \Cref{cyclic}, also we see that ${\m C}_a\cong {\m T}_4^{\co}$ has a majority quotient $\{\{a,c\},\{d\}\}$ and $C$ is a $3$-absorbing set of ${\m C}_a$. Thus, the cyclic operation $g$ acts in the same way as the corresponding cyclic operation from \Cref{table:4} for the algebra ${\m T}_4^{\co}$ $(a\mapsto 1, c\mapsto 0, d\mapsto 2)$. Similar to before, we introduce new cyclic operations
$$g'(x,y,z) := g(g(x,x,y),g(y,y,z),g(z,z,x)),$$
$$g''(x,y,z) := g'(g'(x,x,y),g'(y,y,z),g'(z,z,x)).$$
Since $g(a,a,b)\not= g(b,b,a)$, we get $g'(a,a,b)=g''(a,a,b)=c$, $g'(b,b,a)=g''(b,b,a)=d$ and
$$g''(a,b,c)=g'(g'(a,a,b),g'(b,b,c),g'(c,c,a))=g'(c,b,c)=c,$$
$$g''(a,b,d)=g'(g'(a,a,b),g'(b,b,d),g'(d,d,a))=g'(c,d,d)=d.$$
From automorphism we have $g''(a,d,b)=d$ and $g''(a,c,b)=c$, while  $g''$ has the same values as the term $g$ on $C_a$ and $C_b$. Hence, this algebra ${\m A}$ has a majority quotient $\{\{a,c\},\{b,d\}\}$, a contradiction.

Consider now the case when ${\m C}$ is a two-element affine algebra (\Cref{subcase1b3druga}). According to \Cref{table:4}, ${\m C}_a\cong {\m T}_{10}^{\co}$ must hold. Additionally, we may assume that algebra ${\m A}$ has a ternary cyclic term $g$ by \Cref{cyclic} and $C$ is a $3$-absorbing set of ${\m A}$. Unlike in the previous case, now we cannot assume that $g$ acts in the same way as the corresponding term from \Cref{table:4} because ${\m T}_{10}^{\co}$ has at least two ternary cyclic terms (one can get $g'$ from $g$ in the usual way and $g'(0,1,2)=g'(0,2,1)=0\neq 2=g(0,1,2)=g(0,2,1))$. Obviously, on two-element subalgebras, all ternary cyclic terms act similarly. Suppose first that $g(a,a,b)=d$ and $g(b,b,a)=c$ hold. Let $g'$ be a ternary cyclic term defined by
$$g'(x,y,z) = g(g(x,x,g(x,y,z)),g(y,y,g(y,z,x)),g(z,z,g(z,x,y))).$$ We have $$g'(a,a,b)=g(g(a,a,d),g(a,a,d),g(b,b,d))=g(a,a,d)=a$$ and $g'(b,b,a)=b$. Since $g'$ is cyclic, we get $\mathrm{Sg}\{a,b\}=\{a,b\}$, a contradiction. If $g(a,a,b)=c$ and $g(b,b,a)=d$, we define a new ternary cyclic term $g''$ by
\begin{align*}
g''(x,y,z) = g(&g(x,g(y,y,z),g(y,y,z)),g(y,g(z,z,x),g(z,z,x)),\\ 
& g(z,g(x,x,y),g(x,x,y))).
\end{align*}
Now, we get $g''(a,a,b)=d$ and $g''(b,b,a)=c$, and we can apply the previous reasoning on $g''$.
\end{enumerate}

\end{enumerate}

\item[{\sc{\textbf{Case 2.}}}] $R_{ab}$ is linked.

We shall use the Loop Lemma in this case. In a similar way as in Lemma~\ref{lemma1} we get $R_{a,b}\cap \Delta_A\neq \emptyset$. It is impossible that $(a,a)\in R_{a,b}$ or $(b,b)\in R_{a,b}$ holds; otherwise, if, for example, $(a,a)\in R_{a,b}$, we have $a=s(a,b)=s(b,a)$ for some binary term $s$, so $(b,a)$ is a semilattice edge, a contradiction. Therefore, let us suppose, without loss of generality, that $c=t(a,b)=t(b,a)$ holds for some binary term $t$. Recall $C_a$ from Lemma~\ref{3elem}, the $3$-absorbing set of ${\m A}$ such that $a, c\in C_a$ and $b\notin C_a$. We prove this case by discussing the number of elements in $C_a$.

\begin{enumerate}[wide, labelwidth=!,labelindent=0pt,itemindent=!]
    \item[{\sc{\textbf{Subcase 2a.}}}] $C_a=\{a,c\}.$

    Observe that $\{a,c\}$ must be an affine or $(a,c)$ a semilattice edge in this case. Indeed, if $\{a,c\}$ were a majority edge or $(c,a)$ a semilattice edge, we would have $\{a\}\trianglelefteq_3 C_a \trianglelefteq_3 \m a$. Thus by Proposition~\ref{smedge}, $(b,a)$ would be a semilattice edge or a majority edge, a contradiction. From Proposition~\ref{smedge} it follows that $(b,c)$ must be a semilattice edge or $\{b,c\}$ a weak majority edge. The first case would imply that both $\{a,c\}$ and $\{b,c\}$ are subuniverses, which by Lemma \ref{3elem} leads to $A=\{a,b,c\}$, a contradiction. Hence $\{b,c\}$ is a weak majority edge. Moreover, $a\notin \mathrm{Sg}\{b,c\}$, or the congruence witnessing the weak majority edge $\{b,c\}$ would be a nontrivial congruence, and $\mathrm{Sg}\{b,c\}=\{b,c,d\}$.

\begin{enumerate}[wide, labelwidth=!,labelindent=0pt,itemindent=!]
    \item[\sc{Subsubcase 2a1}.] $\{a,c\}$ is an affine edge.


From $\mathrm{Sg}\{b,c\}=\{b,c,d\}$ and Proposition~\ref{smedge} follows $m(b,c,c)=c$. Since $(a,b), (c,c)\in R_{ab}$, we have that
$$
\begin{bmatrix}
a \\
c
\end{bmatrix}=
m\left(
\begin{bmatrix}
a \\
b
\end{bmatrix}, 
\begin{bmatrix}
c \\
c
\end{bmatrix},
\begin{bmatrix}
c \\
c
\end{bmatrix}
\right)\in R_{ab},
$$
and by symmetry $(c,a)\in R_{ab}$. Therefore,
$$
\begin{bmatrix}
a \\
a
\end{bmatrix}=
m\left(
\begin{bmatrix}
a \\
c
\end{bmatrix}, 
\begin{bmatrix}
c \\
c
\end{bmatrix},
\begin{bmatrix}
c \\
a
\end{bmatrix}
\right)\in R_{ab},
$$
which is a contradiction.

\item[\sc{Subsubcase 2a2}.] $(a,c)$ is a semilattice edge.

By examining the nonconservative three-element algebras, we get the only one possibility -- $\mathrm{Sg}\{b,c\}\cong {\m T}_1^{\n}$ (where $c\mapsto 1, b\mapsto 2, d\mapsto 0$). However, in that case, it can be easily checked that $\{a,c,d\}$ binary absorbs $\m a$ by $t'$, where $t'(x,y):=t(t(x,y),x)$ if we assume that $t$ has the first coordinate as the dominant one. Hence, $(b,a)$ is a weak semilattice edge, a contradiction.
\end{enumerate}

    \item[{\sc{\textbf{Subcase 2b.}}}] $C_a=\{a,c,d\}$.

     A weak affine edge or a semilattice edge is going from $b$ to $c$ or $d$; otherwise, the set $\{b\}$ would be a $3$-absorbing set, so there would exist a weak edge between $a$ and $b$ by Proposition~\ref{smedge}. 

\begin{enumerate}[wide, labelwidth=!,labelindent=0pt,itemindent=!]
    \item[{\sc Subsubcase 2b1}.] $\{b,c\}$ or $\{b,d\}$ is a weak affine edge.

If $\{b,c\}$ is a weak affine edge, then an analogous argument as at the start of {\sc Subsubcase} 1{\sc{b}}1 proves $\mathrm{Sg}\{b,c\}=\{b,c,d\}$. Similarly, if $\{b,d\}$ is a weak affine edge, then $\mathrm{Sg}\{b,d\}=\{b,c,d\}$. In both cases, we obtain that $\{c,d\}$ must be an affine subalgebra, also there are only two possibilities for $\{b,c,d\}$; it is either ${\m T}_2^{\n}$ or ${\m T}_3^{\n}$. If $\{b,c,d\}\cong{\m T}_2^{\n}$, then $\{c,d\}\trianglelefteq_2\{b,c,d\}$, consequently $C_a\trianglelefteq_2\m a$, and we would have that $(b,a)$ must be a weak semilattice edge, a contradiction. So we may assume that $\{b,c,d\}\cong{\m T}_3^{\n}$ holds. \\

\begin{claim}
$(d,d)\in R_{ab}$. 
\end{claim}

\begin{proof}
From $R_{ab}$ being a symmetric and linked relation, we know that $c$ is connected to $c$ and at least one other element, thus $\{c\}\subsetneq R_{ab}[c]$. There are three cases:
\begin{itemize}[leftmargin=*]
\item $d\in R_{ab}[c].$ Then
$$
\begin{bmatrix}
d \\
d
\end{bmatrix}=
m\left(
\begin{bmatrix}
c \\
d
\end{bmatrix}, 
\begin{bmatrix}
c \\
c
\end{bmatrix},
\begin{bmatrix}
d \\
c
\end{bmatrix}
\right)\in R_{ab}.
$$

\item $R_{ab}[c]=\{a,c\}$, which must then be a subuniverse of $\m a$. Since $(a,b),(c,a)\in R_{ab}$, we have that $R_{a,b}[\{a,c\}]= A$, so $(a,d)\in R_{ab}$ follows. If $m(a,c,c)=c$, or $m(c,a,a)=c$, we obtain $(c,d)\in R_{ab}$ (contradicting $R_{ab}[c]=\{a,c\}$):
$$
\begin{bmatrix}
c \\
d
\end{bmatrix}=
m\left(
\begin{bmatrix}
a \\
d
\end{bmatrix}, 
\begin{bmatrix}
c \\
c
\end{bmatrix},
\begin{bmatrix}
c \\
c
\end{bmatrix}
\right) \text{, or} 
\begin{bmatrix}
c \\
d
\end{bmatrix}=
m\left(
\begin{bmatrix}
c \\
c
\end{bmatrix}, 
\begin{bmatrix}
a \\
c
\end{bmatrix},
\begin{bmatrix}
a \\
d
\end{bmatrix}
\right)\text{, respectively.}
$$
On the other hand, if $a=m(a,c,c)=m(c,a,a)$, then $\{a,c\}$ is a two-element semilattice, and we have that
$$
\begin{bmatrix}
a \\
a
\end{bmatrix}=
m\left(
\begin{bmatrix}
a \\
c
\end{bmatrix}, 
\begin{bmatrix}
a \\
c
\end{bmatrix},
\begin{bmatrix}
c \\
a
\end{bmatrix}
\right)\in R_{ab}.$$
However, that is impossible because then $\{b,a\}$ is a two-element semilattice. 

\item $R_{ab}[c]=\{b,c\}.$ We handle this case similarly to the previous one. $R_{ab}[c]=\{b,c\}$ implies that $\{b,c\}$ is a subuniverse of $\m a$. As $(b,a),(c,b)\in R_{ab}$, we have that $R_{ab}[\{b,c\}]=A$. As $(c,d)\notin R_{ab}$, we obtain $(b,d)\in R_{ab}.$ Since $\{a,c,d\}\trianglelefteq_3 A$, by intersecting with the subuniverse $\{b,c,d\}$ we obtain $\{c,d\}\trianglelefteq_3\{b,c,d\}$. Hence $m(b,c,c)\in \{c,d\}$, and since $\{b,c\}$ is a subuniverse, then $m(b,c,c)=c$. The contradiction $(c,d)\in R_{ab}$ follows from
$$
\begin{bmatrix}
c \\
d
\end{bmatrix}=
m\left(
\begin{bmatrix}
b \\
d
\end{bmatrix}, 
\begin{bmatrix}
c \\
c
\end{bmatrix},
\begin{bmatrix}
c \\
c
\end{bmatrix}
\right)\in R_{ab}.
$$
\end{itemize} 
\end{proof}


One of the sets $\{b,c\}$ and $\{b,d\}$ is a subuniverse, and since we just proved $(d,d)\in R_{ab}$, we know all the properties of $c$ (that $t(a,b)=t(b,a)=c$) also hold for $d$ and some other binary term. So, without loss of generality, we can assume that $\{b,c\}$ is a subuniverse. From Lemma~\ref{3elem} we know that $\mathrm{Sg}\{a,c\}=\{a,c\}$ is impossible, so we have $\mathrm{Sg}\{a,c\}=\{a,c,d\}=C_a$. Since $\{c,d\}$ is a two-element affine subalgebra, then the only possibilities are that $\m C_a\cong {\m T}_2^{\n}$ or $\m C_a\cong {\m T}_3^{\n}$.

 If $\m C_a\cong{\m T}_2^{\n}$, the subuniverse $\{b,c,d\}$ is $2$-absorbing as $t$ witnesses the absorption, so $(a,b)$ is a weak semilattice edge, which is a contradiction. Let us now assume $\m C_a\cong {\m T}_3^{\n}$ $(c\mapsto 2, a\mapsto 1, d\mapsto 0)$. We shall prove that ${\m A}$ has an affine quotient $\{\{a,d\},\{b,c\}\}$. By \Cref{cyclic}, the algebra ${\m A}$ has a ternary cyclic term $g$. We know that both $\{a,c,d\}$ and $\{b,c,d\}$ are isomorphic to ${\m T}_3^{\n}$, which has an affine quotient and a ternary absorbing subuniverse such that every ternary cyclic term must act in the same way. Define a new ternary cyclic term $g'$
$$g'(x,y,z) := g(t(x,y),t(y,z),t(z,x)).$$
We get $g'(a,a,b)=g(a,c,c)=d$ and $g'(b,b,a)=g(b,c,c)=c$. Again, define a new ternary cyclic term $g''$
$$g''(x,y,z) := g'(g'(x,x,y),g'(y,y,z),g'(z,z,x)).$$
Now, $g''(a,a,b)=g'(a,d,c)=c$ and $g''(b,b,a)=g'(b,c,d)=d$. Finally, let $g'''$ be a ternary cyclic such that
$$g'''(x,y,z) = g''(g''(x,x,y),g''(y,y,z),g''(z,z,x)).$$
Then we have 
\begin{align*}
g'''(a,a,b)&=g''(g''(a,a,a),g''(a,a,b),g''(b,b,a))=g''(a,c,d)=c,\\
g'''(b,b,a)&=g''(g''(b,b,b),g''(b,b,a),g''(a,a,b))=g''(b,d,c)=d,\\
g'''(a,b,c)&=g''(g''(a,a,b),g''(b,b,c),g''(c,c,a))=g''(c,b,d)=d,\\
g'''(a,c,b)&=g''(g''(a,a,c),g''(c,c,b),g''(b,b,a))=g''(c,c,d)=d,\\
g'''(a,b,d)&=g''(g''(a,a,b),g''(b,b,d),g''(d,d,a))=g''(c,d,d)=c,\\
g'''(a,d,b)&=g''(g''(a,a,d),g''(d,d,b),g''(b,b,a))=g''(a,c,d)=c.
\end{align*}

It follows that $\{\{a,d\},\{b,c\}\}$ is an affine quotient, which is in contradiction with $c=t(a,b)=t(b,a)$.

\item[\sc{Subsubcase 2b2}.] There is a semilattice edge going from $b$ to $C_a$.

If $(b,c)$ and $(b,d)$ are weak semilattice edges, then $C_a$ is a $2$-absorbing set of ${\m A}$ and $(b,a)$ is a weak semilattice edge. Additionally, if we avoid {\sc Subsubcase 2b1}, by Proposition~\ref{smedge} we have that exactly one of $(b,c)$ and $(b,d)$ is a semilattice edge.

We shall prove that $\{b,c\}$ and $\{b,d\}$ are both subalgebras --- one of them a semilattice and other one a majority subalgebra. Suppose that $(b,c)$ is a semilattice edge and that $(b,d)$ is not. We claim that $\{b,d\}$ is a two-element majority algebra. Indeed, if $\mathrm{Sg}\{b,d\}=\{b,c,d\}$, since $(b,d)$ is not a weak semilattice edge, then $\{c,d\}$ is not a $2$-absorbing set for $\mathrm{Sg}\{b,d\}$, but by examining nonconservative algebras from Section~\ref{s3elem} one can confirm there is no such a three-element minimal Taylor algebra. Suppose now $a\in \mathrm{Sg}\{b,d\}$, thus $\mathrm{Sg}\{b,d\}=A$. Hence, $r(d,b)=c$ follows, for some binary term $r$. It can be easily checked that, if $t$ has the first coordinate dominant, the binary term $t(t(x,y),r(y,x))$ witnesses absorption $C_a\trianglelefteq_2\m a$, so $(b,a)$ must be a weak semilattice edge, a contradiction. Therefore, $\{b,d\}$ must be a majority subalgebra.

If $\mathrm{Sg}\{a,c\}=\{a,c\}$, by Lemma~\ref{3elem} we have $\mathrm{Sg}\{a,b\}=\{a,b,c\}$, which is impossible. Therefore, $\mathrm{Sg}\{a,c\}=C_a$ holds, so $\m c_a$ is a nonconservative three-element minimal Taylor algebra, and there are five cases:
\begin{itemize}[leftmargin=*]
\item $\m C_a\cong {\m T}_1^{\n}$. Since an affine edge or a semilattice edge must go out from $a$, it must be $a\mapsto 2$, $c\mapsto 1$, $d\mapsto 0$. But now it is easy to check that $t$ witnesses the absorption $\{b,c,d\}\trianglelefteq_2\m a$, so $(a,b)$ is a semilattice edge, a contradiction.

\item $\m C_a\cong {\m T}_2^{\n}$. If the element $a$ corresponds to $2$, we will show that $\{b,c,d\}$ is a $2$-absorbing set of ${\m A}$ (and consequently, $(a,b)$ is an semilattice edge). Note that $\{0,1\}\trianglelefteq_2 \m T_2^{\n}$, so $\{c,d\}\trianglelefteq_2 \{a,c,d\}$ and since $t(a,b)=t(b,a)=c$, then $\{b,c,d\}\trianglelefteq_2\m a$ is witnessed by $t$, a contradiction.

The remaining option is $a\mapsto 1, c\mapsto 2, d\mapsto 0$. But then the set $\{a,d\}$ is a $3$-absorbing set of $C_a$, and since $C_a$ is a $3$-absorbing of ${\m A}$, then $\{a,d\}\trianglelefteq_3{\m A}$. This is impossible since by Lemma~\ref{dominant} either $t(a,b)$ or $t(b,a)$ should be in $\{a,d\}$, and we know that $t(a,b)=t(b,a)=c\notin\{a,d\}$.

\item $\m C_a\cong {\m T}_3^{\n}$. Hence, $d\mapsto 0$ follows in this case. If $a\mapsto 2$ and $c\mapsto 1$, then $\{a,d\}$ is a $3$-absorbing set of $C_a$ and thus $\{a,d\}\trianglelefteq_3{\m A}$. This is a contradiction, just like in the previous case.
 Therefore, we can suppose $a\mapsto 1, c\mapsto 2$. Then $\{c,d\}$ is an affine subalgebra. Note that, from \Cref{table:1}, we conclude that $m(a,c,c)=d$, while $m(b,c,c)=c$ regardless of whether $\{b,c\}$ is the two-element semilattice or the majority algebra. Hence,
$$
\begin{gathered}
\begin{bmatrix}
d \\
c
\end{bmatrix}=
m\left(
\begin{bmatrix}
a \\
b
\end{bmatrix}, 
\begin{bmatrix}
c \\
c
\end{bmatrix},
\begin{bmatrix}
c \\
c
\end{bmatrix}
\right)\in R_{ab}\text{ and }
\begin{bmatrix}
c \\
d
\end{bmatrix}=
m\left(
\begin{bmatrix}
b \\
a
\end{bmatrix}, 
\begin{bmatrix}
c \\
c
\end{bmatrix},
\begin{bmatrix}
c \\
c
\end{bmatrix}
\right)\in R_{ab}\text{, thus }\\
\begin{bmatrix}
d \\
d
\end{bmatrix}=
m\left(
\begin{bmatrix}
d \\
c
\end{bmatrix}, 
\begin{bmatrix}
c \\
c
\end{bmatrix},
\begin{bmatrix}
c \\
d
\end{bmatrix}
\right)\in R_{ab}.
\end{gathered}
$$
Hence $q(a,b)=q(b,a)=d$ for some binary term $q$, and as $\{a,d\}$ and $\{b,d\}$ are both subuniverses, Lemma~\ref{3elem} guarantees that $\mathrm{Sg}\{a,b\}=\{a,b,d\}$, a contradiction.

\item $\m C_a\cong {\m T}_4^{\n}$. Then $\{c,d\}$ 2-absorbs $\{a,c,d\}$ and we prove that $\{b,c,d\}$ $2$-absorbs ${\m A}$ by the same argument as in the case $\m C_a\cong {\m T}_2^{\n}$. So $(a,b)$ is a weak semilattice edge, also a contradiction.

\item $\m C_a\cong {\m T}_5^{\n}(=\mathbb{Z}_3^{\text{aff}})$. On the set $C_a$, $m$ acts like a Mal'cev operation, and $C_a$ is generated by any two distinct elements. Regarding connectivity, $R_{ab}$ connects $c$ to some element other than itself. We distinguish three cases. 

Firstly, if $a\in R_{ab}[\{c\}]$, then
$$
\begin{bmatrix}
a \\
a
\end{bmatrix}=
m\left(
\begin{bmatrix}
c \\
a
\end{bmatrix}, 
\begin{bmatrix}
c \\
c
\end{bmatrix},
\begin{bmatrix}
a \\
c
\end{bmatrix}
\right)\in R_{ab},
$$
but this is impossible because, otherwise, $\{b,a\}$ is a two-element semilattice.

Secondly, if $d\in R_{ab}[\{c\}]$, since $\mathrm{Sg}\{c,d\}=\{a,c,d\}$, then $(c,a)\in R_{ab}$, and we are in the first case, which is proved to be impossible.

Finally, if $R_{ab}[\{c\}]=\{c,b\}$, then since $(a,b),(b,c)\in R_{ab}$, we have $R_{ab}[\{c,b\}]=A$. Now, we obtain that $(b,d)\in R_{ab}$ and one of two pairs 

$$
m\left(
\begin{bmatrix}
b \\
d
\end{bmatrix}, 
\begin{bmatrix}
c \\
b
\end{bmatrix},
\begin{bmatrix}
c \\
b
\end{bmatrix}
\right)
\text{and}\;
m\left(
\begin{bmatrix}
b \\
d
\end{bmatrix}, 
\begin{bmatrix}
b \\
d
\end{bmatrix},
\begin{bmatrix}
c \\
b
\end{bmatrix}
\right)
$$
is equal to $(c,d)$, but this contradicts $R_{ab}[\{c\}]=\{c,b\}$.

\end{itemize}
\end{enumerate}
\end{enumerate}

\end{enumerate}

Since a contradiction was reached in each case, we may conclude that the assumption that $\m a$ is simple is false, and thus the proof is complete.
\end{proof}
Due to Proposition~\ref{mintayloraff}, if $\m a$ is a 2-generated abelian minimal Taylor algebra on a four-element domain, then it has to be term-equivalent to an affine Mal'cev algebra $(A;x-y+z)$ over $\mathbb{Z}_4$ or $\mathbb{Z}_2\times \mathbb{Z}_2$. It is not hard to show that both algebras have a nontrivial quotient. That being said, we reach the following result.
\begin{cor}\label{notsimplecor}
Any $2$-generated minimal Taylor algebra on a domain of size four is not simple.
\end{cor}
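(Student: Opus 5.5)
The corollary splits according to whether $\m a$ is abelian. If $\m a$ is nonabelian, then Theorem~\ref{main-factor} applies directly: $\m a$ is a minimal Taylor algebra on four elements generated by two distinct elements, so it is not simple. (The generators are distinct because otherwise $\m a$ would be a singleton, by idempotency.) It therefore remains to handle a $2$-generated abelian minimal Taylor algebra $\m a$ with $|A|=4$.

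In the abelian case, Proposition~\ref{mintayloraff} says that $\m a$ is term-equivalent to the affine Mal'cev algebra $(A;x-y+z)$ of some abelian group $(A;+)$ of order four. The group is therefore either $\mathbb{Z}_4$ or $\mathbb{Z}_2\times\mathbb{Z}_2$. Since term-equivalent algebras have the same congruences, it suffices to produce a nontrivial congruence of each of these two affine algebras.

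For the congruences, I use that every subgroup $H$ of $(A;+)$ gives the equivalence ``$x\equiv y$ iff $x-y\in H$''. This relation is compatible with $x-y+z$: if $x-x',\,y-y',\,z-z'\in H$, then
\[
(x-y+z)-(x'-y'+z')=(x-x')-(y-y')+(z-z')\in H.
\]
Now choose $H=\{0,2\}$ in $\mathbb{Z}_4$, and $H=\mathbb{Z}_2\times\{0\}$ in $\mathbb{Z}_2\times\mathbb{Z}_2$. In both cases $H$ has order $2$, so the induced congruence has two classes of size two. It is thus neither the identity nor the full relation, and neither affine algebra is simple.

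There is no real obstacle here; all the difficulty sits in Theorem~\ref{main-factor}. The only points to check are that the $2$-generation hypothesis is needed only in the nonabelian case, and that Proposition~\ref{mintayloraff} covers every abelian minimal Taylor algebra, so the two cases are exhaustive.
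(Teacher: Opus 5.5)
Your proposal is correct and follows the paper's argument: the nonabelian case is Theorem~\ref{main-factor}, and the abelian case goes through Proposition~\ref{mintayloraff}, reducing to the affine Mal'cev algebras over $\mathbb{Z}_4$ or $\mathbb{Z}_2\times\mathbb{Z}_2$. Your congruence induced by a subgroup of order $2$ makes explicit the nontrivial quotient that the paper only calls ``not hard to show''.
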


\section{Classification of 2-generated minimal Taylor algebras on a domain of size 4}

\subsection{Overall strategy for classification and a summary of results}

We proved that every 2-generated minimal Taylor algebra on a four-element domain is not simple. We shall classify these algebras based on their maximal quotients. From \Cref{s3elem} follows that each minimal Taylor algebra $\m a=\mathrm{Sg}\{a,b\}$ on a four-element set must have at least one of the following quotients: two-element semilattice $\m s$, two-element majority algebra $\m m$, or one of the affine algebras $\mathbb{Z}_2^{\text{aff}}$ and $\mathbb{Z}_3^{\text{aff}}$.

We first suppose that $\m a$ has a 2-element semilattice quotient (Subsection~\ref{s:semi}) and discover seven such minimal Taylor algebras $\T_{4,1}-\T_{4,7}$. Next, we suppose that $\m a$ has a 2-element majority quotient, but no 2-element semilattice quotient (Subsection~\ref{s:maj}) and find minimal Taylor algebras $\T_{4,8}-\T_{4,10}$. When $\m a$ has a quotient isomorphic to  $\mathbb{Z}_2^{\text{aff}}$, but neither of the previous two types of quotients in Subsection~\ref{s:Z2}, we discover four more minimal Taylor algebras $\T_{4,11}-\T_{4,13}$. The remaining case with the quotient $\mathbb{Z}_3^{\text{aff}}$ yields algebras $\T_{4,14}-\T_{4,18}$.

Of those nineteen algebras, these are all subdirect products of smaller algebras: $\T_{4,1}\leq_{sd}\m s\times\T_1^{\n}, \T_{4,2}\leq_{sd}\m s\times\T_2^{\n},\T_{4,3},\T_{4,4}\leq_{sd}\m s\times\T_3^{\n}$ (both subdirectly embed into the same product), $\T_{4,7}\leq_{sd}\m s\times\mathbb{Z}_3^{\text{aff}}, \T_{4,9}\cong\m m\times\mathbb{Z}_2^{\text{aff}}$. The algebras $\T_{4,5},\T _{4,6},\T_{4,8},\T_{4,10},\T_{4,11},\T_{4,12},\T_{4,13},\T_{4,14},\T_{4,15},\T_{4,16},\T_{4,17}$ and $\T_{4,18}$ are subdirectly irreducible. Of the subdirectly irreducible examples, three have appeared in \cite{dreamteam}, namely $\T_{4,10}$, $\T_{4,13}$, and $\T_{4,14}$ were described in Example 5.16, Example 5.17, and Example 5.20 of \cite{dreamteam}, respectively. The remaining nine subdirectly irreducible minimal Taylor algebras, to our knowledge, appear for the first time in print here, so we will describe them more thoroughly in case they become useful later.

\subsection{Auxiliary lemmas for the classification}

The following lemma by Barto and Kozik shall be used on numerous occasions later to prove the existence of a cyclic term. 

\begin{lem}[Lemma 4.19 of \cite{BartoKozik}]\label{bkcyclic}
Let $\m A$ be a finite, idempotent algebra and $\alpha$ be a congruence of $\m A$. If ${\m A}/\alpha$ and every $\alpha$-class in $A$ has a cyclic operation of arity $k$, then so does $\m A$.
\end{lem}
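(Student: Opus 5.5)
We will prove the statement by induction on $|A|$, building a cyclic term of arity $k$ on $\m A$ from cyclic terms on the quotient and on the classes. Fix a cyclic term $s$ of arity $k$ of $\m A/\alpha$, realized by some $k$-ary term $s$ of $\m A$. For every $\alpha$-class $B$, fix a $k$-ary term $t_B$ of $\m A$ whose restriction to $B$ is cyclic. The plan has three steps: first get a term that is cyclic modulo $\alpha$ everywhere, then make it cyclic on each class, and finally compose these into one term.

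\emph{Cyclic modulo $\alpha$.} By idempotency, on any tuple $\mathbf{x}$ the element $s(\mathbf{x})$ lies in one class. Moreover, $s(\mathbf{x})$ and $s(\sigma\mathbf{x})$, for a cyclic shift $\sigma$, lie in the same class. Hence every tuple of the form
\[
\bigl(s(\mathbf{x}),s(\sigma\mathbf{x}),\dots,s(\sigma^{k-1}\mathbf{x})\bigr)
\]
lies inside a single $\alpha$-class, and its entries are cyclically permuted when $\mathbf{x}$ is shifted. This reduces the problem to making an operation cyclic on tuples contained in one class.

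\emph{Cyclic on each class.} For each class $B$ we need a term that is cyclic on $B^k$ and still cyclic modulo $\alpha$. Write $B_1,\dots,B_r$ for the classes. Define inductively $u_0(\mathbf{x})=s(\mathbf{x})$ and
\[
u_{i}(\mathbf{x})=t_{B_i}\bigl(u_{i-1}(\mathbf{x}),u_{i-1}(\sigma\mathbf{x}),\dots,u_{i-1}(\sigma^{k-1}\mathbf{x})\bigr).
\]
We check two properties by induction on $i$. First, each $u_i$ is cyclic modulo $\alpha$, since $t_{B_i}$ is idempotent and applied to arguments in one class. Second, each $u_i$ is cyclic on $B_j^k$ for $j\le i$. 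For $j=i$ this holds because $t_{B_i}$ is cyclic on $B_i$ and the inner arguments lie in $B_i$ when $\mathbf{x}\in B_i^k$. For $j<i$, the inner tuple is constant, since $u_{i-1}$ is already cyclic on $B_j$, so idempotency preserves cyclicity.

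The main subtlety is the general tuple $\mathbf{x}$ not inside one class. There $u_r$ is only cyclic modulo $\alpha$, so we need one more composition. Let $w(\mathbf{y})=u_r(\mathbf{y})$, and define
\[
v(\mathbf{x})=w\bigl(u_r(\mathbf{x}),u_r(\sigma\mathbf{x}),\dots,u_r(\sigma^{k-1}\mathbf{x})\bigr).
\]
The inner tuple lies in a single class, on which $w$ is cyclic. Cyclically shifting $\mathbf{x}$ cyclically shifts the inner tuple, so $v(\sigma\mathbf{x})=v(\mathbf{x})$. Hence $v$ is a cyclic term of arity $k$ on all of $\m A$.

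The expected obstacle is the bookkeeping in the second step, namely verifying that each successive composition preserves cyclicity on the earlier classes. The key point is that a term already cyclic on $B_j$ produces a constant argument tuple there. Idempotency of $\m A$ is used throughout, both to keep classes as subuniverses and to collapse constant tuples. With this observation the argument is routine.
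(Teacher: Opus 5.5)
The paper gives no proof of this lemma; it only quotes it from Barto--Kozik, so there is no in-paper argument to compare against. Your proof is correct, and it is essentially the standard composition argument behind that citation. You first take a term that is cyclic modulo $\alpha$. You then repeatedly feed its $k$ cyclic shifts into the class-wise cyclic terms. Idempotency guarantees that a class on which the current term is already cyclic sees a constant argument tuple, so it is left undisturbed.

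Two cosmetic remarks. First, the announced ``induction on $|A|$'' is never used: the argument is direct, and finiteness only ensures there are finitely many classes $B_1,\dots,B_r$ to iterate over. Second, your final composition $v$ is redundant. Because you start from $u_0=s$, every $u_i(\mathbf{x})$ stays in the class of $s(\mathbf{x})$. The same induction then shows $u_i(\sigma\mathbf{x})=u_i(\mathbf{x})$ whenever $s(\mathbf{x})\in B_j$ with $j\le i$. So $u_r$ is already cyclic on all of $A^k$, not just on each $B_j^k$.
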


From now on, we may assume that $\m a$ is a minimal Taylor algebra with underlying set $A=\{a,b,c,d\}$ generated by elements $a$ and $b$. As before, according to \Cref{compositionactslike} and \Cref{gen}, $\m a$ contains a ternary operation $m$ which generates the clone $\mathrm{Clo}(\m a)$. Henceforth, we perceive $\m a$ as a minimal Taylor algebra with $m$ as the only basic operation. We begin with a couple of lemmas we will use in the first subsection.

\begin{lem}\label{triangle}
Let $\m a=\mathrm{Sg}\{a,b\}$ be a minimal Taylor algebra such that $a\neq b$ and $B\trianglelefteq_2 {\m A}$, where $B=A\backslash \{a\}$. If $\{a,c\}$ and $\{b,c\}$ are subuniverses for some $c\in B$ and if $t(a,b),t(b,a)\in \{b,c\}$ for some binary term $t$, then $A=\{a,b,c\}$.
\end{lem}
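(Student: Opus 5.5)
Following the proof of Lemma~\ref{3elem}, I would build a ternary term that is cyclic on $\{a,b,c\}$ and closes that set. Then Proposition~\ref{subuniverse} makes $\{a,b,c\}$ a subuniverse, and since it contains the generators, $A=\{a,b,c\}$.

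First I would collect what the hypotheses give. By Proposition~\ref{bradybinaryprop}, $(\{a\}\cup B)/\theta_B$ is a two-element semilattice with $B$ absorbing. By the last item of Theorem~\ref{compositionactslike}, the binary terms $m(x,x,y)$, $m(x,y,x)$, $m(y,x,x)$ witness $B\trianglelefteq_2\m a$. The subuniverse $\{a,c\}$ contains one element of $B$ and one outside it, so on $\{a,c\}$ the operation $m$ is the semilattice $\min^3$ with absorbing element $c$. On $\{b,c\}$, a two-element minimal Taylor algebra (subalgebras are minimal Taylor by Proposition~\ref{HSPfin}), $m$ acts cyclically by Theorem~\ref{compositionactslike}: as a semilattice, a majority, or an affine operation. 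The binary term $t$ maps $\{(a,b),(b,a)\}$ into $\{b,c\}$. If $t(a,b)=t(b,a)=b$, then $(a,b)$ is a semilattice edge and $A=\{a,b\}$, so I may assume $c$ occurs among these values. Replacing $t$ by a composite such as $t(t(x,y),t(y,x))$, or by composing with $m$ on $\{b,c\}$, I would aim for either $t(a,b)=t(b,a)=c$, or $t$ being a projection-like map into $\{b,c\}$. By Lemma~\ref{dominant}, I fix the first coordinate of $t$ as dominant.

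Next I would define, as in Lemma~\ref{3elem},
$$m'(x,y,z)=m\bigl(t(t(x,y),z),\,t(t(y,z),x),\,t(t(z,x),y)\bigr).$$
I would check that $m'$ is cyclic on $\{a,c\}$ and on $\{b,c\}$, because $t$ restricted to these subuniverses is a binary term of a two-element algebra. I would also check that on $\{a,b\}$ all values $t(t(\cdot,\cdot),\cdot)$ land in $\{b,c\}$ or in $\{a,c\}$. Every inner value with a mixed pair lies in $B$ by $2$-absorption. So each argument of the outer $m$ lies in $\{b,c\}\cup\{a,c\}$, and the six triples that are permutations of $(a,b,c)$ give arguments in $\{a,b,c\}$, whose $m$-values I then need to land in $\{a,b,c\}$.

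The main obstacle is this last closure check. When the outer $m$ receives arguments from both $a$ and $b$ together with $c$, its value is not a priori determined. To handle it I would use $2$-absorption of $B=\{b,c,d\}$ to force any outer argument involving $b$ or $c$ into $B$. I would then try to arrange, by iterating $t$ (for example $t'(x,y)=t(t(x,y),x)$), that all three outer arguments lie in the single two-element subuniverse $\{b,c\}$ or $\{a,c\}$. Once that is done, the value is computed inside that subuniverse, so $\{a,b,c\}$ is closed under the cyclic term $m'$, and Proposition~\ref{subuniverse} finishes the proof.
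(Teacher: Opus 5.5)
Your plan is the paper's: close $\{a,b,c\}$ under a term built from $t$ that is cyclic there, then apply Proposition~\ref{subuniverse}. The paper uses a $p$-ary cyclic term $g$ with iterated $t$ in every argument; your ternary $m'$ from Lemma~\ref{3elem} would serve equally well. But the proposal stops at the decisive step. The closure and cyclicity check on triples containing both $a$ and $b$, which you call the main obstacle, is left as ``I would then try to arrange''. The repairs you suggest are neither justified nor needed: normalizing $t$, iterating to $t(t(x,y),x)$, or fixing a dominant coordinate via Lemma~\ref{dominant}, which is about $3$-absorbing sets.

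The missing idea is to determine $t$ on $\{a,c\}$. Since $t(a,b),t(b,a)\in\{b,c\}$ while $t(a,a)=a\notin\{b,c\}$, the term $t$ depends on both variables. Strong absorption, Proposition~\ref{bradybinaryprop}(1), then gives $t(a,c),t(c,a)\in B\cap\{a,c\}=\{c\}$. Bare $2$-absorption, which you cite, only constrains a witnessing term such as $m(x,x,y)$ and says nothing about $t$.

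Consequently $t$ maps every pair in $\{a,b,c\}^2$ other than $(a,a)$ into $\{b,c\}$. Hence $t(t(x,y),z)\in\{b,c\}$ for every $(x,y,z)\in\{a,b,c\}^3$ other than $(a,a,a)$. So the situation you worry about, where the outer $m$ receives arguments from both $\{a,c\}$ and $\{b,c\}$, never arises. All three inner values lie in $\{b,c\}$, so $m'$ maps $\{a,b,c\}^3\setminus\{(a,a,a)\}$ into $\{b,c\}$.

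Cyclicity also holds on all of $\{a,b,c\}$, not just on $\{a,c\}$ and $\{b,c\}$, which is all you check. A cyclic shift of $(x,y,z)$ cyclically permutes the three inner values, and $m$ is symmetric on the two-element subuniverse $\{b,c\}$ by Theorem~\ref{compositionactslike}. This is exactly the paper's observation that $t(a,x),t(x,a)\in\{b,c\}$ for $x\in\{b,c\}$. With it, Proposition~\ref{subuniverse} finishes the proof, and your preliminary manipulations of $t$ can be dropped.
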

\begin{proof}
The proof is similar to the proof of Lemma~\ref{3elem}.
Take a cyclic operation $g$ of arity $p$. Our assumptions imply that $t(a,x),t(x,a)\in\{b,c\}$, whenever $x\in \{b,c\}$. We define a cyclic operation $g'$ of arity $p$ as follows
\begin{align*}
g'(x_1,\dotso, x_p) = g(&t(\dotso t(t(x_1,x_2),x_3),\dotso x_p), t(\dotso t(t(x_2,x_3),x_4),\dotso x_1),\dotso ,\\
& t(\dotso t(t(x_p,x_1),x_2),\dotso x_{p-1})).
\end{align*}
For the term $g'$, it holds that if $x_i\in \{b,c\},$ for some $1\leqslant i\leqslant p$, then
$$g'(x_1,x_2,\dotso , x_p)\in \{b,c\}.$$

Therefore, the set $\{a,b,c\}$ is closed under the cyclic term $g'$. Furthermore, by Proposition~\ref{subuniverse}\, it is a subuniverse of $\m A$, which implies $\mathrm{Sg}\{a,b\}=\{a,b,c\}$. This concludes the proof.
\end{proof}

The next lemma explores the structure of $\m a$, especially when $B$ is the universe of a conservative algebra.

\begin{lem}\label{triangle2}
Let ${\m A}=\mathrm{Sg}\{a,b\}$ be a minimal Taylor algebra on domain $\{a,b,c,d\}$ and $B=\{b,c,d\}\trianglelefteq_2 {\m A}$. If $\{a,c\}$ and $\{b,c\}$ are subuniverses, then $(b,d)$ is a semilattice edge. If, additionally, $\m b$ is a conservative algebra, then:
\begin{itemize}
\item $(b,d)$, $(a,d)$ and $(b,c)$ are semilattices edges,
\item $\{c,d\}$ is a two-element majority algebra or a two-element affine algebra.
\end{itemize}
\end{lem}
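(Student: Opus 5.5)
First we show that $(b,d)$ is a semilattice edge. By Proposition~\ref{smedge}(1), applied to $B\trianglelefteq_2\m a$ and $a\notin B$, some $e\in B$ makes $(a,e)$ a semilattice edge; in particular $\{a,e\}\leq\m a$. Since $B\trianglelefteq_2\m a$, Proposition~\ref{bradybinaryprop}(1) gives $t(a,b),t(b,a)\in B$ for every binary term $t$ depending on both variables. We then apply Lemma~\ref{triangle} with $c$ in the role of $c$ there. If $t(a,b),t(b,a)\in\{b,c\}$ for some nontrivial binary $t$, then $A=\{a,b,c\}$, a contradiction. So every such $t$ sends $(a,b)$ or $(b,a)$ to $d$. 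Using the swap symmetry $t(x,y)\mapsto t(y,x)$, we get a binary term $s$ with $s(a,b)=d$.

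Next we compute $s(b,a)$ using the known absorption structure. Consider $s'(x,y)=s(s(x,y),s(y,x))$ and similar compositions to force both values to equal $d$. Concretely, if $s(b,a)=u\in B$, then $s'(a,b)=s(d,u)$ and $s'(b,a)=s(u,d)$. Since $B$ is strongly absorbing, all of these lie in $B$. Iterating and using finiteness, I will aim for a binary term $r$ with $r(a,b)=r(b,a)=d$. Then $(b,d)$ is a semilattice edge provided $\{b,d\}$ is closed in the right way. To get this, I would use the loop argument from Case~2 of Theorem~\ref{main-factor} on $R_{ab}$: the loop must be at a point of $B$, and Lemma~\ref{triangle} excludes the loops at $b$ and $c$. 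This gives $(d,d)\in R_{ab}$. Then $r'(x,y)=m(\dots)$ restricted to $\{b,d\}$ should give $r'(b,d)=r'(d,b)=d$. The mechanism is $m(x,x,y)$: it witnesses $B\trianglelefteq_2\m a$ by Theorem~\ref{compositionactslike}, and it can be combined with $r$ via $r(m(x,x,y),m(y,y,x))$ so that $a$ is eventually replaced by $b$ in the relevant coordinates.

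\textbf{This is the step I expect to be hardest:} transferring $(a,b)\mapsto d$ to $(b,d)\mapsto d$. My first attempt will use $R=\mathrm{Sg}\{(b,d),(d,b)\}$ together with the pairs in $R_{ab}$ and the fact that $B\cup\{a\}$ collapses to a semilattice modulo $\theta_B$ (Proposition~\ref{bradybinaryprop}(3)).

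For the conservative case, $\{c,d\},\{b,d\},\{b,c\}$ are all two-element subalgebras, each a semilattice, majority or $\mathbb{Z}_2^{\mathrm{aff}}$ edge.
\begin{enumerate}
\item[$\bullet$] By Proposition~\ref{bradybinaryprop}(3), $\{a,d\}$ is a subuniverse and $\{a,d\}/\theta_B$ is a semilattice absorbing to $d$; as a two-element set this makes $(a,d)$ a semilattice edge.
\item[$\bullet$] To see that $(b,c)$ is a semilattice edge, suppose not. Then $\{b,c\}$ is majority, affine, or semilattice toward $b$. Using $r(a,b)=r(b,a)=d$ and the conservative $3$-element tables (\Cref{table:2,table:4}), the subalgebra $\m b$ must appear there with $(b,d)$ a semilattice edge. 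I would check each such entry and show that the alternative orientation lets $\{a,b,c\}$ be closed under the cyclic term built as in Lemma~\ref{triangle}, a contradiction.
\item[$\bullet$] Finally, $\{c,d\}$ is not a semilattice edge. In either direction, combining it with the edges $(b,d)$ and $(b,c)$, one of $\{b,c\}$ or $\{b,d\}$ would become a $2$-absorbing set of $\m b$. By Proposition~\ref{composition}(2) it would then $2$-absorb $\m a$, and Proposition~\ref{smedge} would yield a semilattice edge out of $a$ into a set avoiding $d$. Lemma~\ref{triangle} then forces a $3$-element algebra, a contradiction. Hence $\{c,d\}$ is majority or affine.
\end{enumerate}
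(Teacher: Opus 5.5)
There is a genuine gap at the central step, and the route you sketch for it cannot work. You want a binary term $r$ with $r(a,b)=r(b,a)=d$, obtained from a loop in $R_{ab}$. However, the Loop Lemma (Theorem~\ref{looplemma}) needs $R_{ab}$ to be linked. In Case~2 of Theorem~\ref{main-factor}, linkedness came from simplicity of $\m a$, whereas here $\m a$ has the nontrivial congruence $\theta_B$.

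Worse, the intermediate claim is false. Take $\m t_{4,5}$ with $a\mapsto 3$, $b\mapsto 2$, $c\mapsto 0$, $d\mapsto 1$ and $\m b\cong\m t_3^{\co}$. It satisfies all hypotheses of the lemma, including the conservative ones, and it has the automorphism $(a\,b)(c\,d)$. So $R_{ab}$ lies inside its graph, hence $(d,d)\notin R_{ab}$, and no such $r$ exists. Even granting $r$, you never actually carry the information over to the pair $\{b,d\}$.

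The missing idea is to use Lemma~\ref{triangle} as a rigidity principle on a term $t$ with $t(a,b)=c$, which exists since $c\in\mathrm{Sg}\{a,b\}$. Any binary term sending $(a,b)$ to $c$ must send $(b,a)$ to $d$. Now apply this to $t_1(x,y)=m(t(x,y),x,x)$ and $t_2(x,y)=m(x,t(x,y),t(x,y))$. Their values at $(a,b)$ are $m(c,a,a)=c$ and $m(a,c,c)=c$, because $\{a,c\}$ is a semilattice with absorbing element $c$. This forces $m(d,b,b)=d$ and $m(b,d,d)=d$, so $m(x,y,y)$ witnesses the semilattice edge $(b,d)$.

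The conservative part also has problems.

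\emph{The edge $(a,d)$.} Proposition~\ref{bradybinaryprop}(3) only says that $B\cup\{a\}$, which here is all of $A$, is a subuniverse; it says nothing about $\{a,d\}$. Closedness of $\{a,d\}$ is exactly the nontrivial point, and it fails without the hypotheses (e.g.\ $\mathrm{Sg}\{a,d\}=\{a,c,d\}$ in $\m t_{4,1}$). The paper gets it from $R_{ab}$. Suppose $s(a,d)=c$ for some binary term $s$. Applying $s$ to $(b,a),(c,d)\in R_{ab}$ gives $(s(b,c),c)\in R_{ab}$ with $s(b,c)\in\{b,c\}$, which contradicts Lemma~\ref{triangle}.

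\emph{The edge $(b,c)$.} Once $\{a,d\}$ is closed, $(b,c)$ follows by rerunning the first part with $c$ and $d$ interchanged. Your case check through the tables instead leans on the nonexistent term $r$.

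\emph{The pair $\{c,d\}$.} Since both $c$ and $d$ absorb $b$, the set that would become $2$-absorbing in $\m b$ is $\{c\}$ or $\{d\}$, not $\{b,c\}$ or $\{b,d\}$. With that correction your argument can be finished. Compose the absorption witness with $t(x,y)$ and $t(y,x)$ to get a term sending both $(a,b)$ and $(b,a)$ to the absorbing singleton. Then invoke Lemma~\ref{triangle} (this uses $\{a,d\}$ closed when the singleton is $\{d\}$). Alternatively, as in the paper, apply Proposition~\ref{subuniverse} to the three-element semilattice on $\{a,b,d\}$.
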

\begin{proof}
Since $c\in\mathrm{Sg}\{a,b\}$, there exists a binary term $t$ such that $c=t(a,b)$. By Lemma~\ref{triangle} we have $t(b,a)\notin \{b,c\}$, so it must be $t(b,a)=d$ because $t(b,a)\in B$. Define a new binary term $t_1$ by $t_1(x,y)=m(t(x,y),x,x)$. Now we obtain
\begin{gather*}
t_1(a,b)=m(t(a,b),a,a)=m(c,a,a)=c,\\
t_1(b,a)=m(t(b,a),b,b)=m(d,b,b).
\end{gather*}
From Lemma~\ref{triangle} we get $t_1(b,a)=m(d,b,b)=d$.
Next, let $t_2(x,y):=m(x,t(x,y),t(x,y))$, so
\begin{gather*}
t_2(a,b)=m(a,t(a,b),t(a,b))=m(a,c,c)=c,\\
t_2(b,a)=m(b,t(b,a),t(b,a))=m(b,d,d).
\end{gather*}
Once again, we deduce $t_2(b,a)=m(b,d,d)=d$ from Lemma~\ref{triangle}.
But now a binary term $t'(x,y):=m(x,y,y)$ acts as a semilattice operation on the set $\{b,d\}$ with an absorbing element $d$. Hence, $(b,d)$ semilattice edge.

Suppose now that $B$ is a subuniverse of a conservative subalgebra.
By the first part of the proof, we see that $(b,d)$ is a semilattice edge. Let $R_{ab}:=\mathrm{Sg}_{\m a^2}\{(a,b),(b,a)\}$. Similarly to before, $t(a,b)=c$ and $t(b,a)=d$ for some binary term $t$, hence $(c,d)\in R_{ab}$.
If $\mathrm{Sg}\{a,d\}$ is not a two-element subalgebra, then $c\in \mathrm{Sg}\{a,d\}$. From $(b,a), (c,d)\in R_{ab}$ follows $(b,c)\in R_{ab}$ or $(c,c)\in R_{ab}$, but then Lemma~\ref{triangle} implies $A=\{a,b,c\}$, a contradiction. Therefore, $(a,d)$ is a semilattice edge. Furthermore, if we replace $d$ with $c$ in the first part of the proof, we get that $(b,c)$ is a semilattice edge. 

Finally, $\{c,d\}$ cannot be a two-element semilattice with the operation $s(x,y)$, as otherwise, $\m A$ has an element, say $d$, that absorbs the entire algebra. Hence, $\{a,b,d\}$ is a 3-element semilattice with respect to the operation $s(t(x,y),t(y,x))$, and by Proposition~\ref{subuniverse} we get $\mathrm{Sg}\{a,b\}=\{a,b,d\}$, a contradiction.
\end{proof}

\subsection{Two-element semilattice quotient}\label{s:semi}

\begin{table}[h!]
\centering
 \begin{tabular}{|c||c|c|c|c|c|c|}
\hline
    Algebra & $\T_{4,1}$ & $\T_{4,2}$ & $\T_{4,3}$ & $\T_{4,4}$ & $\T_{4,5}$ & $\T_{4,6}$\\
\hline\hline
     $g{\restriction_{\{0,1\}}}$  & maj& aff & maj & maj & maj & aff  \\
\hline
    $g{\restriction_{\{0,2\}}}$  & $\text{min}^3_{0\leqslant 2}$ & $\text{min}^3_{0\leqslant 2}$ & aff & aff & $\text{min}^3_{0\leqslant 2}$ & $\text{min}^3_{0\leqslant 2}$ \\

\hline
	$g(0,0,3)$ & 0 & 1 & 0 & 2 &0 & 0\\
\hline
	$g(0,3,3)$ & 1 & 0 & 1 & 0 &0 & 0\\	

\hline
    $g(1,1,2)$  & 1 & 0 & 2 & 2 & 1 & 1 \\
\hline
	$g(1,2,2)$  & 0 & 1 & 0 & 0 & 1 & 1\\
	
\hline
    $g(1,1,3)$  & 1 & 1 & 1 & 2 & 1 & 1 \\
\hline
	$g(1,3,3)$ & 1 & 1 & 1 & 0 & 1& 1\\

\hline
	$g(2,2,3)$ & 0 & 1 & 0 & 2 & 0 & 0\\
\hline
	$g(2,3,3)$ & 1 & 0 & 2 & 2 & 1 & 1\\

\hline
	$g(0,1,2)$ & 0 & 1 & 2 & 2 & 0 & 1\\
\hline
	$g(0,2,1)$ & 0 & 1 & 2 & 2 & 1 & 0\\
\hline
	$g(0,1,3)$ & 1 & 0 & 1 & 2 & 0 & 1\\
\hline
	$g(0,3,1)$ & 1 & 0 & 1 & 2 & 1 & 0\\
\hline
	$g(0,2,3)$ & 0 & 1 & 2 & 0 & 0 & 0\\
\hline
	$g(0,3,2)$ & 0 & 1 & 2 & 0 & 0 & 1\\
\hline
	$g(1,2,3)$ & 1 & 0 & 2 & 0 & 1 & 0\\
\hline
	$g(1,3,2)$ & 1 & 0 & 2 & 0 & 1 & 1\\
\hline
\end{tabular}  
\caption{Algebras with ternary cyclic operation.}
\label{table:5}
\end{table}

\begin{table}[h!]
\centering
 \begin{tabular}{c|c c c c}

    $t$ & 0 & 1 & 2  & 3\\
\hline

    0 & 0 & 2 & 1 & 0  \\

	1 & 2 & 1 & 0 & 2 \\
	
	2 & 1 & 0 & 2 & 1\\
	
	3 & 0 & 2 & 1 & 3\\

\end{tabular}  
\caption{Algebra $\T_{4,7}$ with binary commutative operation.}
\label{table:6}
\end{table}

\begin{figure}[ht]
	\begin{subfigure}{0.24\textwidth}
		\centering
		\begin{tikzpicture}
			\tikzstyle{every node}=[draw,circle,fill=white,minimum size=4pt,inner sep=0pt]
			
			\node (a) [label=above:\strut$a$] {};
			\node (b) [fill=gray,right=1.5cm of a,label=above:\strut$b$] {};
			\node (c) [fill=gray, below=1.5cm of a,label=below:\strut$c$] {};
			\node (d) [fill=gray,right=1.5cm of c,label=below:\strut$d$] {};

             \draw[thin, gray, densely dotted, rounded corners] ([shift={(0.14,0.1)}]b.north east)--([shift={(-0.14,0.14)}]b.north east)--([shift={(-0.14,0.1)}]c.south west)--([shift={(-0.1,-0.14)}]c.south west)--([shift={(0.14,-0.14)}]d.south east)--cycle;
             \node[draw=none] at ([shift={(0.4,-0.3)}]b.east) {$\color{gray} \m t_1^{\n}$};

             \draw[thin, gray, densely dotted, rounded corners] ([shift={(0.14,0.14)}]a.north west)--([shift={(-0.14,0.1)}]a.north west)--([shift={(-0.14,0.1)}]c.south west)--([shift={(-0.1,-0.14)}]c.south west)--([shift={(0.14,-0.14)}]d.south east)--([shift={(0.14,0.14)}]d.south east)--cycle;
             \node[draw=none] at ([shift={(-0.4,-0.3)}]a.west) {$\color{gray} \m t_1^{\n}$};
			
			\path[->,>=stealth,line width=1.2pt, shorten <=0.7mm, shorten >=0.7mm]
			(a) edge [line width=0.4pt] [bend left=12] node[draw=none,near end,above,outer sep=3pt] {\scriptsize{s}} (d)
            (b) edge [line width=0.4pt] [bend right=12] node[draw=none,near end,above,outer sep=3pt] {\scriptsize{s}} (c)
            (a) edge [line width=0.4pt] [bend left=12] node[draw=none,midway,above] {\scriptsize{s}} (b)
            (b) edge [line width=0.4pt] [bend left=12] node[draw=none,midway,below, outer sep=1pt] {\scriptsize{s}} (a)
			(b) edge node[draw=none,midway,right] {\scriptsize{s}} (d)
			(a) edge node[draw=none,midway,left] {\scriptsize{s}} (c);
            \path[-,line width=1.2pt,densely dashed, shorten <=0.7mm, shorten >=0.7mm]
			(b) edge [line width=0.4pt] [bend left=12] node[draw=none,near start,right, outer sep=2pt] {\scriptsize{m}} (c)
            (a) edge [line width=0.4pt] [bend right=12] node[draw=none,near start,left, outer sep=2pt] {\scriptsize{m}} (d)
            (a) edge [line width=0.4pt] [bend left=35] node[draw=none,midway,above] {\scriptsize{m}} (b)
			(c) edge node[draw=none,midway,above] {\scriptsize{m}} (d);

			
		\end{tikzpicture}
		\caption{$\m t_{4,1}$}
		
	\end{subfigure}%
	\hfill
	\begin{subfigure}{0.24\textwidth}
		\centering
		\begin{tikzpicture}
			\tikzstyle{every node}=[draw,circle,fill=white,minimum size=4pt,inner sep=0pt]
			
			\node (a) [label=above:\strut$a$] {};
			\node (b) [fill=gray,right=1.5cm of a,label=above:\strut$b$] {};
			\node (c) [fill=gray, below=1.5cm of a,label=below:\strut$c$] {};
			\node (d) [fill=gray,right=1.5cm of c,label=below:\strut$d$] {};

              \draw[thin, gray, densely dotted, rounded corners] ([shift={(0.14,0.1)}]b.north east)--([shift={(-0.14,0.14)}]b.north east)--([shift={(-0.14,0.1)}]c.south west)--([shift={(-0.1,-0.14)}]c.south west)--([shift={(0.14,-0.14)}]d.south east)--cycle;
             \node[draw=none] at ([shift={(0.4,-0.3)}]b.east) {$\color{gray} \m t_2^{\n}$};

             \draw[thin, gray, densely dotted, rounded corners] ([shift={(0.14,0.14)}]a.north west)--([shift={(-0.14,0.1)}]a.north west)--([shift={(-0.14,0.1)}]c.south west)--([shift={(-0.1,-0.14)}]c.south west)--([shift={(0.14,-0.14)}]d.south east)--([shift={(0.14,0.14)}]d.south east)--cycle;
             \node[draw=none] at ([shift={(-0.4,-0.3)}]a.west) {$\color{gray} \m t_2^{\n}$};
			
			\path[->,>=stealth,line width=1.2pt, shorten <=0.7mm, shorten >=0.7mm]
			(a) edge [line width=0.4pt] [bend left=12] node[draw=none,near end,above,outer sep=3pt] {\scriptsize{s}} (d)
            (b) edge [line width=0.4pt] [bend right=12] node[draw=none,near end,above,outer sep=3pt] {\scriptsize{s}} (c)
            (a) edge [line width=0.4pt] [bend left=12] node[draw=none,midway,above] {\scriptsize{s}} (b)
            (b) edge [line width=0.4pt] [bend left=12] node[draw=none,midway,below, outer sep=1pt] {\scriptsize{s}} (a)
			(b) edge node[draw=none,midway,right] {\scriptsize{s}} (d)
			(a) edge node[draw=none,midway,left] {\scriptsize{s}} (c);
            \path[-,line width=1.2pt,densely dotted, shorten <=0.7mm, shorten >=0.7mm]
			(b) edge [line width=0.4pt] [bend left=12] node[draw=none,near start,right, outer sep=2pt] {\scriptsize{a}} (c)
            (a) edge [line width=0.4pt] [bend right=12] node[draw=none,near start,left, outer sep=2pt] {\scriptsize{a}} (d)
            (a) edge [line width=0.4pt] [bend left=35] node[draw=none,midway,above,outer sep=1pt] {\scriptsize{a}} (b)
			(c) edge node[draw=none,midway,above] {\scriptsize{a}} (d);

		\end{tikzpicture}
		\caption{$\m t_{4,2}$}
	\end{subfigure}
	\hfill
	\begin{subfigure}{0.24\textwidth}
		\centering
		\begin{tikzpicture}
			\tikzstyle{every node}=[draw,circle,fill=white,minimum size=4pt,inner sep=0pt]
			
			\node (a) [label=above:\strut$a$] {};
			\node (b) [fill=gray,right=1.5cm of a,label=above:\strut$b$] {};
			\node (c) [fill=gray, below=1.5cm of a,label=below:\strut$c$] {};
			\node (d) [fill=gray,right=1.5cm of c,label=below:\strut$d$] {};

            \draw[thin, gray, densely dotted, rounded corners] ([shift={(0.14,0.14)}]a.north west)--([shift={(-0.14,0.1)}]a.north west)--([shift={(-0.14,0.1)}]c.south west)--([shift={(-0.1,-0.14)}]c.south west)--([shift={(0.14,-0.14)}]d.south east)--([shift={(0.14,0.14)}]d.south east)--cycle;
             \node[draw=none] at ([shift={(-0.4,-0.3)}]a.west) {$\color{gray} \m t_1^{\n}$};

             \draw[thin, gray, densely dotted, rounded corners] ([shift={(0.14,0.1)}]b.north east)--([shift={(-0.14,0.14)}]b.north east)--([shift={(-0.14,0.1)}]c.south west)--([shift={(-0.1,-0.14)}]c.south west)--([shift={(0.14,-0.14)}]d.south east)--cycle;
             \node[draw=none] at ([shift={(0.4,-0.3)}]b.east) {$\color{gray} \m t_3^{\n}$};

			\path[->,>=stealth,line width=1.2pt, shorten <=0.7mm, shorten >=0.7mm]
			(a) edge [line width=0.4pt] [bend left=12] node[draw=none,near end,above,outer sep=3pt] {\scriptsize{s}} (d)
            (a) edge [line width=0.4pt] [bend left=12] node[draw=none,midway,above, outer sep=1pt] {\scriptsize{s}} (b)
			(a) edge node[draw=none,midway,left] {\scriptsize{s}} (c);
            \path[-,line width=1.2pt,densely dashed, shorten <=0.7mm, shorten >=0.7mm]
            (a) edge [line width=0.4pt] [bend right=12] node[draw=none,near start,left, outer sep=2pt] {\scriptsize{m}} (d)
			(c) edge node[draw=none,midway,above] {\scriptsize{m}} (d);
            \path[-,line width=1.2pt,densely dotted, shorten <=0.7mm, shorten >=0.7mm]
            (b) edge [line width=0.4pt] [bend left=12] node[draw=none,near start,right,outer sep=2pt] {\scriptsize{a}} (c)
            (b) edge [line width=0.4pt] [bend left=12] node[draw=none, midway,below,outer sep=1pt] {\scriptsize{a}} (a)
            (b) edge node[draw=none,midway,right] {\scriptsize{a}} (d);
		\end{tikzpicture}
		\caption{$\m t_{4,3}$}
	\end{subfigure}
	\hfill
	\begin{subfigure}{0.24\textwidth}
		\centering
		\begin{tikzpicture}
			\tikzstyle{every node}=[draw,circle,fill=white,minimum size=4pt,inner sep=0pt]
			
			\node (a) [label=above:\strut$a$] {};
			\node (b) [fill=gray,right=1.5cm of a,label=above:\strut$b$] {};
			\node (c) [fill=gray, below=1.5cm of a,label=below:\strut$c$] {};
			\node (d) [fill=gray,right=1.5cm of c,label=below:\strut$d$] {};

              \draw[thin, gray, densely dotted, rounded corners] ([shift={(0.14,0.1)}]b.north east)--([shift={(-0.14,0.14)}]b.north east)--([shift={(-0.14,0.1)}]c.south west)--([shift={(-0.1,-0.14)}]c.south west)--([shift={(0.14,-0.14)}]d.south east)--cycle;
             \node[draw=none] at ([shift={(0.4,-0.3)}]b.east) {$\color{gray} \m t_3^{\n}$};

             \draw[thin, gray, densely dotted, rounded corners] ([shift={(0.14,0.14)}]a.north west)--([shift={(-0.14,0.1)}]a.north west)--([shift={(-0.14,0.1)}]c.south west)--([shift={(-0.1,-0.14)}]c.south west)--([shift={(0.14,-0.14)}]d.south east)--([shift={(0.14,0.14)}]d.south east)--cycle;
             \node[draw=none] at ([shift={(-0.4,-0.3)}]a.west) {$\color{gray} \m t_2^{\n}$};
			
			\path[->,>=stealth,line width=1.2pt, shorten <=0.7mm, shorten >=0.7mm]
			(a) edge [line width=0.4pt] [bend left=12] node[draw=none,near end,above,outer sep=3pt] {\scriptsize{s}} (d)
            (a) edge [line width=0.4pt] [bend left=12] node[draw=none,midway,above, outer sep=1pt] {\scriptsize{s}} (b)
			(a) edge node[draw=none,midway,left] {\scriptsize{s}} (c);
            \path[-,line width=1.2pt,densely dotted, shorten <=0.7mm, shorten >=0.7mm]
			(b) edge [line width=0.4pt] [bend left=12] node[draw=none,near start,right, outer sep=2pt] {\scriptsize{a}} (c)
            (a) edge [line width=0.4pt] [bend right=12] node[draw=none,near start,left, outer sep=2pt] {\scriptsize{a}} (d)
            (b) edge [line width=0.4pt] [bend left=12] node[draw=none, midway,below,outer sep=1pt] {\scriptsize{a}} (a)
			(c) edge node[draw=none,midway,above] {\scriptsize{a}} (d);
            \path[-,line width=1.2pt,densely dashed, shorten <=0.7mm, shorten >=0.7mm]
            (b) edge node[draw=none,midway,right] {\scriptsize{m}} (d);

		\end{tikzpicture}
		\caption{$\m t_{4,4}$}
	\end{subfigure}
\begin{subfigure}{0.33\textwidth}
		\centering
		\begin{tikzpicture}
			\tikzstyle{every node}=[draw,circle,fill=white,minimum size=4pt,inner sep=0pt]
			
			\node (a) [label=above:\strut$a$] {};
			\node (b) [fill=gray,right=1.5cm of a,label=above:\strut$b$] {};
			\node (c) [fill=gray, below=1.5cm of a,label=below:\strut$c$] {};
			\node (d) [fill=gray,right=1.5cm of c,label=below:\strut$d$] {};

             \draw[thin, gray, densely dotted, rounded corners] ([shift={(0.14,0.1)}]b.north east)--([shift={(-0.14,0.14)}]b.north east)--([shift={(-0.14,0.1)}]c.south west)--([shift={(-0.1,-0.14)}]c.south west)--([shift={(0.14,-0.14)}]d.south east)--cycle;
             \node[draw=none] at ([shift={(0.4,-0.3)}]b.east) {$\color{gray} \m t_3^{\co}$};

              \draw[thin, densely dotted, rounded corners] ([shift={(0.14,0.14)}]a.north west)--([shift={(-0.14,0.1)}]a.north west)--([shift={(-0.14,0.1)}]c.south west)--([shift={(-0.1,-0.14)}]c.south west)--([shift={(0.14,-0.14)}]d.south east)--([shift={(0.14,0.14)}]d.south east)--cycle;
             \node[draw=none] at ([shift={(-0.4,-0.3)}]a.west) {$\color{gray} \m t_3^{\co}$};

			\path[->,>=stealth,line width=1.2pt, shorten <=0.7mm, shorten >=0.7mm]
			(a) edge node[draw=none,near end,above,outer sep=3pt] {\scriptsize{s}} (d)
            (b) edge node[draw=none,midway,left,outer sep=3pt] {\scriptsize{s}} (c)
            (a) edge [line width=0.4pt] [bend left=12] node[draw=none,midway,above, outer sep=1pt] {\scriptsize{s}} (b)
            (b) edge [line width=0.4pt] [bend left=12] node[draw=none,midway,below, outer sep=1pt] {\scriptsize{s}} (a)
			(b) edge node[draw=none,midway,right] {\scriptsize{s}} (d)
			(a) edge node[draw=none,midway,left] {\scriptsize{s}} (c);
            \path[-,line width=1.2pt,densely dashed, shorten <=0.7mm, shorten >=0.7mm]
			(c) edge node[draw=none,midway,above] {\scriptsize{m}} (d);

		\end{tikzpicture}
		\caption{$\m t_{4,5}$}
		
	\end{subfigure}%
	\hfill
	\begin{subfigure}{0.33\textwidth}
		\centering
		\begin{tikzpicture}
			\tikzstyle{every node}=[draw,circle,fill=white,minimum size=4pt,inner sep=0pt]
			
			\node (a) [label=above:\strut$a$] {};
			\node (b) [fill=gray,right=1.5cm of a,label=above:\strut$b$] {};
			\node (c) [fill=gray, below=1.5cm of a,label=below:\strut$c$] {};
			\node (d) [fill=gray,right=1.5cm of c,label=below:\strut$d$] {};

             \draw[thin, gray, densely dotted, rounded corners] ([shift={(0.14,0.1)}]b.north east)--([shift={(-0.14,0.14)}]b.north east)--([shift={(-0.14,0.1)}]c.south west)--([shift={(-0.1,-0.14)}]c.south west)--([shift={(0.14,-0.14)}]d.south east)--cycle;
             \node[draw=none] at ([shift={(0.4,-0.3)}]b.east) {$\color{gray} \m t_7^{\co}$};

              \draw[thin, densely dotted, rounded corners] ([shift={(0.14,0.14)}]a.north west)--([shift={(-0.14,0.1)}]a.north west)--([shift={(-0.14,0.1)}]c.south west)--([shift={(-0.1,-0.14)}]c.south west)--([shift={(0.14,-0.14)}]d.south east)--([shift={(0.14,0.14)}]d.south east)--cycle;
             \node[draw=none] at ([shift={(-0.4,-0.3)}]a.west) {$\color{gray} \m t_7^{\co}$};

			\path[->,>=stealth,line width=1.2pt, shorten <=0.7mm, shorten >=0.7mm]
			(a) edge node[draw=none,near end,above,outer sep=3pt] {\scriptsize{s}} (d)
            (b) edge node[draw=none,midway,left,outer sep=3pt] {\scriptsize{s}} (c)
            (a) edge [line width=0.4pt] [bend left=12] node[draw=none,midway,above, outer sep=1pt] {\scriptsize{s}} (b)
            (b) edge [line width=0.4pt] [bend left=12] node[draw=none,midway,below, outer sep=1pt] {\scriptsize{s}} (a)
			(b) edge node[draw=none,midway,right] {\scriptsize{s}} (d)
			(a) edge node[draw=none,midway,left] {\scriptsize{s}} (c);
            \path[-,line width=1.2pt,densely dotted, shorten <=0.7mm, shorten >=0.7mm]
			(c) edge node[draw=none,midway,above] {\scriptsize{a}} (d);

		\end{tikzpicture}
		\caption{$\m t_{4,6}$}
	\end{subfigure}
	\hfill
	\begin{subfigure}{0.33\textwidth}
		\centering
		\begin{tikzpicture}
			\tikzstyle{every node}=[draw,circle,fill=white,minimum size=4pt,inner sep=0pt]
			
			\node (a) [label=above:\strut$a$] {};
			\node (b) [fill=gray,right=1.5cm of a,label=above:\strut$b$] {};
			\node (c) [fill=gray, below=1.5cm of a,label=below:\strut$c$] {};
			\node (d) [fill=gray,right=1.5cm of c,label=below:\strut$d$] {};

             \draw[thin, gray, densely dotted, rounded corners] ([shift={(0.14,0.1)}]b.north east)--([shift={(-0.14,0.14)}]b.north east)--([shift={(-0.14,0.1)}]c.south west)--([shift={(-0.1,-0.14)}]c.south west)--([shift={(0.14,-0.14)}]d.south east)--cycle;
             \node[draw=none] at ([shift={(0.4,-0.3)}]b.east) {$\color{gray} \m t_5^{\n}$};
			
			\path[->,>=stealth,line width=1.2pt, shorten <=0.7mm, shorten >=0.7mm]
			(a) edge [line width=0.4pt] [bend left=12] node[draw=none,near end,above,outer sep=3pt] {\scriptsize{s}} (d)
            (a) edge [line width=0.4pt] [bend right=12] node[draw=none,midway,below, outer sep=1pt] {\scriptsize{s}} (b)
			(a) edge node[draw=none,midway,left] {\scriptsize{s}} (c);
            \path[-,line width=1.2pt,densely dotted, shorten <=0.7mm, shorten >=0.7mm]
			(b) edge node[draw=none,near start,right, outer sep=2pt] {\scriptsize{a3}} (c)
            (b) edge node[draw=none,midway,right, outer sep=1pt] {\scriptsize{a3}} (d)
            (a) edge [line width=0.4pt] [bend left=12] node[draw=none,midway,above, outer sep=1pt] {\scriptsize{a}} (b)
            (a) edge [line width=0.4pt] [bend right=12] node[draw=none,near start,left, outer sep=3pt] {\scriptsize{a}} (d)
			(c) edge node[draw=none,midway,above, outer sep=1pt] {\scriptsize{a3}} (d);

		\end{tikzpicture}
		\caption{$\m t_{4,7}$}
	\end{subfigure}
	
	\caption{Directed graph of $\m t_{4,i}$ for $i=1,2,\dots, 7$.}
\end{figure}
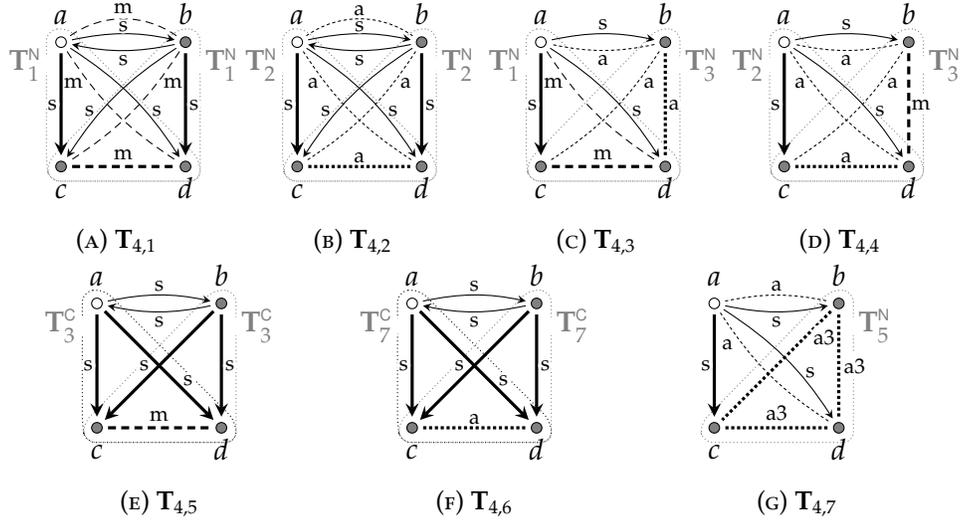

In this subsection, we shall prove that seven minimal Taylor algebras on a four-element domain have a two-element semilattice quotient. \Cref{table:5} contains the first six of these algebras, each of them containing an idempotent ternary cyclic operation. The seventh algebra contains a binary commutative operation as presented in \Cref{table:6}. In addition, we note that the algebras ${\m T}_{4,5}$ and ${\m T}_{4,6}$ do not have a unique ternary cyclic term, unlike the first four algebras.

Suppose that $\m a/\theta$ is a two-element semilattice for some congruence $\theta$. Since $\m a$ is 2-generated and each $\theta$-class is a subuniverse, we know that each $\theta$-class contains one of the generators.
By Proposition~\ref{bradybinaryprop}, $\theta$'s absorbing class has three elements. Without loss of generality, let $B:=b/\theta=\{b,c,d\}$ and $a/\theta=\{a\}$. By Proposition~\ref{smedge} we know that there must exist an element $x\in \{c,d\}$, such that $(a,x)$ is a semilattice edge. Until the end of Subsection 4.1, we assume that $(a,c)$ is a semilattice edge (with absorbing element $c$).


We start by examining cases when $\m b$ is nonconservative. If $\m B$ has a binary absorbing set $B_1$, then by Proposition~\ref{composition}, follows that $B_1\trianglelefteq_2 {\m A}$ and $B_1\cup \{a\}\leq {\m A}$. Consequently, the element $b$ cannot belong to some binary absorbing set of algebra ${\m B}$. As ${\m T}_4^{\n}$ is the union of its 2-element absorbing subsets, $\m b\cong {\m T}_4^{\n}$ can't hold.

\begin{claim}
If ${\m B}\cong {\m T}_1^{\n}$ or ${\m B}\cong {\m T}_2^{\n}$, then ${\m A}$ is $ {\T}_{4,1}$, respectively ${\m A}$ is $ {\T}_{4,2}$, up to isomorphism and term-equivalence.
\end{claim}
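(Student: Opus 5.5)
We will pin down the operation $m$ (equivalently, a ternary cyclic term $g$) on all of $A=\{a,b,c,d\}$ and then identify it with the corresponding column of \Cref{table:5}. Throughout we use the standing setup of this subsection: $B=\{b,c,d\}$ is the absorbing $\theta$-class, $(a,c)$ is a semilattice edge with absorbing element $c$, and $b$ lies in no binary absorbing subset of $\m b$.

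\textbf{Step 1: locate $b$ inside $\m b$.} In ${\m T}_1^{\n}$ and ${\m T}_2^{\n}$ the set $\{0,1\}$ is a binary absorbing subuniverse. Hence $b$ must correspond to $2$, so $\{c,d\}\trianglelefteq_2\m b$. By Proposition~\ref{composition}(2) this gives $\{c,d\}\trianglelefteq_2\m a$, and by (1) the set $\{a,c,d\}$ is a subuniverse. The edge $(b,c)$ or $(b,d)$ is then a weak semilattice edge inside $\m b$, and $\{c,d\}$ carries maj (for ${\m T}_1^{\n}$) or aff (for ${\m T}_2^{\n}$). By Proposition~\ref{bradybinaryprop}, $\theta_{\{c,d\}}$ is a congruence, and $\m a/\theta_{\{c,d\}}$ is a $3$-element algebra generated by $a/\theta,b/\theta$ with absorbing element $\{c,d\}$. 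From the 3-element list, this quotient is ${\m T}_4^{\n}$, the $2$-generated $3$-element semilattice.

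\textbf{Step 2: identify $\{a,c,d\}$.} The subalgebra $\{a,c,d\}$ has $\{c,d\}$ as a binary absorbing subset and contains the semilattice edge $(a,c)$. We cannot have $\{a,d\}$ and $\{a,c\}$ both semilattice with a conservative algebra, because then $\m a$ would be a subdirect product of $\m s$ and ${\m T}_i^{\n}$ that is still forced to have specific behaviour. The cleanest route is to show that $\{a,c,d\}\cong \m b$ via $a\mapsto b$. To do this, use $R_{ab}$. It contains a pair $(t(a,b),t(b,a))\in\{c,d\}^2$, obtained from $m(x,y,y)$-style terms as in Lemma~\ref{triangle2}. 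Lemma~\ref{triangle} excludes configurations with both $\{a,x\},\{b,x\}$ two-element subuniverses and $t(a,b),t(b,a)\in\{b,x\}$. We then argue that $\m a$ subdirectly embeds in $\m s\times\m b$ via $\theta$ and a congruence with classes $\{a,b\},\{c\},\{d\}$ or with the roles of $c,d$ swapped. We will verify that $\{\{a,b\},\{c\},\{d\}\}$ is a congruence by checking that $m$ (the generator from \Cref{compositionactslike}) respects it. Its restrictions to $\{c,d\}$ and to the pairs $\{b,x\},\{a,x\}$ are forced by the edge types.

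\textbf{Step 3: compute $g$ and compare.} By Lemma~\ref{bkcyclic} applied to $\theta$, $\m a$ has a ternary cyclic term $g$. Its values on the triples with entries in $B$ are determined by Lemma~\ref{Aljahomework}, which says ${\m T}_1^{\n}$ and ${\m T}_2^{\n}$ have a unique ternary cyclic term. Its values modulo the second congruence are also determined, so the subdirect embedding into $\m s\times {\m T}_i^{\n}$ fixes every value of $g$. We then read off that $g$ coincides with the column ${\T}_{4,1}$ (resp. ${\T}_{4,2}$) under $a\mapsto 3$, $b\mapsto 2$, and $c,d\mapsto 0,1$. Conversely, we will check that this operation is Taylor and that the generated clone is minimal Taylor. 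This follows because it is the unique cyclic operation compatible with the edge structure, and by \Cref{gen} any $m$ witnessing the edges generates the clone.

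\textbf{Main obstacle.} The hard part is Step 2: proving that $\{\{a,b\},\{c\},\{d\}\}$ (or its $c,d$-swapped version) is a congruence. Equivalently, we must exclude the alternatives where $\{a,c,d\}$ is some other 3-element algebra with a binary absorbing pair, such as a conservative one from \Cref{table:4} or ${\m T}_4^{\n}$. For this we plan to mimic the $R_{ab}$ computations from the proof of Theorem~\ref{main-factor}. Each alternative should produce, via $m$ applied to columns of $R_{ab}$, either a loop at $c$ or $d$ together with two-element subuniverses (contradicting Lemma~\ref{3elem} or Lemma~\ref{triangle}), or a binary absorption showing that $\mathrm{Sg}\{a,b\}$ is too small.
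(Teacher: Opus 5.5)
There is a genuine gap, and it is at the step you yourself call the main obstacle.

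\textbf{The proposed congruence cannot exist.} The relation $\{\{a,b\},\{c\},\{d\}\}$ is never a congruence of ${\m A}$. If $\gamma$ is a congruence with $a\mathrel{\gamma}b$, then ${\m A}/\gamma=\mathrm{Sg}\{a/\gamma\}$ is trivial by idempotence, so $\gamma=A^2$.

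Relatedly, the isomorphism $\{a,c,d\}\cong{\m B}$ must be $a\mapsto b$, $c\mapsto d$, $d\mapsto c$. In ${\m B}$ the semilattice edge leaves $b$ towards $d$, while in $\{a,c,d\}$ it goes from $a$ to $c$. So this isomorphism does not glue with the identity on $B$ into a homomorphism $A\to B$.

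The congruences actually needed are $\beta=\{\{a,c\},\{b,d\}\}$, or $\{\{a,c\},\{b\},\{d\}\}$; the latter together with $\theta$ gives the embedding into ${\m s}\times{\m T}_i^{\n}$. Neither can be read off from edge types. Both depend on the values of a cyclic term on the mixed tuples $(a,b,b)$, $(b,a,a)$, $(b,a,c)$, $(b,c,a)$, $(b,a,d)$, $(b,d,a)$. The structure established so far leaves these open; for instance, one only knows $g(a,a,b),g(b,b,a)\in\{c,d\}$.

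Pinning these values down is the real content of the proof, and your plan to ``mimic the $R_{ab}$ computations'' does not supply it. The paper manufactures a specific cyclic term:
\begin{itemize}
\item $t'(x,y,z)=g(x,x,g(x,y,z))$ for ${\m T}_1^{\n}$, resp.\ $t'(x,y,z)=g(x,g(x,y,z),g(x,y,z))$ for ${\m T}_2^{\n}$;
\item $g'(x,y,z)=g(t'(x,y,z),t'(y,z,x),t'(z,x,y))$;
\item $g''(x,y,z)=g'(g'(x,x,y),g'(y,y,z),g'(z,z,x))$.
\end{itemize}
It then computes all six remaining values of $g''$ explicitly.

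\textbf{Two facts missing before that point.} Both come from Lemma~\ref{triangle2}. First, the semilattice edge out of $b$ in ${\m B}$ must go to $d$ rather than $c$; otherwise ${\m B}$ would be conservative. Second, $\{a,d\}$ is not a subuniverse. Hence $\mathrm{Sg}\{a,d\}=\{a,c,d\}\cong{\m T}_i^{\n}$, and Lemma~\ref{Aljahomework} fixes $g$ there.

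\textbf{The minimality step is circular.} Theorem~\ref{gen} presupposes that the algebra is already minimal Taylor. Saying $g''$ is ``the unique cyclic operation compatible with the edge structure'' does not show that every Taylor reduct of $(A;g'')$ contains $g''$. The paper argues this directly. Every Taylor reduct inherits the congruences $\alpha=\{\{a\},\{b\},\{c,d\}\}$ and $\beta$, with $\alpha\wedge\beta=0_A$. It has a ternary cyclic term by Lemma~\ref{bkcyclic}. The quotients, ${\m T}_4^{\n}$ and the two-element majority (resp.\ affine) algebra, force that cyclic term to coincide with $g''$.
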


\begin{proof}
Suppose that ${\m B}\cong {\m T}_i^{\n}$, $i\in\{1,2\}$. The majority of the proof goes the same whichever $i\in\{1,2\}$ we pick, so we fix $i$ as one of these two indices. Since the algebra ${\m T}_i^{\n}$ has a ternary cyclic term, the same holds for $\m b$. Then, by Lemma~\ref{bkcyclic} applied to the congruence $\{\{a\},\{b,c,d\}\}$, we know that the algebra $\m A$ also has a ternary cyclic term, and we denote by $g$ one such term.

Since $\{0,1\}\trianglelefteq_2 {\m T}_i^{\n}$, using Proposition~\ref{composition} (2) and Proposition~\ref{bradybinaryprop} (3), the isomorphism ${\m B}\cong {\m T}_1^{\n}$ must satisfy $b\mapsto 2$, where elements $0,1,2$ are from \Cref{table:1} and/or \Cref{3elemgraf} (otherwise $|\mathrm{Sg}\{a,b\}|=3$). If $c\mapsto 1$ and $d\mapsto 0$, then $\{a,c\}$ and $\{b,c\}$ would be subuniverses (both are semilattice edges), and by Lemma~\ref{triangle2} $\{b,d\}$ would also be a subuniverse, i.e. $\m b$ would be conservative and ${\m B}\cong {\m T}_i^{\n}$ would fail. Hence, the isomorphism ${\m B}\cong {\m T}_i^{\n}$ is $b\mapsto 2$, $c\mapsto 1$ and $d\mapsto 0$. Then $\{c,d\}\trianglelefteq_2 {\m A}$ and by Proposition~\ref{bradybinaryprop}, $\{a,c,d\}$ is a subalgebra. By Lemma~\ref{Aljahomework}, ${\m T}_i^{\n}$ has a single ternary cyclic term, the one given in \Cref{table:1}, so we already know $g|_B$. Furthermore, by Proposition~\ref{bradybinaryprop} and $\mathrm{Sg}\{a,b\}=A$, we have ${\m A}/\alpha\cong \m T_4^{\n}$, for the congruence $\alpha=\{\{a\},\{b\},\{c,d\}\}$, so we know that $g(a,a,b), g(b,b,a)\in \{c,d\}$.

From Lemma~\ref{triangle2} and the fact that $\{b,d\}$ is a subuniverse follows that $\{a,d\}$ is not a subuniverse. Using the assumption that $\{a,c\}$ is a two-element semilattice and \Cref{table:1}, we see that $\mathrm{Sg}\{a,d\}=\{a,c,d\}\cong {\T}_i^{\n}$ holds with $a\mapsto 2$, $c\mapsto 0$ and $d\mapsto 1$. Hence, we have also determined $g|_{\{a,c,d\}}$, according to Lemma~\ref{Aljahomework}. It remains to determine $g$ on tuples $(a,b,b), (b,a,a), (b,a,c), (b,c,a)$, $(b,a,d)$ and $(b,d,a)$. 

Now we assume that $\m b\cong {\T}_1^{\n}$ and we define the ternary term $t'(x,y,z):=g(x,x,g(x,y,z))$ and let $g'$ be the ternary cyclic term defined by $$g'(x,y,z)=g(t'(x,y,z),t'(y,z,x),t'(z,x,y)).$$
Note that for $\overline{a}=g(b,a,a), \overline{b}=g(a,b,b)$, we have
\begin{align*}
t'\left(
\begin{bmatrix}
a \\
b \\
b
\end{bmatrix}\!,\!
\begin{bmatrix}
b \\
a \\
b
\end{bmatrix}\!,\!
\begin{bmatrix}
b \\
b \\
a
\end{bmatrix}
\right)& =g\left(
\begin{bmatrix}
a \\
b \\
b
\end{bmatrix}\!,\! 
\begin{bmatrix}
a \\
b \\
b
\end{bmatrix}\!,
g\left(
\begin{bmatrix}
a \\
b \\
b
\end{bmatrix}\!,\!
\begin{bmatrix}
b \\
a \\
b
\end{bmatrix}\!,\!
\begin{bmatrix}
b \\
b \\
a
\end{bmatrix}
\right)
\right)=g\left(
\begin{bmatrix}
a \\
b \\
b
\end{bmatrix}\!,\!
\begin{bmatrix}
a \\
b \\
b
\end{bmatrix}\!,\!
\begin{bmatrix}
\overline{b} \\
\overline{b} \\
\overline{b}
\end{bmatrix}
\right)
=
\begin{bmatrix}
c \\
d \\
d
\end{bmatrix}
\end{align*}
so $g'(a,b,b)=g(c,d,d)=d$. Similarly, after swapping $a$ and $b$, thus $\overline{a}$ and $\overline{b}$, we get $g'(b,a,a)=c$.

Finally, consider the ternary term $g''$ given by $$g''(x,y,z)=g'(g'(x,x,y),g'(y,y,z),g'(z,z,x)).$$ The term $g''$ is cyclic and we have
\begin{align*}
g''(a,b,b)&=g'(g'(a,a,b),g'(b,b,b),g'(b,b,a))=g'(c,b,d)=d,\\
g''(b,a,a)&=g'(g'(b,b,a),g'(a,a,a),g'(a,a,b))=g'(d,a,c)=c,\\
g''(b,a,c)&=g'(g'(b,b,a), g'(a,a,c), g'(c,c,b))=g'(d,c,c)=c,\\
g''(b,c,a)&=g'(g'(b,b,c), g'(c,c,a), g'(a,a,b))=g'(d,c,c)=c,\\
g''(b,a,d)&=g'(g'(b,b,a), g'(a,a,d), g'(d,d,b))=g'(d,c,d)=d,\\
g''(b,d,a)&=g'(g'(b,b,d), g'(d,d,a), g'(a,a,b))=g'(d,d,c)=d,
\end{align*}
so $g''$ is completely determined. Thus there can be at most one minimal Taylor algebra satisfying $\m b\cong {\T}_1^{\n}$.

Similarly, if $\m b\cong {\T}_2^{\n}$, we define the ternary term $$t'(x,y,z):=g(x,g(x,y,z),g(x,y,z))$$ and the cyclic term $g'$ by
$$g'(x,y,z)=g(t'(x,y,z),t'(y,z,x),t'(z,x,y)).$$
For $\overline{a}=g(b,a,a), \overline{b}=g(a,b,b)$, we have
\begin{align*}
t'\left(
\begin{bmatrix}
a \\
b \\
b
\end{bmatrix}\!,\!
\begin{bmatrix}
b \\
a \\
b
\end{bmatrix}\!,\!
\begin{bmatrix}
b \\
b \\
a
\end{bmatrix}
\right)& =g\left(
\begin{bmatrix}
a \\
b \\
b
\end{bmatrix}\!,
g\left(
\begin{bmatrix}
a \\
b \\
b
\end{bmatrix}\!,\! 
\begin{bmatrix}
b \\
a \\
b
\end{bmatrix}\!,\!
\begin{bmatrix}
b \\
b \\
a
\end{bmatrix}
\right)\!,
g\left(
\begin{bmatrix}
a \\
b \\
b
\end{bmatrix}\!,\! 
\begin{bmatrix}
b \\
a \\
b
\end{bmatrix}\!,\!
\begin{bmatrix}
b \\
b \\
a
\end{bmatrix}\!
\right)
\right)=g\left(
\begin{bmatrix}
a \\
b \\
b
\end{bmatrix}\!,\! 
\begin{bmatrix}
\overline{b} \\
\overline{b} \\
\overline{b}
\end{bmatrix}\!,\!
\begin{bmatrix}
\overline{b} \\
\overline{b} \\
\overline{b}
\end{bmatrix}
\right)=
\begin{bmatrix}
c \\
d \\
d
\end{bmatrix}\!,
\end{align*}
thus, $ g'(a,b,b)=g(c,d,d)=c$. As before, by swapping $a$ and $b$, thus $\overline{a}$ and $\overline{b}$, we get $g'(b,a,a)=d$. In this case, the ternary cyclic term $g''$ is defined by $$ g''(xyz)=g'(g'(xyy),g'(yzz),g'(zxx)),$$ so we get
\begin{align*}
g''(a,b,b)&=g'(g'(a,b,b), g'(b,b,b), g'(b,a,a))=g'(c,b,d)=c,\\
g''(b,a,a)&=g'(g'(b,a,a), g'(a,a,a), g'(a,b,b))=g'(d,a,c)=d,\\
g''(b,a,d)&=g'(g'(b,a,a), g'(a,d,d), g'(d,b,b))=g'(d,c,d)=c,\\
g''(b,d,a)&=g'(g'(b,d,d), g'(d,a,a), g'(a,b,b))=g'(d,d,c)=c,\\
g''(b,a,c)&=g'(g'(b,a,a), g'(a,c,c), g'(c,b,b))=g'(d,c,c)=d,\\
g''(b,c,a)&=g'(g'(b,c,c), g'(c,a,a), g'(a,b,b))=g'(d,c,c)=d.
\end{align*}

Now consider the Taylor algebra $\m c=(A;g'')$, where $g''$ is the ternary cyclic term we obtained from one of the cases $\m b\cong {\T}_1^{\n}$ and $\m b\cong {\T}_2^{\n}$. $\beta=\{\{a,c\},\{b,d\}\}$ is a congruence of the Taylor algebra $\m c$, and the quotient $\m c/\beta$ is term equivalent to the two-element majority algebra in the case $\m b\cong {\T}_1^{\n}$, or the two-element affine algebra in the case $\m b\cong {\T}_2^{\n}$. Therefore, every Taylor reduct of the algebra $\m c$ must have congruences $\alpha, \beta$ and these congruences completely determine any ternary cyclic term, making it equal to $g''$. Such a ternary cyclic term must exist in any Taylor reduct of $(A;g'')$ by Lemma~\ref{bkcyclic} applied to either $\alpha$ or $\beta$ (each congruence works), so $(A;g'')$ is a minimal Taylor algebra. This completes the proof that $\m A$ is term equivalent to the minimal Taylor algebra ${\m T}_{4,1}$, respectively ${\m T}_{4,2}$, with respect to the isomorphism $ (a\mapsto 3, b\mapsto 2, c\mapsto 1, d\mapsto 0)$.
\end{proof}

\begin{claim}
If ${\m B}\cong {\m T}_3^{\n}$, then ${\m A}$ is ${\T}_{4,3}$ or ${\T}_{4,4}$, up to isomorphism and term-equivalence.
\end{claim}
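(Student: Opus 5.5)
I will follow the template of the preceding claim. Let ${\m B}\cong{\m T}_3^{\n}$, with $\{a,c\}$ a semilattice edge absorbing to $c$, and let $\theta=\{\{a\},\{b,c,d\}\}$.

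\textbf{Placement of the generators.} Since ${\m T}_3^{\n}$ has a ternary cyclic term, Lemma~\ref{bkcyclic} applied to $\theta$ gives a ternary cyclic term $g$ of $\m A$. In ${\m T}_3^{\n}$ the set $\{0,2\}$ is a ternary absorbing affine subuniverse, $\{0,1\}$ is a majority subuniverse, and the congruence $\{\{0,1\},\{2\}\}$ has an affine quotient. I would first exclude each placement of $b$ that forces $|\mathrm{Sg}\{a,b\}|=3$. For this I would use Lemma~\ref{triangle} and Lemma~\ref{triangle2}: $\{a,c\}$ is a subuniverse, and whenever $\{b,c\}$ is also a subuniverse, Lemma~\ref{triangle2} forces $(b,d)$ to be a semilattice edge. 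That is impossible inside ${\m T}_3^{\n}$, which has no semilattice edges. Since $c$ lies in a two-element subuniverse with $b$ in most placements, this should pin down the labelling of $b,c,d$ relative to $0,1,2$ up to one or two options. I expect the outcome to be $c\mapsto 0$ or $c\mapsto 2$, with $b$ sitting so that $\{b,c\}$ is not a subuniverse, i.e.\ $b\mapsto 1$, $c\mapsto 2$, $d\mapsto 0$. By Lemma~\ref{Aljahomework}, $g|_B$ is then known.

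\textbf{Determining $\mathrm{Sg}\{a,d\}$.} Next I would determine $\mathrm{Sg}\{a,d\}$ and its type. By Proposition~\ref{bradybinaryprop}(3), $\{a,d\}$ is a subuniverse modulo $\theta$, so $(a,d)$ is at least a weak semilattice edge. Either $\{a,d\}$ is a two-element semilattice, or $\mathrm{Sg}\{a,d\}=\{a,c,d\}$ is a nonconservative three-element algebra containing the semilattice edge $(a,c)$. By Table~\ref{table:1}, the latter gives $\{a,c,d\}\cong{\m T}_1^{\n}$ or ${\m T}_2^{\n}$, depending on whether $\{c,d\}$ is majority or affine. Reading off Table~\ref{table:5}, this split is exactly what distinguishes $\T_{4,3}$ (where $\{a,c,d\}\cong{\m T}_1^{\n}$ and $\{c,d\}$ is majority) from $\T_{4,4}$ (where it is ${\m T}_2^{\n}$ and $\{c,d\}$ is affine). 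So the type of $\{c,d\}$ inside ${\m T}_3^{\n}$, fixed by the labelling above, chooses the case. Any remaining placement leading to a conservative $\{a,d\}$ should be killed by Lemma~\ref{triangle2} or Lemma~\ref{triangle}. With $\{a,c,d\}$ identified, Lemma~\ref{Aljahomework} determines $g$ on $\{a,c,d\}$.

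\textbf{Pinning down the mixed values.} The remaining values are $g(a,a,b)$, $g(a,b,b)$, $g(a,b,c)$, $g(a,c,b)$, $g(a,b,d)$ and $g(a,d,b)$. All of them lie in $B$ by strong absorption, Proposition~\ref{bradybinaryprop}(1). As in the previous claim, I would iterate terms of the form
$$g'(x,y,z)=g(t'(x,y,z),t'(y,z,x),t'(z,x,y)),$$
with $t'(x,y,z)=g(x,x,g(x,y,z))$ or $g(x,g(x,y,z),g(x,y,z))$, and then apply
$$g''(x,y,z)=g'(g'(x,x,y),g'(y,y,z),g'(z,z,x)),$$
evaluating each iteration with the known restrictions to $B$ and to $\{a,c,d\}$. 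The aim is to force a cyclic term whose values on these six tuples match the $\T_{4,3}$ or $\T_{4,4}$ column of Table~\ref{table:5}. This is the step I expect to be the main obstacle. The values of $g(a,a,b)$ are not a priori constrained, since there is no congruence analogous to $\alpha$ from the previous claim. I would first try to show that one of the equivalences $\{\{a,b\},\{c\},\{d\}\}$-type or $\{\{a,d\},\{b,c\}\}$ becomes a congruence of $(A;g'')$ by a short case split on $g(a,a,b)\in\{b,c,d\}$, discarding inconsistent branches via Lemma~\ref{triangle}.

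\textbf{Minimality.} Finally, I would verify minimality exactly as in the previous claim. Every Taylor reduct of $(A;g'')$ inherits the congruence $\theta$ and the additional congruence found above. By Lemma~\ref{bkcyclic} it therefore has a ternary cyclic term, and these congruences, together with the fixed subalgebras, force that term to equal $g''$. Hence $(A;g'')$ is minimal Taylor and $\m A$ is term-equivalent to it, giving $\T_{4,3}$ or $\T_{4,4}$ under the labelling $a\mapsto3$ and $b,c,d$ mapped as determined above.
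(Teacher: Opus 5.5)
Your proposal has a genuine gap exactly at the step you flag as the main obstacle, and the route you sketch for it cannot work.

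Neither congruence you aim for can exist. Congruence classes of an idempotent algebra are subuniverses, so a class $\{a,b\}$ would give $\mathrm{Sg}\{a,b\}=\{a,b\}$. If $\{\{a,d\},\{b,c\}\}$ were a congruence, its meet with $\theta=\{\{a\},\{b,c,d\}\}$ would have $\{b,c\}$ as a class, yet you have already used Lemma~\ref{triangle2} to show that $\{b,c\}$ is not a subuniverse.

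The congruence that actually occurs is $\beta=\{\{a,c\},\{b\},\{d\}\}$, with $\m A/\beta\cong{\m T}_3^{\n}$; this is why ${\T}_{4,3},{\T}_{4,4}\leq_{sd}\m s\times{\m T}_3^{\n}$. It is what makes your minimality paragraph go through: $\beta\cap\theta$ is the identity, and both quotients have a unique ternary cyclic term (Lemma~\ref{Aljahomework} for ${\m T}_3^{\n}$). Hence every cyclic term of every Taylor reduct is forced.

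But $\beta$ only becomes visible once a fully determined cyclic term has been produced, and you give no mechanism for that. Iterating $g'$ and $g''$ tells you nothing until $g(a,a,b)$ and $g(b,b,a)$ are constrained. The paper normalizes these two values first:
\begin{itemize}
\item In the labelling $b\mapsto 2$, $c\mapsto 1$, $d\mapsto 0$, it uses $\{b,d\}\trianglelefteq_3 B\trianglelefteq_3\m A$, hence $\{b,d\}\trianglelefteq_3\m A$ by Proposition~\ref{composition}(4). So $g(b,b,a)\in\{b,d\}$ by Proposition~\ref{ternaryabscyclic}. A short case split then gives a cyclic $g''$ with $g''(b,b,a)=d$ and $g''(a,a,b)=b$. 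After that, $g^*(x,y,z)=g''(g''(x,x,g''(y,y,z)),g''(y,y,g''(z,z,x)),g''(z,z,g''(x,x,y)))$ is computable on all six mixed tuples.
\item In the other labelling, a similar case split on $g(b,b,a)\in\{b,c,d\}$ yields $g''(b,b,a)=c$ and $g''(a,a,b)=d$. Then $g''(g''(x,x,y),g''(y,y,z),g''(z,z,x))$ is fully determined.
\end{itemize}

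Your placement step is also off.
\begin{itemize}
\item If $0\in\{b,c\}$ under the isomorphism, then $\{b,c\}$ is a subuniverse. Lemma~\ref{triangle2} then yields a semilattice edge $(b,d)$ inside ${\m T}_3^{\n}$, which has none. So $d\mapsto 0$ is forced.
\item Both $b\mapsto 2, c\mapsto 1$ and $b\mapsto 1, c\mapsto 2$ survive, giving ${\T}_{4,3}$ and ${\T}_{4,4}$ respectively. Your stated outcome includes the impossible option $c\mapsto 0$. It then fixes the single labelling $b\mapsto 1$, $c\mapsto 2$, $d\mapsto 0$, which loses ${\T}_{4,3}$ and contradicts your own second paragraph.
\item You must also exclude $\mathrm{Sg}\{a,d\}=A$. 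Apply Lemma~\ref{triangle2} to the generating pair $a,d$, using that $\{a,c\}$ and $\{c,d\}$ are subuniverses. This gives a semilattice edge $(d,b)$, which is impossible.
\item $\mathrm{Sg}\{a,d\}=\{a,d\}$ is excluded by Lemma~\ref{triangle2} with $c$ and $d$ interchanged. It would make $(b,c)$ a semilattice edge, but $\{b,c\}$ is not a subuniverse.
\end{itemize}
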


\begin{proof}
Since algebra ${\m T}_3^{\n}$ has a ternary cyclic term, by Lemma \ref{bkcyclic} we know that $\m A$ has a ternary cyclic term. Let $g$ denote a ternary cyclic term of $\m a$. Since ${\m B}\cong {\m T}_3^{\n}$ and $(a,c)$ is a semilattice edge, by Lemma~\ref{triangle2} we know that $\{b,c\}$ can't be a subuniverse, and as $\{0,1\}$ and $\{0,2\}$ are subuniverses, thus $d\mapsto 0$. Since $\{a,c\}, \{b,d\}$ and $\{c,d\}$ are subuniverses and $\m b$ is nonconservative, Lemma~\ref{triangle2} implies that $\mathrm{Sg}\{a,d\}\neq\{a,d\}$. Next, notice that $\mathrm{Sg}\{a,d\}\not= {\m A}$. Otherwise, we could apply Lemma~\ref{triangle2} to $\m a=\mathrm{Sg}\{a,d\}$, and deduce that $(d,b)$ is a semilattice edge, a contradiction. In the case when $b\mapsto 2$, $c\mapsto 1$ and $d\mapsto 0$ hold, we have $\mathrm{Sg}\{a,d\}=\{a,c,d\}\cong {\T}_1^{\n}$, while if $b\mapsto 1$, $c\mapsto 2$ and $d\mapsto 0$, we have $\mathrm{Sg}\{a,d\}=\{a,c,d\}\cong {\T}_2^{\n}$. In either case,  by Lemma~\ref{Aljahomework}, $g|_{\{a,c,d\}}$ and $g|_B$ are uniquely determined. We proceed by examining each of the cases, and it is notable that in both of them, it will be proved as a consequence that $ \{\{b\},\{d\},\{a,c\}\}$ is a congruence.

Assume first that $b\mapsto 2$, $ c\mapsto 1$ and $d\mapsto 0$. We will find a ternary cyclic term $g''$ for which $g''(b,b,a)=d$ and $g''(a,a,b)=b$. First we want a ternary cyclic term $g'$ such that $g'(b,b,a)=d$. Since $\{b,d\}\trianglelefteq_3 B \trianglelefteq_3 {\m A}$, then $\{b,d\}\trianglelefteq_3 {\m A}$. Consequently, we have $g(b,b,a)\in \{b,d\}$. If $g(b,b,a)=d$, then we can take $g'(x,y,z):=g(x,y,z)$. Otherwise, we define the term $g'$ by $g'(x,y,z):=g(g(x,x,y),g(y,y,z),g(z,z,x))$, so
$$g'(b,b,a)=g(g(b,b,b),g(b,b,a),g(a,a,b))=g(b,b,\overline{a})=d,$$ where  $\overline{a}=g(a,a,b)\in \{c,d\}$.

If $g'(a,a,b)=b$, then we can take $g'$ as the desired term $g''$. Otherwise, let $\Tilde{a}:=g'(a,a,b)\in \{c,d\}.$ Define the ternary term $t(x,y,z):=g'(x,g'(x,y,z),$ $g'(x,y,z))$ and the cyclic term
 $$g''(x,y,z):=g'(t(x,y,z),t(y,z,x),t(z,x,y)).$$
 Now, we have
 $$g''(b,b,a)=g'(g'(b,d,d),g'(b,d,d),g'(a,d,d))=g'(b,b,d)=d,$$
 $$g''(a,a,b)=g'(g'(a,\Tilde{a},\Tilde{a}),g'(a,\Tilde{a},\Tilde{a}),g'(b,\Tilde{a},\Tilde{a}))=g'(\Tilde{a},\Tilde{a},b)=b.$$
 
Next, we define a new cyclic term
 $$g^*(x,y,z):=g''(g''(x,x,g''(y,y,z)),g''(y,y,g''(z,z,x)),g''(z,z,g''(x,x,y))).$$
Of course, $g^*$ is determined on $\{a,c,d\}$ and $\{b,c,d\}$ by Lemma~\ref{Aljahomework}. Since $g''$ is completely determined on all entries which have two equal values, we compute
\begin{align*}
g^*(a,b,b)&=g''(g''(a,a,g''(b,b,b)),g''(b,b,g''(b,b,a)),g''(b,b,g''(a,a,b)))=g''(b,d,b)=d,\\
g^*(b,a,a)&=g''(g''(b,b,g''(a,a,a)),g''(a,a,g''(a,a,b)),g''(a,a,g''(b,b,a)))=
g''(d,b,c)=b,\\
g^*(b,a,d)&=g''(g''(b,b,g''(a,a,d)),g''(a,a,g''(d,d,b)),g''(d,d,g''(b,b,a)))
=g''(d,b,d)=b,\\
g^*(b,d,a)&=g''(g''(b,b,g''(d,d,a)),g''(d,d,g''(a,a,b)),g''(a,a,g''(b,b,d)))
=g''(d,b,c)=b,\\
g^*(b,a,c)&=g''(g''(b,b,g''(a,a,c)),g''(a,a,g''(c,c,b)),g''(c,c,g''(b,b,a)))=
g''(d,b,c)=b,\\
g^*(b,c,a)&=g''(g''(b,b,g''(c,c,a)),g''(c,c,g''(a,a,b)),g''(a,a,g''(b,b,c)))
=g''(d,b,c)=b.
\end{align*}
Hence there is at most one minimal Taylor clone in this case, as each contains the cyclic operation $g^*$.

Now, the algebra $\m c:=(A;g^*)$ has the congruence $\beta=\{\{b\},\{d\},\{a,c\}\}$, such that ${\m c}/\beta\cong {\m T}_3^{\n}$, where $\{b\}\mapsto 2, \{d\}\mapsto 0$ and $\{a,c\}\mapsto 1$ ($0,1,2$ are elements of the algebra ${\m T}_3^{\n}$ from \Cref{table:1}). So, every Taylor reduct of $\m c$ must have as congruences $\beta$ and $\{\{a\},\{b,c,d\}\}$ with semilattice quotient, and these two fully determine any ternary cyclic term, making it equal to $g^*$. Also, any Taylor reduct of $\m c$ must have a ternary cyclic term by Lemma~\ref{bkcyclic} applied to $\beta$, so $\m c$ is minimal Taylor. Moreover, $\m c$ is isomorphic to ${\m T}_{4,3}$ from \Cref{table:6}. Consequently, the algebra $\m A$ is term-equivalent to ${\m T}_{4,3}$ $(a\mapsto 3, b\mapsto 2, c\mapsto 1, d\mapsto 0)$.

Assume now that $\m b\cong{\m T}_3^{\n}$ with $b\mapsto 1, c\mapsto 2, d\mapsto 0$. Recall that $\mathrm{Sg}\{a,d\}=\{a,c,d\}\cong {\T}_2^{\n}$ and that g is determined on $B$ and on $\{a,c,d\}$. Let us find first a ternary cyclic term $g'$ for which $g'(b,b,a)=c$ .

If $g(b,b,a)=c$, then we take $g':=g$. If $g(b,b,a)=d$, we first define $t(x,y,z):=g(x,g(x,y,z),g(x,y,z))$ and the cyclic term $$g'(x,y,z):=g(t(x,y,z),t(y,z,x),t(z,x,y)).$$ Now we have
$$g'(b,b,a)=g(g(b,d,d),g(b,d,d),g(a,d,d))=g(d,d,c)=c.$$
If $g(b,b,a)=b$, then we know that $\ast:=g(a,a,b)\in \{c,d\}$. Next, we define $$t(x,y,z):=g(x,g(x,x,y),g(x,x,y))$$ and $$g'(x,y,z):=g(t(x,y,z),t(y,z,x),t(z,x,y)).$$ Hence,
$$g'(b,b,a)=g(g(b,b,b),g(b,b,b),g(a,\ast,\ast))=g(b,b,c)=c.$$

Therefore, we can assume $g'(b,b,a)=c$. 
 
Now we seek the ternary cyclic term $g''$ such that $g''(b,b,a)=c$ and $g''(a,a,b)=d$. Assume that $\circ:=g'(a,a,b)\in \{b,c\}$, otherwise we could take $g''=g'$. Define $t(x,y,z):=g'(x,g'(x,y,z),g'(x,y,z))$ and the cyclic term $g''(x,y,z):=g'(t(x,y,z),t(y,z,x),t(z,x,y)).$ Next, note that $\bullet:=g'(b,\circ,\circ)\in \{b,d\}$, and
\begin{align*}
g''(b,b,a)=g'(g'(b,c,c),g'(b,c,c),g'(a,c,c))=g'(d,d,c)=c,\\
g''(a,a,b)=g'(g'(a,\circ,\circ),g'(a,\circ,\circ),g'(b,\circ,\circ))=g'(c,c,\bullet)=d.
\end{align*}

As in the previous case, we will obtain the ternary cyclic term $g^*$ which is determined on all values. Define
$$g^*(x,y,z):=g''(g''(x,x,y),g''(y,y,z),g''(z,z,x)).$$
We know from Lemma~\ref{Aljahomework} that every ternary cyclic operation acts as the one from Table \ref{table:1} on $\m b\cong {\m T}_3^{\n}$ and $\mathrm{Sg}\{a,d\}\cong{\m T}_2^{\n}$. We need to determine the values of $g^*$ for tuples $(a,b,b), (b,b,a), (b,a,c), (b,c,a),$ $(b,a,d)$ and $(b,d,a)$. We leave to the reader to verify that
\begin{align*}
     g^*(a,b,b)&=g^*(b,a,d)=g^*(b,d,a)=c\\
     \shortintertext{and}
     g^*(b,a,a)&=g^*(b,a,c)=g^*(b,c,a)=d.
\end{align*}
Now, the algebra $\m c:=(A;g^*)$ has the congruence $\beta=\{\{b\},\{d\},\{a,c\}\}$, such that ${\m c}/\beta\cong {\m T}_3^{\n}$, where $\{b\}\mapsto 1, \{d\}\mapsto 0$ and $\{a,c\}\mapsto 2$. So, every Taylor reduct of $\m c$ must have as congruences $\beta$ and $\{\{a\},\{b,c,d\}\}$ with semilattice quotient, and these two determine that any ternary cyclic term is equal to $g^*$. Also, any Taylor reduct of $\m c$ must have a ternary cyclic term by Lemma~\ref{bkcyclic} applied to $\beta$, so $\m c$ is minimal Taylor. Moreover, $\m c$ is isomorphic to ${\m T}_{4,4}$ from \Cref{table:6}. Consequently, the algebra $\m A$ is term-equivalent to ${\m T}_{4,4}$ $(a\mapsto 3, b\mapsto 1, c\mapsto 2, d\mapsto 0)$.
\end{proof}

\begin{claim}
If ${\m B}\cong {\m T}_5^{\n}\cong \mathbb{Z}^{\mathrm{aff}}_3$, then ${\m A}$ is $ {\m T}_{4,7}$, up to isomorphism and term-equivalence.
\end{claim}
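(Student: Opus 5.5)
Assume $\m B\cong\mathbb{Z}_3^{\mathrm{aff}}$ on $B=\{b,c,d\}$ with $\{a\}$ the non-absorbing class of $\theta$. We pin down the whole clone by computing a \emph{binary} commutative term rather than a ternary cyclic one, because Table~\ref{table:6} presents $\T_{4,7}$ by a commutative binary operation. First, $\mathbb{Z}_3^{\mathrm{aff}}$ has the commutative binary term $2x+2y$ (i.e. $x\cdot y=-(x+y)$ in $\mathbb{Z}_3$), and the two-element semilattice $\m A/\theta$ has one too. So Theorem~\ref{cyclic}, or Lemma~\ref{bkcyclic} with $k=2$ applied to $\theta$, gives a commutative binary term $s$ of $\m A$.

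On $B$ every binary term is an idempotent affine map $\lambda x+(1-\lambda)y$, so commutativity forces $s|_B$ to be $2x+2y$. Since $B\trianglelefteq_2\m A$, we have $s(a,x)\in B$ for all $x\in B$. By Proposition~\ref{bradybinaryprop}(3), each $\{a,x\}$ with $x\in B$ is a subalgebra with semilattice quotient. Hence $\{a,x\}$ is an actual two-element semilattice with absorbing element $x$, and $s(a,x)=x$ for $x=b,c,d$. This contradicts nothing so far. But then $\{a,b\}$ would be a subuniverse, contradicting $\mathrm{Sg}\{a,b\}=A$. So we only get $\{a\}\cup\{x\}\leq\m A$ for $x\in\{c\}$ (the semilattice edge $(a,c)$ assumed) and must treat $b,d$ separately.

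The plan is therefore to determine $s(a,b)$ and $s(a,d)$. We know $s(a,c)=c$. Also $s(a,b)\neq b$, otherwise $\{a,b\}$ is closed under the Taylor operation $s$ and Proposition~\ref{subuniverse} gives $\mathrm{Sg}\{a,b\}=\{a,b\}$. Similarly, $s(a,d)=d$ would make $\{a,d\}$ a semilattice, and then $m(x,y,y)$-type arguments as in Lemma~\ref{triangle} would force $A=\{a,b,d\}$ or similar. The goal is to show there is a unique admissible choice up to the automorphism of $\m B$ fixing $c$ (swapping $b,d$), matching the table: in the labelling $a\mapsto3$, $c\mapsto0$, we want $s(a,x)=-x$ relative to $c$ as zero, i.e. $s(3,1)=2$ and $s(3,2)=1$. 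Consequently $\{a,c\}$ is a semilattice, while $\mathrm{Sg}\{a,b\}=\mathrm{Sg}\{a,d\}=A$.

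To force this, I would iterate $s$ into new commutative terms such as $s'(x,y)=s(s(x,y),s(s(x,y),x))$, or compositions with the Mal'cev action on $B$, and exploit the fact that $B$-values are governed by $\mathbb{Z}_3$-arithmetic. Any choice $s(a,b)=c$ or $s(a,b)=b$ should be reducible to a term making $\{a,b,c\}$ or $\{a,b\}$ closed under a commutative operation, which is impossible by Proposition~\ref{subuniverse}. This is the step I expect to be the main obstacle, since it needs the right composite term.

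Once $s$ equals the table operation $t$, minimality follows as in the earlier claims. Every Taylor reduct of $(A;t)$ has the congruence $\theta$ with a semilattice quotient and the subalgebra $\mathbb{Z}_3^{\mathrm{aff}}$ on $B$. By Lemma~\ref{bkcyclic} it has a commutative binary term, which the above argument forces to be $t$. Hence $(A;t)$ is minimal Taylor, and $\m A$ is term-equivalent to it via $a\mapsto3$, $c\mapsto0$ and $\{b,d\}\mapsto\{1,2\}$.
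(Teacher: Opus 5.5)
There is a genuine gap, and it is the step you flag yourself as the main obstacle. Your setup matches the paper's: a commutative binary term $s$ from Lemma~\ref{bkcyclic} applied to $\theta$, $s|_B=2x+2y$ (so for $x\neq y$ in $B$, $s(x,y)$ is the third element of $B$), $s(a,c)=c$, $s(a,B)\subseteq B$, and $s(a,b)\neq b$.

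However, nothing in the proposal rules out $s(a,b)=c$, and nothing pins down $s(a,d)$; the value $s(a,d)=c$ is never considered. The tools you suggest do not work:
\begin{itemize}
\item $s'(x,y)=s(s(x,y),s(s(x,y),x))$ is not commutative; for instance $s'(b,c)=b\neq c=s'(c,b)$.
\item No commutative term can close $\{a,b,c\}$, because every commutative term sends $(b,c)$ to $d$.
\item The Lemma~\ref{triangle}-type argument for $s(a,d)=d$ needs $\{b,d\}$ to be a subuniverse, and no two-element subset of $B\cong\mathbb{Z}_3^{\mathrm{aff}}$ is one.
\end{itemize}

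The missing idea, which the paper uses, is the relation $R_{ab}=\mathrm{Sg}_{\m A^2}\{(a,b),(b,a)\}$. Neither $(c,c)$ nor $(d,c)$ can lie in it. From either one you get $(c,d),(d,c)\in R_{ab}$, hence $(s(c,d),s(d,c))=(b,b)\in R_{ab}$, which is a semilattice edge between $a$ and $b$.

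In your term language, take the commutative term $u(x,y)=s(s(s(x,y),x),s(s(x,y),y))$.
\begin{itemize}
\item If $s(a,b)=c$, then $u(a,b)=s(c,d)=b$. So $\{a,b\}$ is closed under $u$, contradicting Proposition~\ref{subuniverse}. Hence $s(a,b)=d$.
\item Now $u(a,b)=s(s(a,d),c)$. If $s(a,d)=d$ this equals $b$, a contradiction. If $s(a,d)=c$ it equals $c$, which is the first case with $u$ in place of $s$.
\item Therefore $s(a,d)=b$.
\end{itemize}

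Two further points. First, Proposition~\ref{bradybinaryprop}(3) only says that $B\cup\{a\}=A$ is a subalgebra, not that each $\{a,x\}$ is. You half-retract what you drew from it; that paragraph should simply go.

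Second, in the minimality step you cannot reuse ``the above argument forces it to be $t$'' for an arbitrary Taylor reduct $\m D$ of $(A;t)$. That argument uses $\mathrm{Sg}\{a,b\}=A$, which need not hold in $\m D$. It also uses Proposition~\ref{subuniverse}, which requires $\m D$ to be minimal Taylor. The paper instead notes that $\{\{a,c\},\{b\},\{d\}\}$ is a congruence of $(A;t)$ with quotient $\mathbb{Z}_3^{\mathrm{aff}}$. This congruence and $\theta$ survive in every reduct, and together they force every commutative binary term to equal $t$ (for example, $s(a,b)$ must lie in the class of $d$). Lemma~\ref{bkcyclic} applied to this congruence then gives existence of such a term in every Taylor reduct.
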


\begin{proof}
The minimal Taylor algebra $\mathbb{Z}^{\mathrm{aff}}_3$ has a Taylor operation $d(x,y,z)=x-y+z$ and a unique binary commutative operation, $p(x,y):=d(x,y,x)=2x+2y$. Note that $d(x,y,z)=p(p(x,z),y)$, so $p$ also generates the clone of $\mathbb{Z}^{\mathrm{aff}}_3$. Since $\{a\}$, $\m b$ and the semilattice quotient have a binary commutative term, by Lemma~\ref{bkcyclic}, the algebra $\m A$ has some binary commutative term $t$.

We know that $\{a,c\}$ is the two-element semilattice and so $t(b,c)=d$, $t(b,d)=c$, $t(c,d)=b$ and $t(a,c)=c$. Therefore, we only need to determine $t(a,b)$ and $t(a,d)$. Let $R_{ab}:=\mathrm{Sg}_{\m a^2}\{(a,b),(b,a)\}$. 

Let us first determine $t(a,b)$. If $t(a,b)=b$, then $\mathrm{Sg}\{a,b\}=\{a,b\}$, a contradiction. If $t(a,b)=c$, then $(c,c)\in R_{ab}$. Since $(c,c), (a,b)\in R_{ab}$, then $(c,d)\in R_{ab}$ and $(d,c)\in R_{ab}$ by symmetry. But then $(b,b)=(t(c,d),t(d,c))\in R_{ab}$, again $(a,b)$ is a semilattice edge, i.e. $\mathrm{Sg}\{a,b\}=\{a,b\}$, a contradiction. Thus, $t(a,b)=d$ must hold. Now, we know that $(d,d)\in R_{ab}$, and $(t(a,d),t(b,d))=(t(a,d),c)\in R_{ab}$. We have seen that $(c,c)\not\in R_{ab}$ and $(d,c)\notin R_{ab}$. So, $t(a,d)=b$ is forced. Since $t$ is Taylor, there can be at most one minimal Taylor clone in case ${\m B}\cong {\m T}_5^{\n}$.

The Taylor algebra $\m c:=(A;t)$ has the congruence $\alpha=\{\{a,c\},\{b\},\{d\}\}$, such that ${\m c}/\alpha$ is term-equivalent to $\mathbb{Z}^{\mathrm{aff}}_3$. Hence, every Taylor reduct $\m d$ of $\m c$ must have congruences $\alpha$ and $\beta=\{\{a\},\{b,c,d\}\}$ with the semilattice quotient, and these congruences ensure that any binary commutative term of $\m d$ is equal to $t$. Moreover, using Lemma~\ref{bkcyclic} on $\alpha$, we know that every Taylor reduct of $\m c$ must have a commutative binary term. We conclude that $\m c$ is a minimal Taylor algebra isomorphic to ${\m T}_{4,7}$ $ (a\mapsto 3, b\mapsto 1, c\mapsto 0, d\mapsto 2)$.
\end{proof}

According to Lemma~\ref{triangle2} and \Cref{table:4}, the only two possibilities for a conservative $\m b$ are $\m t_3^{\co}$ and $\m t_7^{\co}$.

\begin{claim}
If ${\m B}\cong {\m T}_3^{\co}$, then ${\m A}$ is $ {\m T}_{4,5}$, up to isomorphism and term-equivalence.
\end{claim}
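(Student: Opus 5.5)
We will follow the template of the preceding claims: first pin down the edge structure of $\m A$, then produce a canonical ternary cyclic term, and finally show that the resulting algebra is minimal Taylor. Since $\m B$ is conservative, $\{b,c\}$ is a subuniverse, and $\{a,c\}$ is one as well (it is a semilattice edge). So Lemma~\ref{triangle2} applies: $(b,d)$, $(a,d)$ and $(b,c)$ are semilattice edges, and $\{c,d\}$ is the two-element majority algebra or the two-element affine algebra. Comparing with \Cref{table:4}, $\m B\cong\m T_3^{\co}$ forces $b\mapsto 2$ and $\{c,d\}\mapsto\{0,1\}$, and then $\{c,d\}$ is the majority algebra. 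Because $\{0,1\}\trianglelefteq_2\m T_3^{\co}$, Proposition~\ref{composition}(2) gives $\{c,d\}\trianglelefteq_2\m A$. Proposition~\ref{bradybinaryprop} then makes $\{a,c,d\}$ a subuniverse and $\alpha=\{\{a\},\{b\},\{c,d\}\}$ a congruence with $\m A/\alpha\cong\m T_4^{\n}$. The subuniverse $\{a,c,d\}$ is conservative, with semilattice edges $a\to c$ and $a\to d$ and a majority edge on $\{c,d\}$, so it is again a copy of $\m T_3^{\co}$, with $a$ playing the role of $2$.

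Next, Lemma~\ref{bkcyclic} applied to $\{\{a\},B\}$ gives a ternary cyclic term $g$. On all two-element subuniverses $g$ is forced. The quotient by $\alpha$ places $g(a,a,b)$, $g(b,b,a)$ and all of $g(a,b,x)$, $g(a,x,b)$ for $x\in\{c,d\}$ inside $\{c,d\}$. The values $g(0,1,2)$ and $g(0,2,1)$ on the two copies of $\m T_3^{\co}$ are not a priori fixed, since $\m T_3^{\co}$ has more than one cyclic term. I would therefore repeat the normalization trick from the $\m T_1^{\n}$ and $\m T_2^{\n}$ claims. First take $g'(x,y,z)=g(g(x,x,y),g(y,y,z),g(z,z,x))$ and compositions of the form $g(x,g(x,y,z),g(x,y,z))$, which land in the two-variable values of $g$. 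This should steer $g(a,a,b)$, $g(b,b,a)$ to the values prescribed by $\T_{4,5}$. Then one more round of $g''(x,y,z)=g'(g'(x,x,y),g'(y,y,z),g'(z,z,x))$ expresses every value on a triple of three distinct elements through already determined two-variable values. The outcome should be a fully determined cyclic term $g^*$ that agrees with \Cref{table:5} under $a\mapsto 3$, $b\mapsto 2$, $\{c,d\}\mapsto\{0,1\}$. Here I would also use $R_{ab}$, as in Lemma~\ref{triangle2}, to exclude the choices that would give $\mathrm{Sg}\{a,b\}\neq A$, namely those that would make $\{a,b,c\}$ or $\{a,b,d\}$ closed under a cyclic term (Lemma~\ref{triangle}).

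The final and hardest step is minimality. Here the earlier argument (congruences determine \emph{every} cyclic term) fails, because $\T_{4,5}$ has several ternary cyclic terms. My plan is to show that the normalization above is uniform: starting from an arbitrary ternary cyclic term of an arbitrary Taylor reduct of $(A;g^*)$, the same compositions produce exactly $g^*$. This uses only the congruences $\alpha$ and $\{\{a\},B\}$ and the two-element subuniverses, all of which every Taylor reduct inherits via Proposition~\ref{subuniverse}. Such a reduct has a ternary cyclic term by Lemma~\ref{bkcyclic}, so it contains $g^*$ and equals $(A;g^*)$. Hence $(A;g^*)$ is minimal Taylor, and $\m A$, which also contains $g^*$, is term-equivalent to it.

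The delicate point is checking that the composed term depends only on two-variable values, so that the non-unique values $g(x,y,z)$ with $\{x,y,z\}$ three distinct elements are washed out. If this fails, I would instead verify via Theorem~\ref{gen} that each candidate cyclic term produces, by composition, an operation $m$ satisfying the edge conditions of \Cref{compositionactslike}. All candidates would then generate the same clone.
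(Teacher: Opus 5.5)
Your determination of the operation is sound, and it already proves the implication as stated. Lemma~\ref{triangle} applied to $g(x,x,y)$ gives $\{g(a,a,b),g(b,b,a)\}=\{c,d\}$. The term $g'(x,y,z)=g(g(x,x,y),g(y,y,z),g(z,z,x))$ normalizes the triangles: $g'(a,c,d)=g'(b,c,d)=c$ and $g'(a,d,c)=g'(b,d,c)=d$. After possibly renaming $c\leftrightarrow d$ (legitimate, since $c$ and $d$ now play symmetric roles) you have $g'(a,a,b)=d$. Then your $g''$ is completely determined and is the table of $\T_{4,5}$ under $a\mapsto 3$, $b\mapsto 2$, $c\mapsto 0$, $d\mapsto 1$. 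Since $\m A$ is minimal Taylor and contains this cyclic operation, term-equivalence follows immediately, without passing through minimality of $(A;g^*)$. Note that it is the renaming, not compositions like $g(x,g(x,y,z),g(x,y,z))$, that fixes which of $c,d$ equals $g(a,a,b)$; those compositions preserve it.

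The genuine gap is in the step you call hardest: showing that $(A;g^*)$ is itself minimal Taylor.

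First, the invariants you rely on (the congruences $\alpha$, $\{\{a\},B\}$ and the small subuniverses) are consistent with a cyclic $h$ in a reduct satisfying $h(a,a,b)=h(b,b,a)=c$. Your normalization sends such an $h$ to an operation preserving $\{a,b,c\}$, not to $g^*$. Inside an arbitrary reduct you cannot invoke Lemma~\ref{triangle} or $\mathrm{Sg}\{a,b\}=A$, since that is exactly what is at stake. The missing idea, which is the core of the paper's argument, is the automorphism $\varphi$ swapping $a\leftrightarrow b$ and $c\leftrightarrow d$. Lemma~\ref{triangle} shows that every binary term satisfies $t(a,b)=c\iff t(b,a)=d$, so your $R_{ab}$ is exactly the graph of $\varphi$. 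Hence $\varphi$ is an automorphism of $(A;g^*)$ and of all its reducts, which forces $h(b,b,a)=\varphi(h(a,a,b))\neq h(a,a,b)$.

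Second, ``the same compositions produce exactly $g^*$'' cannot hold literally. Put $\sigma=(c\,d)$ and $\sigma g^*(x,y,z)=\sigma(g^*(\sigma x,\sigma y,\sigma z))$. With $t(x,y)=g^*(x,y,y)$ one checks that $g^*(g^*(x,x,t(x,y)),g^*(y,y,t(y,z)),g^*(z,z,t(z,x)))=\sigma g^*$, so $\sigma g^*\in\mathrm{Clo}(g^*)$. But any fixed composition scheme $F$ satisfies $F(\sigma h)=\sigma F(h)$, so $F(g^*)=g^*$ forces $F(\sigma g^*)=\sigma g^*\neq g^*$. The paper avoids this with a case-dependent binary term $t$ satisfying $t(a,b)=d$, $t(b,a)=c$ (either $g'(x,x,y)$ or $g'(x,y,y)$), inserted into $g'(g'(x,x,t(x,y)),g'(y,y,t(y,z)),g'(z,z,t(z,x)))$. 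Alternatively, suppose a reduct $\mathcal D$ yields $\sigma g^*$. Then $\mathrm{Clo}(\sigma g^*)$, the conjugate of $\mathrm{Clo}(g^*)$ by $\sigma$, lies in $\mathcal D\subseteq\mathrm{Clo}(g^*)$ and has the same finite number of $n$-ary members for each $n$, so all three coincide.

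Your fallback via Theorem~\ref{gen} does not help, since that theorem presupposes that the algebra is already minimal Taylor.
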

\begin{proof}
If ${\m B}\cong {\m T}_3^{\co}$, then $b\mapsto 2$, where is $2$ element of ${\m T}_3^{\co}$ from \Cref{table:4}. Algebra ${\m T}_3^{\co}$ has a ternary cyclic term, so by Lemma~\ref{bkcyclic} we know that algebra $\m A$ has a ternary cyclic term $g$. We know that $(a,c), (b,c), (a,d)$ and $(b,d)$ must be semilattice edges. As a result, $\{a,c,d\}$ is the underlying set of a subalgebra isomorphic to ${\m T}_3^{\co}$. Also, recall that $\{c,d\}\trianglelefteq_2 {\m A}$ holds.

Lemma~\ref{triangle} implies that for each binary term $t$ we have $t(a,b)=c$ if and only if $t(b,a)=d$. Then there exists an automorphism $\varphi$ of order two such that $\varphi (a)=b$ and $\varphi (c)=d$.

Consider an auxiliary cyclic term $g'$ defined by $$g'(x,y,z)=g(g(x,x,y),g(y,y,z),g(z,z,x)).$$ Then 
\begin{gather*}
    g'(b,c,d)=g'(a,c,d)=c,\\
    g'(b,d,c)=g'(a,d,c)=d.
\end{gather*}


Once again, from Proposition~\ref{bradybinaryprop} follows that ${\m A}/\alpha$ must be $\m T_4^{\n}$, for the congruence $\alpha=\{\{a\},\{b\},\{c,d\}\}$. Henceforth, we assume that $g'(a,a,b), g'(b,b,a)\in \{c,d\}$. Let $t$ be a binary term such that $t(a,b)=d$ and $t(b,a)=c$. Note that for the term $t$ we can take $g'(x,x,y)$ or $g'(x,y,y)$, depending on the value of the term $g'$. We define a new cyclic term $g''$ which shall be completely determined, as follows
$$g''(x,y,z)=g'(g'(x,x,t(x,y)),g'(y,y,t(y,z)),g'(z,z,t(z,x))).$$
The cyclic term $g''$ acts like
\begin{align*}
    g''(a,b,b)&=g''(b,a,c)=g''(b,c,a)=g''(b,c,d)=g''(a,c,d)=c\\
    \shortintertext{and}
    g''(b,a,a)&=g''(b,a,d)=g''(b,d,a)=g''(b,d,c)=g''(a,d,c)=d.
\end{align*}
Hence, all minimal Taylor algebras with ${\m B}\cong {\m T}_3^{\co}$ are term equivalent to $\m c:=(A;g'')$. Any Taylor reduct of $\m c$ has a ternary cyclic term, the congruence $\alpha$ and the automorphism $\varphi$, so any minimal Taylor reduct of $\m c$ has the term $g''$. So, $\m c$ is a minimal Taylor algebra and it is isomorphic to ${\m T}_{4,5}$ $(a\mapsto 3, b\mapsto 2, c\mapsto 0, d\mapsto 1)$.
\end{proof}

\begin{claim}
If ${\m B}\cong {\m T}_7^{\co}$, then ${\m A}$ is $ {\m T}_{4,6}$, up to isomorphism and term-equivalence.
\end{claim}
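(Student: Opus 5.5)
I will follow the argument of the previous claim for ${\m T}_3^{\co}$ essentially verbatim, replacing the majority edge on $\{c,d\}$ by an affine one. Since ${\m B}\cong{\m T}_7^{\co}$, the 2-element subalgebras of $\m b$ are $\{b,c\}$ and $\{b,d\}$ (semilattice edges into $c$ and $d$) and $\{c,d\}$ (affine). Thus $b\mapsto 2$, and $\{c,d\}\trianglelefteq_2\m b$. By Proposition~\ref{composition}(2) we get $\{c,d\}\trianglelefteq_2\m a$, and Proposition~\ref{bradybinaryprop} gives the congruence $\alpha=\{\{a\},\{b\},\{c,d\}\}$ with $\m a/\alpha$ term-equivalent to ${\m T}_4^{\n}$. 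Lemma~\ref{triangle2} shows that $(a,c),(a,d),(b,c),(b,d)$ are semilattice edges, so $\{a,c,d\}\cong{\m T}_7^{\co}$ as well. Since ${\m T}_7^{\co}$ has a ternary cyclic term, Lemma~\ref{bkcyclic} applied to $\{\{a\},\{b,c,d\}\}$ gives a ternary cyclic term $g$ of $\m a$.

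Next I build the automorphism. By Lemma~\ref{triangle}, for every binary term $t$ we have $t(a,b)=c$ iff $t(b,a)=d$. Hence $R_{ab}$ is the graph of an involution $\varphi$ with $\varphi(a)=b$ and $\varphi(c)=d$; its pairs in $\{c,d\}^2$ can only be $(c,d),(d,c)$, since a loop there would contradict Lemma~\ref{triangle}.

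I then normalize $g$. On each 2-element subalgebra every cyclic term is forced, so the only undetermined values are $g(a,a,b),g(b,b,a)\in\{c,d\}$ (these lie in $\{c,d\}$ because of $\alpha$) and the values on permutations of $(a,b,c)$, $(a,b,d)$, $(a,c,d)$, $(b,c,d)$. For ${\m T}_7^{\co}$ the table gives $g(0,1,2)=0$ and $g(0,2,1)=1$, but ${\m T}_7^{\co}$ may have several cyclic terms. So I first apply $g'(x,y,z)=g(g(x,x,y),g(y,y,z),g(z,z,x))$, which lands the rainbow values on $\{x,c,d\}$ in the pattern $c,d$ dictated by the affine pair. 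Then, choosing a binary $t$ with $t(a,b)=d$ and $t(b,a)=c$, I apply the composite
\[
g''(x,y,z)=g'(g'(x,x,t(x,y)),g'(y,y,t(y,z)),g'(z,z,t(z,x))),
\]
exactly as in the previous claim. I expect every value of $g''$ to be computed explicitly, using that $\varphi$ forces the values at $(a,\cdot,\cdot)$ to be the $\varphi$-images of those at $(b,\cdot,\cdot)$. The target is agreement with Table~\ref{table:5} column ${\m T}_{4,6}$ under $a\mapsto3$, $b\mapsto2$ and $\{c,d\}\mapsto\{0,1\}$.

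The main obstacle is this normalization. The affine edge $\{c,d\}$ flips values where the majority edge fixed them, so the composite may need an extra iteration, or an alternating use of $g(x,x,y)$ versus $g(x,y,y)$, to stabilize $g(a,a,b)$ and the rainbow tuples. I would first try the same $g''$ and check the six rainbow tuples; if the resulting values are not $\varphi$-consistent, I would iterate once more. Finally, minimality is shown as in the previous claim. Any Taylor reduct of $(A;g'')$ keeps $\alpha$, the semilattice congruence and $\varphi$, and it has a ternary cyclic term by Lemma~\ref{bkcyclic}; the same normalization then produces $g''$ inside it. So $(A;g'')$ is minimal Taylor and is isomorphic to ${\m T}_{4,6}$.
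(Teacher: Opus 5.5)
\textbf{There is a genuine gap, and it sits exactly at the normalization step you flagged as the main obstacle.} The composite you commit to does not give a canonical term.

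Your $g'$ step is fine. It forces $g'(x,c,d)=c$ and $g'(x,d,c)=d$ for $x\in\{a,b\}$. This is the \emph{opposite} of the pattern in $\T_{4,6}$, where $g(b,c,d)=g(2,0,1)=1=d$, so the second step has to flip it.

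On the affine pair $\{c,d\}$ we have $g'(x,x,y)=y$ and $g'(x,y,y)=x$. Any binary $t$ with $t(a,b)=d$ and $t(b,a)=c$ is a projection on $\{c,d\}$, but \emph{which} projection is not determined by those two values. Both kinds already occur in $\mathrm{Clo}(\T_{4,6})$. For the tabulated $g$, $t_1(x,y)=g(x,x,y)$ is the second projection on $\{c,d\}$, while $t_1(t_1(x,y),x)$ has the same values at $(a,b),(b,a)$ and is the first projection there. In the recipe of the previous claim, $t=g'(x,x,y)$ (the second projection) is used exactly when $g'(a,a,b)=d$.

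Your inner term $g'(x,x,t(x,y))$ coincides with $t$ on $\{c,d\}$, so
\[ g''(b,c,d)=g'(c,\,t(c,d),\,d), \]
which equals $g'(c,c,d)=d$ if $t$ is the first projection and $g'(c,d,d)=c$ if it is the second. In the second case you obtain a cyclic term with $g''(a,a,b)=g''(a,b,c)=d$ and $g''(a,c,b)=c$, but $g''(b,c,d)=g''(a,c,d)=c$. This is a cyclic term of the same clone, yet it does not match Table~\ref{table:5}. With the previous claim's rule for $t$ it is a fixed point of your construction, so iterating once more does not help. (The same happens if you start from the table's own $g$ and take $t=t_1$.)

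Without a canonical term, neither of your two remaining steps goes through. The ``at most one clone'' step fails, and so does the minimality step, which needs the normalization to reproduce the \emph{same} $g''$ inside every Taylor reduct. Your proposed $\varphi$-consistency check is vacuous: every term operation commutes with the automorphism $\varphi$. What must be checked is independence from the starting $g$ and from the choice of $t$.

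\textbf{The fix, which is what the paper does, is your ``alternating'' idea made precise.} Put
\[ s(x,y)=g'(x,t(x,y),t(x,y)),\qquad g''(x,y,z)=g'(s(x,y),s(y,z),s(z,x)). \]
Since $g'(x,y,y)$ is the first projection on $\{c,d\}$, $s$ is the first projection there whichever projection $t$ is. Moreover $s(a,b)=d$, $s(b,a)=c$, and $s$ returns the absorbing element on the four semilattice edges. Hence every value of $g''$ is forced:
\begin{itemize}
\item $g''(a,a,b)=g'(a,d,c)=d$;
\item $g''(b,c,d)=g''(a,c,d)=g'(c,c,d)=d$;
\item $g''(a,b,c)=g'(d,c,c)=d$ and $g''(a,c,b)=c$;
\item $g''(a,b,d)=d$ and $g''(a,d,b)=c$.
\end{itemize}
This is exactly $\T_{4,6}$ under $a\mapsto3$, $b\mapsto2$, $c\mapsto0$, $d\mapsto1$. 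The rest of your argument (setup, automorphism, minimality via Lemma~\ref{bkcyclic}) then goes through as in the $\T_3^{\co}$ claim.
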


\begin{proof}
The proof is the same as in the previous claim, except the term $g''$ is defined by $$g''(x,y,z)=g'(g'(x,t(x,y),t(x,y)),g'(y,t(y,z),t(y,z)),g'(z,t(z,x),t(z,x))).$$
As a consequence we get that $\m A\cong {\m T}_{4,6}$ $ (a\mapsto 3, b\mapsto 2, c\mapsto 0, d\mapsto 1)$.
\end{proof}


\subsection{Two-element majority quotient}\label{s:maj}
As before, let $\mathrm{Sg}\{a,b\}=\m a$ and assume that $\m a$ has a two-element majority quotient. Furthermore, we may suppose that none of the quotients of $\m a$ are isomorphic to the two-element semilattice. We shall prove that such a minimal Taylor algebra must be one of the three algebras from \Cref{table:7}.

\begin{table}[H]
\centering
\begin{tabular}{|c||c|c|c|}
\hline
    Algebra & $\T_{4,8}$ & $\T_{4,9}$ & $\T_{4,10}$ \\
\hline\hline
     $g{\restriction_{\{0,2\}}}$  & $\text{min}^3_{0\leqslant 2}$& aff & aff  \\
\hline
    $g{\restriction_{\{1,3\}}}$  & $\text{min}^3_{1\leqslant 3}$ & aff & aff\\

\hline
	$g(0,0,1)$ & \multirow{2}{*}{maj} & 2 & 0 \\
\cline{1-1} \cline{3-4}
	$g(0,1,1)$ &  & 3 & 3 \\	
\hline

    $g(0,0,3)$  & \multirow{2}{*}{maj} & \multirow{2}{*}{maj} & 2  \\
\cline{1-1} \cline{4-4}
	$g(0,3,3)$  &  &  & 3 \\
\hline

$g(1,1,2)$  & \multirow{2}{*}{maj} & \multirow{2}{*}{maj} & 1  \\
\cline{1-1} \cline{4-4}
	$g(1,2,2)$  &  &  & 0 \\
\hline
 
	$g(2,2,3)$ & 0 & 0 & 2 \\
\hline
	$g(2,3,3)$ & 1 & 1 & 1 \\

\hline
	$g(0,1,2)$ & 0 & 0 & 2\\
\hline
	$g(0,2,1)$ & 0 & 0 & 2 \\
\hline
	$g(0,1,3)$ & 1 & 1 & 1 \\
\hline
	$g(0,3,1)$ & 1 & 1 & 1 \\
\hline
	$g(0,2,3)$ & 0 & 2 & 0 \\
\hline
	$g(0,3,2)$ & 0 & 2 & 0 \\
\hline
	$g(1,2,3)$ & 1 & 3 & 3 \\
\hline
	$g(1,3,2)$ & 1 & 3 & 3 \\
\hline
\end{tabular} 
\caption{Four-element minimal Taylor algebras with 2-majority quotient.}
\label{table:7}
\end{table}

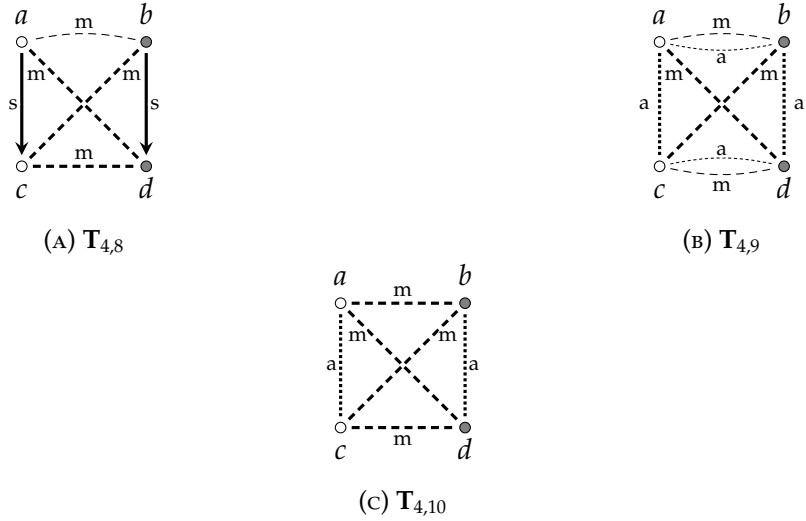
\begin{figure}[ht]
 \begin{subfigure}{0.33\textwidth}
		\centering
		\begin{tikzpicture}
			\tikzstyle{every node}=[draw,circle,fill=white,minimum size=4pt,inner sep=0pt]
			
			\node (a) [label=above:\strut$a$] {};
			\node (b) [fill=gray,right=1.5cm of a,label=above:\strut$b$] {};
			\node (c) [ below=1.5cm of a,label=below:\strut$c$] {};
			\node (d) [fill=gray,right=1.5cm of c,label=below:\strut$d$] {};

			\path[->,>=stealth,line width=1.2pt, shorten <=0.7mm, shorten >=0.7mm]
			(b) edge node[draw=none,midway,right] {\scriptsize{s}} (d)
			(a) edge node[draw=none,midway,left] {\scriptsize{s}} (c);
            \path[-,line width=1.2pt,densely dashed, shorten <=0.7mm, shorten >=0.7mm]
            (b) edge [line width=0.4pt] [bend right=12] node[draw=none,midway,above] {\scriptsize{m}} (a)
            (a) edge node[draw=none,near start, left, outer sep=3pt] {\scriptsize{m}} (d)
            (b) edge node[draw=none,near start,right, outer sep=3pt] {\scriptsize{m}} (c)
			(c) edge node[draw=none,midway,above] {\scriptsize{m}} (d);

		\end{tikzpicture}
		\caption{$\m t_{4,8}$}
	\end{subfigure}
	\hfill
     \begin{subfigure}{0.33\textwidth}
		\centering
		\begin{tikzpicture}
			\tikzstyle{every node}=[draw,circle,fill=white,minimum size=4pt,inner sep=0pt]
			
			\node (a) [label=above:\strut$a$] {};
			\node (b) [fill=gray,right=1.5cm of a,label=above:\strut$b$] {};
			\node (c) [below=1.5cm of a,label=below:\strut$c$] {};
			\node (d) [fill=gray,right=1.5cm of c,label=below:\strut$d$] {};

			\path[-,line width=1.2pt, densely dotted, shorten <=0.7mm, shorten >=0.7mm]
			(a) edge node[draw=none,midway,left,outer sep=3pt] {\scriptsize{a}} (c)
            (b) edge node[draw=none,midway,right,outer sep=3pt] {\scriptsize{a}} (d)
            (a) edge [line width=0.4pt] [bend right=12] node[draw=none,midway,below] {\scriptsize{a}} (b)
            (c) edge [line width=0.4pt] [bend left=12] node[draw=none,midway,above] {\scriptsize{a}} (d);
            \path[-,line width=1.2pt,densely dashed, shorten <=0.7mm, shorten >=0.7mm]
            (a) edge node[draw=none,near start,left,outer sep=3pt] {\scriptsize{m}} (d)
            (b) edge node[draw=none,near start,right,outer sep=3pt] {\scriptsize{m}} (c)
            (a) edge [line width=0.4pt] [bend left=12] node[draw=none,midway,above] {\scriptsize{m}} (b)
			(c) edge [line width=0.4pt] [bend right=12] node[draw=none,midway,below] {\scriptsize{m}} (d);

		\end{tikzpicture}
		\caption{$\m t_{4,9}$}
	\end{subfigure}
	\hfill
	\begin{subfigure}{0.33\textwidth}
		\centering
		\begin{tikzpicture}
			\tikzstyle{every node}=[draw,circle,fill=white,minimum size=4pt,inner sep=0pt]
			
			\node (a) [label=above:\strut$a$] {};
			\node (b) [fill=gray,right=1.5cm of a,label=above:\strut$b$] {};
			\node (c) [below=1.5cm of a,label=below:\strut$c$] {};
			\node (d) [fill=gray,right=1.5cm of c,label=below:\strut$d$] {};

			\path[-,line width=1.2pt, densely dotted, shorten <=0.7mm, shorten >=0.7mm]
            (b) edge node[draw=none,midway,right] {\scriptsize{a}} (d)
			(a) edge node[draw=none,midway,left] {\scriptsize{a}} (c);
            \path[-,line width=1.2pt,densely dashed, shorten <=0.7mm, shorten >=0.7mm]
			(b) edge node[draw=none,near start,right, outer sep=2pt] {\scriptsize{m}} (c)
            (a) edge node[draw=none,midway,above] {\scriptsize{m}} (b)
            (a) edge node[draw=none,near start,left, outer sep=2pt] {\scriptsize{m}} (d)
			(c) edge node[draw=none,midway,below] {\scriptsize{m}} (d);

		\end{tikzpicture}
		\caption{$\m t_{4,10}$}
	\end{subfigure}
	
	\caption{Directed graph of $\m t_{4,i}$ for $i=8,9,10$.}
\end{figure}

Before the analysis, we give an example from \cite{dreamteam}, which we will also be referring to in the section with $\mathbb{Z}_2^{\mathrm{aff}}$ quotient. In the present paper, the following algebra is denoted $\m t_{4,10}$.
\begin{exmp}[Example 5.16 of \cite{dreamteam}]\label{example5.16}
Let $A=\{0,1,2,3\}$ and $\alpha$ the equivalence relation on $A$ with classes $\{0,2\}$ and $\{1,3\}$. Define a symmetric operation $g$ on $A$ as follows. When two of the inputs to $g$ are equal, $g$ is given by $g(a,a,a+1)=a$, $g(a,a,a+2)=g(a,a,a+3)=a+2\; (\mathrm{all\; modulo\; 4})$ and when all three inputs to $g$ are distinct, $g$ is given by $g(a,b,c)=d-1\pmod{4}$ where $a,b,c,d$ are any permutation of $0,1,2,3$. Then $\m a=(A;g)$ is a minimal Taylor algebra, $\alpha$ is a congruence on $\m a$, and each pair of elements in different $\alpha$-classes is a majority edge with witnessing congruence $\alpha$.
\end{exmp}

\subsubsection{Equal size classes}
Suppose, without loss of generality, that $$\alpha=\{\{a,c\},\{b,d\}\}$$ is a congruence and that $\m a/\alpha$ is the two-element majority algebra. By Lemma~\ref{bkcyclic}, there exists a ternary cyclic term $g$ in $\m a$.
\begin{claim}
    There are only two possibilities regarding subuniverses $\{a,c\}$ and $\{b,d\}$:
    \begin{itemize}
        \item both are semilattices with absorbing elements $c$ and $d$, or
        \item both are affine.
    \end{itemize}
\end{claim}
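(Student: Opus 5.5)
Each $\alpha$-class is a subuniverse, so $\{a,c\}$ and $\{b,d\}$ are two-element minimal Taylor algebras (Proposition~\ref{HSPfin}). Each is therefore a semilattice, a majority algebra or $\mathbb{Z}_2^{\mathrm{aff}}$. The plan is to exclude every combination except the two listed. I will do this in three steps: rule out majority classes, rule out a semilattice pointing toward a generator, and rule out a mixed pair. Throughout I will use the ternary cyclic term $g$ and the operation $m$ from Theorem~\ref{compositionactslike}.

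\emph{Step 1: no class is a majority algebra.} Suppose $\{a,c\}$ is a majority algebra. Then $m$ is majority both on $\{a,c\}$ and on $\m a/\alpha$. I expect $\{a\}\trianglelefteq_3\m a$ to follow. If $\{b,d\}$ is also majority, then every pair in $A$ is a (weak) majority edge, so Theorem~\ref{singletoncenter} gives $\{a\}\trianglelefteq_3\m a$ and $\{b\}\trianglelefteq_3\m a$. Then $m$ is majority on $\{a,b\}$, which forces $\mathrm{Sg}\{a,b\}=\{a,b\}$, a contradiction. If $\{b,d\}$ is not majority, I will instead use the composition $C\trianglelefteq_3$ of the $\alpha$-class with the subalgebra, via Proposition~\ref{composition}(4). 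Concretely, $\{a\}\trianglelefteq_3\{a,c\}$, and a class of the majority quotient $3$-absorbs $\m a$. Hence $\{a\}\trianglelefteq_3\m a$. Proposition~\ref{smedge}(2) then says $b$ is joined to $a$ by a semilattice edge or a weak majority edge. In either case I will derive a contradiction with $\m a=\mathrm{Sg}\{a,b\}$ having four elements, using Lemma~\ref{3elem}-type arguments or $\{a,b\}$ being a subuniverse. The same applies to $c$ in place of $a$ where needed.

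\emph{Step 2: semilattice orientation.} Suppose $\{a,c\}$ is a semilattice with absorbing element $a$. Then $\{a\}\trianglelefteq_2\{a,c\}$, and the class $\{a,c\}$ $3$-absorbs $\m a$ because the quotient is majority. So $\{a\}\trianglelefteq_3\m a$ (a $2$-absorbing set is also $3$-absorbing), and we reach the same contradiction as in Step~1. Hence any semilattice class has the non-generator as its absorbing element, that is, $c$, respectively $d$.

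\emph{Step 3: no mixed case.} Suppose $\{a,c\}$ is a semilattice absorbing to $c$ and $\{b,d\}\cong\mathbb{Z}_2^{\mathrm{aff}}$. Consider $R_{ab}=\mathrm{Sg}\{(a,b),(b,a)\}$. This relation is contained in $\alpha$-crossing pairs $(\{a,c\}\times\{b,d\})\cup(\{b,d\}\times\{a,c\})$ together with whatever the majority quotient forces. Applying $g$ coordinatewise to $(a,b),(b,a)$ and pairs already obtained, I will locate $(c,x)$ and $(x,c)$ in $R_{ab}$. Using the semilattice term on $\{a,c\}$ against the affine structure on $\{b,d\}$, I will then produce an asymmetry: $t(a,b)=c$ forces $t(b,a)\in\{b,d\}$. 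From this I aim for either a semilattice quotient $\{\{a\},\{b,c,d\}\}$, via $\{b,c,d\}$ or $\{c\}\cup\{b,d\}$ being $2$-absorbing and Proposition~\ref{bradybinaryprop}, which is excluded by hypothesis, or a congruence separating $c$ that contradicts the earlier steps. The alternative route is to build, as in earlier claims, iterated cyclic terms $g',g''$ that become fully determined and exhibit a semilattice quotient.

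I expect Step~3 to be the main obstacle. Steps~1 and~2 are absorption bookkeeping, whereas the mixed case needs an explicit term computation to show that one class's semilattice structure propagates into a forbidden semilattice quotient.
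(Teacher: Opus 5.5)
Your derivation of $\{a\}\trianglelefteq_3\m a$ in Steps 1 and 2 is fine. You combine $\{a\}\trianglelefteq_3\{a,c\}$ with $\{a,c\}\trianglelefteq_3\m a$ via Proposition~\ref{composition}(4). This is cleaner than the paper's appeal to Theorem~\ref{singletoncenter}, and it makes your separate all-majority subcase unnecessary.

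The finish of Steps 1 and 2, however, points at the wrong tool. Proposition~\ref{smedge}(2) only gives a weak majority edge $\{a,b\}$ whose witness has $\{a\}$ as a class, i.e.\ a second majority quotient $\{\{a\},\{b,c,d\}\}$. That is not contradictory by itself, and Lemma~\ref{3elem} does not apply. What closes the argument is that $\{b,d\}$ is $3$-absorbing as well. Then Proposition~\ref{composition}(3) makes $\{a,b,d\}$ a subuniverse containing $a$ and $b$. Alternatively, as the paper does, Lemma~\ref{dominant} gives $t(a,b)\in\{a\}\cup\{b,d\}$ for every binary term $t$, so $c\notin\mathrm{Sg}\{a,b\}$.

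The genuine gap is Step 3. It lists strategies ($R_{ab}$, a hoped-for semilattice quotient, iterated cyclic terms) but gives no argument, and the contradiction does not in fact come from a semilattice quotient. The missing idea runs as follows.

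\begin{itemize}
\item Exactly as in Step 2, $\{c\}\trianglelefteq_2\{a,c\}\trianglelefteq_3\m a$ gives $\{c\}\trianglelefteq_3\m a$.
\item Hence $\{b,c,d\}=\{c\}\cup\{b,d\}$ is a subuniverse by Proposition~\ref{composition}(3).
\item No nonconservative algebra in Table~\ref{table:1} has a majority quotient with an affine two-element class. So $\{b,c,d\}$ is conservative, and $\{b,c\}$ is a (majority) subuniverse.
\item Take $t$ with $t(a,b)=c$ and dominant first coordinate, so $t(b,a)\in\{b,d\}$.
\item If $t(b,a)=b$: dominance with respect to $\{c\}$ and $\{b,d\}$, together with the subuniverses $\{a,c\}$ and $\{b,c\}$, shows that $t$ maps $\{a,b,c\}^2$ into $\{a,b,c\}$. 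Then $g(t(x,y),t(y,z),t(z,x))$ is a cyclic term preserving $\{a,b,c\}$. Triples with two entries in $\{a,c\}$ stay there by Proposition~\ref{ternaryabscyclic}, and the remaining triples lie in $\{b,c\}^3$. By Proposition~\ref{subuniverse}, $\mathrm{Sg}\{a,b\}\subseteq\{a,b,c\}$, a contradiction.
\item If $t(b,a)=d$: the term $g(x,t(x,y),t(x,y))$ sends $(a,b)\mapsto g(a,c,c)=c$ and $(b,a)\mapsto g(b,d,d)=b$, because $g$ is the minority on $\{b,d\}$. This reduces to the previous case.
\end{itemize}

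Without this, or an equivalent explicit computation, the mixed case, and hence the claim, remains unproved.
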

\begin{proof}
If $\{a,c\}$ is a two-element majority algebra or a semilattice with the absorbing element $a$, then $\{a\}\trianglelefteq_3 {\m A}$ holds by \Cref{singletoncenter}. If $c=t(a,b)$ and $t$ has the first dominant coordinate, this would mean $t(a,b)=a$, while if $t$ has the second dominant coordinate, then $t(a,b)\in\{b,d\}$, a contradiction.

Now let $(a,c)$ be a semilattice edge and let $\{b,d\}$ be an affine algebra. The sets $\{c\}$ and $\{b,d\}$ are 3-absorbing in $\m a$, so $\{b,c,d\}$ is a subalgebra due to \Cref{composition}~(3). According to \Cref{table:1}, this 3-element algebra must be conservative, so $\{b,c\}$ is also a subalgebra which must be the 2-element majority algebra. Similar to before, we have $t(a,b)=c$ and $t(b,a)\in \{b,d\}$ for some binary term $t$. If $t(b,a)=b$, then we would know $t|_{\{a,b,c\}}$ (using that the first coordinate is dominant in $t$) and $\{a,b,c\}$ would be closed under $t$. Then $g(t(x,y),t(y,z),t(z,x))$ would be a cyclic term that preserves $\{a,b,c\}$, so $\mathrm{Sg}\{a,b\}=\{a,b,c\}$, a contradiction. 

Hence, we obtain $t(a,b)=c$ and $t(b,a)=d$. Let $t'(x,y):=g(x,t(x,y),t(x,y))$. Then
 \begin{align*}
     t'(a,b)&=g(a,t(a,b),t(a,b))=g(a,c,c)=c,\\
     t'(b,a)&=g(b,t(b,a),t(b,a))=g(b,d,d)=b,
 \end{align*}
 follows, so $\mathrm{Sg}\{a,b\}=\{a,b,c\}$, a contradiction.  
\end{proof}

\begin{claim}
If $(a,c)$ and $(b,d)$ are semilattice edges, then $\m a$ is $\m t_{4,8}$.
\end{claim}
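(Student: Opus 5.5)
We want to show that, under the standing assumptions $\alpha=\{\{a,c\},\{b,d\}\}$, $\m a/\alpha$ the two-element majority algebra, no semilattice quotient, and $(a,c)$, $(b,d)$ semilattice edges with absorbing elements $c$ and $d$, every ternary cyclic term of $\m a$ can be normalized into a single fully determined operation. That operation should be the one of $\T_{4,8}$ under $c\mapsto 0$, $a\mapsto 2$, $d\mapsto 1$, $b\mapsto 3$. We then show the resulting algebra is minimal Taylor, as in the claims of Subsection~\ref{s:semi}.

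\textbf{Absorbing singletons.} First we would show $\{c\}\trianglelefteq_3\m a$ and $\{d\}\trianglelefteq_3\m a$ via Theorem~\ref{singletoncenter}. The semilattice edge at $c$ points into $c$. Any edge from $c$ to an element of $\{b,d\}$ crosses $\alpha$-classes, so modulo $\alpha$ it can only be a majority-type weak edge, never affine and never an outgoing semilattice edge. This is because the quotient is majority and there is no semilattice quotient; we check this case-by-case using the witnessing congruence restricted to $\mathrm{Sg}\{c,x\}$. The same argument applies to $d$. Proposition~\ref{composition}(3) then makes $\{c,d\}$ a subuniverse, and since $c,d$ lie in different $\alpha$-classes it is the two-element majority algebra. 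By Proposition~\ref{ternaryabscyclic} the cyclic term $g$ satisfies $g(x,c,c)=c$ and $g(x,d,d)=d$ for all $x$. Intersecting with subalgebras generated by $c$ together with $b$, and by $d$ together with $a$, we would argue, using Lemma~\ref{triangle}-style closure arguments and the dominance Lemma~\ref{dominant}, that $\{b,c\}$ and $\{a,d\}$ are subuniverses, hence majority algebras. This pins down $g$ on all two-element subsets.

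\textbf{Normalizing $g$.} The values still open are $g(a,a,b)\in\{a,c\}$, $g(a,b,b)\in\{b,d\}$ (forced by $\alpha$ and the majority quotient) and the four triples of distinct elements. Following the pattern of earlier claims, we would pass to
$$g'(x,y,z)=g(g(x,x,y),g(y,y,z),g(z,z,x)),$$
possibly iterated, or composed with $g(x,y,y)$-type substitutions. The aim is to force $g'(a,a,b)=c$ and $g'(b,b,a)=d$; this is possible because $\{c\}$ and $\{d\}$ are $3$-absorbing, and cases with $g(a,a,b)=a$ would otherwise make $\{a,b\}$ closed, contradicting $|A|=4$. One more round $g''=g'(g'(x,x,y),g'(y,y,z),g'(z,z,x))$ then evaluates every triple of distinct elements, since every inner argument lands in $\{c,d\}$ or in an already determined pair. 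This should give $g''(c,d,a)=g''(c,a,b)=c$ and $g''(c,d,b)=g''(d,a,b)=d$, matching Table~\ref{table:7}.

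\textbf{Minimality, and the expected obstacle.} For minimality, take any Taylor reduct of $(A;g'')$. It has a ternary cyclic term by Lemma~\ref{bkcyclic} applied to $\alpha$. It retains $\alpha$ with majority quotient, the semilattice subalgebras $\{a,c\}$ and $\{b,d\}$, and the absorbing singletons. These constraints force every ternary cyclic term to equal $g''$, so $(A;g'')$ is minimal Taylor and isomorphic to $\T_{4,8}$. The step I expect to be the main obstacle is the first one: excluding a weak affine or outgoing semilattice edge at $c$ or $d$, which is what lets us invoke Theorem~\ref{singletoncenter}. If a direct argument fails, I would fall back on the dominance Lemma~\ref{dominant} together with the binary term $t$ with $t(a,b)=c$, mimicking the first claim of this subsubsection.
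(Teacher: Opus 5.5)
Your outline follows the paper's skeleton, and the values you predict for $g''$ are right. The gaps are at three load-bearing steps, where the key idea is missing.

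\textbf{The subuniverses $\{a,d\}$ and $\{b,c\}$.} This step must use the standing hypothesis that $\m a$ has no two-element semilattice quotient. Your sketch (closure arguments in the style of Lemma~\ref{triangle}, dominance via Lemma~\ref{dominant}) never invokes it, and it cannot be avoided. Read $\T_{4,1}$ via $a\mapsto 3$, $b\mapsto 2$, $c\mapsto 1$, $d\mapsto 0$. It has the majority quotient $\{\{a,c\},\{b,d\}\}$, semilattice edges $(a,c)$ and $(b,d)$, $\mathrm{Sg}\{a,b\}=A$, and $\{c\},\{d\}\trianglelefteq_3\m a$. Yet $\mathrm{Sg}\{a,d\}=\{a,c,d\}$ and $\{b,c\}$ is not closed. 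The paper argues as follows. First, $\{a,c,d\}\leq\m a$ by Proposition~\ref{composition}(3). If $\mathrm{Sg}\{a,d\}=\{a,c,d\}$, a binary term $p$ with $p(a,d)=c$ gives $\{c,d\}\trianglelefteq_2\{a,c,d\}$. From this it derives $\{b,c,d\}\trianglelefteq_2\m a$, i.e.\ a semilattice quotient, already handled in Subsection~\ref{s:semi}. Separately, your justification for $\{c\}\trianglelefteq_3\m a$ is not valid as stated: pairs crossing the classes of a majority quotient can be weak affine edges, as in $\T_{4,9}$. The conclusion follows directly from $\{c\}\trianglelefteq_3\{a,c\}\trianglelefteq_3\m a$ and Proposition~\ref{composition}(4).

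\textbf{Normalizing $g$.} You only rule out $g(a,a,b)=a$ together with $g(b,b,a)=b$. The mixed case $g(a,a,b)=a$, $g(b,b,a)=d$ does not close $\{a,b\}$, and iterating does not repair it, since then $g'(a,a,b)=g(a,a,d)=a$. What is missing is the paper's automorphism step: for every binary term $t$, $t(a,b)=a$ forces $t(b,a)=b$, and $t(a,b)=c$ forces $t(b,a)=d$. Indeed, if $t(a,b)=a$ and $t(b,a)=d$, one checks that the cyclic term $g(t(t(x,y),z),t(t(y,z),x),t(t(z,x),y))$ preserves $\{a,b,d\}$. You also need $\{c,d\}\trianglelefteq_3\m a$ to evaluate triples such as $g(a,c,d)$, which your $g''$ computation uses. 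This comes from the subalgebras on $\{a,c,d\}$ and $\{b,c,d\}$ being copies of $\T_4^{\co}$.

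\textbf{Minimality.} The invariants you list are all compatible with a cyclic term $h$ satisfying $h(a,a,b)=a$ and $h(b,b,a)=b$. So they do not force every cyclic term to equal $g''$. You must show that no ternary cyclic term of $\mathrm{Clo}(A;g'')$ preserves $\{a,b\}$. The paper does this by proving $(a,a,a)\notin\mathrm{Sg}_{\m a^3}\{(a,a,b),(a,b,a),(b,a,a)\}$. It uses two facts: every triple there has at least two coordinates in $\{a,c\}$, and $g''$ outputs $a$ only on $(a,a,a)$ and the permutations of $(a,a,d)$. Together with the automorphism of $(A;g'')$ swapping $a\leftrightarrow b$ and $c\leftrightarrow d$, this gives $h(a,a,b)=c$ and $h(b,b,a)=d$ for every cyclic $h$. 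Hence the term $h''$ built from $h$ as $g''$ was built from $g$ equals $g''$, which is what minimality needs.
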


\begin{proof}
Since $(a,c)$ and $(b,d)$ are semilattice edges, the sets $\{c\}$ and $\{d\}$ are 3-absorbing by \Cref{singletoncenter}. Furthermore, $\{a,c,d\}$, $\{b,c,d\}$, and $\{c,d\}$ are subuniverses by Proposition~\ref{composition} (3). 

If $\mathrm{Sg}\{a,d\}=\{a,c,d\}$, then $p(a,d)=c$ for some binary term $p$ with dominant first coordinate. Hence $p(d,a)=d$ and $p(a,c)=p(c,a)=c$, so $\{c,d\}\trianglelefteq_2\{a,c,d\}$. Since $\{a,b\}$ generates $\m a$ and $\{\{a,c\},\{b,d\}\}$ is its majority quotient, there exists a binary term $t$ with dominant first coordinate such that $t(a,b)=c$ and $t(b,a)\in\{b,d\}$. Moreover, $t(a,d),t(d,a),t(a,c),t(c,a)\in\{c,d\}$ since $\{c,d\}\trianglelefteq_2\{a,c,d\}$. Thus $\{b,c,d\}\trianglelefteq_2\m a$ and $\m a$ has a two-element semilattice quotient, which was analyzed in the previous section. The case $\mathrm{Sg}\{b,c\}=\{b,c,d\}$ is analogous.

Therefore, we can assume that $\{a,d\}$ and $\{b,c\}$ are majority subalgebras. This means that the subalgebras on $\{a,c,d\}$ and $\{b,c,d\}$ are both conservative and isomorphic to $\T_4^{\co}$. Hence, $\{c,d\}\trianglelefteq_3\{a,c,d\}$ and $\{c,d\}\trianglelefteq_3\{b,c,d\}$, using the ternary term given in \Cref{table:4}. This implies $\{c,d\}\trianglelefteq_3\m a$ since any relevant triple for checking this absorption is either in $\{a,c,d\}$ or in $\{b,c,d\}$.

We shall now prove that if $t(a,b)=a$ (resp.~$t(a,b)=c$) for any binary term $t$, then $t(b,a)=b$ (resp.~$t(b,a)=d$). Suppose, on the contrary, $t(a,b)=a$ and $t(b,a)=d$. Define the ternary cyclic term $g'$ by $$g'(x,y,z)=g(t(t(x,y),z),t(t(y,z),x),t(t(z,x),y)).$$ The reader can easily verify that $\{a,b,d\}$ is closed under $g'$, so $\m a$ is not minimal Taylor algebra, a contradiction. Therefore, in the rest of the proof, we assume there is an automorphism that switches the elements in each of the sets $\{a,b\}$ and $\{c,d\}$. Furthermore, equations $g(a,a,b)=c$ and $g(b,b,a)=d$ must be satisfied. Let $g''$ be a cyclic ternary term defined by $$g''(x,y,z)=g(g(x,x,y),g(y,y,z),g(z,z,x)).$$ Then $g''(a,a,b)=g(g(a,a,a),g(a,a,b),g(b,b,a))=g(a,c,d)=c$, where the last equality holds by Proposition~\ref{ternaryabscyclic}, because $\{a,c\}$ and $\{c,d\}$ are 3-absorbing. Similarly, $g''(b,b,a)=g(b,d,c)=d$ follows from the fact that $\{b,d\}$ and $\{c,d\}$ are 3-absorbing. Furthermore, we get
\begin{align*}
    g''(a,c,d)&=g''(a,d,c)=g''(a,b,c)=g''(a,c,b)=c,\\
    g''(b,c,d)&=g''(b,d,c)=g''(a,b,d)=g''(a,d,b)=d,
\end{align*}
which completely determines the cyclic term $g''$ and $(A;g'')$ is isomorphic to $\m t_{4,8}$.
Next we prove that $\{a,b\}$ is not closed for any ternary cyclic term from $\mathrm{Clo}(A;g'')$. If such a cyclic term existed, then $(a,a,a)\in S$ where 
\[
S=\mathrm{Sg}_{\m a^3}\left\{
\begin{bmatrix}
a \\
a \\
b
\end{bmatrix}\!,\!
\begin{bmatrix}
a \\
b \\
a
\end{bmatrix}\!,\!
\begin{bmatrix}
b \\
a \\
a
\end{bmatrix}
\right\}.
\]
We note, since it will be used later, that $S$ is \emph{symmetric}, that is, invariant under permutation of coordinates. Next, observe that each triple from $S$ on at least two coordinates has elements of the set $\{a,c\}$. This property follows from the fact that the same holds for the generating set of $S$ and that $g''$ acts on $\{\{a,c\},\{b,d\}\}$ as a majority operation. The only way to obtain $(a,a,a)$ from other triples in $S$ is to apply $g''$ onto $(a,a,d)$, $(a,d,a)$ and $(d,a,a)$. Hence, we will now show that these triples are not in $S$. If $(a,a,d)\in S$, then 
\[
g''\left(
\begin{bmatrix}
x_1 \\
x_2 \\
x_3
\end{bmatrix}\!,\!
\begin{bmatrix}
y_1 \\
y_2 \\
y_3
\end{bmatrix}\!,\!
\begin{bmatrix}
z_1 \\
z_2 \\
z_3
\end{bmatrix}
\right)=
\begin{bmatrix}
a \\
a \\
d
\end{bmatrix},
\]
where $(x_1,x_2,x_3),(y_1,y_2,y_3),(z_1,z_2,z_3)\in S\setminus\{(a,a,a), (d,a,a),(a,d,a),(a,a,d)\}$. We know that $x_1,x_2,y_1,y_2,z_1,z_2$ must all be in $\{a,d\}$, and since we observed that all these elements must be in $\{a,c\}$, then $x_3,y_3,z_3\in\{b,c\}$ follows as well. Hence $g''(x_3,y_3,z_3)\in\{b,c\}$, a contradiction.

As any ternary cyclic term $h$ acts on $\{(a,a,b),(b,b,a)\}$ the same as $g$, the term $h''$ we obtain from $h$ by the same construction we used to obtain $g''$ from $g$ is also completely determined and satisfies $h''=g''$. Thus $(A;g'')$ is a minimal Taylor algebra.
\end{proof}

Finally, we handle the case where both $\{a,c\}$ and $\{b,d\}$ are affine.
 \begin{claim}
  If  $\{a,c\}$ and $\{b,d\}$ are affine subalgebras, then ${\m A}$ is either $\m t_{4,9}$ or $\m t_{4,10}$.
 \end{claim}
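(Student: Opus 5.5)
We work inside the same setting as the previous claim: $\alpha=\{\{a,c\},\{b,d\}\}$ is a congruence with $\m a/\alpha$ the two-element majority algebra, $\m a$ has no semilattice quotient, and $g$ is a ternary cyclic term (from Lemma~\ref{bkcyclic}). Now $\{a,c\}$ and $\{b,d\}$ are copies of $\mathbb{Z}_2^{\mathrm{aff}}$, so $g$ acts as the minority on each of them. The plan is to determine the remaining pairs $\{a,b\},\{a,d\},\{c,b\},\{c,d\}$ and the values of $g$ on triples with three distinct entries. We then show the data forces exactly two cyclic terms up to isomorphism, giving $\m t_{4,9}$ and $\m t_{4,10}$.

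\textbf{Step 1: the cross pairs.} For $x\in\{a,c\}$ and $y\in\{b,d\}$, $\mathrm{Sg}\{x,y\}$ maps onto the majority quotient. Hence $\{x,y\}$ is either a majority subalgebra, or $\mathrm{Sg}\{x,y\}$ is a three-element nonconservative algebra with a majority quotient, or it is all of $A$.
\begin{itemize}
\item A three-element $\mathrm{Sg}\{x,y\}$ would have to be $\T_1^{\n}$ or $\T_3^{\n}$ with its affine or majority class inside an $\alpha$-class. Here $\T_1^{\n}$ is excluded because it contains a semilattice pair, while $\{x,y\}$ crosses classes and the $\alpha$-classes themselves are affine.
\item So the real split is between cross pairs that are majority subuniverses and cross pairs $\{x,y\}$ with $\mathrm{Sg}\{x,y\}=A$.
\end{itemize}
Since $a,b$ generate $A$, the pair $\{a,b\}$ is not a subuniverse. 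By the argument used for Lemma~\ref{3elem}, at least one of $\{a,d\},\{b,c\}$ behaves like a majority subalgebra. I expect two configurations. In the first, $\{a,d\},\{b,c\}$ are majority subalgebras and $\{a,b\},\{c,d\}$ each generate $A$; this matches the graph of $\m t_{4,9}$. In the second, all four cross pairs are majority, with only $\{a,b\}$ failing as a subuniverse; this matches $\m t_{4,10}$, where every cross pair is a majority edge witnessed by $\alpha$.

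\textbf{Step 2: pinning $g$.} In each configuration I would use iterated cyclic compositions, exactly as before:
\[
g'(x,y,z)=g(g(x,x,y),g(y,y,z),g(z,z,x)),
\]
and variants built from a binary term $t$ with $t(a,b)\in\{a,c\}$ and dominant first coordinate. The aim is to normalise $g(a,a,b)$ and $g(b,b,a)$, and then to normalise the six values with three distinct arguments. Two constraints make this tractable. The majority quotient forces every such value into the correct $\alpha$-class. The minority action inside classes lets one compose to flip a value when needed, e.g.\ $g(x,x,g(\cdot))$ swaps within a class. In the $\m t_{4,9}$ configuration the resulting operation should be recognised as $\mathrm{maj}\times\mathrm{aff}$ on $\m m\times\mathbb{Z}_2^{\mathrm{aff}}$, via $a\mapsto(0,0)$, $c\mapsto(0,1)$, $b\mapsto(1,0)$, $d\mapsto(1,1)$. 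Here the second congruence $\{\{a,b\},\{c,d\}\}$, with affine quotient, determines everything. In the other configuration the target is the operation of Example~\ref{example5.16}.

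\textbf{Step 3: minimality.} For each final term $g^*$, I would show that every Taylor reduct of $(A;g^*)$ has a ternary cyclic term (Lemma~\ref{bkcyclic} applied to $\alpha$). I would also show that it retains enough structure to force that term to equal $g^*$: the congruence $\alpha$, the affine classes, and, for $\m t_{4,9}$, the second congruence. For $\m t_{4,10}$, minimality is already Example~5.16 of \cite{dreamteam}, so only uniqueness is needed.

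\textbf{Main obstacle.} I expect the hardest part to be Step 2 in the $\m t_{4,10}$ case. There is no second congruence there, so the values on triples of distinct elements are not forced by quotients. One must produce compositions that make these values canonical, and also rule out the mixed configuration in which only one of $\{a,d\},\{b,c\}$ is majority. For the latter I would first try the relation $R_{ab}$ together with a closure argument like the one for $\m t_{4,8}$. The idea is to show that a cyclic term preserving a three-element set such as $\{a,b,c\}$ arises, which contradicts $\mathrm{Sg}\{a,b\}=A$.
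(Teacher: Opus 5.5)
There is a genuine gap: your Step 1 case analysis is wrong exactly where $\m t_{4,10}$ lives. Nothing in Lemma~\ref{3elem} gives your assertion that at least one of $\{a,d\},\{b,c\}$ is a majority subalgebra, and it is false for $\m t_{4,10}$. There no pair from different $\alpha$-classes is a subuniverse: in Table~\ref{table:7}, $g(0,1,1)=3$, $g(0,0,3)=2$, $g(1,2,2)=0$ and $g(2,3,3)=1$. In Example~\ref{example5.16}, ``majority edge witnessed by $\alpha$'' means $\mathrm{Sg}\{x,y\}=A$ with witness $\alpha$; it does not mean that $\{x,y\}$ is a majority subalgebra.

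Conversely, the configuration you assign to $\m t_{4,10}$ (every cross pair except $\{a,b\}$ a majority subalgebra) cannot occur. Once $\{a,d\}$ is a subuniverse, every cyclic term satisfies $g(a,a,b)=c$ and $g(c,c,b)=c$. If also $g(c,c,d)=c$, then $\{b,c,d\}$ is a conservative subuniverse. From $g(g(x,y,y),g(y,z,z),g(z,x,x))$ and one further composition one then gets a cyclic term sending $(a,a,b)$ to $a$, a contradiction. So $g(c,c,d)=a$, and $\{c,d\}$ generates $A$.

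The split must therefore be on $\mathrm{Sg}\{a,d\}$ (the three-element option being excluded, as you say). Either $\{a,d\}$ is a subuniverse, which forces $\{b,c\}$ to be majority, $\{c,d\}$ to generate $A$, and all values of a suitable cyclic term, giving $\m t_{4,9}$. Or all four cross pairs generate $A$, giving $\m t_{4,10}$.

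Your plan never treats the latter case, and the missing idea is this:
\begin{enumerate}
\item Use $g(g(x,g(x,y,z),g(x,y,z)),\dots)$ to produce a cyclic term with $g(\bullet,\bullet,\circ)=\bullet$ for some cross pair.
\item Normalise to $g(a,a,b)=a$.
\item Force $g(a,b,b)=d$, $g(a,a,d)=c$, $g(c,c,d)=c$, $g(c,d,d)=b$, $g(c,c,b)=a$, $g(a,d,d)=d$ and $g(b,b,c)=b$ one at a time, each by a closure contradiction.
\item The same composition then fixes the values on triples of distinct elements.
\end{enumerate}

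Your explicit identification of $\m t_{4,9}$ with $\m m\times\mathbb{Z}_2^{\mathrm{aff}}$ is also mislabelled. With $b\mapsto(1,0)$, the pair $\{a,b\}=\{(0,0),(1,0)\}$ is a majority subuniverse, contradicting $\mathrm{Sg}\{a,b\}=A$. In your own configuration $\{a,d\}$ and $\{b,c\}$ are majority subalgebras, so the affine congruence is $\{\{a,d\},\{b,c\}\}$, i.e.\ $b\mapsto(1,1)$ and $d\mapsto(1,0)$.

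Finally, Step~2 only names the technique. The whole content of the claim lies in showing which values are forced for every cyclic term and in exhibiting the compositions that witness each contradiction, and none of this is carried out.
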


\begin{proof}
$\mathrm{Sg}\{a,d\}=\{a,c,d\}$ is impossible, since none of non-conservative three element algebras has $\{a,c\}$ as an affine subalgebra and $\{\{a,c\},\{d\}\}$ as a majority quotient. Hence, either $\mathrm{Sg}\{a,d\}=\{a,d\}$ or $\mathrm{Sg}\{a,d\}=\m a$. 

Assume that $\mathrm{Sg}\{a,d\}=\{a,d\}$. We prove the boldface equations for any ternary cyclic term. If $g(a,a,b)=a$, then $c\notin \mathrm{Sg}\{a,b\}$, a contradiction. Thus, $\mathbf{g(a,a,b)=c}$. If $g(c,c,b)=a$, then the ternary cyclic term $g'$ given by 
$$
\begin{gathered}
g'(x,y,z)=g(g(x,g(x,y,z),g(x,y,z)),\\
g(y,g(y,z,x),g(y,z,x)),g(z,g(z,x,y),g(z,x,y)))
\end{gathered}
$$ 
satisfies $g'(a,a,b)=a$, a contradiction. It follows that $\mathbf{g(c,c,b)=c}$. 
Now, if $g(c,c,d)=c$, then $\{b,c,d\}$ is a subuniverse which cannot be 2-generated, thus $\{b,c\}$ and $\{c,d\}$ must be majority edges. Since $\mathrm{Sg}\{a,b\}=\m a$, it has to be $g(b,b,a)=d$. Let $g'':=g(g(x,y,y),g(y,z,z),g(z,x,x))$, and so
 \begin{align*}
     g''(a,c,d)&=g(g(a,c,c),g(c,d,d),g(d,a,a))=g(a,d,a)=a,\\
     g''(a,a,b)&=g(g(a,a,a),g(a,b,b),g(b,a,a))=g(a,d,c)=c,\\
     g''(b,b,a)&=g(g(b,b,b),g(b,a,a),g(a,b,b))=g(b,c,d)=d.
 \end{align*}
If we define now $g'''$ as follows $$g'''(x,y,z)=g''(g''(x,x,g''(x,x,y)),g''(y,y,g''(y,y,z)),g''(z,z,g''(z,z,x))),$$
then $g'''(a,a,b)=g''(g''(a,a,a),g''(a,a,c),g''(b,b,d))=g''(a,c,d)=a$, which is a contradiction, as $g'''(a,a,b)=c$ for any ternary cyclic term $g'''$. Thus, $\mathbf{g(c,c,d) =a}$. Next, we assume $g(b,b,c)=d$, and define the cyclic term $$\Tilde{g}(x,y,z):=g(g(x,x,g(x,y,z),g(y,y,g(y,z,x)),g(z,z,g(z,x,y)))),$$ so $\Tilde{g}(a,a,b)=a$, a contradiction. So, $\mathbf{g(b,b,c)=b}$, and thus $\mathbf{\{b,c\}}$ {\bf is a majority subalgebra}. If $g(b,b,a)=b$, then $d\notin\mathrm{Sg}\{a,b\}$, which is impossible, so, $\mathbf{g(b,b,a)=d}$. Further, if $g(d,d,c)=d$, then $\{a,c,d\}$ must be a conservative subuniverse by \Cref{table:1}, contradicting $g(c,c,d)=a$. Therefore, $\mathbf{g(d,d,c)=b}$. Now we prove $\mathbf{g(a,c,d)=g(a,d,c)=c}$. For the sake of contradiction, suppose that $g(a,c,d)=a$ or $g(a,d,c)=a$. Here we define cyclic terms $g_1(x,y,z):=g(g(x,x,y),g(y,y,z),g(z,z,x))$ and $g_2(x,y,z):=g(g(x,y,y),g(y,z,z),g(z,x,x))$, therefore we get $g_1(a,a,b)=g(a,c,d)=a$ or $g_2(a,a,b)=g(a,d,c)=a$, a contradiction. Using again $g_1$ and $g_2$, we also derive $\mathbf{g(b,c,d)=g(b,d,c)=d}$. It remains to determine $g(a,b,\ast)$ and $g(a,\ast,b)$ when $\ast\in\{c,d\}$. Let $g^*$ be 
$$
\begin{gathered}
g^*(x,y,z)=g(g(x,g(x,y,z),g(x,y,z)),\\
g(y,g(y,z,x),g(y,z,x)),g(z,g(z,x,y),g(z,x,y))).
\end{gathered}
$$
We proved that $g$ and $g^*$ match on all previously discussed triples, while it is easy to check that 
\begin{align*}
    g^*(a,b,c)&=g^*(a,c,b)=a,\\
    g^*(a,b,d)&=g^*(a,d,b)=b.
\end{align*}
So, we found a cyclic term $g^*$ that is fully determined and thus there is at most one minimal Taylor clone in this case for which $\mathrm{Sg}\{a,d\}=\{a,d\}$.

The Taylor algebra $\m c:=(A;g^*)$ has the congruence $\beta=\{\{a,d\},\{b,c\}\}$ such that $\m a/\beta$ is term equivalent to two-element affine algebra. As before, every Taylor reduct of $\m c$ must have congruences $\alpha$ and $\beta$, and that completely determines any ternary cyclic term, making it equal to $g^*$. Therefore, $\m a$ is a minimal Taylor algebra term-equivalent to $(A;g^*)$ which is isomorphic to $\m t_{4,9}$ ($a\mapsto 0, b\mapsto 1, c\mapsto 2, d\mapsto 3$).

Now we assume that $\mathrm{Sg}\{a,d\}=\m a$. Due to the previous case, we can assume that $\mathrm{Sg}\{b,c\}=\mathrm{Sg}\{c,d\}=\m a$, also hold. First, we prove that we can assume that there are elements $\bullet,\circ\in A$ in distinct $\alpha$-classes such that $g(\bullet,\bullet,\circ)=\bullet$. If that is not the case, then $g(a,a,b)=c$, $g(b,b,a)=d$, $g(b,b,c)=d$, etc. We define the term $g'$ by 
\begin{equation*}\tag{$\star$}
g'(x,y,z)=g(g(x,g(x,y,z),g(x,y,z)),g(y,g(y,z,x),g(y,z,x)),g(z,g(z,x,y),g(z,x,y))),
\end{equation*}
so $g'(a,a,b)=g(g(a,c,c),g(a,c,c),g(b,c,c))=g(a,a,a)=a$. We can use $g'$ instead of $g$, thus, we impose the assumption $\mathbf{g(a,a,b)=a}$. We can do this because our assumptions are somewhat symmetric: we know $\bullet$ and $\circ$ are in different $\alpha$-classes and $\mathrm{Sg}\{\bullet,\circ\}=\m a$, so without loss of generality we can assume $\bullet=a$ and $\circ=b$. Since $\mathrm{Sg}\{a,b\}=\m a$, our assumption $g(a,a,b)=a$ implies that $\mathbf{g(a,b,b)=d}$ and $\mathbf{g(a,a,d)=c}$ must hold. Suppose that $g(c,c,d)=a$. Using the same construction for $g'$ as in $(\star)$, we obtain
\begin{equation*}\tag{$\Box$}
\begin{aligned}
    g'(a,a,d)&=g(g(a,c,c),g(a,c,c),g(d,c,c))=g(a,a,a)=a,\\
    g'(a,a,b)&=g(g(a,a,a),g(a,a,a),g(b,a,a))=g(a,a,a)=a,
\end{aligned}
\end{equation*}
so $\{a,b,d\}$ is a subuniverse, a contradiction. Hence, $\mathbf{g(c,c,d)=c}$. Again, using $\mathrm{Sg}\{c,d\}=\m a$ we deduce that $\mathbf{g(c,d,d)=b}$ and $\mathbf{g(c,c,b)=a}$ must hold. Assuming that $g(a,d,d)=b$, take $g'$ again from $(\star)$, and we obtain
\begin{equation*}
\begin{aligned}
    g'(b,b,a)&=g(g(b,d,d),g(b,d,d),g(a,d,d))=g(b,b,b)=b,\\
    g'(a,a,b)&=g(g(a,a,a),g(a,a,a),g(b,a,a))=g(a,a,a)=a,
\end{aligned}
\end{equation*}
so $\{a,b\}$ would be a subuniverse, a contradiction. Hence, $\mathbf{g(a,d,d)=d}$. Finally, assuming that $g(b,b,c)=d$, we obtain
\begin{equation*}
\begin{aligned}
    g'(c,c,d)&=g(g(c,c,c), g(c,c,c), g(d,c,c))=g(c,c,c)=c,\\
    g'(d,d,c)&=g(g(d,b,b), g(d,b,b), g(c,b,b))=g(d,d,d)=d,
\end{aligned}
\end{equation*}
implying that $\{c,d\}$ is a subuniverse, a contradiction with $\mathrm{Sg}\{c,d\}=\m a$. So, $\mathbf{g(b,b,c)=b}$, and together with the affine assumptions, we have determined all values $g(x,y,z)$ when $|\{x,y,z\}|=2$. It is not hard to compute that $g'(x,x,y)=g(x,x,y)$ holds for all $x,y\in A$ ($g'$ taken again from $(\star)$). Now we shall determine the remaining values for $g'$:
\begin{align*}
    g'(a,b,c)&=g(a,a,c)=c,\\
    g'(a,c,b)&=g(a,c,a)=c,
\end{align*}
and similarly
\begin{align*}
    g'(a,b,d)&=g'(a,d,b)=b,\\
    g'(a,c,d)&=g'(a,d,c)=a,\\
    g'(b,c,d)&=g'(b,d,c)=d.
\end{align*}
Hence, $\m a$ must have the term $g'$ and one can verify that $(A;g')$ is isomorphic to $\m t_{4,10}$, which is a minimal Taylor algebra, as proved in \Cref{example5.16}. 
\end{proof}

\subsubsection{Non-equal sizes classes}

Suppose, without loss of generality, that $\alpha=\{\{b\},\{a,c,d\}\}$ is a congruence and that $\m a/\alpha$ is the majority quotient. Denote by $\m c$ the subalgebra with universe $C:=\{a,c,d\}$, so $\m C\trianglelefteq_3 \m a$. Since $\{b\}\trianglelefteq_3\m a$, it follows $m(b,b,x)=m(b,x,b)=m(x,b,b)=b$ for all $x\in A$. Also, because of $\mathrm{Sg}\{a,b\}=\m a$ and $m(a,b,b)=b$, we may assume $m(a,a,b)=c$ without any loss of generality.

In what follows, through a series of claims, we shall prove that no new minimal Taylor algebras satisfy the aforementioned properties.
\begin{claim}
 If $\{c,d\}\trianglelefteq_2 C$, then $\{b,c,d\}\trianglelefteq_2\m a$.   
\end{claim}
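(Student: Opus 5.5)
The goal is to exhibit a binary term witnessing $\{b,c,d\}\trianglelefteq_2\m a$. We build it from a binary term $s$ witnessing $\{c,d\}\trianglelefteq_2 C$ and the term $m$. By Theorem~\ref{compositionactslike}, the operation $s(x,y):=m(x,x,y)$ witnesses every $2$-absorption of $\m a$. The same construction inside the minimal Taylor algebra $\m C$ (Proposition~\ref{HSPfin}) gives a restriction of $m$ that witnesses all $2$-absorptions of $\m C$. So we may take $s(x,y)=m(x,x,y)$ on $C$, and $s(u,v)\in\{c,d\}$ whenever $u,v\in C$ and $\{u,v\}\cap\{c,d\}\neq\emptyset$.

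Next we check the pairs that involve $b$. Since $\{b\}\trianglelefteq_3\m a$, we have $m(b,b,x)=b$ and $m(x,b,b)=b$ for every $x$, so $s(b,x)=b$ for all $x$. The remaining values are $s(x,b)=m(x,x,b)$ with $x\in C$. These lie in $C$ when $x\in C$, because $C\trianglelefteq_3\m a$ and two of the arguments are in $C$. In particular $s(a,b)=m(a,a,b)=c\in\{b,c,d\}$. For $x\in\{c,d\}$, the values $s(c,b)$ and $s(d,b)$ are only known to be in $C$, and we must rule out that either equals $a$.

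This is the main obstacle. If $s$ fails there, we compose terms. Set
$$t(x,y):=s(s(x,y),s(x,s(x,y))),$$
or, more simply, $t(x,y):=s(s(x,y),x)$.
We then check $t$ on all pairs with at least one entry in $\{b,c,d\}$.
\begin{itemize}
\item For $x,y\in C$ with one of them in $\{c,d\}$, the inner value lies in $\{c,d\}$, so $t(x,y)\in\{c,d\}$ by the $2$-absorption in $C$.
\item For $x=b$, we get $t(b,y)=s(b,b)=b$.
\item For $y=b$ and $x\in\{c,d\}$, the inner value $s(x,b)$ is in $C$ and $x\in\{c,d\}$, so $s(s(x,b),x)\in\{c,d\}$.
\item For $x=a$, $y=b$, we get $t(a,b)=s(c,a)\in\{c,d\}$.
\end{itemize}
Thus $t$ maps every pair with an entry in $\{b,c,d\}$ into $\{b,c,d\}$, and we have to confirm that $\{b,c,d\}$ is a subuniverse. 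By Theorem~\ref{abssetsubuniv}, an absorbing set of a minimal Taylor algebra is automatically a subuniverse, so this gives $\{b,c,d\}\trianglelefteq_2\m a$.

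The step to double-check is the case distinction on which coordinate of $s$ is the absorbing one. For example, we may need $s(u,v)\in\{c,d\}$ whenever $u\in\{c,d\}$ and $v\in C$ arbitrary. This holds because binary absorption in $C$ is symmetric in the definition (at least $n-1=1$ entry in the set suffices). With that settled, the key steps are: choose $s=m(x,x,y)$, use the two absorptions $\{b\}\trianglelefteq_3\m a$ and $C\trianglelefteq_3\m a$ to control pairs with $b$, fix the bad pairs $(c,b)$ and $(d,b)$ by the composition $s(s(x,y),x)$, and conclude with Theorem~\ref{abssetsubuniv}.
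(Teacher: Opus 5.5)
Your argument is correct, but it is organized differently from the paper's. The paper first obtains $\{b,c,d\}\leq\m a$ from Proposition~\ref{composition} (as $\{b\}$ and $\{c,d\}$ are both ternary absorbing in $\m a$). After that, $m(x,x,y)$ itself is already a witness: the only values you could not control, $m(c,c,b)$ and $m(d,d,b)$, land in $\{b,c,d\}$ simply by closure.

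You do not know closure in advance, so you repair those two values with the composition $s(s(x,y),x)$. You use $C\trianglelefteq_3\m a$ to put $s(x,b)$ in $C$, and then absorption in $C$ to push the result into $\{c,d\}$. You then get the subuniverse property for free from Theorem~\ref{abssetsubuniv}.

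The paper's route is a one-liner once the union result for absorbing subuniverses is available. Yours avoids that result entirely and produces an explicit witnessing term, using only the witnessing properties of $m$ and Theorem~\ref{abssetsubuniv}. You can drop the first candidate term $s(s(x,y),s(x,s(x,y)))$, since you never use it.

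One justification needs fixing. Applying Theorem~\ref{compositionactslike} to $\m C$ gives \emph{some} ternary term of $\m C$ with those properties, not necessarily $m|_C$. So your sentence saying it ``gives a restriction of $m$'' does not follow.

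The fact you need is still true, for the following reason. $\m C$ is minimal Taylor, so by Proposition~\ref{bradybinaryprop}(1) applied in $\m C$, every binary term of $\m C$ that depends on both variables witnesses $\{c,d\}\trianglelefteq_2 C$. And $m(x,x,y)$ does depend on both variables on $C$: Proposition~\ref{smedge}(1) gives a semilattice edge $(a,e)$ with $e\in\{c,d\}$, and on that edge $m(x,x,y)$ acts as the semilattice operation.

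The paper's proof tacitly uses exactly this fact when it asserts $t(a,c),t(c,a),t(a,d),t(d,a)\in\{c,d\}$. So this is a repair of rigor, not a missing idea.
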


\begin{proof}
   By Proposition~\ref{composition}, we have $\{b,c,d\}\leq \m a$. Define the binary term $t(x,y):=m(x,x,y)$. Then $t(a,c),t(c,a),t(a,d),t(d,a)\in \{c,d\}$ and $t(a,b),t(b,a)\in \{b,c,d\}$, which altogether implies $\{b,c,d\}\trianglelefteq_2\m a$.
\end{proof}
According to the previous claim, if the set $\{c,d\}$ 2-absorbs $C$, then $\m a$ has the semilattice quotient, which was examined before. Henceforth, in the rest of the subsection, we suppose that $\{c,d\}$ does not binary absorb $C$.

\begin{claim}\label{claimanijeu3aps}
  If $ C'\trianglelefteq_3 C$ and $C'\neq C$, then $a\notin C'$. In particular, $\m c$ is not isomorphic to $\T^{\scriptscriptstyle{\mathsf{P}}}_1$.
\end{claim}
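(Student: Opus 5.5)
The plan is to show that a proper $3$-absorbing subuniverse of $\m c$ containing $a$ would, together with $b$, generate a proper subalgebra of $\m a$ containing both generators. That contradicts $\mathrm{Sg}\{a,b\}=\m a$.

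Suppose, for contradiction, that $a\in C'\trianglelefteq_3 C$ with $C'\neq C$.

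First, we lift the absorption to $\m a$. We already know $C\trianglelefteq_3\m a$, because $C$ is a class of the congruence $\alpha$ whose quotient is the two-element majority algebra. Hence $C'\trianglelefteq_3 C\trianglelefteq_3\m a$, and Proposition~\ref{composition}~(4) gives $C'\trianglelefteq_3\m a$.

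Second, we use the other generator. As noted just before the claim, $\{b\}\trianglelefteq_3\m a$, since $\{b\}$ is the other class of the majority quotient. Applying Proposition~\ref{composition}~(3) to the two ternary absorbing subuniverses $\{b\}$ and $C'$, their union $\{b\}\cup C'$ is a subuniverse of $\m a$.

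This union contains $a$ and $b$, so it contains $\mathrm{Sg}\{a,b\}=A$. On the other hand, $C'\subsetneq C$ and $b\notin C$, so $\{b\}\cup C'$ omits some element of $C$. Hence it is a proper subset of $A$, which is the desired contradiction.

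For the ``in particular'' part, suppose $\m c\cong\T^{\scriptscriptstyle{\mathsf{P}}}_1$. In the dual discriminator algebra every two-element subset is a majority subalgebra, and $\m c$ is conservative. So there are no semilattice edges and no weak affine edges at $a$ inside $\m c$: any weak edge $\{a,x\}$ would have $\mathrm{Sg}\{a,x\}=\{a,x\}$ with trivial witness, and that pair is a majority algebra. Theorem~\ref{singletoncenter}, applied to the minimal Taylor algebra $\m c$ (a subalgebra, so minimal Taylor by Proposition~\ref{HSPfin}), then gives $\{a\}\trianglelefteq_3\m c$. Taking $C'=\{a\}$ contradicts the first part.

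The only point needing care is confirming that the hypotheses $C\trianglelefteq_3\m a$ and $\{b\}\trianglelefteq_3\m a$ are genuinely available as subuniverses (i.e.\ in the $\trianglelefteq_3$ sense), which holds since congruence classes of a majority quotient are subuniverses. Beyond that, the argument is a direct application of Proposition~\ref{composition}.
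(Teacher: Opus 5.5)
Your proof is correct and follows the paper's argument: you lift to $C'\trianglelefteq_3\m a$ via Proposition~\ref{composition}~(4), then combine this with $\{b\}\trianglelefteq_3\m a$ via part (3) to obtain a proper subuniverse $C'\cup\{b\}$ containing both generators. For the ``in particular'' part, which the paper leaves implicit, your appeal to Theorem~\ref{singletoncenter} is valid, though it is quicker to note that the basic operation of $\T^{\scriptscriptstyle{\mathsf{P}}}_1$ is a majority operation, so $\{a\}$ is $3$-absorbing in $\m c$.
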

\begin{proof}
Suppose that $a\in C'\trianglelefteq_3 C$. Via Proposition~\ref{composition}, from $C'\trianglelefteq_3C\trianglelefteq_3\m a$ follows that $C'\trianglelefteq_3\m a$ and $C'\cup\{b\}\leq \m a$. However, the later implies $\{a,b,c,d\}=\mathrm{Sg}\{a,b\}\subseteq \{b\}\cup C'$, so $C'=\{a,c,d\}=C$.   
\end{proof}
\begin{claim}\label{claimde3apbs}
If $\{c\}\trianglelefteq_3C$ (resp.~$\{d\}\trianglelefteq_3C$), then $\{b,c\}$ (resp.~$\{b,d\}$) is a majority algebra.
\end{claim}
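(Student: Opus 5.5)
By the symmetry between $c$ and $d$ in the present setting, it is enough to treat the case $\{c\}\trianglelefteq_3 C$. The plan has two steps: first show that $\{b,c\}$ is a subuniverse, then identify the type of the two-element subalgebra on it.

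\textbf{Step 1: $\{b,c\}$ is a subuniverse.} Since $\{c\}\trianglelefteq_3 C\trianglelefteq_3 \m a$, Proposition~\ref{composition}~(4) gives $\{c\}\trianglelefteq_3\m a$. We also know $\{b\}\trianglelefteq_3\m a$. Proposition~\ref{composition}~(3) then says that $\{b\}\cup\{c\}=\{b,c\}$ is a subuniverse of $\m a$. Moreover, Theorem~\ref{compositionactslike} says that $m$ witnesses every $3$-absorption, so $m(b,b,c),m(b,c,b),m(c,b,b)\in\{b\}$ and $m(c,c,b),m(c,b,c),m(b,c,c)\in\{c\}$.

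\textbf{Step 2: identify the subalgebra.} The restriction of $m$ to $\{b,c\}$ is idempotent and satisfies exactly the majority identities, so $m$ acts on $\{b,c\}$ as $\mathrm{maj}_{b,c}$. By Proposition~\ref{HSPfin}, the subalgebra on $\{b,c\}$ is a two-element minimal Taylor algebra. Among the three such algebras, only the majority algebra contains a majority operation in its clone: the semilattice and $\mathbb{Z}_2^{\mathrm{aff}}$ clones do not contain one. Hence $\{b,c\}$ is the two-element majority algebra. As an independent check, the semilattice case is excluded directly, because in a two-element semilattice one of the two singletons fails to $3$-absorb; likewise the affine case is excluded, because $\mathbb{Z}_2^{\mathrm{aff}}$ has no proper absorbing singletons.

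\textbf{Main obstacle.} The only point needing care is the bookkeeping in Step 1: making sure that Proposition~\ref{composition}~(3) applies to two disjoint $3$-absorbing singletons, so that their union is a subuniverse. I expect no real difficulty beyond that.
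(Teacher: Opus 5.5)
Your proof is correct and follows the paper's route. The paper's ``similar to the previous claim'' means exactly what you do: use Proposition~\ref{composition}~(4) to get $\{c\}\trianglelefteq_3\m a$, then (3) together with $\{b\}\trianglelefteq_3\m a$ to get $\{b,c\}\le\m a$; your Step~2 ($m$ witnesses both singleton absorptions and so acts as the majority on $\{b,c\}$) spells out the identification the paper leaves implicit, and it already gives closure under $m$, so Step~1 is actually redundant. One small point: $c$ and $d$ are not literally symmetric here, since $m(a,a,b)=c$ was normalized, but your argument never uses that normalization and applies verbatim to $d$.
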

\begin{proof}
    Similar to the proof of the previous claim.
\end{proof}
\begin{claim}\label{claimnijepodalg}
If $(a,c)$ is a semilattice edge, then $\{b,c\}$ is not a subuniverse. 
\end{claim}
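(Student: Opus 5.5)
We argue by contradiction: assume $(a,c)$ is a semilattice edge (absorbing element $c$) and $\{b,c\}$ is a subuniverse. Since $\{b\}\trianglelefteq_3\m a$ and $\alpha=\{\{b\},C\}$ gives a majority quotient, $m$ acts on $\{b,c\}$ as the majority operation. The aim is to show that $\{a,b,c\}$ is closed under some cyclic term, so that $\mathrm{Sg}\{a,b\}=\{a,b,c\}$, or else to derive a contradiction with the standing assumptions.

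The first step is to record the relevant values of $m$. On $\{a,c\}$ we have $m(a,c,c)=m(c,a,c)=m(c,c,a)=c$ and $m(a,a,c)=c$, since $m$ acts as $x\wedge y\wedge z$ there. On $\{b,c\}$, $m$ is the majority operation. Every triple containing two $b$'s is sent to $b$. We also have $m(a,a,b)=c$, and $m(b,\cdot,\cdot)$ with two entries from $C$ lands in $C$ because $C\trianglelefteq_3\m a$.

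The next step is to build a suitable binary term. Take $t(x,y):=m(x,x,y)$, so $t(a,b)=c$ and $t(b,a)=b$. By Lemma~\ref{dominant} we may use $t$, or a modification of it, so that $t$ maps $\{a,b,c\}^2$ into $\{a,b,c\}$: $t(a,c)=t(c,a)=c$, $t(b,c)=t(c,b)=c$ or $b$, and $t(a,b)=c$, $t(b,a)=b$. This closure is the key point. Then, exactly as in the proof of Lemma~\ref{3elem}, define the cyclic term
$$g'(x,y,z)=g\bigl(t(t(x,y),z),t(t(y,z),x),t(t(z,x),y)\bigr),$$
where $g$ is a ternary cyclic term. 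Such a $g$ exists by Lemma~\ref{bkcyclic}, since $\m a/\alpha$ is majority and $\m c$ is a three-element minimal Taylor algebra, which has a cyclic ternary term by \Cref{cyclic} with $p=3$ only if no fixed-point-free automorphism arises; if that fails, use $m$ itself with the same composition trick. Each inner argument lies in $\{a,b,c\}$. What remains is to check that $g'$ restricted to the resulting triples stays in $\{a,b,c\}$. These triples lie in $\{a,c\}^3\cup\{b,c\}^3$, possibly together with triples containing two $b$'s or two entries in $\{a,c\}$, and the two-$b$ case gives $b$ by ternary absorption. The remaining danger is $d$ appearing from mixed triples like $(a,b,c)$. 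For these, use that $\{a,c\}$ is a $\theta$-block in $\m a/\alpha$ being majority, together with Proposition~\ref{ternaryabscyclic} applied to $\{c\}$ or $\{b\}$ ternary absorbing.

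\textbf{Main obstacle.} The hard step is controlling values on the mixed triples with entries $a,b,c$ all present, so as to avoid landing on $d$. My first attempt would be to note that $t(t(a,b),c)=c$, so every inner argument of $g'$ on such triples collapses into $\{b,c\}$. Once that holds, $g'$ is evaluated on $\{b,c\}^3$, and that set is closed. With closure established, Proposition~\ref{subuniverse} makes $\{a,b,c\}$ a subuniverse, contradicting $|A|=4$.
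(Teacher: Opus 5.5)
Your argument is the paper's own. The paper takes a binary term $t$ with $t(a,b)=c$ and $t(b,a)=b$, forms $m'(x,y,z)=m\bigl(t(t(x,y),z),t(t(y,z),x),t(t(z,x),y)\bigr)$, and checks that $\{a,b,c\}$ is closed under it. Your choice $t(x,y)=m(x,x,y)$ already gives $t(a,c)=t(c,a)=c$, $t(b,c)=b$, $t(c,b)=c$, so Lemma~\ref{dominant} is not needed. Your final observation is the real content: $t(t(x,y),z)\in\{b,c\}$ unless $x=y=z=a$.

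One step needs repair. A ternary cyclic term $g$ need not exist at this stage. Nothing yet excludes $\m c\cong\m t_1^{\scriptscriptstyle{\mathsf{S}}}$: every pair there is a semilattice edge, and it has no proper absorbing subuniverse. It also has the fixed-point-free automorphism $0\mapsto 1\mapsto 2\mapsto 0$, which would fix $g(0,1,2)$, so it has no ternary cyclic term, and hence neither does $\m a$. Lemma~\ref{bkcyclic} therefore gives you nothing.

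Your fallback ``use $m$ itself'' is then the actual proof, and it is exactly the paper's $m'$. But with $m$ as the outer operation, $m'$ is no longer cyclic for free. Closure of $\{a,b,c\}$ alone does not let you apply Proposition~\ref{subuniverse}, which requires the restriction to be Taylor. The missing sentence is this: apart from the input $(a,a,a)$, every inner triple lies in $\{b,c\}^3$, where $m$ is the symmetric majority operation. Hence $m'$ restricted to $\{a,b,c\}$ is idempotent and cyclic, so it is Taylor. Alternatively, take a cyclic term of prime arity $p>4$, which exists by Theorem~\ref{cyclic}, and use the iterated composition from Lemma~\ref{triangle}.

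Finally, drop the appeal in your middle paragraph to $\{a,c\}$ being a block and to Proposition~\ref{ternaryabscyclic}. $\{a,c\}$ is not an $\alpha$-class, and $\{c\}$ is not known to be $3$-absorbing. Your collapse into $\{b,c\}$ already disposes of all mixed triples.
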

\begin{proof}
As before, assume that $\{b,c\}$ is a subalgebra. Since $\mathrm{Sg}\{a,b\}=\m a$, we have $t(a,b)=c$ for some binary term $t$, which further implies $t(b,a)=b$. Define the ternary term $m'$ by $$m'(x,y,z)=m(t(t(x,y),z),t(t(y,z),x),t(t(z,x),y)).$$
It can be easily verified that $t(a,b)=t(a,c)=t(c,a)=t(c,b)=c$, $t(b,a)=t(b,c)=b$ and $\{a,b,c\}$ is closed under $m'$. Moreover, it can be checked that $m'$ is cyclic on $\{a,b,c\}$, so $\{a,b,c\}$ is a subuniverse, contradiction.
\end{proof}

\begin{claim}
Neither of the sets $\{c\},\{d\}$ binary absorbs $C$.
\end{claim}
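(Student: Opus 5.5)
Assume, for contradiction, that $\{c\}\trianglelefteq_2 C$; the case $\{d\}\trianglelefteq_2 C$ will be symmetric, as explained below. The idea is to show that $C$, and then all of $\m a$, has a two-element semilattice quotient, which we have excluded in this subsection.

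\emph{Step 1: transfer the absorption to $\m a$.} We have $C\trianglelefteq_3\m a$, but Proposition~\ref{composition}(2) only composes binary absorptions, so we cannot conclude $\{c\}\trianglelefteq_2\m a$ directly. Instead we use strong absorption. By Proposition~\ref{bradybinaryprop}(1) applied inside $\m c$, every term depending on its first variable sends $(c,x_2,\dots)$ with $x_i\in C$ to $c$. By Theorem~\ref{compositionactslike}, $m(x,x,y)$ and $m(y,x,x)$ witness $\{c\}\trianglelefteq_2 C$. Hence $\{a,c\}$ and $\{c,d\}$ are semilattice edges with $c$ absorbing. This holds whether $\{a,c\}$ is a subuniverse or lies inside a nonconservative $\mathrm{Sg}\{a,c\}=C$, by Proposition~\ref{bradybinaryprop}(3).

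\emph{Step 2: the pair $\{b,c\}$.} Since $(a,c)$ is a semilattice edge, Claim~\ref{claimnijepodalg} says $\{b,c\}$ is not a subuniverse. On the other hand, $\{c\}$ is 3-absorbing in $C$: a 2-absorbing set is also 3-absorbing via $t(t(x,y),z)$. So by Claim~\ref{claimde3apbs}, $\{b,c\}$ must be a two-element majority subalgebra. This contradicts the previous sentence, and finishes this case. If Claim~\ref{claimde3apbs} turns out to need $\{c\}\trianglelefteq_3 C$ with the witness $m$ specifically, Theorem~\ref{compositionactslike} supplies it, since $m$ witnesses all 3-absorptions.

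\emph{Step 3: the case of $d$.} This is the main obstacle, because the normalization $m(a,a,b)=c$ breaks the symmetry between $c$ and $d$. So we cannot simply swap names; we must check that Claim~\ref{claimnijepodalg} still holds with $d$ in place of $c$. Its proof used a binary term $t$ with $t(a,b)=c$, and such a term with value $d$ exists because $d\in\mathrm{Sg}\{a,b\}$. The construction of the cyclic-on-$\{a,b,d\}$ term $m'$ then goes through verbatim, again using $t(b,a)=b$, which follows from $\{b\}\trianglelefteq_3\m a$ and dominance (Lemma~\ref{dominant}). So $\{b,d\}$ is not a subuniverse, while Claim~\ref{claimde3apbs} forces it to be a majority algebra: contradiction.

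Should Step 2 or Step 3 run into trouble, the fallback is the route via $\{c,d\}$. If $\{c\}\trianglelefteq_2 C$, then $\{c,d\}\trianglelefteq_2 C$ fails only if $d\notin\{c\}\cup\dots$, that is, it in fact holds by monotonicity: the term $m(x,x,y)$ maps $C$-pairs with one entry in $\{c\}$ into $\{c\}\subseteq\{c,d\}$, and pairs inside $\{c,d\}$ stay there since $\{c,d\}$ is a subuniverse. This contradicts our standing assumption that $\{c,d\}$ does not 2-absorb $C$.
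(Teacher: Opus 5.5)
Steps 1--3 are correct and are essentially the paper's proof. Binary absorption of $\{c\}$ gives the semilattice edge $(a,c)$. The paper cites Proposition~\ref{smedge} for this; you read it directly off the witness, since $t(a,c)=t(c,a)=c$. Then, because $\{c\}\trianglelefteq_3 C$, Claim~\ref{claimde3apbs} makes $\{b,c\}$ a subuniverse, which contradicts Claim~\ref{claimnijepodalg}. The paper leaves the $\{d\}$ case implicit. Your check that the proof of Claim~\ref{claimnijepodalg} uses only some binary $t$ with $t(a,b)=d$ and $t(b,a)=b$, and never the normalization $m(a,a,b)=c$, is a fair way to make that case explicit.

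You should delete the fallback paragraph, because its argument is invalid: binary absorption is not inherited by supersets. A witness for $\{c,d\}\trianglelefteq_2 C$ must also send $(a,d)$ and $(d,a)$ into $\{c,d\}$. Your ``monotonicity'' argument never addresses these pairs. This fails whenever $\{a,d\}$ is, for example, a majority subalgebra. In $\m t_2^{\co}$, the singleton $\{0\}$ is $2$-absorbing via $g(x,x,y)$. But $\{0,1\}$ is not, since every binary term acts as a projection on the majority subalgebra $\{1,2\}$.
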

\begin{proof}
Suppose, on the contrary, that $\{c\}\trianglelefteq_2C$. Then, according to \Cref{smedge}, $(a,c)$ is a semilattice edge. On the other hand, due to \Cref{claimde3apbs}, since $\{c\}\trianglelefteq_3C$ (which follows from binary absorption), we have that $\{b,c\}$ is a subuniverse, but this contradicts \Cref{claimnijepodalg}.
\end{proof}
From now on, we can narrow the list of candidates for $\m c$ as it cannot have a semilattice or majority factor. Hence, $\m c$ is not isomorphic to $\T_1^{\n}, \T_2^{\n},\T_4^{\n},\T^{\scriptscriptstyle{\mathsf{S}}}_2,\T^{\co}_2,\T^{\co}_3,\T^{\co}_4,\T^{\co}_5,\T^{\co}_7,\T^{\co}_{11},\T^{\co}_{14}$, or $\T^{\co}_{15}$.

\begin{claim}\label{bar1nijemajclaim}
If $\m a$ has a cyclic ternary term $g$ and $g(a,a,b)=c$, then at least one of $\{a,c\},\{b,c\}$ is not a majority subalgebra of $\m a$.
\end{claim}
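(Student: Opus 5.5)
We argue by contradiction: assume that both $\{a,c\}$ and $\{b,c\}$ are majority subalgebras. The plan is to produce a cyclic term that preserves $\{a,b,c\}$, exactly as in Lemma~\ref{3elem} and Claim~\ref{claimnijepodalg}. Proposition~\ref{subuniverse} then makes $\{a,b,c\}$ a subuniverse, contradicting $\mathrm{Sg}\{a,b\}=\m a$ with $|A|=4$.

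First we pin down the values of $g$ on pairs from $\{a,b,c\}$.
\begin{itemize}
\item On $\{a,c\}$ and $\{b,c\}$ the cyclic term $g$ is the majority operation. The witnessing operation is essentially unique on a two-element majority subalgebra, and every ternary cyclic term restricted there is a cyclic idempotent operation of that clone.
\item Since $\{b\}\trianglelefteq_3\m a$, Proposition~\ref{ternaryabscyclic} gives $g(b,b,x)=b$ for every $x$.
\item By hypothesis $g(a,a,b)=c$.
\end{itemize}
Set $t(x,y):=g(x,x,y)$. Its values on $\{a,b,c\}$ are
\[
t(a,b)=c,\quad t(b,a)=b,\quad t(a,c)=a,\quad t(c,a)=c,\quad t(b,c)=b,\quad t(c,b)=c .
\]
Hence $\{a,b,c\}$ is closed under $t$.

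Next, following Lemma~\ref{3elem}, define the cyclic term
$$m'(x,y,z):=g\bigl(t(t(x,y),z),\,t(t(y,z),x),\,t(t(z,x),y)\bigr).$$
All three inner arguments lie in $\{a,b,c\}$. Whenever those three arguments are not pairwise distinct, the value of $g$ is already known from the list above and lies in $\{a,b,c\}$.

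So the only thing to check is that, for the six permutations of $(a,b,c)$, the three inner arguments always contain a repetition. For example, on $(a,b,c)$ they are $t(c,c)=c$, $t(t(b,c),a)=t(b,a)=b$ and $t(t(c,a),b)=t(c,b)=c$, giving $m'(a,b,c)=g(c,b,c)=c$. The remaining five permutations are verified the same way from the table of $t$. I expect this finite check to be the only real obstacle. If some permutation does yield three distinct arguments, I would first try replacing $t$ by $t(t(x,y),x)$ or by $g(x,y,y)$ and redo the check.

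With closure established, $(\{a,b,c\};m')$ is a Taylor algebra, since $m'$ is cyclic. Proposition~\ref{subuniverse} then yields $\{a,b,c\}\leq\m a$, so $\mathrm{Sg}\{a,b\}\subseteq\{a,b,c\}\neq A$. This is the desired contradiction.
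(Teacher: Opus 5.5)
Your argument is correct, but it takes a different route from the paper's.

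\textbf{The paper's route.} It builds a cyclic term that preserves the two-element set $\{a,b\}$. It uses
\begin{itemize}
\item $g'(x,y,z)=g(g(x,y,z),g(x,x,y),g(y,y,x))$,
\item $w(x,y,z)=g(x,g'(x,y,z),g'(z,x,y))$,
\item and the cyclic symmetrization $u$ of $w$,
\end{itemize}
computing on the columns that generate $S$. The result is $u(a,a,b)=a$ and $u(a,b,b)=b$.

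\textbf{Your route.} You reuse the template of Lemma~\ref{3elem} and Claim~\ref{claimnijepodalg} with $t(x,y)=g(x,x,y)$, and show that $\{a,b,c\}$ is a subuniverse. Your table for $t$ is right, and the key computations hold: $m'(a,b,c)=g(c,b,c)=c$ and $m'(a,c,b)=g(c,c,b)=c$. By cyclicity of $m'$, these two values cover all six permutations.

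\textbf{A small imprecision.} You reduce closure to the six permutations, but it is not automatic that inputs with a repeated entry give inner arguments with a repetition.
\begin{itemize}
\item For inputs from $\{a,c\}$ or $\{b,c\}$ this holds, because both sets are closed under $t$.
\item For inputs from $\{a,b\}$ you must compute directly: $m'(a,a,b)=g(c,c,b)=c$ and $m'(a,b,b)=g(c,b,b)=b$. Both values are fine.
\end{itemize}

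\textbf{What each route buys.} Yours is shorter and uses only the values of $g$ on two-element subsets. It also shows that the semilattice case of Claim~\ref{claimnijepodalg} and the present majority case are disposed of by one and the same construction. The paper's version works inside the relation $S$ and ends with a cyclic term acting as the majority on $\{a,b\}$. Its final symmetrization step is exactly what the paper reuses after Claim~\ref{majorityexists} to rule out the algebras in $\mathcal{C}$, so it fits the $S$-based machinery of the following claims.
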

\begin{proof}
For the sake of contradiction, assume that $\{a,c\},\{b,c\}$ are majority subalgebras of $\m a$. Define a ternary term $g'$ by $$g'(x,y,z)=g(g(x,y,z),g(x,x,y),g(y,y,x)),$$ and note that
$$
g'\left(
\begin{bmatrix}
a \\
a \\
b
\end{bmatrix}\!,\!
\begin{bmatrix}
a \\
b \\
a
\end{bmatrix}\!,\!
\begin{bmatrix}
b \\
a \\
a
\end{bmatrix}
\right)=g\left(
\begin{bmatrix}
c \\
c \\
c
\end{bmatrix}\!,\!
\begin{bmatrix}
a \\
c \\
b
\end{bmatrix}\!,\!
\begin{bmatrix}
a \\
b \\
c
\end{bmatrix}
\right)=
\begin{bmatrix}
a \\
c \\
c
\end{bmatrix}.
$$
Now we define the term $w$ by $w(x,y,z)=g(x,g'(x,y,z),g'(z,x,y))$, thus we have 
$$
w\left(
\begin{bmatrix}
a \\
a \\
b
\end{bmatrix}\!,\!
\begin{bmatrix}
a \\
b \\
a
\end{bmatrix}\!,\!
\begin{bmatrix}
b \\
a \\
a
\end{bmatrix}
\right)=g\left(
\begin{bmatrix}
a \\
a \\
b
\end{bmatrix}\!,\!
\begin{bmatrix}
a \\
c \\
c
\end{bmatrix}\!,\!
\begin{bmatrix}
c \\
a \\
c
\end{bmatrix}
\right)=
\begin{bmatrix}
a \\
a \\
c
\end{bmatrix}.
$$
Finally, let $u$ be the cyclic term defined by
$$u(x,y,z)=g(w(x,y,z),w(y,z,x),w(z,x,y)).$$ Since $u(a,a,b)=a$ and $u(a,b,b)=b$, $\{a,b\}$ is a subuniverse by Proposition~\ref{subuniverse}, contradicting $\mathrm{Sg}\{a,b\}=\m a$.
\end{proof}

Denote by $\mathcal{C}$ the set consisting of following three-element minimal Taylor algebras: $$\m t_5^{\n}, \m t_1^{\scriptscriptstyle{\mathsf{S}}}, \m t_2^{\scriptscriptstyle{\mathsf{P}}}, \m t_6^{\co}, \m t_8^{\co}, \m t_{12}^{\co}, \m t_{13}^{\co}.$$
These are algebras which are strongly connected by a union of strong affine edges and semilattice edges. In the next three claims $S$ denotes the following ternary relation:
\[
S=\mathrm{Sg}_{\m a^3}\left\{
\begin{bmatrix}
a \\
a \\
b
\end{bmatrix}\!,\!
\begin{bmatrix}
a \\
b \\
a
\end{bmatrix}\!,\!
\begin{bmatrix}
b \\
a \\
a
\end{bmatrix}
\right\},
\]
while assuming that $C$ is isomorphic to some algebra from $\mathcal{C}$. Recall that $S$ is symmetric, that is, invariant under permutation of coordinates.
\begin{claim}
 $(x,y,b)\in S$ for all $x,y\in  C$.
\end{claim}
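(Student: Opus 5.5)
Setting: $\alpha=\{\{b\},C\}$ with $C=\{a,c,d\}$ is a congruence, $\m a/\alpha$ is the two-element majority algebra, $\{b\}\trianglelefteq_3\m a$, and $\m c$ is isomorphic to an algebra from $\mathcal{C}$.

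Approach: show that the fibre $S_b:=\{(x,y)\in C^2 : (x,y,b)\in S\}$ is all of $C^2$. First, $S_b$ is a subuniverse of $\m c^2$: it is the intersection of $S$ with the subuniverse $C\times C\times\{b\}$, projected onto the first two coordinates. It contains $(a,a)$, since $(a,a,b)$ is a generator. It is also symmetric, because $S$ is invariant under permutations of coordinates.

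Second, I will produce more pairs in $S_b$. Apply $m$ coordinatewise to generators of $S$ with third coordinates $b,b,a$, and use that $\{b\}\trianglelefteq_3\m a$ makes the third coordinate equal to $b$. For example,
$m\bigl((a,a,b),(a,a,b),(a,b,a)\bigr)=(a,m(a,a,b),b)=(a,c,b)$,
using the normalization $m(a,a,b)=c$. Thus $(a,c)$, and by symmetry $(c,a)$, lie in $S_b$. Other arrangements give $(m(a,b,a),a)$, $(m(b,a,a),a)$, and, from the expressions above, pairs such as $(m(a,a,c),c)$. Already at this point $S_b$ contains $(a,a)$, $(a,c)$, $(c,a)$.

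Third, I will use the structure of the algebras in $\mathcal{C}$. Each is strongly connected by strong affine edges and semilattice edges, so the goal is to show that $S_b$ is a reflexive, linked subalgebra of $\m c^2$ closed under these edges. On a strong affine edge $\{x,y\}$ (with $m$ acting as $x-y+z$), from $(x,x),(x,y),(y,x)\in S_b$ one gets $(y,y)=m\bigl((x,y),(x,x),(y,x)\bigr)$, which spreads the diagonal. On a semilattice edge $(x,y)$, applying $m$ to $(x,\cdot)$ and $(y,\cdot)$ pushes pairs toward the absorbing element. Walking along the strongly connected edge graph starting from $a$ should then give the whole diagonal $\Delta_C\subseteq S_b$, and afterwards all of $C^2$. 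For the last step I would use simplicity of the $\m c$'s, all of which are simple except possibly some conservative ones. Since $S_b\supseteq\Delta_C$ is then a reflexive symmetric subalgebra, it is a tolerance. In a simple algebra whose clone is generated by a Mal'cev-like $m$ on the affine parts, a tolerance containing the nontrivial pair $(a,c)$ should be total. If necessary I will fall back on a case check over the seven algebras of $\mathcal{C}$.

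Main obstacle: this last step. In a general Taylor algebra a reflexive tolerance need not be a congruence. For the conservative members of $\mathcal{C}$ ($\m t_1^{\scriptscriptstyle{\mathsf{S}}}$, $\m t_6^{\co}$, $\m t_8^{\co}$, $\m t_{12}^{\co}$, $\m t_{13}^{\co}$) I expect to argue directly, pair by pair, using their cyclic operations from Tables~\ref{table:2}--\ref{table:4}. There I would exploit that strong affine edges give Mal'cev behaviour, which turns a tolerance into a congruence along those edges, and I would handle semilattice edges by absorption.
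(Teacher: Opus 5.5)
\textbf{There is a genuine gap, and it sits in your third step.} Nothing you produce ever involves the element $d$.

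Spreading the diagonal along an edge $\{x,y\}$ needs pairs such as $(x,y),(y,x)$ to be in $S_b$ already. You start from $(a,a),(a,c),(c,a)$. The other pairs you list have entries $m(a,b,a)$ and $m(a,a,c)$, which are not determined and may well lie in $\{a,c\}$. So from these data you can reach at most $(c,c)$, never $(d,d)$ or $(a,d)$.

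In fact the problem is structural. For the six conservative members of $\mathcal{C}$ (you omitted $\m t_2^{\scriptscriptstyle{\mathsf{P}}}$ from your list), $\{a,c\}$ is a subuniverse of $\m c$. Hence $\{a,c\}^2$ is itself a symmetric subuniverse of $\m c^2$ containing $(a,a),(a,c),(c,a)$. So nothing internal to $\m c^2$ can force $S_b=C^2$ from what you have: not closure of $S_b$ under $m$, not the edge structure of $\m c$, not simplicity or tolerance arguments, and not a case check over $\mathcal{C}$. Also, $S_b$ is not yet known to contain the diagonal, so the tolerance step has nothing to start from.

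\textbf{The missing ingredient} is to bring $d$ in through $\mathrm{Sg}\{a,b\}=A$, using the full strength of the majority quotient rather than only the $3$-absorption of $\{b\}$. Relying on $3$-absorption alone is what limited you to applying $m$ with two $b$'s in the third coordinate.

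Binary term operations act as projections on the two-element majority algebra $\m a/\alpha$. So any binary term $t$ with $t(a,b)=x\in C$ must satisfy $t(b,a)=b$.

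The argument then runs as follows:
\begin{itemize}
\item Applying such a $t$ to the generators $(a,a,b)$ and $(b,a,a)$ gives $(x,a,b)\in S$, for every $x\in C$.
\item By symmetry of $S$, also $(x,b,a)\in S$.
\item Take $s$ binary with $s(a,b)=y$; then $s(b,a)=b$ and $s(x,x)=x$. Applying $s$ to $(x,a,b),(x,b,a)$ gives $(x,y,b)\in S$.
\end{itemize}

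This is the paper's proof. Note that it does not use the hypothesis $\m c\in\mathcal{C}$ at all.
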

\begin{proof}
The relation $S$ is symmetric, so it suffices to prove that $(x,a,b)$, $(x,c,b)$, $(x,d,b)\in S$ when $x\in C$. First, we have $(x,a,b)\in S$ for all $x\in C$, since it follows from $(a,a,b),(b,a,a)\in S$ and $\mathrm{Sg}\{a,b\}=\m a$. Indeed, from $\mathrm{Sg}\{a,b\}=\m a$ we know that there is some binary term $t$ such that $t(a,b)=x$, thus the first coordinate of $t$ is dominant and $t(b,a)=b$ because $\{b\}$ is one $\alpha$ class and $\m a/\alpha$ is the two-element majority algebra. Analogously, if $s(a,b)=y$ we get $s(b,a)=b$ and $s(x,x)=x$, so from $(x,a,b),(x,b,a)\in S$ follows $(x,y,b)\in S$.
\end{proof}

\begin{claim}
There exists $z\in C$ such that $(x,y,z)\in S$ for all $x,y\in C$.
\end{claim}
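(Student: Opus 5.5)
We apply the term $m$ coordinatewise to triples in $S$ that the previous claim provides. That claim gives $(x,y,b)\in S$ for all $x,y\in C$. By symmetry of $S$ we also have $(x,b,y)\in S$ and $(b,x,y)\in S$ for all $x,y\in C$.

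Fix $x,y\in C$. Apply $m$ coordinatewise to the three triples
\[
(x,y,b),\quad (x,y,b),\quad (x,y,b)'\ ,
\]
where the third is chosen in the form $(x,b,y')$ or $(b,x,y')$ with suitable entries. More directly, we use that $\{b\}\trianglelefteq_3\m a$ and that $C\trianglelefteq_3\m a$ is the other class of the majority quotient $\alpha$. Any term applied to tuples whose last coordinates are mostly in $C$ lands in $C$ in the last coordinate. On the first two coordinates, idempotence gives $m(x,x,\cdot)$-type behaviour.

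Concretely, choose a value $z$ attained from the generators. Take the triples $(x,y,b)$, $(x,y,b)$ and $(x,y,z_0)$ with $z_0\in C$; the third one we still have to produce. Then $m$ applied to them yields $(x,y,m(b,b,z_0))=(x,y,b)$, which gives nothing new. So instead we feed triples $(x,b,u)$ and $(b,y,v)$ together with $(x,y,b)$. This gives
\[
m\left(\begin{bmatrix}x\\ y\\ b\end{bmatrix},\begin{bmatrix}x\\ b\\ u\end{bmatrix},\begin{bmatrix}b\\ y\\ v\end{bmatrix}\right)=\begin{bmatrix}m(x,x,b)\\ m(y,b,y)\\ m(b,u,v)\end{bmatrix}.
\]
Here $m(x,x,b)$ and $m(y,b,y)$ lie in $C$ because $C$ is a $3$-absorbing class; they are not necessarily $x$ and $y$. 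The last entry $m(b,u,v)$ lies in $C$ for $u,v\in C$.

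The plan is therefore to first show that $S\cap C^3\neq\emptyset$; the construction above gives this. Then $T:=S\cap C^3$ is a subuniverse of $\m c^3$, symmetric, and it projects onto $C$ in each coordinate. Let $P=\{(x,y):\exists z\,(x,y,z)\in T\}\le\m c^2$; it is a symmetric subdirect relation. The goal is to show $P=C^2$ and that a single $z$ works for all $(x,y)$.

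Since $\m c$ belongs to $\mathcal C$, it is strongly connected by strong affine edges and semilattice edges, and hence has no proper absorbing subuniverses. This should force $P$ to be linked, and then, using simplicity-type properties of the algebras in $\mathcal C$ (each is simple or has only affine/semilattice pieces), to be full. For the common $z$, consider for each $z$ the set $P_z=\{(x,y):(x,y,z)\in T\}$. Using the previous claim together with the computation above, in which $m(b,u,v)$ can be steered by choosing $u,v$, the sets $P_z$ cover $C^2$. An intersection/absorption argument (the $P_z$ are related by the $S$-relation on the third coordinate) then shows that some $P_z$ equals $C^2$.

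The main obstacle is this last step, selecting one $z$ uniform in $x,y$. We would try it case by case over the seven algebras of $\mathcal C$, exploiting that each of them has a Mal'cev-like or semilattice term making $C^3$-subpowers easy to pin down.
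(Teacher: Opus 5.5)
Your proposal has a genuine gap. The step that carries the whole content of the claim is never carried out: finding one $z$ that works simultaneously for all $(x,y)\in C^2$. You reduce it to ``an intersection/absorption argument'' on the sets $P_z$ and say you would try it case by case, but no argument is given.

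The intermediate steps are also only asserted. That $T=S\cap C^3$ projects onto $C$ is not proved. ``This should force $P$ to be linked'' is not justified either: a non-linked symmetric subdirect $P$ over a simple $\mathbf{C}$ could a priori be the graph of an automorphism of order at most two, and some members of $\mathcal{C}$, e.g.\ $\mathbf{T}^{\mathsf{C}}_8$ and $\mathbf{T}^{\mathsf{C}}_{12}$, are not simple. Moreover, even if an absorption-theorem argument gave $P=C^2$, that only says each pair $(x,y)$ has \emph{some} $z$ with $(x,y,z)\in S$. Passing from this pointwise statement to a uniform $z$ is exactly what is missing.

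The idea you are missing is to freeze the first two coordinates, look at the fibre, and place $b$ in only \emph{one} argument of $m$.
\begin{itemize}
\item Take any $(x',y',z')\in S\cap C^3$; your own computation provides one.
\item The set $\{u\in A:(x',y',u)\in S\}$ is a subuniverse of $\mathbf{A}$ by idempotency. It contains $b$ by the previous claim, and it contains $z'$, hence all of $\mathrm{Sg}\{b,z'\}$.
\item If $\{b,z'\}$ is a two-element majority subalgebra, then $m(b,z',z')=z'$. So for every semilattice edge $(y',y'')$ or strong affine edge $\{y',y''\}$ of $\mathbf{C}$,
\[ m\bigl((x',y'',b),(x',y',z'),(x',y',z')\bigr)=(x',y'',z')\in S. \]
\item Since $\mathbf{C}$ is strongly connected by such edges, the second coordinate sweeps out all of $C$ while the third stays $z'$. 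Doing the same in the first coordinate, using the symmetry of $S$, gives $(x,y,z')\in S$ for all $x,y\in C$.
\end{itemize}
The other shapes of $\mathrm{Sg}\{b,z'\}$ are handled by reduction or by the same mechanism.
\begin{itemize}
\item If it is $\{b,c,d\}\cong\mathbf{T}^{\mathsf{N}}_1$, one moves along a semilattice edge $(z',z'')$ for which $\{b,z''\}$ is majority, and lands in the previous case.
\item If it is all of $A$, the fibre is $A$. One then propagates in the second and third coordinates with the first frozen at $x'$, and uses symmetry of $S$ to make $x'$ the uniform element.
\end{itemize}
Your attempt $m((x,y,b),(x,y,b),(x,y,z_0))$ yielded nothing precisely because $b$ occupied two of the three positions.
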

\begin{proof}
Applying Proposition~\ref{ternaryabscyclic} to a cyclic term of $\m a$ and $C\trianglelefteq_3 \m A$, we get $(x',y',z')\in S$ for some $x',y',z'\in C$. We shall observe $\mathrm{Sg}\{b,z'\}$. From the previous claim, we have $(x',y',b)\in S$, so $(x',y',u)\in S$ for all $u\in\mathrm{Sg}\{b,z'\}$. Now we distinguish a few cases regarding $\mathrm{Sg}\{b,z'\}$.

  Suppose first $\mathrm{Sg}\{b,z'\}=\{b,z'\}$, so $\{b,z'\}$ is a majority subalgebra. We shall prove that $z'$ has the asserted property. First we prove $(x',y,z')\in S$ for all $y\in C$. Let $(y',y'')$ be a semilattice or $\{y',y''\}$ a strong affine edge. By applying $m$ to the triples $(x',y'',b),(x',y',z'),(x',y',z')\in S$, we get $(x',y'',z')\in S$. After at most one more such step, we obtain $(x',y,z')\in S$ for any $y\in C$. Fixing now $y$ and working in a similar fashion, we get $(x,y,z')\in S$ for any $x\in C$.

  If $\m b:=\mathrm{Sg}\{b,z'\}=\{b,c,d\}$, then $\m b$ must be the algebra $\m t_1^{\n}$. Therefore, there is $z''\in\{c,d\}$ such that $(z',z'')$ is a semilattice edge and $\{b,z''\}$ is a majority algebra. Applying $m(x,x,y)$ to $x=(x',y',z')$ and $y=(b,y',z'')$ we get $(x'',y',z'')\in S$ for some $x''\in C$ and thus we are done by the previous case.

  Let us now assume $\mathrm{Sg}\{b,z'\}=\m a$. Then $(x',y',a),(x',y',c),(x',y',d)\in S$ and let $u:=m(b,a,a)\in \{c,d\}$. If $(y',y'')$ is a semilattice edge, or $\{y',y''\}$ a strong affine edge, we obtain $(x',y'',u)\in S$ by applying $m$ to the triples $(x',y'',b)$, $(x',y',a)$, $(x',y',a)$. There is semilattice or strong affine edge from $u$ to $v$, for some $v\in C\setminus \{u\}$. Applying $m$ to $(x',y'',u),(x',y',u),(x',y',v)\in S$, we get $(x',y'',v)\in S$. Using the same argument again (if necessary) on another semilattice or strong affine edge out of $v$, we derive that $(x',y'',z)\in S$ for all $z\in C$. The same argument from the start, replacing $y'$ with $y''$, gives us, for any semilattice edge $(y'',y''')$ or strong affine edge $\{y'',y'''\}$, and any $z\in C$, that $(x',y''',z)\in S$, so actually for all $y,z\in C$, $(x',y,z)\in S$. Since $S$ is symmetric, for all $y,z\in C$, we have $(y,z,x')\in S$, so we are done.\end{proof}

\begin{claim}\label{majorityexists}
$(a,a,a)\in S$.
\end{claim}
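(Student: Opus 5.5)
By the previous claim there is $z\in C$ with $C\times C\times\{z\}\subseteq S$. Since $S$ is symmetric, we also have $\{z\}\times C\times C\subseteq S$ and $C\times\{z\}\times C\subseteq S$. In particular $(a,a,z)$, $(a,z,a)$ and $(z,a,a)$ all lie in $S$. The plan is to combine these tuples with the generators $(a,a,b)$, $(a,b,a)$, $(b,a,a)$ using $m$, and to work inside the subalgebra $\mathrm{Sg}\{a,z\}\le \m c$, so as to produce $(a,a,a)$.

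If $z=a$ we are done immediately, so assume $z\neq a$. Recall that $\m c$ is one of the algebras in $\mathcal{C}$, and in these $C$ is strongly connected by semilattice edges and strong affine edges. I would argue along a path from $z$ back to $a$ in this graph. Suppose $(u,v)$ is a semilattice edge or $\{u,v\}$ is a strong affine edge, and suppose the set $T_u=\{(x,y,u):x,y\in C\}$ is contained in $S$. Since $C\times C\times\{z\}\subseteq S$ and $C\times C\times\{b\}\subseteq S$, the key step is to show that $T_u\subseteq S$ implies $T_v\subseteq S$ for every neighbour $v$ of $u$ along such an edge. For this I would use the coordinatewise identity
\[
m\big((x,y,u),(x,y,u),(x,y,v')\big)=(x,y,v),
\]
with $v'$ chosen so that $m(u,u,v')=v$. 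In the affine case $m(u,u,v)=v$ by the Mal'cev behaviour on the edge. In the semilattice case $m(u,u,v)=v$ holds when $v$ is the absorbing element. The first two coordinates are preserved by idempotency.

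Note that $(x,y,v)$ with $v$ arbitrary still needs some tuple in $S$ with third coordinate $v$. I would obtain it as follows. By the first claim, $C\times C\times\{b\}\subseteq S$, and $\mathrm{Sg}\{b,u\}$ contains elements reachable through the majority quotient. Alternatively, I would use the strong connectivity directly, with $u=z$ at the start.

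The main obstacle is that semilattice edges in $\m c$ point in only one direction. Along an edge $(u,v)$ with $v$ absorbing, we can only move from $u$ to $v$ using $u$-tuples. This is why strong connectivity of the members of $\mathcal{C}$ is needed: it supplies a directed path $z=u_0\to u_1\to\cdots\to u_k=a$ of semilattice or strong affine edges. Iterating the step above along this path gives $C\times C\times\{a\}\subseteq S$, and in particular $(a,a,a)\in S$. I would then check the seven algebras of $\mathcal{C}$ to confirm that every step of the path is of one of the two admissible types. These algebras are $\m t_5^{\n}$, $\m t_1^{\scriptscriptstyle{\mathsf{S}}}$, $\m t_2^{\scriptscriptstyle{\mathsf{P}}}$, $\m t_6^{\co}$, $\m t_8^{\co}$, $\m t_{12}^{\co}$ and $\m t_{13}^{\co}$.

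A subtlety is the nonconservative case $\m t_5^{\n}\cong\mathbb{Z}_3^{\mathrm{aff}}$. There I would instead use the Mal'cev operation on $C$ directly:
\[
m\big((a,a,z),(a,a,z),(a,a,a')\big)
\]
with a suitable $a'$, or simply
\[
m\big((a,a,z),(a,a,w),(a,a,w')\big)
\]
chosen so that $z-w+w'=a$. This is possible because all of $C\times C\times\{z\}$ lies in $S$, and the same holds for each reached element.
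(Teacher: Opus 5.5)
Your propagation step is circular. To get $(x,y,v)$ from $T_u\subseteq S$, you apply $m$ to $(x,y,u),(x,y,u),(x,y,v')$ with $m(u,u,v')=v$. On a semilattice edge with $v$ absorbing, or on a strong affine edge, your choice is $v'=v$, so the third input is the very tuple $(x,y,v)$ you are trying to produce.

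With the first two coordinates frozen at $(x,y)$, the only tuples you know have third coordinate $z$ or $b$. Applying binary terms coordinatewise to $(x,y,b)$ and $(x,y,u)$ does give $C\times C\times\mathrm{Sg}\{b,u\}\subseteq S$, which is presumably what your remark about $\mathrm{Sg}\{b,u\}$ aims at. However, $\mathrm{Sg}\{b,z\}$ may be just $\{b,z\}$; this is exactly the first case in the proof of the previous claim, where $\{b,z\}$ is a majority subalgebra. Then nothing new is obtained, and ``using the strong connectivity directly'' is not an argument.

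Your $\mathbb{Z}_3^{\mathrm{aff}}$ paragraph has the same defect. Getting $m((a,a,z),(a,a,w),(a,a,w'))=(a,a,a)$ with $z-w+w'=a$ requires some $(a,a,w)\in S$ with $w\in C\setminus\{z\}$, and you have none. Your intermediate target $C\times C\times\{v\}\subseteq S$ is also much stronger than what is needed.

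The missing idea is to use the symmetry of $S$ so that the element moving along an edge occupies two coordinates at once, with the fixed $a$ in the remaining coordinate. Suppose $(z,z')$ is a semilattice edge or $\{z,z'\}$ is a strong affine edge. Then $(z',a,z)$, $(z,a,z)$ and $(z,a,z')$ all lie in $S$, the last being a coordinate permutation of the first. Applying $m$ gives $(m(z',z,z),a,m(z,z,z'))=(z',a,z')$.

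Choose $z'$ as the first step of a directed path of such edges from $z$ to $a$; since $|C|=3$, this path has length at most $2$. If $z'=a$, you are done. Otherwise there is an edge from $z'$ to $a$, and applying $m$ to the permuted tuples $(a,z',z'),(z',a,z'),(z',z',a)$ yields $(a,a,a)$. This works because $m$ is $x\wedge y\wedge z$ with $a$ absorbing on a semilattice edge, and a minority operation on a two-element affine edge.

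In $\mathbb{Z}_3^{\mathrm{aff}}$ this second step would fail, since $m(z',a,z')=2z'-a\neq a$. It is never needed there: any two distinct elements form a strong affine edge, so you may take $z'=a$ and get $m((a,a,z),(z,a,z),(z,a,a))=(a,a,a)$ directly. No implication of the form $T_u\subseteq S\Rightarrow T_v\subseteq S$ is required.
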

\begin{proof}
Let $z\in C$ be such that $(x,y,z)\in S$ holds for all $x,y\in C$. Let $(z,z')$ be a semilattice, or $\{z,z'\}$ a strong affine edge. By the symmetry of $S$ we know $(z',a,z),(z,a,z),(z,a,z')\in S$, so applying $m$ to these three we obtain $(z',a,z')\in S$. If $a$ can be selected to be $z'$ we are done, otherwise $(z',a)$ is a semilattice, or $\{z',a\}$ a strong affine edge and by applying $m$ to $(a,z',z'),(z',a,z'),(z',z',a)$ we obtain $(a,a,a)\in S$.
\end{proof}

By Claim~\ref{majorityexists}, and the argument from the end of the proof of Claim~\ref{bar1nijemajclaim}, there exists a term $t(x,y,z)$ which acts as the majority on $\{a,b\}$, a contradiction. Therefore, we have eliminated all algebras of $\mathcal{C}$ as candidates for the subalgebra $\m c$. At the end of the analysis, we have to verify $\m c$ cannot be isomorphic to any of the remaining algebras $\m t^{\n}_3, \m t^{\co}_1, \m t^{\co}_9$ and $\m t^{\co}_{10}$. The following claim concludes the proof.

\begin{claim}
    $\m c$ cannot be isomorphic to $\m t_3^{\n}$, nor to $\m t_i^{\co}$ for any $i\in\{1,9,10\}$.
\end{claim}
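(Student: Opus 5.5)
We argue separately for each of the four candidates, using the constraints already established on $\m c$: $C=\{a,c,d\}\trianglelefteq_3\m a$, $\{b\}\trianglelefteq_3\m a$, and $m(a,a,b)=c$. By Claim~\ref{claimanijeu3aps}, $a$ lies in no proper $3$-absorbing subuniverse of $C$. By Claim~\ref{claimde3apbs}, $\{c\}\trianglelefteq_3 C$ forces $\{b,c\}$ to be a majority subalgebra, and likewise for $d$. Also $\{c,d\}\not\trianglelefteq_2 C$, and neither $\{c\}$ nor $\{d\}$ binary absorbs $C$. In each candidate we first list its proper $3$-absorbing subuniverses, using the tables of Section~\ref{s3elem} together with Theorem~\ref{singletoncenter} and Proposition~\ref{smedge}. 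This pins down which element of the candidate must correspond to $a$, and in most cases it forces a contradiction at once.

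For $\m t_3^{\n}$ the set $\{0,2\}$ ternary absorbs, so $a\mapsto 1$. The congruence $\{\{0,1\},\{2\}\}$ has an affine quotient, and $\{0,2\}$ is an affine subalgebra. For $\m t_1^{\co}$ the element $0$ is a $3$-absorbing singleton, and also $\{0,1\}$ and $\{0,2\}$ are $3$-absorbing. Hence $a$ must be the remaining vertex, namely $2$ (on the rock side). Then $\{0\}\trianglelefteq_3 C$ makes $\{b,0\}$ a majority subalgebra, and we should reach a contradiction with Claim~\ref{claimnijepodalg}, since $(a,0)$ is connected by the semilattice edge through $1$.

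For $\m t_9^{\co}$ and $\m t_{10}^{\co}$ the proper $3$-absorbing sets are $\{0\}$ and $\{0,1\}$ in one case, and $\{0,2\}$-type sets coming from the majority edge in the other. In each case we place $a$ at the unique vertex outside all of them. We then deduce which of $\{b,c\}$ and $\{b,d\}$ are majority subalgebras, and combine this with $m(a,a,b)=c$ to compute the possible values of a ternary cyclic term $g$, which exists by Lemma~\ref{bkcyclic} applied to $\alpha$.

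The uniform strategy for deriving the contradiction is the one used in Claim~\ref{bar1nijemajclaim} and Claim~\ref{majorityexists}: show that $(a,a,a)\in S=\mathrm{Sg}_{\m a^3}\{(a,a,b),(a,b,a),(b,a,a)\}$. This yields a term acting as the majority on $\{a,b\}$, so $\{a,b\}$ is a subuniverse, which contradicts $\mathrm{Sg}\{a,b\}=\m a$.

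To populate $S$ we follow the preceding claims. We have $(x,y,b)\in S$ for $x,y\in C$, because binary terms sending $(a,b)$ into $C$ send $(b,a)$ to $b$. Next we walk along the edges of $C$ to reach a triple $(x,y,z)\in C^3$, and then propagate to $(a,a,a)$ using $m$. The difference from the class $\mathcal C$ is that now some edges inside $C$ are majority edges, or affine edges in a quotient, rather than strong affine or semilattice edges. For those we would instead use the majority edges $\{b,c\}$ or $\{b,d\}$ obtained from Claim~\ref{claimde3apbs}, applying $m$ with $b$ in one coordinate to move along them.

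In the cases where propagation stalls, we use the alternative route: build an explicit cyclic term through compositions $g(g(x,x,y),g(y,y,z),g(z,z,x))$ and similar, and show that it preserves a proper subset such as $\{a,b,c\}$. Proposition~\ref{subuniverse} then gives a contradiction.

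The main obstacle is the $\m t_3^{\n}$ case, together with $\m t_{10}^{\co}$. Both contain an affine quotient, so walking inside $C$ does not directly reach $(a,a,a)$. There I would first try to determine $g(a,a,b)$, $g(b,b,c)$, $g(b,b,d)$ and the triples $g(a,b,\ast)$ via iterated cyclic compositions. The aim is to exhibit a cyclic term with a new congruence, for example $\{\{a,b,\ast\},\ldots\}$, or one closing $\{a,b,\ast\}$, which would contradict either $\mathrm{Sg}\{a,b\}=\m a$ or the standing assumption that $\m a$ has no semilattice quotient.
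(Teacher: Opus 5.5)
Your proposal does not derive a contradiction in any of the four cases, and several of the facts it relies on are false.

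\textbf{Wrong structural facts.} In $\m t_1^{\co}$ the set $\{0,2\}$ is not $3$-absorbing: its trace $\{2\}$ would have to $3$-absorb the semilattice $\{1,2\}$, in which $1$ absorbs. If $\{0,2\}$ were $3$-absorbing, $a$ could not be $2$ at all. The right reason for $a\mapsto 2$ is $\{0,1\}\trianglelefteq_3\m t_1^{\co}$ together with Claim~\ref{claimanijeu3aps}. In $\m t_9^{\co}$ the set $\{0,1\}$ is not $3$-absorbing either, since its trace on the affine subalgebra $\{1,2\}$ would be a $3$-absorbing singleton of $\mathbb{Z}_2^{\mathrm{aff}}$. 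So only $\{0\}$ absorbs, and both $a\mapsto 1$ and $a\mapsto 2$ must be ruled out. Finally, $\m t_{10}^{\co}$ is simple, so it has no affine quotient; it is in fact the easiest case.

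\textbf{Misapplied claim.} Your use of Claim~\ref{claimnijepodalg} for $\m t_1^{\co}$ fails. That claim needs a direct semilattice edge $(a,x)$ with $\{b,x\}$ a subuniverse. But $\{2,0\}$ is a majority edge. For the genuine semilattice edge $(2,1)$ you would need $\{b,d\}\leq\m a$, which you cannot get: a priori $\{b,c,d\}$ could be $\T_1^{\n}$ with $b\mapsto 1$, where $\{b,d\}$ generates everything.

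\textbf{Strategy stalls.} The $(a,a,a)\in S$ propagation cannot be the uniform tool here. Applying $m$ along a majority edge just returns the majority value, including along $\{b,c\}$ or $\{b,d\}$, so the walk stalls in exactly these algebras. Your fallback (``build a cyclic term closing $\{a,b,\ast\}$'') is left as an intention.

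\textbf{What is missing.} Use Claims~\ref{claimnijepodalg} and \ref{bar1nijemajclaim} as black boxes; the proof of the latter works with any element in place of $c$. Add the following observation. If $D'\subseteq\{c,d\}$ is $3$-absorbing in $C$, then $D'\trianglelefteq_3\m a$ by Proposition~\ref{composition}(4). Hence $\{b\}\cup D'$ is a subuniverse by Proposition~\ref{composition}(3). When $D'=\{c,d\}$ is affine, $\{b,c,d\}$ is conservative: the only nonconservative algebra in Table~\ref{table:1} with a majority quotient is $\T_1^{\n}$, and its two-element class is a semilattice. The cases then close as follows.

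\begin{itemize}
\item $\m t_{10}^{\co}$ (where $a\mapsto 1$), and $\m t_9^{\co}$ with $a\mapsto 1$. Take $u\mapsto 0$. Then $\{b,u\}$ is a subuniverse and $(a,u)$ is a semilattice edge, contradicting Claim~\ref{claimnijepodalg}.

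\item $\m t_1^{\co}$ (where $a\mapsto 2$), and $\m t_9^{\co}$ with $a\mapsto 2$. Take $u\mapsto 0$. Both $\{a,u\}$ (by Table~\ref{table:4}) and $\{b,u\}$ (by Claim~\ref{claimde3apbs}) are majority. A binary $t$ with $t(a,b)=u$ has dominant first coordinate, so $t(u,a)=u$ and $t(b,a)=b$. The cyclic term $g(t(t(x,y),z),t(t(y,z),x),t(t(z,x),y))$ then sends $(a,a,b)$ to $g(u,u,b)=u$, contradicting Claim~\ref{bar1nijemajclaim}.

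\item $\T_3^{\n}$ (where $a\mapsto 1$). Take $u\mapsto 0$ and $v\mapsto 2$. Then $\{a,u\}$ and $\{b,u\}$ are majority, so Claim~\ref{bar1nijemajclaim} forces $g(a,a,b)=v$ for every cyclic ternary $g$. By Lemma~\ref{Aljahomework}, $g$ is known on $C$. Consider the cyclic term
$g'(x,y,z)=g\bigl(g(g(x,y,z),g(x,y,z),g(x,x,y)),\,g(g(y,z,x),g(y,z,x),g(y,y,z)),\,g(g(z,x,y),g(z,x,y),g(z,z,x))\bigr)$.
It gives $g'(a,a,b)=g(g(v,v,a),v,g(v,v,b))=g(u,v,v)=u$, again a contradiction.
\end{itemize}
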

\begin{proof}
Suppose, on the contrary, that $\m c\cong \m t_3^{\n}$. Some cyclic ternary term $g$ exists, completely determined on the set $C$. Also, since $\{0,2\}\trianglelefteq_3\T_3^{\n}$, by Claim~\ref{claimanijeu3aps} the isomorphism must map $a\mapsto 1$, thus $\{c,d\}\mapsto\{0,2\}$. Let, without loss of generality, $c\mapsto 2$ and $d\mapsto 0$. Since $\{b\}\trianglelefteq_3\m a$ and $\{c,d\}\trianglelefteq_3\m a$, by Proposition~\ref{composition}, $\{b,c,d\}$ is a subuniverse. Moreover, $\{b,c,d\}$ must be the universe of a conservative algebra, because the only algebra from \Cref{table:1} that has a majority factor also has a semilattice factor. Since $\{a,c\}$ and $\{b,c\}$ are majority algebras, due to Claim~\ref{bar1nijemajclaim}, $g(a,a,b)=d$ follows. Define the cyclic ternary term $g'$ by 
\[
\begin{gathered}
g'(x,y,z)=g(g(g(x,y,z),g(x,y,z),g(x,x,y)),g(g(y,z,x),g(y,z,x),g(y,y,z)),\\
g(g(z,x,y),g(z,x,y),g(z,z,x))).
\end{gathered}
\]
Now, $g'(a,a,b)=g(g(d,d,a),g(d,d,d),g(d,d,b))=g(c,d,d)=c$, a contradiction with Claim \ref{bar1nijemajclaim}.

Suppose that $\m c$ is isomorphic to $\m t_1^{\co}$. As $\{0,1\}$ is 3-absorbing in $\m t_1^{\co}$, we derive $a\mapsto 2$, by Claim~\ref{claimanijeu3aps}. Let, without loss of generality, $c\mapsto 0$ and $d\mapsto 1$. From $\{c\}\trianglelefteq_3 C$ and Claim~\ref{claimde3apbs} we get that $\{b,c\}$ is a majority subalgebra, as is $\{a,c\}$ by $\m t_1^{\co}$ from \Cref{table:4}. Since $\{a,b\}$ is a generating set, there is a binary term $t$ such that $t(a,b)=c$. Then we also have $t(a,c)=a$, $t(b,a)=t(b,c)=b$ and $t(c,b)=t(c,a)=c$. Define the cyclic term $g'$ by 
\begin{equation*}\tag{$\star$}
g'(x,y,z)=g(t(t(x,y),z),t(t(y,z),x),t(t(z,x),y)).
\end{equation*}
 Now $g'(a,a,b)=g(t(a,b),t(c,a),t(b,a))=g(c,c,b)=c$, which, by Claim~\ref{bar1nijemajclaim}, implies that one of subalgebras $\{a,c\}$, $\{b,c\}$ is not majority, a contradiction.

Let now $\m c\cong\m t_9^{\co}$. From $\{0\}\trianglelefteq_3\m t_9^{\co}$ and  Claim~\ref{claimanijeu3aps} we get $a\mapsto\{1,2\}$. Let $c\mapsto 0$. By Proposition~\ref{composition} (4), $\{c\}\trianglelefteq_3\m a$, so by Proposition~\ref{composition} (3), $\{b,c\}$ is a subuniverse. Let $t(a,b)=c$ for some binary term $t$, which further implies $t(c,a)=t(c,b)=c$ and $t(b,a)=t(b,c)=b$. We know that $\m a$ has a ternary cyclic term, call it $g$ and let $g'$ be the cyclic term obtained from $g$ like in $(\star)$. We get $g'(a,a,b)=g(t(a,b),t(c,a),t(b,a))=g(c,c,b)=c$. Then, by Claim~\ref{claimnijepodalg}, we get $a\not\mapsto 1$, while $a\not\mapsto 2$ follows from Claim~\ref{bar1nijemajclaim}, a contradiction.

Finally, suppose that $\m c\cong\m t_{10}^{\co}$. We have $\{0,2\}\trianglelefteq_3 \m t_{10}^{\co}$, so, by Claim~\ref{claimanijeu3aps}, $a\mapsto 1$. Without loss of generality, let $c\mapsto 0$. As before, $\{b,c,d\}$ is the universe of a conservative subalgebra, as no algebra in \Cref{table:1} has a majority factor. Thus $\{b,c\}$ is a subalgebra and $(a,c)$ is a semilattice edge, contradicting Claim~\ref{claimnijepodalg}.
\end{proof}

\subsection{Affine $\mathbb{Z}^{\text{aff}}_2$ quotient}\label{s:Z2}

We will prove that any 2-generated, 4-element minimal Taylor algebra $\m a$ which has a factor isomorphic to $\mathbb{Z}^{\text{aff}}_2$ must be term equivalent to one of the algebras listed in \Cref{table 2-affine}.

\begin{table}[!ht]
\centering
 \begin{tabular}{|c||c|c|c|}
\hline
    Algebra & $\T_{4,11}$ & $\T_{4,12}$ & $\T_{4,13}$\\
\hline\hline
     $g{\restriction_{\{0,2\}}}$  & maj &  aff & aff \\
\hline
    $g{\restriction_{\{1,3\}}}$  & maj &  aff& aff\\

\hline
	$g(0,0,1)$ & 3 &  3 & 3\\
\hline
	$g(0,1,1)$ & 2 &  2 & 0\\	
\hline

    $g(0,0,3)$  & 3 &  1 & 1\\
\hline
	$g(0,3,3)$  & 2 &  2 & 0\\
\hline

        $g(1,1,2)$  & 2 &  0 & 2\\
\hline
	$g(1,2,2)$  & 3 &  3 & 3\\
\hline
 
	$g(2,2,3)$ & \multirow{2}{*}{aff} &  1 & 1\\
\cline{1-1} \cline{3-4}
	$g(2,3,3)$ &  &  0 & 2 \\

\hline
	$g(0,1,2)$ & 3 &  1 & 1 \\
\hline
	$g(0,2,1)$ & 3 &  1 & 1 \\
\hline
	$g(0,1,3)$ & 2 &  0 & 2 \\
\hline
	$g(0,3,1)$ & 2 &  0 & 2\\
\hline
	$g(0,2,3)$ & 3 &  3 & 3 \\
\hline
	$g(0,3,2)$ & 3 &  3 & 3 \\
\hline
	$g(1,2,3)$ & 2 &  2 & 0 \\
\hline
	$g(1,3,2)$ & 2 &  2 &  0\\
\hline
\end{tabular} 
\caption{Four-element minimal Taylor algebras with $\mathbb{Z}^{\text{aff}}_2$ quotient.}
\label{table 2-affine}
\end{table}

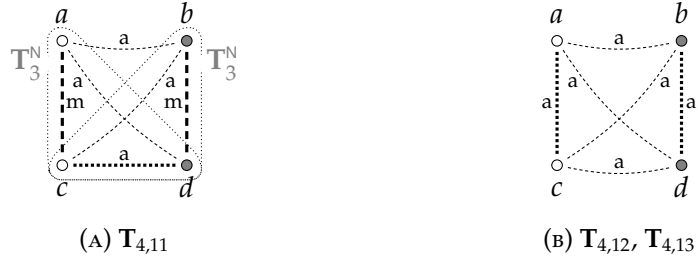
\begin{figure}[ht]
    \begin{subfigure}{0.48\textwidth}
		\centering
		\begin{tikzpicture}
			\tikzstyle{every node}=[draw,circle,fill=white,minimum size=4pt,inner sep=0pt]
			
			\node (a) [label=above:\strut$a$] {};
			\node (b) [fill=gray,right=1.5cm of a,label=above:\strut$b$] {};
			\node (c) [ below=1.5cm of a,label=below:\strut$c$] {};
			\node (d) [fill=gray,right=1.5cm of c,label=below:\strut$d$] {};

             \draw[thin, densely dotted, rounded corners] ([shift={(0.14,0.1)}]b.north east)--([shift={(-0.14,0.14)}]b.north east)--([shift={(-0.14,0.1)}]c.south west)--([shift={(-0.1,-0.14)}]c.south west)--([shift={(0.14,-0.14)}]d.south east)--cycle;
             \node[draw=none] at ([shift={(0.4,-0.3)}]b.east) {$\color{gray} \m t_3^{\n}$};

             \draw[thin, densely dotted, rounded corners] ([shift={(0.14,0.14)}]a.north west)--([shift={(-0.14,0.1)}]a.north west)--([shift={(-0.14,0.1)}]c.south west)--([shift={(-0.1,-0.14)}]c.south west)--([shift={(0.14,-0.14)}]d.south east)--([shift={(0.14,0.14)}]d.south east)--cycle;
             \node[draw=none] at ([shift={(-0.4,-0.3)}]a.west) {$\color{gray} \m t_3^{\n}$};
			
			\path[-,line width=1.2pt, densely dotted, shorten <=0.7mm, shorten >=0.7mm]
			(a) edge [line width=0.4pt] [bend right=12] node[draw=none,near start,left,outer sep=2pt] {\scriptsize{a}} (d)
            (b) edge [line width=0.4pt] [bend left=12] node[draw=none,near start,right,outer sep=2pt] {\scriptsize{a}} (c)
            (a) edge [line width=0.4pt] [bend right=12] node[draw=none,midway,above, outer sep=1pt] {\scriptsize{a}} (b)
            (c) edge node[draw=none,midway,above, outer sep=1pt] {\scriptsize{a}} (d);
            \path[-,line width=1.2pt,densely dashed, shorten <=0.7mm, shorten >=0.7mm]
			(b) edge node[draw=none,midway,left, outer sep=1pt] {\scriptsize{m}} (d)
            (a) edge node[draw=none,midway,right, outer sep=1pt] {\scriptsize{m}} (c);

			
		\end{tikzpicture}
		\caption{$\m t_{4,11}$}
		
	\end{subfigure}%
	\hfill
	\begin{subfigure}{0.48\textwidth}
		\centering
		\begin{tikzpicture}
			\tikzstyle{every node}=[draw,circle,fill=white,minimum size=4pt,inner sep=0pt]
			
			\node (a) [label=above:\strut$a$] {};
			\node (b) [fill=gray,right=1.5cm of a,label=above:\strut$b$] {};
			\node (c) [below=1.5cm of a,label=below:\strut$c$] {};
			\node (d) [fill=gray,right=1.5cm of c,label=below:\strut$d$] {};

            \path[-,line width=1.2pt,densely dotted, shorten <=0.7mm, shorten >=0.7mm]
            (a) edge [line width=0.4pt] [bend right=12] node[draw=none,midway,above, outer sep=1pt] {\scriptsize{a}} (b)
			(b) edge [line width=0.4pt] [bend left=12] node[draw=none,near start,right, outer sep=2pt] {\scriptsize{a}} (c)
            (a) edge [line width=0.4pt] [bend right=12] node[draw=none,near start,left, outer sep=2pt] {\scriptsize{a}} (d)
			(c) edge [line width=0.4pt] [bend right=12] node[draw=none,midway,above,outer sep=1pt] {\scriptsize{a}} (d)
            (b) edge node[draw=none,midway,right,outer sep=1pt] {\scriptsize{a}} (d)
			(a) edge node[draw=none,midway,left,outer sep=1pt] {\scriptsize{a}} (c);
		\end{tikzpicture}
		\caption{$\m t_{4,12}$, $\m t_{4,13}$}
	\end{subfigure}
	\caption{Directed graph of $\m t_{4,i}$ for $i=11,12,13$.}
\end{figure}

\begin{lem}\label{aaalemma}
Let $\m a$ have an automorphism of order two that switches $a$ and $b$. If $S$ is a ternary relation defined as  
$$
S=\mathrm{Sg}_{\m a^3}\left\{
\begin{bmatrix}
a \\
b \\
b
\end{bmatrix}\!,\!
\begin{bmatrix}
b \\
a \\
b
\end{bmatrix}\!,\!
\begin{bmatrix}
b \\
b \\
a
\end{bmatrix}
\right\},
$$ then $(a,a,a)\notin S$.
\end{lem}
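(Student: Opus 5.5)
The plan is to argue by contradiction: turn membership $(a,a,a)\in S$ into a term acting as a minority on $\{a,b\}$, then use Proposition~\ref{subuniverse} to contradict $\mathrm{Sg}\{a,b\}=\m a$. Throughout I use the standing assumptions of this section: $\m a$ is a minimal Taylor algebra on $\{a,b,c,d\}$ with $\mathrm{Sg}\{a,b\}=\m a$, and $\m a$ is idempotent.

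First, I translate membership in $S$ into a term condition. Since $S$ is the subuniverse of $\m a^3$ generated by the three listed triples, an element lies in $S$ exactly when it is the value of a ternary term operation applied coordinatewise to those triples. So suppose $(a,a,a)\in S$. Then there is $f\in\mathrm{Clo}_3(\m a)$ with
$$f(a,b,b)=f(b,a,b)=f(b,b,a)=a .$$

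Second, I transport this along the automorphism. Let $\varphi$ be the automorphism of order two with $\varphi(a)=b$ and $\varphi(b)=a$. Term operations commute with automorphisms, so applying $\varphi$ to the three equalities gives
$$f(b,a,a)=f(a,b,a)=f(a,a,b)=b .$$
Together with idempotency, $f(a,a,a)=a$ and $f(b,b,b)=b$. Hence $\{a,b\}$ is closed under $f$, and on $\{a,b\}$ the operation $f$ is exactly the minority operation. That is, $(\{a,b\};f)$ is the two-element affine algebra $\mathbb{Z}_2^{\mathrm{aff}}$, which is Taylor.

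Third, I apply Proposition~\ref{subuniverse}. It says $\{a,b\}$ is then a subuniverse of $\m a$. This gives $\mathrm{Sg}\{a,b\}=\{a,b\}\neq A$, since $|A|=4$, which contradicts $\mathrm{Sg}\{a,b\}=\m a$. Therefore $(a,a,a)\notin S$.

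I do not expect a real obstacle. The only points to state carefully are the description of $S$ via ternary terms and the fact that automorphisms commute with term operations; both are standard.
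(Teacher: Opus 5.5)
Your proposal is correct and follows essentially the same route as the paper: a ternary term witnessing $(a,a,a)\in S$ is transported by the automorphism to a minority operation on $\{a,b\}$, which forces $\{a,b\}$ to be a (Taylor, hence by Proposition~\ref{subuniverse}) subuniverse, contradicting $\mathrm{Sg}\{a,b\}=\m a$. The only difference is that you cite Proposition~\ref{subuniverse} explicitly, whereas the paper leaves that step implicit.
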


\begin{proof}
 On the contrary, suppose that $(a,a,a)\in S$. Then $h(a,b,b)=h(b,a,b)=h(b,b,a)=a$ for some ternary term $h$. Notice that now $h(b,a,a)=h(a,b,a)=h(a,a,b)=b$ follows, since $a$ and $b$ are switched by some automorphism. Then $\{a,b\}$ is a $\mathbb{Z}^{\text{aff}}_2$ subalgebra, a contradiction.
\end{proof}

\subsubsection{Non-equal sizes classes $\mathbb{Z}^{\text{aff}}_2$ quotient}

Let $\alpha=\{\{b\},\{a,c,d\}\}$ be a congruence such that $\m a/\alpha$ is the affine quotient. Denote by $D:=\{a,c,d\}$ and by $\m d$ the subalgebra with universe $D$. Note that $m(x,y,b)=b$ holds for all $x,y\in D$. Also, since $\mathrm{Sg}\{a,b\}=\m a$ and $m(a,a,b)=b$, one of the values $m(a,b,b),m(b,a,b),m(b,b,a)$ must not be equal to $a$. Additionally, according to the remark made after \Cref{compositionactslike}, we have $m(x,y,y)=m(y,y,x)$ for all $x,y$. In some cases, it will be helpful to observe the previously defined ternary relation $S$. We state the following beneficial fact, which cannot be derived straightforwardly from Lemma~\ref{aaalemma} since we do not assume the existence of an automorphism.
\begin{claim}\label{aaaclaim}
$(a,a,a)\notin S$.
\end{claim}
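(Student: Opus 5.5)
The plan is to argue by contradiction, replacing the automorphism used in Lemma~\ref{aaalemma} with the fact that the $\alpha$-class of $b$ is a singleton. Suppose $(a,a,a)\in S$. Since $S$ is generated by the three listed triples, there is a ternary term $h\in\mathrm{Clo}(\m a)$ with
$$h(a,b,b)=h(b,a,b)=h(b,b,a)=a,$$
obtained by applying $h$ coordinatewise to the generators.

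First I determine how $h$ acts on the quotient $\m a/\alpha$. This quotient is term-equivalent to $\mathbb{Z}_2^{\mathrm{aff}}$; identify $D=a/\alpha$ with $0$ and $\{b\}$ with $1$. The idempotent ternary term operations of $\mathbb{Z}_2^{\mathrm{aff}}$ are the three projections and the minority operation $x+y+z$. The three equalities above say that, modulo $\alpha$, $h(0,1,1)=h(1,0,1)=h(1,1,0)=0$. The first projection violates $h(1,0,1)=0$, and the other two projections are excluded symmetrically. Hence $h$ acts as the minority operation on $\m a/\alpha$.

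Next I use the singleton class. By minority modulo $\alpha$, each of $h(a,a,b)$, $h(a,b,a)$, $h(b,a,a)$ lies in the $\alpha$-class of $b$, which is $\{b\}$. So all three equal $b$. Together with idempotency and the assumed values $h(a,b,b)=h(b,a,b)=h(b,b,a)=a$, this shows that $\{a,b\}$ is closed under $h$ and that $h$ restricted to $\{a,b\}$ is exactly the minority operation. Thus $(\{a,b\};h)$ is the Taylor algebra $\mathbb{Z}_2^{\mathrm{aff}}$.

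Proposition~\ref{subuniverse} then makes $\{a,b\}$ a subuniverse of $\m a$. This contradicts $\mathrm{Sg}\{a,b\}=\m a$ with $|A|=4$, so $(a,a,a)\notin S$. The only step needing care is the classification of idempotent ternary term operations of $\mathbb{Z}_2^{\mathrm{aff}}$, and that is elementary, so I expect no real obstacle.
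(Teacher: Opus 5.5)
Your proof is correct and is essentially the paper's argument: the paper also passes to $\m a/\alpha$, deduces $h(b,a,a),h(a,b,a),h(a,a,b)\in b/\alpha=\{b\}$, and finishes as in Lemma~\ref{aaalemma} by observing that $\{a,b\}$ would then be a $\mathbb{Z}_2^{\mathrm{aff}}$ subalgebra. The only difference is how the values modulo $\alpha$ are obtained: the paper applies the automorphism of $\m a/\alpha$ that swaps the two classes, whereas you classify the idempotent ternary term operations of $\mathbb{Z}_2^{\mathrm{aff}}$ as the three projections and the minority, and both steps are immediate.
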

\begin{proof}
Assume $(a,a,a)\in S$. Then $w(a,b,b)=w(b,a,b)=w(b,b,a)=a$ for some ternary term $w$. Since $\m a/\alpha$ has an automorphism switching $a/\alpha$ and $b/\alpha$, we have $w(b,a,a)/\alpha=w(a,b,a)/\alpha=w(a,a,b)/\alpha=b/\alpha$, thus $w(b,a,a)=w(a,b,a)=w(a,a,b)=b$ and the proof concludes the same way as the proof of Lemma~\ref{aaalemma}.
\end{proof}

As in the case of a two-element majority quotient with non-equal size classes, we shall prove that there are no new minimal Taylor algebras.

\begin{claim}\label{D'bissubuniverse}
If $D'\trianglelefteq_2 \m d$ and $D'\neq D$, then $D'\cup\{b\}$ is a subuniverse, and therefore, $a\notin D'$.
\end{claim}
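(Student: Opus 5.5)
The first step is to transfer the absorption from $\m d$ to $\m a$. Since $\alpha=\{\{b\},\{a,c,d\}\}$ is a congruence, $D$ is an $\alpha$-class, and $\m a/\alpha\cong\mathbb{Z}_2^{\mathrm{aff}}$, I would not expect $D$ to absorb $\m a$. So Proposition~\ref{composition}(2) cannot be applied directly, and I would argue by hand instead.

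Let $t$ be the binary term witnessing $D'\trianglelefteq_2\m d$. By Theorem~\ref{compositionactslike}, we may take $t(x,y)=m(x,x,y)$, since this operation witnesses all $2$-absorptions in the minimal Taylor algebra $\m d$ (Proposition~\ref{HSPfin}). We want to show that $D'\cup\{b\}$ is closed under $m$. The case where all arguments lie in $D'$ is trivial. If at least one argument is $b$ and at least one lies in $D'$, we work in the quotient. Since $m$ acts as $x-y+z$ on $\m a/\alpha$, the value $m(\mathbf u)$ lies in $\{b\}$ exactly when an odd number of the arguments equal $b$. Otherwise the value lies in $D$, and we must show that it actually lies in $D'$.

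That remaining case means two of the arguments equal $b$ and one, say $x$, lies in $D'$. I would use the fact noted before Claim~\ref{aaaclaim}: $m(x,y,b)=b$ for $x,y\in D$, which follows from $m(y,y,x)=m(x,y,y)$ and the affine action on the quotient. I would then try to show that $m(b,b,x)$, $m(b,x,b)$ and $m(x,b,b)$ all lie in $D'$.

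An alternative is to prove directly that $D'\cup\{b\}$ is closed under a Taylor (cyclic) term. This would be done by composing with $t$, as in Lemma~\ref{triangle}: for example, $g'(x_1,\dots)=g(t(\cdots),\dots)$, where $g$ is a cyclic term, which exists by Lemma~\ref{bkcyclic} applied to $\alpha$. Proposition~\ref{subuniverse} would then give that $D'\cup\{b\}$ is a subuniverse. To make this work I would use Proposition~\ref{bradybinaryprop}(1) in $\m d$, namely strong absorption: any term depending on a coordinate that carries an element of $D'$ lands in $D'$ when the other arguments lie in $D$. The arguments $b$ first have to be converted into elements of $D$. For this I would use $m(x,b,b)=m(b,b,x)$, and note that applying $m$ again to such values together with $x$ yields elements of $D$.

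The main obstacle is exactly the values $m(x,b,b)$ for $x\in D'$. Here I would use Proposition~\ref{bradybinaryprop}(3) in the form of the congruence $\theta_{D'}$ on $\m d$, and consider the relation $\{(u,v): u\in D',\ v\}$ generated in $\m a^2$, showing it stays inside $D'\cup\{b\}$.

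The final assertion, $a\notin D'$, is then immediate. If $a\in D'$, then $\mathrm{Sg}\{a,b\}\subseteq D'\cup\{b\}\subsetneq A$, because $D'\neq D$. This contradicts $\mathrm{Sg}\{a,b\}=\m a$, exactly as in Claim~\ref{claimanijeu3aps}.
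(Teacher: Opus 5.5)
You prove $a\notin D'$ correctly from $\mathrm{Sg}\{a,b\}\subseteq D'\cup\{b\}$. You also have the right ingredients: a cyclic term, Proposition~\ref{subuniverse}, strong absorption from Proposition~\ref{bradybinaryprop}(1) inside $\m d$, and parity modulo $\alpha$. The gap is the only nontrivial step. For a tuple with two $b$'s and one $x\in D'$, you must force the value into $D'$, not merely into $D$. You leave this as an ``obstacle'', and none of your sketched routes resolves it.

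\begin{itemize}
\item Showing $m(b,b,x)\in D'$ for the basic operation $m$ is essentially the claim itself, and you give no argument for it.
\item The composition ``as in Lemma~\ref{triangle}'' with $t(x,y)=m(x,x,y)$ breaks down. Modulo $\alpha$ the term $t$ is the second projection, so $t(u,b)=b$ for every $u\in D$. A $t$-chain therefore loses $D'$-membership as soon as a $b$ follows. The outer cyclic term then receives arguments equal to $b$, where strong absorption in $\m d$ says nothing. For arity $3$ you simply get back $g'(b,b,x)=g(m(b,b,x),b,b)$.
\item The suggestion via $\theta_{D'}$ and a relation in $\m a^2$ is not a concrete argument. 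Moreover, $\theta_{D'}$ is a congruence of $\m d$ and carries no information about $b$.
\end{itemize}

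The missing idea is to use inner terms that agree modulo $\alpha$ with the outer operation. Then, whenever the output falls into $D$, every inner value already lies in $D$, and you arrange for at least one of them to lie in $D'$. The paper takes a ternary cyclic term $g$. It exists by Lemma~\ref{bkcyclic}, since every $3$-element minimal Taylor algebra with a proper $2$-absorbing subset has one. It then sets $g'(x,y,z)=g(g(x,x,g(x,y,z)),g(y,y,g(y,z,x)),g(z,z,g(z,x,y)))$. Each inner term acts as $x+y+z$ on $\m a/\alpha\cong\mathbb{Z}_2^{\mathrm{aff}}$.

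\begin{itemize}
\item At $(b,b,x)$ with $x\in D'$, the inner values $g(b,b,g(b,b,x))$ and $g(b,b,g(b,x,b))$ lie in $D$. The third, $g(x,x,g(x,b,b))$, lies in $D'$ by strong absorption, since its first input is $x\in D'$ and its other inputs are in $D$. Applying strong absorption once more, using cyclicity, puts $g'(b,b,x)$ in $D'$.
\item At $(b,x,y)$ with $x,y\in D'$, everything evaluates to $b$.
\end{itemize}

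So $D'\cup\{b\}$ is closed under the cyclic term $g'$, and Proposition~\ref{subuniverse} applies. Your remark about applying $m$ again to $m(x,b,b)$ together with $x$ is exactly the ingredient $g(x,x,g(x,b,b))$. What you lacked was wrapping all three coordinates this way, so that no $b$ survives into the outer application.
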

\begin{proof}
Each three-element minimal Taylor algebra with a proper 2-absorbing set has a ternary cyclic term, so by Lemma~\ref{bkcyclic}, it also exists in $\m a$. Let $g$ be a ternary cyclic term of $\m a$. Define the new cyclic term $g'$ by $$g'(x,y,z)=g(g(x,x,g(x,y,z)),g(y,y,g(y,z,x)),g(z,z,g(z,x,y))).$$ Clearly, sets $D'$ and $\{b\}$ are closed under $g'$, and $g'(b,x,y)=b$ when $x,y\in D'$, so we shall only check if $g'(b,b,x)\in D'$ when $x\in D'$. We have $$g'(b,b,x)=g(g(b,b,g(b,b,x)),g(b,b,g(b,x,b)),g(x,x,g(x,b,b))),$$ which must belong to $D'$ due to the item (1) of \Cref{bradybinaryprop} (applied to $g(b,b,g(b,b,x))\in D$ and $g(x,x,g(x,b,b))\in D'$).
\end{proof}
The next claim says that if $\{c,d\}$ is 2-absorbing in $D$, then $\m a$ also has a semilattice quotient.
\begin{claim}
If $\{c,d\}\trianglelefteq_2 D$, then $\{b,c,d\}\trianglelefteq_2\m a$.
\end{claim}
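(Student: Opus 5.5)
We mirror the analogous claim in the majority-quotient subsection: first show that $\{b,c,d\}$ is a subuniverse, then exhibit a binary term witnessing the absorption.

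\emph{Subuniverse.} By Claim~\ref{D'bissubuniverse} applied to $D'=\{c,d\}$, the set $\{c,d\}\cup\{b\}=\{b,c,d\}$ is a subuniverse of $\m a$. Alternatively, one can use Proposition~\ref{composition}(1) inside $\m d$ together with the congruence $\alpha$, but the previous claim gives it directly.

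\emph{Witnessing term.} We take the binary term $t(x,y):=m(x,x,y)$, which by Theorem~\ref{compositionactslike} witnesses every $2$-absorption of $\m d$, in particular $\{c,d\}\trianglelefteq_2 D$. We must check that $t(u,v),t(v,u)\in\{b,c,d\}$ whenever $u\in\{b,c,d\}$ and $v\in A$. We split into cases.
\begin{enumerate}
\item If both arguments lie in $\{b,c,d\}$, the value lies there because $\{b,c,d\}$ is a subuniverse.
\item If both arguments lie in $D$ and one of them is in $\{c,d\}$, the value is in $\{c,d\}$ by the absorption $\{c,d\}\trianglelefteq_2 D$ witnessed by $t$.
\item The only remaining pairs are $t(a,b)$ and $t(b,a)$. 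We have $t(a,b)=m(a,a,b)=b$, as noted at the start of this subsection, since $\m a/\alpha$ is affine and $m(x,y,b)=b$ for $x,y\in D$.
\item For $t(b,a)=m(b,b,a)$, the affine quotient only gives $m(b,b,a)\in D$. So the real issue is to show that $m(b,b,a)\neq a$.
\end{enumerate}

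\emph{The main obstacle: $m(b,b,a)\neq a$.} By the remark after Theorem~\ref{compositionactslike}, $m(b,b,a)=m(a,b,b)$. Suppose $m(b,b,a)=a$. Since $\mathrm{Sg}\{a,b\}=\m a$, some value among $m(a,b,b),m(b,a,b),m(b,b,a)$ differs from $a$, so $m(b,a,b)\in\{c,d\}$. We then try to reach a contradiction, either with Claim~\ref{aaaclaim} or with $\mathrm{Sg}\{a,b\}=\m a$.
\begin{itemize}
\item One route is to replace $t$ by a symmetrized term such as $t'(x,y):=m(x,y,x)$, or by a composition $t(t(x,y),x)$. Using $m(b,a,b)\in\{c,d\}$ and the absorption of $\{c,d\}$ in $D$, such a term should send both $(a,b)$ and $(b,a)$ into $\{b,c,d\}$, while still mapping pairs from $D$ meeting $\{c,d\}$ into $\{c,d\}$.
\item If that fails, we argue directly. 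If every binary term maps $(b,a)$ into $\{a\}\cup\{b\}$ on the relevant coordinates, then $\{a,b\}$ is closed under enough operations to be a subuniverse, via Proposition~\ref{subuniverse} applied to a cyclic term built as in Claim~\ref{D'bissubuniverse}.
\end{itemize}

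Once a binary term sends both $(a,b)$ and $(b,a)$ into $\{b,c,d\}$, the three cases above combine to give $\{b,c,d\}\trianglelefteq_2\m a$.
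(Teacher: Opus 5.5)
\textbf{Gap: the decisive step is left unresolved.} Your outline is the right one. $\{b,c,d\}$ is a subuniverse by Claim~\ref{D'bissubuniverse}, and it then suffices to exhibit one binary term that witnesses $\{c,d\}\trianglelefteq_2 D$ and sends both $(a,b)$ and $(b,a)$ into $\{b,c,d\}$. You also correctly see why the argument from the majority subsection does not transfer verbatim: the $\mathbb{Z}_2^{\mathrm{aff}}$ quotient only gives $m(b,b,a)\in D$.

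But the proposal stops exactly at the step that carries the content. You give no argument that $m(b,b,a)\neq a$, and none is needed, so setting out to refute $m(b,b,a)=a$ is the wrong goal. Both of your bullets are left at ``should'' and ``if that fails''. The second bullet is not a valid argument in any case, because knowing where binary terms send $(b,a)$ says nothing about closure of $\{a,b\}$. Indeed some binary term has $t(a,b)=c$ and $t(b,a)=b$. As written, the proof is incomplete.

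\textbf{How to close it with your own first bullet.} Change the witnessing term rather than seek a contradiction; your first bullet does exactly this once it is checked. Suppose $m(b,b,a)=a$. You have shown $m(b,a,b)\in\{c,d\}$, and the quotient gives $m(a,b,a)=b$ (since $0-1+0=1$ in $\mathbb{Z}_2$). So $m(x,y,x)$ sends $(a,b)$ to $b$ and $(b,a)$ into $\{c,d\}$. It witnesses $\{c,d\}\trianglelefteq_2 D$ for the same reason $m(x,x,y)$ does (last item of Theorem~\ref{compositionactslike}). Your three-case check then goes through unchanged.

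\textbf{The paper's route.} The paper avoids the case split. Since $\mathrm{Sg}\{a,b\}=A$, there is a binary term $t$ with $t(a,b)\in\{c,d\}$. Idempotent binary terms of $\mathbb{Z}_2^{\mathrm{aff}}$ are projections, so $t(b,a)=b$ automatically, and absorption in $D$ handles the pairs inside $D$. If you take that route, guard against $t$ restricting to a projection on $D$ by using $m(m(x,x,y),m(x,x,y),t(x,y))$ instead. This term still sends $(a,b)$ into $\{c,d\}$, sends $(b,a)$ to $b$, and sends pairs from $D$ meeting $\{c,d\}$ into $\{c,d\}$.
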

\begin{proof}
Due to the previous claim, $\{b,c,d\}$ is a subuniverse. Since $\m a/\alpha$ is the affine quotient, there exists a binary term $t$ that satisfies $t(a,b)\in\{c,d\}$ and $t(b,a)=b$. From $\{c,d\}\trianglelefteq_2 D$ we have $t(a,c),t(a,d),t(c,a),t(d,a)\in\{c,d\}$, which concludes the proof.
\end{proof}

\begin{claim}
    Neither of the sets $\{c\},\{d\}$ binary absorbs $D$.
\end{claim}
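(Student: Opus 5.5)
Suppose, for contradiction, that $\{c\}\trianglelefteq_2 D$; the case of $\{d\}$ is symmetric since $c$ and $d$ play the same role so far. By Claim~\ref{D'bissubuniverse}, $\{b,c\}$ is then a subuniverse of $\m a$. By Proposition~\ref{smedge}~(1) applied inside $\m d$, $(a,c)$ is a semilattice edge with absorbing element $c$, so $\{a,c\}$ is also a subuniverse. The plan is to mimic Claim~\ref{claimnijepodalg} from the majority-quotient subsection and show that $\{a,b,c\}$ is closed under some cyclic (or Taylor) term. Proposition~\ref{subuniverse} would then make it a subuniverse, contradicting $\mathrm{Sg}\{a,b\}=\m a$.

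First I fix a binary term $t$ with $t(a,b)=c$; it exists because $c\in\mathrm{Sg}\{a,b\}$. Since $\{b\}$ is an $\alpha$-class and $\m a/\alpha$ is $\mathbb{Z}_2^{\mathrm{aff}}$, the value of $t$ on $\{a/\alpha,b/\alpha\}$ must be a projection. As $t(a,b)\in D$, it is the first projection, so $t(b,a)=b$, $t(b,c)=b$, and $t(c,b),t(a,b)\in D$. Next I determine $t$ on $\{a,c\}$ and $\{b,c\}$. On $\{a,c\}$, the set $\{c\}$ binary absorbs $D$, so by Proposition~\ref{bradybinaryprop}~(1) $t(a,c),t(c,a)$ equal $c$ whenever $t$ depends on the corresponding variable; since $t$ is a nonprojection-dependent first coordinate term, $t(c,a)=c$ and $t(a,c)\in\{a,c\}$. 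On $\{b,c\}$, $t(c,b)=c$ because the values are forced modulo $\alpha$ and $\{b,c\}$ is a subuniverse. Replacing $t$ by $t'(x,y)=t(t(x,y),x)$ or a similar composite if needed, I would arrange $t(a,b)=t(a,c)=t(c,a)=t(c,b)=c$ and $t(b,a)=t(b,c)=b$, exactly as in Claim~\ref{claimnijepodalg}.

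Then I define
$$m'(x,y,z)=m(t(t(x,y),z),t(t(y,z),x),t(t(z,x),y)).$$
Every inner value lies in $\{a,b,c\}$, and since $\{a,c\}$ and $\{b,c\}$ are subuniverses, the outer application of $m$ stays in $\{a,b,c\}$. One must check this carefully for tuples containing both $a$ and $b$, where the inner values form a subset of $\{b,c\}$ or of $\{a,c\}$. One then verifies that $m'$ is cyclic on $\{a,b,c\}$. On the two-element sets $\{a,c\}$ and $\{b,c\}$ it acts as the cyclic restriction of $m$: a semilattice on $\{a,c\}$, and a majority or semilattice on $\{b,c\}$. On tuples with all of $a,b,c$ it returns $m$ applied to permutations of the same multiset, which for two-element multisets $m$ treats cyclically.

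The main obstacle is the type of $\{b,c\}$. If it were $\mathbb{Z}_2^{\mathrm{aff}}$, then $m$ would not be cyclic-friendly in the required sense, because the minority is cyclic but may break the computation on tuples like $(a,b,c)$. There may also be a mismatch between the quotient behaviour, where $b$ lies in the affine class, and conservativity. If closure under a cyclic term fails, I would fall back on the relation $S$ and Claim~\ref{aaaclaim}. The aim is to produce $(a,a,a)\in S$ by generating triples inside $\{a,c\}^3$ using the semilattice edge $(a,c)$ and $\{c\}\trianglelefteq_2 D$, in the style of Claim~\ref{majorityexists}, which would contradict Claim~\ref{aaaclaim}.
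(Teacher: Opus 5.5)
\textbf{The gap: the only case that occurs is the one you leave open.} Your overall strategy is the paper's: close $\{a,b,c\}$ under a cyclic term built from a binary $t$ with $t(a,b)=c$, then conclude with Proposition~\ref{subuniverse}. But you describe $m$ on $\{b,c\}$ as a majority or semilattice operation, and you single out the affine case as the ``main obstacle''. In fact $\{b,c\}$ can only be $\mathbb{Z}_2^{\mathrm{aff}}$. Since $b$ and $c$ lie in different $\alpha$-classes, the map $\{b,c\}\to A/\alpha$ is a bijective homomorphism onto $\mathbb{Z}_2^{\mathrm{aff}}$; the paper writes ``(affine) subalgebra'' at exactly this point. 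So the majority and semilattice cases you treat never arise. For the affine case you offer only an unspecified fallback through $S$ and Claim~\ref{aaaclaim}. As written, the proposal does not prove the claim.

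\textbf{How to close it.} The obstacle is illusory. First normalize $t$ so that on $\{a,b,c\}$ one has $t(u,v)=u$ except $t(a,b)=t(a,c)=c$. Your suggested $t(t(x,y),x)$ does not achieve this: if $t$ is the first projection on $\{a,c\}$, it remains one. A composite that works is $s(s(t(x,y),y),x)$ with $s(x,y)=m(x,y,y)$. This $s$ is the semilattice operation on $\{a,c\}$ with $c$ absorbing, the first projection on $\{b,c\}$, and the first projection modulo $\alpha$. With this $t$, every non-constant triple gives $t(t(x,y),z)=\phi(x)$, where $\phi(a)=\phi(c)=c$ and $\phi(b)=b$. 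Hence $m'(x,y,z)=m(\phi(x),\phi(y),\phi(z))\in\{b,c\}$. By Theorem~\ref{compositionactslike}, $m$ is the minority operation on $\{b,c\}$, which is symmetric. So $m'$ is symmetric, hence Taylor, on $\{a,b,c\}$, and Proposition~\ref{subuniverse} gives the contradiction.

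\textbf{Comparison with the paper.} The paper avoids even this type check. It uses a $p$-ary cyclic term $g'$ as the outer operation, so the composite is automatically cyclic. It then only needs closure, namely that all inner values of non-constant tuples lie in the subuniverse $\{b,c\}$.
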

\begin{proof}
Suppose, on the contrary, that $\{c\}\trianglelefteq_2 D$. Then, by Claim~\ref{D'bissubuniverse}, $\{b,c\}$ is an (affine) subalgebra.  Let $t$ be a binary term such that $t(a,b)=c$, thus $t(b,a)=b$. The absorption implies $t(a,c)=t(c,a)=c$, while because of $\{b,c\}$ being a subalgebra we may assume $t(b,c)=b$ and $t(c,b)=c$. Let $g'$ be a cyclic term of arity $p$, where $p$ is large enough. Define a new cyclic term $g''$ of the same arity as 
\begin{align*}
g''(x_1,\dotso, x_p) = g'(&t(\dotso t(t(x_1,x_2),x_3),\dotso x_p), t(\dotso t(t(x_2,x_3),x_4),\dotso x_1),\dotso ,\\
& t(\dotso t(t(x_p,x_1),x_2),\dotso x_{n-1})).
\end{align*}
Now it is not hard to see that $\{a,b,c\}$ is closed under $g''$, contradicting the assumption that $\mathrm{Sg}\{a,b\}=\m a$
\end{proof}
In other words, throughout the rest of the subsection, we may assume that no subset of $D$ is 2-absorbing.
\begin{claim}
$\m d$ cannot have $\mathbb{Z}_2^{{\mathrm{aff}}}$ as quotient.
\end{claim}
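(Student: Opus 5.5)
Assume for contradiction that $\m d$ has a congruence $\gamma$ with $\m d/\gamma\cong\mathbb{Z}_2^{\mathrm{aff}}$. Since $|D|=3$, $\gamma$ has one two-element class and one singleton. Reading off \Cref{table:1}, \Cref{table:3} and \Cref{table:4}, we first list the three-element minimal Taylor algebras with such a quotient and with no proper $2$-absorbing subset. The last condition is available because the previous claims show that no subset of $D$ binary absorbs $\m d$. This leaves $\m t_3^{\n}$ together with a few conservative algebras such as $\m t_8^{\co}$, $\m t_{10}^{\co}$, $\m t_{11}^{\co}$, $\m t_{12}^{\co}$, $\m t_{13}^{\co}$ and $\m t_2^{\scriptscriptstyle{\mathsf{P}}}$; each has to be checked against the conditions. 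I would handle them uniformly by an argument in $\m a$, not by computing every candidate separately.

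The plan is to show that the equivalence $\delta$ on $A$ whose classes are the $\gamma$-classes of $D$ plus $\{b\}$ gives $\m a$ a quotient onto something impossible. The quotient $\m a/\delta$ has three elements and maps onto $\m a/\alpha\cong\mathbb{Z}_2^{\mathrm{aff}}$. The point is that a 2-generated three-element algebra with two affine-type layers should force an abelian (affine) structure, and this conflicts with $\mathrm{Sg}\{a,b\}=\m a$ together with Claim~\ref{aaaclaim}.

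\textbf{Step 1: $\delta$ is a congruence.} The difficulty is $m(b,x,y)$ for $x,y\in D$ with $x\,\gamma\,y$. By the $\mathbb{Z}_2^{\mathrm{aff}}$ structure on $\m a/\alpha$ we have $m(b,x,x),m(x,b,x)\in D$. Taking a $\gamma$-class $\{u,v\}$, I need $m(b,u,u)\,\gamma\, m(b,v,v)$ and similar statements. I would obtain these from a cyclic term. Lemma~\ref{bkcyclic} applied to $\alpha$ gives a ternary cyclic term whenever $\m d$ has one, which holds for every candidate except $\m t_2^{\scriptscriptstyle{\mathsf{P}}}$; that case must be treated separately using the Mal'cev term from Theorem~\ref{Malcevcriterionedges}. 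I would then iterate the symmetrization $g'(x,y,z)=g(g(x,x,y),g(y,y,z),g(z,z,x))$, as elsewhere in the paper, to normalize the values on tuples containing $b$.

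\textbf{Step 2: the structure of $\m a/\delta$.} If Step 1 succeeds, $\m a/\delta$ is a three-element minimal Taylor algebra by Proposition~\ref{HSPfin}. It has a $\mathbb{Z}_2^{\mathrm{aff}}$ quotient with singleton class $\{b/\delta\}$, and its other two elements $a/\delta$ and $(D\setminus a/\gamma)/\delta$ form a $\mathbb{Z}_2^{\mathrm{aff}}$ subalgebra. Among the three-element algebras, only the affine-type ones such as $\m t_{13}^{\co}$ and $\m t_2^{\scriptscriptstyle{\mathsf{P}}}$ satisfy this, and possibly $\m t_3^{\n}$ or $\m t_{12}^{\co}$. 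In every case there is a ternary term acting as a minority on a pair $\{a/\delta,b/\delta\}$. That term lifts to some $w$ with $w(a,b,b),w(b,a,b),w(b,b,a)\in a/\delta$.

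\textbf{Step 3: contradiction.} I would then refine $w$, composing with a term that collapses $a/\delta$ onto $a$ within $\m d$ (using the ternary absorption or the affine structure of $\gamma$-classes), to obtain $(a,a,a)\in S$. This contradicts Claim~\ref{aaaclaim}.

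The main obstacle is Step 1, proving that $\delta$ is compatible at $b$, since $m(b,b,x)$ is not controlled by $\alpha$. If the uniform argument fails, the fallback is a case-by-case elimination over the short candidate list. In each case one fixes the isomorphism (Claim~\ref{D'bissubuniverse} pins down where $a$ sits), uses Lemma~\ref{Aljahomework}-type uniqueness of the cyclic term on $D$, and builds a modified cyclic term $g''$ under which either $\{a,b\}$ or $\{a,b,x\}$ is closed. That contradicts $\mathrm{Sg}\{a,b\}=\m a$ via Proposition~\ref{subuniverse}.
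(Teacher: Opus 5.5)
\textbf{There is a genuine gap: Step 1 carries the whole argument and is never proved.} The one concrete statement you offer for it is also wrong. Since $\{b\}$ is an $\alpha$-class and $m$ is a minority on $\m a/\alpha$, we have $m(b,x,x)=m(x,b,x)=b$ for $x\in D$; indeed $m(x,y,b)=b$ for all $x,y\in D$. So tuples with a single $b$ are harmless.

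Whether $\delta$ is compatible depends only on the unary polynomials $x\mapsto m(b,b,x)$ (and their permutations): do they send $\gamma$-related elements of $D$ to $\gamma$-related elements? These polynomials use the parameter $b\notin D$, so they are not polynomials of $\m d$, and nothing forces them to respect $\gamma$.

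Your normalization does not help. Because $g(x,x,b)=b$ for $x\in D$, the term $g'(x,y,z)=g(g(x,x,y),g(y,y,z),g(z,z,x))$ satisfies $g'(b,b,x)=f(f(x))$ with $f(x)=g(b,b,x)$. An iterate of a non-injective self-map of $D$ need not respect $\gamma$. For instance, nothing you say rules out $f$ fixing $a$ and $c$ and sending $d\mapsto a$ when $\gamma$ has classes $\{a\},\{c,d\}$.

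The idea is not absurd in principle: a cyclic term respecting $\delta$ would generate $\mathrm{Clo}(\m a)$ by minimality, so $\delta$ would be a congruence. But constructing such a term is exactly the missing work. Your fallback paragraph only restates the generic technique and contains no computation.

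\textbf{Steps 2 and 3, and the candidate lists, are also off.} If $\delta$ were a congruence you would be finished at once. $\m a/\delta$ is generated by $a/\delta$ and $b/\delta$, so it is not conservative. None of $\m t_1^{\n},\dots,\m t_5^{\n}$ has a congruence with $\mathbb{Z}_2^{\mathrm{aff}}$ quotient whose two-element class is affine. So your Step 2 list ($\m t_{13}^{\co}$, $\m t_2^{\scriptscriptstyle{\mathsf{P}}}$, $\m t_{12}^{\co}$, $\m t_3^{\n}$) contains no valid candidate, and the ``collapsing'' term of Step 3 is never defined. Your initial list is inaccurate as well: $\m t_2^{\scriptscriptstyle{\mathsf{P}}}$ is simple, and $\m t_{10}^{\co}$, $\m t_{11}^{\co}$ have no $\mathbb{Z}_2^{\mathrm{aff}}$ quotient. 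Every three-element algebra that does have such a quotient has a ternary cyclic term, so the Mal'cev detour is unnecessary.

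\textbf{How the paper argues instead.} It needs neither an enumeration nor a congruence on $\m a$; it splits on where $a$ lies relative to $\gamma$.
\begin{itemize}
\item If $\{a\}$ is the singleton class, the minority behaviour on $\m d/\gamma$ gives $g(a,x,x)=a$ for $x\in D$. Assume without loss of generality $g(a,b,b)=c$. Applying $w(x,y,z)=g(x,g(x,y,y),g(x,y,z))$ to the generators of $S$ gives $(a,c',c')\in S$ with $c'=g(b,b,c)$. Applying $g$ to its cyclic shifts, which lie in $S$ by symmetry, gives $(a,a,a)\in S$, contradicting Claim~\ref{aaaclaim}.
\item If $\{a,c\}$ is the two-element class, the cyclic term $g(w(x,y,z),w(y,z,x),w(z,x,y))$ preserves $\{a,b,c\}$, so $d\notin\mathrm{Sg}\{a,b\}$.
\end{itemize}
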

\begin{proof}
Let us suppose, for the sake of contradiction, that $\m d$ has an affine quotient $\m d/\beta$, where $\beta=\{D_1,D_2\}$ and $|D_1|=1$.  Here, we can assume that $\m a$ has a ternary cyclic term $g$ since $\m d$ does. We have two cases depending on whether the element $a$ is in $D_1$ or $D_2$.

First, let $D_1=\{a\}$. Let, without loss of generality, $g(a,b,b)=c$. Define the ternary term $w$ by $w(x,y,z)=g(x,g(x,y,y),g(x,y,z))$ and note that $S$ is symmetric and $g$ is the minority operation on $\m d/\beta$. We compute
$$
\begin{gathered}
w\left(
\begin{bmatrix}
a \\
b \\
b
\end{bmatrix}\!,\!
\begin{bmatrix}
b \\
a \\
b
\end{bmatrix}\!,\!
\begin{bmatrix}
b \\
b \\
a
\end{bmatrix}
\right)=g\left(
\begin{bmatrix}
a \\
b \\
b
\end{bmatrix}\!,\!
\begin{bmatrix}
c \\
b \\
b
\end{bmatrix}\!,\!
\begin{bmatrix}
c \\
c \\
c
\end{bmatrix}
\right)=
\begin{bmatrix}
a \\
c' \\
c'
\end{bmatrix},\text{ and hence }\\
g\left(
\begin{bmatrix}
a \\
c' \\
c'
\end{bmatrix}\!,\!
\begin{bmatrix}
c' \\
a \\
c'
\end{bmatrix}\!,\!
\begin{bmatrix}
c' \\
c' \\
a
\end{bmatrix}
\right)=
\begin{bmatrix}
a \\
a \\
a
\end{bmatrix}\in S,
\text{ contradicting Claim~\ref{aaaclaim}.}
\end{gathered}
$$

Assume now that $a\in D_2$ and let, without loss of generality, $D_2=\{a,c\}$, and thus $D_1=\{d\}$. Let $w$ be defined as in the previous paragraph and let $g':=g(w(x,y,z),w(y,z,x),w(z,x,y))$. Therefore, $g'(b,a,a)=g'(b,c,c)=g'(b,a,c)=g'(b,c,a)=b$ and $g'(a,a,c),g'(a,c,c)\in D_2$. Further we have
$$
w\left(
\begin{bmatrix}
a \\
b \\
b
\end{bmatrix}\!,\!
\begin{bmatrix}
b \\
a \\
b
\end{bmatrix}\!,\!
\begin{bmatrix}
b \\
b \\
a
\end{bmatrix}
\right)=g\left(
\begin{bmatrix}
a \\
b \\
b
\end{bmatrix}\!,\!
\begin{bmatrix}
g(a,b,b) \\
b \\
b
\end{bmatrix}\!,\!
\begin{bmatrix}
g(a,b,b) \\
g(a,b,b) \\
g(a,b,b)
\end{bmatrix}
\right)=
\begin{bmatrix}
\ast \\
\circ \\
\circ
\end{bmatrix},
$$
where $\circ\in D$ and $\ast\in\{a,c\}$, so $g'(a,b,b)=g(\ast,\circ,\circ)\in D_2$. Similarly, we get
$$
w\left(
\begin{bmatrix}
c \\
b \\
b
\end{bmatrix}\!,\!
\begin{bmatrix}
b \\
c \\
b
\end{bmatrix}\!,\!
\begin{bmatrix}
b \\
b \\
c
\end{bmatrix}
\right)=c\left(
\begin{bmatrix}
c \\
b \\
b
\end{bmatrix}\!,\!
\begin{bmatrix}
g(c,b,b) \\
b \\
b
\end{bmatrix}\!,\!
\begin{bmatrix}
g(c,b,b) \\
g(c,b,b) \\
g(c,b,b)
\end{bmatrix}
\right)=
\begin{bmatrix}
\star \\
\bullet \\
\bullet
\end{bmatrix},
$$
where $\bullet\in D$, so $g'(c,b,b)=g(\star,\bullet,\bullet)\in D_2$. Therefore, $d\notin\mathrm{Sg}\{a,b\}$, a contradiction.
\end{proof}

\begin{claim}\label{nocons}
$\m d$ cannot be a conservative algebra with a ternary cyclic term.
\end{claim}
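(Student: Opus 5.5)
The plan is to first shrink the list of candidates for $\m d$ by using the restrictions already collected in this subsection, and then eliminate each surviving conservative algebra with the relation $S$ and Claim~\ref{aaaclaim}. The conservative three-element algebras with a cyclic ternary term are $\m t_1^{\co},\dots,\m t_{15}^{\co}$ from \Cref{table:4}. From the preceding claims we know three things about $\m d$: it has no proper $2$-absorbing subset, so it has no two-element semilattice quotient and no semilattice singleton absorbing; it has no $\mathbb{Z}_2^{\mathrm{aff}}$ quotient; and by Claim~\ref{D'bissubuniverse} the element $a$ lies in no proper absorbing set of the relevant kind. Reading off the two-element restrictions and the quotients of each $\m t_i^{\co}$, this should leave only a short list. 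I expect it to consist of the algebras that are strongly connected by affine edges together with majority edges, such as $\m t_{12}^{\co}$, $\m t_{13}^{\co}$, $\m t_{14}^{\co}$, $\m t_{15}^{\co}$, $\m t_{11}^{\co}$, and possibly $\m t_8^{\co}$ and $\m t_{10}^{\co}$, provided no 2-absorption survives. I will check against the table which of these actually remain.

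For each surviving candidate, the plan is to derive $(a,a,a)\in S$ and contradict Claim~\ref{aaaclaim}. The mechanism mirrors the majority-quotient subsection.

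1. Show first that $(x,y,b)\in S$ for all $x,y\in D$. This uses $\mathrm{Sg}\{a,b\}=\m a$: any binary term $t$ with $t(a,b)=x$ must have $t(b,a)=b$, because $\{b\}$ is an $\alpha$-class and $m(x,y,b)=b$. The symmetry of $S$ then gives the rest.
2. Use a cyclic term and $D\trianglelefteq$-type closure to find some $(x',y',z')\in S$ with all coordinates in $D$. Since $\m d/\alpha$-wise the value of the cyclic term on $(a,b,b)$ permutations lands in $D$ when two entries are in $D$ (affine quotient: $b+b-a\equiv a$), applying a cyclic term to the generators of $S$ yields $(u,u,u)$ with $u\in D$.
3. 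Propagate inside $D$. If $\{u,v\}$ is a majority subalgebra, apply $m$ to $(u,u,u),(u,u,v),(u,v,u)$-type triples already in $S$. If $\{u,v\}$ is a strong affine edge, use the Mal'cev behaviour of $m$ on $\{u,v\}$ together with the triples $(x,y,b)$ to shift one coordinate at a time, exactly as in Claim~\ref{majorityexists}. Since $\m d$ is conservative and connected through affine and majority edges, this moves $(u,u,u)$ to $(a,a,a)$.

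The main obstacle is step 3 in the case where the edges leaving $u$ toward $a$ are majority edges rather than affine edges. A majority edge alone does not let one coordinate move independently. For that case I would first try to pass through a third element, using that $\m d$ contains an affine edge or a cyclic value on three distinct elements. Failing that, I would imitate the composite-term trick of Claim~\ref{bar1nijemajclaim}: build from $g$ a term sending the generators of $S$ to $(a,a,x)$ and then close up with a cyclic composition. For any candidate where $a$ sits in a proper $3$-absorbing set, such as a majority singleton, I would instead argue as in Claim~\ref{claimanijeu3aps}. By Proposition~\ref{composition} that set union $\{b\}$ would be a subuniverse containing $a,b$, contradicting $\mathrm{Sg}\{a,b\}=\m a$ unless that set is all of $D$.
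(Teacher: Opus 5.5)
Your step 1 is false in this subsection, and step 3 fails with it. Here $S$ is the relation of Claim~\ref{aaaclaim}, generated by $(a,b,b),(b,a,b),(b,b,a)$, not by the $(a,a,b)$-type triples of the majority subsection. Identify $D$ with $1$ and $\{b\}$ with $0$ in $\m a/\alpha\cong\mathbb{Z}_2^{\mathrm{aff}}$. Every term operation then preserves $\{v\in\mathbb{Z}_2^3: v_1+v_2+v_3=1\}$, and this set contains the generators. So every triple of $S$ has exactly one or exactly three coordinates in $D$, and $(x,y,b)\notin S$ for $x,y\in D$. Your binary-term argument gives only $(x,b,b)\in S$.

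The coordinate-shifting of Claim~\ref{majorityexists} runs through triples $(x',y'',b)$, so it cannot be imported. To shift coordinates with triples $(x,b,b)$ you need the values $g(x,b,b)$ for $x\in D$, and your plan never determines them. That is the missing idea. The paper pins them down as follows.
\begin{itemize}
\item $g(a,b,b)\neq a$, else $\{a,b\}$ is closed under $g$; say $g(a,b,b)=c$.
\item $g(c,b,b)=d$, else $\{a,b,c\}$ is closed under $g$. Hence $(c,b,b),(d,b,b)\in S$ and $(d,d,d)=g((c,b,b),(b,c,b),(b,b,c))\in S$.
\item $g(d,b,b)=d$. The value $a$ would give $(a,a,a)\in S$. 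The value $c$ would make $\{b,c,d\}$ a three-element subalgebra generated both by $\{b,c\}$ and by $\{b,d\}$, which \Cref{table:1} rules out.
\end{itemize}
Then, if $\{a,d\}$ is affine or $(d,a)$ is a semilattice edge, $g((d,d,d),(a,b,b),(d,b,b))=(a,d,d)\in S$, and hence $(a,a,a)\in S$.

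The complementary case, where $\{a,d\}$ is a majority subalgebra or $(a,d)$ is a semilattice edge, is exactly what you flag as the main obstacle, and your fallbacks do not close it. The appeal to Claim~\ref{claimanijeu3aps} does not transfer. There, $C\trianglelefteq_3\m a$ and $\{b\}\trianglelefteq_3\m a$ came from the majority quotient. Here $\mathbb{Z}_2^{\mathrm{aff}}$ has no proper absorbing subuniverse, so neither $D$ nor $\{b\}$ absorbs $\m a$, and Proposition~\ref{composition} yields nothing.

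Enumeration does not avoid this case: $\m t_{14}^{\co}$ and $\m t_{15}^{\co}$, which have only majority edges, survive the earlier claims. Your candidate list is also off. $\m t_8^{\co},\m t_{12}^{\co},\m t_{13}^{\co}$ already have a $\mathbb{Z}_2^{\mathrm{aff}}$ quotient and are excluded. Meanwhile $\m t_1^{\co},\m t_4^{\co},\m t_6^{\co},\m t_9^{\co}$, which contain semilattice edges, survive.

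The paper does not aim for $(a,a,a)\in S$ in this case. It takes a binary $t$ with $t(a,b)=d$ and $t(b,a)=b$, and forms the cyclic term $g'(x,y,z)=g(g(x,t(x,y),t(x,y)),g(y,t(y,z),t(y,z)),g(z,t(z,x),t(z,x)))$. For this term $g'(a,b,b)=g(g(a,d,d),b,b)=g(d,b,b)=d$. The paper concludes that $\{a,b,d\}$ is closed under a cyclic term, contradicting $\mathrm{Sg}\{a,b\}=\m a$. The argument is uniform over all conservative candidates, so no pass through \Cref{table:4} is needed.
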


\begin{proof}
By the way of contradiction, suppose that $\m d$ is conservative and has a ternary cyclic term $g$. First, since $c,d\in\mathrm{Sg}\{a,b\}$, we may assume $g(a,b,b)=c$ and $g(c,b,b)=d$. Also, $g((a,b,b),(b,a,b),(b,a,b))=(c,b,b)\in S$. Using the symmetry of $S$ we also conclude that $g((c,b,b),(b,c,b),(b,c,b))=(d,b,b)\in S$, so $(x,b,b)\in S$ for any $x\in D$. From that we also conclude $g((c,b,b),(b,c,b),(b,b,c))=(d,d,d)\in S$.

If $g(d,b,b)=a$, then from $(d,b,b), (b,d,b),(b,b,d)\in S$ follows $(a,a,a)\in S$ after applying $g$, which is in a contradiction with Claim~\ref{aaaclaim}. If $g(d,b,b)=c$, then $\{b,c,d\}$ is a subuniverse since $\{c,d\}$ is, and $\mathrm{Sg}\{b,c\}=\mathrm{Sg}\{b,d\}=\{b,c,d\}$. But no such algebra exists, according to \Cref{table:1}. We conclude that $g(d,b,b)=d$ must hold.

Next, we prove that $\{a,d\}$ is a majority subalgebra or $(a,d)$ a semilattice edge. Suppose, on the contrary, that $\{a,d\}$ is an affine subalgebra or $(d,a)$ a semilattice edge. Then $g((d,d,d),(a,b,b),(d,b,b))=(a,d,d)\in S$. Finally, $g((a,d,d),(d,a,d),(d,d,a))=(a,a,a)\in S$, a contradiction. 

Therefore, we have that $(a,d)$ is a semilattice edge or $\{a,d\}$ is a majority subalgebra. Nevertheless, since $\mathrm{Sg}\{a,b\}=\m a$, $t(a,b)=d$ for some binary term $t$. As $\m a/\alpha$ has an automorphism switching $a/\alpha$ and $b/\alpha$, $t(b,a)=b$. Let $g'$ be the ternary cyclic term defined by \[g'(x,y,z)=g(g(x,t(x,y),t(x,y)),g(y,t(y,z),t(y,z)),g(z,t(z,x),t(z,x))).\]
Then one can easily check that $g'(a,b,b)=d$ holds, so $\{a,b,d\}$ is closed under $g'$, which is a contradiction. 
\end{proof}

We proved that $\m d$ must be term equivalent to an isomorphic copy of one of these four algebras: $\mathbb{Z}^{\text{aff}}_3$, $\m t_1^{\scriptscriptstyle{\mathsf{S}}}$, $\m t_1^{\scriptscriptstyle{\mathsf{P}}}$, $\m t_2^{\scriptscriptstyle{\mathsf{P}}}$. To conclude the subsection, we exclude all four possibilities.

\begin{claim}\label{Daff}
$\m d$ cannot be term equivalent to a copy of $\mathbb{Z}^{\mathrm{aff}}_3$ or $\m t_2^{\scriptscriptstyle{\mathsf{P}}}$.
\end{claim}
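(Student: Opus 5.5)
Both $\mathbb{Z}_3^{\mathrm{aff}}$ and $\m t_2^{\scriptscriptstyle{\mathsf{P}}}$ have a Mal'cev term. So in both cases I would take a term $p\in\mathrm{Clo}(\m a)$ that is Mal'cev on $D$ and work in the symmetric relation $S$. The target is $(a,a,a)\in S$, which contradicts Claim~\ref{aaaclaim}. The facts I will use throughout are: $\m a/\alpha\cong\mathbb{Z}_2^{\mathrm{aff}}$; every term $w$ satisfies $w(\bar x)=b$ whenever $\bar x$ has exactly one $D$-entry and the others are $b$, or more generally whenever the number of $b$'s has the right parity in the quotient; and $m(x,y,b)=b$ for $x,y\in D$.

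First, using $\mathrm{Sg}\{a,b\}=\m a$, I would produce tuples of the form $(x,b,b)\in S$ for every $x\in D$. For each $x\in D$ there is a binary term $t$ with $t(a,b)=x$. Its dual value $t(b,a)$ lies in $b/\alpha=\{b\}$, because $t$ must act on $\m a/\alpha$ as the first projection (the only binary terms of $\mathbb{Z}_2^{\mathrm{aff}}$ are projections). Applying $t$ to the generators $(a,b,b)$ and $(b,a,a)$ is not available, since $(b,a,a)\notin S$ in general. Instead I apply $t$ coordinatewise to $(a,b,b)$ and $(b,a,b)$, which gives $(x,b,b')$ with $b'=t(b,b)=b$. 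So $(x,b,b)\in S$ for every $x\in D$, and by symmetry all permutations lie in $S$ as well.

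Second, I use the Mal'cev term $p$, with $p\restriction_D$ equal to $x-y+z$ in the $\mathbb{Z}_3$ case, or to the operation $f$ of Table~\ref{table:3} in the $\m t_2^{\scriptscriptstyle{\mathsf{P}}}$ case. I apply $p$ coordinatewise to three tuples, each with exactly one $D$-entry, placed in distinct positions, so that the outcome has all three coordinates in $D$. For example,
\[
p\bigl((x,b,b),(b,b,b),(b,b,z)\bigr)=(p(x,b,b),b,p(b,b,z)),
\]
which reduces by the Mal'cev identities to $(x,b,z)$. This alone is not enough, because $(b,b,b)$ need not be in $S$. So the real step is to combine $(x,b,b)$, $(b,y,b)$ and $(b,b,z)$ with $m$ or with a cyclic term. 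This is the main obstacle: I need to control values such as $p(x,y,b)$ with $x,y\in D$, which lie in $\{b\}$ by parity, and $p(x,b,z)$, which lies in $D$.

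To get around this, I would first show that $\{(u,u,u):u\in D\}\cap S\neq\emptyset$. Proposition~\ref{ternaryabscyclic} does not apply here, since $D$ is an affine class rather than an absorbing one. So I would use Theorem~\ref{cyclic}: a cyclic term $g$ exists, either with arity $3$ directly or via Lemma~\ref{bkcyclic}, because both the quotient and $D$ have ternary cyclic terms ($\mathbb{Z}_3^{\mathrm{aff}}$ has $x+y+z$, and for $\m t_2^{\scriptscriptstyle{\mathsf{P}}}$ I would check this from the tables). Applying $g$ to the three cyclic shifts of $(x,b,b)$ gives a constant triple $(u,u,u)\in S$ with $u=g(x,b,b)\in D$. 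As $x$ ranges over $D$, I get $(u_x,u_x,u_x)\in S$. If some $u_x=a$, we are done. Otherwise I take a diagonal triple $(u,u,u)$ together with tuples $(x,b,b)$, and apply $p$ coordinatewise to $(u,u,u)$, $(u,b,b)$ and $(x,b,b)$ suitably arranged; in $\mathbb{Z}_3^{\mathrm{aff}}$ this shifts the diagonal by $x-u$. Since $D$ is generated by any two elements, and by transitivity of translations (respectively, the strong connectivity of $\m t_2^{\scriptscriptstyle{\mathsf{P}}}$ by Mal'cev edges), I can move the diagonal element to $a$. This yields $(a,a,a)\in S$, contradicting Claim~\ref{aaaclaim}.
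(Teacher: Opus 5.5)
\paragraph{A ternary cyclic term does not exist here.} Your first step, $(x,b,b)\in S$ for every $x\in D$, is correct, and the paper uses exactly this. The argument breaks at the step meant to produce a diagonal triple, because in neither case of this claim does $\m a$ have a ternary cyclic term.

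\begin{itemize}
\item On $\mathbb{Z}_3$ the operation $x+y+z$ is not idempotent. An idempotent $\lambda x+\lambda y+\lambda z$ would need $3\lambda=1$ in $\mathbb{Z}_3$, which is impossible.
\item Conceptually, $x\mapsto x+1$ is a fixed-point-free automorphism of order $3$ of $\mathbb{Z}_3^{\mathrm{aff}}$. The $3$-cycle $(0\,1\,2)$ plays the same role for $\m t_2^{\scriptscriptstyle{\mathsf{P}}}$, which Table~\ref{table:3} lists precisely among the algebras without a cyclic ternary term.
\item Since $D$ is a subuniverse, a ternary cyclic term of $\m a$ would restrict to one of $\m d$, so $\m a$ has none.
\end{itemize}

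Consequently the hypothesis of Lemma~\ref{bkcyclic} fails for the class $D$. Theorem~\ref{cyclic} gives arity $3$ only when no such automorphism exists in $\mathsf{HS}(\m a)$, and here $\m d$ supplies one. This is no accident: the four candidates remaining at this point are exactly the three-element minimal Taylor algebras without a ternary cyclic term.

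Your later ``diagonal shifting'' step has a second problem. It evaluates $p$ at tuples containing $b$, such as $p(u,b,b)$, but you only know $p$ is Mal'cev on $D$. Those values are therefore uncontrolled.

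\paragraph{The missing idea: a Mal'cev term on all of $A$.} The paper obtains one from Theorem~\ref{Malcevcriterionedges} by checking that $\m a$ has no semilattice or weak majority edges.
\begin{itemize}
\item There are none inside $D$, because $\m d$ is Mal'cev.
\item There is no semilattice edge between $b$ and any $x\in D$. The two-element subalgebra $\{x,b\}$ would map isomorphically onto $\m a/\alpha$, which is term-equivalent to $\mathbb{Z}_2^{\mathrm{aff}}$.
\item There is no weak majority edge $\{x,b\}$ with witness $\theta$. Otherwise $m(x,x,b)$ would lie both in $b/\alpha=\{b\}$ and in $x/\theta$.
\end{itemize}

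With a global Mal'cev term $p$, no diagonal triple is needed. Apply $p$ coordinatewise to $(a,b,b),(b,a,b),(b,b,a)$. This gives $(a,\ast,a)\in S$ with $\ast=p(b,a,b)$, and $\ast\in D$ because of the $\mathbb{Z}_2^{\mathrm{aff}}$ quotient. If $\ast=a$ you are done. If $\ast\neq a$, then $p\bigl((a,\ast,a),(b,\ast,b),(b,a,b)\bigr)=(a,a,a)$, using $(b,\ast,b)\in S$ from your first step. Either way this contradicts Claim~\ref{aaaclaim}.
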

\begin{proof}
Let us suppose the opposite, hence $\m d$ is a Mal'cev subalgebra. We prove that $\m a$ has no semilattice edges, nor weak majority edges. Working by definition, it clear that there can be no semilattice or weak majority edges between two elements of $D$. Moreover, no semilattice edge can connect $b$ and a member of $D$, in either direction, since this would be a two-element algebra $\m e$ with binary absorption, and we know that $\m e$ would also inherit the structure $\mathbb{Z}^{\mathrm{aff}}_2$ from $\m a/\alpha$. If there is a weak majority edge $\{x,b\}$, where $x\in D$, let $\m e=\mathrm{Sg}\{x,b\}$ and $\theta$ be the witnessing congruence. If $m$ is the term from Theorem~\ref{compositionactslike}, then $m(x,x,b)\in b/\alpha=\{b\}$ and $m(x,x,b)\in x/\theta$, a contradiction. Thus by Theorem~\ref{Malcevcriterionedges}, $\m a$ has a Mal'cev term $p$. Let $S$ be the previously defined ternary relation. If $\circ\in D$, then $t(a,b)=\circ$ and $t(b,a)\in b/\alpha=\{b\}$ for some binary term $t$, so we get $t((a,b,b),(b,a,b))=(\circ,b,b)\in S$ (thus, $(b,\circ,b),(b,b,\circ)\in S$).  Further, by applying $p$ to the generating triples of $S$, we get $(a,\ast,a)\in S$, where $\ast=p(b,a,b)$. Of course, if $\ast=a$, we are done by contradiction with Claim~\ref{aaaclaim}. The case $\ast=b$ is not possible because $p(b,a,b)\neq b$ due to the $\mathbb{Z}^{\mathrm{aff}}_2$ quotient. In the case $\ast\in\{c,d\}$ we can use $(b,c,b),(b,d,b)\in S$ to get $p((a,\ast,a),(b,\ast,b),(b,a,b))=(a,a,a)\in S$, contradicting again Claim~\ref{aaaclaim}.
\end{proof}

\begin{claim}
$\m d$ cannot be term equivalent to a copy of $\m t_1^{\scriptscriptstyle{\mathsf{S}}}$.
\end{claim}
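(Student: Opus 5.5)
We argue by contradiction and suppose that $\m d\cong\m t_1^{\scriptscriptstyle{\mathsf{S}}}$, the rock-paper-scissors algebra. Then $\m d$ is conservative with a commutative binary term. Each of $\{a,c\},\{a,d\},\{c,d\}$ is a two-element semilattice, and the absorbing elements form a directed $3$-cycle. Since $\{b\}$ and $\m d/\alpha$ are fine, $\m a/\alpha\cong\mathbb{Z}_2^{\mathrm{aff}}$ has no commutative binary term, so Lemma~\ref{bkcyclic} with $k=2$ does not apply directly. However, $\m d$, $\{b\}$ and $\mathbb{Z}_2^{\mathrm{aff}}$ all have cyclic terms of arity $3$: the rock-paper-scissors operation $s$ gives one via $s(s(x,y),z)$ symmetrized, or more simply by Theorem~\ref{cyclic}. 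Hence $\m a$ has a ternary cyclic term $g$. On $D$, $g$ is forced to be conservative and cyclic. Each two-element subset of $D$ is a semilattice, so $g$ is the $\min^3$ there, and $g$ acts as the minority on $\m a/\alpha$.

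Without loss of generality let the cycle be $(a,c)$, $(c,d)$, $(d,a)$, with absorbing elements $c$, $d$, $a$ respectively. Since $c,d\in\mathrm{Sg}\{a,b\}$ and $g(x,y,b)=b$ for $x,y\in D$, the value $g(a,b,b)$ lies in $D$ and is not pinned down a priori. I would first normalize $g$ using compositions of the type used in Claim~\ref{nocons}, namely $g'(x,y,z)=g(g(x,t(x,y),t(x,y)),\dots)$ with $t$ a binary term satisfying $t(b,a)=b$ (forced by the $\mathbb{Z}_2^{\mathrm{aff}}$ quotient). The aim is to force $g(a,b,b)$ to be a chosen element, exactly as before.

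The key tool is the symmetric relation $S$ together with Claim~\ref{aaaclaim}. Mimicking Claim~\ref{nocons}, I will show that $(x,b,b)\in S$ for every $x\in D$. Then applying $g$ to $(x,b,b),(b,x,b),(b,b,x)$ gives $(g(x,b,b),g(x,b,b),g(x,b,b))\in S$, so the constant triples $(u,u,u)$ with $u=g(x,b,b)$ lie in $S$. Because $\m d$ is strongly connected by semilattice edges (it belongs to the family $\mathcal{C}$), this can be propagated. Take $(u,u,u)\in S$ and $(v,b,b)\in S$ with $(u,v)$ a semilattice edge. Applying the binary commutative-on-$D$ operation $s$, coordinatewise with $s(x,b)$ and $s(b,x)$ controlled by $\alpha$, will move one coordinate at a time. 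The goal is to reach $(a,a,a)\in S$, contradicting Claim~\ref{aaaclaim}.

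In practice, once $(d,d,d)\in S$ (or any constant triple) is obtained, I use $(d,d,d)$, $(a,b,b)$ and the cycle edge $(d,a)$. For instance, $g((d,d,d),(a,b,b),(d,b,b))$ yields a triple $(a,\ast,\ast)$ with $\ast\in D$, as in Claim~\ref{nocons}. Repeating this pushes every coordinate to $a$ along the cycle, because every element of $D$ reaches $a$ by a directed path of at most two semilattice edges.

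The main obstacle is the first normalization step. Since $g(d,b,b)$ could be any of $a,c,d$, I cannot simply copy the case analysis of Claim~\ref{nocons}: there the option $g(d,b,b)=c$ was excluded using a subuniverse $\{c,d\}$ together with Table~\ref{table:1}. Here $\{b,c,d\}$ is closed under $g$ and would be $\m t$-type with $\{c,d\}$ a semilattice. I expect to handle this by checking Table~\ref{table:1}, which shows that no nonconservative three-element algebra has a $\mathbb{Z}_2^{\mathrm{aff}}$ quotient $\{\{b\},\{c,d\}\}$ with $\{c,d\}$ a semilattice. Each remaining value then leads to a constant triple in $S$ and hence to $(a,a,a)\in S$.
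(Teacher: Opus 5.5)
\textbf{There is a genuine gap.} Your whole argument rests on the assertion that $\m a$ has a ternary cyclic term $g$, and this is false.

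The rock--paper--scissors algebra has a fixed-point-free automorphism of order $3$. With your orientation ($c$ absorbs $a$, $d$ absorbs $c$, $a$ absorbs $d$), the rotation $\sigma\colon a\mapsto c\mapsto d\mapsto a$ preserves the basic commutative operation, and hence every term operation of $\m d$. Suppose $g$ were a ternary cyclic term of $\m d$. Then $\sigma(g(a,c,d))=g(c,d,a)=g(a,c,d)$, so $g(a,c,d)$ would be a fixed point of $\sigma$, and there is none.

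So $\m d$ has no ternary cyclic term. Since $D$ is a subuniverse, neither does $\m a$. This has three consequences:
\begin{itemize}
\item Theorem~\ref{cyclic} falls into its alternative (2) for $p=3$, so it does not give you $g$.
\item Lemma~\ref{bkcyclic} with $k=3$ is unavailable, because the class $D$ lacks such a term.
\item No ``symmetrization'' of $s(s(x,y),z)$ can produce one.
\end{itemize}
This is exactly why $\m t_1^{\scriptscriptstyle{\mathsf{S}}}$, together with $\mathbb{Z}_3^{\mathrm{aff}}$, $\m t_1^{\scriptscriptstyle{\mathsf{P}}}$ and $\m t_2^{\scriptscriptstyle{\mathsf{P}}}$, survives Claim~\ref{nocons} and needs a separate argument. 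Every step that uses cyclicity therefore collapses: getting a constant triple directly from $(x,b,b),(b,x,b),(b,b,x)$, normalizing $g(a,b,b)$ by Claim~\ref{nocons}-style compositions, and the discussion of $g(d,b,b)$. Had you tried to fill in $g(a,c,d)$ after noting that $g$ is $\min^3$ on each pair, you would have hit the contradiction.

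What survives is the following:
\begin{itemize}
\item The fact that $(x,b,b)\in S$ for all $x\in D$. Derive it without $g$: take a binary term $t$ with $t(a,b)=x$; the $\mathbb{Z}_2^{\mathrm{aff}}$ quotient forces $t(b,a)=b$.
\item The goal of reaching $(a,a,a)\in S$, contradicting Claim~\ref{aaaclaim}.
\end{itemize}

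To repair the argument, replace $g$ by the term $m$ of Theorem~\ref{compositionactslike}. It acts as $x\wedge y\wedge z$ on each two-element subset of $D$, as the minority modulo $\alpha$, and satisfies $m(x,x,y)=m(y,x,x)$.

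Applying $m$ to the generators of $S$ gives only $(\bullet,\ast,\bullet)$ with $\bullet,\ast\in D$, so you need a case split. If $\bullet\neq\ast$, the symmetry of $S$ together with $m$ acting as a minimum on $\{\bullet,\ast\}$ produces a constant triple. From each constant triple $(u,u,u)$ you must then combine with the triples $(x,b,b)$ while tracking the uncontrolled values $m(b,b,u)$. For example, from $(d,d,d)$ and $(a,b,b)$ you get $(a,\diamond,\diamond)$ with $\diamond=m(b,b,d)$, and each of $\diamond\in\{a,c,d\}$ needs its own step.

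Your ``push every coordinate along the cycle'' sketch glosses over precisely these values. That finite case analysis is the actual content of the paper's proof.
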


\begin{proof}
Suppose, for the sake of contradiction, $\m d\cong\m t^{\scriptscriptstyle{\mathsf{S}}}_1$ and let $a$ absorb $d$. Similar to before, we get $(\circ,b,b)\in S$ for all $\circ\in D$. Let $(\bullet,\ast,\bullet)\in S$ be the result of applying $m$ to a generating triple of $S$. We discuss several cases, depending on what $\bullet$ and $\ast$ are equal to. 

If $\bullet=\ast=a$, then $(a,a,a)\in S$ and we have a contradiction by Claim~\ref{aaaclaim}.

If $\bullet=\ast=d$, then $(a,\diamond,\diamond)\in S$ is implied by $(a,b,b),(d,d,d)\in S$,  where $\diamond=m(b,b,d)$. Of course, if $\diamond=a$, the proof is done, so we assume $\diamond\neq a$. In the case of $\diamond=d$, we get $(a,a,a)\in S$ from $(a,d,d),(d,a,d),(d,d,a)\in S$ and the fact that $m(a,d,d)=a$ holds. On the other hand, if $\diamond=c$, that is $(a,c,c)\in S$, then we apply $m$ to $(a,c,c)$ and $(d,d,d)$, so we get $(a,d,d)\in S$. This reduces to the previously discussed option, so we again reach a contradiction.

Suppose now $\bullet=\ast=c$. By applying $m$ to $(d,b,b),(d,b,b)$ and $(c,c,c)$ we obtain $(d,\star,\star)\in S$, where $\star=m(b,b,c)\in D$. If $\star=a$, we have $(d,a,a)\in S$, so $m((d,a,a),(a,d,a),(a,a,d))=(a,a,a)\in S$. Let us now assume $\star=d$, so $(d,d,d)\in S$. Now we use $m$ on the triples $(a,b,b),(a,b,b),(d,d,d)\in S$, thus we get $(a,\triangledown,\triangledown)\in S$. We may assume that $\triangledown=c$, as otherwise $(a,a,a)\in S$ or $(a,d,d)\in S$ and we get a contradiction as before. We now apply any binary operation which is not projection on the triples $(a,c,c),(d,d,d)\in S$. Thus we get $(a,d,d)\in S$, which implies $(a,a,a)\in S$ as before, a contradiction. Finally, if $\star=c$, we get $m((d,c,c),(c,d,c),(c,c,d))=(d,d,d)\in S$, and we proceed as above.

In the case $\{\bullet,\ast\}=\{a,d\}$, we get $m((a,a,d),(a,d,a),(d,a,a))=(a,a,a)\in S$ or $m((a,d,d),(d,a,d),(d,d,a))=(a,a,a)\in S$, a contradiction.

If $\{\bullet,\ast\}=\{a,c\}$, then $(c,c,c)\in S$, and therefore $(a,a,a)\in S$ by the third case, a contradiction.

Finally, we deal with the case when $\{\bullet,\ast\}=\{c,d\}$. Here we get $(d,d,d)\in S$, leading to a contradiction due to the second case.
\end{proof}

\begin{claim}
$\m d$ cannot be term equivalent to a copy of $\m t_1^{\scriptscriptstyle{\mathsf{P}}}$.  
\end{claim}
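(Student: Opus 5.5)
We argue by contradiction, supposing $\m d\cong\m t_1^{\scriptscriptstyle{\mathsf{P}}}$, and reuse the ternary relation $S$ together with Claim~\ref{aaaclaim}. Recall that $\m t_1^{\scriptscriptstyle{\mathsf{P}}}$ is the dual discriminator algebra. Every two-element subset of $D$ is therefore a majority subalgebra, and every singleton $\{x\}$ with $x\in D$ is $3$-absorbing in $\m d$. By Proposition~\ref{composition}~(4) each singleton of $D$ is then $3$-absorbing in $\m a$. Since $\{b\}$ is an $\alpha$-class, Proposition~\ref{composition}~(3) gives that $\{x,b\}$ is a subuniverse for every $x\in D$. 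These subuniverses inherit the $\mathbb{Z}_2^{\mathrm{aff}}$ structure from $\m a/\alpha$, so every $\{x,b\}$ is a two-element affine subalgebra. In particular $\{a,b\}$ is a subuniverse, contradicting $\mathrm{Sg}\{a,b\}=\m a$. This is the route I expect to work, and it is very short.

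The one step needing care is the $3$-absorption $\{x\}\trianglelefteq_3\m d$ for each $x\in D$. In $\m t_1^{\scriptscriptstyle{\mathsf{P}}}$ the basic operation $f$ is the majority on each pair and the first projection on rainbow triples. It does \emph{not} witness absorption of $\{x\}$ by itself, since $f(y,x,x)=x$ holds but $f(y,z,x)$ with $y,z\neq x$ distinct gives $y$. The absorption condition only concerns tuples with at least two coordinates equal to $x$, however. On such tuples any majority-on-pairs ternary operation returns $x$, so $f$ itself witnesses $\{x\}\trianglelefteq_3\m d$. Theorem~\ref{abssetsubuniv} is not even needed, because singletons are subuniverses by idempotency.

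Should the composition argument hit a technicality, I would fall back on a direct computation with $m$. From $m(x,x,b)\in\{b\}$ and $m(x,b,b)\in x/\alpha$ we would read off that $m$ restricted to $\{a,b\}$ is the minority operation on the $\alpha$-quotient. Then $\{x\}\trianglelefteq_3\m a$, witnessed by $m$, forces $m(a,a,b)$ to be $b$ and $m(a,b,b)$ to be $a$. This would make $\{a,b\}$ closed under $m$, which generates the clone, again a contradiction.
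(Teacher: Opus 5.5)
Your argument fails at the transfer step, and the failure is structural rather than technical.

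Proposition~\ref{composition}~(4) turns $\{x\}\trianglelefteq_3\m d$ into $\{x\}\trianglelefteq_3\m a$ only if $D\trianglelefteq_3\m a$, and in this subsection that is false. Because $\m a/\alpha$ is term-equivalent to $\mathbb{Z}_2^{\mathrm{aff}}$, the pair $\{a,b\}$ is a weak affine edge witnessed by $\alpha$. So the term $m$ of Theorem~\ref{compositionactslike} acts as $x-y+z$ on $\m a/\alpha$, which gives $m(x,x,b)=b\notin D$ and $m(b,b,x)\in D$ for every $x\in D$. Since $m$ witnesses every $3$-absorption of $\m a$, none of $D$, the singletons $\{x\}\subseteq D$, or $\{b\}$ is $3$-absorbing in $\m a$. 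This blocks your use of (4), and it also blocks your use of (3) to make $\{x,b\}$ a subuniverse. More generally, an idempotent affine operation never absorbs a proper subset, so absorption cannot cross $\alpha$-classes here.

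You have carried over the mechanism of the majority-quotient case in Subsection~\ref{s:maj}, where $C\trianglelefteq_3\m a$ and $\{b\}\trianglelefteq_3\m a$ do hold. Your fallback assumes the same absorption and is internally inconsistent. If $\{a\}\trianglelefteq_3\m a$ were witnessed by $m$, it would force $m(a,a,b)=a$, not $b$. The clash with $m(a,a,b)=b$ therefore refutes the absorption you assumed, not the existence of $\m d$.

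What you are missing is an argument that uses the majority structure of $\m d$ \emph{inside} $D$, via the relation $S$, rather than through absorption into $\m a$. The paper proceeds in two steps.
\begin{enumerate}
\item It shows that $\{b,c\}$ and $\{b,d\}$ cannot be subuniverses, since either would collapse $\mathrm{Sg}\{a,b\}$ to a three-element set. Hence $\{b,c,d\}$ is not a subuniverse either, and $\mathrm{Sg}\{b,x\}=A$ for every $x\in D$.
\item It works in $S=\mathrm{Sg}_{\m a^3}\{(a,b,b),(b,a,b),(b,b,a)\}$. Apply $m$ to the three generators, and use $m(x,y,y)=m(y,y,x)$, the symmetry of $S$, and the fact that $m$ is majority on pairs in $D$. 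This yields $(\bullet,\bullet,\bullet)\in S$ with $\bullet=m(a,b,b)\in D$. If $\bullet\neq a$, take a binary term $t$ with $t(b,\bullet)=a$ and apply it to $(\bullet,b,b)\in S$ and $(\bullet,\bullet,\bullet)$; this gives $(\bullet,a,a)\in S$. Majority on $\{a,\bullet\}$ then gives $(a,a,a)\in S$, contradicting Claim~\ref{aaaclaim}.
\end{enumerate}
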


\begin{proof}
We shall first prove that $\mathrm{Sg}\{b,c\}=\{b,c\}$ (resp.~$\mathrm{Sg}\{b,d\}=\{b,d\}$) implies $\mathrm{Sg}\{a,b\}=\{a,b,c\}$ (resp.~$\mathrm{Sg}\{a,b\}=\{a,b,d\}$). Suppose $\mathrm{Sg}\{b,c\}=\{b,c\}$ and let $t(a,b)=c$ (thus, $t(b,a)=b$) for some binary term $t$. Define the term $m'$ by $$m'(x,y,z)=m(m(x,t(x,y),t(x,y)),m(y,t(y,z),t(y,z)),m(z,t(z,x),t(z,x))).$$ Then $m'(a,b,b)=m(m(a,c,c),m(b,b,b),m(b,b,b))=m(c,b,b)=c$, and similarly, $m'(b,a,b)=m'(b,b,a)=c$. Of course, $m'$ acts as a minority operation on $\{b,c\}$, $m'(b,a,a),m'(a,b,a),m'(a,a,b)\in b/\alpha=\{b\}$ and for the same reason $m'(x,y,z)=b$ when $\{x,y,z\}=\{a,b,c\}$. Hence, $\mathrm{Sg}\{a,b\}=\{a,b,c\}$, a contradiction. Since $\mathrm{Sg}\{b,c\}=\{b,c\}$ and $\mathrm{Sg}\{b,d\}=\{b,d\}$ are impossible, $\{b,c,d\}$ also cannot be a subuniverse, since even if it were nonconservative, Table~\ref{table:1} shows that either $\{b,c\}$ or $\{b,d\}$ would be a subuniverse. Hence, $\mathrm{Sg}\{a,b\}=\mathrm{Sg}\{b,c\}=\mathrm{Sg}\{b,d\}=A$. Now we observe the relation $S$. As before, let $(\bullet,\ast,\bullet)\in S$ be the result of applying $m$ to a generating triple of $S$. From $(\bullet,\bullet,\ast),(\bullet,\ast,\bullet),(\ast,\bullet,\bullet)\in S$ we get $(\bullet,\bullet,\bullet)\in S$, where $\bullet=m(a,b,b)=m(b,b,a)\in D$. Assume that $\bullet\neq a$, as otherwise we have a contradiction. Since $a\in\mathrm{Sg}\{b,\bullet\}$, then $t(b,\bullet)=a$ for some binary term $t$. We further have $(\bullet,a,a)\in S$ after applying $t$ onto $(\bullet,\bullet,\bullet),(\bullet,b,b)\in S$. Finally, we apply $m$ onto $(\bullet,a,a),(a,\bullet,a),(a,a,\bullet)\in S$, so $(a,a,a)\in S$, a contradiction. This concludes the proof.
\end{proof}

\subsubsection{Equal size classes $\mathbb{Z}^{\text{aff}}_2$ quotient}

\begin{exmp}[Example 5.17 of \cite{dreamteam}]\label{example5.17}
   Let $\m A = (\{a, b, c, d\}; p)$, where $p$ is a Mal’cev operation with the following properties. The operation $p$ commutes with the permutations $\sigma = (a\; c)$ and $\tau = (b\; d)$. The polynomials
$+_a = p(\cdot, a, \cdot)$, $+_b = p(\cdot, b, \cdot)$ define abelian groups:
\begin{table}[h!]
\begin{subtable}{.45\linewidth}
\centering
 \begin{tabular}{c|c c c c}

    $+_a$ & $a$ & $b$& $c$  & $d$\\
\hline

    $a$ & $a$ & $b$ & $c$ & $d$  \\

	$b$ & $b$ & $c$ & $d$ & $a$ \\
	
	$c$ & $c$ & $d$ & $a$ & $b$\\
	
	$d$ & $d$ & $a$ & $b$ & $c$\\

\end{tabular}  

\end{subtable}
\begin{subtable}{.45\linewidth}
\centering
 \begin{tabular}{c|c c c c}

    $+_b$ & $a$ & $b$& $c$  & $d$\\
\hline

    $a$ & $b$ & $a$ & $d$ & $c$  \\

	$b$ & $a$ & $b$ & $c$ & $d$ \\
	
	$c$ & $d$ & $c$ & $b$ & $a$\\
	
	$d$ & $c$ & $d$ & $a$ & $b$\\

\end{tabular}  

\end{subtable}
\caption{Tables of operations from Example \ref{example5.17}}
\end{table}

Then $\m a$ is a minimal Taylor algebra.
\end{exmp}

Let $\alpha=\{\{a,c\},\{b,d\}\}$ be a congruence such that $\m a/\alpha$ is term equivalent to $\mathbb{Z}_2^\mathrm{aff}$. Denote by $g$ a ternary cyclic term of $\m a$, which must exist due to Proposition~\ref{composition}.

\begin{enumerate}[wide, labelindent=0pt]
    \item[\textsc{\textbf{Case 1.}}] $\{a,c\}$ is a two-element semilattice.
\begin{claim}\label{Cl62}
$(a,c)$ is a semilattice edge, and $\{b,c,d\}$ is a subalgebra.
\end{claim}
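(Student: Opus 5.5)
The plan is to use the symmetry between $a$ and $c$ inside the $\alpha$-class $\{a,c\}$, together with Theorem~\ref{singletoncenter} and Proposition~\ref{composition}~(3).

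\textbf{Orientation of the edge.} Since $\{a,c\}$ is a two-element semilattice, either $(a,c)$ or $(c,a)$ is a semilattice edge. Suppose $(c,a)$ is the edge, so $a$ is the absorbing element. I would show that $a$ then has no outgoing semilattice edge and no weak affine edge.
\begin{itemize}
\item Within $\{a,c\}$ this is clear.
\item Toward $b$ or $d$, consider any two-element subalgebra or weak edge $\{a,x\}$ with $x\in\{b,d\}$. It maps onto $\m a/\alpha\cong\mathbb{Z}_2^{\mathrm{aff}}$, so a semilattice edge $\{a,x\}$ would force the quotient to carry a semilattice, which is impossible.
\item Weak affine edges out of $a$ are the problematic part. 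A weak affine edge $\{a,x\}$ with $x\in\{b,d\}$ is plausible because the quotient is affine.
\end{itemize}

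\textbf{Fallback on the dominant coordinate.} So this route needs care, and I would rather argue as in the majority-quotient claim. If $a$ absorbs $c$, pick a binary term $t$ with $t(a,b)=c$ and first coordinate dominant in the appropriate sense. Lemma~\ref{dominant} uses $3$-absorbing sets, so I would instead use $\alpha$. Since $t(a,b)\in\{a,c\}$, the term $t$ acts as the first projection on $\m a/\alpha$. Then $t(c,a)\in\{a,c\}$ and $t(a,c)=c$. Combining with $m$ acting as $\wedge$ on $\{a,c\}$ toward $a$, the term $t'(x,y)=m(t(x,y),x,x)$ gives $t'(a,b)=m(c,a,a)=a$. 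I then aim to derive that $\{a,b\}$ or $\{a,b,\cdot\}$ is closed under a cyclic term, contradicting $\mathrm{Sg}\{a,b\}=A$. Concretely, I would show that $c$ cannot be reached from $a,b$ using only values where $a$ is absorbing; this is the step I expect to be delicate.

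\textbf{Second half.} For the statement that $\{b,c,d\}$ is a subalgebra, note that with $(a,c)$ a semilattice edge, $c$ has no outgoing semilattice edge inside $\{a,c\}$. I would show that $\{c\}\trianglelefteq_3$ holds in a suitable sense via Theorem~\ref{singletoncenter}, or more directly that $\{c\}$ and $\{b,d\}$ are both $3$-absorbing. The latter is not automatic for an affine quotient class. So instead I would verify closure of $\{b,c,d\}$ under the cyclic term $g$, using three facts:
\begin{itemize}
\item $g$ maps into the class $\{b,d\}$ or $\{a,c\}$ according to the $\mathbb{Z}_2$ parity;
\item triples with two entries from $\{b,d\}$ and one equal to $c$ land in $\{a,c\}$;
\item one must then show the result is $c$, not $a$, using $t(a,c)=t(c,a)=c$ to compose and replace $g$ by $g'(x,y,z)=g(t(x,\cdot),\dots)$ as in Lemma~\ref{triangle}.
\end{itemize}
Once $\{b,c,d\}$ is closed under a cyclic term, Proposition~\ref{subuniverse} finishes the argument. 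The main obstacle is excluding the value $a$ in these closure computations.
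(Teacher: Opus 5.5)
There is a genuine gap. Both halves of your plan stop exactly at the step that carries the proof.

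In the orientation part you never construct the term that would keep $c$ out of $\mathrm{Sg}\{a,b\}$. Moreover, nothing forces $t(a,c)=c$: if $a$ is absorbing, a binary term restricted to $\{a,c\}$ is a projection or the meet, and only the second projection gives $c$. And $t'(a,b)=a$ on its own yields no contradiction.

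In the second part, the Lemma~\ref{triangle}-style modification cannot help. Every binary term acts as a projection on $\m a/\alpha\cong\mathbb{Z}_2^{\mathrm{aff}}$, so each nested binary chain lies in the $\alpha$-class of one of its inputs. Hence on a tuple such as $(c,b,b)$ the outer cyclic term again receives one argument from $\{a,c\}$ and two from $\{b,d\}$, which is precisely the uncontrolled situation you began with. Lemma~\ref{triangle} worked because of a binary absorption $B\trianglelefteq_2\m a$. That is unavailable here, since it would put you back in the semilattice-quotient case.

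The missing idea is to use the ternary (minority) behaviour of $g$ on $\m a/\alpha$ to manufacture elements of $\{a,c\}$, and then let the semilattice structure of $\{a,c\}$ decide the value. This settles both assertions at once.

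Let $v$ be the absorbing element of $\{a,c\}$, so $g$ acts there as $x\wedge y\wedge z$. Put $t(x,y,z)=g(x,g(x,y,y),g(x,y,z))$ and $g'(x,y,z)=g(t(x,y,z),t(y,z,x),t(z,x,y))$.
\begin{enumerate}
\item Take $u,w\in\{b,d\}$. Parity modulo $\alpha$ gives $g(v,u,u),g(v,u,w)\in\{a,c\}$, hence $t(v,u,w)=v$.
\item Parity also gives $t(u,w,v),t(w,v,u)\in\{a,c\}$, so $g'(v,u,w)=v$.
\item Tuples with no entry or two entries equal to $v$ land in $\{b,d\}$ by parity, and $g'(v,v,v)=v$.
\end{enumerate}
So $\{b,d,v\}$ is closed under the cyclic term $g'$, and is a subuniverse by Proposition~\ref{subuniverse}. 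If $v=a$, this subuniverse would contain $a,b$ but not $c$, contradicting $\mathrm{Sg}\{a,b\}=A$. Hence $v=c$ and $\{b,c,d\}\leq\m a$.

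This is the paper's argument. It makes your separate orientation attempts (via Theorem~\ref{singletoncenter} or dominant coordinates) unnecessary.
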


\begin{proof}
 Suppose that $(u,v)$ is a semilattice edge, where $\{u,v\}=\{a,c\}$. Let the term $t$ be defined by $t(x,y,z)=g(x,g(x,y,y),g(x,y,z))$ and let $g'(x,y,z):=g(t(x,y,z),t(y,z,x),t(z,x,y))$. Therefore,
 $$
t\left(
\begin{bmatrix}
v \\
b \\
b
\end{bmatrix}\!,\!
\begin{bmatrix}
b \\
v \\
b
\end{bmatrix}\!,\!
\begin{bmatrix}
b \\
b \\
v
\end{bmatrix}
\right)=g\left(
\begin{bmatrix}
v \\
b \\
b
\end{bmatrix}\!,\!
\begin{bmatrix}
g(v,b,b) \\
g(b,v,v) \\
b
\end{bmatrix}\!,\!
\begin{bmatrix}
g(v,b,b) \\
g(v,b,b)\\
g(v,b,b)
\end{bmatrix}
\right)=
\begin{bmatrix}
v \\
\circ \\
\ast
\end{bmatrix},
$$ where $\circ,\ast\in \{a,c\}$. This implies $g'(v,b,b)=v$ and, similarly, $g'(v,d,d)=v$. By analogous computation, we derive $g'(v,b,d)=g'(v,d,b)=v$, so $\{b,v,d\}$ is closed under the cyclic term $g'$. Thus $v$ can't be equal to $a$, i.e. $v=c$, which concludes the proof.
\end{proof}

\begin{claim}\label{Cl63}
It may be assumed that $\{a,d\}$ is a subalgebra.
\end{claim}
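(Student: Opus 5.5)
By Claim~\ref{Cl62}, $(a,c)$ is a semilattice edge and $\{b,c,d\}$ is a subalgebra. The plan is to split according to the relative position of $a$ and $d$. Since $\alpha=\{\{a,c\},\{b,d\}\}$ is a congruence with affine quotient, $\mathrm{Sg}\{a,d\}$ is a union of blocks that meets both $\alpha$-classes. So either $\mathrm{Sg}\{a,d\}=\{a,d\}$, or $\mathrm{Sg}\{a,d\}=\{a,c,d\}$, or $\mathrm{Sg}\{a,d\}=A$. The statement ``it may be assumed'' means we must show that in the two bad cases we can rename elements, or fall back on a case already treated, so that the good case $\{a,d\}\le\m a$ holds.

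\textbf{Case $\mathrm{Sg}\{a,d\}=A$.} Then $b\in\mathrm{Sg}\{a,d\}$, and $\{b,d\}$ is a block of $\alpha$. The symmetry of the setup suggests swapping the roles of $b$ and $d$: the pair $\{a,d\}$ also generates $\m a$, and all hypotheses used so far ($\alpha$, the $\mathbb{Z}_2^{\mathrm{aff}}$ quotient, $\{a,c\}$ a semilattice) are invariant under the transposition $(b\;d)$. So we may replace $b$ by $d$ as the second generator. If $\{a,b\}$ were a subuniverse we would be done after the swap. Otherwise both $\mathrm{Sg}\{a,b\}$ and $\mathrm{Sg}\{a,d\}$ equal $A$, and this must be refuted. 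I would apply Claim~\ref{Cl62} in its symmetric form to get that $\{b,c,d\}$ is a subalgebra with $c$ $3$-absorbing, since $(a,c)$ is a semilattice edge and $\{c\}\trianglelefteq_3\{a,c\}$. Then I would run the usual cyclic-term trick to show that $\{a,c,x\}$ is closed for $x\in\{b,d\}$. Take $g'(x,y,z)=g(t(x,y,z),t(y,z,x),t(z,x,y))$ with $t(x,y,z)=g(x,g(x,y,y),g(x,y,z))$, exactly as in the proof of Claim~\ref{Cl62}. The aim is to show that one of $\{a,b,c\}$ or $\{a,c,d\}$ is closed under a cyclic term, which contradicts $\mathrm{Sg}\{a,b\}=A$ via Proposition~\ref{subuniverse}.

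\textbf{Case $\mathrm{Sg}\{a,d\}=\{a,c,d\}$.} This is a three-element minimal Taylor algebra containing the semilattice $\{a,c\}$ with absorbing element $c$, and its quotient by $\{\{a,c\},\{d\}\}$ is $\mathbb{Z}_2^{\mathrm{aff}}$. Scanning Table~\ref{table:1}, the only candidate is $\m t_2^{\n}$ with $d\mapsto 2$, $c\mapsto 0$, $a\mapsto 1$, or a relabelling of it; in $\m t_2^{\n}$ the pair $\{0,1\}$ is affine, not a semilattice. A conservative algebra would make $\{a,d\}$ a subuniverse, contradicting the case assumption. So I expect the table check to rule this case out almost immediately. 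The remaining candidates, $\m t_1^{\n}$ (which has a majority factor) and $\m t_4^{\n}$ (which has no affine factor), are excluded because their quotients have the wrong type.

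\textbf{Main obstacle.} The hard step is the case $\mathrm{Sg}\{a,d\}=A$. Here one must either justify the swap $b\leftrightarrow d$ cleanly or derive a contradiction by building a cyclic term that preserves a three-element set. I would first try to show $g'(a,a,d)\in\{a,c\}$ and $g'(d,d,a)\in\{b,d\}$, so that $\{a,c,d\}$ becomes closed. If that fails, I would fall back on the swap together with the symmetric claim for $\{a,b\}$.
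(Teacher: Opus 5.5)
Your proposal has a genuine gap. You try to eliminate the cases $\{a,d\}\not\le\m a$ by contradiction or relabelling, but these cases are not contradictory. The phrase ``it may be assumed'' has to be justified by reducing to the already classified situation of a two-element semilattice quotient (Subsection~\ref{s:semi}).

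\textbf{The case $\mathrm{Sg}\{a,d\}=\{a,c,d\}$.} Your table check here is wrong, because you placed $\{a,c\}$ on $\{0,1\}$. Instead, $\m t_2^{\n}$ with $a\mapsto 2$, $c\mapsto 0$, $d\mapsto 1$ satisfies every constraint: $\{0,2\}$ is a semilattice with $0$ absorbing, $\{\{0,2\},\{1\}\}$ is a congruence with quotient $\mathbb{Z}_2^{\mathrm{aff}}$, and $\mathrm{Sg}\{1,2\}=\{0,1,2\}$. This configuration actually occurs. Take $\m t_{4,2}$ with $a\mapsto 3$, $b\mapsto 2$, $c\mapsto 1$, $d\mapsto 0$. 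There $\{\{a,c\},\{b,d\}\}$ is a congruence with affine quotient, $c$ absorbs $a$, $\mathrm{Sg}\{a,b\}=A$, and $\mathrm{Sg}\{a,d\}=\{a,c,d\}$.

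\textbf{The case $\mathrm{Sg}\{a,d\}=A$.} Here three things go wrong.
\begin{itemize}
\item The swap $b\leftrightarrow d$ changes nothing, since both pairs generate $A$.
\item $\{c\}\trianglelefteq_3\{a,c\}$ does not give $\{c\}\trianglelefteq_3\m a$.
\item Your targets $g'(a,a,d)\in\{a,c\}$ and $g'(d,d,a)\in\{b,d\}$ are impossible. Every ternary cyclic term acts as $x+y+z$ on $\m a/\alpha$, which forces $g'(a,a,d)\in\{b,d\}$ and $g'(d,d,a)\in\{a,c\}$.
\end{itemize}
Also, $\mathrm{Sg}\{a,d\}$ need not be a union of $\alpha$-classes. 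The option $\mathrm{Sg}\{a,d\}=\{a,b,d\}$ must be excluded separately; Table~\ref{table:1} does this, since that option would make $\{a,b\}$ a subuniverse.

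\textbf{The missing idea.} Handle both bad cases at once by binary absorption, which is what the paper does.
\begin{enumerate}
\item If $\{a,d\}$ is not a subuniverse, then $c\in\mathrm{Sg}\{a,d\}$. So $t_1(a,d)=c$ for some binary term $t_1$.
\item Since $\mathrm{Sg}\{a,b\}=A$, there is a binary term $t_2$ with $t_2(a,b)=c$. Both $t_1$ and $t_2$ act as the first projection on $\m a/\alpha$.
\item Combine them into one binary term. The paper uses $g(t_1(x,y),t_1(x,y),t_2(x,y))$; the variant $g(t_1(x,y),t_2(x,y),g(x,y,y))$ makes the check on $\{a,c\}$ automatic.
\item This term sends every pair with an entry in $\{b,c,d\}$ into $\{b,c,d\}$. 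The reasons: that set is a subuniverse by Claim~\ref{Cl62}, $\{b,d\}$ is an $\alpha$-class, and on $\{a,c\}$ the term $g$ is $x\wedge y\wedge z$ with $c$ absorbing. So one argument equal to $c$, with the rest in $\{a,c\}$, forces the value $c$.
\item Hence $\{b,c,d\}\trianglelefteq_2\m a$. By Proposition~\ref{bradybinaryprop}, $\m a$ then has a two-element semilattice quotient, which is the case already classified in Subsection~\ref{s:semi}.
\end{enumerate}
That reduction is exactly what ``it may be assumed'' means here.
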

\begin{proof}
Suppose that $\mathrm{Sg}\{a,d\}\neq \{a,d\}$. Then $c\in\mathrm{Sg}\{a,d\}$, so there exists a binary term $t_1$ such that $t_1(a,d)=c$ and $t_1(d,a)\in \{b,d\}$. Since $\mathrm{Sg}\{a,b\}=\m a$, we know that $t_2(a,b)=c$ and $t_2(b,a)\in\{b,d\}$ for some binary term $t_2$. Let $t_3$ be a binary term defined by $$t_3(x,y)=g(t_1(x,y),t_1(x,y),t_2(x,y)).$$ It is an easy exercise to verify that $t_3$ witnesses the absorption $\{b,c,d\}\trianglelefteq_2\m a$. Since this case with a semilattice quotient is already examined, we indeed may assume that $\{a,d\}$ is a subalgebra.
\end{proof}

\begin{claim}
$\{b,d\}$ is not an affine subalgebra.
\end{claim}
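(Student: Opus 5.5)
We argue by contradiction: assume $\{b,d\}$ is a $\mathbb{Z}_2^{\mathrm{aff}}$ subalgebra, in the setting of Case 1 with Claims~\ref{Cl62} and~\ref{Cl63} in force. Thus $(a,c)$ is a semilattice edge with absorbing element $c$, the set $\{b,c,d\}$ is a subuniverse, and $\{a,d\}$ is a subuniverse. The plan is to find the structure of $\{b,c,d\}$ and then build a cyclic term that preserves a proper subset containing $a$ and $b$.

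\textbf{Step 1: identify $\{b,c,d\}$.} By Theorem~\ref{singletoncenter}, $\{c\}\trianglelefteq_3 \m a$, since $c$ has no outgoing semilattice edge and no weak affine edge. The subuniverse $\{b,c,d\}$ has $\{b,d\}$ as an affine subalgebra, and $\alpha$ restricts to the quotient $\{\{c\},\{b,d\}\}$, which is $\mathbb{Z}_2^{\mathrm{aff}}$. Checking Tables~\ref{table:1}--\ref{table:4}, I expect this to be impossible unless $\{b,c\}$ and $\{c,d\}$ are subuniverses. The reason is that a nonconservative algebra with an affine two-element subalgebra and an affine quotient separating the third point does not occur: $\m t_3^{\n}$ has its affine quotient collapsing the majority pair, not the affine pair.

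So $\{b,c,d\}$ should be conservative, with $\{b,d\}$ affine and $\{c\}\trianglelefteq_3$. On the other hand, the affine quotient forces $g(c,c,x)\in\{b,d\}$ for $x\in\{b,d\}$, which contradicts $\{c\}$ being $3$-absorbing via Proposition~\ref{ternaryabscyclic}. Indeed, $g(c,c,b)$ has two entries in $\{c\}$, so it lies in $\{c\}$, yet its $\alpha$-class is $b/\alpha$.

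\textbf{Step 2: derive the contradiction.} This tension already shows that the quotient $\mathbb{Z}_2^{\mathrm{aff}}$ is incompatible with $\{c\}$ being ternary absorbing. Hence $(a,c)$ being a semilattice edge with $\{c\}\trianglelefteq_3\m a$ is only consistent if the $\alpha$-classes behave differently. I therefore suspect the real content lies in Claim~\ref{Cl62} combined with Theorem~\ref{singletoncenter}. That theorem requires the absence of weak affine edges at $c$, and $\{c,b\}$ may itself be a weak affine edge witnessed by $\alpha$. So Step 1 must be redone carefully. I would first determine, via Proposition~\ref{smedge}, whether $c$ has a weak affine edge to $b$ or $d$ witnessed through the quotient.

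\textbf{Step 3: fallback.} If that route fails, I would use the ternary relation $S$ generated by $(a,b,b),(b,a,b),(b,b,a)$. First I would show that $(x,y,y)\in S$ for the relevant $x,y$, using $\{a,d\}$ as a subuniverse and the affine structure on $\{b,d\}$, via minority-type applications of $g$. Then I would reach $(a,a,a)\in S$. As in Claim~\ref{aaaclaim}, the quotient automorphism then forces $\{a,b\}$ to be a $\mathbb{Z}_2^{\mathrm{aff}}$ subalgebra, contradicting $\mathrm{Sg}\{a,b\}=\m a$.

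The main obstacle is that $\{a,d\}$ may be a semilattice or a majority subalgebra. I would handle each possibility separately, composing $g$ with the binary term $g(x,y,y)$ to move between the tuples.
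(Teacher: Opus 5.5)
Your proposal does not give a proof, and its first route cannot work. In Step 1 you apply Theorem~\ref{singletoncenter} to $c$, but $c$ certainly has weak affine edges. Since $\m a/\alpha\cong\mathbb{Z}_2^{\mathrm{aff}}$, every pair $\{c,x\}$ with $x\in\{b,d\}$ is a weak affine edge, witnessed by $\alpha$ restricted to $\mathrm{Sg}\{c,x\}$. So $\{c\}$ is not $3$-absorbing, and your computation with $g(c,c,b)$ merely re-proves this. It never uses that $\{b,d\}$ is affine, so it gives no contradiction with the hypothesis.

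The part of Step 1 that survives is the table check showing $\{b,c,d\}$ is conservative, so $\{b,c\}$ and $\{c,d\}$ are subuniverses. That is exactly the paper's first step. Your stated ``main obstacle'' is not real. $\{a,d\}$ is a two-element subuniverse meeting both $\alpha$-classes, so $x\mapsto x/\alpha$ is an isomorphism onto $\m a/\alpha$. Hence $\{a,d\}$ is necessarily affine, as are $\{b,c\}$ and $\{c,d\}$.

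Step 3 does not go through as stated. Claim~\ref{aaaclaim} relies on $b/\alpha=\{b\}$, and Lemma~\ref{aaalemma} relies on an automorphism of $\m a$ swapping $a$ and $b$; neither is available. Here $b/\alpha=\{b,d\}$, so a term $w$ with $w(a,b,b)=w(b,a,b)=w(b,b,a)=a$ only yields $w(b,a,a),w(a,b,a),w(a,a,b)\in\{b,d\}$. No such automorphism exists under your hypothesis either: it would carry the semilattice $\{a,c\}$ onto a semilattice subalgebra $\{b,d\}$ or $\{b,c\}$, and both are affine. You also never actually derive $(a,a,a)\in S$.

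The missing ideas are these.
\begin{enumerate}
\item[(i)] $\{a,c,d\}$ is a subuniverse. This follows from the cyclic terms $g'(x,y,z)=g(g(x,y,y),g(y,z,z),g(z,x,x))$ and $g''(x,y,z)=g'(g'(x,x,y),g'(y,y,z),g'(z,z,x))$.
\item[(ii)] One may assume $g(a,a,b)=b$, and therefore $g(a,b,b)=c$. If $g(a,a,b)=d$, compose with $h(x,y,z)=g(x,g(x,y,y),g(x,y,z))$; this step needs (i) to control $g(a,c,d)$.
\item[(iii)] The cyclic term $g^+(x,y,z)=g(g(g(x,y,y),z,z),g(g(y,z,z),x,x),g(g(z,x,x),y,y))$ then preserves $\{a,b,c\}$. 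This uses the semilattice $\{a,c\}$ with $c$ absorbing and the affine $\{b,c\}$, for instance $g^+(b,a,a)=g(b,c,c)=b$.
\end{enumerate}
Thus $d\notin\mathrm{Sg}\{a,b\}$. The contradiction comes from a three-element subuniverse, not from closing $\{a,b\}$.
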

\begin{proof}
For the sake of contradiction, let $\{b,d\}$ be an affine subalgebra. We know that $\{b,c,d\}$ is a subalgebra, and since it has an affine quotient, we conclude it must be a conservative one. Hence, $\{b,c\}$ and $\{c,d\}$ are subalgebras. 

Observe now that $\{a,c,d\}$ is also a subalgebra. Indeed, if we define a cyclic ternary term $g'$ by $$g'(x,y,z)=g(g(x,y,y),g(y,z,z),g(z,x,x)),$$ we have
$$
\begin{gathered}
g'(a,a,d)=g'(c,c,d)=d, \text{and  } g'(a,c,d)=g(c,c,d)=d, \text{and  }\\
g'(a,d,c)=g(a,d,c).
\end{gathered}
$$
Since $g(a,d,c)$ is not determined, we define a new cyclic term $g''$ by $$g''(x,y,z)=g'(g'(x,x,y),g'(y,y,z),g'(z,z,x)),$$
and now it is easy to verify that the set $\{a,c,d\}$ is closed under $g''$.

The next step in the proof is to notice that $g(a,a,b)=b$ may be assumed as well. Suppose that $g(a,a,b)=d$ holds and define the ternary term $h$ by $h(x,y,z)=g(x,g(x,y,y),g(x,y,z))$. Then 
$$
h\left(
\begin{bmatrix}
b \\
a \\
a
\end{bmatrix}\!,\!
\begin{bmatrix}
a \\
b \\
a
\end{bmatrix}\!,\!
\begin{bmatrix}
a \\
a \\
b
\end{bmatrix}
\right)=g\left(
\begin{bmatrix}
b \\
a \\
a
\end{bmatrix}\!,\!
\begin{bmatrix}
d \\
a/c \\
a
\end{bmatrix}\!,\!
\begin{bmatrix}
d \\
d \\
d
\end{bmatrix}
\right)=
\begin{bmatrix}
b \\
d \\
d
\end{bmatrix},
$$ so, if we define $g^*$ by $g^*(x,y,z)=g(h(x,y,z),h(y,z,x),h(z,x,y))$, we get $g^*(a,a,b)=b$. The cyclic term $g^*$ can be used instead of $g$; thus, we can assume $g(a,a,b)=b$. Of course, $g(a,a,b)=b$ implies $g(a,b,b)=c$.

We shall now derive a contradiction by showing $d\notin\mathrm{Sg}\{a,b\}$. Define the cyclic ternary term $g^+$ by $$g^+(x,y,z)=g(g(g(x,y,y),z,z),g(g(y,z,z),x,x),g(g(z,x,x),y,y)).$$ Since $\{b,c\}$ is a subalgebra, we need to compute only $g^+(b,a,a)$, $g^+(b,a,c)$ and $g^+(b,c,a)$. Hence,
\begin{align*}
   g^+(b,a,a)&=g(g(g(b,a,a),a,a),g(g(a,a,a),b,b),g(g(a,b,b),a,a))\\
            &=g(g(b,a,a),g(a,b,b),g(c,a,a))\\
            &=g(b,c,c)=b,
\end{align*}
and similarly $g^+(b,a,c)=g^+(b,c,a)=b$, a contradiction.
\end{proof}
\begin{claim}
$\{b,d\}$ cannot be a majority subalgebra; therefore, $(b,d)$ is a semilattice edge.
\end{claim}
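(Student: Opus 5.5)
Suppose, for a contradiction, that $\{b,d\}$ is a two-element majority subalgebra. The aim is to show that $\{a,b\}$ is closed under a cyclic ternary term, which contradicts $\mathrm{Sg}\{a,b\}=\m a$. We use the setting of Case 1: $(a,c)$ is a semilattice edge, $\{b,c,d\}$ is a subalgebra (Claim~\ref{Cl62}), and $\{a,d\}$ is a subalgebra (Claim~\ref{Cl63}).

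First we fix the structure of $\{b,c,d\}$. It has the affine quotient $\{\{c\},\{b,d\}\}$ inherited from $\alpha$. By \Cref{table:1}, the nonconservative three-element algebras with such a quotient are $\m t_2^{\n}$ and $\m t_3^{\n}$. Neither of these has a majority subalgebra inside the block containing two elements, so $\{b,c,d\}$ must be conservative. Then $\{b,c\}$ and $\{c,d\}$ are two-element affine subalgebras, since they are the preimages of the $\mathbb{Z}_2^{\mathrm{aff}}$ quotient. Similarly, $\{a,d\}$ is a two-element subalgebra lying over the affine quotient, so it is affine.

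Next we determine $\{c\}$-absorption. The element $c$ has no outgoing semilattice edge inside $\{a,c\}$, but it does have affine edges to $b$ and $d$. So \Cref{singletoncenter} does not apply directly. Instead, we work with $\{b,d\}$. There are no semilattice edges leaving $b$ or $d$: the edges between the two $\alpha$-classes are affine, and $\{b,d\}$ is majority. So for each of $b$ and $d$, the only edges are the majority edge $\{b,d\}$ and affine edges to $a$ or $c$. This means that $b$ and $d$ are symmetric in the relevant sense.

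The plan is to mimic the proof that $\{b,d\}$ is not affine. We aim for a cyclic term $g^*$ with $g^*(a,a,b)=b$, and then build a term such as $g^+$ for which $g^+(b,a,a)=b$ and the values on mixed triples also stay in $\{a,b\}$.

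Concretely, we first use $h(x,y,z)=g(x,g(x,y,y),g(x,y,z))$ and the symmetrization $g(h(x,y,z),h(y,z,x),h(z,x,y))$. The goal is to replace $g(a,a,b)=d$ by $b$, now using that $g$ acts as majority on $\{b,d\}$, so that $g(b,d,d)=d$ and $g(b,b,d)=b$. Then $g(a,b,b)\in\{a,c\}$. We would like to force $g(a,b,b)=a$, using the semilattice edge $(a,c)$ to move $c$ to $a$ or vice versa through terms like $g(x,x,g(\cdot))$. If this yields $g'(a,a,b)=b$ and $g'(a,b,b)=a$, then $\{a,b\}$ is closed under a cyclic term, contradicting Proposition~\ref{subuniverse}.

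The main obstacle is that the semilattice edge points from $a$ to $c$. Binary terms therefore push values toward $c$ rather than $a$, which tends to produce $g(a,b,b)=c$. If this happens, we switch strategy and use the relation $S$ generated by $(a,a,b),(a,b,a),(b,a,a)$. Applying the majority on $\{b,d\}$ and the semilattice on $\{a,c\}$, we try to derive $(b,b,b)\in S$. This would give a term acting as the minority on $\{a,b\}$, so $\{a,b\}$ would be a $\mathbb{Z}_2^{\mathrm{aff}}$ subalgebra, again a contradiction.

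Once the majority case is excluded, $\{b,d\}$ is neither affine (by the previous claim) nor majority. Hence it is a two-element semilattice. By the argument of Claim~\ref{Cl62} applied with the roles of the classes swapped, the edge is directed so that $(b,d)$ is a semilattice edge.
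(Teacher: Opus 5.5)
There is a genuine gap. Your structural reduction is wrong, and the central step is never carried out.

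\textbf{The reduction of $\{b,c,d\}$.} Among the nonconservative three-element algebras, $\m t_3^{\n}$ has the congruence $\{\{0,1\},\{2\}\}$ with quotient $\mathbb{Z}_2^{\mathrm{aff}}$, and $\{0,1\}$ is a majority subalgebra. So $\{b,c,d\}\cong\m t_3^{\n}$, with $\{b,d\}\mapsto\{0,1\}$ and $c\mapsto 2$, is entirely possible. In that case only one of $\{b,c\},\{c,d\}$ is a subuniverse. This is exactly the harder subcase the paper must handle, and your conservativity claim discards it.

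\textbf{The main argument.} The one concrete tool you name does not do what you want. With $h(x,y,z)=g(x,g(x,y,y),g(x,y,z))$ and $g(a,a,b)=d$, you get $h(b,a,a)=g(b,d,d)$. In the proof that $\{b,d\}$ is not affine this equals $b$ only because $g$ is the minority there. Under your hypothesis $g$ is the majority on $\{b,d\}$, so $h(b,a,a)=d$ and nothing is gained.

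The fallback is unsupported as well, and you never actually derive $(b,b,b)\in S$. Even granting it, $(b,b,b)\in S$ gives a term $w$ with $w(a,a,b)=w(a,b,a)=w(b,a,a)=b$. The $\mathbb{Z}_2^{\mathrm{aff}}$ quotient then only yields $w(b,b,a)\in\{a,c\}$, not $w(b,b,a)=a$. Lemma~\ref{aaalemma} needed an automorphism swapping $a$ and $b$, and Claim~\ref{aaaclaim} needed $\{b\}$ to be an $\alpha$-class; neither is available here.

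\textbf{The missing idea.} The paper first proves a binary implication: for every binary term $t$, $t(b,a)=d$ forces $t(a,b)=c$.
\begin{itemize}
\item Suppose instead $t(a,b)=a$. Since $t$ acts as the first projection on the quotient, it does so on the affine subalgebra $\{a,d\}$, so $t(d,a)=d$; this is where Claim~\ref{Cl63} is used.
\item Replacing $t$ by $t(t(x,y),t(y,x))$ if $t(d,b)=b$, the cyclic term $g(t(t(x,y),z),t(t(y,z),x),t(t(z,x),y))$ then preserves $\{a,b,d\}$, a contradiction.
\end{itemize}

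The paper then splits on the structure of $\{b,c,d\}$.
\begin{itemize}
\item If $\{b,c\}$ is a subalgebra, take $t$ with $t(b,a)=d$, hence $t(a,b)=c$. Then $t'(x,y)=g(x,x,t(x,y))$ gives $t'(a,b)=g(a,a,c)=c$ and $t'(b,a)=g(b,b,d)=b$. Here the majority is exploited through the slot $g(x,x,\cdot)$, not $g(x,\cdot,\cdot)$. The cyclic term built from $t'$ preserves $\{a,b,c\}$.
\item If $\{b,c,d\}\cong\m t_3^{\n}$, one obtains a cyclic $g'$ with $g'(b,a,a)=d$ and $g'(a,b,b)=c$. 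Then $t_1(x,y)=g'(x,g'(y,x,x),g'(y,x,x))$ satisfies $t_1(a,b)=a$ and $t_1(b,a)=g'(b,c,c)=d$, contradicting the implication.
\end{itemize}

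So the contradiction comes from a proper three-element subuniverse containing $a$ and $b$, or from the implication itself, not from closing $\{a,b\}$. Your last step, orienting the semilattice edge by dualizing Claim~\ref{Cl62}, is fine.
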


\begin{proof}
For the contradiction, assume that $\{b,d\}$ is a majority subalgebra.
Since $\{b,c,d\}$ is a subalgebra, then $\{b,c\}$ or $\{c,d\}$ is also a subalgebra.
As $d\in\mathrm{Sg}\{a,b\}$, we have $t(b,a)=d$ for some binary term $t$. Then $t(a,b)\in\{a,c\}$. Assume that $t(a,b)=a$ and define the cyclic ternary term $g'$ by $$g'(x,y,z)=g(t(t(x,y),z),t(t(y,z),x),t(t(z,x),y)).$$
Suppose first $t(d,b)=d$. We compute $g'(a,b,b)=g'(a,d,d)=g'(a,d,b)=a$, but $g'(a,b,d)=g(a,d,t(t(d,a),b))$. Since $t(b,a)=d$ and $\{a,d\}$ is subalgebra, we have $t(d,a)=d$. It follows $t(t(d,a),b)=d$, thus $g'(a,b,d)=a$. This means that $\{a,b,d\}$ is closed under $g'$, a contradiction. If $t(d,b)=b$, then a contradiction is obtained using $t'$ instead of $t$ where $t'$ is defined by $t'(x,y)=t(t(x,y),t(y,x))$. Hence, 
\begin{equation}\tag{$\ast$}\label{binaryeq}
t(b,a)=d\text{ implies }t(a,b)=c\text{ for any binary term }t.
\end{equation}

Suppose first that $\{b,c\}$ is a subalgebra; thus, it must be affine. Now let $t'(x,y):=g(x,x,t(x,y))$, so $t'(a,b)=c$ and $t'(b,a)=b$. Define the cyclic term $g^*$ by $$g^*(x,y,z)=g(t'(t'(x,y),z),t'(t'(y,z),x),t'(t'(z,x),y)).$$ We compute $g^*(b,a,a)= g^*(b,c,c)=g^*(b,a,c)=g^*(b,c,a)=b$, so $d\notin\mathrm{Sg}\{a,b\}$, a contradiction.

Consequently, from now on, we assume $\mathrm{Sg}\{c,d\}=\{c,d\}$ and $\mathrm{Sg}\{b,c\}=\{b,c,d\}$ which must be $\T^{\n}_3$, according to Table~\ref{table:1}, and thus any ternary cyclic term is uniquely determined on $\{b,c,d\}$. Let $t$ be a binary term such that $t(b,a)=d$, thus $t(a,b)=c$. Additionally, we have $t(c,b)=c$ because $\{b,c,d\}$ is a subalgebra. Let $g'$ be defined as in the first paragraph, so, $$g'(b,a,a)=g(t(d,a),t(a,b),t(c,a))=g(d,c,c)=d$$ follows and by \eqref{binaryeq} $g'(a,b,b)=c$. Define the binary term $t_1$ by $$t_1(x,y)=g'(x,g'(y,x,x),g'(y,x,x)).$$
Hence, $t_1(a,b)=a$ and $t_1(b,a)=g'(b,c,c)=d$ by the table of $\T^{\n}_3$. This contradicts \eqref{binaryeq}, concluding the proof.
\end{proof}

By dualizing Claims~\ref{Cl62} and \ref{Cl63}, $\{a,c,d\}$ and $\{b,c\}$ are subalgebras, and by intersecting we get that $\{c,d\}$ is one, too. Hence, both $\{b,c,d\}$ and $\{a,c,d\}$ are conservative. Moreover, $\{a,d\}$ and $\{b,c\}$ are affine subalgebras. 

\begin{claim}\label{automorphism claim}
The permutation which transposes $a$ with $b$, and also transposes $c$ with $d$, is an automorphism.
\end{claim}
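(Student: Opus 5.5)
The plan is to show that the relation $R_{ab}=\mathrm{Sg}_{\m a^2}\{(a,b),(b,a)\}$ is the graph of the permutation $\varphi=(a\;b)(c\;d)$. Every element of $R_{ab}$ has the form $(t(a,b),t(b,a))$ for a binary term $t$. So it suffices to prove, for every binary term $t$, that $t(b,a)=\varphi(t(a,b))$. Once this holds, $R_{ab}$ is a subdirect subuniverse of $\m a^2$ that is the graph of a bijection, and that bijection is therefore an automorphism.

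First we record what is known at this point. The sets $\{a,c\}$ and $\{b,d\}$ are semilattices with absorbing elements $c$ and $d$. The sets $\{a,d\}$, $\{b,c\}$ and $\{c,d\}$ are subuniverses. The subalgebras on $\{a,c,d\}$ and $\{b,c,d\}$ are conservative, with $\{a,d\}$ and $\{b,c\}$ affine. Since $\alpha$ gives a $\mathbb{Z}_2^{\mathrm{aff}}$ quotient, every binary term $t$ acts on $\m a/\alpha$ as a projection. If $t$ projects to the first coordinate, then $t(a,b)\in\{a,c\}$ and $t(b,a)\in\{b,d\}$. So only four pairs $(t(a,b),t(b,a))$ are possible, namely $(a,b),(a,d),(c,b),(c,d)$, and we must rule out $(a,d)$ and $(c,b)$. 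By the symmetry of the situation, which is invariant under swapping $a\leftrightarrow b$ and $c\leftrightarrow d$, it is enough to exclude $(a,d)\in R_{ab}$. The case where $t$ projects to the second coordinate reduces to the first via $t(y,x)$.

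Suppose, for contradiction, that $t(a,b)=a$ and $t(b,a)=d$. Following the proof of the analogous implication \eqref{binaryeq} in the previous claim, we would build the cyclic term
$$g'(x,y,z)=g(t(t(x,y),z),t(t(y,z),x),t(t(z,x),y))$$
and aim to show that $\{a,b,d\}$ is closed under $g'$. This would contradict $\mathrm{Sg}\{a,b\}=\m a$ via Proposition~\ref{subuniverse}.

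For this we need the values of $t$ on $\{a,b,d\}$. On $\{a,d\}$ (affine) and $\{b,d\}$ (semilattice toward $d$), $t$ is determined up to a few options by its $\alpha$-behaviour and by which coordinate is dominant. Because $t(b,a)=d$, the term $t$ is not a projection on $\{a,d\}$ in the orientation $(d,a)$, so $t(d,a)=d$ and $t(a,d)=a$. If the values on $\{b,d\}$ fall wrong, for example $t(d,b)=b$, we replace $t$ by $t(t(x,y),t(y,x))$ and adjust, exactly as was done before.

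The main obstacle is checking closure of $\{a,b,d\}$ under $g'$ on the mixed triples $(a,b,d)$ and $(a,d,b)$, since $g$ is not yet fully determined on those tuples. Our first attempt is to show that each inner argument collapses into the $\alpha$-class of $a$ restricted to $\{a\}$, or into $\{b,d\}$, which would force the output into $\{a,b,d\}$. If that fails, we would first pass to an iterate $g(g(x,x,y),g(y,y,z),g(z,z,x))$ to pin down these values, and then redo the computation. With $(a,d)$ excluded, the symmetric pair $(c,b)$ is excluded by the same argument, which gives $R_{ab}=\{(x,\varphi(x))\}$ and proves the claim.
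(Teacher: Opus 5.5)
Your proposal is correct and is essentially the paper's argument in mirror image. The paper rules out $t(a,b)=c,\ t(b,a)=b$ by showing that $\{a,b,c\}$ is closed under the same cyclic term $g'$. You rule out $t(a,b)=a,\ t(b,a)=d$ via closure of $\{a,b,d\}$. Each appeals to the $(a\;b)(c\;d)$ symmetry for the other case.

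The step you flag as the main obstacle goes through directly, with no further iteration. Once $t(d,b)=d$ (after your replacement by $t(t(x,y),t(y,x))$ if necessary), every triple in $\{a,b,d\}^3$ with exactly one $a$ is sent by the inner terms to a rotation of $(a,d,d)$. Then $g(a,d,d)=a$, because $g$ is the minority operation on $\{a,d\}$. Note that merely landing in $\{a\}\cup\{b,d\}$, as your heuristic suggests, would not suffice: $g(a,b,b)$, $g(a,b,d)$ and $g(a,d,b)$ could a priori equal $c$.
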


\begin{proof}
It suffices to prove that if $t$ is any binary term, then $t(a,b)=c$ if and only if $t(b,a)=d$. Indeed, if the equivalence is not true, and if, without loss of generality, $t(a,b)=c$ and $t(b,a)=b$, it can be easily verified that $\{a,b,c\}$ is closed under the cyclic term $g'$ defined by $$g'(x,y,z)=g(t(t(x,y),z),t(t(y,z),x),t(t(z,x),y)),$$ which is a contradiction.
\end{proof}

In particular, we have $g(a,b,b)=c$ and $g(a,a,b)=d$. Furthermore, the only undetermined values remaining are $g(a,b,c)$ and $g(a,c,b)$.

\begin{claim}
There exists a cyclic ternary term $g^*$ such that $\{a,b\}$ is closed under it, which contradicts the assumption that $\mathrm{Sg}\{a,b\}=\m a$.
\end{claim}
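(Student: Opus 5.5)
The plan is to exploit the fact that, by now, almost all of $g$ is pinned down. The values on two-element subsets of $\{a,b\}$ are known: $g(a,a,b)=d$ and $g(a,b,b)=c$, and hence $g(b,b,a)=c$ and $g(b,a,a)=d$ by the automorphism $\varphi=(a\,b)(c\,d)$ of Claim~\ref{automorphism claim}. The subalgebras $\{a,d\}$ and $\{b,c\}$ are affine. The pair $(a,c)$ is a semilattice edge with absorbing element $c$, and $(b,d)$ is one with absorbing element $d$. The pair $\{c,d\}$ is forced to be affine by the $\mathbb{Z}_2^{\mathrm{aff}}$ quotient modulo $\alpha$. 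Consequently $\{a,c,d\}$ and $\{b,c,d\}$ are conservative copies of $\m t_8^{\co}$, on which every ternary cyclic term is determined up to the values at the rainbow tuples. The only freedom left in $g$ is the pair $g(a,b,c),g(a,c,b)$, together with their $\varphi$-images $g(b,a,d),g(b,d,a)$. Working modulo $\alpha$, where every cyclic term acts as the minority, we know $g(a,b,c),g(a,c,b)\in\{b,d\}$.

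Because $\varphi$ is an automorphism and the terms we build are cyclic, it suffices to produce a cyclic term $g^*$ with $g^*(a,a,b)=b$. Then $g^*(b,b,a)=\varphi(g^*(a,a,b))=a$, so $\{a,b\}$ is closed under $g^*$. Proposition~\ref{subuniverse} then makes $\{a,b\}$ a subuniverse, contradicting $\mathrm{Sg}\{a,b\}=\m a$.

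The construction will take the usual shape
\[
g^*(x,y,z)=g(h(x,y,z),h(y,z,x),h(z,x,y)),
\]
where $h$ is a ternary term chosen so that the three rotations of $(a,a,b)$ are sent to values on which $g$ returns $b$. A natural target multiset is $\{b,c,c\}$, since $g(b,c,c)=b$ in the affine subalgebra $\{b,c\}$. Another is $\{b,b,x\}$ with $x\in\{b,d\}$ not forced back to $d$ by the semilattice $(b,d)$. The first candidates I will evaluate are built from $g(x,x,y)$, $g(x,y,y)$ and $g(x,y,z)$, for instance $h(x,y,z)=g(x,g(x,y,y),g(x,y,z))$ or nested variants $g(x,g(x,x,y),g(x,x,y))$. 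Each evaluation uses only the determined two-element values and the $\m t_8^{\co}$ tables on $\{a,c,d\}$ and $\{b,c,d\}$. Wherever the output would involve the unknown value $g(a,b,c)$, I will first replace $g$ by a cyclic term such as
\[
g'(x,y,z)=g(g(x,x,y),g(y,y,z),g(z,z,x)),
\]
for which $g'(a,b,c)=g(d,c,a)$ is already determined. This makes the whole table explicit before performing the final composition.

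The main obstacle is that the naive compositions tend to collapse back to $d$. For example, $h=g(x,x,y)$ yields $g(a,d,c)=d$, because the $\alpha$-minority and the absorption towards $c$ and $d$ keep pushing into the absorbing elements. The step that needs care is therefore finding an $h$ whose three rotated values land inside the affine pair $\{b,c\}$ rather than in $\{c,d\}$. My approach is to route through $\{b,c\}$ using $g(b,c,c)=b$ and $g(c,c,b)=b$. If no single layer works, I will iterate the standard self-composition $g\mapsto g'$ two or three times, as was done in the earlier claims. Throughout, I will track the two unknown values as parameters and check that each case, $d$ or $b$ for each of them, leads to $g^*(a,a,b)=b$.
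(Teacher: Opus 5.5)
There is a genuine gap. Your setup is right: the forced two-element values, the conservative structure of $\{a,c,d\}$ and $\{b,c,d\}$, and the reduction to a cyclic $g^*$ with $g^*(a,a,b)=b$ via $\varphi=(a\,b)(c\,d)$ are all correct. But the proposal stops exactly where the claim begins: it never exhibits the term, and the candidates and fallback you name do not work.

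Note first that the rainbow values on $\{a,c,d\}$ and $\{b,c,d\}$ are also forced by $\alpha$, namely $g(a,c,d)=g(a,d,c)=d$ and $g(b,c,d)=g(b,d,c)=c$. Your candidates then fail:
\begin{itemize}
\item with $h=g(x,g(x,y,y),g(x,y,z))$, the rotations of $(a,a,b)$ go to $(g(a,a,d),g(a,c,d),g(b,d,d))=(d,d,d)$;
\item with $h=g(x,g(x,x,y),g(x,x,y))$, they go back to $(a,a,b)$.
\end{itemize}
Either way $g^*(a,a,b)=d$. Iterating $g\mapsto g(g(x,y,y),g(y,z,z),g(z,x,x))$ or $g\mapsto g(g(x,x,y),g(y,y,z),g(z,z,x))$ cannot help either. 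At $(a,a,b)$ these give $g(a,c,d)=d$ and $g(a,d,c)=d$ for every cyclic $g$ in this configuration, so the values on $\{a,b\}$ never move. Your target $\{b,b,x\}$ only works for $x=b$ (since $g(b,b,d)=d$), and that is the goal itself.

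The missing idea is to use a three-element value that becomes pinned once you pass to $g'(x,y,z)=g(g(x,y,y),g(y,z,z),g(z,x,x))$. Namely, $g'(a,b,c)=g(c,b,c)=b$, or equivalently $g'(a,d,b)=g(a,d,d)=a$, independently of the unknowns $g(a,b,c),g(a,c,b)$. You treat $g(a,b,c)$ only as a nuisance to be eliminated, but it is the lever. What you need is an inner term sending the rotations of $(a,a,b)$ to a cyclic rotation of $(a,b,c)$.

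For example, take $s(x,y)=g(x,x,y)$ and $h(x,y,z)=s(s(z,s(x,y)),x)$. Then $h(a,a,b)=s(c,a)=c$, $h(a,b,a)=s(d,a)=a$ and $h(b,a,a)=s(c,b)=b$. So $g^*(x,y,z)=g'(h(x,y,z),h(y,z,x),h(z,x,y))$ satisfies $g^*(a,a,b)=g'(c,a,b)=b$, and your automorphism argument then closes $\{a,b\}$.

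The paper does the mirror-image computation. With $t(x,y)=g'(x,y,y)$ and inner term $t(x,t(g(x,x,y),g(y,z,z)))$, the rotations of $(a,b,b)$ go to $(a,d,b)$, which gives $g^*(a,b,b)=g'(a,d,b)=a$.
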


\begin{proof}
First, we shall find a cyclic term that is completely determined. Let $g'(x,y,z):=g(g(x,y,y),g(y,z,z),g(z,x,x))$, so, we get $g'(a,b,c)=b$ and $g'(a,c,b)=d$, while it matches values $g$ on the remaining places.
Let $t(x,y):=g'(x,y,y)$ and define a cyclic term $g^*$ by
\begin{gather*}
    g^*(x,y,z)=g'(t(x,t(g(x,x,y),g(y,z,z))),\\
    t(y,t(g(y,y,z),g(z,x,x))),t(z,t(g(z,z,x),g(x,y,y)))).
\end{gather*}
Now we have 
\begin{gather*}
    g^*(a,b,b)=g'(t(a,t(d,b)),t(b,t(b,d)),t(b,t(c,c)))=\\
    g'(t(a,d),t(b,d),t(b,c))= g'(a,d,b)=a,
\end{gather*}
and by the automorphism, $g^*(b,a,a)=b$, so we are done.
\end{proof}

    \item[\textsc{\textbf{Case 2.}}] $\{a,c\}$ is a majority subalgebra.
\begin{claim}
 $\{b,d\}$ cannot be an affine subalgebra; thus, it must be a majority.
\end{claim}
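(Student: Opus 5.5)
We argue by contradiction: assume $\{a,c\}$ is a majority subalgebra and $\{b,d\}$ is an affine subalgebra, with $\alpha=\{\{a,c\},\{b,d\}\}$ and $\m a/\alpha\cong\mathbb{Z}_2^{\mathrm{aff}}$. We will show that $\mathrm{Sg}\{a,b\}$ omits some element, as in Case~1. The key asymmetry is the following. In a majority two-element subalgebra, every cyclic ternary term returns the repeated element. In an affine one, it returns the odd one out. So the value of $g$ on tuples with two equal entries is already forced by $\alpha$ and by these two-element subalgebras, up to the choice inside the $\alpha$-class.

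First we locate a three-element subalgebra. We try to copy the proof of Claim~\ref{Cl62}. The term $t(x,y,z)=g(x,g(x,y,y),g(x,y,z))$ and the cyclic $g'(x,y,z)=g(t(x,y,z),t(y,z,x),t(z,x,y))$ should give $g'(u,v,v)=u$ for $u\in\{a,c\}$ and $v\in\{b,d\}$. Indeed, the inner value $g(u,v,v)$ lies in $\{a,c\}$ by the affine quotient, and $g$ is majority on $\{a,c\}$, so the outer application returns $u$. The computation should mirror the semilattice case with majority in place of absorption. From this we want to conclude that $\{u,b,d\}$ is closed under $g'$ for $u=a$ or $u=c$. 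Taking $u=a$ gives $\{a,b,d\}$, which contradicts $\mathrm{Sg}\{a,b\}=\m a$ via Proposition~\ref{subuniverse}.

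The delicate point is the values $g'(a,b,d)$ and $g'(a,d,b)$. We must check that they land on $a$ rather than $c$. This uses that $\{b,d\}$ is affine, so $g(b,d,d)=b$, and that $\{a,c\}$ is majority, so any mixture of $a$ and $c$ with $a$ repeated collapses to $a$.

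If this direct computation fails because the mixed values are not controlled, we fall back on the relation
\[
S=\mathrm{Sg}_{\m a^3}\{(a,b,b),(b,a,b),(b,b,a)\}.
\]
By the argument of Claim~\ref{aaaclaim}, which uses only the $\mathbb{Z}_2^{\mathrm{aff}}$ quotient swapping the classes, we have $(a,a,a)\notin S$. We then aim to derive $(a,a,a)\in S$. Applying $g$ coordinatewise to the generators yields a triple $(x,x,x)$ with $x\in\{b,d\}$. Using binary terms $s$ with $s(a,b)=c$ (so that $s(b,a)\in\{b,d\}$), we then produce triples in $\{a,c\}\times\{b,d\}^2$ and their permutations. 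Applying the majority-on-$\{a,c\}$ behaviour coordinatewise to $(a,y,y'),(y,a,y''),\dots$ should eventually force $(a,a,a)$. The main obstacle is controlling the $\{b,d\}$-coordinates, since the affine structure on $\{b,d\}$ can flip them. We would handle this by first arranging $g(a,a,b)=b$ through the $h$-trick of Case~1, and then iterating $g$ as done there.

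Once the affine option is excluded, the three remaining possibilities for $\{b,d\}$ are semilattice, majority, or affine. The semilattice option is excluded by the dual of Case~1 with the classes swapped. Hence $\{b,d\}$ must be a majority subalgebra, as the statement claims.
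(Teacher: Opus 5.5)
Your main line fails at its first computation. On the majority class $\{a,c\}$ a cyclic term returns the \emph{repeated} entry. So with $x=g(u,v,v)\in\{a,c\}$ you get $t(u,v,v)=g(u,x,x)=x$, not $u$. Unwinding the definition of $g'$, and using cyclicity of $g$, gives
\[
g'(u,v,v)=\mathrm{maj}\bigl(x,\ g(v,v,x),\ g(v,g(v,u,u),x)\bigr),
\]
which is not forced to be $u$. For instance, if $g(a,b,b)=c$ and $g(b,b,c)=c$, then $g'(a,b,b)=c$, and nothing you have used excludes this configuration. In Claim~\ref{Cl62} the analogous step worked only because $v$ was the \emph{absorbing} element of the semilattice $\{a,c\}$: every application of $g$ with first entry $v$ and the other entries in $\{a,c\}$ returned $v$. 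A majority class has no such element, so ``majority in place of absorption'' does not transfer. Consequently closure of $\{a,b,d\}$ under $g'$ is not established, including the uncontrolled values $g'(a,b,d)$ and $g'(a,d,b)$.

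The cases with $g(a,b,b)=c$ are exactly where the paper has to work. It splits on $g(a,a,b)\in\{b,d\}$. It then determines $g(b,b,c)$ and $g(b,c,c)$, making $\{b,c\}$ a subalgebra and excluding $d$ from $\mathrm{Sg}\{a,b\}$. In the case $g(a,a,b)=d$ it further pins down $g(d,a,a)$, splits on $g(a,d,d)$, and uses that $\mathrm{Sg}\{a,d\}$ would have to be $\m t_3^{\n}$.

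The fallback does not close the gap either. Claim~\ref{aaaclaim} relies on $b/\alpha=\{b\}$ being a singleton, which forces $w(b,a,a)=b$. Here $b/\alpha=\{b,d\}$, so you only get $w(b,a,a)\in\{b,d\}$. This is why, in the equal-classes setting, the paper uses Lemma~\ref{aaalemma}, which needs an automorphism swapping $a$ and $b$; no such automorphism is available at this point.

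The step can be repaired. If $(a,a,a)\in S$, you get a cyclic term with $g(a,b,b)=a$. Then $g(a,a,b)=b$ is immediately contradictory, and the trick $h(x,y,z)=g(x,g(x,y,y),g(x,y,z))$ rules out $g(a,a,b)=d$. But this only disposes of the easiest of the paper's three cases.

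The substantive part, actually deriving $(a,a,a)\in S$, is only announced (``should eventually force''). You yourself name the obstruction, namely the affine behaviour on the $\{b,d\}$-coordinates, but you do not resolve it. First arranging $g(a,a,b)=b$ does not help, because the case $g(a,a,b)=b$, $g(a,b,b)=c$ is one of the genuinely hard ones.
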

\begin{proof}
Assume that $\{b,d\}$ is affine. We look at the possibilities for $g(a,a,b)$ and $g(a,b,b)$. We cannot have $g(a,a,b)=b$ and $g(a,b,b)=a$ simultaneously. Hence, we have three cases left to discuss.

Suppose that $g(a,b,b)=a$ and $g(a,a,b)=d$ hold. Define the term $h$ by $$h(x,y,z)=g(x,g(x,y,y),g(x,y,z)),$$ and let $g'$ be defined by $g'(x,y,z)=g(h(x,y,z),h(y,z,x),h(z,x,y))$. 
$$
\begin{gathered}
h\left(
\begin{bmatrix}
b \\
a \\
a
\end{bmatrix}\!,\!
\begin{bmatrix}
a \\
b \\
a
\end{bmatrix}\!,\!
\begin{bmatrix}
a \\
a \\
b
\end{bmatrix}
\right)=g\left(
\begin{bmatrix}
b \\
a \\
a
\end{bmatrix}\!,\!
\begin{bmatrix}
d \\
a \\
a
\end{bmatrix}\!,\!
\begin{bmatrix}
d \\
d \\
d
\end{bmatrix}
\right)=
\begin{bmatrix}
b \\
\ast \\
\ast
\end{bmatrix},
\hspace{0.2cm}\text{ while }\\
h\left(
\begin{bmatrix}
a \\
b \\
b
\end{bmatrix}\!,\!
\begin{bmatrix}
b \\
a \\
b
\end{bmatrix}\!,\!
\begin{bmatrix}
b \\
b \\
a
\end{bmatrix}
\right)=g\left(
\begin{bmatrix}
a \\
b \\
b
\end{bmatrix}\!,\!
\begin{bmatrix}
a \\
d \\
b
\end{bmatrix}\!,\!
\begin{bmatrix}
a \\
a \\
a
\end{bmatrix}
\right)=
\begin{bmatrix}
a \\
\circ \\
a
\end{bmatrix},
\end{gathered}
$$

where $\ast\in\{b,d\}$ and $\circ\in\{a,c\}$. Thus, $g'(a,b,b)=a$ and $g'(a,a,b)=b$ for the cyclic term $g'$, a contradiction.

Let now $g(a,b,b)=c$ and $g(a,a,b)=b$. We define $w$ by $w(x,y,z)=g(x,x,g(x,y,z))$ and $g''(x,y,z)=g(w(x,y,z),w(y,z,x),w(z,x,y))$. Then
$$
\begin{gathered}
w\left(
\begin{bmatrix}
b \\
a \\
a
\end{bmatrix}\!,\!
\begin{bmatrix}
a \\
b \\
a
\end{bmatrix}\!,\!
\begin{bmatrix}
a \\
a \\
b
\end{bmatrix}
\right)=g\left(
\begin{bmatrix}
b \\
a \\
a
\end{bmatrix}\!,\!
\begin{bmatrix}
b \\
a \\
a
\end{bmatrix}\!,\!
\begin{bmatrix}
b \\
b \\
b
\end{bmatrix}
\right)=
\begin{bmatrix}
b \\
b \\
b
\end{bmatrix},
\hspace{0.2cm}\text{ while }\\
w\left(
\begin{bmatrix}
a \\
b \\
b
\end{bmatrix}\!,\!
\begin{bmatrix}
b \\
a \\
b
\end{bmatrix}\!,\!
\begin{bmatrix}
b \\
b \\
a
\end{bmatrix}
\right)=g\left(
\begin{bmatrix}
a \\
b \\
b
\end{bmatrix}\!,\!
\begin{bmatrix}
a \\
b \\
b
\end{bmatrix}\!,\!
\begin{bmatrix}
c \\
c \\
c
\end{bmatrix}
\right)=
\begin{bmatrix}
a \\
\ast \\
\ast
\end{bmatrix},
\end{gathered}
$$
where $\ast=g(b,b,c)\in \{a,c\}$. We know $g''(a,a,b)=b$. Also, If $\ast=a$, then $g''(a,b,b)=a$, which together with $g''(a,a,b)=b$ gives a contradiction. Therefore, let us assume $\ast=g(b,b,c)=c$, so $g''(a,b,b)=c$. 

Assume first that $g(b,c,c)=d$. The subcase $\mathrm{Sg}\{b,c\}=\m a$ leads to the same contradiction as in the case that assumed $g(a,b,b)=a$ and $g(a,a,b)=d$, with $a$ and $c$ swapping places. If $\mathrm{Sg}\{b,c\}=\{b,c,d\}$, this is impossible by Table~\ref{table:1}, as there is no 3-element nonconservative algebra which has both a congruence class and a factor isomorphic to $\mathbb{Z}^{\text{aff}}_2$. So it must be that $g(b,c,c)=b$ and $\mathrm{Sg}\{b,c\}=\{b,c\}$. Let $t(x,y):=g(x,y,y)$ and define the cyclic term $g'''$ by $g'''(x,y,z)=g(t(x,t(y,z)),t(y,t(z,x)),t(z,t(x,y))).$ We compute 
$$
\begin{gathered}
t\left(
\begin{bmatrix}
b \\
a \\
a
\end{bmatrix}\!,
t\left(
\begin{bmatrix}
a \\
b \\
a
\end{bmatrix}\!,\!
\begin{bmatrix}
a \\
a \\
b
\end{bmatrix}
\right)\right)=t\left(
\begin{bmatrix}
b \\
a \\
a
\end{bmatrix}\!,\!
\begin{bmatrix}
a \\
b \\
c
\end{bmatrix}
\right)=
\begin{bmatrix}
b \\
c \\
c
\end{bmatrix},
\;
t\left(
\begin{bmatrix}
a \\
b \\
c
\end{bmatrix}\!,
t\left(
\begin{bmatrix}
b \\
c \\
a
\end{bmatrix}\!,\!
\begin{bmatrix}
c \\
a \\
b
\end{bmatrix}
\right)\right)=t\left(
\begin{bmatrix}
a \\
b \\
c
\end{bmatrix}\!,\!
\begin{bmatrix}
b \\
a \\
c
\end{bmatrix}
\right)=
\begin{bmatrix}
c \\
b \\
c
\end{bmatrix},\\
t\left(
\begin{bmatrix}
a \\
c \\
b
\end{bmatrix}\!,
t\left(
\begin{bmatrix}
c \\
b \\
a
\end{bmatrix}\!,\!
\begin{bmatrix}
b \\
a \\
c
\end{bmatrix}
\right)\right)=t\left(
\begin{bmatrix}
a \\
c \\
b
\end{bmatrix}\!,\!
\begin{bmatrix}
c \\
b \\
c
\end{bmatrix}
\right)=
\begin{bmatrix}
c \\
c \\
b
\end{bmatrix}.
\end{gathered}
$$  
Hence, $g'''(a,a,b)=g'''(a,b,c)=g'''(a,c,b)=b$ and $\mathrm{Sg}\{b,c\}=\{b,c\}$ implies that $g'''(b,c,c)=b$, as well. Thus, $d\notin\mathrm{Sg}\{a,b\}$, a contradiction.

Consider now the case when $g(a,b,b)=c$ and $g(a,a,b)=d$. Let $h'(x,y,z):=g(x,x,g(x,y,y))$ and define a cyclic term $g^+$ by $$g^+(x,y,z)=g(h'(x,y,z),h'(y,z,x),h'(z,x,y)).$$ Then 
\begin{equation*}
h'\left(
\begin{bmatrix}
b \\
a \\
a
\end{bmatrix}\!,\!
\begin{bmatrix}
a \\
b \\
a
\end{bmatrix}\!,\!
\begin{bmatrix}
a \\
a \\
b
\end{bmatrix}
\right)=
g\left(
\begin{bmatrix}
b \\
a \\
a
\end{bmatrix}\!,\!
\begin{bmatrix}
b \\
a \\
a
\end{bmatrix}\!,\!
g\left(
\begin{bmatrix}
b \\
a \\
a
\end{bmatrix}\!,\!
\begin{bmatrix}
a \\
b \\
a
\end{bmatrix}\!,\!
\begin{bmatrix}
a \\
b \\
a
\end{bmatrix}
\right)
\right)=
g\left(
\begin{bmatrix}
b \\
a \\
a
\end{bmatrix}\!,\!
\begin{bmatrix}
b \\
a \\
a
\end{bmatrix}\!,\!
\begin{bmatrix}
d \\
c \\
a
\end{bmatrix}
\right)=
\begin{bmatrix}
d \\
a\\
a
\end{bmatrix}.
\end{equation*}
Now, if $g(d,a,a)=b$, then $g^+(a,a,b)=b$. If we want to avoid the immediate contradiction, we need to assume $g^+(a,b,b)=c$, but, according to the two previous paragraphs, this also leads to a contradiction. That being said, we have $g(d,a,a)=d$. The following discussion is based on what $g(a,d,d)$ is equal to. 

Suppose first that $g(a,d,d)=a$, thus $\{a,d\}$ is a subalgebra. Let $h'$ be defined as before, and we define $h''$ by $h''(x,y,z)=g(x,x,g(x,z,z))$. Then 
\begin{equation*}
h''\left(
\begin{bmatrix}
a \\
b \\
b
\end{bmatrix}\!,\!
\begin{bmatrix}
b \\
a \\
b
\end{bmatrix}\!,\!
\begin{bmatrix}
b \\
b \\
a
\end{bmatrix}
\right)=
g\left(
\begin{bmatrix}
a \\
b \\
b
\end{bmatrix}\!,\!
\begin{bmatrix}
a \\
b \\
b
\end{bmatrix}\!,\!
g\left(
\begin{bmatrix}
a \\
b \\
b
\end{bmatrix}\!,\!
\begin{bmatrix}
b \\
b \\
a
\end{bmatrix}\!,\!
\begin{bmatrix}
b \\
b \\
a
\end{bmatrix}
\right)
\right)=
g\left(
\begin{bmatrix}
a \\
b \\
b
\end{bmatrix}\!,\!
\begin{bmatrix}
a \\
b \\
b
\end{bmatrix}\!,\!
\begin{bmatrix}
c \\
b \\
d
\end{bmatrix}
\right)=
\begin{bmatrix}
a \\
b\\
d
\end{bmatrix},
\end{equation*}
\begin{equation*}
h'\left(
\begin{bmatrix}
a \\
b \\
b
\end{bmatrix}\!,\!
\begin{bmatrix}
b \\
a \\
b
\end{bmatrix}\!,\!
\begin{bmatrix}
b \\
b \\
a
\end{bmatrix}
\right)=
g\left(
\begin{bmatrix}
a \\
b \\
b
\end{bmatrix}\!,\!
\begin{bmatrix}
a \\
b \\
b
\end{bmatrix}\!,\!
g\left(
\begin{bmatrix}
a \\
b \\
b
\end{bmatrix}\!,\!
\begin{bmatrix}
b \\
a \\
b
\end{bmatrix}\!,\!
\begin{bmatrix}
b \\
a \\
b
\end{bmatrix}
\right)
\right)=
g\left(
\begin{bmatrix}
a \\
b \\
b
\end{bmatrix}\!,\!
\begin{bmatrix}
a \\
b \\
b
\end{bmatrix}\!,\!
\begin{bmatrix}
c \\
d \\
b
\end{bmatrix}
\right)=
\begin{bmatrix}
a \\
d\\
b
\end{bmatrix},
\end{equation*}
Define the term $w'$ by $$w'(x,y,z)=g(x,h'(x,y,z),h''(x,y,z)),$$ so, we have 
\begin{equation*}
w'\left(
\begin{bmatrix}
a \\
b \\
b
\end{bmatrix}\!,\!
\begin{bmatrix}
b \\
a \\
b
\end{bmatrix}\!,\!
\begin{bmatrix}
b \\
b \\
a
\end{bmatrix}
\right)=g\left(
\begin{bmatrix}
a \\
b \\
b
\end{bmatrix}\!,\!
\begin{bmatrix}
a \\
d \\
b
\end{bmatrix}\!,\!
\begin{bmatrix}
a \\
b \\
d
\end{bmatrix}
\right)=
\begin{bmatrix}
a \\
d \\
d
\end{bmatrix}.
\end{equation*}
Define the cyclic term $g^{\circ}$ by $g^{\circ}(x,y,z)=g(w'(x,y,z),w'(y,z,x),w'(z,x,y))$, thus we get $g^{\circ}(a,b,b)=a$. Now, we recall the analysis from the beginning of the proof, which implies that $\{a,b\}$ is closed for some cyclic term, a contradiction.

Therefore, we suppose that $g(a,d,d)=c$. Observe that we may assume $\mathrm{Sg}\{a,d\}=\{a,c,d\}$. Indeed, if $\mathrm{Sg}\{a,d\}=\m a$, after swapping $b$ and $d$, we find ourselves in a case where $\mathrm{Sg}\{a,b\} = \m a$ and $g(b,a,a) = b$, $g(a,b,b) = c$, which we have already ruled out. Hence, $\{a,c,d\}$ is a subalgebra and, according to \Cref{table:1}, it must be $\m t_3^{\scriptscriptstyle{\mathsf{N}}}$. Define the term $w''$ by $$w''(x,y,z)=g(x,g(x,y,y),g(x,y,z)),$$ so, we have 
 \begin{equation*}
w''\left(
\begin{bmatrix}
b \\
a \\
a
\end{bmatrix}\!,\!
\begin{bmatrix}
a \\
b \\
a
\end{bmatrix}\!,\!
\begin{bmatrix}
a \\
a \\
b
\end{bmatrix}
\right)=
g\left(
\begin{bmatrix}
b \\
a \\
a
\end{bmatrix}\!,\!
g\left(
\begin{bmatrix}
b \\
a \\
a
\end{bmatrix}\!,\!
\begin{bmatrix}
a \\
b \\
a
\end{bmatrix}\!,\!
\begin{bmatrix}
a \\
b \\
a
\end{bmatrix}
\right)\!,\!
g\left(
\begin{bmatrix}
b \\
a \\
a
\end{bmatrix}\!,\!
\begin{bmatrix}
a \\
b \\
a
\end{bmatrix}\!,\!
\begin{bmatrix}
a \\
a \\
b
\end{bmatrix}
\right)
\right)=
g\left(
\begin{bmatrix}
b \\
a \\
a
\end{bmatrix}\!,\!
\begin{bmatrix}
d \\
c \\
a
\end{bmatrix}\!,\!
\begin{bmatrix}
d \\
d \\
d
\end{bmatrix}
\right)=
\begin{bmatrix}
b \\
d\\
d
\end{bmatrix}.
\end{equation*}
Define the cyclic term $g^*$ by $g^*(x,y,z)=g(w''(x,y,z),w''(y,z,x),w''(z,x,y))$. It holds $g^*(b,a,a)=b$, but we have eliminated this case (the one with $g(a,b,b)=c$ and $g(a,a,b)=b$) before. This concludes the proof of the claim.
\end{proof}
We distinguish three cases regarding which algebra the set $\{a,d\}$ generates.
\begin{enumerate}[wide, labelindent=0pt]
    \item[\textsc{\textbf{Subcase 2.1}}] $\mathrm{Sg}\{a,d\}=\{a,d\}$.
    
    We shall prove that this condition is not satisfied by any unclassified algebra. Let us start by checking how a cyclic term acts on the generating set $\{a,b\}$. The following claims apply to any cyclic term.
    \begin{claim}\label{c(abb)=d claim}
    It holds $g(a,b,b)=c$.
    \end{claim}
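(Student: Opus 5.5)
The plan is a case elimination on the value $g(a,b,b)$, carried out in the present setting: $\alpha=\{\{a,c\},\{b,d\}\}$ with $\m a/\alpha\cong\mathbb{Z}_2^{\mathrm{aff}}$, $\{a,c\}$ and $\{b,d\}$ majority subalgebras, and $\{a,d\}$ a subalgebra.

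Since $g$ acts as the minority operation modulo $\alpha$, we have $g(a,b,b)\in a/\alpha=\{a,c\}$. It therefore suffices to rule out $g(a,b,b)=a$, and I will argue by contradiction assuming it.

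\textbf{Step 1: determine $g(a,a,b)$.} Modulo $\alpha$ we have $g(a,a,b)\in\{b,d\}$. If $g(a,a,b)=b$, then $\{a,b\}$ is closed under the cyclic term $g$, so Proposition~\ref{subuniverse} makes it a subuniverse, contradicting $\mathrm{Sg}\{a,b\}=\m a$. Hence $g(a,a,b)=d$.

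\textbf{Step 2: transfer to the subalgebra $\{a,d\}$.} This is the configuration already handled in the previous claim (there with $\{b,d\}$ affine): $g(a,b,b)=a$ and $g(a,a,b)=d$. I would reuse the same construction,
$$h(x,y,z)=g(x,g(x,y,y),g(x,y,z)),\qquad g'(x,y,z)=g(h(x,y,z),h(y,z,x),h(z,x,y)).$$
The goal is to show that $g'(a,b,b)=a$ and $g'(a,a,b)=b$. Then $\{a,b\}$ is closed under the cyclic term $g'$, again a contradiction.

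\textbf{Step 3: compute the relevant triples.} Evaluating $h$ on the generating triples of $S$, the computation reduces to values of $g$ on the two-element subalgebras:
\begin{itemize}
\item on $\{a,d\}$, where $\mathrm{Sg}\{a,d\}=\{a,d\}$ is now given;
\item on $\{b,d\}$, which is majority, so for instance $g(b,d,d)=d$ and $g(d,b,b)=b$;
\item on $\{a,c\}$, which is majority.
\end{itemize}
The $\alpha$-classes of all intermediate entries are forced by the minority action of $g$ on $\m a/\alpha$. For the triple $(b,a,a)$, $h$ should return something of the form $(b,\ast,\ast)$ with $\ast\in\{b,d\}$, and a final application of $g$ should give $g'(a,a,b)=b$. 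For the triple $(a,b,b)$, majority on $\{a,c\}$ should yield $(a,\circ,a)$-type outputs and hence $g'(a,b,b)=a$.

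\textbf{Main obstacle.} In the affine case the intermediate values such as $g(b,d,a)$ were harmless. Here $\{b,d\}$ is majority, so I must check that entries like $g(b,d,a)$ and $g(d,b,b)$ land in the right elements, not merely in the right $\alpha$-class. This is where $\mathrm{Sg}\{a,d\}=\{a,d\}$ is needed: it pins down $g(a,d,d)$ and $g(a,a,d)$.

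If the direct computation leaves an ambiguous value $\ast$, I would first try the variant $w(x,y,z)=g(x,x,g(x,y,z))$ from the previous claim. Failing that, I would branch on $\ast\in\{b,d\}$. The case $\ast=d$ gives $g'(a,a,b)=d$ with $g'(a,b,b)=a$, so the same construction can be iterated. The case $\ast=b$ gives the desired contradiction immediately.
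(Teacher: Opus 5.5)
Your Step 1 is correct: $g(a,a,b)=b$ would let $g$ preserve $\{a,b\}$, so $g(b,a,a)=d$. Step 3 fails, however, and not because some value is left unresolved. The construction borrowed from the previous claim returns you exactly to your starting configuration.

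In the previous claim $\{b,d\}$ was affine, so $h(b,a,a)=g(b,g(b,a,a),g(b,a,a))=g(b,d,d)=b$. Here $\{b,d\}$ is majority, so this same value is $g(b,d,d)=d$, as you yourself note. Since $\{a,d\}$ is an affine subalgebra, $h(a,b,a)=h(a,a,b)=g(a,a,d)=d$. Hence $g'(b,a,a)=g(d,d,d)=d$, not $b$. On the other side, $h(a,b,b)=h(b,b,a)=a$ and $h(b,a,b)=g(b,d,a)\in\{a,c\}$, so $g'(a,b,b)=a$ by majority on $\{a,c\}$. Thus $g'$ satisfies exactly the hypotheses you started from, and iterating never terminates.

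The variant $w(x,y,z)=g(x,x,g(x,y,z))$ behaves the same way. It gives $w(b,a,a)=g(b,b,d)=b$ and $w(a,b,a)=w(a,a,b)=d$, hence again $g(b,d,d)=d$. Meanwhile $w$ sends $(a,b,b)$, $(b,a,b)$, $(b,b,a)$ all to $a$. So there is no ambiguous $\ast\in\{b,d\}$ to branch on.

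The underlying problem is the target. Map $d\mapsto 0$, $b\mapsto 1$, $a\mapsto 2$. The values you can force on $\{a,b,d\}$ are exactly those of $\m T_3^{\n}$:
\begin{itemize}
\item $\{a,d\}$ affine and $\{b,d\}$ majority;
\item $g(b,b,a)=a$ and $g(b,a,a)=d$.
\end{itemize}
If in addition $g(a,d,b)=g(a,b,d)=a$, then $g$ restricted to $\{a,b,d\}$ \emph{is} the cyclic operation of $\m T_3^{\n}$. In that algebra no cyclic term preserves $\{1,2\}$, since $\{1,2\}$ generates the whole algebra.

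So you should not try to close off $\{a,b\}$. The contradiction comes from closing off the three-element set $\{a,b,d\}$, which equally contradicts $\mathrm{Sg}\{a,b\}=\m A$. The paper takes $t(x,y)=g(x,y,y)$ and $g'(x,y,z)=g(t(t(x,y),z),t(t(y,z),x),t(t(z,x),y))$. One then gets:
\begin{itemize}
\item $g'(a,d,b)=g'(a,d,d)=a$ and $g'(b,a,a)=d$;
\item $g'(a,b,b)=g'(a,b,d)=g(a,d,b)\in\{a,c\}$.
\end{itemize}
If $g(a,d,b)=a$, then $\{a,b,d\}$ is closed under $g'$. Otherwise, compose once more with $g'$ as the outer operation and the same $t$. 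This sends $(a,b,b)$, $(a,b,d)$ and $(a,d,b)$ to $a$, so $\{a,b,d\}$ is closed under a cyclic term.
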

    \begin{proof}
    For the sake of contradiction, suppose that $g(a,b,b)=a$, and thus $g(b,a,a)=d$. Let $t(x,y):=g(x,y,y)$ and define $g'$ by $$g'(x,y,z)=g(t(t(x,y),z),t(t(y,z),x),t(t(z,x),y)).$$ By the assumptions we just made and the assumption that $\{a,d\}$ is a subuniverse, we see that 
    \[
    \begin{gathered}
    g'(a,d,b)=g(t(a,b),t(b,a),t(d,d))=g(a,d,d)=a=g'(a,d,d)\\
    g'(a,b,b)=g(t(a,b),t(b,a),t(d,b))=g(a,d,b)=\\
    g(t(a,d),t(d,a),t(d,b))=g'(a,b,d).
    \end{gathered}
    \]
    If $g(a,d,b)=a$, then $\{a,d,b\}$ would be closed under $g'$, so we must have $g'(a,b,b)=c=g'(a,b,d)$. Now we define $g''$ by $$g''(x,y,z)=g'(t(t(x,y),z),t(t(y,z),x),t(t(z,x),y)).$$ The term $g''$ is cyclic and one can verify that
    \[
    \begin{gathered}
     g''(a,d,b)=g'(t(a,b),t(b,a),t(d,d))=g'(a,d,d)=a=g''(a,d,d)\\    g''(a,b,b)=g'(t(a,b),t(b,a),t(d,b))=g'(a,d,b)=a\\
     g''(a,b,d)=g'(t(a,d),t(d,a),t(d,b))=g'(a,d,b)=a.    
    \end{gathered}
    \]
    Hence, $\{a,b,d\}$ is closed under $g''$, a contradiction.
    \end{proof}

    \begin{claim}\label{bdaffineclaim}
    $\{b,c\}$ is a (affine) subalgebra.
    \end{claim}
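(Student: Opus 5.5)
We need to prove that $\{b,c\}$ is a subuniverse, and then that it is affine. The setting is Case 2, Subcase 2.1: $\alpha=\{\{a,c\},\{b,d\}\}$ with $\m a/\alpha\cong\mathbb{Z}_2^{\mathrm{aff}}$, $\{a,c\}$ and $\{b,d\}$ are majority subalgebras, $\{a,d\}$ is a subuniverse, and by Claim~\ref{c(abb)=d claim} every ternary cyclic term $g$ satisfies $g(a,b,b)=c$.

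\textbf{Affineness, granted the subuniverse.} Suppose $\{b,c\}$ is a subuniverse. Since $b$ and $c$ lie in different $\alpha$-classes and $g$ induces the minority operation on $\m a/\alpha$, we get $g(b,b,c)\in\{c\}\cap\{b,c\}$, so $g(b,b,c)=c$; similarly $g(c,c,b)=b$. Hence $g$ is the minority on $\{b,c\}$ and the subalgebra is $\mathbb{Z}_2^{\mathrm{aff}}$. The same argument shows that $\{a,d\}$ is affine.

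\textbf{Main step: showing $\{b,c\}$ is a subuniverse.} I will argue by contradiction and assume $\mathrm{Sg}\{b,c\}\neq\{b,c\}$. Then $\mathrm{Sg}\{b,c\}$ is either $\{b,c,x\}$ for some $x\in\{a,d\}$, or all of $A$.

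First suppose $\mathrm{Sg}\{b,c\}$ has three elements. It has the $\mathbb{Z}_2^{\mathrm{aff}}$ quotient induced by $\alpha$, and one $\alpha$-class in it is a two-element majority subalgebra ($\{a,c\}$ or $\{b,d\}$). Checking Table~\ref{table:1}, no nonconservative three-element algebra has an affine quotient whose two-element class is a majority algebra: $\m t_3^{\n}$ has class $\{0,1\}$ majority, but its quotient is onto $\{0,1\}/\{2\}$ with $\{0,2\}$ affine, and the pair $\{1,2\}$ would then have to be our generating pair. I will check this configuration carefully against $\m t_3^{\n}$. If it fails, the three-element subalgebra is conservative, so $\{b,c\}$ is a subuniverse, a contradiction.

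Now suppose $\mathrm{Sg}\{b,c\}=A$. Then the pair $(b,c)$ plays the role of $(a,b)$: they lie in different $\alpha$-classes, $\{b,d\}$ and $\{c,a\}$ are majority classes, and the cross pair $\{c,d\}$ or $\{b,a\}$ corresponds to $\{a,d\}$. Applying Claim~\ref{c(abb)=d claim} and its proof to the appropriate relabelling, I will read off values such as $g(c,b,b)$ and $g(b,c,c)$. I will then combine these with $g(a,b,b)=c$ and $g(b,a,a)=d$ to define a cyclic term of the form $g(t(t(x,y),z),\dots)$ with $t(x,y)=g(x,y,y)$, under which $\{a,b,c\}$ or $\{a,b,d\}$ is closed. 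This contradicts $\mathrm{Sg}\{a,b\}=\m a$ via Proposition~\ref{subuniverse}.

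\textbf{Expected obstacle.} The main obstacle is the $\mathrm{Sg}\{b,c\}=A$ case. The relabelling has to respect the hypothesis that $\{a,d\}$ is a subuniverse. If it does not transfer cleanly, I would first apply Claim~\ref{c(abb)=d claim}'s iteration $g\mapsto g'\mapsto g''$ directly to the pair $(b,c)$, and track the values $g(b,c,a)$ and $g(b,a,c)$ using the automorphism-free minority behaviour modulo $\alpha$.
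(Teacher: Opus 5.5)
Your affineness step is fine, but the main step has two genuine gaps.

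\textbf{The three-element case.} Your claim that no nonconservative three-element algebra has a $\mathbb{Z}_2^{\mathrm{aff}}$ quotient with a majority class is false. $\m t_3^{\n}$ is exactly such an algebra: it has the congruence $\{\{0,1\},\{2\}\}$, and the class $\{0,1\}$ is a majority algebra. The option $\mathrm{Sg}\{b,c\}=\{a,b,c\}$ is excluded at once, since it would contain $\mathrm{Sg}\{a,b\}=A$. However, $\mathrm{Sg}\{b,c\}=\{b,c,d\}\cong\m t_3^{\n}$ via $b\mapsto 1$, $c\mapsto 2$, $d\mapsto 0$ is consistent with everything you invoke. In it, $\{b,d\}$ is majority, $\{c\}$ is the other class, $\{c,d\}$ is an affine subalgebra, and $\mathrm{Sg}\{b,c\}=\{b,c,d\}$. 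Nothing proved so far forbids $\{c,d\}$ being a subalgebra; the paper explicitly allows it later. So your check does not ``fail'', and you are not pushed into the conservative case.

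\textbf{The case $\mathrm{Sg}\{b,c\}=A$.} The relabelling does not transfer. Claim~\ref{c(abb)=d claim} uses that the cross pair (first generator together with the class-mate of the second) is a subuniverse. For the pair $(c,b)$ that pair is $\{c,d\}$, whose status is unknown. For $(b,c)$ it is $\{b,a\}$, which is not a subuniverse. Your fallback is not an argument, and the term closing $\{a,b,c\}$ or $\{a,b,d\}$ is never constructed.

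\textbf{The missing idea.} Do not split on $|\mathrm{Sg}\{b,c\}|$ at all. Since $g$ is the minority modulo $\alpha$, we have $g(c,b,b)\in\{a,c\}$ and $g(b,c,c)\in\{b,d\}$. If $g(c,b,b)=c$ and $g(b,c,c)=b$, then $\{b,c\}$ is closed under the cyclic (hence Taylor) operation $g$, so it is a subuniverse by Proposition~\ref{subuniverse}. Hence either $g(c,b,b)=a$, or $g(b,b,c)=c$ and $g(b,c,c)=d$. Your surviving $\m t_3^{\n}$ configuration is exactly the second case.

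In the first case, put $h(x,y,z)=g(x,x,g(x,y,z))$. Then $h(a,b,b)=g(a,a,c)=a$ and $h(b,a,b)=h(b,b,a)=g(b,b,c)=a$. So the cyclic term $g'(x,y,z)=g(h(x,y,z),h(y,z,x),h(z,x,y))$ satisfies $g'(a,b,b)=a$.

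In the second case, put $k(x,y,z)=g(x,x,g(x,y,z))$ and $h(x,y,z)=g(x,k(x,y,z),k(x,y,z))$. Then $h(a,b,b)=a$ and $h(b,a,b)=h(b,b,a)=g(b,c,c)=d$. So the same composite gives $g'(a,b,b)=g(a,d,d)$. This equals $a$, because $\{a,d\}$ is a subuniverse meeting both $\alpha$-classes and $g$ is the minority modulo $\alpha$.

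Either way Claim~\ref{c(abb)=d claim}, which holds for every cyclic term, is contradicted. This is the paper's argument.
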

    \begin{proof}
    Suppose not, then we have $g(c,b,b)=a$ or $g(b,c,c)=d$. Suppose first that $g(c,b,b)=a$ and let $h(x,y,z):=g(x,x,g(x,y,z))$ and define the ternary cyclic term $g'$ by $g'(x,y,z)=g(h(x,y,z),h(y,z,x),h(z,x,y))$. We get that $g'(a,b,b)=g(g(a,a,c),g(b,b,c),g(b,b,c))=g(a,a,a)=a$, which contradicts Claim~\ref{c(abb)=d claim}. 

    Assume now $g(b,c,c)=d$ and $g(b,b,c)=c$. Let $$h(x,y,z):=g(x,g(x,x,g(x,y,z)),g(x,x,g(x,y,z)))$$ and let $g'$ be the ternary cyclic term defined by $$g'(x,y,z)=g(h(x,y,z),h(y,z,x),h(z,x,y)).$$ Again, we compute $$h(a,b,b)=g(a,g(a,a,c),g(a,a,c))=a$$ and $$h(b,a,b)=h(b,b,a)=g(b,g(b,b,c),g(b,b,c))=g(b,c,c)=d.$$ Hence, $g'(a,b,b)=g(a,d,d)=a$, a contradiction once again.
    \end{proof}

    \begin{claim}\label{g(baa)=d}
    It holds $g(b,a,a)=d$.
    \end{claim}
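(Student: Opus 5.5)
Since $\m a/\alpha$ is term-equivalent to $\mathbb{Z}_2^{\mathrm{aff}}$ and $g$ acts there as the minority operation, $g(b,a,a)\in b/\alpha=\{b,d\}$. The plan is to assume $g(b,a,a)=b$ and derive a contradiction, following the pattern of the proof of Claim~\ref{c(abb)=d claim}. The goal is a cyclic term under which $\{a,b,c\}$ is closed. Proposition~\ref{subuniverse} would then make $\{a,b,c\}$ a subuniverse, contradicting $\mathrm{Sg}\{a,b\}=\m a$.

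First I would collect the values of the binary term $t(x,y):=g(x,y,y)$ on $\{a,b,c\}$. By Claim~\ref{c(abb)=d claim}, $t(a,b)=c$, and by assumption $t(b,a)=b$. Since $\{a,c\}$ is a majority subalgebra, $t(a,c)=c$ and $t(c,a)=a$. Since $\{b,c\}$ is an affine subalgebra by Claim~\ref{bdaffineclaim}, $t(b,c)=b$ and $t(c,b)=c$. Hence $\{a,b,c\}$ is closed under $t$. I would then set
$$g'(x,y,z)=g(t(t(x,y),z),t(t(y,z),x),t(t(z,x),y)),$$
which is cyclic. On tuples with a repeated entry, all three arguments of the outer $g$ lie in a two-element subalgebra $\{a,c\}$ or $\{b,c\}$, or reduce to $g(c,b,c)$- or $g(a,b,a)$-type values. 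Those are already determined: $g(a,a,b)=b$ by assumption, and $g(a,b,b)=c$.

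The main obstacle is the tuples with all of $a,b,c$ distinct. The value $g(a,b,c)$ lies only in $\{b,d\}$ by the affine quotient, so it is not controlled. For example, $g'(a,c,b)=g(c,a,b)$, while $g'(a,b,c)=g(c,b,c)=b$ is harmless. To handle this I would iterate once more, as in the second half of the proof of Claim~\ref{c(abb)=d claim}. Put $g''(x,y,z)=g'(t(t(x,y),z),t(t(y,z),x),t(t(z,x),y))$, or alternatively use $g'(g'(x,x,y),g'(y,y,z),g'(z,z,x))$. The aim is that after one inner application of $t$, every fully distinct triple is sent to a triple with a repeated entry inside $\{a,b,c\}$, where $g'$ is already known to stay in $\{a,b,c\}$. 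If neither composition works directly, I would branch on the two possible values $g(a,b,c)\in\{b,d\}$ (and likewise for $g(a,c,b)$). The case where the value is $b$ gives closure immediately. In the case where it is $d$, I would swap the roles of $c$ and $d$ and use that $\{a,d\}$ is a subalgebra (Subcase~2.1) and $\{b,d\}$ is majority to close $\{a,b,d\}$ instead. In every case some $3$-element set containing $a,b$ is closed under a cyclic term, which gives the desired contradiction. Therefore $g(b,a,a)=d$.
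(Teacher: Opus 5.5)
Your route is the one the paper intends. Its proof says ``similar to Claim~\ref{c(abb)=d claim}'', meaning the mirror image of that argument under $a\leftrightarrow b$, $c\leftrightarrow d$, with the affine subalgebra $\{b,c\}$ from Claim~\ref{bdaffineclaim} playing the role of $\{a,d\}$. Your table of $t(x,y)=g(x,y,y)$ on $\{a,b,c\}$ is correct.

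Your bookkeeping for $g'$, however, is wrong, and this undercuts the justification you give for the second step. Write $T(x,y,z)=t(t(x,y),z)$. On $\{a,b,c\}$:
\begin{itemize}
\item $T(x,y,z)=b$ if $x=b$;
\item otherwise $T(x,y,z)=a$ if $z=a$, and $T(x,y,z)=c$ if $z\in\{b,c\}$.
\end{itemize}
Hence $g'(a,a,b)=g(T(a,a,b),T(a,b,a),T(b,a,a))=g(c,a,b)$, which is only known to lie in $\{b,d\}$. So the repeated-entry tuple $(a,a,b)$ is not harmless. Together with $(a,c,b)$, it is exactly where $g'$ may leave $\{a,b,c\}$. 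This is the precise analogue of $g'(a,b,b)=g(a,d,b)=g'(a,b,d)$ in the paper's proof of Claim~\ref{c(abb)=d claim}. Your heuristic that the inner layer of $t$ sends fully distinct triples to triples with a repeated entry is also false, since $(a,c,b)\mapsto(c,a,b)$.

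The fix is a direct check, and your first candidate for $g''$ does work, but for a different reason than the one you state. Take the cyclic-orbit representatives
$$(a,a,b),\ (a,b,b),\ (a,a,c),\ (a,c,c),\ (b,b,c),\ (b,c,c),\ (a,b,c),\ (a,c,b).$$
The corresponding triples $(T(x,y,z),T(y,z,x),T(z,x,y))$ are
$$(c,a,b),\ (c,b,b),\ (c,a,a),\ (c,a,c),\ (b,b,c),\ (b,c,c),\ (c,b,c),\ (c,a,b).$$
So $g'$ equals $g(a,b,c)$ on the orbits of $(a,a,b)$ and $(a,c,b)$, and takes values in $\{a,b,c\}$ everywhere else; for example $g'(a,b,c)=g(c,b,c)=b$. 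None of the listed triples is a rotation of $(a,a,b)$ or $(a,c,b)$. Therefore $g''(x,y,z)=g'(T(x,y,z),T(y,z,x),T(z,x,y))$ preserves $\{a,b,c\}$ unconditionally; for instance $g''(a,a,b)=g''(a,c,b)=g'(a,b,c)=b$. Proposition~\ref{subuniverse} then gives the contradiction with $\mathrm{Sg}\{a,b\}=\m a$.

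Equivalently, following the paper: if $g(a,b,c)=b$, then $g'$ already closes $\{a,b,c\}$; otherwise pass to $g''$. Your fallback of closing $\{a,b,d\}$ instead is neither justified nor needed, and should be dropped.
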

    \begin{proof}
    Similar to the proof of Claim~\ref{c(abb)=d claim}.
    \end{proof}

    \begin{claim}\label{c(bde)=d}
    It holds $g(b,c,d)=g(b,d,c)=c$, and, similarly, $g(a,c,d)=g(a,d,c)=d$.
    \end{claim}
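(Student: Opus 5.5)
The plan is to use that Claims~\ref{c(abb)=d claim} and~\ref{g(baa)=d} hold for \emph{every} ternary cyclic term of $\m a$, not only for $g$. So if one of the four values in the statement is wrong, it should suffice to exhibit another cyclic term that violates one of those two claims. The natural candidates are the two standard cyclic terms already used repeatedly in the paper:
$$g_1(x,y,z)=g(g(x,x,y),g(y,y,z),g(z,z,x)),\qquad g_2(x,y,z)=g(g(x,y,y),g(y,z,z),g(z,x,x)).$$

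First I will pin down which values are possible at all. The quotient $\m a/\alpha$ is $\mathbb{Z}_2^{\mathrm{aff}}$, on which $g$ acts as the minority operation. Hence $g(b,c,d),g(b,d,c)\in\{a,c\}$ and $g(a,c,d),g(a,d,c)\in\{b,d\}$. So for each value it is enough to rule out one alternative.

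Next I evaluate $g_1$ and $g_2$ on the tuples $(a,b,b)$ and $(b,a,a)$. Cyclicity of $g$ together with the two claims gives $g(a,a,b)=g(b,a,a)=d$ and $g(b,b,a)=g(a,b,b)=c$. Substituting these and using cyclicity of $g$ once more yields:
\begin{itemize}
\item $g_1(a,b,b)=g(d,b,c)=g(b,c,d)$;
\item $g_2(a,b,b)=g(c,b,d)=g(b,d,c)$;
\item $g_1(b,a,a)=g(c,a,d)=g(a,d,c)$;
\item $g_2(b,a,a)=g(d,a,c)=g(a,c,d)$.
\end{itemize}

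Now suppose, say, $g(b,c,d)=a$. Then $g_1$ is a ternary cyclic term with $g_1(a,b,b)=a$, contradicting Claim~\ref{c(abb)=d claim} applied to $g_1$. In the same way, $g(b,d,c)=a$ contradicts that claim for $g_2$. Likewise $g(a,d,c)=b$ or $g(a,c,d)=b$ contradicts Claim~\ref{g(baa)=d} for $g_1$ or $g_2$ respectively. This forces the four asserted values.

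The only point needing care is the bookkeeping of cyclic rotations in the four evaluations above. I do not expect any genuine obstacle. The one thing to double-check is that Claims~\ref{c(abb)=d claim} and~\ref{g(baa)=d} were indeed proved for an arbitrary cyclic term; their proofs only use the standing assumptions of Subcase~2.1, so this should hold.
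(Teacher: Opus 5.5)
Your proposal is correct and is essentially the paper's proof: the paper also rules out $g(b,c,d)=a$ and $g(b,d,c)=a$ by applying Claim~\ref{c(abb)=d claim} to the cyclic terms $g(g(x,x,y),g(y,y,z),g(z,z,x))$ and $g(g(x,y,y),g(y,z,z),g(z,x,x))$. Your four evaluations and rotations check out. For the $a$-part, which the paper only calls ``similar'', you apply Claim~\ref{g(baa)=d} to the same two terms, and that is precisely the intended symmetric argument.
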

    \begin{proof}
    Suppose, for the contradiction, that $g(b,d,c)=a$. Define the cyclic ternary term $g'$ by $$g'(x,y,z)=g(g(x,y,y),g(y,z,z),g(z,x,x)).$$ Then $g'(a,b,b)=g(c,b,d)=a$, which contradicts Claim~\ref{c(abb)=d claim}. The case $g(b,c,d)=a$ is handled similarly, just $g'$ is defined by $$g'(x,y,z)=g(g(x,x,y),g(y,y,z),g(z,z,x)).$$
    \end{proof}

    \begin{claim}
    A permutation of order two which maps $a$ to $b$ and $c$ to $d$ is an automorphism.  
    \end{claim}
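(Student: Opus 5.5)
The claim is that the permutation $\varphi=(a\;b)(c\;d)$ is an automorphism. The plan is to follow the pattern of Claim~\ref{automorphism claim} from Case~1: I will reduce to a statement about binary terms and then rule out the asymmetric possibilities by building a cyclic term that preserves a proper three-element subset.

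\textbf{Step 1: reduction to binary terms.} Since $\mathrm{Sg}\{a,b\}=\m a$, every element of $A$ has the form $t(a,b)$ for some binary term $t$. Hence $\varphi$ is an automorphism exactly when, for every binary term $t$, the pair $(t(a,b),t(b,a))$ lies in
\[\{(a,b),(b,a),(c,d),(d,c)\}.\]
In that case the graph of $\varphi$ is the subalgebra $R_{ab}=\mathrm{Sg}_{\m a^2}\{(a,b),(b,a)\}$. Because $\alpha=\{\{a,c\},\{b,d\}\}$ has quotient $\mathbb{Z}_2^{\mathrm{aff}}$, which is swapped by the nontrivial automorphism of the quotient, $t(a,b)$ and $t(b,a)$ always lie in different $\alpha$-classes. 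So the only pairs to exclude are $(a,d)$, $(c,b)$ and their mirror images $(d,a)$, $(b,c)$. By symmetry it suffices to show that no binary term satisfies $t(a,b)=a,\ t(b,a)=d$, and none satisfies $t(a,b)=c,\ t(b,a)=b$.

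\textbf{Step 2: using what is already known.} The following facts are available:
\begin{itemize}
\item $\{a,c\}$ and $\{b,d\}$ are majority subalgebras;
\item $\{a,d\}$ is a subuniverse (Subcase 2.1);
\item $\{b,c\}$ is an affine subalgebra (Claim~\ref{bdaffineclaim});
\item every ternary cyclic term satisfies $g(a,b,b)=c$ and $g(b,a,a)=d$ (Claims~\ref{c(abb)=d claim} and \ref{g(baa)=d});
\item $g(b,c,d)=g(b,d,c)=c$ and $g(a,c,d)=g(a,d,c)=d$ (Claim~\ref{c(bde)=d}).
\end{itemize}
These already make $g$ itself behave like $\varphi$ on its two-variable values. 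For a hypothetical bad binary term $t$, I would form the cyclic term
\[g'(x,y,z)=g(t(t(x,y),z),t(t(y,z),x),t(t(z,x),y)),\]
as in Claim~\ref{automorphism claim}, and compute $g'(a,b,b)$ and $g'(b,a,a)$.

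\textbf{Step 3: the case analysis.} If $t(a,b)=a$ and $t(b,a)=d$, then $g'(a,b,b)=g(t(a,b),t(b,a),t(d,b))=g(a,d,\ast)$ with $\ast\in\{b,d\}$. Using that $\{a,d\}$ and $\{b,d\}$ are subuniverses, the aim is to show that $\{a,b,d\}$ is closed under $g'$ (or under a further iterate $g''$ built the same way). Either conclusion contradicts $\mathrm{Sg}\{a,b\}=\m a$, or it contradicts Claim~\ref{c(abb)=d claim} directly, since that claim forces $g'(a,b,b)=c$. Similarly, if $t(a,b)=c$ and $t(b,a)=b$, then the sets $\{a,c\}$ and $\{b,c\}$ are subuniverses and $t$ is determined on $\{a,b,c\}$. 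I expect $\{a,b,c\}$ to be closed under $g'$, which would give $\mathrm{Sg}\{a,b\}\subseteq\{a,b,c\}$.

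\textbf{Main obstacle.} The hard part is the three-distinct-argument values of $g'$, such as $g'(a,b,d)$ and $g'(a,d,b)$, which involve $t(t(d,a),b)$ and other values of $t$ off the generators. Here I would use dominance of a coordinate of $t$ (Lemma~\ref{dominant}) together with the known subuniverses to pin those inner values into the target three-element set. If a value stays undetermined, I would replace $t$ by $t(t(x,y),t(y,x))$, as in the proof of \eqref{binaryeq}, or iterate the construction once more, as in Claim~\ref{c(abb)=d claim}.
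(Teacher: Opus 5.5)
Your Step 1 reduction and your choice of $g'$ match the paper's proof. However, Step 3 never derives the contradiction, and the route you sketch toward it would stall.

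Two of your assertions are not right. First, $t$ is not determined on $\{a,b,c\}$. On the cross-class pairs $\{a,d\}$ and $\{b,c\}$, both copies of $\mathbb{Z}_2^{\mathrm{aff}}$, the quotient forces $t$ to be the first projection. On the majority pairs $\{a,c\}$ and $\{b,d\}$, however, $t$ may be either projection. Second, closure of $\{a,b,c\}$ under $g'$ is not available at the first level. We have $g'(b,a,a)=g(t(b,a),t(a,b),t(c,a))=g(b,c,t(c,a))$, and if $t(c,a)=a$ this is $g(a,b,c)$, which no earlier claim determines. Likewise, in your other case $g'(a,b,b)=g(a,d,t(d,b))$ equals the undetermined $g(a,d,b)$ when $t(d,b)=b$. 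Lemma~\ref{dominant} does not help here. What matters is only that binary terms act as projections on every two-element subalgebra.

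The missing step is to evaluate $g'$ at one well-chosen triple with three distinct entries and then iterate once with the same $t$. In the case $t(a,b)=c$, $t(b,a)=b$, the inner values are:
\begin{itemize}
\item $t(t(b,c),a)=t(b,a)=b$;
\item $t(t(a,b),c)=t(c,c)=c$;
\item $t(t(c,a),b)=c$, whichever of $a,c$ the value $t(c,a)$ is, since $t(c,b)=c$ and $t(a,b)=c$.
\end{itemize}
Hence $g'(b,c,a)=g(b,c,c)=b$, and similarly $g'(b,c,c)=b$. Now set $g''(x,y,z)=g'(t(t(x,y),z),t(t(y,z),x),t(t(z,x),y))$. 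This gives $g''(b,a,a)=g'(b,c,t(c,a))=b$, contradicting Claim~\ref{g(baa)=d}; no closure of a three-element set is needed.

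Your other case is the mirror image under the relabelling $a\leftrightarrow b$, $c\leftrightarrow d$, which preserves every fact you listed in Step 2. There $g'(a,d,b)=g'(a,d,d)=a$, so $g''(a,b,b)=g'(a,d,t(d,b))=a$, contradicting Claim~\ref{c(abb)=d claim}. This is exactly the paper's argument. The paper writes out only the first case and leaves the second to this symmetry.
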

    \begin{proof}
    Similarly to the proof of Claim~\ref{automorphism claim}, we assume $t(a,b)=c$, $t(b,a)=b$ and define the cyclic term $g'(x,y,z)=g(t(t(x,y),z),t(t(y,z),x),t(t(z,x),y))$. If $t(c,a)=c$, then $t(t(c,a),b)=t(c,b)=c$, while if $t(c,a)=a$, then again $t(t(c,a),b)=t(a,b)=c$. Hence, $g'(b,c,a)=g(b,c,c)=b$. If we define $g''$ by $$g''(x,y,z)=g'(t(t(x,y),z),t(t(y,z),x),t(t(z,x),y)),$$ we get $g''(b,a,a)=g'(b,c,t(c,a))$. If $t(c,a)=c$ we get $g''(b,a,a)=g'(b,c,c)=b$, while if $t(c,a)=a$ we also get $g''(b,a,a)=g'(b,c,a)=b$, either way a contradiction with Claim~\ref{g(baa)=d}.
    \end{proof}

    At this point, we have two possibilities: $\mathrm{Sg}\{c,d\}=\{c,d\}$ or $\mathrm{Sg}\{c,d\}=\m a$. If the latter of these two cases holds, then we have $g(c,d,d)=a$ and $g(d,c,c)=b$, so we can use $c$ and $d$ as generators too. Thus by the dual of Claim~\ref{c(bde)=d}, the operation $g$ is completely determined. This algebra also has a two-element majority quotient and we already classified it as $\mathbf{T}_{4,9}$. Therefore, we may assume that $\{c,d\}$ is a subalgebra.
    
    We shall find one completely determined cyclic term. Define $g'$ by $$g'(x,y,z)=g(g(x,y,y),g(y,z,z),g(z,x,x)),$$ so we get $g'(c,b,a)=g(c,d,c)=d$ and $g'(d,a,b)=c$, while $g'(a,b,c)$ and $g'(b,a,d)$ remain undetermined. Hence, we define $g''$ by $$g''(x,y,z)=g'(g'(x,y,y),g'(y,z,z),g'(z,x,x)).$$ Now, $g''(a,b,c)=g'(c,b,a)=d=g''(a,c,b)$ and $g''(a,b,d)=g''(a,d,b)=c$, while on other triples $g''$ fulfills Claim~\ref{c(abb)=d claim}, Claim~\ref{g(baa)=d} and Claim~\ref{c(bde)=d}. The next claim settles the discussion in the case when $\{a,d\}$ is a subalgebra. 
    \begin{claim}
    $(a,a,a)\in S$ holds, which contradicts Lemma~\ref{aaalemma}.
    \end{claim}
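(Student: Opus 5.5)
The plan is to exhibit $(a,a,a)$ as the value of a short chain of coordinatewise applications of the fully determined cyclic term $g''$ to elements of $S$. We then conclude from Lemma~\ref{aaalemma}, since the automorphism from the previous claim swaps $a$ and $b$. First I would collect the table of $g''$ that is needed:
\begin{itemize}
\item on the majority subalgebras $\{a,c\}$ and $\{b,d\}$ we have $g''(a,c,c)=c$;
\item on the affine subalgebras $\{a,d\}$, $\{b,c\}$ and $\{c,d\}$, $g''$ acts as the minority operation;
\item from Claims~\ref{c(abb)=d claim}, \ref{g(baa)=d} and \ref{c(bde)=d}, $g''(a,b,b)=c$ and $g''(b,c,d)=g''(b,d,c)=c$;
\item by construction, $g''(a,b,c)=g''(a,c,b)=d$.
\end{itemize}
Cyclicity then gives every rotation of these values.

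The steps, in order, are as follows.

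\emph{Step 1.} Apply $g''$ coordinatewise to the three generators of $S$. Each coordinate is a rotation of $g''(a,b,b)=c$, so $(c,c,c)\in S$.

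\emph{Step 2.} Apply $g''$ to $(a,b,b)$, $(b,a,b)$, $(c,c,c)$. This gives $(g''(a,b,c),g''(b,a,c),g''(b,b,c))=(d,d,c)$. Here the second entry is a rotation of $g''(a,c,b)=d$, and the third is $c$ because $\{b,c\}$ is affine. Since $S$ is symmetric, $(d,c,d)\in S$ as well.

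\emph{Step 3.} Apply $g''$ to $(a,b,b)$, $(d,d,c)$, $(d,c,d)$. The first coordinate is $g''(a,d,d)=a$ because $\{a,d\}$ is affine. The other two coordinates are $g''(b,d,c)=g''(b,c,d)=c$. Hence $(a,c,c)\in S$.

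\emph{Step 4.} Apply $g''$ to $(a,c,c)$, $(d,d,c)$, $(d,c,d)$. This gives $(g''(a,d,d),g''(c,d,c),g''(c,c,d))=(a,d,d)$, using that $\{a,d\}$ and $\{c,d\}$ are affine. By symmetry $(d,a,d)$ and $(d,d,a)$ also lie in $S$.

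\emph{Step 5.} Apply $g''$ to $(a,d,d)$, $(d,a,d)$, $(d,d,a)$. Minority on $\{a,d\}$ gives $(a,a,a)\in S$, which is the required contradiction with Lemma~\ref{aaalemma}.

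The main obstacle is bookkeeping rather than ideas. Each value used must really have been pinned down for this $g''$: the types of the two-element subalgebras fixed in Subcase 2.1, and the cyclic rotations of the values obtained for $g''$ on the 3-element argument sets, e.g.\ that $g''(b,a,c)$ is a rotation of $g''(a,c,b)$. I would double-check the value $g''(b,b,c)=c$ and the affine behaviour on $\{c,d\}$, since the chain hinges on them. If one of them fails, the fallback is to saturate $S$ by a finite closure computation under $g''$, which is guaranteed to terminate because $g''$ is completely determined.
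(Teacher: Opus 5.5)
Your proposal is correct and uses the same method as the paper. Both apply the fully determined cyclic term $g''$ coordinatewise inside the symmetric relation $S$ until the permutations of $(a,d,d)$ appear, and then use the minority action on $\{a,d\}$ to obtain $(a,a,a)$. Your chain passes through $(c,c,c)$, $(d,d,c)$, $(a,c,c)$, whereas the paper's passes through $(d,b,c)$, $(a,c,c)$, $(a,b,d)$, $(d,c,d)$; yours is one step shorter, and every value it uses is among those established in Subcase 2.1.
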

    \begin{proof}
    First, we get $(d,b,c)\in S$, by applying $g''$ to $(a,b,b),(a,b,b),(b,b,a)\in S$. Further, from $(d,b,c),(d,c,b),(a,b,b)\in S$ we have $(a,c,c)\in S$, which implies $(a,b,d)\in S$ after we apply $g''$ to $(a,c,c),(a,c,c),(c,b,d)$. Notice that from $(a,b,d),(d,b,a),(a,c,c)\in S$ we get $(d,c,d)\in S$. If we apply $g''$ to $(c,d,d),(a,c,c),(a,c,c)\in S$, we get $(a,d,d)\in S$. Finally, applying $g''$ to $(a,d,d),(d,a,d),(d,d,a)\in S$, we get $(a,a,a)\in S$ which concludes the proof.
    \end{proof}
    
    \item[\textsc{\textbf{Subcase 2.2}}] $\mathrm{Sg}\{a,d\}=\{a,c,d\}$.
    
    Since it is not a conservative algebra, the algebra $\m t_3^{\n}$ is the only possibility for $\{a,c,d\}$, where $\{c,d\}$ is an affine algebra. If  $\mathrm{Sg}\{b,c\}=\m a$, then we are in Subcase 2.1 with $a$ and $c$ switching roles, since $\{c,d\}$ is a subalgebra. The case  $\mathrm{Sg}\{b,c\}=\{b,c\}$ also returns us to Subcase 2.1 with $a$ and $b$ interchanged. Thus $\mathrm{Sg}\{b,c\}=\{b,c,d\}$ and it is also isomorphic to $\m t_3^{\n}$. Of course, we have that $\{c,d\}$ is an affine subalgebra and the cyclic term $g$ is completely determined on $\{a,c,d\}$ and $\{b,c,d\}$ since $\m t_3^{\n}$ has the unique ternary cyclic term.
    \begin{claim}
    It may be assumed that $g(a,b,b)=c$ and $g(b,a,a)=d$.
    \end{claim}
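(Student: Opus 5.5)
We are in Subcase 2.2: $\alpha=\{\{a,c\},\{b,d\}\}$ is a congruence with quotient $\mathbb{Z}_2^{\mathrm{aff}}$. Both $\{a,c,d\}$ and $\{b,c,d\}$ are copies of $\m t_3^{\n}$ with $\{c,d\}$ affine, and $\{a,c\}$, $\{b,d\}$ are majority subalgebras. The cyclic term $g$ is therefore fixed on $\{a,c,d\}\cup\{b,c,d\}$. The quotient forces $g(a,b,b)\in\{a,c\}$ and $g(b,a,a)\in\{b,d\}$. The plan is to eliminate the three unwanted combinations one at a time, each time replacing $g$ by a new cyclic term built with the usual composition patterns, while not disturbing the values on the two copies of $\m t_3^{\n}$.

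\textbf{Ruling out $g(a,b,b)=a$ and $g(b,a,a)=b$ together.} If both held, $\{a,b\}$ would be closed under the cyclic term $g$, so by Proposition~\ref{subuniverse} it would be a subuniverse, contradicting $\mathrm{Sg}\{a,b\}=\m a$.

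\textbf{The mixed cases.} Suppose $g(a,b,b)=a$ and $g(b,a,a)=d$. I would use the trick from the opening of Case~2: set $h(x,y,z)=g(x,g(x,y,y),g(x,y,z))$ and $g'(x,y,z)=g(h(x,y,z),h(y,z,x),h(z,x,y))$. Track the triples $(b,a,a),(a,b,a),(a,a,b)$ coordinatewise, using that $\{a,c\}$ and $\{b,d\}$ are majority subalgebras and that $\alpha$ is affine. The aim is $g'(b,a,a)=b$ while $g'(a,b,b)=a$, which lands back in the excluded situation above. The case $g(a,b,b)=c$, $g(b,a,a)=b$ is symmetric, with the roles of $a$ and $b$ interchanged. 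Since no automorphism is available, I would redo the computation with the analogous term $g(x,x,g(x,y,z))$ and aim for $g'(a,b,b)=a$ and $g'(b,a,a)=b$.

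\textbf{Main obstacle.} The hardest step is when the first such composite produces a value in $\{c,d\}$ rather than the desired generator. If that happens, I would compose once more, using the $\m t_3^{\n}$ values $g(a,c,c)=d$, $g(b,c,c)=d$, $g(a,d,d)=a$ or $c$, and so on, read from Table~\ref{table:1} via the isomorphisms. The second composite should push the output back to $a$ or $b$. Only finitely many patterns need checking, because $g$ is already determined on every triple lying inside $\{a,c,d\}$ or $\{b,c,d\}$.

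Once only $g(a,b,b)=c$ and $g(b,a,a)=d$ survive, the claim follows: we may replace the original cyclic term by the new one, since every modification keeps it cyclic and leaves it unchanged on $\{a,c,d\}$ and $\{b,c,d\}$, by Lemma~\ref{Aljahomework}.
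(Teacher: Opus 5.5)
Your plan for the mixed cases does not work, and the problem is more than a missing computation. The trick you borrow from the opening of Case~2 relied on $\{b,d\}$ being affine there, so that $g(b,d,d)=b$. In Subcase~2.2, $\{b,d\}$ is a majority subalgebra.

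\textbf{Your two terms return the same mixed values.} Suppose $g(a,b,b)=a$ and $g(b,a,a)=d$, and take $h(x,y,z)=g(x,g(x,y,y),g(x,y,z))$. Then $h(b,a,a)=g(b,d,d)=d$, and $h(a,a,b)=h(a,b,a)=g(a,a,d)=d$, using the $\m t_3^{\n}$ value on $\{a,c,d\}$. Hence $g'(b,a,a)=d$, while $g'(a,b,b)=g(a,a,g(a,b,d))=a$. So $g'$ has exactly the same mixed values as $g$. Likewise, suppose $g(a,b,b)=c$ and $g(b,a,a)=b$. The term built from $g(x,x,g(x,y,z))$ gives $g''(b,a,a)=b$, and $g''(a,b,b)=g(a,c,c)=c$ because $g(b,b,c)=c$ in $\{b,c,d\}$. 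Again nothing changes.

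\textbf{Your target cannot be reached by composing $g$.} You want a cyclic term acting as the minority on $\{a,b\}$. Take $g(a,b,b)=a$, $g(b,a,a)=d$ and $g(a,b,c)=g(a,c,b)=d$; this is compatible with everything established so far. Then $g$ outputs $b$ only when at least two of its arguments equal $b$. Consequently the $6$-tuples with at most one $b$ among their last three entries are closed under $g$: two $2$-subsets of a $3$-set always meet. This set contains $(a,b,b,b,a,a)$, $(b,a,b,a,b,a)$ and $(b,b,a,a,a,b)$, but not $(a,a,a,b,b,b)$. So no term of $\mathrm{Clo}(A;g)$ is the minority on $\{a,b\}$, and ``composing once more'' cannot refute this case. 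At this stage a mixed case can only be converted, not eliminated.

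\textbf{The missing idea is that conversion.} It uses the $\m t_3^{\n}$ values to push toward $c,d$, not back to $a,b$. The hypotheses are symmetric under $a\leftrightarrow b$, $c\leftrightarrow d$, so assume $g(a,b,b)=c$ and $g(b,a,a)=b$. Put $g_1(x,y,z)=g(g(x,y,y),g(y,z,z),g(z,x,x))$, and let $g_2$ be obtained from $g_1$ in the same way. Then
\[
g_1(a,b,b)=g(c,b,b)=c,\qquad g_1(b,a,c)=g(b,c,c)=d,\qquad g_1(b,a,a)=g(b,a,c)\in\{b,d\}.
\]
The restrictions of $g_1$ to $\{a,c,d\}$ and $\{b,c,d\}$ are forced by Lemma~\ref{Aljahomework}. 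So in either value of $g_1(b,a,a)$ you get $g_2(a,b,b)=g_1(c,b,g_1(b,a,a))=c$ and $g_2(b,a,a)=g_1(g_1(b,a,a),a,c)=d$. This is precisely the paper's argument.
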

    \begin{proof}
    Suppose, without loss of generality, that $g(a,b,b)=c$ and $g(b,a,a)=b$. We find a cyclic term $g_2$ such that $g_2(a,b,b)=c$ and $g_2(b,a,a)=d$. Let $g_1$ and $g_2$ be ternary terms such that
    \begin{align*}
        g_1(x,y,z)&=g(g(x,y,y),g(y,z,z),g(z,x,x)),\\
        g_2(x,y,z)&=g_1(g_1(x,y,y),g_1(y,z,z),g_1(z,x,x)).
    \end{align*}
    We compute $g_1(a,b,b)=g(c,b,b)=c$, $g_1(b,a,c)=g(b,c,c)=d$ and $g_1(b,a,a)=g(b,a,c)\in \{b,d\}$. Hence $g_2(a,b,b)=g_1(c,b,g_1(b,a,a))=c$ by the table of $\m t_3^{\n}$. Also, if $g_1(b,a,a)=d$, then $g_2(b,a,a)=g_1(d,a,c)=d$  by the table of $\m t_3^{\n}$. If $g_1(b,a,a)=b$ then $g_2(b,a,a)=g_1(b,a,c)=d$, which was needed to prove the claim.
    \end{proof}
    Hence, let $g(a,b,b)=c$ and $g(b,a,a)=d$. It remains to determine values $g(a,b,c)$, $g(a,c,b)$, $g(a,b,d)$ and $g(a,d,b)$. Define a new cyclic ternary term $g'$ by 
    \[
    g'(x,y,z)=g(g(x,y,y),g(y,z,z),g(z,x,x)).
    \]
    This term is completely determined, as one can easily check that 
    \begin{gather}\label{mtat11}
    \begin{aligned}
        g'(a,b,b)=g(c,b,d)=c,\,& g'(a,b,d)=g(c,d,d)=c,\,g'(a,d,b)=g(c,b,d)=c,\\
        g'(b,a,a)=g(d,a,c)=d,\,& g'(a,b,c)=g(c,d,a)=d\text{ and }g'(a,c,b)=g(c,c,d)=d
    \end{aligned}
    \end{gather}
    hold. We shall prove that $(A;g')$ is a minimal Taylor algebra. It suffices to show that there is no ternary cyclic term $g^*\in\mathrm{Clo}((A;g'))$ such that $g^*(a,a,b)=b$ and $g^*(b,b,a)=a$, since then we can construct $g'$ from any ternary cyclic term just like we did above.

    \begin{claim}
    Any ternary cyclic term $g^*\in\mathrm{Clo}((A;g'))$ satisfies $g^*(a,a,b)=d$ and $g^*(b,b,a)=c$.
    \end{claim}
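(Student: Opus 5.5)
Throughout, let $\m a'=(A;g')$ with $g'$ as computed in \eqref{mtat11}. The plan is to show that the values of any ternary cyclic term $g^*\in\mathrm{Clo}(\m a')$ on $(a,a,b)$ and $(b,b,a)$ are forced. The forcing should come from structure that every term of $\m a'$ must preserve, namely congruences, subuniverses and automorphisms of $\m a'$.

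First I would record the invariant structure of $\m a'$.
\begin{itemize}
\item The congruence $\alpha=\{\{a,c\},\{b,d\}\}$ has quotient $\mathbb{Z}_2^{\mathrm{aff}}$.
\item The subuniverses $\{a,c,d\}$ and $\{b,c,d\}$ are both isomorphic to $\m t_3^{\n}$, and $\{c,d\}$ is affine.
\item The permutation $\varphi=(a\;b)(c\;d)$ is an automorphism. I would check this directly from the table of $g'$; the values in \eqref{mtat11} and the analogous ones for $b$ are visibly swapped by $\varphi$.
\end{itemize}
Since $g^*$ is cyclic and $\m a'/\alpha$ is $\mathbb{Z}_2^{\mathrm{aff}}$, we have $g^*(a,a,b)\in\{b,d\}$ and $g^*(b,b,a)\in\{a,c\}$. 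Because $\varphi$ commutes with $g^*$, $g^*(a,a,b)=b$ holds iff $g^*(b,b,a)=a$. So exactly two options remain: either $g^*(a,a,b)=d$ and $g^*(b,b,a)=c$, or $\{a,b\}$ is closed under $g^*$. It therefore suffices to rule out the second option.

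To exclude it, I would use the relation $S=\mathrm{Sg}_{\m a'^3}\{(a,b,b),(b,a,b),(b,b,a)\}$. If $g^*$ preserved $\{a,b\}$ as a minority on $\{a,b\}$, then $g^*$ applied to the generators would give $(a,a,a)\in S$, contradicting Lemma~\ref{aaalemma}. Hence the goal is to show that $(a,a,a)\notin S$ directly inside $\m a'$.

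The natural approach is to exhibit an explicit subuniverse $T\subseteq A^3$ of $\m a'^3$ that contains the three generators but not $(a,a,a)$. I would take $T$ to be defined by $\alpha$-parity: triples whose $\alpha$-classes have an odd number of $\{a,c\}$-entries. This is preserved since $\alpha$ gives an affine quotient. I would then intersect this with a further constraint that excludes $(a,a,a)$ among the all-$\{a,c\}$ triples, for instance ``if all three coordinates lie in $\{a,c\}$ then at least one equals $c$''. This generalizes the argument of Lemma~\ref{aaalemma}: every term $h$ of $\m a'$ with $h(a,b,b)=h(b,a,b)=h(b,b,a)=a$ would, via $\varphi$, act as a minority on $\{a,b\}$, making $\{a,b\}$ a $\mathbb{Z}_2^{\mathrm{aff}}$ subalgebra.

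The main obstacle is verifying closure of the candidate $T$ under $g'$, which requires checking the table of $g'$ on mixed triples. If that verification fails, I would fall back on computing $S$ directly. Using symmetry of $S$ and the automorphism $\varphi$, I would close the generators under $g'$, tracking orbits, and show that the closure stabilizes without reaching $(a,a,a)$; on four elements this is a finite, though tedious, computation.
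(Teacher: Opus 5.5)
\textbf{There is a genuine gap: your proposed invariant $T$ is not a subuniverse, and the fallback is an unexecuted computation.} Your reduction itself is sound. The partition $\alpha$ is a congruence of $(A;g')$ with quotient $\mathbb{Z}_2^{\mathrm{aff}}$, and $\varphi=(a\;b)(c\;d)$ is an automorphism of $(A;g')$, as one checks from \eqref{mtat11}. So it suffices to show $(a,a,a)\notin S$. You are also right not to lean on Lemma~\ref{aaalemma}: its contradiction comes from minimality of the algebra, which is exactly what this claim is being used to establish.

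\textbf{Why $T$ fails.} The triples $(c,a,a),(a,c,a),(a,a,c)$ each have three entries in $\{a,c\}$, which is odd, and each contains a $c$. So all three lie in $T$. But $\{a,c\}$ is a majority subalgebra, so applying $g'$ coordinatewise gives $(g'(c,a,a),g'(a,c,a),g'(a,a,c))=(a,a,a)\notin T$. Any constraint of the form ``inside $\{a,c\}^3$, not all entries equal $a$'' is destroyed by the majority on $\{a,c\}$. The parity condition does not help either, since $(a,a,a)$ has three $\{a,c\}$-entries and satisfies it. Your fallback of closing the generators by hand would work in principle, because $(a,a,a)\notin S$ is true. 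As written, though, it is a promise rather than an argument.

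\textbf{The missing idea.} Inspect where $g'$ can output $a$:
\begin{itemize}
\item On the copy of $\m t_3^{\n}$ with universe $\{a,c,d\}$ (where $c\mapsto 0$, $a\mapsto 1$, $d\mapsto 2$), the value $a$ arises only from $(a,a,a)$ and the cyclic shifts of $(a,a,c)$.
\item The subuniverse $\{b,c,d\}$ never produces $a$.
\item Every triple containing both $a$ and $b$ is sent to $c$ or $d$ by \eqref{mtat11}.
\end{itemize}
Hence $g'(x,y,z)=a$ forces at least two of $x,y,z$ to equal $a$.

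It follows that $R=\{(x,y,z)\in A^3:(x,y)\neq(a,a)\}$ is a subuniverse of $(A;g')^3$. Suppose an output has $a$ in its first two coordinates. Then the first and second rows each have $a$ in at least two of the three columns. By pigeonhole, some column has $a$ in both rows, so that input lies outside $R$. Since $R$ contains $(a,b,b),(b,a,b),(b,b,a)$ but not $(a,a,a)$, you get $(a,a,a)\notin S$, and your reduction finishes the proof.

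This is exactly the paper's argument in mirrored form. There, $b$ is produced only from cyclic shifts of $(b,b,b)$ and $(b,b,d)$, so $A^2\setminus\{(b,b)\}$ is a subuniverse. Hence $(b,b)\notin\mathrm{Sg}\{(a,a),(a,b),(b,a)\}$, which rules out $g^*(a,a,b)=b$ directly.
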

    \begin{proof}
We shall prove that any ternary cyclic term $g^*\in\mathrm{Clo}((A;g'))$ satisfies $g^*(a,a,b)=d$, and the proof of $g^*(b,b,a)=c$ is analogous. To do that, it is sufficient to prove that $(b,b)\notin S$, where
\[
S=\mathrm{Sg}_{\m a^2}\left\{
\begin{bmatrix}
a \\
a
\end{bmatrix}\!,\!
\begin{bmatrix}
a \\
b 
\end{bmatrix}\!,\!
\begin{bmatrix}
b \\
a 
\end{bmatrix}
\right\}.
\]
In fact, the core of the argument is showing that the collection of all pairs other than $(b,b)$ forms a subalgebra. Indeed, the only two ways to obtain $b$ are $g'(b,b,b)$ and $g'(b,b,d)$, so to get $(b,b)$ as an output requires $(b,b)$ on at least one input. Therefore, since $A^2\setminus\{(b,b)\}$ is a subuniverse, we have $(b,b)\notin S$, which concludes the proof.
\end{proof}
    The minimal Taylor algebra $(A;g')$ where $\{a,c,d\}$ and $\{b,c,d\}$ are subalgebras isomorphic to $\m t_3^{\n}$ and $g'$ additionally satisfies equations \eqref{mtat11} is denoted $\m t_{4,11}$.

    \item[\textsc{\textbf{Subcase 2.3}}] $\mathrm{Sg}\{a,d\}=\mathrm{Sg}\{b,c\}=\m a$.

    In this case, we can also assume $\mathrm{Sg}\{c,d\}=\m a$, as otherwise we could recall some of the previous cases. In other words, the algebra $\m a$ is generated by any pair of elements from different $\alpha$-classes.
    \begin{claim}
    The following may be assumed:
    \begin{itemize}
        \item $g(a,b,b)=a$, and therefore $g(b,a,a)=d$;
    
        \item $g(a,d,d)=c$.
    \end{itemize}
    \end{claim}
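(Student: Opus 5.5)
We are in Subcase 2.3 of Case 2: $\alpha=\{\{a,c\},\{b,d\}\}$, the quotient $\m a/\alpha$ is $\mathbb{Z}_2^{\mathrm{aff}}$, the pairs $\{a,c\}$ and $\{b,d\}$ are majority subalgebras, and every pair from different $\alpha$-classes generates $\m a$. The plan is to normalize a ternary cyclic term $g$ in two stages, each time replacing $g$ by a cyclic term built from $g$, as in the previous subcases.

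\textbf{First stage: arrange $g(a,b,b)=a$.} Since $\m a/\alpha$ is the minority algebra, $g(a,b,b)\in\{a,c\}$ and $g(b,a,a)\in\{b,d\}$. The four pairs $(a,c),(a,d),(c,b),(c,d)$ are each generating and each consists of one element from each $\alpha$-class, so the hypotheses are symmetric in them. Hence it suffices to find some $x\in\{a,c\}$ and $y\in\{b,d\}$ with $g(x,y,y)=x$ or $g(y,x,x)=y$, and then rename. Suppose instead that $g(x,y,y)$ is always the other element of $x/\alpha$, and likewise for $g(y,x,x)$. Then I would apply the construction $(\star)$ from Subcase~2.3 of Subsection~\ref{s:maj}:
\[
g'(x,y,z)=g(g(x,g(x,y,z),g(x,y,z)),\dots).
\]
Since $\{a,c\}$ is a majority algebra, $g(a,c,c)=c$. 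Using this, $g'(a,b,b)=g(g(a,c,c),g(b,c,c),g(b,c,c))$ (with the cyclic arrangement as computed there), and this should collapse to $a$ once we use $g(b,c,c)\in\{b,d\}$ and $g(a,b,b)=c$. If the collapse does not go through directly, I would fall back to $g(x,y,y)$ combined with $g(x,x,\cdot)$, as in Claim~\ref{c(abb)=d claim}.

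Once $g(a,b,b)=a$, the value $g(b,a,a)=d$ is forced. If instead $g(b,a,a)=b$, the set $\{a,b\}$ would be closed under $g$, which is cyclic and hence Taylor. By Proposition~\ref{subuniverse} it would then be a subuniverse, contradicting $\mathrm{Sg}\{a,b\}=\m a$.

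\textbf{Second stage: arrange $g(a,d,d)=c$, keeping the first normalization.} By the minority quotient, $g(a,d,d)\in\{a,c\}$. Suppose $g(a,d,d)=a$, and look at $g(d,a,a)$.
\begin{itemize}
\item If $g(d,a,a)=d$, then $\{a,d\}$ is closed under $g$, contradicting $\mathrm{Sg}\{a,d\}=\m a$.
\item So $g(d,a,a)=b$. Then $d$ plays the role that $b$ played in the first normalization. I would compose $g$ with the binary term $t(x,y)=g(x,y,y)$, in the pattern $g(t(t(x,y),z),\dots)$ used in Claim~\ref{c(abb)=d claim}. The goal is a cyclic term that either closes $\{a,b,d\}$ or closes $\{a,b\}$, either of which contradicts generation. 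Failing that, the composition should yield a cyclic term with $g(a,d,d)=c$ that still has $g(a,b,b)=a$.
\end{itemize}
The key fact to check is that compositions of the shape $g(g(x,y,y),g(y,z,z),g(z,x,x))$ preserve the value on $(a,b,b)$. This holds because $g(a,a,a)=a$ and $g(b,a,a)=d$, so the inner entries are $a$ together with an element of $\{b,d\}$. One then needs $g(a,d,d)$ and $g(a,b,d)$, which I expect to be controlled by the majority behaviour on $\{a,c\}$.

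\textbf{Main obstacle.} The main difficulty is the second stage. A normalization aimed at $(a,d,d)$ can destroy the one already achieved at $(a,b,b)$, and there is no automorphism available to help. I would therefore do explicit bookkeeping of the values $g(x,x,y)$ on the eight pairs across classes, and choose the composition pattern ($\star$ versus the $t$-pattern) case by case, as was done in Subsection~\ref{s:maj}.
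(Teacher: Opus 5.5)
Your first stage is sound. Assume every cross value is the other element of its $\alpha$-class. Then the composition
\[
g'(x,y,z)=g\bigl(g(x,w,w),g(y,w,w),g(z,w,w)\bigr),\qquad w=g(x,y,z),
\]
gives $g'(a,b,b)=g(c,d,d)=a$. The paper uses $g(g(x,x,w),g(y,y,w),g(z,z,w))$ instead, which needs only $g(a,b,b)=c$ and $g(c,b,b)=a$; either works.

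The gap is the second stage, which you leave as a plan, and neither route you propose closes.
\begin{itemize}
\item The composition $g(g(x,y,y),g(y,z,z),g(z,x,x))$ evaluates at $(a,b,b)$ to $g(a,b,d)$. At this point that value is only known to lie in $\{a,c\}$, so your ``key fact'' that it preserves $g(a,b,b)=a$ is not justified.
\item The $t$-pattern with $t(x,y)=g(x,y,y)$ sends $(a,b,b)$ and $(a,d,d)$ to $g(a,d,b)$ and $g(a,b,d)$. Both are undetermined elements of $\{a,c\}$, so the new term may destroy $g(a,b,b)=a$ rather than keep it, and you give no way out of that case.
\end{itemize}

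The missing idea is that the second bullet is not a normalization to be arranged. It holds automatically for every cyclic term with $g(a,b,b)=a$, so there is no tension between the two bullets and no bookkeeping is needed.

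You already derived every ingredient: $g(b,a,a)=d$, and, assuming $g(a,d,d)=a$, also $g(d,a,a)=b$. Now apply the very composition from your first stage.
\begin{itemize}
\item At $(a,b,b)$ we have $w=a$, so $g'(a,b,b)=g(g(a,a,a),g(b,a,a),g(b,a,a))=g(a,d,d)=a$.
\item At $(b,a,a)$ we have $w=d$, so $g'(b,a,a)=g(g(b,d,d),g(a,d,d),g(a,d,d))=g(d,a,a)=b$. This uses that $\{b,d\}$ is a majority subalgebra, so $g(b,d,d)=d$.
\end{itemize}
Thus $\{a,b\}$ is closed under the cyclic term $g'$. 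By Proposition~\ref{subuniverse} it is then a subuniverse, contradicting $\mathrm{Sg}\{a,b\}=\m a$. Hence $g(a,d,d)=c$, which is exactly how the paper argues.
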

    \begin{proof}
     First, we shall prove that a cyclic term $g'$ satisfying $g'(a,b,b)=a$ can always be constructed. Suppose $g(a,b,b)=c$, and therefore $g(c,b,b)=a$ because, if $g(c,b,b)=c$, we can switch $a$ and $c$. Define a ternary cyclic term $g'$ by $$g'(x,y,z)=g(g(x,x,g(x,y,z)),g(y,y,g(x,y,z)),g(z,z,g(x,y,z))).$$ Then $g'(a,b,b)=g(g(a,a,c),g(b,b,c),g(b,b,c))=g(a,a,a)=a$. Now we show that for any such term, we have $g'(a,d,d)=c$. If $g'(a,d,d)\neq c$, then $g'(a,d,d)=a$, and thus $g'(d,a,a)=b$. Define a new ternary cyclic term $g''$ by 
$$
\begin{gathered}
g''(x,y,z)=g'(g'(x,g'(x,y,z),g'(x,y,z)),\\
g'(y,g'(x,y,z),g'(x,y,z)),g'(z,g'(x,y,z),g'(x,y,z))).
\end{gathered}
$$ Now 
     $$
     \begin{gathered}
     g''(a,b,b)=g'(g'(a,a,a),g'(b,a,a),g'(b,a,a))=g'(a,d,d)=a\text{ and }\\
     g''(b,a,a)=g'(g'(b,d,d),g'(a,d,d),g'(a,d,d))=g'(d,a,a)=b.
     \end{gathered}
     $$ 
     Hence, $\{a,b\}$ is closed under $g''$, a contradiction.
    \end{proof}
   Recall the term $g'$ from the previous claim. From $$g'(a,b,b)=g(g(a,a,a),g(b,b,a),g(b,b,a))=g(a,a,a)=a$$ follows $g'(b,a,a)=d$. On the other hand, we have $$g'(b,a,a)= g(b,g(a,a,d), g(a,a,d)),$$ and since $g(a,a,d)$ is in $\{b,d\}$, which is a majority algebra, we get $g'(b,a,a)=g(b, g(a,a,d), g(a,a,d)) = g(a,a,d)$. Hence, we have $g(a,a,d)=d$. Similarly, by acting with $g'$ on sets $\{a,d\}$ and $\{c,d\}$, we get $g(d,d,c)=c$, $g(d,c,c)=b$, $g(c,c,b)=b$ and $g(c,b,b)=a$. In other words, terms $g$ and $g'$ match on every two-element set. Let us look what happens on the set $\{a,b,c\}$: $g'(a,b,c)=g(g(a,a,\ast),g(b,b,\ast),g(c,c,\ast))$, where $\ast\in\{b,d\}$. Hence, $g'(a,b,c)=g(d,b,b)=b$ holds, and, similarly, $g'(a,c,b)=b$. Furthermore, by the same reasoning (after appropriately permuting the elements), we get:
   \begin{align*}
       g'(a,b,d)&=a=g'(a,d,b),\\
       g'(a,c,d)&=d=g'(a,d,c),\\
       g'(b,c,d)&=c=g'(b,d,c).
   \end{align*}
   The cyclic term $g'$ is completely determined. The next claim proves that a clone generated by $g'$ is not a minimal Taylor.
   \begin{claim}
There exists a cyclic ternary term $g^*\in\mathrm{Clo}((A;g'))$ such that $\{a,b\}$ is closed under it, which contradicts the assumption that $\mathrm{Sg}\{a,b\}=\m a$.
\end{claim}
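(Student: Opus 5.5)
The plan is to build $g^*$ from $g'$ by the same kind of compositions used throughout this section, $g^*(x,y,z)=g'(h(x,y,z),h(y,z,x),h(z,x,y))$, choosing the inner ternary term $h$ so that both $g^*(a,b,b)$ and $g^*(b,a,a)$ land in $\{a,b\}$. Since $g'$ is completely determined, each candidate is checked by a short table computation. By cyclicity it suffices to control the values on $(a,b,b)$ and $(b,a,a)$; Proposition~\ref{subuniverse} then makes $\{a,b\}$ a subuniverse, contradicting $\mathrm{Sg}\{a,b\}=\m a$.

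First I would record the full behaviour of $g'$ on two-element sets from the computations above. The sets $\{a,c\}$ and $\{b,d\}$ are majority. The remaining values are
\begin{gather*}
g'(a,b,b)=a,\quad g'(b,a,a)=d,\quad g'(a,a,d)=d,\quad g'(a,d,d)=c,\\
g'(d,d,c)=c,\quad g'(d,c,c)=b,\quad g'(c,c,b)=b,\quad g'(c,b,b)=a.
\end{gather*}
This shows a cyclic ``rotation'' pattern $a\to d\to c\to b\to a$ across the four cross pairs. I would also check whether the permutation $(a\,d\,c\,b)$, or its square, is an automorphism of $(A;g')$; if so, this would reduce the number of cases.

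The value $g'(a,b,b)=a$ is already good, so the task is to repair $g'(b,a,a)=d$ without destroying it. The first choice is $h(x,y,z)=g'(x,g'(x,y,z),g'(x,y,z))$, as in $(\star)$. It gives
\begin{gather*}
h(b,a,a)=g'(b,d,d)=b,\quad h(a,b,a)=h(a,a,b)=g'(a,d,d)=c,\quad\text{so } g^*(b,a,a)=g'(b,c,c)=b.
\end{gather*}
However it sends $(a,b,b)$ to $g'(a,d,d)=c$.

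So the real step is to combine two such inner terms. I would use a first-coordinate-keeping term on one side and the $(\star)$-type term on the other, for instance $h(x,y,z)=g'(x,\,g'(x,x,y),\,g'(x,g'(x,y,z),g'(x,y,z)))$, or to pre-compose with the binary term $t(x,y)=g'(x,y,y)$, which fixes $a$ against $b$. The aim is a term whose column values on $\{(a,b,b),(b,a,b),(b,b,a)\}$ are $(a,\ast,\ast)$ with $g'(a,\ast,\ast)=a$, while on $\{(b,a,a),(a,b,a),(a,a,b)\}$ they are $(b,c,c)$ or $(b,d,d)$.

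The main obstacle is exactly this simultaneous control, since every single-step modification seen so far fixes one of the two values and spoils the other. If direct search stalls, I would fall back on the relational argument used for $\m t_{4,8}$ and in Lemma~\ref{aaalemma}. Consider $S=\mathrm{Sg}_{\m a^3}\{(a,b,b),(b,a,b),(b,b,a)\}$ and derive $(a,a,a)\in S$ by a chain of applications of $g'$ and $t$. Together with the analogous fact for $(b,b,b)$ in the relation generated by $(b,a,a),(a,b,a),(a,a,b)$, this yields the required cyclic term acting on $\{a,b\}$. The chain would be modelled on the one in Subcase 2.1, where $(d,b,c)$, $(a,c,c)$, $(a,b,d)$, $(d,c,d)$, $(a,d,d)$ were obtained successively.
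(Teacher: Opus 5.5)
Your framework matches the paper's, since its $g^*$ is also $g'(w(x,y,z),w(y,z,x),w(z,x,y))$ for an inner term $w$, and your table of $g'$ on two-element sets is correct. The gap is that you never exhibit an inner term that works, and because $g'$ is completely determined, that is the whole content of the claim. Your explicit candidate fails. For $h(x,y,z)=g'(x,g'(x,x,y),g'(x,g'(x,y,z),g'(x,y,z)))$, the values at $(a,b,b),(b,a,b),(b,b,a)$ are $(d,a,b)$, so $g^*(a,b,b)=g'(d,b,a)=a$. At $(b,a,a),(a,b,a),(a,a,b)$ they are $(a,d,a)$, so $g^*(b,a,a)=g'(a,a,d)=d$. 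The simplest pre-composition with $t(x,y)=g'(x,y,y)$, namely $h=t(t(x,y),z)$, fails the same way. Its values are $(a,b,d)$ and $(d,a,a)$, giving $g^*(a,b,b)=a$ but $g^*(b,a,a)=g'(d,a,a)=d$. Both candidates just reproduce the profile of $g'$ itself.

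The fallback does not close the gap either. Showing $(a,a,a)\in S$ gives a term $h_1$ controlling the orbit of $(a,b,b)$. Showing $(b,b,b)$ in the other relation gives a possibly different term $h_2$ controlling the orbit of $(b,a,a)$. Nothing in your argument merges them into one term. In Lemma~\ref{aaalemma} the merging came from an automorphism swapping $a$ and $b$, and none exists here, since $g'(a,b,b)=a$ while $g'(b,a,a)=d$. The permutation $(a\,d\,c\,b)$ you suspected is indeed an automorphism of $(A;g')$, but no power of it exchanges $a$ and $b$. Modulo $\alpha$ you only learn that $h_1$ sends the second orbit into $\{b,d\}$. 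Symmetrizing with $g'$ can then return $d$, which puts you back at the profile of $g'$. What you actually need is one term doing both jobs, i.e.\ the tuple $(a,a,a,b,b,b)$ in the $6$-ary relation generated by $(a,b,b,b,a,a)$, $(b,a,b,a,b,a)$, $(b,b,a,a,a,b)$. Even the chain giving $(a,a,a)\in S$ is only announced, not carried out.

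The missing idea is to first produce non-constant intermediate columns, then use the values of $g'$ on triples of three distinct elements with the variables placed asymmetrically. The paper does this in three steps:
\begin{enumerate}
\item $g''(x,y,z)=g'(g'(x,y,z),g'(x,y,z),g'(x,y,y))$ sends the two orbits to $(a,d,d)$ and $(d,c,c)$.
\item $w(x,y,z)=g'(y,z,g''(x,y,z))$ turns these into $(a,a,a)$ and $(d,b,b)$, using $g'(a,b,d)=g'(a,d,b)=a$ and $g'(a,b,c)=g'(a,c,b)=b$.
\item $g^*(x,y,z)=g'(w(x,y,z),w(y,z,x),w(z,x,y))$ then satisfies $g^*(a,b,b)=a$ and $g^*(b,a,a)=g'(d,b,b)=b$.
\end{enumerate}
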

   
\begin{proof}
Define the ternary term $g''$ by $$g''(x,y,z)=g'(g'(x,y,z),g'(x,y,z),g'(x,y,y)).$$ Then
\begin{equation}\label{eq for g''}
g''\left(
\begin{bmatrix}
a \\
b \\
b
\end{bmatrix}\!,\!
\begin{bmatrix}
b \\
a \\
b
\end{bmatrix}\!,\!
\begin{bmatrix}
b \\
b \\
a
\end{bmatrix}
\right) =g'\left(
g'\left(
\begin{bmatrix}
a \\
b \\
b
\end{bmatrix}\!,\! 
\begin{bmatrix}
b \\
a \\
b
\end{bmatrix}\!,\!
\begin{bmatrix}
b \\
b \\
a
\end{bmatrix}
\right)\!,
g'\left(
\begin{bmatrix}
a \\
b \\
b
\end{bmatrix}\!,\! 
\begin{bmatrix}
b \\
a \\
b
\end{bmatrix}\!,\!
\begin{bmatrix}
b \\
b \\
a
\end{bmatrix}
\right)\!,
g'\left(
\begin{bmatrix}
a \\
b \\
b
\end{bmatrix}\!,\! 
\begin{bmatrix}
b \\
a \\
b
\end{bmatrix}\!,\!
\begin{bmatrix}
b \\
a \\
b
\end{bmatrix}\!
\right)
\right)=
\end{equation}
$$
g'\left(
\begin{bmatrix}
a \\
a \\
a
\end{bmatrix}\!,\! 
\begin{bmatrix}
a \\
a \\
a
\end{bmatrix}\!,\!
\begin{bmatrix}
a \\
d \\
b
\end{bmatrix}
\right)=
\begin{bmatrix}
a \\
d \\
d
\end{bmatrix}\!,
$$
and
\begin{equation}\label{eq for g'' 2}
g''\left(
\begin{bmatrix}
b \\
a \\
a
\end{bmatrix}\!,\!
\begin{bmatrix}
a \\
b \\
a
\end{bmatrix}\!,\!
\begin{bmatrix}
a \\
a \\
b
\end{bmatrix}
\right) =g'\left(
g'\left(
\begin{bmatrix}
b \\
a \\
a
\end{bmatrix}\!,\! 
\begin{bmatrix}
a \\
b \\
a
\end{bmatrix}\!,\!
\begin{bmatrix}
a \\
a \\
b
\end{bmatrix}
\right)\!,
g'\left(
\begin{bmatrix}
b \\
a \\
a
\end{bmatrix}\!,\! 
\begin{bmatrix}
a \\
b \\
a
\end{bmatrix}\!,\!
\begin{bmatrix}
a \\
a \\
b
\end{bmatrix}
\right)\!,
g'\left(
\begin{bmatrix}
b \\
a \\
a
\end{bmatrix}\!,\! 
\begin{bmatrix}
a \\
b \\
a
\end{bmatrix}\!,\!
\begin{bmatrix}
a \\
b \\
a
\end{bmatrix}\!
\right)
\right)=
\end{equation}
$$
g'\left(
\begin{bmatrix}
d \\
d \\
d
\end{bmatrix}\!,\! 
\begin{bmatrix}
d \\
d \\
d
\end{bmatrix}\!,\!
\begin{bmatrix}
d \\
a \\
a
\end{bmatrix}
\right)=
\begin{bmatrix}
d \\
c \\
c
\end{bmatrix}\!.
$$
Now we define the ternary term $w$ by $w(x,y,z)=g'(y,z,g''(x,y,z))$, so, from \eqref{eq for g''} and \eqref{eq for g'' 2} we get
 \begin{equation}\label{eq for w}
w\left(
\begin{bmatrix}
a \\
b \\
b
\end{bmatrix}\!,\!
\begin{bmatrix}
b \\
a \\
b
\end{bmatrix}\!,\!
\begin{bmatrix}
b \\
b \\
a
\end{bmatrix}
\right)=
g'\left(
\begin{bmatrix}
b \\
a \\
b
\end{bmatrix}\!,\!
\begin{bmatrix}
b \\
b \\
a
\end{bmatrix}\!,\!
g''\left(
\begin{bmatrix}
a \\
b \\
b
\end{bmatrix}\!,\!
\begin{bmatrix}
b \\
a \\
b
\end{bmatrix}\!,\!
\begin{bmatrix}
b \\
b \\
a
\end{bmatrix}
\right)
\right)=
g'\left(
\begin{bmatrix}
b \\
a \\
b
\end{bmatrix}\!,\!
\begin{bmatrix}
b \\
b \\
a
\end{bmatrix}\!,\!
\begin{bmatrix}
a \\
d \\
d
\end{bmatrix}
\right)=
\begin{bmatrix}
a \\
a\\
a
\end{bmatrix},
\end{equation}
\begin{equation}\label{eq for w 2}
w\left(
\begin{bmatrix}
b \\
a \\
a
\end{bmatrix}\!,\!
\begin{bmatrix}
a \\
b \\
a
\end{bmatrix}\!,\!
\begin{bmatrix}
a \\
a \\
b
\end{bmatrix}
\right)=
g'\left(
\begin{bmatrix}
a \\
b \\
a
\end{bmatrix}\!,\!
\begin{bmatrix}
a \\
a \\
b
\end{bmatrix}\!,\!
g''\left(
\begin{bmatrix}
b \\
a \\
a
\end{bmatrix}\!,\!
\begin{bmatrix}
a \\
b \\
a
\end{bmatrix}\!,\!
\begin{bmatrix}
a \\
a \\
b
\end{bmatrix}
\right)
\right)=
g'\left(
\begin{bmatrix}
a \\
b \\
a
\end{bmatrix}\!,\!
\begin{bmatrix}
a \\
a \\
b
\end{bmatrix}\!,\!
\begin{bmatrix}
d \\
c \\
c
\end{bmatrix}
\right)=
\begin{bmatrix}
d \\
b\\
b
\end{bmatrix}.
\end{equation}
Finally, we define the cyclic term $g^*$ by $$g^*(x,y,z)=g'(w(x,y,z),w(y,z,x),w(z,x,y)).$$ From \eqref{eq for w} and \eqref{eq for w 2} follows that $\{a,b\}$ is closed for cyclic term $g^*$, which concludes the proof.
\end{proof}

\end{enumerate}

  \item[\textsc{\textbf{Case 3.}}] $\{a,c\}$ and $\{b,d\}$ are affine subalgebras.

Unlike in the previous case, we have two possibilities regarding $\mathrm{Sg}\{a,d\}$ since the case with a three-element algebra cannot happen.

\begin{enumerate}[wide, labelindent=0pt]
    \item[\textsc{\textbf{Subcase 3.1}}] $\mathrm{Sg}\{a,d\}=\{a,d\}$.
    
    The analysis in this case is similar to Subcase 2.1. The following three claims hold for any cyclic term.
    \begin{claim}\label{affine c(abb)=d}
    It holds $g(a,b,b)=c$.
    \end{claim}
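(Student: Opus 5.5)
We argue by contradiction, following the pattern of Claim~\ref{c(abb)=d claim} in Subcase 2.1. The setting is as follows: $\alpha=\{\{a,c\},\{b,d\}\}$ has quotient $\mathbb{Z}_2^{\mathrm{aff}}$, $\{a,c\}$ and $\{b,d\}$ are affine subalgebras, $\{a,d\}$ is a subuniverse (hence a two-element semilattice, majority or affine algebra), and $g$ is an arbitrary ternary cyclic term. Since $g$ acts as the minority on $\m a/\alpha$, we get $g(a,b,b)\in a/\alpha=\{a,c\}$. So it suffices to exclude $g(a,b,b)=a$. Suppose $g(a,b,b)=a$; by the affine quotient we also have $g(b,a,a)\in\{b,d\}$.

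The first step is to determine $g(b,a,a)$. If $g(b,a,a)=b$, then $\{a,b\}$ is closed under the cyclic term $g$, and Proposition~\ref{subuniverse} gives $\mathrm{Sg}\{a,b\}=\{a,b\}$, a contradiction. Hence $g(b,a,a)=d$.

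The second step imitates the earlier construction. Put $t(x,y):=g(x,y,y)$, so $t(a,b)=a$, $t(b,a)=d$, and $t$ is a binary term whose values on $\{a,d\}$ and $\{b,d\}$ are known (on $\{b,d\}$ it is the affine $t(b,d)=b$, $t(d,b)=d$). Then define
$$g'(x,y,z)=g(t(t(x,y),z),t(t(y,z),x),t(t(z,x),y)).$$
The aim is to show that $\{a,b,d\}$ is closed under $g'$ (or under one further iterate $g''$ of the same shape), which by Proposition~\ref{subuniverse} contradicts $\mathrm{Sg}\{a,b\}=\m a$. Inputs from $\{a,d\}$ and $\{b,d\}$ stay inside those subalgebras. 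For the mixed tuples we compute $g'(a,b,b)=g(t(a,b),t(b,a),t(d,b))=g(a,d,d)$ and $g'(a,b,d)=g(t(a,d),t(d,a),t(d,b))$, together with the analogous expressions for $g'(b,a,a)$, $g'(a,d,b)$ and $g'(b,a,d)$.

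The main obstacle is the case analysis on the type of $\{a,d\}$. If $\{a,d\}$ is a semilattice or a majority algebra, these values lie in $\{a,d\}$ or reduce to $g(a,d,b)$ and $g(b,d,a)$. As in Claim~\ref{c(abb)=d claim}, if $g(a,d,b)$ already lies in $\{a,b,d\}$ we are done. Otherwise $g'(a,b,b)=c=g'(a,b,d)$, and iterating once more with $t$ forces all these values back into $\{a,b,d\}$. The case where $\{a,d\}$ is affine needs more care, because the orientation of $t$ on $\{a,d\}$ changes. Here I would first replace $t$ by $t'(x,y)=t(t(x,y),t(y,x))$, as was done in the majority claim of Case~1, to normalize its values. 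I would also use the $\alpha$-minority behaviour to keep track of which $\alpha$-class each coordinate lies in, so that any value outside $\{a,b,d\}$ must be $c$. Such a $c$ can then be eliminated by a second application of the construction.
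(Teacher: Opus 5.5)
Your construction ($t(x,y)=g(x,y,y)$ and the cyclic term $g'$) is exactly the one the paper uses. However, the proposal never actually shows that $\{a,b,d\}$ is closed under $g'$. It describes an obstacle that does not exist, and its remedies for that obstacle (the normalization $t'$, ``a second application of the construction'') are not carried out.

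The missing observation is that $\{a,d\}$ cannot be a semilattice or a majority algebra. It is a subuniverse with one element in each $\alpha$-class. So the quotient map restricts to an isomorphism from $\{a,d\}$ onto $\mathbf{A}/\alpha\cong\mathbb{Z}_2^{\mathrm{aff}}$, and the cyclic term $g$ acts as the minority on $\{a,d\}$.

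Hence $t$ is the first projection on both $\{a,d\}$ and $\{b,d\}$: $t(a,d)=a$, $t(d,a)=t(d,b)=d$, $t(b,d)=b$. Together with $t(a,b)=a$ and $t(b,a)=d$, one step gives $g'(a,b,b)=g'(a,b,d)=g'(a,d,b)=g(a,d,d)=a$ and $g'(b,a,a)=g(d,a,a)=d$. Every other tuple from $\{a,b,d\}^3$ is a cyclic rotation of one of these or lies in $\{a,d\}^3$ or $\{b,d\}^3$. So $\{a,b,d\}$ is closed under the cyclic term $g'$, contradicting $\mathrm{Sg}\{a,b\}=\mathbf{A}$ by Proposition~\ref{subuniverse}. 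This is the paper's whole proof; the affine case needs no extra care, since there $t$ is simply a projection.

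Your reduction to the undetermined value $g(a,d,b)$ is imported from Claim~\ref{c(abb)=d claim}, and it does not transfer. There $\{b,d\}$ was a majority algebra, so $t(d,b)=b$ and a stray $b$ entered an inner argument. Here $\{b,d\}$ is affine, $t(d,b)=d$, and no such value arises. In fact the closure holds even without identifying the type of $\{a,d\}$. On $\{a,b,d\}$, $t(x,y)=b$ forces $x=b$ and $y\in\{b,d\}$. So if $a$ occurs in $(x,y,z)$, none of the three inner arguments $t(t(x,y),z)$, $t(t(y,z),x)$, $t(t(z,x),y)$ equals $b$. All three then lie in the subuniverse $\{a,d\}$, and so does $g'(x,y,z)$. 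Either way, the argument closes at the first iterate, and the semilattice/majority branch of your case analysis is vacuous.
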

    \begin{proof}
    For the sake of contradiction, suppose that $g(a,b,b)=a$, and thus $g(b,a,a)=d$. Let $t(x,y):=g(x,y,y)$ and define $g'$ by $$g'(x,y,z)=g(t(t(x,y),z),t(t(y,z),x),t(t(z,x),y)).$$ We see that $t(a,b)=t(a,d)=a$, $t(b,a)=t(d,a)=t(d,b)=d$, $t(b,d)=b$, and so
    \[
    \begin{gathered}
    g'(a,d,b)=g(t(a,b),t(d,a),t(d,d))=g(a,d,d)=a=g'(a,d,d)\\    g'(a,b,b)=g(t(a,b),t(b,a),t(d,b))=g(a,d,d)=a\\
    g'(a,b,d)=g(t(a,d),t(b,a),t(d,b))=g(a,d,d)=a.
    \end{gathered}
    \] 
    Hence, $\{a,b,d\}$ is closed under $g''$, a contradiction.
    \end{proof}
    \begin{claim}
    $\{b,c\}$ is a (affine) subalgebra.
    \end{claim}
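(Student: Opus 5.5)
The plan is to argue by contradiction, in the same way as Claim~\ref{bdaffineclaim} in Subcase~2.1, with Claim~\ref{affine c(abb)=d} as the target. That claim says every ternary cyclic term $g$ satisfies $g(a,b,b)=c$. So it suffices to build, from a hypothetical failure of the statement, a cyclic term $g'$ with $g'(a,b,b)=a$.

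First I would collect what is forced. Since $\alpha=\{\{a,c\},\{b,d\}\}$ has $\mathbb{Z}_2^{\mathrm{aff}}$ as quotient, every cyclic $g$ acts as the minority operation modulo $\alpha$. Because $\{a,d\}$ is a subalgebra meeting both classes, it is an affine subalgebra, so $g(a,d,d)=a$ and $g(a,a,d)=d$. The subalgebras $\{a,c\}$ and $\{b,d\}$ are affine, and Claim~\ref{affine c(abb)=d} gives $g(a,b,b)=c$. If $\{b,c\}$ is not a subuniverse, the minority behaviour modulo $\alpha$ leaves exactly two options: $g(c,b,b)=a$, or $g(b,c,c)=d$ (with $g(b,b,c)=c$).

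In the first case I would take $h(x,y,z)=g(g(x,y,z),y,z)$ and $g'(x,y,z)=g(h(x,y,z),h(y,z,x),h(z,x,y))$. Then $h(a,b,b)=g(c,b,b)=a$. The other two coordinates $h(b,b,a)$ and $h(b,a,b)$ lie in $\{b,d\}$, by the minority behaviour modulo $\alpha$. So $g'(a,b,b)=g(a,u,v)$ with $u,v\in\{b,d\}$.
\begin{itemize}
\item If $u=v=d$, then $g'(a,b,b)=g(a,d,d)=a$, which is the desired contradiction.
\item If $u=v=b$, I would compose once more with the roles adjusted. For instance, inserting $g(x,x,\cdot)$ or $g(x,\cdot,\cdot)$ as in the Subcase~2.1 constructions pushes the inner value $c$ back to $a$, using $g(c,a,a)=c$, $g(a,c,c)=a$ and $g(c,b,b)=a$.
\item If $u\neq v$, I expect to need the value $g(a,b,d)$, which I would first pin down by a preliminary lemma.
\end{itemize}

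In the second case I would copy the second construction of Claim~\ref{bdaffineclaim}: $h(x,y,z)=g(x,g(x,x,g(x,y,z)),g(x,x,g(x,y,z)))$. Here $h(a,b,b)$ reduces to $g(a,g(a,a,c),g(a,a,c))=g(a,c,c)=a$, using that $\{a,c\}$ is affine. Also $h(b,a,b)=h(b,b,a)=g(b,g(b,b,c),g(b,b,c))=g(b,c,c)=d$. Symmetrising, $g'(a,b,b)=g(a,d,d)=a$, again contradicting Claim~\ref{affine c(abb)=d}.

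The main obstacle is the first case. The Subcase~2.1 trick relied on $g(a,a,c)=a$, which held because $\{a,c\}$ was a majority subalgebra; here $\{a,c\}$ is affine and that identity fails. So the inner term has to be reshuffled, and the awkward outcome $u\ne v$ must be handled, possibly by an extra iteration $g''$ built from $g'$ in the same pattern until the pair stabilises at $(d,d)$.
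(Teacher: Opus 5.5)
Your second case ($g(c,b,b)=c$, $g(b,c,c)=d$) is correct: it is the Subcase~2.1 construction, and it works here because $g(a,a,c)=c$ and $g(a,c,c)=a$ on the affine $\{a,c\}$.

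The first case, $g(c,b,b)=a$, is not proved. With $h(x,y,z)=g(g(x,y,z),y,z)$ you get $g'(a,b,b)=g(a,g(c,b,a),g(c,a,b))$. Nothing established so far determines $g(c,b,a)$, $g(c,a,b)$ or $g(a,b,d)$. So the outcomes $u=v=b$ and $u\neq v$ remain open, and ``reshuffling the inner term'' or ``iterating until the pair stabilises'' are not arguments. In fact $h=g(x,g(x,y,z),g(x,y,z))$ already disposes of every situation with $g(b,c,c)=d$. The genuinely open configuration is therefore $g(c,b,b)=a$, $g(b,c,c)=b$, and there no ternary composition of the known values visibly helps.

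The missing idea is to bring in a binary term. In either case $d\in\mathrm{Sg}\{b,c\}$; if $g(c,b,b)=a$, then $\mathrm{Sg}\{b,c\}\supseteq\mathrm{Sg}\{a,b\}=A$. Hence $t(b,c)=d$ for some binary term $t$.

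This $t$ is the first projection on $\{a,c\}$. The generating operation $m$ acts as $x-y+z$ both on $A/\alpha$ and on $\{a,c\}$. So every term preserves the graph of the map $A\to\{a,c\}$ sending $a,c\mapsto a$ and $b,d\mapsto c$. Applying $t$ to the graph points $(b,c)$ and $(c,a)$ gives $(d,t(c,a))$, which lies in the graph, so $t(c,a)=c$, i.e.\ $t(a,c)=a$.

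Now $g'(x,y,z)=g(t(x,g(x,y,z)),t(y,g(y,z,x)),t(z,g(z,x,y)))$ is cyclic, and
$g'(a,b,b)=g(t(a,c),t(b,c),t(b,c))=g(a,d,d)=a$. This contradicts the previous claim that $g(a,b,b)=c$ for every cyclic $g$.

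This is the paper's argument. The paper only asserts that $t$ acts as the first projection on two-element affine algebras; the graph argument above justifies it. It covers both of your cases at once, so the case split is unnecessary.
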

    \begin{proof}
    Suppose that $\{b,c\}$ is not a subalgebra. Then $g(c,c,b)=d$ or $g(c,b,b)=a$. In either case, we have $d\in\mathrm{Sg}\{b,c\}$, thus $t(b,c)=d$ for some binary term $t$. Notice that on any two-element affine algebra, the term $t$ acts as the first projection. Define the cyclic term $g'$ by $$g'(x,y,z)=g(t(x,g(x,y,z)),t(y,g(y,z,x)),t(z,g(z,x,y))).$$ Then $g'(a,b,b)=g(t(a,c),t(b,c),t(b,c))=g(a,d,d)=a$, a contradiction with Claim~\ref{affine c(abb)=d}.
    \end{proof}
    \begin{claim}
    It holds $g(b,a,a)=d$.
    \end{claim}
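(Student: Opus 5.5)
Since $g(b,a,a)\in b/\alpha=\{b,d\}$, the plan is to assume $g(b,a,a)=b$ and derive a contradiction, arguing symmetrically to Claim~\ref{affine c(abb)=d} with the roles of $a$ and $b$ exchanged. By Claim~\ref{affine c(abb)=d}, in that case $g(a,b,b)=c$ and $g(b,a,a)=b$. The tools available are: $\{a,c\}$, $\{b,d\}$, $\{b,c\}$ and $\{a,d\}$ are affine subalgebras (the last being Subcase 3.1), and $\alpha$ gives a $\mathbb{Z}_2^{\mathrm{aff}}$ quotient. The goal is a cyclic term under which $\{a,b,c\}$ is closed, so that $d\notin\mathrm{Sg}\{a,b\}$ by Proposition~\ref{subuniverse}, exactly as in Claims~\ref{D'bissubuniverse} and \ref{automorphism claim}.

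Put $t(x,y):=g(x,y,y)$. On every two-element affine subalgebra, $t$ is the first projection, so $t(a,c)=a$, $t(c,a)=c$, $t(b,c)=b$ and $t(c,b)=c$. Moreover $t(a,b)=c$ and $t(b,a)=b$. Thus $\{a,b,c\}$ is closed under $t$. Define the cyclic term
$$g'(x,y,z)=g(t(t(x,y),z),t(t(y,z),x),t(t(z,x),y)).$$
Then every inner value lies in $\{a,b,c\}$, and it remains to evaluate $g$ on the resulting triples. On $\{b,c\}$ and $\{a,c\}$ these are affine evaluations, which stay inside. For example,
$$g'(b,a,a)=g(t(b,a),t(c,b),t(a,a))=g(b,c,a),\qquad g'(a,b,b)=g(t(c,b),t(b,a),t(b,b))=g(c,b,b).$$
Here $g(c,b,b)=c$ because $\{b,c\}$ is affine. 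The delicate values are $g(a,b,c)$ and $g(a,c,b)$, and those triples in which both $a$ and $b$ occur.

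\textbf{Main obstacle.} Some values of the form $g(x,y,z)$ with $\{x,y,z\}=\{a,b,c\}$ may not be determined. I would handle them as in Subcase 2.1 by iterating. Set
$$g''(x,y,z)=g'(t(t(x,y),z),t(t(y,z),x),t(t(z,x),y)).$$
The hope is that after one more application every occurrence of a fully mixed triple is replaced by a triple with a repeated entry. Values on such triples are known: $g(b,a,a)=b$ under the assumption, $g(a,b,b)=c$, and the affine subalgebras give the rest, while the $\alpha$-classes force $g(b,a,c)$ and $g(a,b,c)$ into $\{b,d\}$ and $\{a,c\}$ respectively. If $d$ still appears, I would use the quotient $\alpha$, on which $g$ is the minority, to pin the class, and then switch the construction to $g(x,x,g(x,y,z))$-type compositions, as in Claim~\ref{bdaffineclaim}. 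Once $\{a,b,c\}$ is closed under a cyclic term, we obtain the contradiction, hence $g(b,a,a)=d$.
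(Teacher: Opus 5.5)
Your construction is the paper's own. The paper proves this claim as the mirror image of Claim~\ref{affine c(abb)=d} under $a\leftrightarrow b$, $c\leftrightarrow d$. The just-established affine subalgebra $\{b,c\}$ plays the role of $\{a,d\}$, and $t(x,y)=g(x,y,y)$ and $g'$ are exactly as you define them. However, as written your proposal does not prove the claim: the decisive verification is mis-computed and then left to an unspecified iteration.

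For $(x,y,z)=(b,a,a)$ the three inner entries are
\[
t(t(b,a),a)=t(b,a)=b,\qquad t(t(a,a),b)=t(a,b)=c,\qquad t(t(a,b),a)=t(c,a)=c.
\]
So $g'(b,a,a)=g(b,c,c)=b$, not the undetermined value $g(b,c,a)$; you wrote $t(c,b)$ and $t(a,a)$ for the last two entries. For the two cyclic classes of fully mixed triples,
\[
g'(a,b,c)=g\bigl(t(c,c),t(b,a),t(c,b)\bigr)=g(c,b,c)=b,\qquad g'(a,c,b)=g\bigl(t(a,b),t(c,a),t(b,c)\bigr)=g(c,c,b)=b.
\]
You also found $g'(a,b,b)=g(c,b,b)=c$ correctly. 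Triples inside $\{a,c\}$ or $\{b,c\}$ stay there because these are subalgebras. Hence no permutation of $(a,b,c)$ is ever fed to $g$. The set $\{a,b,c\}$ is closed under the cyclic term $g'$ after a single step, and Proposition~\ref{subuniverse} gives the contradiction at once.

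Your ``main obstacle'' therefore does not exist. The fallback plan, iterating to $g''$ in the hope that mixed triples disappear and otherwise switching to other compositions, is not an argument as stated. It should be replaced by the evaluations above.

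Your class bookkeeping is also off. $g(a,b,c)$ lies in $\{b,d\}$, not $\{a,c\}$, being the minority of the classes $\{a,c\},\{b,d\},\{a,c\}$. So fully mixed triples genuinely could produce $d$, which is exactly why it matters that the one-step computation never evaluates $g$ on them.
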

    \begin{proof}
    Similar to the proof of the \Cref{affine c(abb)=d}.
    \end{proof}
    \begin{claim}
    It holds $g(b,c,d)=g(b,d,c)=c$, and, similarly, $g(a,c,d)=g(a,d,c)=d$.
    \end{claim}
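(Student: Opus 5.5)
We are in Case 3, Subcase 3.1. So $\{a,c\}$ and $\{b,d\}$ are affine subalgebras, $\{a,d\}$ is a subalgebra, and (by the preceding claims) $\{b,c\}$ is an affine subalgebra, $g(a,b,b)=c$ and $g(b,a,a)=d$. The goal is $g(b,c,d)=g(b,d,c)=c$ and, symmetrically, $g(a,c,d)=g(a,d,c)=d$. I will mirror the proof of Claim~\ref{c(bde)=d} from Subcase 2.1.

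\emph{Range of the value.} The value $g(b,d,c)$ lies in the $\alpha$-class of the majority-type combination on $\mathbb{Z}_2^{\mathrm{aff}}$: the classes are $b/\alpha$, $b/\alpha$, $a/\alpha$, and the minority gives $a/\alpha=\{a,c\}$. So $g(b,d,c)\in\{a,c\}$, and it suffices to rule out the value $a$.

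\emph{Ruling out $g(b,d,c)=a$.} Suppose $g(b,d,c)=a$. Take the cyclic term
\[
g'(x,y,z)=g(g(x,y,y),g(y,z,z),g(z,x,x)).
\]
Then
\[
g'(a,b,b)=g(g(a,b,b),g(b,b,b),g(b,a,a))=g(c,b,d).
\]
By cyclicity this equals $g(b,d,c)=a$, which contradicts Claim~\ref{affine c(abb)=d} applied to the cyclic term $g'$. The claim holds for every cyclic term.

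\emph{The value $g(b,c,d)$.} I use $g'(x,y,z)=g(g(x,x,y),g(y,y,z),g(z,z,x))$. Here $g(a,a,b)$ is the cyclic rotation of $g(b,a,a)=d$, and $g(b,b,a)=g(a,b,b)=c$. So
\[
g'(a,b,b)=g(g(a,a,b),g(b,b,b),g(b,b,a))=g(d,b,c),
\]
which is a rotation of $g(b,c,d)$. If $g(b,c,d)=a$, this gives $g'(a,b,b)=a$, again contradicting Claim~\ref{affine c(abb)=d}.

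\emph{The second pair.} The statement $g(a,c,d)=g(a,d,c)=d$ follows by the same computation with the roles of $a,b$ (and correspondingly $c,d$) exchanged. This uses the dual claim $g(b,a,a)=d$, proved above, in place of Claim~\ref{affine c(abb)=d}. Both values lie in $b/\alpha=\{b,d\}$, and the value $b$ would force $g'(b,a,a)=b$ for one of the two $g'$ above. The duality is legitimate because all hypotheses used are symmetric under swapping $a\leftrightarrow b$, $c\leftrightarrow d$: both classes are affine, and $\{a,d\}$, $\{b,c\}$ are subalgebras.

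\emph{Main obstacle.} The only delicate point is bookkeeping the cyclic rotations, so that the expression produced (for instance $g(c,b,d)$) is correctly identified with $g(b,d,c)$ rather than $g(b,c,d)$. That identification is what decides which of the two auxiliary terms handles which triple.
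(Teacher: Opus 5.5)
Your proof is correct and is essentially the paper's own argument. The two auxiliary cyclic terms $g(g(x,y,y),g(y,z,z),g(z,x,x))$ and $g(g(x,x,y),g(y,y,z),g(z,z,x))$ turn $g(b,d,c)=a$, respectively $g(b,c,d)=a$, into $g'(a,b,b)=a$, contradicting the preceding claim that every cyclic term sends $(a,b,b)$ to $c$. The second pair follows by the swap $a\leftrightarrow b$, $c\leftrightarrow d$, and you also make explicit the range step the paper leaves implicit: a cyclic term acts as the minority on $\mathbb{Z}_2^{\mathrm{aff}}$, so your ``majority-type combination'' is a slip for minority.
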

    \begin{proof}
    Suppose, for the contradiction, that $g(b,d,c)=a$. Define the cyclic ternary term $g'$ by $$g'(x,y,z)=g(g(x,y,y),g(y,z,z),g(z,x,x)).$$ Then $g'(a,b,b)=g(c,b,d)=a$, which contradicts Claim~\ref{affine c(abb)=d}. The case $g(b,c,d)=a$ is handled similarly, just $g'$ is defined by $$g'(x,y,z)=g(g(x,x,y),g(y,y,z),g(z,z,x)).$$ The other two cases are analogous modulo switching $a$ with $b$ and $c$ with $d$.
    \end{proof}
    \begin{claim}
    There exists a cyclic term $g^*$ such that $g^*(a,b,b)=a$, which contradicts Claim~\ref{affine c(abb)=d}.
    \end{claim}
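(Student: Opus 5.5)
The plan is to argue through the ternary relation
\[
S=\mathrm{Sg}_{\m a^3}\left\{(a,b,b),(b,a,b),(b,b,a)\right\},
\]
exactly as in the last claim of Subcase~2.1. A cyclic term $g^*$ with $g^*(a,b,b)=a$ exists precisely when $(a,a,a)\in S$. Indeed, if $(a,a,a)=h((a,b,b),(b,a,b),(b,b,a))$ for some ternary term $h$, then $g^*(x,y,z):=g(h(x,y,z),h(y,z,x),h(z,x,y))$ is cyclic and satisfies $g^*(a,b,b)=a$. So it suffices to derive $(a,a,a)\in S$.

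The natural last step uses $\{a,d\}$. We are in Case~3, so $\m a/\alpha\cong\mathbb{Z}_2^{\mathrm{aff}}$ with $\alpha=\{\{a,c\},\{b,d\}\}$. The two-element subalgebra $\{a,d\}$ meets both $\alpha$-classes, so it maps isomorphically onto this quotient and is therefore affine; hence $g$ acts on it as the minority operation. Consequently, once $(a,d,d)\in S$ is established, applying $g$ to $(a,d,d),(d,a,d),(d,d,a)$, which all lie in $S$ by symmetry, gives $(a,a,a)$. The whole task thus reduces to putting $(a,d,d)$ into $S$.

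To do this I would first normalize $g$ so that it is completely determined, as was done in Subcase~2.1.
\begin{enumerate}
\item Determine the remaining two-element values. We know $g(a,a,b)\in\{b,d\}$ and $g(b,b,a)\in\{a,c\}$, and the relevant options will be fixed by the same kind of arguments as in Claim~\ref{affine c(abb)=d}: any other option yields a cyclic term under which $\{a,b\}$ or $\{a,b,d\}$ is closed.
\item Decide whether $\{c,d\}$ is a subalgebra. If $\mathrm{Sg}\{c,d\}=\m a$, then $c,d$ can serve as generators, and the situation should coincide with the already classified $\mathbf{T}_{4,9}$, just as it did in Subcase~2.1.
\item If $\{c,d\}$ is a subalgebra, only a few values such as $g(a,b,c)$, $g(a,c,b)$, $g(a,b,d)$, $g(a,d,b)$ remain free. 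Pass to
\[
g'(x,y,z)=g(g(x,y,y),g(y,z,z),g(z,x,x))
\]
and iterate once more if needed, so that these values become fixed.
\end{enumerate}

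With the normalized term, generate elements of $S$ step by step. Applying the term to the three generators gives a constant triple, such as $(c,c,c)$. Combining it with a generator through the binary terms $g(x,x,y)$ and $g(x,y,y)$, which act as projections on the affine pairs, gives triples such as $(a,b,b)$ and $(a,c,c)$. Feeding in mixed triples and using $g(b,c,d)=g(b,d,c)=c$ and $g(a,c,d)=g(a,d,c)=d$ should then produce triples like $(d,b,c)$ and $(a,b,d)$, and finally $(a,d,d)$. This mirrors the chain of computations at the end of Subcase~2.1.

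The main obstacle is the bookkeeping of the undetermined values $g(a,a,b)$ and $g(a,b,c)$ before normalization. I would try first to find a single composed term that fixes them regardless of their original values, in the style of the construction of $g^*$ in Subcase~2.3. Failing that, I would split into the two or three remaining cases and run the $S$-closure argument in each.
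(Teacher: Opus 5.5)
Your reduction is correct. A cyclic $g^*$ with $g^*(a,b,b)=a$ exists if and only if $(a,a,a)\in S$. Since $\{a,d\}$ is a subalgebra mapped bijectively onto $\mathbf{A}/\alpha$, $g$ acts as the minority on it, so $(a,d,d)\in S$ would suffice.

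The gap is that $(a,d,d)\in S$ is never derived. You only say the computations ``should'' produce it after a normalization, and the normalization you outline is misdirected:
\begin{itemize}
\item The values you treat as open, $g(a,a,b)$ and $g(b,b,a)$, are already fixed by the preceding claims of Subcase~3.1, which give $g(a,b,b)=c$ and $g(b,a,a)=d$ for every cyclic $g$.
\item Your step~2 does not apply here. The identification with $\mathbf{T}_{4,9}$ came from Case~2, where the $\alpha$-classes are majority algebras. In Case~3 they are affine, and $\mathbf{T}_{4,9}$ has no $\mathbb{Z}_2^{\mathrm{aff}}$-quotient with affine classes.
\item The generation mechanism you describe fails. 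The terms $g(x,x,y)$ and $g(x,y,y)$ act as projections on $\{a,c\}$ and $\{b,c\}$, so combining $(c,c,c)$ with $(a,b,b)$ only returns $(a,b,b)$ or $(c,c,c)$, never $(a,c,c)$.
\end{itemize}

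What is missing is one genuinely ternary combination of mixed triples; no normalization is needed. Applying $g$ coordinatewise to $(a,b,b),(b,a,b),(b,a,b)$ gives $(c,d,b)\in S$, and similarly $(c,b,d)\in S$. Then
\[
g\bigl((a,b,b),(c,d,b),(c,b,d)\bigr)=\bigl(g(a,c,c),g(b,d,b),g(b,b,d)\bigr)=(a,d,d),
\]
using only the minority action on $\{a,c\}$ and $\{b,d\}$. Equivalently, with $t(x,y,z)=g(x,g(x,y,y),g(x,z,z))$, the cyclic term $g(t(x,y,z),t(y,z,x),t(z,x,y))$ sends $(a,b,b)$ to $g(a,d,d)=a$.

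The paper instead uses $h(x,y,z)=g(x,g(x,y,y),g(x,y,z))$. This produces $(a,c,c)\in S$ and finishes with the minority on $\{a,c\}$, which requires the earlier facts $g(b,d,c)=c$ and that $\{b,c\}$ is affine. Your completed route needs only $g(a,b,b)=c$, $g(b,a,a)=d$, the affine classes $\{a,c\}$ and $\{b,d\}$, and the Subcase~3.1 hypothesis that $\{a,d\}$ is a subalgebra.
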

    \begin{proof}
    First, define the term $h$ by $h(x,y,z)=g(x,g(x,y,y),g(x,y,z))$ and then let $g^*$ be the cyclic term defined as $g^*(x,y,z):=g(h(x,y,z),h(y,z,x),h(z,x,y))$. We compute that \[g^*(a,b,b)=g(g(a,c,c),g(b,b,c),g(b,d,c))=g(a,c,c)=a,\] which contradicts Claim~\ref{affine c(abb)=d}.
    \end{proof}
    \item[\textsc{\textbf{Subcase 3.2}}] $\mathrm{Sg}\{a,d\}=\mathrm{Sg}\{b,c\}=\m a$.

    It may be assumed $\mathrm{Sg}\{c,d\}=\m a$ as well, since otherwise we would be in the previous case. Regarding how cyclic terms act on two-element subsets whose elements belong to distinct $\alpha$-classes, we have two options:
    \begin{itemize}
        \item[$i)$] there are $\circ,\ast\in A$ from different $\alpha$-classes such that $g(\circ,\ast,\ast)=\circ$ for some ternary cyclic term $g$;
        \item[$ii)$] for any $\circ,\ast\in A$ from different $\alpha$-classes and any ternary cyclic term $g$ it holds $g(\circ,\ast,\ast)\neq\circ$.
    \end{itemize}

    We first deal with the latter of these two cases. Before that, we note that throughout the subcase $g'$ and $g''$ shall be used for cyclic terms defined by $g(g(x,y,y),g(y,z,z),g(z,x,x))$ and $g(g(x,x,y),g(y,y,z),g(z,z,x))$, respectively, where $g$ is any cyclic ternary term. For any cyclic ternary term $g$, the following hold in case $ii)$:
    \begin{align*}
        g(a,b,b)=g(a,d,d)&=c  &  g(b,a,a)=g(b,c,c)&=d\\
         g(c,b,b)=g(c,d,d)&=a  &  g(d,a,a)=g(d,c,c)&=b
    \end{align*}
    These equations can be used to determine the remaining values of $g$. For example, $d=g'(a,a,b)=g(g(a,a,a),g(a,b,b),g(b,a,a))=g(a,c,d)$, so $g(a,c,d)=d$. Also, if $g''$ applied, we obtain $$d=g''(a,a,b)=g(g(a,a,a),g(a,a,b),g(b,b,a))=g(a,d,c),$$ thus $g(a,d,c)=d$. Similarly, we get $g(c,b,d)=g(d,b,c)=c$, $g(b,c,a)=g(a,c,b)=b$ and $g(a,d,b)=g(b,d,a)=a$. This completely determines the term $g$.

    \begin{claim}
    The clone generated by $g$ satisfying case $ii)$ is the same one from $\mathbb{Z}_4^{\mathrm{aff}}$, or, equivalently, the algebra $\m a$ is term-equivalent to $\mathbb{Z}_4^{\mathrm{aff}}$.
    \end{claim}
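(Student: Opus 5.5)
The plan is to exhibit an explicit identification of $A$ with $\mathbb{Z}_4$ under which the completely determined operation $g$ of case $ii)$ becomes an idempotent affine operation of $\mathbb{Z}_4$. Then we check that this operation and $x-y+z$ generate each other.

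The congruence $\alpha=\{\{a,c\},\{b,d\}\}$ should correspond to the cosets of the subgroup $\{0,2\}$, and $\{a,c\}$, $\{b,d\}$ are affine subalgebras. So I would try $a\mapsto 0$, $b\mapsto 1$, $c\mapsto 2$, $d\mapsto 3$. Since $g$ is cyclic and idempotent, the natural candidate is the symmetric affine operation $g(x,y,z)=3x+3y+3z=-(x+y+z)$; its coefficients sum to $9\equiv 1 \pmod 4$, so it is idempotent.

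The first key step is a routine table check that this formula reproduces every value of $g$ found above. The values on two-element sets inside an $\alpha$-class must be minority operations, for example $g(a,a,c)=-2=2=c$. The listed equalities must also hold, for example
\[
g(a,b,b)=-2=c,\quad g(a,d,d)=-6=c,\quad g(b,a,a)=-1=d,\quad g(c,b,b)=-4=a,
\]
and the three-distinct-argument values $g(a,c,d)=g(a,d,c)=-5=d$, $g(c,b,d)=-6=c$, $g(b,c,a)=-3=b$, $g(a,d,b)=-4=a$. By cyclicity, together with the symmetric entries already derived in the text, these values cover all triples. Hence $g=-(x+y+z)$, which lies in the clone of $\mathbb{Z}_4^{\mathrm{aff}}$, since every idempotent affine operation of $\mathbb{Z}_4$ is a term of $x-y+z$.

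For the reverse inclusion I would build $x-y+z$ from $g$ by composition. Set $h(x,y,z):=g(x,y,g(y,y,z))$. Since $g(y,y,z)=-2y-z$, we get $h(x,y,z)=-(x+y-2y-z)=-x+y+z$. Then $h(y,x,z)=x-y+z$, so $\mathrm{Clo}(A;g)=\mathrm{Clo}(\mathbb{Z}_4^{\mathrm{aff}})$. Because $\mathrm{Clo}(\m a)$ is generated by any of its cyclic terms, or by this $g$ together with minimality, $\m a$ is term-equivalent to $\mathbb{Z}_4^{\mathrm{aff}}$. This is abelian and hence was already accounted for in Corollary~\ref{notsimplecor}, so it yields no new nonabelian algebra.

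The only real obstacle is guessing the right labeling and the right affine polynomial. The alternative labeling $b\leftrightarrow d$ corresponds to the automorphism $x\mapsto -x$ and works equally well, so the verification is mechanical once the formula is fixed.
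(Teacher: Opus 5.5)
Your proposal is correct and matches the paper's proof: you use the same labeling $a\mapsto 0$, $b\mapsto 1$, $c\mapsto 2$, $d\mapsto 3$ and the same identification $g=-(x+y+z)$, and you carry out the table check the paper leaves as an exercise (your listed values are right). The only difference is the reverse inclusion: you recover $x-y+z=g(y,x,g(x,x,z))$ explicitly, whereas the paper relies on the known minimality of $\mathbb{Z}_4^{\mathrm{aff}}$, so your version is slightly more self-contained.
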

    \begin{proof}
    Since $\mathbb{Z}_4^{\mathrm{aff}}$ is known to be a minimal Taylor algebra, we shall prove the claim by finding a cyclic term $g$ in $\mathbb{Z}_4^{\mathrm{aff}}$. Let $p$ be a Mal'cev term in $\mathbb{Z}_4^{\mathrm{aff}}$, that is $p(x,y,z)=x-y+z$. Define a cyclic term $g^*$ by $g^*(x,y,z)=p(p(x,y,z),p(y,z,x),p(z,x,y))=-(x+y+z)$. We left as an exercise to prove that the term $g$ is nothing but $g^*$ if $a\mapsto 0$, $b\mapsto 1$, $c\mapsto 2$, $d\mapsto 3$.
    \end{proof}
    For the sake of counting, in this paper, the algebra $\mathbb{Z}_4^{\mathrm{aff}}$ is also denoted by $\m t_{4,12}$.

    From now on, we assume that $i)$ holds; therefore, it may be assumed that $g$ fulfills the conditions stated in $i)$. As we have $\mathrm{Sg}\{a,b\}=\mathrm{Sg}\{a,d\}=\mathrm{Sg}\{b,c\}=\mathrm{Sg}\{c,d\}=\m a$, we can suppose $g(a,b,b)=a$, and thus $g(b,a,a)=d$. Regarding some other undetermined values of $g$, we have five possibilities, as presented in \Cref{table of cases}. The three options not covered are isomorphic to one of the five cases after permuting along the cycle $a\rightarrow b\rightarrow c\rightarrow d\rightarrow a$ in some way: the columns $[a,b,a,d]^T$ and $[a,d,c,d]^T$ are actually case $2^\circ$, while the column $[a,d,a,d]^T$ is case $3^\circ$. It shall be proved that a minimal Taylor algebra is obtained only in the case $4^{\circ}$.

\begin{table}[h!]
    \centering
    \begin{tabular}{r||c|c|c|c|c}
\diagbox[width=4em]{$\mathbf x$}{$g(\mathbf x)$}& $1^{\circ}$ & $2^{\circ}$ & $3^{\circ}$ & $4^{\circ}$ & $5^{\circ}$\\ \hline\hline
$(a,b,b)$& $a$ & $a$ & $a$ & $a$ & $a$ \\ \hline
$(b,c,c)$& $b$ & $b$ & $b$ & $d$ & $d$ \\ \hline
$(c,d,d)$& $c$ & $c$ & $a$ & $c$ & $a$ \\ \hline
$(d,a,a)$& $d$ & $b$ & $b$ & $b$ & $b$ 
\end{tabular}
    \caption{Subcase $3.2\; i)$}
    \label{table of cases}
\end{table}

\begin{claim}
    Equations $g(b,c,c)=b$ and $g(c,d,d)=c$ cannot hold simultaneously; therefore, cases $1^{\circ}$ and $2^{\circ}$ are impossible.
\end{claim}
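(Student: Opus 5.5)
Assume, for contradiction, that some ternary cyclic term $g$ satisfies $g(a,b,b)=a$, $g(b,a,a)=d$, $g(b,c,c)=b$ and $g(c,d,d)=c$. The strategy that has driven this whole subsection will be reused here. We compose $g$ with itself to obtain a new cyclic term. That new term either collapses a pair of generators from distinct $\alpha$-classes, which contradicts $\mathrm{Sg}\{a,b\}=\mathrm{Sg}\{a,d\}=\mathrm{Sg}\{b,c\}=\mathrm{Sg}\{c,d\}=\m a$, or it closes a proper subset under a cyclic term, which contradicts Proposition~\ref{subuniverse}.

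The first step is to extract all forced values on two-element sets of the form $\{x,y\}$ with $x,y$ in different $\alpha$-classes. Generation by every such pair gives the following. Since $g(a,b,b)=a$, we must have $g(b,a,a)=d$. Since $g(b,c,c)=b$, we must have $g(c,b,b)=a$; otherwise $\{b,c\}$ would be closed under $g$. Since $g(c,d,d)=c$, we must have $g(d,c,c)=b$. On $\{a,c\}$ and $\{b,d\}$ the term acts as the minority operation. The only remaining two-element value is $g(d,a,a)$, together with $g(a,d,d)$; exactly one of the pair $g(d,a,a)=d$, $g(a,d,d)=a$ fails, and this choice separates $1^\circ$ from $2^\circ$. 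The $\alpha$-quotient is $\mathbb{Z}_2^{\mathrm{aff}}$, so every value on a triple with three distinct entries is determined up to its $\alpha$-class.

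The main construction mimics the proof of \Cref{c(abb)=d claim}. Let $t(x,y):=g(x,y,y)$. Then $t(a,b)=a$, $t(b,c)=b$, $t(c,d)=c$, while $t(b,a)=d$, $t(c,b)=a$ and $t(d,c)=b$. Now form
\[
g'(x,y,z)=g(t(t(x,y),z),t(t(y,z),x),t(t(z,x),y)).
\]
We aim to show that $g'(b,c,c)=b$ and $g'(c,b,b)=c$, or, if that fails, $g'(b,b,c)$ evaluated appropriately. The cleanest target is a cyclic term that fixes $b$ on both $(b,c,c)$ and $(c,b,b)$, since that makes $\{b,c\}$ closed. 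For example,
\[
g'(c,b,b)=g(t(t(c,b),b),t(t(b,b),c),t(t(b,c),b))=g(t(a,b),b,t(b,b))=g(a,b,b)=a,
\]
so a plain composition is not enough. I would therefore try the variant used in \Cref{bdaffineclaim}, namely $h(x,y,z)=g(x,x,g(x,y,z))$ with $g^\sharp=g(h(x,y,z),h(y,z,x),h(z,x,y))$. For this term,
\[
g^\sharp(c,b,b)=g(g(c,c,a),g(b,b,a),g(b,b,a))=g(a,d,d),
\]
and, in the same way, $g^\sharp(b,c,c)=g(b,b,b)=b$ when we use $g(c,c,b)$. Concretely, $g^\sharp(b,c,c)=g(g(b,b,b),g(c,c,b),g(c,c,b))=g(b,a,a)=d$, so the bookkeeping has to be redone carefully. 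The plan is to tabulate the few compositions $g(x,g(x,y,y),\cdot)$, $g(x,x,g(x,y,y))$ and the iterates $g'$, $g''$ from the subcase preamble on the pairs $(a,b)$, $(b,c)$, $(c,d)$ and $(d,a)$. For each of the two possible values of $g(d,a,a)$, the goal is a cyclic term $g^*$ under which one of these pairs satisfies $g^*(x,y,y)=x$ and $g^*(y,x,x)=y$. That pair is then a two-element subuniverse, which is a contradiction.

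The main obstacle is the undetermined value $g(d,a,a)$, together with the values on three distinct elements that composition may produce. I expect case $1^\circ$, where $g(d,a,a)=d$, to fall to the $t$-composition: under the rotation $a\to b\to c\to d$ the pattern is then ``every $g(x,y,y)$ with $y$ the successor of $x$ returns $x$'', and the composite should fix some pair. Case $2^\circ$, where $g(d,a,a)=b$, should fall to the $h$-type composition anchored at $d$, in the same way as in the claims of Subcase 2.3.
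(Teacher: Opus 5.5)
Your proposal has a genuine gap. You never exhibit a cyclic term that closes a two-element set. Both compositions you try fail, and the rest of the proposal (``I expect case $1^\circ$ to fall to the $t$-composition'', ``case $2^\circ$ should fall to an $h$-type composition anchored at $d$'') is a hope, not an argument.

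\begin{itemize}
\item The first attempt gives $g'(c,b,b)=a$, as you observe yourself.
\item The second attempt is miscomputed. By cyclicity, $g(b,b,a)=g(a,b,b)=a$, not $d$, and $g(c,c,b)=g(b,c,c)=b$, not $a$. With correct bookkeeping you get $g^\sharp(b,c,c)=g(b,b,b)=b$ but $g^\sharp(c,b,b)=g(a,a,a)=a$. So $\{b,c\}$ is still not closed.
\item A smaller slip: generation by $\{a,d\}$ only says that at least one of $g(d,a,a)=d$, $g(a,d,d)=a$ fails, not exactly one.
\end{itemize}

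You also misidentify the obstacle. The unknown value $g(d,a,a)$ is irrelevant, and no case split between $1^\circ$ and $2^\circ$ is needed. The missing idea is a composition that only ever evaluates $g$ on pairs whose values are already forced. The paper takes
\[ t(x,y,z)=g(g(x,g(x,y,z),g(x,y,z)),\,g(x,y,y),\,g(x,z,z)),\qquad g^*(x,y,z)=g(t(x,y,z),t(y,z,x),t(z,x,y)). \]

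On the columns $(b,c,c),(c,b,c),(c,c,b)$:
\begin{itemize}
\item The three inner arguments are the columns $(b,a,a)$, $(b,a,c)$ and $(b,c,a)$.
\item By the minority action on $\{a,c\}$, $t$ returns $(b,c,c)$.
\item Hence $g^*(b,c,c)=g(b,c,c)=b$.
\end{itemize}

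On the columns $(c,b,b),(b,c,b),(b,b,c)$:
\begin{itemize}
\item The inner arguments are $(c,d,d)$, $(a,b,b)$ and $(a,b,b)$. This is where $g(b,a,a)=d$ is used.
\item By the minority action on $\{a,c\}$ and $\{b,d\}$, $t$ returns $(c,d,d)$.
\item Hence $g^*(c,b,b)=g(c,d,d)=c$.
\end{itemize}

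So $\{b,c\}$ is closed under the cyclic term $g^*$, which contradicts $\mathrm{Sg}\{b,c\}=\mathbf{A}$. The argument uses only $g(b,c,c)=b$, $g(c,b,b)=a$, $g(b,a,a)=d$, $g(c,d,d)=c$ and the minority behaviour on the two $\alpha$-classes.
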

\begin{proof}
Suppose, on the contrary, that $g(b,c,c)=b$ (and thus $g(c,b,b)=a$) and $g(c,d,d)=c$ for some ternary cyclic term $g$. Define the ternary term $t$ by $$t(x,y,z)=g(g(x,g(x,y,z),g(x,y,z)),g(x,y,y),g(x,z,z)),$$ and the cyclic term $g^*$ by $g^*(x,y,z)=g(t(x,y,z),t(y,z,x),t(z,x,y))$. We have
\begin{equation*}
t\left(
\begin{bmatrix}
b \\
c \\
c
\end{bmatrix}\!,\!
\begin{bmatrix}
c \\
b \\
c
\end{bmatrix}\!,\!
\begin{bmatrix}
c \\
c \\
b
\end{bmatrix}
\right)=
g\left(
g\left(
\begin{bmatrix}
b \\
c \\
c
\end{bmatrix}\!,\!
\begin{bmatrix}
b \\
b \\
b
\end{bmatrix}\!,\!
\begin{bmatrix}
b \\
b \\
b
\end{bmatrix}
\right)\!,
\begin{bmatrix}
b \\
a \\
c
\end{bmatrix}\!,\!
\begin{bmatrix}
b \\
c \\
a
\end{bmatrix}
\right)=
g\left(
\begin{bmatrix}
b \\
a \\
a
\end{bmatrix}\!,\!
\begin{bmatrix}
b \\
a \\
c
\end{bmatrix}\!,\!
\begin{bmatrix}
b \\
c \\
a
\end{bmatrix}
\right)=
\begin{bmatrix}
b \\
c \\
c
\end{bmatrix},
\end{equation*}
so, $g^*(b,c,c)=b$ follows. Similarly, 
\begin{equation*}
t\left(
\begin{bmatrix}
c \\
b \\
b
\end{bmatrix}\!,\!
\begin{bmatrix}
b \\
c \\
b
\end{bmatrix}\!,\!
\begin{bmatrix}
b \\
b \\
c
\end{bmatrix}
\right)=
g\left(
g\left(
\begin{bmatrix}
c \\
b \\
b
\end{bmatrix}\!,\!
\begin{bmatrix}
a \\
a \\
a
\end{bmatrix}\!,\!
\begin{bmatrix}
a \\
a \\
a
\end{bmatrix}
\right)\!,
\begin{bmatrix}
a \\
b \\
b
\end{bmatrix}\!,\!
\begin{bmatrix}
a \\
b \\
b
\end{bmatrix}
\right)=
g\left(
\begin{bmatrix}
c \\
d \\
d
\end{bmatrix}\!,\!
\begin{bmatrix}
a \\
b \\
b
\end{bmatrix}\!,\!
\begin{bmatrix}
a \\
b \\
b
\end{bmatrix}
\right)=
\begin{bmatrix}
c \\
d \\
d
\end{bmatrix},
\end{equation*}
thus, $g^*(c,b,b)=c$ holds. The set $\{b,c\}$ is closed under the cyclic term $g^*$, so $\mathrm{Sg}\{b,c\}=\{b,c\}$, a contradiction.
\end{proof}

\begin{claim}\label{g(abd)=c=g(adb)claim}
    If $g(d,a,a)=b$, then $g(a,b,d)=g(a,d,b)=c$.
\end{claim}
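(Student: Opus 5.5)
The plan is to reduce both values to the $\alpha$-class $\{a,c\}$ and then rule out the value $a$ by contradiction. Since $\m a/\alpha$ is term equivalent to $\mathbb{Z}_2^{\mathrm{aff}}$, every ternary cyclic term acts there as the minority operation. Hence $g(a,b,d)/\alpha=g(a,d,b)/\alpha=a/\alpha$, and both values lie in $\{a,c\}$. Suppose, for contradiction, that $g(a,b,d)=a$ or $g(a,d,b)=a$. I would then construct, from $g$, a cyclic term under which a proper subset containing two elements from different $\alpha$-classes is closed. By Proposition~\ref{subuniverse} this contradicts $\mathrm{Sg}\{a,b\}=\mathrm{Sg}\{a,d\}=\mathrm{Sg}\{b,c\}=\mathrm{Sg}\{c,d\}=\m a$.

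First I collect the known values. Cyclicity gives $g(a,a,d)=g(d,a,a)=b$ and $g(a,a,b)=g(b,a,a)=d$. Also $g(a,b,b)=a$, and $\{a,c\}$, $\{b,d\}$ are affine subalgebras. The value $g(a,d,d)$ lies in $\{a,c\}$, which splits the argument into two subcases. I would use the two standard auxiliary cyclic terms of this subcase,
\[
g'(x,y,z)=g(g(x,y,y),g(y,z,z),g(z,x,x)),\qquad g''(x,y,z)=g(g(x,x,y),g(y,y,z),g(z,z,x)).
\]
One useful computation is already available: $g''(d,d,a)=g(d,g(a,a,d),g(a,a,d))=g(d,b,b)=d$, since $\{b,d\}$ is affine. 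So it suffices to produce a cyclic term $h$ with $h(d,d,a)=d$ and $h(a,a,d)=a$. Then $\{a,d\}$ is closed under $h$, which contradicts $\mathrm{Sg}\{a,d\}=\m a$.

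The next step is to compute $g''(a,a,d)=g(a,g(a,a,d),g(d,d,a))=g(a,b,g(a,d,d))$. If $g(a,d,d)=a$, this equals $g(a,b,a)=g(a,a,b)=d$, which is the wrong value. In that subcase I would instead use $g'$, where $g'(a,a,d)=g(a,g(a,d,d),g(d,a,a))=g(a,a,b)=d$, or I would compose with a term of the form $t(x,y,z)=g(x,g(x,y,y),g(x,y,z))$ as in earlier claims. The aim is to feed the tuple $(a,b,d)$ into $g$ so that the assumed value $g(a,b,d)=a$, or $g(a,d,b)=a$, appears at the key coordinate. If $g(a,d,d)=c$, then $g''(a,a,d)=g(a,b,c)$, and I would iterate once more, as $g'''=g''(g''(x,x,y),\dots)$, so that the relevant inner tuple becomes $(a,b,d)$ or $(a,d,b)$ and the assumption yields $a$.

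The main obstacle is the bookkeeping in choosing which composition forces the assumed value $a$ while keeping $d$ fixed on $(d,d,a)$. I expect up to four combinations: which of $g(a,b,d)$, $g(a,d,b)$ equals $a$, crossed with $g(a,d,d)\in\{a,c\}$. In some of these combinations the better target may be closure of $\{a,b,d\}$ rather than of $\{a,d\}$, using $g(a,b,b)=a$ together with the affine structure of $\{b,d\}$.
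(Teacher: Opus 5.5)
Your reduction to $\{a,c\}$ via the minority action of cyclic terms on $\mathbf{A}/\alpha$ is correct, but the rest of the plan has a genuine gap. First, your closure target for $\{a,d\}$ cannot be met. Every cyclic term is the minority operation on $\mathbf{A}/\alpha$, so any cyclic $h$ has $h(a,a,d)\in\{b,d\}$ and $h(d,d,a)\in\{a,c\}$. Hence ``$h(d,d,a)=d$, $h(a,a,d)=a$'' never happens; the correct condition is $h(a,a,d)=d$ and $h(a,d,d)=a$.

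Second, your ``useful computation'' is wrong. In fact $g''(d,d,a)=g(g(d,d,d),g(d,d,a),g(a,a,d))=g(d,g(a,d,d),b)$, which lies in $\{a,c\}$, not $g(d,b,b)=d$. With the corrected target your $g''$ idea does settle one of your four combinations. If $g(a,d,d)=a$ and $g(a,b,d)=a$, then $g''(a,a,d)=g(a,b,a)=d$ and $g''(d,d,a)=g(d,a,b)=g(a,b,d)=a$, so $\{a,d\}$ is closed under $g''$. The value you called ``wrong'' is exactly the one you need. The other combinations are only described (``I would iterate once more'') and never carried out, so as written nothing is proved.

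The missing idea is to aim at $\{a,b\}$ rather than $\{a,d\}$; this also removes the case split on $g(a,d,d)$. With $g'(x,y,z)=g(g(x,y,y),g(y,z,z),g(z,x,x))$ you get
\[
g'(b,a,a)=g(g(b,a,a),a,g(a,b,b))=g(d,a,a)=b.
\]
No cyclic term can preserve $\{a,b\}$, by Proposition~\ref{subuniverse} and $\mathrm{Sg}\{a,b\}=\mathbf{A}$. So $g'(a,b,b)\neq a$, hence $g'(a,b,b)=c$, and $g'(a,b,b)=g(g(a,b,b),b,g(b,a,a))=g(a,b,d)$.

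Likewise
\[
g''(b,a,a)=g(g(b,b,a),a,g(a,a,b))=g(a,a,d)=b,
\]
which forces $c=g''(a,b,b)=g(g(a,a,b),b,g(b,b,a))=g(d,b,a)=g(a,d,b)$. This is the paper's two-line proof.
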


\begin{proof}
    From $g'(b,a,a)=g(d,a,a)=b$ we get $c=g'(a,b,b)=g(a,b,d)$, and similarly $g''(b,a,a)=g(a,a,d)=b$ implies $c=g''(a,b,b)=g(d,b,a)$.
\end{proof}

\begin{claim}
    Equations $g(b,c,c)=b$ and $g(d,a,a)=b$ cannot hold simultaneously; therefore, the case $3^{\circ}$ is impossible.
\end{claim}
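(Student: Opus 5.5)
I plan to argue by contradiction, in the same style as the preceding claims of Subcase 3.2 $i)$. Suppose that $g$ is a ternary cyclic term with $g(a,b,b)=a$ (hence $g(b,a,a)=d$), $g(b,c,c)=b$ (hence $g(c,b,b)=a$) and $g(d,a,a)=b$ (hence $g(a,d,d)=c$). The goal is to modify $g$ into a cyclic term under which some pair from different $\alpha$-classes is closed. That pair will be $\{a,b\}$ or $\{b,c\}$, and either outcome contradicts $\mathrm{Sg}\{a,b\}=\mathrm{Sg}\{b,c\}=\m a$.

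First I record what is already known. Claim~\ref{g(abd)=c=g(adb)claim} applies, because $g(d,a,a)=b$, so $g(a,b,d)=g(a,d,b)=c$. On $\{a,c\}$ and $\{b,d\}$ the term $g$ acts as the minority operation. From $g(b,c,c)=b$ and $g(c,b,b)=a$ we have $g'(c,b,b)=g(g(c,b,b),g(b,b,b),g(b,c,c))=g(a,b,b)=a$ and $g''(b,c,c)=g(b,g(c,c,b),g(c,b,b))=g(b,a,a)=d$. So the auxiliary terms $g'$ and $g''$ move the values on the pair $\{b,c\}$ around in a controlled way.

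Next I will build, as in the proof that cases $1^\circ,2^\circ$ are impossible, a term
$$t(x,y,z)=g(g(x,g(x,y,z),g(x,y,z)),g(x,y,y),g(x,z,z))$$
or a close variant such as $g(x,x,g(x,y,z))$ composed with $g(x,y,y)$. I then evaluate it on the two triples of columns $(b,c,c),(c,b,c),(c,c,b)$ and $(c,b,b),(b,c,b),(b,b,c)$, using only values of $g$ on two-element sets together with $g(a,b,d)=g(a,d,b)=c$. The aim is outputs of the shapes $(b,\ast,\ast)$ and $(c,\circ,\circ)$, with $\ast\in\{a,c\}$ and $\circ\in\{b,d\}$, arranged so that the symmetrized term $g^*(x,y,z)=g(t(x,y,z),t(y,z,x),t(z,x,y))$ satisfies $g^*(b,c,c)=b$ and $g^*(c,b,b)=c$. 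That would make $\{b,c\}$ a subuniverse. If this turns out unworkable, the alternative target is $g^*(a,b,b)=a$ together with $g^*(b,a,a)=b$, which closes $\{a,b\}$, obtained by feeding the value $g(d,a,a)=b$ through a term like $g(x,g(x,y,y),g(x,y,z))$.

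The main obstacle is choosing the composition so that every value it touches is already determined. In particular I must avoid undetermined triples with three distinct entries such as $(a,b,c)$ and $(b,c,d)$, or else first pin those down via $g'$ and $g''$ exactly as was done for case $ii)$. I would start by computing the full table of $g'$ and $g''$ on pairs under the three assumptions. If one of them already yields $g'(c,b,b)=c$ or $g''(a,b,b)=a$ together with the matching flipped value, the contradiction is immediate. Otherwise I would iterate the $t$-construction once more, keeping track of $\alpha$-classes to restrict the possible outputs.
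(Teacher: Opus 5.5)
Your proposal does not contain a proof yet. The whole content of the claim is exhibiting a specific composite cyclic term that closes a cross pair, and both concrete candidates you name fail when evaluated.

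With $t(x,y,z)=g(g(x,g(x,y,z),g(x,y,z)),g(x,y,y),g(x,z,z))$, borrowed from the $1^{\circ}/2^{\circ}$ argument, the columns $(b,c,c),(c,b,c),(c,c,b)$ are sent to $(b,c,c)$. However, the columns $(c,b,b),(b,c,b),(b,b,c)$ are sent to $(c,d,d)$. So the symmetrized term satisfies $g^*(b,c,c)=b$ and $g^*(c,b,b)=g(c,d,d)$. This closes $\{b,c\}$ only if $g(c,d,d)=c$, which is exactly the situation the previous claim already excludes. In case $3^{\circ}$ one has $g(c,d,d)=a$, so $g^*$ acts on $\{b,c\}$ just like $g$ and nothing is gained.

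Your fallback $h(x,y,z)=g(x,g(x,y,y),g(x,y,z))$ fares no better. It sends $(a,b,b),(b,a,b),(b,b,a)$ to $(a,c,a)$, using $g(b,d,a)=g(a,b,d)=c$, and sends $(b,a,a),(a,b,a),(a,a,b)$ to $(b,b,b)$. Its symmetrization therefore has $g^*(b,a,a)=b$ but $g^*(a,b,b)=g(a,a,c)=c$, again no contradiction. Saying you would ``iterate once more'' does not specify which iteration works, and without it there is no argument.

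Separately, one of your auxiliary computations is wrong. By definition $g''(b,c,c)=g(g(b,b,c),g(c,c,c),g(c,c,b))=g(a,c,b)$. This triple has three distinct entries and its value is undetermined, so it is not $d$.

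The missing idea is a chain of compositions whose final columns are $(c,b,b)$ and $(d,a,a)$, so that the last symmetrization uses both hypotheses, in the form $g(c,b,b)=a$ and $g(d,a,a)=b$. Each step must touch only determined values: the two-element values, the minority action on $\{a,c\}$ and $\{b,d\}$, and $g(a,b,d)=g(a,d,b)=c$.

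The paper does this in three stages:
\begin{enumerate}
\item $t(x,y,z)=g(x,g(x,y,y),g(x,z,z))$ gives $(a,d,d)$ and $(b,a,a)$.
\item $t'(x,y,z)=g(x,g(x,y,z),t(x,y,z))$ gives $(a,c,c)$ and $(d,b,b)$.
\item $t''(x,y,z)=g(x,g''(x,y,z),t'(x,y,z))$ gives $(c,b,b)$ and $(d,a,a)$, using $g''(a,b,b)=c$ and $g''(b,a,a)=b$.
\end{enumerate}
Then $g^*(x,y,z)=g(t''(x,y,z),t''(y,z,x),t''(z,x,y))$ satisfies $g^*(a,b,b)=a$ and $g^*(b,a,a)=b$. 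Hence $\{a,b\}$ is closed under a cyclic term, contradicting $\mathrm{Sg}\{a,b\}=\m a$.
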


\begin{proof}
Assume, for the sake of contradiction, that $g(b,c,c)=b$ (therefore $g(c,b,b)=a$) and $g(d,a,a)=b$ hold. By Claim~\ref{g(abd)=c=g(adb)claim}, $g(a,b,d)=g(a,d,b)=c$ holds. Define the ternary term $t$ by $$t(x,y,z)=g(x,g(x,y,y),g(x,z,z)),$$ so, we have 
 \begin{equation*}
t\left(
\begin{bmatrix}
a \\
b \\
b
\end{bmatrix}\!,\!
\begin{bmatrix}
b \\
a \\
b
\end{bmatrix}\!,\!
\begin{bmatrix}
b \\
b \\
a
\end{bmatrix}
\right)=
g\left(
\begin{bmatrix}
a \\
b \\
b
\end{bmatrix}\!,\!
g\left(
\begin{bmatrix}
a \\
b \\
b
\end{bmatrix}\!,\!
\begin{bmatrix}
b \\
a \\
b
\end{bmatrix}\!,\!
\begin{bmatrix}
b \\
a \\
b
\end{bmatrix}
\right)\!,\!
g\left(
\begin{bmatrix}
a \\
b \\
b
\end{bmatrix}\!,\!
\begin{bmatrix}
b \\
b \\
a
\end{bmatrix}\!,\!
\begin{bmatrix}
b \\
b \\
a
\end{bmatrix}
\right)
\right)=
g\left(
\begin{bmatrix}
a \\
b \\
b
\end{bmatrix}\!,\!
\begin{bmatrix}
a \\
d \\
b
\end{bmatrix}\!,\!
\begin{bmatrix}
a \\
b \\
d
\end{bmatrix}
\right)=
\begin{bmatrix}
a \\
d\\
d
\end{bmatrix},
\end{equation*}
 \begin{equation*}
t\left(
\begin{bmatrix}
b \\
a \\
a
\end{bmatrix}\!,\!
\begin{bmatrix}
a \\
b \\
a
\end{bmatrix}\!,\!
\begin{bmatrix}
a \\
a \\
b
\end{bmatrix}
\right)=
g\left(
\begin{bmatrix}
b \\
a \\
a
\end{bmatrix}\!,\!
g\left(
\begin{bmatrix}
b \\
a \\
a
\end{bmatrix}\!,\!
\begin{bmatrix}
a \\
b \\
a
\end{bmatrix}\!,\!
\begin{bmatrix}
a \\
b \\
a
\end{bmatrix}
\right)\!,\!
g\left(
\begin{bmatrix}
b \\
a \\
a
\end{bmatrix}\!,\!
\begin{bmatrix}
a \\
a \\
b
\end{bmatrix}\!,\!
\begin{bmatrix}
a \\
a \\
b
\end{bmatrix}
\right)
\right)=
g\left(
\begin{bmatrix}
b \\
a \\
a
\end{bmatrix}\!,\!
\begin{bmatrix}
d \\
a \\
a
\end{bmatrix}\!,\!
\begin{bmatrix}
d \\
a \\
a
\end{bmatrix}
\right)=
\begin{bmatrix}
b \\
a\\
a
\end{bmatrix}.
\end{equation*}
Further, let the ternary term $t'$ be defined by $$t'(x,y,z)=g(x,g(x,y,z),t(x,y,z)),$$ 
hence, we get 
\begin{equation}\label{t'values}
t'\left(
\begin{bmatrix}
a \\
b \\
b
\end{bmatrix}\!,\!
\begin{bmatrix}
b \\
a \\
b
\end{bmatrix}\!,\!
\begin{bmatrix}
b \\
b \\
a
\end{bmatrix}
\right)=
g\left(
\begin{bmatrix}
a \\
b \\
b
\end{bmatrix}\!,\!
\begin{bmatrix}
a \\
a \\
a
\end{bmatrix}\!,\!
\begin{bmatrix}
a \\
d \\
d
\end{bmatrix}
\right)=
\begin{bmatrix}
a \\
c\\
c
\end{bmatrix},
\end{equation}
\begin{equation}\label{t'values2}
t'\left(
\begin{bmatrix}
b \\
a \\
a
\end{bmatrix}\!,\!
\begin{bmatrix}
a \\
b \\
a
\end{bmatrix}\!,\!
\begin{bmatrix}
a \\
a \\
b
\end{bmatrix}
\right)=
g\left(
\begin{bmatrix}
b \\
a \\
a
\end{bmatrix}\!,\!
\begin{bmatrix}
d \\
d \\
d
\end{bmatrix}\!,\!
\begin{bmatrix}
b \\
a \\
a
\end{bmatrix}
\right)=
\begin{bmatrix}
d \\
b\\
b
\end{bmatrix}.
\end{equation}
Define now the ternary term $t''$ by $$t''(x,y,z)=g(x,g''(x,y,z),t'(x,y,z)).$$
Then from \eqref{t'values} and \eqref{t'values2} we have
\begin{equation}\label{t''values}
t''\left(
\begin{bmatrix}
a \\
b \\
b
\end{bmatrix}\!,\!
\begin{bmatrix}
b \\
a \\
b
\end{bmatrix}\!,\!
\begin{bmatrix}
b \\
b \\
a
\end{bmatrix}
\right)=
g\left(
\begin{bmatrix}
a \\
b \\
b
\end{bmatrix}\!,\!
\begin{bmatrix}
c \\
c \\
c
\end{bmatrix}\!,\!
\begin{bmatrix}
a \\
c \\
c
\end{bmatrix}
\right)=
\begin{bmatrix}
c \\
b\\
b
\end{bmatrix},
\end{equation}
\begin{equation}\label{t''values2}
t''\left(
\begin{bmatrix}
b \\
a \\
a
\end{bmatrix}\!,\!
\begin{bmatrix}
a \\
b \\
a
\end{bmatrix}\!,\!
\begin{bmatrix}
a \\
a \\
b
\end{bmatrix}
\right)=
g\left(
\begin{bmatrix}
b \\
a \\
a
\end{bmatrix}\!,\!
\begin{bmatrix}
b \\
b \\
b
\end{bmatrix}\!,\!
\begin{bmatrix}
d \\
b \\
b
\end{bmatrix}
\right)=
\begin{bmatrix}
d \\
a\\
a
\end{bmatrix}.
\end{equation}
Finally, if we define the cyclic term $g^*$ by 
$$
g^*(x,y,z)=g(t''(x,y,z),t''(y,z,x),t''(z,x,y)),
$$
from \eqref{t''values} and \eqref{t''values2} we get $g^*(a,b,b)=a$ and $g^*(b,a,a)=b$, so, $\mathrm{Sg}\{a,b\}=\{a,b\}$, a contradiction.
\end{proof}

\begin{claim}
The asserted equations for the case $5^{\circ}$ cannot hold simultaneously; therefore, the case is impossible.
\end{claim}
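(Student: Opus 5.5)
We argue by contradiction, following the pattern of the claims that ruled out cases $1^{\circ}$--$3^{\circ}$. Case $5^{\circ}$ fixes
\[
g(a,b,b)=a,\qquad g(b,c,c)=d,\qquad g(c,d,d)=a,\qquad g(d,a,a)=b .
\]
We also use the affine subalgebras $\{a,c\}$ and $\{b,d\}$ and the fact that $g$ acts as the minority on $\m a/\alpha$. Together these determine $g$ on every two-element set:
\[
g(b,a,a)=d,\quad g(c,b,b)=a,\quad g(d,c,c)=b,\quad g(a,d,d)=c .
\]
Since $g(d,a,a)=b$, Claim~\ref{g(abd)=c=g(adb)claim} gives $g(a,b,d)=g(a,d,b)=c$. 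The same argument, applied to the auxiliary terms $g'$ and $g''$ at the other pairs, determines the remaining triples $g(a,b,c)$, $g(a,c,b)$, $g(a,c,d)$, $g(a,d,c)$, $g(b,c,d)$ and $g(b,d,c)$. For instance, $g'(c,c,d)=g(c,a,b)$ must equal $g'(d,d,c)$'s partner value dictated by the two-element table on $\{c,d\}$. So the first step is to check that $g$ is completely determined, exactly as in case $ii)$.

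Next we look for the symmetry underlying case $5^{\circ}$. The pattern $a\to a$, $b\to d$, $c\to a$, $d\to b$ for the values $g(x,y,y)$ on consecutive pairs suggests that the permutation $a\leftrightarrow c$ (or the map $a\mapsto c, b\mapsto b$, etc.) conjugates $5^{\circ}$ into $4^{\circ}$ or $3^{\circ}$ with the roles shifted. First we check whether some permutation preserving $\alpha$ and the cycle structure turns the table for $5^{\circ}$ into that for $3^{\circ}$. If so, the preceding claim already gives a contradiction.

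If no such relabelling works, we construct a cyclic term closing $\{a,b\}$, following the previous claim. We set $t(x,y,z)=g(x,g(x,y,y),g(x,z,z))$ and evaluate it on the generating columns $(a,b,b),(b,a,b),(b,b,a)$ and their swaps. We then iterate with $t'(x,y,z)=g(x,g(x,y,z),t(x,y,z))$ and similar terms, mixing in $g'$ and $g''$, until we reach a ternary term $w$ satisfying
\[
w\bigl((a,b,b),(b,a,b),(b,b,a)\bigr)\in\{a\}\times\{b,d\}^2 \ \text{or}\ \{a\}\times\{a\}^2,
\]
together with the analogous condition for the swapped columns. Then $g^*(x,y,z)=g(w(x,y,z),w(y,z,x),w(z,x,y))$ is cyclic and satisfies $g^*(a,b,b)=a$ and $g^*(b,a,a)=b$. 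This means $\{a,b\}$ is closed under $g^*$, contradicting $\mathrm{Sg}\{a,b\}=\m a$.

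Alternatively, we would show that the relation $S$ generated by $(a,b,b),(b,a,b),(b,b,a)$ contains $(a,a,a)$, as was done in Subcase 2.1. Claim~\ref{aaaclaim} forbids this, so it would again give a contradiction.

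The main obstacle is finding the right composite term. With the minority behaviour modulo $\alpha$, most naive compositions return elements in the wrong $\alpha$-class. The first thing we would try is to track only the $\alpha$-classes, so that the composite behaves like projection or minority modulo $\alpha$. We would then use the fully determined table to correct within the classes, aiming for outputs $(a,d,d)$ and $(d,a,a)$ at an intermediate stage. Once those are reached, $g(a,d,d)=c$ and $g(d,a,a)=b$ can be fed back in, as in the proof for case $3^{\circ}$.
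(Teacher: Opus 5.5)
Your proposal has a genuine gap. The step that carries the proof, exhibiting a cyclic term that closes $\{a,b\}$, is left as ``iterate until it works'', and most of the concrete facts you build around it are wrong.

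\emph{The values you treat as determined are not.} In case $5^{\circ}$ only $g(b,a,a)=d$ is forced. For this $g$, $\mathrm{Sg}\{x,y\}=\m a$ only forbids $g(x,y,y)=x$ and $g(y,x,x)=y$ from holding together. None of $g(b,c,c)=d$, $g(c,d,d)=a$, $g(d,a,a)=b$ returns its first argument, so the reverse values $g(c,b,b)$, $g(d,c,c)$, $g(a,d,d)$ are unconstrained, and your values for them are unjustified. The paper in fact has to split on $g(b,b,c)\in\{a,c\}$, and the value $a$ you assumed is the easy branch. The ``completely determined'' step from case $ii)$ does not transfer either. There every cyclic term, including $g'$ and $g''$, was known to have no fixed pair, which is exactly what fails in case $i)$.

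\emph{No relabelling reduces $5^{\circ}$ to $3^{\circ}$.} Rotations of $a\to b\to c\to d\to a$ preserve the number of entries of Table~\ref{table of cases} with $g(x,y,y)=x$; there is one in $5^{\circ}$ and two in $3^{\circ}$. The reflections preserving $\alpha$ move the tabulated values onto the reverse values $g(y,x,x)$, which are unknown here.

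\emph{The fallback via $(a,a,a)\in S$ is misapplied.} Claim~\ref{aaaclaim} used $b/\alpha=\{b\}$. Lemma~\ref{aaalemma} needs an automorphism swapping $a$ and $b$, which cannot exist here since $g(a,b,b)=a$ but $g(b,a,a)=d$. With $b/\alpha=\{b,d\}$, a term witnessing $(a,a,a)\in S$ only sends the swapped columns into $\{b,d\}$, which is no contradiction.

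\emph{Your success criterion for $w$ is also off.} If the first columns go to $(a,u,v)$, then $g^*(a,b,b)=g(a,v,u)$. For $\{u,v\}=\{b,d\}$ this is $c$ by Claim~\ref{g(abd)=c=g(adb)claim}, and for $u=v=d$ it is the unknown $g(a,d,d)$.

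The chain you start does work, however, if it is carried out exactly as in case $3^{\circ}$. It uses only $g(a,b,b)=a$, $g(b,a,a)=d$, $g(a,b,d)=g(a,d,b)=c$, the minority on $\{a,c\}$ and $\{b,d\}$, and the equations of $5^{\circ}$.
\begin{enumerate}
\item $t$ sends the columns $(a,b,b),(b,a,b),(b,b,a)$ and their swaps to $(a,d,d)$ and $(b,a,a)$.
\item $t'$ sends them to $(a,c,c)$ and $(d,b,b)$.
\item $t''(x,y,z)=g(x,g''(x,y,z),t'(x,y,z))$ sends them to $(c,d,d)$ and $(d,a,a)$, because $g''(a,b,b)=c$, $g''(b,a,a)=b$ and $g(b,c,c)=d$.
\end{enumerate}
Hence $g^*(x,y,z)=g(t''(x,y,z),t''(y,z,x),t''(z,x,y))$ satisfies $g^*(a,b,b)=g(c,d,d)=a$ and $g^*(b,a,a)=g(d,a,a)=b$. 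This closes $\{a,b\}$, contradicting $\mathrm{Sg}\{a,b\}=\m a$.

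This route avoids the paper's case split on $g(b,b,c)$; the paper uses $t=g(y,z,g(x,y,y))$ instead. Your proposal, though, neither performs these computations nor would its stated target recognise $(c,d,d)$ as the winning output.
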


\begin{proof}
By Claim~\ref{g(abd)=c=g(adb)claim}, $g(a,b,d) = g(a,d,b) = c$. Define the term $t$ by $$t(x,y,z)=g(y,z,g(x,y,y)),$$ so, we get 
 \begin{equation*}
t\left(
\begin{bmatrix}
a \\
b \\
b
\end{bmatrix}\!,\!
\begin{bmatrix}
b \\
a \\
b
\end{bmatrix}\!,\!
\begin{bmatrix}
b \\
b \\
a
\end{bmatrix}
\right)=
g\left(
\begin{bmatrix}
b \\
a \\
b
\end{bmatrix}\!,\!
\begin{bmatrix}
b \\
b \\
a
\end{bmatrix}\!,\!
g\left(
\begin{bmatrix}
a \\
b \\
b
\end{bmatrix}\!,\!
\begin{bmatrix}
b \\
a \\
b
\end{bmatrix}\!,\!
\begin{bmatrix}
b \\
a \\
b
\end{bmatrix}
\right)
\right)=
g\left(
\begin{bmatrix}
b \\
a \\
b
\end{bmatrix}\!,\!
\begin{bmatrix}
b \\
b \\
a
\end{bmatrix}\!,\!
\begin{bmatrix}
a \\
d \\
b
\end{bmatrix}
\right)=
\begin{bmatrix}
a \\
c\\
a
\end{bmatrix},
\end{equation*}
\begin{equation*}
t\left(
\begin{bmatrix}
b \\
a \\
a
\end{bmatrix}\!,\!
\begin{bmatrix}
a \\
b \\
a
\end{bmatrix}\!,\!
\begin{bmatrix}
a \\
a \\
b
\end{bmatrix}
\right)=
g\left(
\begin{bmatrix}
a \\
b \\
a
\end{bmatrix}\!,\!
\begin{bmatrix}
a \\
a \\
b
\end{bmatrix}\!,\!
g\left(
\begin{bmatrix}
b \\
a \\
a
\end{bmatrix}\!,\!
\begin{bmatrix}
a \\
b \\
a
\end{bmatrix}\!,\!
\begin{bmatrix}
a \\
b \\
a
\end{bmatrix}
\right)
\right)=
g\left(
\begin{bmatrix}
a \\
b \\
a
\end{bmatrix}\!,\!
\begin{bmatrix}
a \\
a \\
b
\end{bmatrix}\!,\!
\begin{bmatrix}
d \\
a \\
a
\end{bmatrix}
\right)=
\begin{bmatrix}
b \\
d\\
d
\end{bmatrix}.
\end{equation*}
Define the ternary term $t'$ by $$t'(x,y,z)=g(x,x,t(x,y,z)),$$ hence, we get 
$$
\begin{gathered}
t'\left(
\begin{bmatrix}
a \\
b \\
b
\end{bmatrix}\!,\!
\begin{bmatrix}
b \\
a \\
b
\end{bmatrix}\!,\!
\begin{bmatrix}
b \\
b \\
a
\end{bmatrix}
\right)=
g\left(
\begin{bmatrix}
a \\
b \\
b
\end{bmatrix}\!,\!
\begin{bmatrix}
a \\
b \\
b
\end{bmatrix}\!,\!
\begin{bmatrix}
a \\
c \\
a
\end{bmatrix}
\right)=
\begin{bmatrix}
a \\
g(b,b,c) \\
a
\end{bmatrix}\quad\text{and}\\
t'\left(
\begin{bmatrix}
b \\
a \\
a
\end{bmatrix}\!,\!
\begin{bmatrix}
a \\
b \\
a
\end{bmatrix}\!,\!
\begin{bmatrix}
a \\
a \\
b
\end{bmatrix}
\right)=
g\left(
\begin{bmatrix}
b \\
a \\
a
\end{bmatrix}\!,\!
\begin{bmatrix}
b \\
a \\
a
\end{bmatrix}\!,\!
\begin{bmatrix}
b \\
d \\
d
\end{bmatrix}
\right)=
\begin{bmatrix}
b \\
b \\
b
\end{bmatrix}.
\end{gathered}
$$
We may assume that $g(b,b,c)=c$ holds, since otherwise we get that $\{a,b\}$ is closed under the cyclic term $w$ defined by $$w(x,y,z)=g(t'(x,y,z),t'(y,z,x),t'(z,x,y)),$$ which is a contradiction. Hence, let $g(b,b,c)=c$ and define the ternary term $t''$ by $$t''(x,y,z)=g(g(x,z,z),g(x,z,z),g''(x,y,z)).$$ Then 
$$
\begin{gathered}
t''\left(
\begin{bmatrix}
a \\
b \\
b
\end{bmatrix}\!,\!
\begin{bmatrix}
b \\
a \\
b
\end{bmatrix}\!,\!
\begin{bmatrix}
b \\
b \\
a
\end{bmatrix}
\right)=
g\left(
\begin{bmatrix}
a \\
b \\
d
\end{bmatrix}\!,\!
\begin{bmatrix}
a \\
b \\
d
\end{bmatrix}\!,\!
\begin{bmatrix}
c \\
c \\
c
\end{bmatrix}
\right)=
\begin{bmatrix}
c \\
c \\
a
\end{bmatrix}\quad\text{and}\\
t''\left(
\begin{bmatrix}
b \\
a \\
a
\end{bmatrix}\!,\!
\begin{bmatrix}
a \\
b \\
a
\end{bmatrix}\!,\!
\begin{bmatrix}
a \\
a \\
b
\end{bmatrix}
\right)=
g\left(
\begin{bmatrix}
d \\
a \\
a
\end{bmatrix}\!,\!
\begin{bmatrix}
d \\
a \\
a
\end{bmatrix}\!,\!
\begin{bmatrix}
b \\
b \\
b
\end{bmatrix}
\right)=
\begin{bmatrix}
b \\
d \\
d
\end{bmatrix}.
\end{gathered}
$$
From these two equations, we get that $\{a,b\}$ is closed for the cyclic term $g^*$ defined by $$g^*(x,y,z)=g(t''(x,y,z),t''(y,z,x),t''(z,x,y)),$$ a contradiction.
\end{proof}

\begin{claim}
If the equations of the case $4^{\circ}$ hold, then the following also hold:
\begin{enumerate}
    \item[1)] $g(a,d,d)=g(b,c,d)=g(b,d,c)=a$;
    \item[2)] $g(c,b,b)=g(a,b,d)=g(a,d,b)=c$;
    \item[3)] $g(d,c,c)=g(a,b,c)=g(a,c,b)=b$;
    \item[4)] $g(a,c,d)=g(a,d,c)=d$.
\end{enumerate}
Consequently, the term $g$ is completely determined, and a minimal Taylor algebra $\m a$ is the one from \Cref{example5.17}.
\end{claim}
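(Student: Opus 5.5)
The plan is to squeeze the missing values out of the auxiliary cyclic terms $g'(x,y,z)=g(g(x,y,y),g(y,z,z),g(z,x,x))$ and $g''(x,y,z)=g(g(x,x,y),g(y,y,z),g(z,z,x))$, as was done in case $ii)$ and in Claim~\ref{g(abd)=c=g(adb)claim}. Since $g'$ and $g''$ are again ternary cyclic terms of $\m a$, all the restrictions established so far in Subcase 3.2 apply to them as well. These restrictions are: the values on each cross pair are constrained by the $\mathbb{Z}_2^{\mathrm{aff}}$ quotient $\alpha$ and by $\mathrm{Sg}\{x,y\}=\m a$; cases $1^{\circ},2^{\circ},3^{\circ},5^{\circ}$ are impossible up to the rotation $a\to b\to c\to d\to a$; and a cyclic term closing some pair $\{x,y\}$ contradicts $\mathrm{Sg}\{x,y\}=\m a$.

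\textbf{Step 1: two-element sets.} The hypotheses of $4^{\circ}$ give $g(a,b,b)=a$, $g(b,c,c)=d$, $g(c,d,d)=c$, $g(d,a,a)=b$, and from these $g(b,a,a)=d$ and $g(d,c,c)=b$, using $\mathrm{Sg}=\m a$ and the affine quotient. The remaining pair values are $g(c,b,b)$ and $g(a,d,d)$.
\begin{itemize}
\item For $g(c,b,b)$, the only options are $a$ and $c$. If it were $a$, then the pair $\{b,c\}$ would behave as in case $ii)$, i.e.\ neither endpoint is ever returned. I would then form $g'$ or $g''$ and check that its pattern on $(a,b,b),(b,c,c),(c,d,d),(d,a,a)$ falls into one of the excluded columns of \Cref{table of cases}, or that it closes a pair. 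This should give $g(c,b,b)=c$.
\item The same argument with the roles rotated gives $g(a,d,d)=a$.
\end{itemize}
With Step 1 done, $g$ is known on every two-element set.

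\textbf{Step 2: three-element sets.} I would read off the values on triples with three distinct entries from identities of the form
\[
g'(x,x,y)=g(x,g(x,y,y),g(y,x,x)),\qquad g''(x,x,y)=g(x,g(x,x,y),g(y,y,x)),
\]
since the left-hand sides are already forced by Step 1 applied to $g'$ and $g''$. For instance:
\begin{itemize}
\item $g'(b,b,a)=g(b,g(b,a,a),g(a,b,b))=g(b,d,a)$ and $g''(b,b,a)=g(b,a,d)$ should yield item 1).
\item Claim~\ref{g(abd)=c=g(adb)claim} already gives $g(a,b,d)=g(a,d,b)=c$ in item 2).
\item The analogous computations on the pairs $(b,c)$, $(c,d)$, $(d,a)$ fill in items 3) and 4).
\end{itemize}
This step is bookkeeping once Step 1 is settled.

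\textbf{Step 3: identification with Example 5.17.} Now $g$ is completely determined. I would exhibit a cyclic term of the algebra in Example~\ref{example5.17} built from $p$, for instance $p(p(x,y,z),p(y,z,x),p(z,x,y))$ or a variant using the symmetries $\sigma,\tau$. I would then check, up to the relabelling fixed by the $\alpha$-classes $\{a,c\},\{b,d\}$, that it agrees with the table of $g$. Since that algebra is minimal Taylor, $\m a$ is term-equivalent to it; conversely, any minimal Taylor algebra in case $4^{\circ}$ contains this $g$.

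\textbf{Main obstacle.} I expect Step 1, specifically ruling out $g(c,b,b)=a$ and $g(a,d,d)=c$, to be the hardest part. There I would first try $g'$ and $g''$ directly, and if they do not land in an excluded column, use the compositions $t,t',t''$ from the $3^{\circ}$ and $5^{\circ}$ arguments to produce a cyclic term closing $\{a,b\}$ or $\{b,c\}$.
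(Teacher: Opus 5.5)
There is a genuine gap in your Step 2. After Step 1 the pattern on cross pairs is as follows: both elements of $\{a,c\}$ are returned against both neighbours ($g(a,b,b)=g(a,d,d)=a$, $g(c,b,b)=g(c,d,d)=c$), while $b,d$ are never returned ($g(b,a,a)=g(b,c,c)=d$, $g(d,a,a)=g(d,c,c)=b$).

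Consequently, for a cross pair $(x,y)$ your identities $g'(x,y,y)=g(g(x,y,y),y,g(y,x,x))$ and $g''(x,y,y)=g(g(y,x,x),y,g(x,y,y))$ behave in only two ways. If $y\in\{a,c\}$, they collapse to two-element values. If $y\in\{b,d\}$, they become $g(x,y,y')$ or $g(y',y,x)$ with $x\in\{a,c\}$ and $\{y,y'\}=\{b,d\}$.

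So these identities only ever see the triples $\{a,b,d\}$ and $\{b,c,d\}$. They give $g(a,b,d)=g(a,d,b)=c$, which is Claim~\ref{g(abd)=c=g(adb)claim}. (Your own example with $g'(b,b,a)$, $g''(b,b,a)$ produces exactly these item 2) values, not item 1).) Through the pairs $(c,b)$ or $(c,d)$ they give $g(b,c,d)=g(b,d,c)=a$. The four values $g(a,b,c)$, $g(a,c,b)$, $g(a,c,d)$, $g(a,d,c)$ in items 3) and 4) never occur, so ``the analogous computations fill in items 3) and 4)'' is false; this is not bookkeeping.

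Also, it is not ``Step 1 applied to $g'$'' that forces the left-hand sides, since $g'$ is not in case $4^{\circ}$ normal form ($g'(b,a,a)=b$). What forces, e.g., $g'(a,b,b)=c$ is that $g'(b,a,a)=b$ is computable and a cyclic term cannot close a cross pair.

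The missing idea is a separate argument showing that $g(a,b,c)=d$ or $g(a,c,b)=d$ yields a cyclic term closing $\{a,b\}$. The paper uses $t_1(x,y,z)=g(z,g(x,y,z),g(x,y,y))$ and $t_3(x,y,z)=g(y,g(x,y,z),g(x,y,y))$ together with their cyclic symmetrizations $g_1,g_3$. Both satisfy $g_i(a,b,b)=c$, while $g_i(b,a,a)$ equals one of $g(a,c,b)$, $g(a,b,c)$. If that value is $d$, then $g_2=g(g,g',g_i)$ gives $g_2(a,b,b)=g(a,c,c)=a$ and $g_2(b,a,a)=g(d,b,d)=b$, closing $\{a,b\}$. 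Item 4) then follows by the symmetry $a\leftrightarrow c$, $b\leftrightarrow d$.

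Your Step 1 is essentially the paper's reduction to cases $1^{\circ}$/$2^{\circ}$ via $g'$, with two caveats:
\begin{itemize}
\item the pattern of $g'$ lands in an excluded column only after an orientation-reversing relabelling of the cycle $a\to b\to c\to d\to a$, which is legitimate because every $\alpha$-preserving permutation is a symmetry of Subcase 3.2;
\item the computation uses the $\{a,b,d\}$ and $\{b,c,d\}$ values (e.g.\ $g'(a,d,d)=g(c,d,b)$ when $g(a,d,d)=c$), so that part of Step 2 must come first.
\end{itemize}
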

\begin{proof}
First, $g(d,c,c)=b$ follows from $g(c,d,d)=c$ and $\mathrm{Sg}\{c,d\}=\m a$. By Claim~\ref{g(abd)=c=g(adb)claim} and its dual obtained by transposing $a$ with $c$ and $b$ with $d$, we deduce $g(a,b,d)=g(a,d,b)=c$ and $g(b,c,d)=g(b,d,c)=a$.

Next, we shall prove $g(a,d,d)=a$. If $g(a,d,d)=c$, then $g'(a,d,d)=g(c,d,b)=a$. Since $g'(b,a,a)=b$ and $g'(d,c,c)=d$ hold as well, we may recall case $1^{\circ}$ if $g'(c,b,b)=c$, or case $2^{\circ}$ if $g'(c,b,b)=a$, which are both impossible. Therefore, it may be assumed that $g(a,d,d)=a$ holds, which concludes the proof of the claim's first part. The corresponding claim $g(c,b,b)=c$ is proved similarly.

Let us now prove $g(a,b,c)=g(a,c,b)=b$. First, for the sake of contradiction, suppose that $g(a,b,c)=d$. Define $t_1$ and $g_1$ by 
\begin{align*}
    t_1(x,y,z)&=g(z,g(x,y,z),g(x,y,y)),\\
    g_1(x,y,z)&=g(t_1(x,y,z),t_1(y,z,x),t_1(z,x,y)).
\end{align*}
Then 
$$
\begin{gathered}
t_1\left(
\begin{bmatrix}
a \\
b \\
b
\end{bmatrix}\!,\!
\begin{bmatrix}
b \\
a \\
b
\end{bmatrix}\!,\!
\begin{bmatrix}
b \\
b \\
a
\end{bmatrix}
\right)=
g\left(
\begin{bmatrix}
b \\
b \\
a
\end{bmatrix}\!,\!
\begin{bmatrix}
a \\
a \\
a
\end{bmatrix}\!,\!
\begin{bmatrix}
a \\
d \\
b
\end{bmatrix}
\right)=
\begin{bmatrix}
d \\
c \\
d
\end{bmatrix}\quad\text{and}\\
t_1\left(
\begin{bmatrix}
b \\
a \\
a
\end{bmatrix}\!,\!
\begin{bmatrix}
a \\
b \\
a
\end{bmatrix}\!,\!
\begin{bmatrix}
a \\
a \\
b
\end{bmatrix}
\right)=
g\left(
\begin{bmatrix}
a \\
a \\
b
\end{bmatrix}\!,\!
\begin{bmatrix}
d \\
d \\
d
\end{bmatrix}\!,\!
\begin{bmatrix}
d \\
a \\
a
\end{bmatrix}
\right)=
\begin{bmatrix}
a \\
b \\
c
\end{bmatrix}.
\end{gathered}
$$
We get $g_1(a,b,b)=c$ and, using our assumption $g(a,b,c)=d$, $g_1(b,a,a)=d$. Next we define $g_2$ by 
\begin{align*}
    g_2(x,y,z)&=g(g(x,y,z),g'(x,y,z),g_1(x,y,z)).
\end{align*}
Then $g_2$ is cyclic (being an operation applied to three cyclic operations) and
$$
\begin{gathered}
g_2\left(
\begin{bmatrix}
a \\
b \\
b
\end{bmatrix}\!,\!
\begin{bmatrix}
b \\
a \\
b
\end{bmatrix}\!,\!
\begin{bmatrix}
b \\
b \\
a
\end{bmatrix}
\right)=
g\left(
\begin{bmatrix}
a \\
a \\
a
\end{bmatrix}\!,\!
\begin{bmatrix}
c \\
c \\
c
\end{bmatrix}\!,\!
\begin{bmatrix}
c \\
c \\
c
\end{bmatrix}
\right)=
\begin{bmatrix}
a \\
a \\
a
\end{bmatrix}\quad\text{and}\\
g_2\left(
\begin{bmatrix}
b \\
a \\
a
\end{bmatrix}\!,\!
\begin{bmatrix}
a \\
b \\
a
\end{bmatrix}\!,\!
\begin{bmatrix}
a \\
a \\
b
\end{bmatrix}
\right)=
g\left(
\begin{bmatrix}
d \\
d \\
d
\end{bmatrix}\!,\!
\begin{bmatrix}
b \\
b \\
b
\end{bmatrix}\!,\!
\begin{bmatrix}
d \\
d \\
d
\end{bmatrix}
\right)=
\begin{bmatrix}
b \\
b \\
b
\end{bmatrix}.
\end{gathered}
$$
We get $g_2(a,b,b)=a$ and $g_2(b,a,a)=b$ for the cyclic term $g_2$, a contradiction. This proves $g(a,b,c)=b$. Suppose now that $g(a,c,b)=d$ holds. Here we define $t_3$ and $g_3$ by 
$$
\begin{gathered}
t_3(x,y,z)=g(y,g(x,y,z),g(x,y,y))\;\;\text{and}\\
g_3(x,y,z)=g(t_3(x,y,z),t_3(y,z,x),t_3(z,x,y)). 
\end{gathered}
$$
Now, 
$$
\begin{gathered}
t_3\left(
\begin{bmatrix}
a \\
b \\
b
\end{bmatrix}\!,\!
\begin{bmatrix}
b \\
a \\
b
\end{bmatrix}\!,\!
\begin{bmatrix}
b \\
b \\
a
\end{bmatrix}
\right)=
g\left(
\begin{bmatrix}
b \\
a \\
b
\end{bmatrix}\!,\!
\begin{bmatrix}
a \\
a \\
a
\end{bmatrix}\!,\!
\begin{bmatrix}
a \\
d \\
b
\end{bmatrix}
\right)=
\begin{bmatrix}
d \\
b \\
a
\end{bmatrix}\quad\text{and}\\
t_3\left(
\begin{bmatrix}
b \\
a \\
a
\end{bmatrix}\!,\!
\begin{bmatrix}
a \\
b \\
a
\end{bmatrix}\!,\!
\begin{bmatrix}
a \\
a \\
b
\end{bmatrix}
\right)=
g\left(
\begin{bmatrix}
a \\
b \\
a
\end{bmatrix}\!,\!
\begin{bmatrix}
d \\
d \\
d
\end{bmatrix}\!,\!
\begin{bmatrix}
d \\
a \\
a
\end{bmatrix}
\right)=
\begin{bmatrix}
a \\
c \\
b
\end{bmatrix}.
\end{gathered}
$$
Since $g_3(a,b,b)=c$ and $g_3(b,a,a)=d$, analogously as above replacing $g_1$ with $g_3$, we get that $\{a,b\}$ is closed under some cyclic term, a contradiction. The proof of part $3)$ is finished.

By transposing $a$ with $c$, and also $b$ with $d$, in the above arguments, we prove the last part of the claim.

The term $g$ is completely determined now. We shall prove that a minimal Taylor algebra from \Cref{example5.17} is obtained in this case.

That can be done by finding any of cyclic terms $g,g',g''$ in the minimal Taylor algebra from \Cref{example5.17}. Let $g^*$ be a term defined by $g^*(x,y,z)=p(p(x,y,z),p(y,z,x),p(z,x,y))$. The verification of $g^*$ being equal to $g'$ is left to the reader. In the present paper, we denote this algebra by $\m t_{4,13}$.
\end{proof}

\end{enumerate}
\end{enumerate}

\subsection{Affine $\mathbb{Z}^{\text{aff}}_3$ quotient}\label{s:Z3}
Suppose that $\alpha=\{\{a,d\},\{b\},\{c\}\}$ is a congruence such that $\m a/\alpha$ is term-equivalent to $\mathbb{Z}^{\text{aff}}_3$. Recall the operation from \Cref{compositionactslike}; on $\mathbb{Z}^{\text{aff}}_3$ it acts like $p(x,y,z)=x-y+z$. We distinguish three cases depending on which algebra is the class $\{a,d\}$: the semilattice, the majority and the affine class.

\begin{table}[H]
\begin{subtable}{.45\linewidth}
\centering
 \begin{tabular}{c|c c c c}

    $t'$ & $a$ & $b$& $c$  & $d$\\
\hline

    $a$ & $a$ & $c$ & $b$ & $a$  \\

	$b$ & $c$ & $b$ & $\ast$ & $c$ \\
	
	$c$ & $b$ & $\ast$ & $c$ & $b$\\
	
	$d$ & $a$ & $c$ & $b$ & $d$
\end{tabular}  
\caption{General case}
\label{table:8}
\end{subtable}
\begin{subtable}{.45\linewidth}
\centering
 \begin{tabular}{c|c c c c}

    $t'$ & $a$ & $b$& $c$  & $d$\\
\hline

    $a$ & $a$ & $c$ & $b$ & $a$  \\

	$b$ & $c$ & $b$ & $d$ & $c$ \\
	
	$c$ & $b$ & $d$ & $c$ & $b$\\
	
	$d$ & $a$ & $c$ & $b$ & $d$
\end{tabular}  
\caption{Algebra $\m t_{4,14}$ ($\ast=d$)}
\label{table:9}
\end{subtable}
\caption{Algebras with $\mathbb{Z}^{\text{aff}}_3$ quotient and commutative binary operation.}
\end{table}

\begin{table}[H]
\begin{subtable}{.45\linewidth}
    \centering
    \begin{tabular}{c c|c c}
    $\mathbf{x}$ & $p(\mathbf{x})$ &$\mathbf{x}$  & $p(\mathbf{x})$\\ \hline\hline
     $(a,a,b)$ & $b$ & $(c,a,a)$ & $c$ \\
     $(a,a,c)$ & $c$ & $(c,a,c)$ & $b$\\
     $(a,b,a)$ & $c$ & $(c,a,d)$ & $c$ \\
     $(a,b,c)$ & $b$ & $(c,b,a)$ & $b$\\
     $(a,b,d)$ & $c$ & $(c,b,b)$ & $c$ \\
     $(a,c,a)$ & $b$ & $(c,b,d)$ & $b$ \\
     $(a,c,b)$ & $c$ & $(c,c,b)$ & $b$ \\
     $(a,c,d)$ & $b$ & $(c,d,a)$ & $c$ \\
     $(a,d,b)$ & $b$ & $(c,d,c)$ & $b$ \\
     $(a,d,c)$ & $c$ & $(c,d,d)$ & $c$ \\[1ex]
     $(b,a,a)$ & $b$ & $(d,a,b)$ & $b$ \\
     $(b,a,b)$ & $c$ & $(d,a,c)$ & $c$ \\
     $(b,a,d)$ & $b$ & $(d,b,a)$ & $c$ \\
     $(b,b,c)$ & $c$ & $(d,b,c)$ & $b$ \\
     $(b,c,a)$ & $c$ & $(d,b,d)$ & $c$ \\
     $(b,c,c)$ & $b$ & $(d,c,a)$ & $b$ \\
     $(b,c,d)$ & $c$ & $(d,c,b)$ & $c$ \\
     $(b,d,a)$ & $b$ & $(d,c,d)$ & $b$ \\
     $(b,d,b)$ & $c$ & $(d,d,b)$ & $b$ \\
     $(b,d,d)$ & $b$ & $(d,d,c)$ & $c$     
\end{tabular}
    \caption{Common values}
    \label{3affcommon}
\end{subtable}
\begin{subtable}{.5\linewidth}
\centering
 \begin{tabular}{|c||c|c|c|c|c|}
\hline
    Algebra & $\T_{4,15}$ & $\T_{4,16}$ & $\T_{4,17}$ & $\T_{4,18}$ \\
\hline\hline
     $p{\restriction_{\{a,d\}}}$  & maj & maj & aff & aff \\
\hline
    $p(a,b,b)$  & $d$ & $a$ & $d$ & $a$ \\
\hline
	$p(a,c,c)$ & $d$ & $d$ & $d$ & $d$ \\
\hline
	$p(b,a,c)$ & $d$ & $a$ & $a$ & $d$ \\	
\hline

    $p(b,b,a)$  & $d$ & $a$ & $d$ & $a$ \\
\hline
	$p(b,b,d)$  & $d$ & $a$ & $d$ & $a$ \\
\hline

        $p(b,c,b)$  & $d$ & $d$ & $d$ & $d$ \\
\hline
	$p(b,d,c)$  & $d$ & $d$ & $d$ & $d$ \\
\hline
 
	$p(c,a,b)$ & $d$ & $a$ & $a$ & $d$ \\
\hline
	$p(c,b,c)$ & $d$ & $a$ & $d$ & $a$ \\

\hline
	$p(c,c,a)$ & $d$ & $d$ & $d$ & $d$ \\
\hline
	$p(c,c,d)$ & $d$ & $d$ & $d$ & $d$ \\
\hline
	$p(c,d,b)$ & $d$ & $d$ & $d$ & $a$ \\
\hline
	$p(d,b,b)$ & $d$ & $a$ & $d$ & $a$ \\
\hline
	$p(d,c,c)$ & $d$ & $d$ & $d$ & $d$ \\
\hline

\end{tabular} 
\caption{Remaining values.}
\label{3affremain}
\end{subtable}
\caption{Algebras with $\mathbb{Z}^{\text{aff}}_3$ quotient and a ternary operation.}
\end{table}

\begin{figure}[ht]
    \begin{subfigure}{0.33\textwidth}
		\centering
		\begin{tikzpicture}
			\tikzstyle{every node}=[draw,circle,fill=white,minimum size=4pt,inner sep=0pt]
			
			\node (a) [label=above:\strut$a$] {};
			\node (b) [fill=gray,right=1.5cm of a,label=above:\strut$b$] {};
			\node (c) [fill=gray!20,below=1.5cm of a,label=below:\strut$c$] {};
			\node (d) [right=1.5cm of c,label=below:\strut$d$] {};

			\path[->,>=stealth,line width=1.2pt, shorten <=0.7mm, shorten >=0.7mm]
            (a) edge node[draw=none,near start,left, outer sep=3pt] {\scriptsize{s}} (d);
			
			\path[-,line width=1.2pt,densely dotted, shorten <=0.7mm, shorten >=0.7mm]
            (a) edge node[draw=none,midway,above, outer sep=1pt] {\scriptsize{a3}} (b)
			(b) edge node[draw=none,near start,right, outer sep=2pt] {\scriptsize{a3}} (c)
			(c) edge node[draw=none,midway,above,outer sep=1pt] {\scriptsize{a3}} (d)
            (b) edge node[draw=none,midway,right,outer sep=1pt] {\scriptsize{a3}} (d)
			(a) edge node[draw=none,midway,left,outer sep=1pt] {\scriptsize{a3}} (c);

		\end{tikzpicture}
		\caption{$\m t_{4,14}$}
		
	\end{subfigure}%
	\hfill
	\begin{subfigure}{0.33\textwidth}
		\centering
		\begin{tikzpicture}
			\tikzstyle{every node}=[draw,circle,fill=white,minimum size=4pt,inner sep=0pt]
			
			\node (a) [label=above:\strut$a$] {};
			\node (b) [fill=gray,right=1.5cm of a,label=above:\strut$b$] {};
			\node (c) [fill=gray!20,below=1.5cm of a,label=below:\strut$c$] {};
			\node (d) [right=1.5cm of c,label=below:\strut$d$] {};

            \path[-,line width=1.2pt,densely dashed, shorten <=0.7mm, shorten >=0.7mm]
            (a) edge node[draw=none,near start,left, outer sep=3pt] {\scriptsize{m}} (d);
			
			\path[-,line width=1.2pt,densely dotted, shorten <=0.7mm, shorten >=0.7mm]
            (a) edge node[draw=none,midway,above, outer sep=1pt] {\scriptsize{a3}} (b)
			(b) edge node[draw=none,near start,right, outer sep=2pt] {\scriptsize{a3}} (c)
			(c) edge node[draw=none,midway,above,outer sep=1pt] {\scriptsize{a3}} (d)
            (b) edge node[draw=none,midway,right,outer sep=1pt] {\scriptsize{a3}} (d)
			(a) edge node[draw=none,midway,left,outer sep=1pt] {\scriptsize{a3}} (c);

		\end{tikzpicture}
		\caption{$\m t_{4,15}$, $\m t_{4,16}$}
	\end{subfigure}
	\hfill
	\begin{subfigure}{0.33\textwidth}
		\centering
		\begin{tikzpicture}
			\tikzstyle{every node}=[draw,circle,fill=white,minimum size=4pt,inner sep=0pt]
			
			\node (a) [label=above:\strut$a$] {};
			\node (b) [fill=gray,right=1.5cm of a,label=above:\strut$b$] {};
			\node (c) [fill=gray!20,below=1.5cm of a,label=below:\strut$c$] {};
			\node (d) [right=1.5cm of c,label=below:\strut$d$] {};

            \path[-,line width=1.2pt,densely dotted, shorten <=0.7mm, shorten >=0.7mm]
            (a) edge node[draw=none,near start,left, outer sep=3pt] {\scriptsize{a}} (d)
			(a) edge node[draw=none,midway,above, outer sep=1pt] {\scriptsize{a3}} (b)
			(b) edge node[draw=none,near start,right, outer sep=2pt] {\scriptsize{a3}} (c)
			(c) edge node[draw=none,midway,above,outer sep=1pt] {\scriptsize{a3}} (d)
            (b) edge node[draw=none,midway,right,outer sep=1pt] {\scriptsize{a3}} (d)
			(a) edge node[draw=none,midway,left,outer sep=1pt] {\scriptsize{a3}} (c);;
		\end{tikzpicture}
		\caption{$\m t_{4,17}$, $\m t_{4,18}$}
	\end{subfigure}
	\caption{Directed graph of $\m t_{4,i}$ for $i=14,\dots,18$.}
\end{figure}
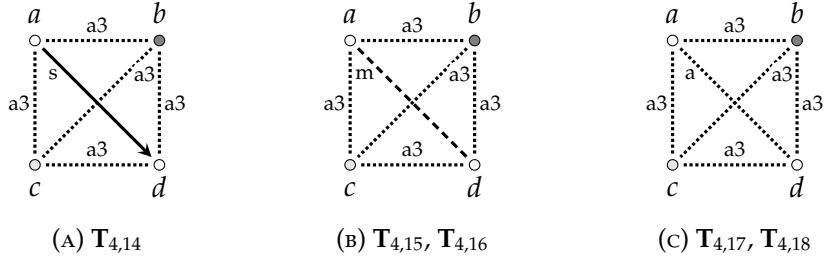

\begin{claim}\label{claim3affsemiblock}

If the two-element class of $\m a/\alpha$ is a semilattice, then  $\m a\cong\m t_{4,7}$ or $\m a\cong\m t_{4,14}$.
\end{claim}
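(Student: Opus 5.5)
Let the semilattice on $\{a,d\}$ have absorbing element $d$; the case where $a$ absorbs $d$ is symmetric after renaming. The plan is to find a binary commutative term and show it is forced to be the operation $t'$ of Table~\ref{table:8}. Then we split on the single undetermined value $\ast=t'(b,c)$.

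\textbf{Existence of a commutative term.} The quotient $\m a/\alpha\cong\mathbb{Z}_3^{\mathrm{aff}}$ has the binary commutative term $2x+2y$. Each $\alpha$-class ($\{a,d\}$ a semilattice, and singletons) has a binary commutative term. So Lemma~\ref{bkcyclic} gives a commutative binary term $t$ of $\m a$.

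\textbf{Pinning down most values.} On the quotient, $t$ acts as $2x+2y$. This forces $t(b,c)\in\{a,d\}$, $t(a,b),t(d,b)\in\{c\}$ and $t(a,c),t(d,c)\in\{b\}$. Restricted to $\{a,d\}$, $t$ is the semilattice operation with $t(a,d)=d$.

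We then renormalize $t$ so that it agrees with Table~\ref{table:8}. Table~\ref{table:8} has $t'(a,d)=a$, so the naming there is with $a$ absorbing. I would therefore rename so that the absorbing element is $a$, and then $t'(a,d)=a$ holds. Everything else in Table~\ref{table:8} is already forced. The only free value is $\ast=t(b,c)\in\{a,d\}$.

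\textbf{The case $\ast=$ the absorbing element.} Here $\{a\}$ together with the quotient structure suggests that $\{b,c,a\}$ is closed under $t$. The set $\{a,b,c\}$ is closed under $t$, and $t$ restricted to it is a commutative Taylor operation (an isomorphic copy of $\mathbb{Z}_3^{\mathrm{aff}}$). By Proposition~\ref{subuniverse}, $\{a,b,c\}$ is a subuniverse of $\m a$. Then $d\notin\mathrm{Sg}\{a,b\}$, which contradicts 2-generation.

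But I must check the generators. Since the classes are $\{a,d\},\{b\},\{c\}$, the generating pair could be $\{d,b\}$ or $\{a,b\}$. The relevant configuration is when the generator lies in the non-absorbing element of the class. In that configuration the resulting algebra is $\m t_{4,7}$: it has the semilattice quotient identified earlier, and its table (Table~\ref{table:6}) matches under the isomorphism used in the corresponding claim of Subsection~\ref{s:semi}. So one value of $\ast$ yields $\m t_{4,7}$ (via the earlier classification), and I would verify this by matching tables.

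\textbf{The case $\ast=$ the other element.} The other value of $\ast$ gives exactly Table~\ref{table:9}. We must show that $\m c=(A;t')$ is minimal Taylor. Any Taylor reduct of $\m c$ preserves $\alpha$, since the congruences are inherited. It also has a commutative binary term, by Lemma~\ref{bkcyclic}. By the argument above, that term is forced to agree with $t'$. Hence the reduct contains $t'$, so $\m c$ is minimal Taylor and equals $\m t_{4,14}$.

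\textbf{Main obstacle.} The delicate step is the last one: showing the determination of $\ast$ is intrinsic in every reduct. Specifically, we must exclude a reduct whose commutative terms all take the other value of $\ast$. I would handle this by composing: if $s$ is any commutative term, then $s(s(x,y),\,\cdot)$-type compositions move $\ast$ between the two options. For instance, $u(x,y)=s(s(x,y),s(x,s(x,y)))$ would be used to produce a term realizing the required value. This shows that both clones contain the same canonical commutative term, as in the earlier claims.
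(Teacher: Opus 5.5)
Your outline matches the paper's proof up to the final step. You get a commutative binary term from Lemma~\ref{bkcyclic}, and all of its values are forced by the $\mathbb{Z}_3^{\mathrm{aff}}$ quotient and the semilattice class except $\ast=t(b,c)\in\{a,d\}$. You then split on $\ast$: when $\ast$ is the absorbing element, you exclude it via Proposition~\ref{subuniverse} if the absorbing element is a generator, and obtain $\m t_{4,7}$ otherwise. One slip: your opening remark that the two choices of absorbing element are ``symmetric after renaming'' is wrong. Since $a$ is a generator and $d$ is not, the two choices lead to different algebras. You correct this yourself later.

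\textbf{The gap is the last step: proving that $(A;t')$ from Table~\ref{table:9} is minimal Taylor.}

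\emph{What fails.} The sentence ``by the argument above, that term is forced to agree with $t'$'' does not follow, because that argument leaves $s(b,c)\in\{a,d\}$ open. The repair you sketch also cannot work. If $s$ is commutative with $s(b,c)=a$ (the absorbing element), then $\{a,b,c\}$ is closed under $s$. So every composition of $s$, including your $u$, maps $(b,c)$ into $\{a,b,c\}$; indeed $u(b,c)=s(a,s(b,a))=s(a,c)=b$. Such an $s$ would generate a proper Taylor subclone. What must be shown is that no such $s$ lies in $\mathrm{Clo}(A;t')$ at all, and that calls for an invariance argument rather than a composition argument.

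\emph{The fix.} Here the invariance argument is immediate. Since $t'(b,c)=d$, $t'(b,d)=c$ and $t'(c,d)=b$, the set $\{b,c,d\}$ is a subuniverse of $(A;t')$. Hence every binary term $s$ of $(A;t')$ satisfies $s(b,c)\neq a$. Combined with your pinning-down, every commutative binary term of every Taylor reduct equals $t'$. The paper does not prove this step; it cites Example 5.20 of \cite{dreamteam}.

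\emph{It is not needed for the claim.} Since $\m a$ is minimal Taylor and $t'\in\mathrm{Clo}(\m a)$ is Taylor, $\mathrm{Clo}(\m a)=\mathrm{Clo}(A;t')$ directly. Minimality of $\m t_{4,14}$ only matters for knowing that this case actually occurs.
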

\begin{proof}
Suppose that the class $\{a,d\}$ is the semilattice with the absorbing element $a$. Affine algebra $\mathbb{Z}^{\text{aff}}_3$ has only one commutative binary term, that is $t(x,y):=p(x,y,x)=2x+2y$. Since in this case each class of $\m a/\alpha$ also has it, such a term exists in the algebra $\m a$ as well. Denote by $t'$ one such term.
In particular, if we denote classes of $\alpha$ by $C_1,C_2,C_3$, then $t'(C_i,C_j)\subseteq C_k$, whenever $\{i,j,k\}=\{1,2,3\}$. Table of operation $t'$ is given in \Cref{table:8}, where $\ast$ represents the only undetermined value, that is $t'(b,c)=t'(c,b)\in\{a,d\}$. 
If $\ast=a$, then $\{a,b,c\}$ is the underlying set of $\mathbb{Z}^{\text{aff}}_3$. Moreover, we have $t'(d,\circ)\in\{a,b,c\}$ if $\circ\in\{a,b,c\}$, so $\{a,b,c\}\trianglelefteq_2\m a$. Therefore, $\m a$ also has a semilattice factor, which was examined before. The reader can verify that $\m a\cong\m t_{4,7}$ is true. Therefore, let $\ast=d$. This is algebra $\m t_{4,14}$ from \Cref{table:9}, which is a minimal Taylor algebra as shown in Example 5.20 of \cite{dreamteam}.
\end{proof}
\begin{rem}
In the later case of Claim~\ref{claim3affsemiblock}, we have $\mathrm{Sg}\{b,d\}=\mathrm{Sg}\{c,d\}=\m a$.
\end{rem}

\begin{claim}

If the two-element class of $\m a/\alpha$ is a majority subalgebra, then  $\m a\cong\m t_{4,15}$ or $\m a\cong\m t_{4,16}$.
\end{claim}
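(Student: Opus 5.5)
The plan mirrors the semilattice-block case. We now have $\alpha=\{\{a,d\},\{b\},\{c\}\}$, with $\m a/\alpha$ term-equivalent to $\mathbb{Z}^{\mathrm{aff}}_3$ and $\{a,d\}$ a two-element majority subalgebra.

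First we fix a convenient generating term. Neither $\mathbb{Z}^{\mathrm{aff}}_3$ nor the majority algebra has a commutative binary term, so there is no binary analogue here. Instead we work with the operation $p$ from \Cref{compositionactslike}, which acts as $x-y+z$ modulo $\alpha$. We may assume $p$ is the only basic operation. On the class $\{a,d\}$ it acts as the majority, and by the remark after \Cref{compositionactslike} it satisfies $p(x,x,y)=p(y,x,x)$.

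Most values of $p$ are then forced by $\alpha$. Whenever the arguments hit the three classes so that $x-y+z$ lands in the singleton class $\{b\}$ or $\{c\}$, the value is determined. This yields the ``common values'' of \Cref{3affcommon}, together with the majority table on $\{a,d\}$. What remains is exactly the list of triples in \Cref{3affremain}, namely those $\mathbf x$ for which $p(\mathbf x)\in\{a,d\}$.

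Next we cut down the unknown entries of \Cref{3affremain}, using $\mathrm{Sg}\{a,b\}=\m a$ and minimality.
\begin{itemize}
\item The majority class $\{a,d\}$ is $3$-absorbing modulo $\alpha$. So, in the style of Theorem~\ref{singletoncenter}, I expect the values landing in $\{a,d\}$ to be governed by absorption.
\item If a value such as $p(c,c,a)$ were $a$, the proof of Claim~\ref{claim3affsemiblock} would suggest that $\{a,b,c\}$ is a subuniverse copy of $\mathbb{Z}^{\mathrm{aff}}_3$. That subuniverse absorbing $\m a$ would give a semilattice quotient, already handled in Subsection~\ref{s:semi}.
\item I would therefore first show that all entries of the form $p(x,x,y)$ or $p(y,x,x)$ with $\{x,y\}\subseteq\{b,c\}\cup\{\cdot\}$ are forced in $\{a,d\}$ in a uniform way. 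This is done by composing $p$ with itself, e.g. $p(p(x,y,y),p(y,z,z),p(z,x,x))$, and deriving contradictions with $\mathrm{Sg}\{a,b\}=\m a$ or with Theorem~\ref{center} when the pattern is mixed.
\end{itemize}

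The remaining freedom should collapse to one binary choice: whether the ``$b$-side'' entries ($p(a,b,b)$, $p(b,b,a)$, $p(c,b,c)$, \dots) all equal $d$ or all equal $a$. This is the split between $\m t_{4,15}$ and $\m t_{4,16}$. The $c$-side entries are normalised to $d$ by the symmetry $b\leftrightarrow c$ together with the choice of which generator is called $a$.

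The main obstacle is this normalisation. I would show that in every other pattern, an iterated composite
\[
p^*(x,y,z)=p\bigl(p(x,y,y),p(y,z,z),p(z,x,x)\bigr)
\]
(or a similar variant) converts mixed values into a uniform pattern. So every minimal Taylor clone in this case contains one of the two fully determined operations. For each of the two candidates, minimality is then established as in the earlier claims:
\begin{itemize}
\item Every Taylor reduct keeps the congruence $\alpha$, with a Mal'cev quotient and a majority class.
\item By \Cref{bkcyclic} applied to $\alpha$, such a reduct has a cyclic (or Mal'cev-like) term.
\item Running the same normalisation inside the reduct recovers $p$ itself.
\end{itemize}
Finally, we check that the two resulting algebras are non-isomorphic, since they differ in whether $p(a,b,b)$ falls back to $a$. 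They are the algebras $\m t_{4,15}$ and $\m t_{4,16}$.
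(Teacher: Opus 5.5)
Your set-up is correct: the values forced by $\alpha$ are those of Table~\ref{3affcommon}, and the free data are the fourteen triples of Table~\ref{3affremain}, each known only to land in $\{a,d\}$. The gap is that the step that actually determines these fourteen values is never carried out, and the composite you propose cannot do it. Since $p(b,u,u)=b$ is forced for $u\in\{a,d\}$, your $p^*(x,y,z)=p(p(x,y,y),p(y,z,z),p(z,x,x))$ gives $p^*(u,b,b)=p(p(u,b,b),b,b)$. It only feeds an unknown entry back into itself. So if $u\mapsto p(u,b,b)$ is a bijection of $\{a,d\}$, every iterate still has $p^*(a,b,b)=a\neq d=p^*(d,b,b)$.

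The missing idea is to first move the $\{a,d\}$-argument into a singleton class. The paper does this with $p'(x,y,z)=p(p(x,z,x),y,p(z,x,z))$. The values $p(a,b,a)=p(d,b,d)=p(b,a,b)=p(b,d,b)=c$ and $p(a,c,a)=p(d,c,d)=p(c,a,c)=p(c,d,c)=b$ are forced. Hence the five entries $p'(a,b,b),p'(d,b,b),p'(b,b,a),p'(b,b,d),p'(c,b,c)$ all equal $p(c,b,c)$. The five entries obtained by exchanging $b$ and $c$ all equal $p(b,c,b)$. The entries $p'(b,y,c)=p(p(b,c,b),y,p(c,b,c))$ and $p'(c,y,b)$, for $y\in\{a,d\}$, are then fixed by the majority on $\{a,d\}$. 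So everything reduces to the pair $(p(c,b,c),p(b,c,b))\in\{a,d\}^2$:
\begin{itemize}
\item $(a,a)$ makes $\{a,b,c\}$ closed under $p'$, contradicting $\mathrm{Sg}\{a,b\}=\m a$;
\item $(d,d)$ yields $\m t_{4,15}$;
\item $(a,d)$ and $(d,a)$ yield $\m t_{4,16}$, up to exchanging $b$ and $c$.
\end{itemize}

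Several of your supporting assertions are also false or unusable.
\begin{itemize}
\item The class $\{a,d\}$ is not $3$-absorbing, not even modulo $\alpha$, since $\mathbb{Z}^{\mathrm{aff}}_3$ has no proper absorbing subuniverse.
\item A single value such as $p(c,c,a)=a$ does not make $\{a,b,c\}$ a subuniverse. $\m t_{4,16}$ has $p(b,b,a)=a$, so after exchanging $b$ and $c$ this value occurs in a genuine case. Only $p(c,b,c)=p(b,c,b)=a$ (for the normalised term) is contradictory, and the contradiction is with $\mathrm{Sg}\{a,b\}=\m a$, not via a semilattice quotient.
\item Theorem~\ref{center} gives no information here, because $\m a$ already has an abelian quotient.
\item For minimality, Lemma~\ref{bkcyclic} cannot supply a ternary cyclic term, as $\mathbb{Z}^{\mathrm{aff}}_3$ has none. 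That is why one works with the edge term $p$ throughout.
\end{itemize}
Moreover, ``running the normalisation recovers $p$'' needs invariants that exclude the other patterns in every Taylor reduct. For $\m t_{4,15}$, the subuniverse $\{b,c,d\}$ forces $p''(c,b,c)=p''(b,c,b)=d$ for every term $p''$ acting as $x-y+z$ modulo $\alpha$. For $\m t_{4,16}$, the automorphism exchanging $a\leftrightarrow d$ and $b\leftrightarrow c$ forces $p''(c,b,c)\neq p''(b,c,b)$. Neither invariant appears in your plan.
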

\begin{proof}
\Cref{3affcommon} shows the determined values of $p$, while on $\{a,d\}$ it acts as the majority because $\{a,d\}$ is assumed to be majority subalgebra. Let $$p'(x,y,z)=p(p(x,z,x),y,p(z,x,z)).$$ Hence, the following equations hold
\begin{align*}
p'(a,b,b)&=p(p(a,b,a),b,p(b,a,b))=p(c,b,c),\\
p'(b,b,a)&=p(p(b,a,b),b,p(a,b,a))=p(c,b,c),
\end{align*}
and similarly
\begin{align*}
    p'(d,b,b)&=p'(b,b,d)=p'(c,b,c)=p(c,b,c),\text{ and}\\
    p'(a,c,c)&=p'(c,c,a)=p'(d,c,c)=p'(c,c,d)=p'(b,c,b)=p(b,c,b)
\end{align*}
while for the last four tuples, we have
\begin{align*}
    p'(c,a,b)&=p(p(c,b,c),a,p(b,c,b)),\\
    p'(b,a,c)&=p(p(b,c,b),a,p(c,b,c)),\\
    p'(c,d,b)&=p(p(c,b,c),d,p(b,c,b)),\\
    p'(b,d,c)&=p(p(b,c,b),d,p(c,b,c)).
\end{align*}
It remains to determine $p(c,b,c)$ and $p(b,c,b)$. The case $p(c,b,c)=p(b,c,b)=a$ is not possible since otherwise $\{a,b,c\}$ would be a subalgebra. Therefore, we have two cases up to isomorphism.

First, let $p(c,b,c)=p(b,c,b)=d$. In this case, each of the values of $p'$ for inputs outside of \Cref{3affcommon} equals $d$. Hence, this algebra is $\m t_{4,15}$ from \Cref{3affremain}. It has the set $\{b,c,d\}$ as a subuniverse. Because of that, the values $p(c,b,c)$ and $p(b,c,b)$ must be equal to $d$ in every Taylor reduct, so $\m t_{4,15}$ is a minimal Taylor algebra. 

Suppose now that $p(c,b,c)=a$ and $p(b,c,b)=d$. Here
$$
\begin{gathered}
    p'(a,b,b)=p'(b,b,a)=p'(d,b,b)=p'(b,b,d)=\\
    p'(c,b,c)=p'(c,a,b)=p'(b,a,c)=a,\text{ and}\\
    p'(a,c,c)=p'(c,c,a)=p'(d,c,c)=p'(c,c,d)=\\
    p'(b,c,b)=p'(c,d,b)=p'(b,d,c)=d.
\end{gathered}
$$
follows. In other words, the operation $p'$ corresponds to the operation $p$ of the algebra $\m t_{4,16}$ from \ref{3affremain}. In this way, the automorphism that switches $a$ with $d$ and $b$ with $c$ is obtained. Hence, every time we have $p''(c,b,c)=a$ (or $p''(c,b,c)=d$, which is the same up to isomorphism) for some ternary term $p''$, we get $p''(b,c,b)=d$ (or, respectively, $p''(b,c,b)=a$), so $\m t_{4,16}$ is a minimal Taylor algebra.
\end{proof}

\begin{claim}
If the two-element class of $\m a/\alpha$ is a affine subalgebra, then  $\m a\cong\m t_{4,17}$ or $\m a\cong\m t_{4,18}$.
\end{claim}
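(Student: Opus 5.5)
The approach follows the majority case. We have $\alpha=\{\{a,d\},\{b\},\{c\}\}$ and $\{a,d\}$ is an affine subalgebra. The term $p$ from \Cref{compositionactslike} acts as $x-y+z$ on $\m a/\alpha$ and as the minority $x-y+z$ on $\{a,d\}$. Since $\{a,d\}$ carries a Mal'cev operation, I would first argue that \Cref{3affcommon} still describes $p$ on every tuple whose value is forced into the singleton classes $\{b\}$ or $\{c\}$ by the quotient. Together with the affine action on $\{a,d\}$, this leaves exactly the fifteen tuples listed in \Cref{3affremain} undetermined; each of them has value in $\{a,d\}$.

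Next I would use the same auxiliary term $p'(x,y,z)=p(p(x,z,x),y,p(z,x,z))$ as in the majority claim. The computation there expresses all undetermined values of $p'$ through $u:=p(c,b,c)$ and $v:=p(b,c,b)$, except for the four tuples $(c,a,b),(b,a,c),(c,d,b),(b,d,c)$. Those four are given as $p(u,a,v)$, $p(v,a,u)$, $p(u,d,v)$, $p(v,d,u)$. Now every argument of these expressions lies in $\{a,d\}$, which is affine, so the values come from the minority on $\{a,d\}$. The case $u=v=a$ must be excluded, since otherwise $\{a,b,c\}$ is closed under $p'$ and, by Proposition~\ref{subuniverse}, would be a subuniverse containing both generators. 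Swapping $a$ and $d$ is an automorphism of the whole data, so up to isomorphism two cases remain: $u=v=d$ and $u=a, v=d$.

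In the first case the minority gives $p(d,a,d)=a$ and $p(d,d,d)=d$. So $p'(c,a,b)=p'(b,a,c)=a$, the remaining nine entries are $d$, and $p'(c,d,b)=p'(b,d,c)=d$. This should match $\m t_{4,17}$. In the second case the same values of the four entries arise with mixed arguments, $p(a,a,d)=d$ and $p(a,d,d)=a$, and this should give $\m t_{4,18}$, with $p'(c,d,b)=a$ distinguishing it. I would then check, entry by entry, that each table equals \Cref{3affremain}.

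The main obstacle is minimality, i.e.\ showing that in each case every Taylor reduct recovers the same $p'$. For $\m t_{4,17}$ I would use that $\{b,c,d\}$ is a subuniverse, which forces $u=v=d$ in any reduct, exactly as for $\m t_{4,15}$. For $\m t_{4,18}$ I would use the order-two automorphism swapping $a\leftrightarrow d$ and $b\leftrightarrow c$: it forces $\{u,v\}=\{a,d\}$, and the reduct then reconstructs the same $p'$ up to isomorphism. I would also need to confirm that in every reduct the operation from \Cref{compositionactslike} has the same action on $\alpha$ and on $\{a,d\}$, so that \Cref{3affcommon} still applies there.
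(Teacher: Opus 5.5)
Your proposal is correct and follows the same route the paper only sketches (its proof says ``similar to the majority case, details left to the reader''): pass to $p'$, reduce everything to $u=p(c,b,c)$, $v=p(b,c,b)$ and the minority on $\{a,d\}$, discard $u=v=a$, and get minimality from the subuniverse $\{b,c,d\}$ for $\T_{4,17}$ and from the automorphism $(a\,d)(b\,c)$ for $\T_{4,18}$. A few small points for the write-up. For $(u,v)=(a,d)$ your formulas give $p'(b,d,c)=p(d,d,a)=a$, so the entry $d$ printed in the $\T_{4,18}$ column of \Cref{3affremain} is a misprint, since it is incompatible with that automorphism. You should also say why $(A;p')$ is Taylor at all, e.g.\ by Lemma~\ref{bkcyclic} with arity $5$ applied to $\alpha$; otherwise $p'\in\mathrm{Clo}(\m a)$ does not yield term-equivalence. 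Finally, there are $14$ undetermined tuples, not $15$.
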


\begin{proof}
Recall the same term $p'$ as before; thus, the operation $p'$ on the set $\{a,d\}$ acts as the minority operation. Similar to the proof of the previous claim, we get that $\m a$ is one of the minimal Taylor algebras $\m a\cong\m t_{4,17}$ and $\m a\cong\m t_{4,18}$. The details of the proof are left to the reader.
\end{proof}



\end{document}